\theoremstyle{plain}
\newtheorem{proposition}{Proposition}[section]
\newtheorem{theorem}[proposition]{Theorem}
\newtheorem{lemma}[proposition]{Lemma}
\newtheorem{corollary}[proposition]{Corollary}
\newtheorem{observation}[proposition]{Observation}
\theoremstyle{definition}
\newtheorem{example}[proposition]{Example}
\newtheorem{definition}[proposition]{Definition}
\theoremstyle{remark}
\newtheorem{remark}[proposition]{Remark}
\newtheorem*{question}{Question}
\DeclareMathOperator{\Aff}{Aff}
\DeclareMathOperator{\Aut}{Aut}
\DeclareMathOperator{\supp}{supp}
\DeclareMathOperator{\GL}{GL}
\DeclareMathOperator{\PGL}{PGL}
\DeclareMathOperator{\Hol}{Hol}
\DeclareMathOperator{\End}{End}
\DeclareMathOperator{\Span}{Span} 
\DeclareMathOperator{\Euc}{Euc} 
\DeclareMathOperator{\id}{id}
\DeclareMathOperator{\Bc}{\mathcal{B}}
\DeclareMathOperator{\Cc}{\mathcal{C}}
\DeclareMathOperator{\Dc}{\mathcal{D}}
\DeclareMathOperator{\Kc}{\mathcal{K}}
\DeclareMathOperator{\Vc}{\mathcal{V}}
\DeclareMathOperator{\Bb}{\mathbb{B}}
\DeclareMathOperator{\Cb}{\mathbb{C}}
\DeclareMathOperator{\Db}{\mathbb{D}}
\DeclareMathOperator{\Kb}{\mathbb{K}}
\DeclareMathOperator{\Nb}{\mathbb{N}}
\DeclareMathOperator{\Pb}{\mathbb{P}}
\DeclareMathOperator{\Rb}{\mathbb{R}}
\DeclareMathOperator{\Xb}{\mathbb{X}}
\DeclareMathOperator{\Yb}{\mathbb{Y}}
\newcommand{\abs}[1]{\left|#1\right|}
\newcommand{\norm}[1]{\left\|#1\right\|}
\newcommand{\wt}[1]{\widetilde{#1}}
\newcommand{\wh}[1]{\widehat{#1}}
\newcommand{\ip}[1]{\left\langle #1\right\rangle}
\begin{document}

\title{Subelliptic estimates from Gromov hyperbolicity}
\author{Andrew Zimmer}\address{Department of Mathematics, Louisiana State University, Baton Rouge, LA, USA}\curraddr{Department of Mathematics, University of Wisconsin-Madison, Madison, WI, USA}
\email{amzimmer2@wisc.edu}
\date{\today}
\keywords{Gromov hyperbolic metric space, dbar-Neumann problem, Kaehler-Einstein metric, Subelliptic estimates, Kobayashi metric}
\subjclass[2010]{}

\begin{abstract} In this paper we prove: if the complete K\"ahler-Einstein metric on a bounded convex domain (with no boundary regularity assumptions) is Gromov hyperbolic, then the $\bar{\partial}$-Neumann problem satisfies a subelliptic estimate. This is accomplished by constructing bounded plurisubharmonic function whose Hessian grows at a certain rate (which implies a subelliptic estimate by work of Catlin and Straube). We also provide a characterization of Gromov hyperbolicity in terms of orbit of the domain under the group of affine transformations. This characterization allows us to construct many examples. For instance, if the Hilbert metric on a bounded convex domain is Gromov hyperbolic, then the K\"ahler-Einstein metric is as well. \end{abstract}

\maketitle

\tableofcontents

\section{Introduction}

Suppose that $\Omega \subset \Cb^d$ is a bounded pseudoconvex domain. Then a \emph{subelliptic estimate of order $\epsilon>0$ holds on $\Omega$} if there exists a constant $C>0$ such that 
\begin{align*}
\norm{u}_{\epsilon} \leq C ( \|\bar{\partial} u\|_0 + \|\bar{\partial}^* u\|_0)
\end{align*}
for all $u \in L^2_{(0,q)}(\Omega) \cap { \rm dom}(\bar{\partial}) \cap { \rm dom}(\bar{\partial}^*)$ and $1 \leq q \leq d$. Here $\norm{\cdot}_s$ denotes the $L^2$-Sobolev space norm of order $s$ on $(0,q)$-forms on $\Omega$,  $\bar{\partial}^*$ denotes the adjoint of $\bar{\partial}$ with respect to the $L^2$ inner product, and  $L^2_{(0,q)}(\Omega)$ denotes the space of $(0,q)$-forms with square integrable coefficients.

 In the case when $\Omega$ is smoothly bounded, subelliptic estimates have been extensively studied, culminating in Catlin's~\cite{C1987,C1983} deep work which asserts that a subelliptic estimate holds on a smoothly bounded pseudoconvex domain if and only if the boundary has finite type in the sense of D'Angelo. For more background, see the survey papers~\cite{BS1999,CD2010}.

In this paper we consider domains with non-smooth boundary. Previously, Henkin-Iordan-Kohn~\cite{HIK1996} established subelliptic estimates on strongly pseudoconvex domains with piecewise smooth boundary and Michel-Shaw~\cite{MS1998} established subelliptic estimates on strongly pseudoconvex domains with Lipschitz boundary. Straube~\cite{S1997} established subelliptic estimates on pseudoconvex domains with piecewise smooth boundary of finite type. Straube~\cite{S1997} and Harrington~\cite{H2007} have also established sufficient conditions for subelliptic estimates in terms of the existence of functions with large Hessians near the boundary. 

We will focus our attention on convex domains. For smoothly bounded convex domains, subelliptic estimates have been previously studied by Forn\ae ss-Sibony \cite{FS1989} and McNeal~\cite{M1994,M2002,NPT2013}. For bounded convex domains with non-smooth boundary, Fu-Straube~\cite{FS1998} established necessary and sufficient conditions for compactness of the $\bar{\partial}$-Neumann problem. Convexity is a strong geometric assumption, but we will show that this special case already contains interesting examples with non-smooth boundary. 

In the non-smooth setting, it seems difficult to develop boundary invariants that will imply or be implied by subelliptic estimates. Instead, we consider conditions on the interior geometry of a domain. In particular, every bounded pseudoconvex domain $\Omega$ has a canonical geometry: the complete K\"ahler-Einstein metric $g_\Omega$ with Ricci curvature $-1$ constructed by Cheng-Yau~\cite{CY1980} when $\partial \Omega$ is $C^2$ and Mok-Yau~\cite{MY1983} in general. Let $d_\Omega$ denote the distance on $\Omega$ induced by this K\"ahler metric. In~\cite{Z2016}, we proved that when $\Omega$ is a smoothly bounded convex domain, then $\partial \Omega$ has finite type if and only if the metric space $(\Omega, d_\Omega)$ is Gromov hyperbolic. 

Combining this with Catlin's results yields the following: when $\Omega$ is a smoothly bounded convex domain a subelliptic estimate holds if and only if $(\Omega, d_\Omega)$ is Gromov hyperbolic. The first main result of this paper shows that one direction of the above equivalence holds without any boundary regularity.

\begin{theorem}\label{thm:main} Suppose $\Omega \subset \Cb^d$ is a bounded convex domain and $(\Omega, d_\Omega)$ is Gromov hyperbolic. Then $\Omega$ satisfies a subelliptic estimate.
\end{theorem}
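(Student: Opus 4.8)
The plan is to reduce Theorem~\ref{thm:main} to the construction of a family of bounded plurisubharmonic weight functions with large complex Hessian near $\partial\Omega$, and then to build that family using convexity together with the affine-orbit description of Gromov hyperbolicity announced in the introduction. \textbf{Step 1 (reduction to weights).} I would start from the sufficient condition for subelliptic estimates going back to Catlin, which needs only boundedness and pseudoconvexity (see \cite{C1987} and the surveys \cite{BS1999,CD2010}): a subelliptic estimate of some order $\epsilon>0$ holds on $\Omega$ as soon as there is a constant $C>0$ such that for every small $\delta>0$ one can find a plurisubharmonic $\phi_\delta \in C^2(\Omega)$ with $\abs{\phi_\delta}\le 1$ on $\Omega$ whose complex Hessian satisfies $i\partial\overline{\partial}\phi_\delta \ge C\delta^{-2\epsilon}\beta$ at every point of the strip $\{ z\in\Omega : \delta_\Omega(z)\le\delta\}$, where $\delta_\Omega$ is Euclidean distance to $\partial\Omega$ and $\beta$ the standard K\"ahler form of $\Cb^d$. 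This reduces the theorem to producing such a family, which is where the geometric hypotheses enter.

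\textbf{Step 2 (uniform finite type from hyperbolicity).} For $\zeta\in\Omega$ near the boundary, with nearest boundary point $p$ and $\delta := \delta_\Omega(\zeta)=\abs{\zeta-p}$, I would pass to affine coordinates centered at $\zeta$ adapted to a maximal polydisc $P(\zeta)\subset\Omega$ about $\zeta$ of the kind McNeal uses for convex domains, with side lengths $\tau_1(\zeta)\approx\delta$ (the real-normal direction at $p$) and $\tau_2(\zeta),\dots,\tau_d(\zeta)$ (complex tangential); in these coordinates $\Omega$ is comparable near $p$ to a dilate of $P(\zeta)$. The crucial point is that Gromov hyperbolicity of $(\Omega,d_\Omega)$ forces a uniform bound $\tau_j(\zeta)\le C\delta_\Omega(\zeta)^{\epsilon}$ for all $j$ and a fixed $\epsilon>0$ --- a uniform finite-line-type statement. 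Were this to fail, one could choose $\zeta_n\to\partial\Omega$ along which some tangential ratio $\tau_j(\zeta_n)/\delta_\Omega(\zeta_n)^{1/n}$ diverges; rescaling by the affine maps $A_{\zeta_n}$ that send $P(\zeta_n)$ to the unit polydisc and extracting a subsequence, the convex domains $A_{\zeta_n}(\Omega)$ converge in the local Hausdorff topology to a convex domain containing a complex affine line. This is incompatible with Gromov hyperbolicity of $\Omega$ by the affine-orbit characterization that is the paper's second main result --- indeed that characterization can be phrased directly as the uniform bound on the $\tau_j$, which is how one reads off $\epsilon$.

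\textbf{Step 3 (construction of $\phi_\delta$).} Granting $\tau_j(\zeta)\lesssim\delta^{\epsilon}$, I would follow McNeal's weight construction for convex finite-type domains \cite{M1994,M2002}: to each $\zeta$ in the shell $\{\delta_\Omega\approx\delta\}$ attach a plurisubharmonic bump $\psi_\zeta$, built from logarithms of the affine functions supporting $\Omega$ near $p$, normalized so that $\psi_\zeta(\zeta)\approx-1$, $\psi_\zeta\ge-1$ on $\Omega$, $\psi_\zeta$ vanishes outside a fixed dilate of $P(\zeta)$, and the complex Hessian of $\psi_\zeta$ is $\gtrsim \sum_j \tau_j(\zeta)^{-2}\abs{w_j}^2 \gtrsim \delta^{-2\epsilon}\abs{w}^2$ on $P(\zeta)$; then combine these bumps over a locally finite cover of the shell (via a regularized maximum, or summation against a bounded partition of unity with the standard large-constant correction --- here convexity of $\Omega$ is essential, as it makes the logarithmic functions plurisubharmonic on all of $\Omega$), and, if convenient, add a fixed multiple of $-\delta_\Omega^{\epsilon}$, which is plurisubharmonic because $\delta_\Omega$ is concave on $\Omega$ and $t\mapsto -t^{\epsilon}$ is convex and decreasing on $(0,\infty)$. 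The resulting $\{\phi_\delta\}$ meets the requirements of Step~1.

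\textbf{Main obstacle.} The heart of the matter is Step~2: obtaining a \emph{single} exponent $\epsilon$ and uniform constants with no regularity at all on $\partial\Omega$, so that D'Angelo's finite-type invariants and Catlin's boundary systems are unavailable and must be replaced by convex- and coarse-geometric input --- the affine-orbit characterization, plus the fact that a convex domain containing a complex line is not Gromov hyperbolic. Part of this is controlling the K\"ahler--Einstein distance under affine rescaling and local Hausdorff limits, which is more delicate than for the Kobayashi distance (to which $d_\Omega$ is, however, comparable on bounded convex domains, and to which the problem may be reduced). This is also why one cannot simply exhaust $\Omega$ by smoothly bounded convex subdomains and invoke the smooth equivalence of \cite{Z2016} together with Catlin: neither the order nor the constant of the subelliptic estimate survives such a limit without exactly the uniform control that Step~2 supplies. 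Step~3, by contrast, is a technical but essentially routine transcription of McNeal's construction to the rescaled, non-smooth setting.
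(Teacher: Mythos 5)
Your overall strategy coincides with the paper's: reduce to producing bounded plurisubharmonic weights with Hessian $\gtrsim \delta_\Omega^{-2\epsilon}$ near the boundary (the paper uses Straube's non-smooth version of Catlin's sufficient condition, Theorem~\ref{thm:straube}, plus a dyadic summation over scales); derive a uniform local $m$-convexity bound $\delta_\Omega(z;v)\le C\delta_\Omega(z)^{1/m}$ from Gromov hyperbolicity via affine rescaling and the simple-boundary property of limit domains (Corollary~\ref{cor:m_convex_Gromov_hyp}); and build the weights by a McNeal-type bump construction on normalized domains. Your Steps 1 and 2 are essentially the paper's argument. (One small imprecision in Step 2: a single rescaling does not produce a limit domain containing a complex affine line --- that would push the limit out of $\Xb_d$, where the characterization says nothing. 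The paper needs an extremal choice of base points and \emph{two} successive rescalings to produce an analytic disk in the \emph{boundary} of a limit domain in $\Xb_d$, which is what contradicts simple boundary.)

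The genuine gap is in Step 3, precisely at the point you dismiss as ``a technical but essentially routine transcription of McNeal's construction.'' To sum the bumps $\psi_\zeta$ and keep the result bounded you need (i) an engulfing/doubling property for the regions $P(\zeta)$ across comparable scales, so that a maximal disjoint subfamily at scale $\delta/2$ gives a cover at scale $\delta$ with overlap multiplicity bounded independently of $\delta$, and (ii) uniform comparability between the support of $\psi_\zeta$ (a fixed dilate of $P(\zeta)$) and the region where the Hessian estimate is needed. In McNeal's setting both facts are proved from Taylor expansions and properties of convex polynomials of bounded degree; with no boundary regularity those tools simply do not exist, and convexity alone does not give doubling of the $\tau_j(\zeta,\delta)$ in $\delta$. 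The paper's resolution is to abandon the Euclidean polydiscs as the combinatorial skeleton and replace them by balls $V_{z_0}(\xi;\epsilon)$ of a visual quasi-metric on the Gromov boundary, for which doubling is essentially automatic from the definition of the Gromov product; the real work (Propositions~\ref{prop:comparison}--\ref{prop:normalizing_maps_visual_metric}, proved via the Morse lemma and a chain of visibility/compactness lemmas for rescaled geodesics) is then to show these visual balls are uniformly comparable to the affine images $A_{\xi,\epsilon}^{-1}\Bb_d(e_1;r)$ on which the plurisubharmonic bumps are constructed. This comparison is the technical heart of the proof and is exactly what your outline is missing; without it, or a substitute argument for the bounded-overlap property of your cover, the sum defining $\phi_\delta$ cannot be shown to stay bounded.
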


\begin{remark}\label{rmk:main_thm} \ \begin{enumerate}
\item Unfortunately the converse to Theorem~\ref{thm:main} is false, see Section~\ref{sec:converse_example}. 
\item A bounded convex domain has (at least) two other natural metrics: the Kobayashi metric and the Bergman metric. By a result of Frankel~\cite{F1991} these are both bi-Lipschitz to the K\"ahler-Einstein metric and hence if one is Gromov hyperbolic, then they all are.
\end{enumerate}
\end{remark}

We prove Theorem~\ref{thm:main} by constructing a bounded plurisubharmonic function whose Levi form grows at a certain rate. Such functions imply subelliptic estimates by results of Catlin~\cite{C1987} and Straube~\cite{S1997}. Catlin's construction of these functions in the finite type case is very involved and so finding alternative approaches to constructing these functions seems desirable. The main idea in our construction is to show that analytic and algebraic arguments of McNeal~\cite{M1992, M1994} in the case of convex domains of finite type can be recast as geometric arguments in terms of the intrinsic metrics. Another key part in our construction is proving that convex domains whose K\"ahler-Einstein metric is Gromov hyperbolic must be $m$-convex, see Section~\ref{sec:m_convex_vs_gromov}. An outline of the proof of Theorem~\ref{thm:main} and a more detailed discussion of prior work can be found in Section~\ref{sec:prior_outline_subelliptic}.

One motivation for Theorem~\ref{thm:main} comes from the deep connections between potential theory and negative curvature, see for instance \cite{A1983,S1983,AS1985,A1987,A1990}. In particular, techniques from Gromov hyperbolic metric spaces have been used to develop new insights into potential theory on bounded domains in $\Rb^d$, see for instance~\cite[Section 8]{A1987}. Based on these results, it seems natural to explore connections between other analytic problems and Gromov hyperbolicity. 

Theorem~\ref{thm:main} is a consequence of the following more general result. 

\begin{theorem}\label{thm:intersection}(see Section~\ref{sec:pf_of_thm_intersection}) Suppose $\Omega_1,\dots, \Omega_m \subset \Cb^d$ are bounded convex domains and each $(\Omega_j, d_{\Omega_j})$ is Gromov hyperbolic. If $\Omega := \cap_{j=1}^m \Omega_j$ is non-empty, then $\Omega$ satisfies a subelliptic estimate.  \end{theorem}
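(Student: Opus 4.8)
The plan is to establish a subelliptic estimate by producing, for each boundary point $p \in \partial\Omega$ and each scale $\delta>0$, a bounded plurisubharmonic function on $\Omega$ whose complex Hessian is bounded below by $\delta^{-2\epsilon}$ (for some fixed $\epsilon>0$ independent of $p$ and $\delta$) on the set of points within distance $\delta$ of $p$. By the Catlin-type sufficient condition — as formalized by Catlin~\cite{C1983} and reworked in the convex/non-smooth setting by Straube~\cite{S1997} and Harrington~\cite{H2007} — the existence of such a family of ``bumping'' functions with uniformly controlled Hessian lower bounds implies a subelliptic estimate of order $\epsilon$. So the real content is a quantitative construction of plurisubharmonic functions, and this is where Gromov hyperbolicity of each $d_{\Omega_j}$ enters: I expect the earlier sections of the paper to have translated Gromov hyperbolicity of $(\Omega_j, d_{\Omega_j})$ into a statement asserting that $\partial\Omega_j$ has no ``flat'' pieces in a precise scaling sense — e.g., that the Kobayashi (equivalently K\"ahler-Einstein) metric of $\Omega_j$ is comparable, near each boundary point, to a metric built from a finite-type-like quantity, or equivalently that there is a uniform bound on how slowly the squared distance to the boundary can be dominated by a plurisubharmonic function.

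First I would reduce to a local, scale-invariant statement. Fix $p \in \partial\Omega$. Since $\Omega = \cap_j \Omega_j$ and $\Omega$ is convex, $p$ lies on $\partial\Omega_j$ for at least one index $j$; pick such a $j$. The key step is to use the hypothesis that $(\Omega_j, d_{\Omega_j})$ is Gromov hyperbolic to obtain a bounded plurisubharmonic function $\varphi_{j}$ on $\Omega_j$ and constants $C>0$, $\epsilon_j>0$ (depending only on $\Omega_j$, via the Gromov hyperbolicity constant and the diameter/inradius of $\Omega_j$) such that, writing $r_j$ for (minus) a defining function of $\Omega_j$,
\begin{align*}
i\partial\overline{\partial}\, \varphi_{j} \;\geq\; \frac{c}{\big(\operatorname{dist}(z,\partial\Omega_j)\big)^{2-2\epsilon_j}}\,\omega_{\mathrm{eucl}}
\end{align*}
on a neighborhood of $p$ in $\Omega_j$, where $\omega_{\mathrm{eucl}}$ is the Euclidean K\"ahler form. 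For convex domains, such estimates are most naturally obtained from the comparability of $d_{\Omega_j}$ with a Carnot--Carath\'eodory-type metric built from the boundary, together with the fact (due to Frankel~\cite{F1991} and recalled in Remark~\ref{rmk:main_thm}) that the Kobayashi, Bergman, and K\"ahler-Einstein metrics are all bi-Lipschitz; Gromov hyperbolicity then forbids the ``finite type exponent'' from degenerating along $\partial\Omega_j$, yielding a \emph{uniform} $\epsilon_j$. I would then restrict $\varphi_j$ to $\Omega \subset \Omega_j$; since $\operatorname{dist}(z,\partial\Omega) \leq \operatorname{dist}(z,\partial\Omega_j)$, the same Hessian lower bound persists on $\Omega$, now phrased in terms of $\operatorname{dist}(z,\partial\Omega)$ near the points of $\partial\Omega$ coming from $\partial\Omega_j$.

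To globalize, I would take $\epsilon := \min_j \epsilon_j > 0$ and patch: near a boundary point $p$, at least one $\varphi_j$ supplies the Hessian bound at scale $\delta$, and because each $\varphi_j$ is globally defined and bounded on $\Omega$ one can combine them with a partition of unity (or simply take a sum $\sum_j \varphi_j$, exploiting plurisubharmonicity and that the Hessians are nonnegative so no cancellation occurs) to get a single bounded family of weights realizing Catlin's property $(P_\epsilon)$ uniformly on $\partial\Omega$. Feeding this into the subelliptic estimate machinery of~\cite{C1983,S1997,H2007} completes the proof, and Theorem~\ref{thm:main} follows as the case $m=1$. The main obstacle, I expect, is the second step: converting the coarse-geometric Gromov hyperbolicity hypothesis on $(\Omega_j,d_{\Omega_j})$ into the quantitative, uniform-in-scale Hessian lower bound for a bounded plurisubharmonic function. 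This requires both a precise description (from the paper's earlier sections, presumably via an affine-orbit characterization) of what Gromov hyperbolicity says about the local geometry of $\partial\Omega_j$, and a careful construction — likely via sublevel sets of the Kobayashi metric or via explicit convex ``bumping'' — of plurisubharmonic functions whose Hessian blows up at the optimal rate $\operatorname{dist}(\cdot,\partial\Omega_j)^{-2+2\epsilon_j}$ without losing uniformity as $p$ ranges over $\partial\Omega_j$.
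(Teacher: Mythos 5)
Your proposal follows essentially the same route as the paper: Theorem~\ref{thm:final_plurisubharmonic_fcn} supplies exactly the bounded plurisubharmonic functions $G_j$ with $i\partial\overline{\partial}G_j \gtrsim \delta_{\Omega_j}(z)^{-2/m_j}\, i\partial\overline{\partial}\norm{z}^2$ near each boundary point (built from visual metrics on the Gromov boundary and affine normalizing maps rather than from a Carnot--Carath\'eodory comparison), and the paper likewise sums the $G_j$ over the indices $j$ with $\xi \in \partial\Omega_j$, feeds the result into Straube's theorem, and patches the resulting local Sobolev estimates together with interior estimates. The only slip is your exponent $\delta^{-(2-2\epsilon_j)}$, which should read $\delta^{-2\epsilon_j}$ (i.e.\ $\delta^{-2/m_j}$) to match both your own opening formulation and the hypothesis of Theorem~\ref{thm:straube}.
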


Our second main result is a necessary and sufficient condition for $(\Omega, d_\Omega)$ to be Gromov hyperbolic. To state the precise result, we need the following definitions. 

\begin{definition} \ 
\begin{enumerate}
\item A domain $\Omega \subset \Cb^d$ has \emph{simple boundary} if every holomorphic map $\Db \rightarrow \partial \Omega$ is constant. 
\item A convex domain $\Omega \subset \Cb^d$ is called \emph{$\Cb$-properly convex} if $\Omega$ does not contain any entire complex affine lines. 
\item Let $\Xb_d$ denote the set of all $\Cb$-properly convex domains in $\Cb^d$ endowed with the local Hausdorff topology (see Section~\ref{sec:space_of_convex_domains} for details). 
\item Let $\Aff(\Cb^d)$ denote the group of complex affine automorphisms of $\Cb^d$. 
\end{enumerate}
\end{definition}

The group $\Aff(\Cb^d)$ acts on $\Xb_d$ and our characterization of Gromov hyperbolicity is in terms of the orbit of a domain under this action.

\begin{theorem}\label{thm:main_equivalence}(see Section~\ref{sec:pf_of_thm_main_equivalence})  Suppose $\Omega \subset \Cb^d$ is a bounded convex domain. Then $(\Omega, d_\Omega)$ is Gromov hyperbolic if and only if every domain in
\begin{align*}
\overline{\Aff(\Cb^d) \cdot \Omega} \cap \Xb_d
\end{align*}
has simple boundary.
\end{theorem}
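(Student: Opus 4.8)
The plan is to reduce Gromov hyperbolicity of $(\Omega, d_\Omega)$ to a local condition on limits of affine rescalings, using the fact (Remark~\ref{rmk:main_thm}(2)) that $d_\Omega$ is bi-Lipschitz to the Kobayashi metric $k_\Omega$, so it suffices to characterize Gromov hyperbolicity of $(\Omega, k_\Omega)$. The key technical device is a \emph{visibility}/\emph{non-visibility} dichotomy: for convex domains one expects $(\Omega, k_\Omega)$ to be Gromov hyperbolic if and only if $\Omega$ has the visibility property (any two distinct boundary points can be separated by geodesics staying in a compact part of $\Omega$), and the obstruction to visibility is precisely the presence of a nontrivial holomorphic disk or line segment in a limiting boundary. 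So the first step is to show: if some $\Omega_\infty \in \overline{\Aff(\Cb^d)\cdot\Omega}\cap\Xb_d$ fails to have simple boundary, then $(\Omega, d_\Omega)$ is \emph{not} Gromov hyperbolic. Here one takes the affine maps $A_n$ with $A_n\Omega \to \Omega_\infty$, pulls back a non-constant holomorphic disk $\varphi\colon\Db\to\partial\Omega_\infty$, and uses a normal-families/rescaling argument together with the Kobayashi-metric estimates of Frankel to produce, inside $\Omega$, a sequence of geodesic triangles that are thinner and thinner relative to their size contradiction—i.e.\ geodesics that track the disk and fan out, violating the thin-triangles condition. Convexity is used to guarantee the limiting domains $A_n\Omega$ converge in $\Xb_d$ (after normalizing so they stay $\Cb$-properly convex and nondegenerate) and that Kobayashi geodesics behave well under the limit.

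For the converse—if every domain in $\overline{\Aff(\Cb^d)\cdot\Omega}\cap\Xb_d$ has simple boundary, then $(\Omega,d_\Omega)$ is Gromov hyperbolic—I would argue by contradiction using a compactness/rescaling scheme. If $(\Omega, d_\Omega)$ is not Gromov hyperbolic, then (again via bi-Lipschitz equivalence and a characterization of hyperbolicity for proper geodesic spaces, e.g.\ the Gromov-product or thin-triangles criterion) there exist geodesic triangles in $(\Omega, k_\Omega)$ that are arbitrarily fat: points $x_n$ on a side whose distance to the union of the other two sides goes to infinity. Translate and rescale by affine maps $A_n$ so that $A_n x_n$ stays in a fixed compact region; by the local Hausdorff compactness of $\Xb_d$ (Section~\ref{sec:space_of_convex_domains}) pass to a subsequence with $A_n\Omega \to \Omega_\infty \in \Xb_d$. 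The fatness of the triangles, transported to the limit, forces $\Omega_\infty$ to contain a bi-infinite Kobayashi geodesic line whose two ends land at distinct boundary points that are \emph{not} visible from each other; for convex domains this non-visibility is known to produce a nontrivial affine segment, hence a nonconstant holomorphic map $\Db\to\partial\Omega_\infty$ (by convexity, boundary segments generate analytic disks in the boundary), contradicting simple boundary of $\Omega_\infty$. One must be careful that $\Omega_\infty$ lies in $\Xb_d$: the normalization should be chosen (using John-ellipsoid / affine-position normalization of convex bodies) so that the rescaled domains are uniformly $\Cb$-properly convex and do not degenerate to a lower-dimensional set or a halfspace-type domain containing a line.

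The main obstacle I anticipate is controlling the behavior of Kobayashi geodesics and the Kobayashi metric under the affine rescaling limits: one needs that $k_{A_n\Omega} \to k_{\Omega_\infty}$ locally uniformly (which follows from continuity of the Kobayashi metric under local Hausdorff convergence of $\Cb$-properly convex domains, but requires the $\Cb$-proper convexity to be preserved in the limit), and that Kobayashi geodesic segments in $A_n\Omega$ subconverge to geodesics in $\Omega_\infty$. A second delicate point is ensuring the normalization keeps the limit domain inside $\Xb_d$—degenerations where the domain "opens up" to contain a complex line must be ruled out by the choice of rescaling (anchoring three or more well-separated points, or using the affine-invariant normalization coming from the Kobayashi/Hilbert geometry). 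Finally, the precise equivalence "non-visibility $\iff$ nonconstant disk in the boundary of some limit domain" for convex domains likely requires a short argument using convexity: a pair of non-visible boundary points is joined by a boundary line segment, and the convex hull structure near such a segment yields a holomorphic (indeed affine) disk in the boundary. The rest is packaging: assembling these limiting statements into the clean dichotomy of the theorem.
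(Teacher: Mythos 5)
Your overall architecture (affine rescaling, Frankel-type compactness of the orbit, convergence of Kobayashi metrics and geodesics under local Hausdorff limits, and a thin-triangles contradiction) matches the paper's strategy, and the necessity direction is indeed handled as you describe, by relating disks in limit boundaries to failures of hyperbolicity (the paper quotes this from prior work). But the sufficiency direction has a genuine gap at its central step: you assert that non-visibility between two distinct boundary points of the limit domain $\Omega_\infty$ ``is known to produce a nontrivial affine segment'' in $\partial\Omega_\infty$. This is not true for a single convex domain, and it is exactly the point where the hypothesis about the \emph{entire} closure $\overline{\Aff(\Cb^d)\cdot\Omega}\cap\Xb_d$ (rather than about $\Omega$ or one fixed limit) is needed. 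The obstruction to visibility can be a sequence of boundary pieces that flatten at increasingly fast rates without any actual segment appearing in $\partial\Omega_\infty$; to extract a disk one must perform a second, carefully normalized rescaling of $\Omega_\infty$ itself (an extremal-point selection along the ray to the boundary followed by anisotropic dilations), which produces a disk only in the boundary of a \emph{further} limit $D_2\in\overline{\Aff(\Cb^d)\cdot\Omega}\cap\Xb_d$. The paper packages all of this as a uniform local $m$-convexity estimate for the whole compact family of rescaled domains (Theorem~\ref{thm:loc_m_convex}), which then yields visibility of the sequences $A_n\Omega$ via a Mercer-type estimate (Proposition~\ref{prop:visible_sequence}); your sketch replaces this with an unproved equivalence, and that equivalence is where essentially all the work lies.

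A second omission: in the thin-triangle contradiction you need the limit of the long side to be a geodesic line whose two ends land at \emph{distinct} points of the end compactification of $\Omega_\infty$ before visibility can be applied to the other two sides. This ``well-behaved geodesics'' property is not automatic: the limit $\Omega_\infty$ of rescalings of a bounded domain can be unbounded, and for unbounded convex domains whose asymptotic cone is totally real (e.g.\ tube domains over bounded bases) a geodesic line can have both ends at the same point of $\partial_{\rm End}\Omega_\infty$ even though every affine limit has simple boundary. The paper rules this out by showing that the asymptotic cone of any limit of a bounded domain is either trivial or not totally real (Lemma~\ref{lem:keep_asymptotic_cone}) and then proving well-behavedness (Proposition~\ref{prop:showing_well_behaved}); your proposal does not address this case.
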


\begin{remark} Theorem~\ref{thm:main_equivalence} is motivated by results of Karlsson-Noskov~\cite{KN2002} and Benoist~\cite{B2009} on the Hilbert metric, see Section~\ref{sec:intro_to_hilbert_metric} for details. \end{remark}

Theorem~\ref{thm:main_equivalence} may seem like a very abstract characterization, but in many concrete cases one can use it to quickly determine if $(\Omega, d_\Omega)$ is Gromov hyperbolic or not. For instance, suppose $\Omega \subset\Cb^d$ is a bounded convex domain with $C^\infty$ boundary. If $\partial \Omega$ has finite type in the sense of D'Angelo, then the rescaling method of Bedford-Pinchuk~\cite{BP1994} implies that every domain in $\overline{\Aff(\Cb^d) \cdot \Omega} \cap \Xb_d$ coincides, up to an affine transformation, either with $\Omega$ or a domain of the form 
\begin{align*}
\{ z \in \Cb^d : {\rm Im}(z_1) > P(z_2,\dots, z_d)\}
\end{align*}
where $P$ is a ``non-degenerate'' real valued polynomial. This implies that every domain in $\overline{\Aff(\Cb^d) \cdot \Omega} \cap \Xb_d$ has simple boundary. Conversely, if $\Omega$ has a point $\xi \in \partial \Omega$ with infinite type in the sense of D'Angelo, then there exists a sequence of affine maps $A_n$ such that $A_n(\xi) = \xi$ and $A_n\Omega$ converges to a $\Cb$-properly convex domain whose boundary contains an analytic disk through $\xi$, see~\cite[Lemma 6.1]{Z2016}. This discussion implies the following corollary. 

\begin{corollary}\cite[Theorem 1.1]{Z2016}\label{cor:finite_type} Suppose $\Omega \subset \Cb^d$ is a bounded convex domain with $C^\infty$ boundary. Then $(\Omega, d_\Omega)$ is Gromov hyperbolic if and only if $\partial \Omega$ has finite type in the sense of D'Angelo. 
\end{corollary}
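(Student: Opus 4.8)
The plan is to deduce the corollary from Theorem~\ref{thm:main_equivalence}: the whole argument reduces to identifying the domains in $\overline{\Aff(\Cb^d) \cdot \Omega} \cap \Xb_d$ and checking which of them have simple boundary. I would treat the two implications separately.

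For the direction from finite type to Gromov hyperbolicity, fix $\Omega' \in \overline{\Aff(\Cb^d) \cdot \Omega} \cap \Xb_d$ and write $\Omega' = \lim_n A_n \Omega$ with $A_n \in \Aff(\Cb^d)$, the limit in the local Hausdorff topology. If some subsequence of $(A_n)$ converges in $\Aff(\Cb^d)$, then $\Omega'$ is affinely equivalent to $\Omega$; since $\partial \Omega$ is $C^\infty$ of finite D'Angelo type it contains no nonconstant holomorphic disk (such a disk is, near a point where its derivative is nonzero, an embedded one-dimensional complex submanifold of $\partial\Omega$, contradicting finite type), so $\Omega$ and hence $\Omega'$ has simple boundary. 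If no subsequence of $(A_n)$ converges in $\Aff(\Cb^d)$, I would apply the Bedford--Pinchuk rescaling method \cite{BP1994} exactly as recorded in the discussion preceding the corollary: because $\partial \Omega$ has finite type, $\Omega'$ is affinely equivalent to a model domain $\{z \in \Cb^d : \Imag(z_1) > P(z_2,\dots,z_d)\}$ with $P$ a non-degenerate real polynomial, and non-degeneracy of $P$ forces this model to be $\Cb$-properly convex of finite type, hence to have simple boundary. In all cases every element of $\overline{\Aff(\Cb^d) \cdot \Omega} \cap \Xb_d$ has simple boundary, so Theorem~\ref{thm:main_equivalence} yields that $(\Omega, d_\Omega)$ is Gromov hyperbolic.

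For the other direction I would argue by contraposition: assume some $\xi \in \partial \Omega$ has infinite D'Angelo type. By \cite[Lemma 6.1]{Z2016} there is a sequence $A_n \in \Aff(\Cb^d)$ with $A_n(\xi) = \xi$ for which $A_n \Omega$ converges in the local Hausdorff topology to a $\Cb$-properly convex domain $\Omega_\infty$ whose boundary contains a nonconstant holomorphic disk through $\xi$. Then $\Omega_\infty \in \overline{\Aff(\Cb^d) \cdot \Omega} \cap \Xb_d$ and $\Omega_\infty$ does not have simple boundary, so Theorem~\ref{thm:main_equivalence} gives that $(\Omega, d_\Omega)$ is not Gromov hyperbolic. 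Combining the two directions proves the corollary.

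I expect the main obstacle to lie in the non-precompact case of the first implication: one must be sure that the Bedford--Pinchuk scaling applies to an arbitrary affine sequence $(A_n)$ for which $A_n\Omega$ has a $\Cb$-properly convex limit --- not merely to sequences of base points approaching $\partial\Omega$ --- and that the resulting polynomial models, together with $\Omega$ itself, really exhaust $\overline{\Aff(\Cb^d)\cdot\Omega}\cap\Xb_d$ and each has simple boundary. By contrast, the infinite type direction is comparatively soft once \cite[Lemma 6.1]{Z2016} is in hand.
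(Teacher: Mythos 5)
Your proposal is correct and follows essentially the same route as the paper: the paper's own justification is exactly the discussion preceding the corollary, namely Theorem~\ref{thm:main_equivalence} combined with the Bedford--Pinchuk rescaling classification of the limits in $\overline{\Aff(\Cb^d)\cdot\Omega}\cap\Xb_d$ for the finite type direction and \cite[Lemma 6.1]{Z2016} for the infinite type direction. The caveat you raise about applying the rescaling method to arbitrary affine sequences is a fair one, but the paper is no more detailed on that point than you are (and the corollary is in any case attributed to \cite[Theorem 1.1]{Z2016}, where a full proof appears).
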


Using Theorem~\ref{thm:main} and Theorem~\ref{thm:main_equivalence}, we can construct examples of domains which satisfy a subelliptic estimate and have interesting boundaries. 

\begin{example}\label{ex:local_cone_point}(see Section~\ref{sec:ex_local_cone_point}) For any $d \geq 2$, there exists a bounded convex domain $\Omega \subset \Cb^d$ with the following properties:
\begin{enumerate}
\item there exists a boundary point $\xi \in \partial \Omega$ where $\Omega$ is locally a cone (that is, there exists a convex cone $\Cc \subset \Cb^d$ based at $\xi$ and a neighborhood $U$ of $\xi$ such that $\Cc \cap U = \Omega \cap U$) and
\item a subelliptic estimate holds on $\Omega$.
\end{enumerate}
\end{example}

\begin{example}\label{ex:non_strongly_pseudoconvex}(see Section~\ref{sec:squeezing}) For any $d \geq 2$, there exists a bounded convex domain $\Omega \subset \Cb^d$ with the following properties:
\begin{enumerate}
\item $\partial \Omega$ is $C^2$,
\item $\Omega$ is not strongly pseudoconvex, and
\item a subelliptic estimate of order $\epsilon$ holds on $\Omega$ for every $\epsilon \in (0,1/2)$. 
\end{enumerate}
\end{example}

\begin{example}\label{ex:ae_flat}(see Section~\ref{sec:ex_ae_flat}) For any $d \geq 2$ there exists a bounded convex domain $\Omega \subset \Cb^d$ with the following properties: 
\begin{enumerate}
\item $\partial \Omega$ is $C^{1,\alpha}$ for some $\alpha > 0$ (but not $C^{1,1}$),
\item the curvature of $\partial \Omega$ is concentrated on a set of measure zero (see Definition~\ref{defn:curvature_on_meas_zero}), and
\item a subelliptic estimate holds on $\Omega$.
\end{enumerate}
Informally, Condition (2) says that $\partial\Omega$ is strongly convex on a set of measure zero. 
\end{example}

We can also use Theorem~\ref{thm:main_equivalence} to relate the geometry of the classical Hilbert metric to the geometry of the K\"ahler-Einstein metric. This relationship will be one of our primary mechanisms for constructing interesting examples.

A convex domain $C \subset \Rb^d$ is called \emph{$\Rb$-properly convex} if it does not contain an entire affine real line. Every $\Rb$-properly convex domain $C \subset \Rb^d$ has a natural interior geometry: the Hilbert distance which we denote by $H_{C}$. Recently, Benoist~\cite{B2003} proved that the Hilbert distance on a bounded convex domain is Gromov hyperbolic  if and only if the boundary of the domain is quasi-symmetric (see Definition~\ref{defn:quasi_symmetric}). 

Using Theorem~\ref{thm:main_equivalence} and work of Karlsson-Noskov~\cite{KN2002} on the Hilbert metric we will establish the following. 

\begin{corollary}\label{cor:Hilbert_metric}(see Section~\ref{sec:pf_of_cor_hilbert_metric}) Suppose $\Omega \subset \Cb^d$ is a bounded convex domain. If $(\Omega, H_\Omega)$ is Gromov hyperbolic, then $(\Omega, d_\Omega)$ is Gromov hyperbolic.
\end{corollary}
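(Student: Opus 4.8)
The plan is to deduce Corollary~\ref{cor:Hilbert_metric} from Theorem~\ref{thm:main_equivalence} by showing that if $(\Omega, H_\Omega)$ is Gromov hyperbolic then every $\Cb$-properly convex domain in the affine closure $\overline{\Aff(\Cb^d)\cdot\Omega}\cap\Xb_d$ has simple boundary. The first point to settle is compatibility of the two notions of ``properly convex'': since $\Omega\subset\Cb^d\cong\Rb^{2d}$ is bounded, it is certainly $\Rb$-properly convex, and more importantly a $\Cb$-properly convex limit domain $\Omega_\infty\in\overline{\Aff(\Cb^d)\cdot\Omega}\cap\Xb_d$ may fail to be bounded but, not containing a complex affine line, we must check whether it can contain a real affine line. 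If $\Omega_\infty$ contained a real affine line $\ell$, then by convexity it would contain the complex line spanned by $\ell$ — wait, that is not automatic, so I would instead argue directly at the level of the Hilbert metric: the real affine maps underlying the complex affine maps $A_n$ with $A_n\Omega\to\Omega_\infty$ also act by Hilbert isometries, so $\Omega_\infty$ inherits a ``boundary degeneration'' of the Hilbert geometry of $\Omega$.

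The heart of the argument is the following chain. First, I would invoke Benoist's theorem (quoted in the excerpt) that $(\Omega,H_\Omega)$ Gromov hyperbolic is equivalent to $\partial\Omega$ being quasi-symmetric, or — more usefully for limits — directly use the result of Karlsson-Noskov~\cite{KN2002} referenced above: Gromov hyperbolicity of the Hilbert metric forces a strong convexity-type condition on the boundary behaviour, in particular it rules out line segments in the boundary in the following quantitative, scaling-stable sense. Concretely, if some limit domain $\Omega_\infty$ had a boundary containing a nondegenerate real segment (equivalently, failed the Karlsson-Noskov condition), one could pull this back along the $A_n$ to produce, inside $\Omega$ itself, a sequence of segments in $\partial\Omega$ violating quasi-symmetry, contradicting Gromov hyperbolicity of $(\Omega,H_\Omega)$. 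Thus every limit $\Omega_\infty\in\overline{\Aff(\Cb^d)\cdot\Omega}\cap\Xb_d$ has the property that $\partial\Omega_\infty$ contains no real affine segment. The final step is to observe that a convex domain whose boundary contains no real line segment certainly has simple boundary: the image of a nonconstant holomorphic disk $\Db\to\partial\Omega_\infty$ would, by convexity of $\Omega_\infty$ and the maximum principle applied coordinatewise, have to contain a real segment (the image is a nontrivial connected convex-position subset of the flat part of $\partial\Omega_\infty$ through that point) — more carefully, a nonconstant holomorphic disk into the boundary of a convex domain has image lying in a complex supporting hyperplane, hence in an affine subspace of $\partial\Omega_\infty$ of positive dimension, which then contains real segments. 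Hence every domain in $\overline{\Aff(\Cb^d)\cdot\Omega}\cap\Xb_d$ has simple boundary, and Theorem~\ref{thm:main_equivalence} gives that $(\Omega,d_\Omega)$ is Gromov hyperbolic.

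I expect the main obstacle to be making precise the ``scaling-stable'' transfer: one needs that the Karlsson-Noskov obstruction to Hilbert-Gromov-hyperbolicity is stable under affine rescaling and passage to local Hausdorff limits, so that a bad limit domain produces a genuine contradiction back in $\Omega$. This is essentially the content of Karlsson-Noskov's characterization together with the fact that affine maps are Hilbert isometries, but care is needed because limit domains can be unbounded, so the Hilbert metric may degenerate; I would handle this by restricting attention to a fixed bounded piece of $\partial\Omega_\infty$ near the putative flat spot and tracking it back under $A_n^{-1}$ into a shrinking neighbourhood of a boundary point of $\Omega$. A secondary technical point, already flagged above, is checking that the complex-affine limit domains that are $\Cb$-properly convex are also $\Rb$-properly convex after possibly splitting off a linear factor, so that the Hilbert metric and Karlsson-Noskov's results apply to them at all; this should follow from the structure theory of $\Cb$-properly convex domains together with the boundedness of the original $\Omega$.
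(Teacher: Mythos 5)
Your overall strategy (reduce to Theorem~\ref{thm:main_equivalence}, use Karlsson--Noskov to rule out flat pieces in the boundaries of limit domains, and convert holomorphic disks into segments via Proposition~\ref{prop:affine_vs_holomorphic_disks}) is the same as the paper's, but the central transfer step in your argument runs in the wrong direction and this is where the gap lies. You propose to take a segment in $\partial \Omega_\infty$, where $A_n\Omega \rightarrow \Omega_\infty$, and ``pull it back along the $A_n$'' to produce a violation of quasi-symmetry in $\partial\Omega$ itself. A boundary segment of a local Hausdorff limit does not correspond to segments in $\partial\Omega$ (a strictly convex $\Omega$ can perfectly well have affine rescalings converging to a domain with flat boundary --- this is exactly what happens at an infinite-type point), so what you would actually need is a quantitative, rescaling-stable statement that flatness in the limit forces failure of Benoist's quasi-symmetry condition upstairs. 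You correctly flag this as the main obstacle, but it is not a quotable result and you do not supply the argument; as written, the proof does not close.

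The paper's proof goes the other way, which is essentially trivial: Gromov hyperbolicity is a universally quantified four-point inequality on Gromov products, so it passes to pointwise limits of metrics and to subsets on which the metric restricts well. Concretely, the $\delta$-hyperbolicity inequality for $H_\Omega$ transfers to $H_{A_n\Omega}$ by affine invariance of the Hilbert metric, then to the pseudo-distance $H_{\Omega_\infty}$ by continuity of $H$ under local Hausdorff convergence (Observation~\ref{obs:Hilbert_metric_conv}; no properness of $\Omega_\infty$ is needed for the inequality to make sense), and finally to $H_C$ where $C = \Omega_\infty \cap \Span_{\Rb}\{e_1,e_2\}$ is a genuine $\Rb$-properly convex planar domain containing the segment $e_1+[-1,1]\cdot e_2$ in its boundary --- here one uses that the Hilbert metric of a slice equals the restriction of the ambient Hilbert metric. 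Karlsson--Noskov then says $(C,H_C)$ cannot be Gromov hyperbolic, giving the contradiction. This also disposes of your secondary worry about $\Rb$-proper convexity of the limit: one never needs $\Omega_\infty$ itself to be $\Rb$-properly convex, only the two-real-dimensional slice through the offending disk, and that is checked by a short direct argument. If you replace your pull-back step with this push-forward of the four-point condition, your proof becomes correct and coincides with the paper's.
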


Corollary~\ref{cor:Hilbert_metric} is somewhat surprising since the metric spaces $(\Omega, H_\Omega)$ and $(\Omega, d_\Omega)$ can be very different. For instance, if $D \subset \Cb$ is a convex polygon, then $(D,d_D)$ is isometric to the real hyperbolic plane, while $(D,H_D)$ is quasi-isometric to the Euclidean plane~\cite{B2009} (notice that this shows that the converse of Corollary~\ref{cor:Hilbert_metric} is false).

Using Corollary~\ref{cor:Hilbert_metric} and Benoist's characterization of Gromov hyperbolicity for the Hilbert distance, we have the following example.

\begin{example} Suppose $\Omega \subset \Cb^d$ is a bounded convex domain with quasi-symmetric boundary (see Definition~\ref{defn:quasi_symmetric}). Then $(\Omega, d_\Omega)$ is Gromov hyperbolic and hence a subelliptic estimate holds on $\Omega$. 
\end{example}

We can also use the proof of Theorem~\ref{thm:main_equivalence} to characterize the tube domains where the K\"ahler-Einstein metric is Gromov hyperbolic. A domain $\Omega \subset \Cb^d$ is called a \emph{tube domain} if there exists a domain $C \subset \Rb^d$ such that $\Omega = C + i\Rb^d$. Bremermann~\cite{B1957} showed that a tube domain $\Omega = C+i\Rb^d$ is pseudoconvex if and only if $C$ is convex. Further, when $C$ is convex the domain $\Omega = C+i\Rb^d$ is $\Cb$-properly convex if and only if $C$ is $\Rb$-properly convex. Using the proof of Theorem~\ref{thm:main_equivalence} we prove the following. 

\begin{corollary}\label{cor:tube_domains}(see Section~\ref{sec:tube_domains}) Suppose $d \geq 2$, $C \subset \Rb^d$ is an $\Rb$-properly convex domain, and $\Omega = C+i\Rb^d$. Then the following are equivalent:
\begin{enumerate}
\item $(\Omega,d_\Omega)$ is Gromov hyperbolic, 
\item $(C,H_C)$ is Gromov hyperbolic and $C$ is unbounded. 
\end{enumerate}
\end{corollary}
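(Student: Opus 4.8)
\emph{Proof proposal.}
The plan is to adapt the rescaling argument behind Theorem~\ref{thm:main_equivalence} to the unbounded tube domain $\Omega=C+i\Rb^d$ and to feed the output into the work of Karlsson--Noskov \cite{KN2002} (and Benoist \cite{B2003}) on the Hilbert metric. Two elementary facts about tube domains do most of the bookkeeping. (i) If $D\subset\Rb^d$ is convex then $\partial(D+i\Rb^d)=(\partial D)+i\Rb^d$, so a holomorphic disk $f\colon\Db\to\partial(D+i\Rb^d)$ is the same as a pluriharmonic map $\Real f\colon\Db\to\partial D$; choosing a supporting affine functional $\ell$ of $D$ at a point of $\Real f(\Db)$ and applying the maximum principle to the harmonic function $\ell\circ\Real f$ forces $\Real f(\Db)$ into a single face of $D$, hence $\Real f$ and then $f$ to be constant unless $\partial D$ contains a nontrivial segment. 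Thus $D+i\Rb^d$ has simple boundary if and only if $D$ is strictly convex. (ii) Since $\Real\colon\Cb^d\to\Rb^d$ intertwines the affine actions and the imaginary translations $z\mapsto z+iv$ ($v\in\Rb^d$) are automorphisms of any tube domain, each $g\in\Aff(\Rb^d)$, viewed inside $\Aff(\Cb^d)$, sends $C+i\Rb^d$ to $(gC)+i\Rb^d$; hence $\overline{\Aff(\Cb^d)\cdot\Omega}$ contains $D+i\Rb^d$ for every $D\in\overline{\Aff(\Rb^d)\cdot C}$. Finally I would record a consequence of the Karlsson--Noskov/Benoist analysis: $(C,H_C)$ is Gromov hyperbolic if and only if every $\Rb$-properly convex domain in $\overline{\Aff(\Rb^d)\cdot C}$ is strictly convex (if some rescaling limit of $C$ had a boundary corner, blowing up there produces an $\Rb$-properly convex affine cone — pointed, by strict convexity of that limit — whose boundary contains segments; so strict convexity at every scale already forces the $C^1$ regularity needed for hyperbolicity).

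For $(1)\Rightarrow(2)$, suppose first that $C$ is bounded and normalize so that $(-\delta,\delta)^d\subset C\subset(-R,R)^d$. Then $\Omega$ lies between the ``polydisk'' tube domains $(-\delta,\delta)^d+i\Rb^d$ and $(-R,R)^d+i\Rb^d$, in each of which the fiber $F=\{(iy_1,\dots,iy_d):y\in\Rb^d\}$ is a totally geodesic flat $\cong\Rb^d$ (a product of imaginary-axis geodesics of strips). As inclusions of convex domains are distance nonincreasing for the Kobayashi metric and $d_\Omega$ is bi-Lipschitz to the Kobayashi distance (Remark~\ref{rmk:main_thm}(2)), $d_\Omega$ restricted to $F$ is bi-Lipschitz to the Euclidean metric, so $F$ is a quasi-isometrically embedded $\Rb^d$; since $d\geq 2$ this prevents $(\Omega,d_\Omega)$ from being Gromov hyperbolic — and $(2)$ also fails, as $C$ is bounded. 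If instead $C$ is unbounded but $(C,H_C)$ is not Gromov hyperbolic, then by the reduction in the first paragraph some $\Rb$-properly convex $D\in\overline{\Aff(\Rb^d)\cdot C}$ has a segment in its boundary; by (i) and (ii), $D+i\Rb^d\in\overline{\Aff(\Cb^d)\cdot\Omega}\cap\Xb_d$ has non-simple boundary, and the ``non-simple rescaling limit $\Rightarrow$ not hyperbolic'' half of the argument of Theorem~\ref{thm:main_equivalence} (which does not use boundedness of the ambient domain) shows $(\Omega,d_\Omega)$ is not Gromov hyperbolic.

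For $(2)\Rightarrow(1)$, assume $C$ is unbounded and $(C,H_C)$ is Gromov hyperbolic. The crux is a rescaling dictionary for tube domains: when $C$ is unbounded and $\Rb$-properly convex, every $\Cb$-properly convex domain in $\overline{\Aff(\Cb^d)\cdot\Omega}$ is complex-affinely equivalent to a tube domain $D+i\Rb^d$ with $D\in\overline{\Aff(\Rb^d)\cdot C}$ and $D$ $\Rb$-properly convex — a sequence of complex affine maps whose limit genuinely mixes the real and imaginary directions is driven out of $\Xb_d$ (a complex affine line appears in the limit, or the interior collapses), using unboundedness and $\Rb$-proper convexity of $C$. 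Granting this, each $\Cb$-properly convex rescaling limit of $\Omega$ is a tube over an $\Rb$-properly convex rescaling limit of $C$, which is strictly convex by the reduction above and hence, by (i), has simple boundary; the ``all rescaling limits simple $\Rightarrow$ Gromov hyperbolic'' half of the argument of Theorem~\ref{thm:main_equivalence}, transported to the unbounded domain $\Omega$, then gives that $(\Omega,d_\Omega)$ is Gromov hyperbolic.

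The hard part is precisely this rescaling dictionary, together with carrying the ``simple $\Rightarrow$ hyperbolic'' implication of Theorem~\ref{thm:main_equivalence} over to the unbounded domain: one must control how an arbitrary sequence of complex affine maps interacts with the rigid imaginary-translation symmetry of $\Omega$, and show that $\Rb$-proper convexity and unboundedness of $C$ are exactly what exclude the exotic, product-like rescaling limits that do appear — and that obstruct hyperbolicity — when $C$ is bounded.
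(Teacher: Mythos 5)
Your $(2)\Rightarrow(1)$ architecture matches the paper's: show that every domain in $\overline{\Aff(\Cb^d)\cdot\Omega}\cap\Xb_d$ is a complex-affine image of a tube over some $D\in\overline{\Aff(\Rb^d)\cdot C}\cap\Yb_d$ (the paper's Lemma~\ref{lem:affine_orbits_equal}, proved not by ruling out ``mixing'' affine sequences but by renormalizing on the real side with Benz\'ecri's cocompactness theorem and then invoking Proposition~\ref{prop:limit_domain_depends_on_sequence}), use Karlsson--Noskov to get strict convexity of the real limits, and conclude via the unbounded version of Theorem~\ref{thm:main_equivalence}. The gap is in the last step: the implication ``all rescaling limits have simple boundary $\Rightarrow$ Gromov hyperbolic'' does \emph{not} transport to unbounded domains as stated, and your own $(1)\Rightarrow(2)$ argument exhibits the counterexample. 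When $C$ is bounded, every domain in $\overline{\Aff(\Cb^d)\cdot\Omega}\cap\Xb_d$ still has simple boundary (the rescaling dictionary holds regardless of unboundedness, and tubes over bounded convex limits of $C$ have simple boundary), yet $\Omega$ contains the quasi-isometrically embedded flat $c_0+i\Rb^d$ and is not hyperbolic. The correct sufficiency statement (Theorem~\ref{thm:main_equivalence_general_version}) carries the extra hypothesis that ${\rm AC}(\Omega)$ is not totally real; unboundedness of $C$ enters exactly there (a recession direction $v$ of $C$ together with $iv\in i\Rb^d\subset{\rm AC}(\Omega)$ kills total reality), not by ``excluding exotic product-like rescaling limits,'' which do not exist even for bounded $C$. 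You must state and verify this hypothesis; without it the implication you are quoting is false.

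For $(1)\Rightarrow(2)$, the bounded case via the flat fiber is correct (it is the paper's Lemma~\ref{lem:QI_flat_embedding}). But for ``$C$ unbounded and $(C,H_C)$ not hyperbolic'' you use, in contrapositive, the implication ``every domain in $\overline{\Aff(\Rb^d)\cdot C}\cap\Yb_d$ strictly convex $\Rightarrow$ $(C,H_C)$ Gromov hyperbolic.'' That is the hard sufficiency direction of a Hilbert-metric analogue of Theorem~\ref{thm:main_equivalence}; it is not what Karlsson--Noskov prove (they give only the necessity of strict convexity and $C^1$ regularity), it is not established anywhere in the paper for possibly unbounded $C$, and your parenthetical blow-up remark only reduces $C^1$-ness to strict convexity of the limits --- it does not produce hyperbolicity. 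The paper avoids this entirely with Lemma~\ref{lem:QI_embedding}: $K_\Omega(c_1,c_2)\le H_C(c_1,c_2)\le 2K_\Omega(c_1,c_2)$ for all $c_1,c_2\in C$ (one inequality from Pflug--Zwonek, the other from the infinitesimal comparison of $h_C$ with $k_\Omega$ via Lemma~\ref{lem:kob_inf_bound}), so the geodesic space $(C,H_C)$ quasi-isometrically embeds into $(\Omega,K_\Omega)$ and inherits Gromov hyperbolicity directly. Replacing your rescaling detour with this embedding closes the gap and makes this direction elementary.
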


\begin{remark} Pflug and Zwonek previously established some necessary conditions for the K\"ahler-Einstein metric on a tube domain to be Gromov hyperbolic~\cite{PZ2018}. \end{remark}

If $(X,d)$ is a Gromov hyperbolic metric space,  $X$ has a natural compactification, denoted by $\overline{X}^G$, called the \emph{Gromov compactification}. The \emph{Gromov boundary of $X$} is $\partial_G X := \overline{X}^G \setminus X$. See Section~\ref{sec:GH_basics} for a precise definition. 

In joint work with Bracci and Gaussier, we showed when $\Omega$ is convex and $(\Omega, d_\Omega)$ is Gromov hyperbolic, the Gromov compactification coincides with the ``Euclidean end compactification.'' 

\begin{definition}\label{defn:end_comp} Given a domain $\Omega \subset \Cb^d$, let $\overline{\Omega}^{\rm End}$ denote  the end compactification of $\overline{\Omega}$ (in the sense of Freudenthal, see~\cite{P1992}). Then define $\partial_{\rm End}\Omega := \overline{\Omega}^{\rm End} \setminus \Omega$. \end{definition}

\begin{theorem}\cite[Theorem 1.4]{BGZ2018b}\label{thm:compactification} Suppose $\Omega \subset \Cb^d$ is a $\Cb$-properly convex domain and $(\Omega, d_\Omega)$ is Gromov hyperbolic. Then the identity map $\Omega \rightarrow \Omega$ extends to a homeomorphism 
\begin{align*}
\overline{\Omega}^{\rm End} \rightarrow \overline{\Omega}^G.
\end{align*}
 \end{theorem}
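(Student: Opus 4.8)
The plan is to build the homeomorphism by hand and let compactness absorb most of the bookkeeping. Since $g_\Omega$ is complete, $(\Omega,d_\Omega)$ is a proper geodesic metric space (Hopf--Rinow), and as it is Gromov hyperbolic the Gromov compactification $\overline{\Omega}^G=\Omega\sqcup\partial_G\Omega$ is compact and metrizable, with $\Omega$ open and dense. The space $\overline{\Omega}$ is locally compact, $\sigma$-compact, connected, and — using convexity — locally connected, so $\overline{\Omega}^{\rm End}$ is likewise a compact metrizable compactification of $\Omega$. A continuous bijection from a compact space to a Hausdorff space is a homeomorphism, and a continuous map out of a compact space is closed, so it suffices to produce a continuous injection $F\colon\overline{\Omega}^{\rm End}\to\overline{\Omega}^G$ restricting to the identity on $\Omega$: its image is then closed and contains the dense set $\Omega$, hence equals $\overline{\Omega}^G$, and $F$ is the desired homeomorphism. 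By density of $\Omega$ and metrizability of both spaces, existence and continuity of such an $F$ reduce (via interleaving of sequences, then a diagonal argument) to the single statement (A): \emph{if $z_n\in\Omega$ and $z_n\to\xi$ in $\overline{\Omega}^{\rm End}$ with $\xi\in\partial_{\rm End}\Omega$, then $(z_n\mid z_m)_{z_0}:=\tfrac12\bigl(d_\Omega(z_0,z_n)+d_\Omega(z_0,z_m)-d_\Omega(z_n,z_m)\bigr)\to\infty$ as $n,m\to\infty$}; and injectivity of $F$ is the statement (B): \emph{if $z_n\to\xi$ and $w_n\to\xi'$ in $\overline{\Omega}^{\rm End}$ with $\xi\ne\xi'$ in $\partial_{\rm End}\Omega$, then $\sup_n(z_n\mid w_n)_{z_0}<\infty$}.

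Everything then rests on convex-geometry comparisons between $d_\Omega$ and Euclidean data (with Frankel's bi-Lipschitz equivalence of $d_\Omega$ and the Kobayashi distance $k_\Omega$ used freely). First I would establish the lower bound (L): $d_\Omega(z_0,z)\ge\tfrac12\log\frac{1}{\delta_\Omega(z)}-C$, where $\delta_\Omega$ is Euclidean distance to $\partial\Omega$ — this comes from monotonicity of $k_\Omega$ under the inclusion of $\Omega$ into the complex affine half-space $H$ supporting $\Omega$ at a nearest boundary point of $z$, together with the explicit distance on $H\cong\mathbb{H}\times\mathbb{C}^{d-1}$. Second, for $z,w$ in a common small Euclidean ball about a point of $\partial\Omega$, an upper bound of the form $d_\Omega(z,w)\le d_\Omega(z_0,z)+d_\Omega(z_0,w)-\omega(\abs{z-w})+C$ with $\omega(t)\to\infty$ as $t\to0$, obtained from the standard convex upper estimate bounding $k_\Omega(z,w)$ by a sum of terms $\tfrac12\log\bigl(1+\abs{z-w}/s_\Omega(\,\cdot\,;u)\bigr)$ (with $s_\Omega(p;u)$ the distance from $p$ to $\partial\Omega$ in direction $u$), combined with (L) and $\abs{z-w}\le\mathrm{const}$. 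Granting these, (A) is immediate: $(z_n\mid z_m)_{z_0}\ge\tfrac12\,\omega(\abs{z_n-z_m})-C\to\infty$ since $\abs{z_n-z_m}\to0$.

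For (B) the mechanism is visibility. Fix disjoint Euclidean balls about $\xi$ and $\xi'$; by (L) the hypotheses force $\delta_\Omega(z_n),\delta_\Omega(w_n)\to0$, so a $d_\Omega$-geodesic $\sigma_n$ from $z_n$ to $w_n$ must leave the thin shell $\{\delta_\Omega<\epsilon\}$ near $\partial\Omega$: otherwise it would travel a definite Euclidean distance while staying at depth $<\epsilon$, which costs at least a positive power of $1/\epsilon$, far exceeding the $O(\log\tfrac1\epsilon)$ cost of the competitor path ``radially into the convex inner parallel body $K=\{\delta_\Omega\ge r\}$, across $K$, radially back out.'' Hence $\sigma_n$ meets the fixed compact set $K$ at some $p_n$, and then $d_\Omega(z_n,w_n)=d_\Omega(z_n,p_n)+d_\Omega(p_n,w_n)\ge d_\Omega(z_0,z_n)+d_\Omega(z_0,w_n)-2\max_{q\in K}d_\Omega(z_0,q)$, i.e. $\sup_n(z_n\mid w_n)_{z_0}<\infty$. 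This is the one place where Gromov hyperbolicity is genuinely needed (and where the analogous statement for the bidisk fails): by Theorem~\ref{thm:main_equivalence} every domain in $\overline{\Aff(\mathbb{C}^d)\cdot\Omega}\cap\mathbb{X}_d$ has simple boundary, which excludes the degenerate local pictures — analytic disks in $\partial\Omega$, product/polydisk behavior near a boundary point — in which tangential motion at depth $\epsilon$ would cost only $O(\log\tfrac1\epsilon)$ and the competitor-path comparison would collapse. A rescaling/normal-families argument at each boundary point converts ``simple boundary for every rescaling limit'' into the uniform lower bound on tangential lengths used above.

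The main obstacle is exactly this last point: making the convex upper estimate and, above all, the tangential lower bound uniform over all of $\partial\Omega$ with \emph{no} boundary regularity hypothesis available — only $\mathbb{C}$-proper convexity plus Gromov hyperbolicity — so that the degeneracy must be ruled out through Theorem~\ref{thm:main_equivalence} and rescaling rather than from explicit defining functions. A secondary, more routine, difficulty is the unbounded case, where $\partial_{\rm End}\Omega$ also contains ends at infinity: there (L) and the upper estimate must be replaced by analogues controlling $d_\Omega(z_0,z)$ in terms of how $z$ escapes to infinity inside the $\mathbb{C}$-properly convex directions of $\Omega$, but the overall structure — reduce to (A) and (B), then invoke compactness — is unchanged.
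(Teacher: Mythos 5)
Two framing remarks first. The paper does not prove Theorem~\ref{thm:compactification}: it quotes it from \cite[Theorem 1.4]{BGZ2018b}, and the only "proof content" here is the subsequent remark that the statement transfers from the Kobayashi distance to the K\"ahler--Einstein distance because the two are bi-Lipschitz and the Gromov compactification is a quasi-isometry invariant. Your topological framework (both compactifications are compact metrizable, so it suffices to produce a continuous injection extending the identity, which reduces to statements (A) and (B) about Gromov products) is sound. On (A): the lower bound (L) is Lemma~\ref{lem:hyperplanes} applied to a supporting hyperplane at a nearest boundary point, and combined with the standard two-term upper bound for convex domains it gives $(z|w)_{z_0}\geq\omega\bigl(\max\{\abs{z-w},\delta_\Omega(z),\delta_\Omega(w)\}\bigr)-C$ rather than $\omega(\abs{z-w})$ --- harmless, since all three quantities tend to $0$ when $z,w\to\xi\in\partial\Omega$. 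But note that this cancellation needs the \emph{matching} constants $\tfrac12$ in the upper and lower bounds, which hold for $K_\Omega$ and not for $d_\Omega$; "using Frankel freely" at the level of these additive estimates destroys the argument, so you must prove the homeomorphism for $(\Omega,K_\Omega)$ and transfer to $(\Omega,d_\Omega)$ at the very end via quasi-isometry invariance, exactly as the paper's remark indicates. Also, (A) for the end(s) at infinity of an unbounded $\Omega$ is not routine; it is a genuine portion of the work in \cite{BGZ2018b}.

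The real gap is in (B). Your competitor-path comparison does not show that the geodesics $\sigma_n$ from $z_n$ to $w_n$ meet a \emph{fixed} compact set. Crossing Euclidean distance $D$ while staying in the shell $\{\delta_\Omega<\epsilon\}$ costs at least $cD\epsilon^{-1/m}$ by local $m$-convexity, but the competitor path costs about $\tfrac12\log\tfrac{1}{\delta_\Omega(z_n)}+\tfrac12\log\tfrac{1}{\delta_\Omega(w_n)}$, not $O(\log\tfrac1\epsilon)$: you have conflated the depth of the shell with the depth of the endpoints, and the latter tends to $0$ independently of $\epsilon$. For fixed $\epsilon$ the comparison reads "a constant versus a quantity tending to infinity" and yields nothing; optimizing over $\epsilon=\epsilon_n$ yields only $\max_t\delta_\Omega(\sigma_n(t))\gtrsim\bigl(\log\tfrac{1}{\delta_\Omega(z_n)}\bigr)^{-m}\to0$, whence $(z_n|w_n)_{z_0}\le K_\Omega(z_0,\sigma_n)\lesssim\log\log\tfrac{1}{\delta_\Omega(z_n)}$, which is unbounded. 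This is precisely why visibility is not a two-line consequence of $m$-convexity. The statement you need is true, but its proof is the compactness/rescaling argument already in the paper: Corollary~\ref{cor:simple_bd_implies_visibility} and Proposition~\ref{prop:geod_in_domain_1} (whose hypotheses hold by Theorem~\ref{thm:prior_nec}, and which rest on Proposition~\ref{prop:visible_sequence}, i.e. \cite[Proposition 7.8]{Z2016}) show that, after passing to a subsequence, the $\sigma_n$ converge locally uniformly to a geodesic of $\Omega$ and hence eventually meet a fixed compact subset of $\Omega$; from there your inequality $(z_n|w_n)_{z_0}\leq K_\Omega(z_0,\sigma_n)$ does finish (B). Replace the length comparison by an appeal to those results (or reproduce their proofs), and the argument closes.
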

 
 \begin{remark} To be precise, Theorem 1.4 in~\cite{BGZ2018b} assumes that the Kobayashi distance $K_\Omega$ is Gromov hyperbolic and shows that $\overline{\Omega}^{\rm End}$ is homeomorphic to the Gromov compactification of $(\Omega, K_\Omega)$. However, as mentioned earlier, the Kobayashi and K\"ahler-Einstein metrics are bi-Lipschitz on any $\Cb$-properly convex domain~\cite{F1991} and the Gromov boundary is a quasi-isometric invariant. 
 \end{remark}

Using Theorem~\ref{thm:compactification} and facts about the geometry of Gromov hyperbolic metric spaces, one can establish the following results about the behavior of holomorphic maps.

\begin{corollary}\cite[Corollary 1.6]{BGZ2018b} Suppose $\Omega_1, \Omega_2 \subset \Cb^d$ are $\Cb$-properly convex domains and $f : \Omega_1 \rightarrow \Omega_2$ is a biholomorphism. If $(\Omega_1, d_{\Omega_1})$ (and hence also $(\Omega_2, d_{\Omega_2})$) is Gromov hyperbolic, then $f$ extends to a homeomorphism $\overline{\Omega_1}^{\rm End} \rightarrow \overline{\Omega_2}^{\rm End}$. \end{corollary}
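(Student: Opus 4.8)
The plan is to deduce the statement directly from Theorem~\ref{thm:compactification} together with the standard fact that a biholomorphism between bounded (or $\Cb$-properly convex) domains is an isometry for the Kobayashi distance, hence for the K\"ahler-Einstein distance up to the Frankel bi-Lipschitz comparison. First I would note that since $f : \Omega_1 \rightarrow \Omega_2$ is a biholomorphism, it is a $K$-isometry: $K_{\Omega_2}(f(p), f(q)) = K_{\Omega_1}(p,q)$ for all $p,q \in \Omega_1$. By the invariance of the Kobayashi distance under biholomorphisms, $f$ is in particular a quasi-isometry between $(\Omega_1, d_{\Omega_1})$ and $(\Omega_2, d_{\Omega_2})$ (using that $d_{\Omega_j}$ and $K_{\Omega_j}$ are bi-Lipschitz by Frankel~\cite{F1991}). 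Since Gromov hyperbolicity is a quasi-isometry invariant of proper geodesic metric spaces, the hypothesis that $(\Omega_1, d_{\Omega_1})$ is Gromov hyperbolic forces $(\Omega_2, d_{\Omega_2})$ to be Gromov hyperbolic as well, which justifies the parenthetical remark in the statement.

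Next I would invoke the functoriality of the Gromov compactification: a quasi-isometry between proper geodesic Gromov hyperbolic metric spaces extends to a homeomorphism of their Gromov compactifications. Applying this to $f$ (as a quasi-isometry for the K\"ahler-Einstein, equivalently Kobayashi, metrics) yields a homeomorphism $\overline{\Omega_1}^G \rightarrow \overline{\Omega_2}^G$ extending $f$. Then I would apply Theorem~\ref{thm:compactification} to each of $\Omega_1$ and $\Omega_2$ separately: since both are $\Cb$-properly convex with Gromov hyperbolic K\"ahler-Einstein metric, the identity maps extend to homeomorphisms $\overline{\Omega_j}^{\rm End} \rightarrow \overline{\Omega_j}^G$ for $j = 1, 2$. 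Composing, $f$ extends to a homeomorphism $\overline{\Omega_1}^{\rm End} \rightarrow \overline{\Omega_1}^G \rightarrow \overline{\Omega_2}^G \rightarrow \overline{\Omega_2}^{\rm End}$, and this composition restricts to $f$ on the interiors, which is exactly the claim.

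The only genuinely delicate point is checking that a biholomorphism is a quasi-isometry for $d_{\Omega_j}$, rather than merely for $K_{\Omega_j}$: $f$ is an exact isometry for the Kobayashi distance, but $d_{\Omega_j}$ and $K_{\Omega_j}$ are only bi-Lipschitz (with domain-dependent constants), so composing gives a quasi-isometry with additive constant zero but a multiplicative constant depending on the Frankel comparison constants for $\Omega_1$ and $\Omega_2$; this is still a quasi-isometry, so the argument goes through without change. One must also make sure the domains are proper geodesic metric spaces, which holds because bounded convex (more generally $\Cb$-properly convex) domains are complete and locally compact in the K\"ahler-Einstein/Kobayashi distance, so the Hopf–Rinow theorem applies. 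I expect the main obstacle is purely expository — correctly citing the functoriality of the Gromov boundary and assembling the two instances of Theorem~\ref{thm:compactification} — rather than any substantive new estimate.
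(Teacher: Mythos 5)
Your proposal is correct and follows exactly the route the paper indicates for this cited corollary: a biholomorphism is a Kobayashi isometry, hence a quasi-isometry for the K\"ahler--Einstein distances via Frankel's comparison, so it extends to the Gromov boundaries, and Theorem~\ref{thm:compactification} identifies each Gromov compactification with the Euclidean end compactification. The only cosmetic caveat is that a general quasi-isometry extends to a homeomorphism of Gromov \emph{boundaries} rather than compactifications, but since $f$ here is itself a homeomorphism of the interiors (indeed a $K$-isometry), the combined extension to the compactifications is continuous and the argument goes through as you describe.
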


\begin{corollary}\cite[Corollary 1.7]{BGZ2018b} Suppose $\Omega \subset \Cb^d$ is a $\Cb$-properly convex domain and $(\Omega, d_\Omega)$ is Gromov hyperbolic. If $f : \Omega \rightarrow \Omega$ is holomorphic, then either 
\begin{enumerate}
\item $f$ has a fixed point in $\Omega$, or
\item there exists $\xi \in \partial_{\rm End}\Omega$ such that 
\begin{align*}
\lim_{n \rightarrow \infty} f^n(z) = \xi
\end{align*}
for all $z \in \Omega$.
\end{enumerate}
\end{corollary}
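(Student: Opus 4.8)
The plan is to transport the problem into the metric space $(\Omega,K_\Omega)$, where $K_\Omega$ is the Kobayashi distance, and then invoke the theory of nonexpansive self-maps of Gromov hyperbolic spaces. First I would record the setup facts: since $\Omega$ is $\Cb$-properly convex it is Kobayashi hyperbolic and biholomorphic to a bounded domain (Barth), so $\Omega$ is taut and $(\Omega,K_\Omega)$ is a complete, proper, geodesic metric space; since $K_\Omega$ and $d_\Omega$ are bi-Lipschitz (Frankel) and Gromov hyperbolicity of a proper geodesic space is a bi-Lipschitz invariant, $(\Omega,K_\Omega)$ is Gromov hyperbolic; and by the Schwarz--Pick property of the Kobayashi distance, the holomorphic map $f$ is $1$-Lipschitz, i.e. $K_\Omega(f(z),f(w))\le K_\Omega(z,w)$. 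Fix a base point $x_0\in\Omega$ and set $x_n:=f^n(x_0)$.

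Next I would split into cases according to whether the orbit $\{x_n\}$ is relatively compact in $\Omega$ or eventually leaves every compact subset (these are the only options since $K_\Omega$ is complete and proper). These two cases will produce the two alternatives, and they are visibly mutually exclusive: if $f(p)=p$ then $K_\Omega(x_n,p)=K_\Omega(f^n x_0,f^n p)\le K_\Omega(x_0,p)$, so a fixed point forces a bounded orbit.

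In the unbounded case, $f$ is a fixed-point-free nonexpansive self-map of the proper geodesic Gromov hyperbolic space $(\Omega,K_\Omega)$ with an unbounded orbit, so by the theorem of Karlsson--Noskov on nonexpansive maps of Gromov hyperbolic spaces there is a point $\xi\in\partial_G\Omega$ with $x_n\to\xi$ in $\overline{\Omega}^G$. For arbitrary $z\in\Omega$ the $1$-Lipschitz bound gives $K_\Omega(x_n,f^n(z))\le K_\Omega(x_0,z)$ for all $n$, so $\{f^n(z)\}$ remains within bounded distance of $\{x_n\}$; since convergence to a point of $\partial_G\Omega$ is insensitive to such bounded perturbations, $f^n(z)\to\xi$ for every $z$. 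Finally I would apply Theorem~\ref{thm:compactification}, which identifies $\overline{\Omega}^G$ with $\overline{\Omega}^{\mathrm{End}}$ by a homeomorphism restricting to the identity on $\Omega$, so that $\xi$ is realized as a point of $\partial_{\mathrm{End}}\Omega$ and $\lim_n f^n(z)=\xi$ in $\overline{\Omega}^{\mathrm{End}}$ for all $z$; this is alternative (2). In the bounded case, the $1$-Lipschitz bound shows every orbit is relatively compact, so by tautness $\{f^n\}$ is a normal family and some subsequence $f^{n_k}$ converges locally uniformly to a holomorphic map $\rho$ with relatively compact image. By the classical structure theory of limits of iterates on taut manifolds (Abate), $\rho$ is a holomorphic retraction onto a complex submanifold $Z\subseteq\Omega$ on which $f$ restricts to an automorphism; if $\dim Z=0$ we are done, and if $\dim Z\ge 1$ then the powers of $f|_Z$ generate a relatively compact (hence compact) subgroup of $\Aut(Z)$, and averaging an invariant metric together with Cartan's fixed-point theorem produces a point of $Z$ fixed by $f$. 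Either way $f$ has a fixed point, which is alternative (1).

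The hard part will be the unbounded-orbit case: the delicate point is upgrading ``the orbit accumulates somewhere on $\partial_G\Omega$'' to ``the orbit converges to a single boundary point,'' ruling out oscillation between distinct boundary points, and in particular handling the parabolic sub-case where the escape rate $\lim_n K_\Omega(x_0,x_n)/n$ vanishes. This is exactly where the fine geometry of Gromov hyperbolic spaces (thin triangles and the visibility property) enters, and it is the content of the Karlsson--Noskov analysis that I would cite rather than reprove. By contrast, the setup reductions, the case split, and the holomorphic fixed-point argument in the bounded case are routine given the quoted results.
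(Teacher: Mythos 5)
Your outline is essentially the route taken in the cited source \cite{BGZ2018b} (this paper itself offers no proof beyond pointing to Theorem~\ref{thm:compactification} and ``facts about the geometry of Gromov hyperbolic metric spaces''): pass to $K_\Omega$ via Frankel and Schwarz--Pick, split on whether the orbit is compactly divergent, invoke a Wolff--Denjoy-type theorem for nonexpansive self-maps of proper geodesic Gromov hyperbolic spaces in the divergent case, and then use Theorem~\ref{thm:compactification} to convert the Gromov boundary point into a point of $\partial_{\rm End}\Omega$. The reduction, the stability of convergence to a Gromov boundary point under the uniformly bounded perturbation $K_\Omega(f^n(z),f^n(x_0))\le K_\Omega(z,x_0)$, and the use of Theorem~\ref{thm:compactification} are all correct.

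Three soft spots. First, the dichotomy ``orbit relatively compact or eventually leaves every compact set'' is not a consequence of completeness and properness alone; a priori an orbit could accumulate both inside $\Omega$ and on the boundary. For nonexpansive maps of proper metric spaces this dichotomy is Ca{\l}ka's theorem, which you should cite (or avoid by splitting only into bounded versus unbounded orbit). Second, the convergence theorem you want in the unbounded case is not in Karlsson--Noskov (that paper concerns the Hilbert metric and is used elsewhere here for a different purpose); the relevant result is Karlsson's theorem on nonexpanding maps, and as you note the zero-escape-rate case is exactly where the content lies. Third, and most substantively: in the bounded-orbit case the final step ``average an invariant metric and apply Cartan's fixed-point theorem'' does not go through as written. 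Averaging produces a $G$-invariant Riemannian metric on the limit manifold $Z$, but Cartan's theorem needs nonpositive curvature (or some convexity of the distance function), which averaging does not supply; a compact group of automorphisms of a general complex manifold need not have a fixed point (rotations of an annulus). The correct statement here is Abate's theorem that a holomorphic self-map of a (biholomorph of a) bounded convex domain either has a fixed point or has compactly divergent iterates; its proof uses the convexity of $\Omega$ itself rather than a group-averaging argument, and you should cite it in place of the Cartan step. With those repairs the argument is complete and agrees with the source.
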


Theorem~\ref{thm:main_equivalence} provides new examples with non-smooth boundary for which these corollaries apply. 

\subsection{Outline of Paper} Throughout the paper we will consider the Kobayashi metric instead of the K\"ahler-Einstein metric. As mentioned in the introduction, Frankel~\cite{F1991} proved that the two metrics are bi-Lipschitz on any $\Cb$-properly convex domain. Hence, if one is Gromov hyperbolic, then so is the other. In the convex setting, the Kobayashi metric is slightly easier to work with because there are very precise estimates, see for instance Lemmas~\ref{lem:kob_inf_bound} and~\ref{lem:hyperplanes} below. However, for general pseudoconvex domains it is not known whether or not the Kobayashi metric is complete, so it seems reasonable to state all the results in the introduction in terms of the K\"ahler-Einstein metric. 

The paper has four main parts: 
\begin{enumerate} 
\item Sections~\ref{sec:background} through ~\ref{sec:normalizing_maps} are mostly expository and devoted to some preliminary material. 
\item Sections~\ref{sec:prior_work_outline_Gromov} through~\ref{sec:pf_of_thm_main_equivalence} are devoted to the proof of Theorem~\ref{thm:main_equivalence}. In Section~\ref{sec:prior_work_outline_Gromov} we recall some prior work and give an outline of the proof of Theorem~\ref{thm:main_equivalence}. 
\item Sections~\ref{sec:prior_outline_subelliptic} through~\ref{sec:optimal_estimate} are devoted to the proof Theorem~\ref{thm:intersection}. In Section~\ref{sec:prior_outline_subelliptic} we recall some prior work and give an outline of the proof of Theorem~\ref{thm:intersection}. 
\item In Sections~\ref{sec:intro_to_hilbert_metric} through~\ref{sec:misc_examples}, we construct a number of examples. 
\end{enumerate}

 \subsection*{Acknowledgements} I would like to thank the referee for their very careful reading of the paper and their very insightful comments. This material is based upon work supported by the National Science Foundation under grants DMS-1760233, DMS-2104381, and DMS-2105580.

\part{Preliminaries}

\section{Background material}\label{sec:background}

\subsection{Notation}

\begin{enumerate}
\item For $z \in \Cb^d$ let $\norm{z}$ be the standard Euclidean norm and $d_{\Euc}(z_1, z_2) = \norm{z_1-z_2}$ be the standard Euclidean distance. 
\item For $z_0 \in \Cb^d$ and $r > 0$ let 
\begin{align*}
\Bb_d(z_0;r) = \left\{ z \in \Cb^d : \norm{z-z_0} < r\right\}.
\end{align*}
Then let $\Bb_d  = \Bb_d(0;1)$ and $\Db = \Bb_1$. 
\item Throughout the paper we will let $\Cb^d \cup \{\infty\}$ denote the one-point compactification of $\Cb^d$. 
\item Given an open set $\Omega \subset \Cb^d$, $z \in \Omega$, and $v \in \Cb^d \setminus \{0\}$ let 
\begin{align*}
\delta_{\Omega}(z)= \inf \{ d_{\Euc}(z,w) : w\in \partial \Omega \}
\end{align*}
and 
\begin{align*}
\delta_{\Omega}(z;v)= \inf \{ d_{\Euc}(z,w) : w\in \partial \Omega \cap (z+\Cb \cdot v) \}.
\end{align*}
\end{enumerate}

\subsection{Gromov hyperbolicity}\label{sec:GH_basics}

In this subsection we recall the definition of Gromov hyperbolic metric spaces and
state some of their basic properties, additional information can be found in~\cite{BH1999} or~\cite{DSU2017}.

Given a metric space $(X,d)$ define the \emph{Gromov product} of $x,y,z \in X$ to be 
\begin{align*}
(x|y)_z = \frac{1}{2}\left( d(x,z) + d(z,y) - d(x,y) \right). 
\end{align*}

\begin{definition}\label{defn:GH} \
\begin{enumerate}
\item A metric space $(X,d)$ is \emph{$\delta$-hyperbolic} if 
\begin{align*}
(x|z)_w \geq \min\{ (x|y)_w, (y|z)_w\} - \delta
\end{align*}
for all $x,y,z,w \in X$. 
\item A metric space is called \emph{Gromov hyperbolic} if it is $\delta$-hyperbolic for some $\delta \geq 0$. 
\end{enumerate}
\end{definition}

For proper geodesic metric spaces, Gromov hyperbolicity can also be defined in terms of the shape of geodesic triangles.  

When $(X,d)$ is a metric space and $I \subset\Rb$ is an interval, a curve $\sigma: I \rightarrow X$ is a \emph{geodesic} if 
\begin{align*}
d(\sigma(t_1),\sigma(t_2)) = \abs{t_1-t_2}
\end{align*}
 for all $t_1, t_2 \in I$.  We say that $(X,d)$ is \emph{geodesic} if every two points in $X$ can be joined by a geodesic and \emph{proper} if bounded closed sets are compact. 

A \emph{geodesic triangle} in a metric space is a choice of three (not necessarily distinct) points in $X$ and geodesic segments connecting these points. A geodesic triangle is said to be \emph{$\delta$-thin} if any point on any of the sides of the triangle is within distance $\delta$ of the union of the other two sides. 

\begin{theorem}\label{thm:thin_triangle} If $(X,d)$ is a proper geodesic metric space, then $(X,d)$ is Gromov hyperbolic if and only if there exists some $\delta \geq 0$ such that every geodesic triangle is $\delta$-thin. 
\end{theorem}

\begin{proof} See for instance~\cite[Chapter III.H, Proposition 1.22]{BH1999}. \end{proof}

A proper geodesic Gromov hyperbolic metric space $(X,d)$ also has a natural boundary which can be described as follows. Two geodesic rays $\sigma_1,\sigma_2:[0,\infty) \rightarrow X$ are  \emph{asymptotic} if 
\begin{align*}
\sup_{t \geq 0} d(\sigma_1(t),\sigma_2(t)) < \infty.
\end{align*}
Then the \emph{Gromov boundary}, denoted by $\partial_G X$, is the set of equivalence classes of asymptotic geodesic rays in $X$. 

The set $\overline{X}^G = X \cup \partial_G X$ has a natural topology making it a compactification of $X$ (see for instance~\cite[Chapter III.H.3]{BH1999}). To understand this topology we introduce the following notation: given a geodesic ray $\sigma:[0,\infty) \rightarrow X$ let $[\sigma]$ denote the equivalence class of $\sigma$ and given a geodesic segment $\sigma:[0,R] \rightarrow X$ define $[\sigma]:=\sigma(R)$. Now fix a point $x_0 \in X$, then the topology on $\overline{X}^G$ can be described as follows: $\xi_n \rightarrow \xi$ if and only if for every choice of geodesics $\sigma_n$ with $\sigma_n(0)=x_0$ and $[\sigma_n]=\xi_n$ every subsequence of $(\sigma_n)_{n \geq 0}$ has a subsequence which converges locally uniformly to a geodesic $\sigma$ with $[\sigma]=\xi$. One can also show that this topology does not depend on the choice of $x_0$ (again see ~\cite[Chapter III.H.3]{BH1999}).

\begin{remark} In some special cases, for instance when $X$ is simply connected complete negatively curved Riemannian manifold, for every $\xi \in \overline{X}^G$ there exists a unique geodesic $\sigma_\xi$ with $\sigma_{\xi}(0)=x_0$ and $[\sigma_{\xi}]=\xi$. In this case, $\xi_n \rightarrow \xi$ if and only the geodesics $\sigma_{\xi_n}$ converge locally uniformly to $\sigma_{\xi}$. 
\end{remark}

Next we recall the Morse Lemma for quasi-geodesics. 

\begin{definition} Suppose $(X,d)$ is a metric space, $I \subset \Rb$ is an interval, $\alpha \geq 1$, and $\beta \geq 0$. Then a map $\sigma : I \rightarrow X$ is a \emph{$(\alpha,\beta)$-quasi-geodesic} if
\begin{align*}
\frac{1}{\alpha} \abs{t-s}-\beta \leq d(\sigma(s), \sigma(t)) \leq \alpha \abs{t-s}+\beta
\end{align*}
for all $s,t \in I$. 
\end{definition}

Quasi-geodesics in a Gromov hyperbolic metric space have the following remarkable property. 

\begin{theorem}[Morse Lemma]\label{thm:morse_lemma} For any $\delta>0$, $\alpha \geq 1$, and $\beta \geq 0$ there exists $M =M(\delta,\alpha,\beta)> 0$ with the following property: if $(X,d)$ is a proper geodesic $\delta$-hyperbolic metric space and $\sigma_1:[a_1,b_1] \rightarrow X$, $\sigma_2:[a_2,b_2] \rightarrow X$ are $(\alpha,\beta)$-quasi-geodesics with $\sigma_1(a_1)=\sigma_2(a_2)$, $\sigma_1(b_1)=\sigma_2(b_2)$, then 
\begin{align*}
\max\left\{ \max_{t \in [a_1,b_1]} d(\sigma_1(t), \sigma_2), \max_{t \in [a_2,b_2]} d(\sigma_2(t), \sigma_1) \right\} \leq M.
\end{align*}
\end{theorem}

\begin{proof} For a proof see for instance~\cite[Chapter III.H, Theorem 1.7]{BH1999}. \end{proof}

\subsection{The Kobayashi metric}

 In this expository section we recall the definition of the Kobayashi metric and then state some of its properties.  
  
 Given a domain $\Omega \subset \Cb^d$ the \emph{(infinitesimal) Kobayashi metric} is the pseudo-Finsler metric
\begin{align*}
k_{\Omega}(x;v) = \inf \left\{ \abs{\xi} : f \in \Hol(\Db, \Omega), \ f(0) = x, \ d(f)_0(\xi) = v \right\}.
\end{align*}
By a result of Royden~\cite[Proposition 3]{R1971} the Kobayashi metric is an upper semicontinuous function on $\Omega \times \Cb^d$. In particular, if $\sigma:[a,b] \rightarrow \Omega$ is an absolutely continuous curve (as a map $[a,b] \rightarrow \Cb^d$), then the function 
\begin{align*}
t \in [a,b] \rightarrow k_\Omega(\sigma(t); \sigma^\prime(t))
\end{align*}
is integrable and we can define the \emph{length of $\sigma$} to  be
\begin{align*}
\ell_\Omega(\sigma)= \int_a^b k_\Omega(\sigma(t); \sigma^\prime(t)) dt.
\end{align*}
One can then define the \emph{Kobayashi pseudo-distance} to be
\begin{multline*}
 K_\Omega(x,y) = \inf \left\{\ell_\Omega(\sigma) : \sigma\colon[a,b]
 \rightarrow \Omega \text{ is abs. cont., } \sigma(a)=x, \text{ and } \sigma(b)=y\right\}.
\end{multline*}
This definition is equivalent to the standard definition using analytic chains by a result of Venturini~\cite[Theorem 3.1]{V1989}.

When $\Omega \subset\Cb^d$ is bounded, it is easy to show that $K_\Omega$ is a non-degenerate distance on $\Omega$. For general domains determining whether or not $K_\Omega$ is non-degenerate is very difficult, but in the special case of convex domains we have the following result of Barth. 

\begin{theorem}[Barth~\cite{B1980}]\label{thm:barth}
Suppose $\Omega$ is a convex domain. Then the following are equivalent:
\begin{enumerate}
\item $\Omega$ is $\Cb$-proper, 
\item $\Omega$ is biholomorphic to a bounded domain, 
\item $K_\Omega$ is a non-degenerate distance on $\Omega$, 
\item $(\Omega, K_\Omega)$ is a proper geodesic metric space. 
\end{enumerate}
\end{theorem}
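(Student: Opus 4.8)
The plan is to prove the implications $(4)\Rightarrow(3)\Rightarrow(1)\Rightarrow(2)\Rightarrow(3)$ together with $(1)\Rightarrow(4)$, which closes up all four equivalences. Three of these are routine. The implication $(4)\Rightarrow(3)$ is immediate, since a proper geodesic metric space is in particular a metric space, so $K_\Omega$ is non-degenerate. For $(3)\Rightarrow(1)$ I would argue by contraposition: if $\Omega$ is not $\Cb$-proper it contains a complex affine line $L=a+\Cb v$; then $\zeta\mapsto a+\zeta v$ is a holomorphic map $\Cb\to\Omega$, and since $K_\Cb\equiv 0$ and the Kobayashi pseudodistance is contracted by holomorphic maps, $K_\Omega$ vanishes on $L\times L$ and hence is degenerate. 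For $(2)\Rightarrow(3)$: if $\Omega$ is biholomorphic to a bounded domain $\Omega'\subseteq\Bb_d(0;R)$, then by monotonicity under inclusions and biholomorphic invariance $K_\Omega=K_{\Omega'}\geq K_{\Bb_d(0;R)}$, which is positive off the diagonal.

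The main content is $(1)\Rightarrow(2)$, that a $\Cb$-properly convex domain is biholomorphic to a bounded domain. My plan is to produce complex affine functionals $\ell_1,\dots,\ell_d$ on $\Cb^d$ with $\Real\ell_j>0$ on $\Omega$ and with $\Cb$-linearly independent linear parts, and then use the map $\Phi(z)=\big(\mathrm{Cay}(\ell_1(z)),\dots,\mathrm{Cay}(\ell_d(z))\big)$, where $\mathrm{Cay}(w)=\tfrac{w-1}{w+1}$ maps the right half-plane biholomorphically onto $\Db$. In the coordinates $w_j=\ell_j(z)$ the map $\Phi$ is a coordinatewise Cayley transform carrying the product of right half-planes biholomorphically onto $\Db^d$, so $\Phi$ restricts to a biholomorphism of $\Omega$ onto the bounded domain $\Phi(\Omega)\subseteq\Db^d$. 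The key step is the existence of the $\ell_j$: convexity provides a supporting complex affine half-space at each boundary point, and if the linear parts of all such supporting functionals spanned only a proper subspace $W\subsetneq(\Cb^d)^*$, then every $v$ in the nonzero complex subspace $\bigcap_{\lambda\in W}\ker\lambda$ would satisfy $\Omega+v=\Omega$ — because each supporting half-space is invariant under translation by $v$ and $\Omega$ is the interior of the intersection of its supporting half-spaces — forcing $\Omega$ to contain complex lines, contradicting $\Cb$-properness.

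It remains to prove $(1)\Rightarrow(4)$. Non-degeneracy of $K_\Omega$ is already part of $(3)$, which the previous steps show is equivalent to $(1)$. For the geodesic property I would use that a $\Cb$-properly convex domain is biholomorphic to a bounded domain (by the step just proved) and hence taut, so Lempert's theorem supplies a complex geodesic $\Db\hookrightarrow\Omega$ through any two points; restricting it to a real geodesic of the disk yields a geodesic segment. The substantive part is properness: that $K_\Omega$-bounded subsets of $\Omega$ are relatively compact in $\Omega$. This splits into showing $K_\Omega(z_0,z)\to\infty$ as $z\to\partial\Omega$ and as $z\to\infty$ within $\Omega$. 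For the first, I would compare $\Omega$ with a supporting half-space $H\supseteq\Omega$ at the relevant boundary point; writing $H\cong\mathbb{H}\times\Cb^{d-1}$ one has $K_H\big((a,y),(b,y')\big)\geq K_{\mathbb{H}}(a,b)$, which blows up at $\partial H$. For the second, given $z_n\in\Omega$ with $\norm{z_n}\to\infty$ and $z_n/\norm{z_n}\to u$, I would use that the recession cone of $\Omega$ contains no complex line (which follows from $\Cb$-properness) to find a supporting real hyperplane of $\Omega$ whose complex affine defining functional $\ell$ is non-constant in the direction $u$; then $\ell(z_n)\to\infty$ inside the right half-plane $\{\Real\ell>0\}$, and comparison with that half-space again forces $K_\Omega(z_0,z_n)\to\infty$. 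I expect this last point — ruling out $K_\Omega$-bounded sequences that escape to infinity in an unbounded $\Omega$ — to be the main obstacle, since it genuinely uses $\Cb$-properness of the recession cone rather than a single supporting hyperplane, and requires a short case analysis according to whether $\pm u$ or $\pm iu$ lie in the recession cone.
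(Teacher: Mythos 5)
Your argument is correct, and it is essentially the classical proof of Barth's theorem; note that the paper itself supplies no proof of this statement but simply cites~\cite{B1980}, so there is nothing internal to compare against. Two small remarks. First, in the geodesic step, tautness alone does not give complex geodesics — Lempert's theorem (in the Royden–Wong form) requires convexity, which you of course have; alternatively, since you have already established that $K_\Omega$ is a proper length metric, the Hopf–Rinow theorem gives geodesics with no appeal to Lempert at all. Second, the "short case analysis according to whether $\pm u$ or $\pm iu$ lie in the recession cone" that you anticipate at the end is unnecessary: your spanning argument from $(1)\Rightarrow(2)$ already shows that the complex-linear parts of the supporting functionals have trivial common kernel, so for any nonzero $u$ there is a supporting functional $\ell$ with $\Real\ell>0$ on $\Omega$ whose linear part does not annihilate $u$; then $\abs{\ell(z_n)}\to\infty$ inside the closed right half-plane and the half-plane comparison finishes the properness argument directly.
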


Since every $\Cb$-properly convex domain is biholomorphic to a bounded domain, the results of Cheng-Yau~\cite{CY1980} and Mok-Yau~\cite{MY1983} imply that every such domain has a unique complete K\"ahler-Einstein metric with Ricci curvature $-1$. 

\begin{definition} When $\Omega \subset \Cb^d$ is a $\Cb$-properly convex domain, let $g_\Omega$ be the unique complete K\"ahler-Einstein metric on $\Omega$ with Ricci curvature $-1$ and let $d_\Omega$ be the associated distance. 
\end{definition}

As mentioned in Remark~\ref{rmk:main_thm}, we have the following uniform relationship between the Kobayashi and K\"ahler-Einstein metrics. 

\begin{theorem}[Frankel~\cite{F1991}] For any $d \in\Nb$, there exists $A > 1$ such that: if $\Omega \subset \Cb^d$ is a $\Cb$-properly convex domain, then
\begin{align*}
\frac{1}{A} k_\Omega(z;v) \leq \sqrt{g_\Omega(v,v)} \leq A k_\Omega(z;v)
\end{align*}
for all $z \in \Omega$ and $v \in T_z \Omega$. 
\end{theorem}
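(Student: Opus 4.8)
The plan is to exploit that $k_\Omega(z;v)$ and $\sqrt{g_\Omega(v,v)}$ are both biholomorphic invariants, so that the ratio
\[
R(\Omega,z,v) := \frac{\sqrt{g_\Omega(v,v)}}{k_\Omega(z;v)}
\]
is invariant under $\Aff(\Cb^d)$ and $0$-homogeneous in $v$; the theorem is precisely the assertion that $R$ is bounded above and below by constants depending only on $d$. First I would normalize. Translating $z$ to the origin and applying John's theorem to the Kobayashi indicatrix $I_\Omega(0) = \{w : k_\Omega(0;w) < 1\}$ --- a bounded convex circled domain, whose maximal inscribed ellipsoid is therefore also circled and hence equal to $L(\Bb_d)$ for some $\Cb$-linear $L$ --- one obtains, after a $\Cb$-linear change, the sandwich $\Bb_d \subseteq I_\Omega(0) \subseteq c_d \Bb_d$ for a dimensional constant $c_d$. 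In this normalization $c_d^{-1}\norm{w} \le k_\Omega(0;w) \le \norm{w}$ for all $w$, so it suffices to prove $\sqrt{g_\Omega(w,w)} \asymp_d \norm{w}$ for every such normalized $\Cb$-properly convex $\Omega$ and every $w$. Moreover the standard convex-domain estimates $\tfrac{1}{2}\norm{w}/\delta_\Omega(0;w) \le k_\Omega(0;w) \le \norm{w}/\delta_\Omega(0;w)$ (cf.\ the lemmas recalled later in the paper) force $1/2 \le \delta_\Omega(0;w) \le c_d$ for every unit $w$; in particular $\Bb_d(0;1/2) \subseteq \Omega$ and no complex affine line meets $\Omega$.

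Next I would argue by contradiction via a normal families argument. If the bound failed there would be normalized $\Cb$-properly convex $\Omega_n$ and unit vectors $w_n$ with $\sqrt{g_{\Omega_n}(w_n,w_n)} \to 0$ or $\to \infty$. Since each $\Omega_n$ contains $\Bb_d(0;1/2)$ while every complex line through $0$ meets $\partial\Omega_n$ within Euclidean distance $c_d$, the family $\{\Omega_n\}$ is precompact for the local Hausdorff topology and cannot converge to all of $\Cb^d$; after passing to a subsequence, $\Omega_n \to \Omega_\infty$ and $w_n \to w_\infty$ with $\norm{w_\infty} = 1$. The limit $\Omega_\infty$ is a convex domain containing $\Bb_d(0;1/2)$ with $\delta_{\Omega_\infty}(0;w) \le c_d$ for all unit $w$, and since a convex domain through the origin contains a complex line exactly when some $\delta_{\Omega_\infty}(0;w)$ is infinite (examine the recession cone), $\Omega_\infty \in \Xb_d$. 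I would then invoke two continuity statements along $\Omega_n \to \Omega_\infty$: continuity of the Kobayashi metric, $k_{\Omega_n}(0;w_n) \to k_{\Omega_\infty}(0;w_\infty)$ --- standard for convex domains, from lower semicontinuity of $k$ together with the explicit analytic disks realizing the upper bound and the fact that the limit is again $\Cb$-properly convex --- and continuity of the K\"ahler--Einstein metric, $g_{\Omega_n}(w_n,w_n) \to g_{\Omega_\infty}(w_\infty,w_\infty)$. Granting these, $R(\Omega_n,0,w_n) \to R(\Omega_\infty,0,w_\infty)$, which is finite and strictly positive because $\Omega_\infty$ is a genuine $\Cb$-properly convex domain --- contradicting divergence to $0$ or $\infty$. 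Hence $R$ is bounded by dimensional constants, and unwinding the normalization gives the theorem with $A = A(d)$.

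The main obstacle is the continuity of the K\"ahler--Einstein metric under local Hausdorff convergence of the domains, i.e.\ the stability of the complex Monge--Amp\`ere equation defining $g_\Omega$. Writing $\omega_\Omega = i\partial\bar\partial u_\Omega$ for the K\"ahler form and recalling that $u_\Omega$ solves the Monge--Amp\`ere equation of Cheng--Yau with $u_\Omega \to +\infty$ at $\partial\Omega$, one must show $u_{\Omega_n} \to u_{\Omega_\infty}$ in $C^\infty_{\mathrm{loc}}$ on $\Omega_\infty$. This rests on the interior a priori estimates of Cheng--Yau (and Mok--Yau for non-smooth boundary): since every $\Omega_n$ contains $\Bb_d(0;1/2)$ and the $\Omega_n$ exhaust $\Omega_\infty$ on compact sets, those estimates bound $g_{\Omega_n}$ and its derivatives uniformly on a fixed neighborhood of $0$, after which an Arzel\`a--Ascoli/diagonal argument produces a limit metric that uniqueness of the K\"ahler--Einstein metric identifies with $g_{\Omega_\infty}$. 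An essentially equivalent route, isolating the same analytic difficulty, would be to prove directly --- again by the compactness scheme --- that $\sqrt{g_\Omega(v,v)} \asymp_d \norm{v}/\delta_\Omega(z;v)$ for $v \in T_z\Omega$, matching the Kobayashi estimate; the two comparisons together then yield the bi-Lipschitz bound immediately.
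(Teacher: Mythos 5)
The paper gives no argument for this statement --- it is quoted from Frankel --- so I am comparing your proposal against the known proof, whose architecture (affine normalization of the pointed domain, compactness of the normalized family, uniform control of both metrics at the base point) you have essentially reproduced. The normalization and the reduction to uniform two-sided bounds on $g_{\Omega}(0)(w,w)$ for normalized domains are fine (granting, as you implicitly do, that the Kobayashi indicatrix of a convex domain is convex, e.g.\ via Lempert's theorem $k_\Omega=c_\Omega$), as is the verification that the local Hausdorff limit is again $\Cb$-properly convex.

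The genuine gap is the sentence asserting that ``since every $\Omega_n$ contains $\Bb_d(0;1/2)$ \dots those estimates bound $g_{\Omega_n}$ and its derivatives uniformly on a fixed neighborhood of $0$.'' An inscribed ball gives only a one-sided comparison: by the maximum principle for the Monge--Amp\`ere equation, $u_{\Omega_n}\le u_{\Bb_d(0;1/2)}$ on the ball, i.e.\ an \emph{upper} bound on the K\"ahler--Einstein volume form. It cannot by itself prevent $g_{\Omega_n}$ from degenerating to zero (on balls of radius $R\to\infty$ the metric at the center tends to $0$, and there is no complete K\"ahler--Einstein metric on $\Cb^d$ at all), and the complementary lower bound on $u_{\Omega_n}$ cannot come from a circumscribed ball because your normalized domains need not be uniformly bounded --- they are only ``thin'' in every \emph{complex} direction and may be unbounded in real directions. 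Without two-sided $C^0$ control of the potential, the interior $C^2$ estimates you invoke do not apply, and the Arzel\`a--Ascoli limit could be a degenerate form. The missing ingredient is the Schwarz--Yau lemma applied to maps $\Omega\rightarrow\Db$: since $(\Omega,g_\Omega)$ is complete with ${\rm Ric}=-1$ and the Poincar\'e metric has curvature bounded above by a negative constant, every $f\in\Hol(\Omega,\Db)$ is uniformly distance-decreasing, whence $g_\Omega\ge c\, c_\Omega^2=c\,k_\Omega^2$ with a universal $c>0$. In the normalization this gives $g_{\Omega_n}(0)\ge c'\norm{\cdot}^2$, which (i) supplies the lower bound $\det g_{\Omega_n}(0)\ge (c')^d$, hence the lower bound on $u_{\Omega_n}$ needed for your interior estimates, and (ii) already combines with the volume-form upper bound from the inscribed ball to give $\lambda_{\max}(g_{\Omega_n}(0))\le \det g_{\Bb_d(0;1/2)}(0)/(c')^{d-1}$, so the two-sided bound --- and hence the theorem --- follows directly, without the compactness-and-limit apparatus. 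A smaller point in the same spirit: identifying the Arzel\`a--Ascoli limit with $g_{\Omega_\infty}$ via ``uniqueness'' requires knowing the limit metric is complete, which again comes from domination of $c_{\Omega_\infty}^2$ rather than from the interior estimates.
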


We will also use the following standard estimates on the the Kobayashi distance and metric.

\begin{lemma}[Graham \cite{G1991}]\label{lem:kob_inf_bound}
Suppose $\Omega \subset \Cb^d$ is a convex domain. If $z \in \Omega$ and $v \in \Cb^d$ is non-zero, then 
\begin{align*}
\frac{\norm{v}}{2\delta_{\Omega}(z;v)} \leq k_{\Omega}(z;v) \leq \frac{\norm{v}}{\delta_{\Omega}(z;v)}.
\end{align*}
\end{lemma}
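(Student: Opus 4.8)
The estimate is one–dimensional in nature, so the first step is to reduce to the complex line $L_v := z + \Cb \cdot v$ through $z$ in the direction $v$. Note that $\Omega \cap L_v$ is a convex domain inside $L_v \cong \Cb$ containing $z$, and that $\delta_{\Omega}(z;v)$ is the Euclidean distance from $z$ to the relative boundary of $\Omega \cap L_v$ inside $L_v$; in particular, since $z \in \Omega$ and $\Omega$ is open, the Euclidean disk in $L_v$ of radius $\delta_{\Omega}(z;v)$ centered at $z$ is contained in $\Omega$ (a segment from $z$ leaving $\Omega \cap L_v$ would meet $\partial\Omega \cap L_v$). If $\partial\Omega \cap L_v = \emptyset$, then $\delta_{\Omega}(z;v) = \infty$, the line $L_v$ lies in $\Omega$, and both sides of the claimed inequality vanish by Liouville's theorem; so we may assume $\delta_{\Omega}(z;v) < \infty$.

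For the upper bound, set $r = \delta_{\Omega}(z;v)/\norm{v}$ and consider the holomorphic map $\varphi \colon \Db \to \Omega$, $\varphi(\zeta) = z + r\zeta v$, which takes values in $\Omega$ by the previous paragraph. Since $\varphi(0) = z$ and $d\varphi_0(1) = rv$, the distance–decreasing property of the Kobayashi metric gives $r\, k_{\Omega}(z;v) = k_{\Omega}(z; rv) \leq k_{\Db}(0;1) = 1$, hence $k_{\Omega}(z;v) \leq 1/r = \norm{v}/\delta_{\Omega}(z;v)$.

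For the lower bound, choose $w_0 \in \partial\Omega \cap L_v$ with $\norm{w_0 - z} = \delta_{\Omega}(z;v)$, and write $w_0 = z + \lambda v$, so $\abs{\lambda} = \delta_{\Omega}(z;v)/\norm{v}$. By convexity there is a real affine hyperplane supporting $\Omega$ at $w_0$; promoting the associated $\Rb$–linear functional to a $\Cb$–linear functional $\Lambda \colon \Cb^d \to \Cb$ with $\Real \Lambda$ equal to it, we get $\Omega \subset P := \{ \zeta \in \Cb : \Real \Lambda(\zeta) < \Real \Lambda(w_0)\}$, an open half-plane in $\Cb$. Applying the distance–decreasing property to $\Lambda|_\Omega \colon \Omega \to P$ yields $k_{\Omega}(z;v) \geq k_{P}(\Lambda(z); \Lambda(v))$. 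A direct computation (conformally mapping $P$ onto $\Db$) shows that the Kobayashi–Finsler metric of a half-plane $\{\Real \zeta < c\}$ at $\zeta$ in direction $\eta$ equals $\abs{\eta}/\big(2(c - \Real \zeta)\big)$, so
\[
k_{\Omega}(z;v) \geq \frac{\abs{\Lambda(v)}}{2\big(\Real \Lambda(w_0) - \Real \Lambda(z)\big)}.
\]
Since $\Lambda$ is linear, $\Lambda(w_0) - \Lambda(z) = \lambda\,\Lambda(v)$, whence $\Real \Lambda(w_0) - \Real \Lambda(z) \leq \abs{\lambda}\abs{\Lambda(v)}$; moreover $\Lambda(v) \neq 0$, because $z \in \Omega \subset P$ forces $\Real \Lambda(z) < \Real \Lambda(w_0)$ and thus $\lambda \Lambda(v) \neq 0$. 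Substituting and using $\abs{\lambda} = \delta_{\Omega}(z;v)/\norm{v}$ gives $k_{\Omega}(z;v) \geq \norm{v}/\big(2\delta_{\Omega}(z;v)\big)$.

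The only mildly delicate points are the explicit normalization of the Kobayashi metric on a half-plane and the passage from a real supporting hyperplane to a $\Cb$–linear functional; neither is a genuine obstacle, as both are standard facts for convex sets and for the disk.
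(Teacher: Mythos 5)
The paper does not prove this lemma itself (it cites Graham and Frankel), and your argument is correct and is essentially the standard one from those references: the upper bound via the affine disk of radius $\delta_\Omega(z;v)$ in the line $z+\Cb\cdot v$, and the lower bound via a complex-linear functional obtained from a real supporting hyperplane at the nearest boundary point, together with the explicit Kobayashi metric of a half-plane. All the details check out, including the normalization $k_{\{\Real\zeta<c\}}(\zeta;\eta)=\abs{\eta}/(2(c-\Real\zeta))$ and the estimate $\Real\Lambda(w_0)-\Real\Lambda(z)\leq\abs{\lambda}\abs{\Lambda(v)}$.
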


A proof of Lemma~\ref{lem:kob_inf_bound} can also be found in \cite[Theorem 2.2]{F1991}.

\begin{lemma}\label{lem:hyperplanes}
Suppose $\Omega \subset \Cb^d$ is a convex domain and $H \subset \Cb^d$ is a complex hyperplane such that $H \cap \Omega = \emptyset$. Then for any $z_1, z_2 \in \Omega$ we have 
\begin{align*}
K_{\Omega}(z_1, z_2) \geq \frac{1}{2}\abs{ \log \frac{ d_{\Euc}(H,z_1)}{d_{\Euc}(H,z_2)} }.
\end{align*}
\end{lemma}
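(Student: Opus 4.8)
\textbf{Proof proposal for Lemma~\ref{lem:hyperplanes}.}

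The plan is to reduce the statement to the one-dimensional computation on the upper half-plane (or the unit disk) by exhibiting an explicit holomorphic function that witnesses a bound on the Kobayashi distance. First I would observe that, after composing with a complex affine automorphism of $\Cb^d$, we may assume the complex hyperplane $H$ is $\{z_1 = 0\}$ and that $\Omega$ lies in the half-space $\{\Real(z_1) > 0\}$ (this uses that $\Omega$ is convex, so it lies on one side of $H$; the affine change of coordinates does not distort the Kobayashi distance since $K$ is a biholomorphic invariant, and it scales $d_{\Euc}(H,\cdot)$ by a fixed factor which cancels in the ratio). Under this normalization $d_{\Euc}(H,z) = \abs{\Real(z_1)}$ up to a harmless constant — actually, after a rotation in the $z_1$-line we can even arrange $d_{\Euc}(H,z) = \Real(z_1)$ exactly, but for the argument it suffices that $d_{\Euc}(H,z)$ is comparable to, or better yet equal to, $\abs{z_1}$ restricted to the relevant slice; I will just keep $d_{\Euc}(H,z)=\Real(z_1)$.

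Next I would use the contraction property of the Kobayashi distance under holomorphic maps: the coordinate projection $\pi(z) = z_1$ maps $\Omega$ holomorphically into the right half-plane $\mathbb{H}^+ = \{w : \Real(w) > 0\}$, so
\begin{align*}
K_\Omega(z_1,z_2) \geq K_{\mathbb{H}^+}(\pi(z_1),\pi(z_2)).
\end{align*}
Then I compute $K_{\mathbb{H}^+}$ explicitly. Biholomorphically $\mathbb{H}^+ \cong \Db$, and the Poincaré distance on the right half-plane in the direction transverse to the imaginary axis gives, for points $w_1,w_2$,
\begin{align*}
K_{\mathbb{H}^+}(w_1,w_2) \geq \frac{1}{2}\abs{\log \frac{\Real(w_1)}{\Real(w_2)}},
\end{align*}
which one sees by projecting further (holomorphically, $w \mapsto w$ composed with nothing, but comparing along the real positive axis): the segment of $\mathbb{H}^+$ along the positive real axis is a geodesic isometric to $\Rb$ with the logarithmic parametrization, and moving off the real axis only increases distance. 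More carefully, the function $w \mapsto \log w$ maps $\mathbb{H}^+$ into the strip $\{\abs{\Imag \zeta} < \pi/2\}$, and composing with $\zeta \mapsto \tanh(\zeta/2)$ or simply estimating $K_{\text{strip}}$ against $\tfrac12\abs{\Real\zeta_1 - \Real\zeta_2}$ gives the claimed bound; equivalently one uses the standard formula $K_{\mathbb{H}^+}(t_1, t_2) = \tfrac12\abs{\log(t_1/t_2)}$ for $t_1,t_2 > 0$ and monotonicity. Chaining these inequalities with $\Real(\pi(z_j)) = d_{\Euc}(H,z_j)$ yields the lemma.

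The main obstacle — really the only delicate point — is getting the constant $\tfrac12$ and the geometry of the normalization exactly right: one must be careful that $d_{\Euc}(H,z)$ equals (not merely is comparable to) the real part of the chosen coordinate, which forces a specific rotation in the $z_1$-plane, and one must correctly identify the Kobayashi/Poincaré distance on the half-plane so that the logarithmic term carries the factor $\tfrac12$ rather than $1$. A clean way to handle both at once is to note that $\abs{z_1} \geq \Real(z_1) = d_{\Euc}(H, z)$ need not hold after rotation, so instead I would directly use the affine functional $\ell$ vanishing on $H$ with $\abs{\ell(z)} = d_{\Euc}(H,z)$ and whose real part is positive on $\Omega$ (available since $\Omega$ is convex and disjoint from $H$), giving a holomorphic map $\ell : \Omega \to \mathbb{H}^+$; then the half-plane computation applies verbatim. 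Everything else is the standard distance-decreasing property plus an explicit one-variable estimate, so no further difficulties are expected.
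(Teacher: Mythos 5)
Your overall strategy --- push forward by an affine functional annihilating $H$ and reduce to a one-variable Poincar\'e estimate --- is the right one, and it is essentially how the reference the paper points to ([Z2017b, Lemma 4.2]; the paper itself gives no proof) proceeds. But there is a genuine gap at the last step, and it sits exactly where you flag ``the only delicate point.'' The Euclidean distance from $z$ to the complex hyperplane $H$ is the \emph{modulus} $\abs{\ell(z)}$ of the normalized affine functional $\ell$ with $\ell^{-1}(0)=H$, not its real part. Your half-plane estimate is stated, and then invoked, in the form $K_{\{\Real w>0\}}(w_1,w_2)\geq\tfrac{1}{2}\abs{\log\left(\Real w_1/\Real w_2\right)}$, and your chain closes with ``$\Real(\pi(z_j))=d_{\Euc}(H,z_j)$,'' which is false in general: no rotation $\lambda$ can arrange $\Real(\lambda\ell(z))=\abs{\ell(z)}$ for all $z\in\Omega$ simultaneously. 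Moreover the real-part inequality does not imply the modulus inequality you need: for $w_1=1$ and $w_2=1+iM$ your bound gives $0$, while $\tfrac{1}{2}\abs{\log(\abs{w_1}/\abs{w_2})}=\tfrac{1}{2}\log\sqrt{1+M^2}$ is large. Your closing ``clean fix'' (take $\ell$ with $\abs{\ell(z)}=d_{\Euc}(H,z)$ and $\Real\ell>0$ on $\Omega$, then apply ``the half-plane computation verbatim'') does not repair this, because the computation applied verbatim still yields a bound in terms of $\Real\ell(z_j)$ rather than $\abs{\ell(z_j)}$.

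The repair is short, and one of your own side remarks already contains it. What is needed is $K_{\{\Real w>0\}}(w_1,w_2)\geq\tfrac{1}{2}\abs{\log(\abs{w_1}/\abs{w_2})}$. Either integrate the infinitesimal metric, $k_{\{\Real w>0\}}(w;v)=\abs{v}/(2\Real w)\geq\abs{v}/(2\abs{w})$, so every curve $\gamma$ from $w_1$ to $w_2$ has length at least $\tfrac{1}{2}\abs{\log\abs{w_1}-\log\abs{w_2}}$; or use your logarithm-to-the-strip map and note that $\Real(\log w)=\log\abs{w}$ (not $\log\Real w$), so the strip estimate $K_S(\zeta_1,\zeta_2)\geq\tfrac{1}{2}\abs{\Real\zeta_1-\Real\zeta_2}$ delivers the modulus bound directly. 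Closest to the cited proof: $D:=\ell(\Omega)$ is a convex domain in $\Cb$ with $0\notin D$, hence $\delta_D(w)\leq\abs{w}$, and Lemma~\ref{lem:kob_inf_bound} gives $k_D(w;v)\geq\abs{v}/(2\abs{w})$; integrate and use that $\ell\colon\Omega\rightarrow D$ is distance-decreasing. (A smaller point: ``convexity puts $\Omega$ on one side of $H$'' needs the separation theorem to produce a real hyperplane containing the real-codimension-two set $H$ and missing $\Omega$; this is standard but should be said.)
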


A proof of Lemma~\ref{lem:hyperplanes} can be found in \cite[Lemma 4.2]{Z2017b}. 

\begin{lemma}\label{lem:hyperplanes_along_lines}
Suppose $\Omega \subset \Cb^d$ is a convex domain, $z_1,z_2 \in \Omega$, and $L$ is the complex affine line containing $z_1,z_2$.  Then 
\begin{align*}
K_{\Omega}(z_1, z_2) \geq \sup_{\xi \in L \setminus \Omega \cap L} \frac{1}{2} \abs{\log \frac{ \norm{z_1-\xi}}{\norm{z_2-\xi}}}.
\end{align*}
\end{lemma}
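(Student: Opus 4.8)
The plan is to reduce the statement to Lemma~\ref{lem:hyperplanes}. Fix $\xi \in L \setminus (\Omega \cap L)$ (if there is no such $\xi$, the supremum is over the empty set and there is nothing to prove); in particular $\xi \notin \Omega$. Since $\Omega$ is open and convex and $\xi \notin \Omega$, the Hahn--Banach separation theorem, applied in $\Cb^d \cong \Rb^{2d}$ together with the identification of real-linear functionals $\Rb^{2d} \to \Rb$ with real parts of $\Cb$-linear functionals $\Cb^d \to \Cb$, produces a nonzero $\Cb$-linear functional $\ell \colon \Cb^d \to \Cb$ with $\Real \ell(z - \xi) > 0$ for all $z \in \Omega$. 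Set $H := \{x \in \Cb^d : \ell(x-\xi) = 0\}$, a complex affine hyperplane with $\xi \in H$; for $z \in \Omega$ we have $\ell(z-\xi) \neq 0$, hence $z \notin H$, so $H \cap \Omega = \emptyset$.

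The key observation is that $z_1, z_2, \xi$ are collinear and $\xi \in H$, so the Euclidean distances of $z_1$ and $z_2$ to $H$ are proportional \emph{by the same constant} to $\norm{z_1 - \xi}$ and $\norm{z_2 - \xi}$. Concretely, write $L = \xi + \Cb v$ with $v$ a unit vector and $z_j = \xi + t_j v$ with $t_j \in \Cb$; here $t_j \neq 0$ since $z_j \in \Omega$ but $\xi \notin \Omega$, and $\norm{z_j-\xi} = \abs{t_j}$. Because $z_1 \in \Omega \cap L$ while $H \cap \Omega = \emptyset$ we have $L \not\subset H$, so $v$ does not lie in the linear subspace $W := H - \xi$, whence $c := d_{\Euc}(v, W) > 0$. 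Since $W$ is a linear subspace, scaling gives $d_{\Euc}(z_j, H) = d_{\Euc}(t_j v, W) = \abs{t_j}\, c = c\,\norm{z_j - \xi}$. Feeding this into Lemma~\ref{lem:hyperplanes} and cancelling the common factor $c$,
\[
K_\Omega(z_1, z_2) \;\geq\; \frac12 \left\lvert \log \frac{d_{\Euc}(H, z_1)}{d_{\Euc}(H, z_2)} \right\rvert \;=\; \frac12 \left\lvert \log \frac{\norm{z_1 - \xi}}{\norm{z_2 - \xi}} \right\rvert .
\]
Taking the supremum over all $\xi \in L \setminus (\Omega \cap L)$ completes the argument.

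There is no serious obstacle here. The two points that need care are: (i) the existence of a \emph{complex} (not merely real) affine hyperplane through $\xi$ disjoint from $\Omega$, which is where convexity of $\Omega$ is used essentially, via passing from the real separating functional to the $\Cb$-linear $\ell$; and (ii) the bookkeeping that the proportionality constant $c$ (equivalently $\abs{\ell(v)}$) is independent of $j$ and therefore cancels in the ratio $d_{\Euc}(H,z_1)/d_{\Euc}(H,z_2)$ — this is precisely the step that uses the collinearity of $z_1$, $z_2$, $\xi$ along $L$.
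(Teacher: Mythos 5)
Your proof is correct, and it is exactly the derivation the paper has in mind: the text states that Lemma~\ref{lem:hyperplanes_along_lines} ``follows easily from Lemma~\ref{lem:hyperplanes},'' and your argument supplies that reduction, with the two delicate points (existence of a complex supporting hyperplane through $\xi$ via Hahn--Banach, and the cancellation of the proportionality constant coming from collinearity) handled properly. No issues.
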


A proof of Lemma~\ref{lem:hyperplanes_along_lines} can be found in \cite[Lemma 2.6]{Z2016}, but it also follows easily from Lemma~\ref{lem:hyperplanes}.

Using Lemma~\ref{lem:kob_inf_bound} and Lemma~\ref{lem:hyperplanes} it is possible to prove the following. 

\begin{proposition}[{\cite[Theorem 3.1]{Z2016}}]\label{prop:quasi_geodesic} Suppose $\Omega \subset \Cb^d$ is a $\Cb$-properly convex domain. For any $z_0 \in \Omega$ and $R > 0$, there exist $\alpha \geq 1$ and $\beta \geq 0$ such that: if $\xi \in \partial \Omega \cap \Bb_d(0;R)$, then the curve $\sigma_\xi: [0,\infty) \rightarrow \Omega$ given by 
\begin{align*}
\sigma_\xi(t) = \xi + e^{-2t}\left( z_0-\xi \right)
\end{align*}
is an $(\alpha,\beta)$-quasi-geodesic. 
\end{proposition}

\subsection{Geometric properties of convex domains} In this section we recall some basic geometric properties of convex domains. 

First, we have the following result about the complex geometry of the boundary of a convex domain. 

\begin{proposition}\label{prop:affine_vs_holomorphic_disks} Suppose $\Omega \subset \Cb^d$ is a convex domain. Then every holomorphic map $\Db \rightarrow \partial\Omega$ is constant if and only if every complex affine map $\Db \rightarrow \partial\Omega$ is constant. 
\end{proposition}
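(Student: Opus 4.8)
The plan is to prove the two directions separately, with the nontrivial content being that a nonconstant holomorphic map into $\partial\Omega$ forces a nonconstant \emph{affine} one. The easy direction is immediate: a nonconstant complex affine map $\Db \to \partial\Omega$ is in particular a nonconstant holomorphic map $\Db \to \partial\Omega$, so if every holomorphic map into $\partial\Omega$ is constant then certainly every affine one is. Hence the real work is the converse, and I would prove its contrapositive: assuming there is a nonconstant holomorphic $f : \Db \to \partial\Omega$, I will produce a nonconstant complex affine disk in $\partial\Omega$.

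First I would set up the boundary data. Pick $z_0 \in f(\Db)$ and, after a translation, assume $z_0 = 0 \in \partial\Omega$. Choose a real supporting hyperplane for the convex set $\Omega$ at $0$: there is an $\Rb$-linear functional, which I write as $\Real(\ell)$ for a suitable $\Cb$-linear functional $\ell : \Cb^d \to \Cb$, with $\Real(\ell(z)) < 0$ for all $z \in \Omega$ and $\Real(\ell(0)) = 0$. Now consider the holomorphic function $h := \ell \circ f : \Db \to \Cb$. Since $f(\Db) \subset \partial\Omega \subset \overline{\Omega}$ and $\Real(\ell) \le 0$ on $\overline{\Omega}$, the function $\Real(h)$ is $\le 0$ on $\Db$ and equals $0$ at the preimage of $0$. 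By the maximum principle (applied to the harmonic function $\Real(h)$, which attains an interior maximum), $\Real(h) \equiv 0$, so $h$ is constant; thus $f(\Db)$ lies in the affine complex hyperplane $V := \{ z : \ell(z) = 0\}$, which is a supporting hyperplane of $\Omega$. Therefore $f(\Db) \subset V \cap \partial\Omega = V \cap \overline{\Omega}$, and $V \cap \overline\Omega$ is a compact convex set.

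Next I would pass to the affine disk. The set $K := V \cap \overline\Omega$ is a compact convex subset of the affine subspace $V$, and $f : \Db \to K$ is a nonconstant holomorphic map into it. The key point is that the convex hull of $f(\Db)$ inside $K$ is a nontrivial compact convex set $C$ of some real dimension $\ge 1$; I claim it contains a nonconstant complex affine disk. Indeed, consider the relative interior of $C$ and let $W$ be the affine span of $C$ over $\Cb$ (the smallest complex-affine subspace containing $C$) — if $W$ had complex dimension $0$ then $C$ is a point, contradicting that $f$ is nonconstant. So $\dim_\Cb W \ge 1$, and since $C \subset W \cap \overline\Omega$ is convex with nonempty interior relative to its real-affine span, one can find a point $p$ in the relative interior of $C$ and a complex line $L \subset W$ through $p$ on which $C$ contains a genuine $2$-disk; reparametrizing this disk affinely gives a nonconstant complex affine map $\Db \to C \subset \partial\Omega$. (An alternative, perhaps cleaner, route: show directly that $V \cap \overline\Omega$ having a nonconstant holomorphic disk in its boundary-relative-to-$V$ structure reduces the dimension, and induct on $d$; the base case where $V \cap \overline\Omega$ is already low-dimensional is handled by the fact that a nonconstant holomorphic disk in a convex set of real dimension $\le 2$ forces that set to contain a round $2$-disk.)

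The main obstacle I anticipate is the last step: extracting an \emph{affine} disk from a convex set that merely \emph{contains the image of} a nonconstant holomorphic disk. The image $f(\Db)$ itself need not be affine, so one cannot just restrict $f$; one must use convexity to find a flat piece. The cleanest argument is likely: let $C = \overline{\mathrm{conv}}(f(\Db)) \subset V \cap \overline\Omega$, which is compact convex and not a single point, so its relative interior is a nonempty open subset of an affine subspace $W$ of $\Cb^d$ with $\dim_\Cb W \ge 1$ (here one uses that $W$ is the $\Cb$-affine span and is genuinely a \emph{complex} subspace because $C$ lies in a supporting real hyperplane only in the direction $\Real(\ell)$, and the imaginary part is free). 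Then any small complex-affine disk centered in the relative interior of $C$ and tangent to $W$ lies in $C \subset V \cap \partial\Omega$, giving the desired nonconstant complex affine map $\Db \to \partial\Omega$. I would double-check the claim that the $\Cb$-affine span of $C$ has complex dimension $\ge 1$; if $f$ is nonconstant then $f(\Db)$ is not a point, so its real-affine span has real dimension $\ge 1$, but I need complex dimension $\ge 1$ of the $\Cb$-span — this follows because $h = \ell \circ f$ is constant for \emph{every} supporting functional, so $f(\Db)$ cannot be contained in a real hyperplane that is not a complex hyperplane unless... — this is exactly the delicate point, and the resolution is that one works inside $V \cong \Cb^{d-1}$ and reapplies the supporting-hyperplane argument there, so that eventually the span is forced to be complex; this is where I would be most careful in writing the full proof.
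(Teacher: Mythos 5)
The paper does not prove this proposition; it cites Fu--Straube \cite[Theorem 1.1]{FS1998}, so your attempt is being measured against the standard argument rather than an in-paper one. Your first half is exactly that standard argument and is correct: a supporting functional $\Real(\ell)\le 0$ at a point of $f(\Db)$ makes $\Real(\ell\circ f)$ a harmonic function with an interior maximum, so $\ell\circ f$ is constant and $f(\Db)$ lies in a complex affine hyperplane $V$ disjoint from $\Omega$, whence $f(\Db)\subset V\cap\overline{\Omega}\subset\partial\Omega$, a convex set. (Minor: $V\cap\overline\Omega$ need not be compact since $\Omega$ is not assumed bounded, but you never use compactness.)

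The genuine gap is the one you flag yourself: to extract an affine disk from $C=\mathrm{conv}(f(\Db))$ you need the \emph{real} affine span of $f(\Db)$ to be a \emph{complex} affine subspace, and your proposed resolution --- iterating the supporting-hyperplane argument inside $V$ --- does not obviously close it. Inside $V$, points of $f(\Db)$ may lie in the relative interior of $V\cap\overline\Omega$, where there is no supporting hyperplane to exploit, and the induction has no mechanism forcing the eventual span to be complex rather than totally real. The fix is to drop convexity at this step entirely and apply your "real part constant $\Rightarrow$ constant" trick to \emph{every} real-linear functional annihilating the span, not just to supporting ones. Normalize $z_0=f(\zeta_0)\in f(\Db)$ and let $W_0=\Span_{\Rb}\bigl(f(\Db)-z_0\bigr)$. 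If $\phi$ is any real-linear functional vanishing on $W_0$, write $\phi=\Real(\mu)$ with $\mu$ complex-linear; then $\Real(\mu\circ f)\equiv\phi(z_0)$ forces $\mu\circ f\equiv\mu(z_0)$ (Cauchy--Riemann), so $\mu$ itself, and hence also $\Imag(\mu)=\Real(-i\mu)$, vanishes on $W_0$. Thus the annihilator of $W_0$ is stable under $\mu\mapsto i\mu$, i.e.\ it is a complex subspace of the dual, and $W_0$ is an intersection of complex hyperplanes $\ker\mu$ --- a complex subspace of complex dimension $\ge 1$ since $f$ is nonconstant. Now $C$ has nonempty interior in $z_0+W_0$ and your final sentence (a small complex affine disk inside that interior) goes through verbatim.
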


\begin{proof} See for instance~\cite[Theorem 1.1]{FS1998}. \end{proof}

We will also use the following observation about the asymptotic geometry of a convex domain. 

\begin{observation}\label{obs:asymptotic_cone_1} Suppose $\Omega \subset \Cb^d$ is a convex domain and $v \in \Cb^d$ is non-zero. Then the following are equivalent:
\begin{enumerate}
\item there exists a sequence $z_n \in \Omega$ such that $\norm{z_n} \rightarrow \infty$ and 
\begin{align*}
\lim_{n \rightarrow \infty} \frac{z_n}{\norm{z_n}} = \frac{v}{\norm{v}},
\end{align*}
\item $z_0 + \Rb_{\geq 0} v \subset \Omega$ for some $z_0 \in \Omega$,
\item $z + \Rb_{\geq 0} v \subset \Omega$ for all $z \in \Omega$. 
\end{enumerate}
\end{observation}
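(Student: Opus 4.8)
The plan is to prove the cycle of implications $(3) \Rightarrow (2) \Rightarrow (1) \Rightarrow (3)$, using convexity at each step. The implication $(3) \Rightarrow (2)$ is immediate (take any $z_0 \in \Omega$). For $(2) \Rightarrow (1)$, suppose $z_0 + \Rb_{\geq 0} v \subset \Omega$ for some $z_0 \in \Omega$. Then the points $z_n := z_0 + nv$ lie in $\Omega$, satisfy $\norm{z_n} \to \infty$, and $z_n / \norm{z_n} = (z_0 + nv)/\norm{z_0 + nv} \to v/\norm{v}$ as $n \to \infty$, since the $z_0$ term is negligible relative to $nv$. This gives (1).

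The main work is in $(1) \Rightarrow (3)$. Assume there exist $z_n \in \Omega$ with $\norm{z_n} \to \infty$ and $z_n/\norm{z_n} \to v/\norm{v}$; fix an arbitrary $z \in \Omega$, and we must show $z + \Rb_{\geq 0} v \subset \Omega$. First I would reduce to showing $z + tv \in \overline{\Omega}$ for every $t \geq 0$: once that is known, since $\Omega$ is open and convex and $z \in \Omega$ while $z + 2tv \in \overline{\Omega}$, the point $z + tv$ lies on the open segment from an interior point to a point of the closure, hence is interior (a standard fact for convex sets: if $p \in \inte\,\Omega$ and $q \in \overline{\Omega}$ then the half-open segment $[p,q)$ lies in $\inte\,\Omega$). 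So it suffices to establish $z + tv \in \overline{\Omega}$ for all $t \geq 0$. Fix $t \geq 0$. For each large $n$ write $w_n := (1 - s_n) z + s_n z_n$ with $s_n := t\norm{v} / \norm{z_n} \in (0,1)$ for $n$ large; by convexity $w_n \in \Omega$. Then
\begin{align*}
w_n = z + s_n (z_n - z) = z + \frac{t \norm{v}}{\norm{z_n}}\big(z_n - z\big) = z + t\norm{v} \cdot \frac{z_n}{\norm{z_n}} - \frac{t\norm{v}}{\norm{z_n}} z \longrightarrow z + t\norm{v} \cdot \frac{v}{\norm{v}} = z + tv
\end{align*}
as $n \to \infty$, using $z_n/\norm{z_n} \to v/\norm{v}$ and $\norm{z_n} \to \infty$. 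Hence $z + tv$ is a limit of points of $\Omega$, so $z + tv \in \overline{\Omega}$, completing the argument.

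The only genuinely delicate point is the reduction step asserting that $[p, q) \subset \inte\,\Omega$ when $p \in \inte\,\Omega$ and $q \in \overline{\Omega}$; this is elementary convex geometry, and I would either cite it from a standard reference on convex sets or include the one-line argument (a small ball around $p$ maps, under the dilation fixing $q$, into a ball around each intermediate point which stays inside $\overline{\Omega}$ by convexity, and a slightly shrunk version lands in $\Omega$ itself). Everything else is a direct computation with the hypothesis $z_n/\norm{z_n} \to v/\norm{v}$, so I do not anticipate any serious obstacle.
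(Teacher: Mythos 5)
Your proposal is correct and follows essentially the same route as the paper: the paper also dispatches $(3)\Rightarrow(2)\Rightarrow(1)$ as immediate and proves $(1)\Rightarrow(3)$ by using the segments $[z,z_n]\subset\Omega$ to conclude $z+\Rb_{\geq 0}v\subset\overline{\Omega}$ and then invoking openness and convexity. Your write-up merely makes explicit the convex-combination computation and the standard fact that the half-open segment from an interior point to a closure point lies in the interior, both of which the paper leaves implicit.
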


\begin{proof} Clearly $(3) \Rightarrow (2) \Rightarrow (1)$. To prove $(1) \Rightarrow (3)$: suppose that $z_n \in \Omega$, $\norm{z_n} \rightarrow \infty$, and 
\begin{align*}
\lim_{n \rightarrow \infty} \frac{z_n}{\norm{z_n}} = \frac{v}{\norm{v}}.
\end{align*}
Fix some $z \in \Omega$. Then by convexity $[z,z_n] \subset \Omega$ for every $n \in \Nb$. So $z + \Rb_{\geq 0} v \subset \overline{\Omega}$. Then since $\Omega$ is open and convex, we see that $z + \Rb_{\geq 0} v \subset \Omega$.
\end{proof}

This observation motivates the following standard definition. 

\begin{definition}\label{defn:asym_cone} Suppose $\Omega \subset \Cb^d$ is a convex domain. The \emph{asymptotic cone of $\Omega$}, denoted by ${ \rm AC}(\Omega)$, is the set of  vectors $v \in \Cb^d$ such that $z + \Rb_{\geq 0} v \subset \Omega$ for some (hence all) $z \in \Omega$.
\end{definition}

As the name suggests we have the following. 

\begin{observation} Suppose $\Omega \subset \Cb^d$ is a convex domain. Then ${ \rm AC}(\Omega)$ is a convex cone based at $0$.
\end{observation}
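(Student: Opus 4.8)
The plan is to verify directly from Definition~\ref{defn:asym_cone} that $\mathrm{AC}(\Omega)$ is closed under positive scaling and under addition, and that it contains $0$; the cone property then follows, since a subset of $\Cb^d$ closed under both positive scalars and sums is automatically a convex cone based at the origin. Throughout I would use the characterization from Observation~\ref{obs:asymptotic_cone_1}, namely that $v \in \mathrm{AC}(\Omega)$ if and only if $z + \Rb_{\geq 0} v \subset \Omega$ for \emph{every} $z \in \Omega$.

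First I would handle the easy points. That $0 \in \mathrm{AC}(\Omega)$ is immediate since $z + \Rb_{\geq 0} \cdot 0 = \{z\} \subset \Omega$. For positive scaling, if $v \in \mathrm{AC}(\Omega)$ and $\lambda > 0$, then $\{z + t(\lambda v) : t \geq 0\} = \{z + s v : s \geq 0\}$, so $\lambda v \in \mathrm{AC}(\Omega)$; the case $\lambda = 0$ is covered by the previous sentence.

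The main content is closure under addition. Suppose $v_1, v_2 \in \mathrm{AC}(\Omega)$ and fix $z \in \Omega$. I want to show $z + \Rb_{\geq 0}(v_1 + v_2) \subset \Omega$, so fix $t \geq 0$. Since $v_1 \in \mathrm{AC}(\Omega)$, the point $z + 2t v_1$ lies in $\Omega$; since $v_2 \in \mathrm{AC}(\Omega)$ and $z + 2t v_1 \in \Omega$, the point $(z + 2t v_1) + 2t v_2$ lies in $\Omega$. Now
\begin{align*}
z + t(v_1+v_2) = \tfrac{1}{2} z + \tfrac{1}{2}\big( z + 2t v_1 + 2t v_2 \big),
\end{align*}
which is the midpoint of two points of $\Omega$, hence lies in $\Omega$ by convexity. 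As $t \geq 0$ was arbitrary, $v_1 + v_2 \in \mathrm{AC}(\Omega)$.

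There is no real obstacle here: the only mild subtlety is making sure one uses the "for all $z \in \Omega$'' form of the definition (supplied by Observation~\ref{obs:asymptotic_cone_1}) so that the two translations can be composed, and the convexity combination is a genuine midpoint of points already known to be in $\Omega$. Combining the three bullet points, $\mathrm{AC}(\Omega)$ is a convex cone based at $0$.
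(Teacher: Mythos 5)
Your proof is correct and is just the natural fleshing-out of the argument the paper dismisses as ``an immediate consequence of convexity'': the midpoint identity $z + t(v_1+v_2) = \tfrac{1}{2}z + \tfrac{1}{2}(z+2tv_1+2tv_2)$ is exactly the convexity step being alluded to, and you correctly invoke the ``for all $z\in\Omega$'' form of the definition from Observation~\ref{obs:asymptotic_cone_1} so that the two ray conditions can be composed. Nothing further is needed.
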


\begin{proof}
This is an immediate consequence of convexity. 
\end{proof}

Finally, we have the following connection between the asymptotic cone and the end compactification (recall, from Definition~\ref{defn:end_comp}, that $\overline{\Omega}^{\End}$ denotes the end compactification of $\overline{\Omega}$). 

\begin{observation}\label{obs:asymptotic_cone_3}
Suppose $\Omega \subset \Cb^d$ is a convex domain. Then either
\begin{enumerate}
\item $\Omega$ is bounded and $\overline{\Omega}^{\End} = \overline{\Omega}$,
\item $\overline{\Omega}^{\End} \setminus \overline{\Omega}$ is a single point, or
\item $\overline{\Omega}^{\End} \setminus \overline{\Omega}$ is two points and ${ \rm AC}(\Omega)= \Rb \cdot v$ for some non-zero $v \in \Cb^d$. 
\end{enumerate}
\end{observation}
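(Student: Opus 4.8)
The plan is to establish the trichotomy by analyzing the structure of the asymptotic cone $\mathrm{AC}(\Omega)$ together with the recession/line structure it forces on $\Omega$. First I would dispose of the bounded case: if $\Omega$ is bounded then $\overline{\Omega}$ is already compact, and since $\Omega$ is connected and locally connected at each point of $\overline{\Omega}$ (being open and convex, with $\overline\Omega$ still connected and locally connected), the end compactification $\overline{\Omega}^{\End}$ adds nothing, so $\overline{\Omega}^{\End} = \overline{\Omega}$. So assume $\Omega$ is unbounded; then by Observation~\ref{obs:asymptotic_cone_1}, $\mathrm{AC}(\Omega)$ contains a nonzero vector (take any sequence $z_n\in\Omega$ with $\norm{z_n}\to\infty$, pass to a subsequence so that $z_n/\norm{z_n}$ converges to some unit vector $v$). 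The claim then reduces to: the number of ends of $\Omega$ is $1$ unless $\mathrm{AC}(\Omega)$ is a line, in which case it is $2$.

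Next I would count ends by understanding the topology of $\overline\Omega$ at infinity, i.e. the complement of large balls. The key geometric input is convexity: for large $R$, the set $\Omega \setminus \overline{\Bb_d(0;R)}$ is, up to homotopy, controlled by the "directions of escape to infinity," which is precisely the spherical projection of $\mathrm{AC}(\Omega) \setminus \{0\}$ onto the unit sphere $S^{2d-1}$. More carefully, I would show that if $K := \mathrm{AC}(\Omega)$ is a convex cone which is not a line, then $\Omega$ outside a large ball deformation-retracts onto a connected set (the "spherical part" of $K$ is connected — a convex cone's intersection with the unit sphere is always connected), giving one end. The only way to get disconnection at infinity is if $K$ is exactly a real line $\Rb\cdot v$: then moving to infinity can happen in the $+v$ direction or the $-v$ direction, and these cannot be joined by a path staying outside a large ball inside $\Omega$ (because any such path would, by convexity and properness of the projection to the $v$-axis, have to come back near the origin). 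This gives exactly two ends. I should also rule out three or more ends: a convex set's spherical cone section being connected means you can never get more than two, and two only in the degenerate line case.

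The implementation I would choose: fix $z_0 \in \Omega$, and for $R$ large consider the connected components of $\{z\in\overline\Omega : \norm{z - z_0} > R\}$. I would prove (a) each such component contains a point at arbitrarily large distance, hence "sees infinity," and conversely (b) two points $z, w\in\Omega$ with $\norm z, \norm w$ huge lie in the same component for all large $R$ iff the segment-like path joining them through $\Omega$ can avoid $\Bb_d(z_0;R)$ — and by convexity this happens iff $z/\norm z$ and $w/\norm w$ both lie near the same connected component of $\{u\in S^{2d-1} : \Rb_{\geq 0} u \subset \mathrm{AC}(\Omega)\}$, with the extra subtlety that when $\mathrm{AC}(\Omega)$ is a line the two antipodal unit vectors are genuinely different components. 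The correspondence between "ends of $\Omega$" and "components of the unit-section of $\mathrm{AC}(\Omega)$" then yields the trichotomy since a nonzero convex cone in $\Cb^d\cong\Rb^{2d}$ has a connected unit section unless it is one-dimensional, i.e. a ray or a line; a ray gives one end, a line gives two.

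The main obstacle I expect is step (b): making rigorous the claim that the number of ends of an unbounded convex domain equals the number of connected components of the unit-sphere section of its asymptotic cone, with correct bookkeeping in the line case. The tricky direction is showing that two far-away points whose radial directions point into the \emph{same} ray of the cone actually lie in the same component of $\Omega \setminus \overline{\Bb_d(z_0;R)}$ — one must produce an explicit path in $\Omega$ staying far from $z_0$, and here I would use that $z + \Rb_{\geq 0}v \subset \Omega$ for all $z\in\Omega$ (Definition~\ref{defn:asym_cone}) to first push both points far out along $v$, then join them by a segment (convexity keeps it in $\Omega$) that stays far from $z_0$ because both endpoints and hence the whole segment are far out in the $v$-direction. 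The reverse separation in the line case uses that the orthogonal projection of $\Omega$ onto the hyperplane $v^\perp$ is bounded (otherwise $\mathrm{AC}(\Omega)$ would contain a vector off the line $\Rb v$), so any path in $\Omega$ from the $+v$ end to the $-v$ end must pass through a bounded neighborhood of the $v$-axis and thus cannot avoid a large ball. Everything else is routine convex geometry.
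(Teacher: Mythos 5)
Your proposal is correct, and it follows the same route as the paper: the paper dismisses this as an immediate consequence of Observation~\ref{obs:asymptotic_cone_1}, and your plan is simply a careful implementation of exactly that reduction, identifying the ends of $\overline{\Omega}$ with the connected components of the unit-sphere section of ${\rm AC}(\Omega)$ (connected unless the cone is a line, in which case it is two antipodal points). The details you flag as delicate --- keeping the push along $v$ and the connecting segment outside a large ball via the affine function $\langle\,\cdot\,,v\rangle$, and the bounded projection onto $v^{\perp}$ in the line case --- are handled correctly.
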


\begin{proof}
This is an immediate consequence of Observation~\ref{obs:asymptotic_cone_1}. 
\end{proof}

\section{The space of convex domains}\label{sec:space_of_convex_domains}

Following work of Frankel~\cite{F1989b, F1991}, in this section we describe some facts about the space of convex domains and the action of the affine group on this space. 

\begin{definition} Let $\Xb_d$ be the set of all non-empty $\Cb$-properly convex domains in $\Cb^d$ and let $\Xb_{d,0}$ be the set of pairs $(\Omega,z)$ where $\Omega \in \Xb_d$ and $z \in \Omega$. \end{definition}

\begin{remark} The motivation for only considering $\Cb$-properly convex domains comes from Theorem~\ref{thm:barth}.
\end{remark}

We now describe a natural topology on the sets $\Xb_d$ and $\Xb_{d,0}$. Given two non-empty compact sets $A,B \subset \Cb^d$, the \emph{Hausdorff distance} between them is
\begin{align*}
d_{H}(A,B) = \max\left\{ \max_{a \in A} \min_{b \in B} \norm{a-b}, \max_{b \in B} \min_{a \in A} \norm{b-a} \right\}.
\end{align*}
We also define 
\begin{align*}
d_{H}(A,\emptyset) = d_{H}(\emptyset,A) = \begin{cases} \infty & \text{if } A \neq \emptyset \\ 0 & \text{if } A = \emptyset \end{cases}.
\end{align*}

The Hausdorff distance is a complete metric on the set of non-empty compact subsets in $\Cb^d$. To consider general closed sets, we introduce the \emph{local Hausdorff pseudo-distances} between two non-empty closed sets $A,B \subset \Cb^d$ by defining 
\begin{align*}
d_{H}^{(R)}(A,B) = d_{H}\left(A \cap \overline{\Bb_d(0;R)},B\cap \overline{\Bb_d(0;R)}\right)
\end{align*}
for $R > 0$. Since an open convex set is determined by its closure, we can define a topology on $\Xb_{d}$ and $\Xb_{d,0}$ using these pseudo-distances:
\begin{enumerate}
\item A sequence $\Omega_n \in \Xb_d$ converges to $\Omega \in \Xb_d$ if there exists some $R_0 \geq 0$ such that $d_{H}^{(R)}(\overline{\Omega}_n,\overline{\Omega}) \rightarrow 0$ for all $R \geq R_0$, 
 \item A sequence $(\Omega_n,z_n) \in \Xb_{d,0}$ converges to $(\Omega,z) \in \Xb_{d,0}$ if $\Omega_n$ converges to $\Omega$ in $\Xb_d$ and $z_n$ converges to $z$ in $\Cb^d$. 
 \end{enumerate}
 
 We will frequently use the following basic properties of this notion of convergence. 
 
 \begin{proposition}\label{prop:haus_conv_compacts} Suppose that $\Omega_n$ converges to $\Omega$ in $\Xb_d$. 
 \begin{enumerate}
\item For any compact set $K \subset \Omega$, there exists some $N \geq 0$ such that $K \subset \Omega_n$ for all $n \geq N$. 
\item If $z_n \in \overline{\Omega}_n$ and $\lim_{n \rightarrow \infty} z_n = z$, then $z \in \overline{\Omega}$. 
\item If $z_n \in \Cb^d \setminus \Omega_n$ and $\lim_{n \rightarrow \infty} z_n = z$, then $z \in \Cb^d \setminus \Omega$. 
\end{enumerate}
\end{proposition}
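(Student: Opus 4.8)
The plan is to extract the statements directly from the definition of convergence in $\Xb_d$, using only the local Hausdorff pseudo-distances together with elementary convexity. Fix $R_0 \geq 0$ so that $d_H^{(R)}(\overline{\Omega}_n, \overline{\Omega}) \to 0$ for all $R \geq R_0$.

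For part (1), let $K \subset \Omega$ be compact. First I would choose $R \geq R_0$ large enough that $K \subset \Bb_d(0;R/2)$ and that $\Omega \cap \overline{\Bb_d(0;R)}$ is a ``genuine'' piece of $\Omega$ (i.e. has nonempty interior near $K$); concretely, let $\epsilon = \min\{ \delta_\Omega(z) : z \in K\} > 0$ by compactness, and shrink so that the $\epsilon/2$-neighborhood of $K$ still lies in $\overline{\Bb_d(0;R)}$. Since $d_H^{(R)}(\overline{\Omega}_n, \overline{\Omega}) \to 0$, for $n$ large every point of $\overline{\Omega} \cap \overline{\Bb_d(0;R)}$ is within $\epsilon/4$ of $\overline{\Omega}_n \cap \overline{\Bb_d(0;R)}$. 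The subtlety is that Hausdorff-closeness of closures does not immediately give $K \subset \Omega_n$, since points could be approximated by boundary points; here I would use convexity. Pick a point $p \in K$; for each $z \in K$, the ball $\Bb_d(z;\epsilon) \subset \Omega$, so I can find finitely many points $w_1, \dots, w_N \in \Omega$ whose convex hull $P$ contains $K$ in its interior and satisfies $P \subset \Omega$ with $P \subset \overline{\Bb_d(0;R)}$ (e.g.\ a fine polytope). For $n$ large, each $w_j$ lies within $\epsilon/4$ of a point of $\overline{\Omega}_n$, hence (after a further application with a slightly enlarged polytope, or by noting $w_j \in \Omega$ so there is room) $w_j \in \Omega_n$; then by convexity of $\Omega_n$ the polytope $P \subset \Omega_n$, and so $K \subset \inte P \subset \Omega_n$. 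The cleanest route is to first produce $w_j \in \Omega$ with $\overline{\Bb_d(w_j; 2\eta)} \subset \Omega$ for suitable $\eta$, observe these balls lie in $\overline{\Omega}_n$ once $n$ is large by Hausdorff convergence applied at radius $R$, then use that $\overline{\Omega}_n$ convex with $\Bb_d(w_j;2\eta) \subset \overline{\Omega}_n$ forces $\Bb_d(w_j;2\eta) \subset \Omega_n$ (an open ball inside a closed convex set lies in its interior), and finally take convex hulls.

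For part (2), suppose $z_n \in \overline{\Omega}_n$ with $z_n \to z$. Fix any $R \geq R_0$ with $z \in \Bb_d(0;R)$; for $n$ large $z_n \in \overline{\Bb_d(0;R)}$, so $z_n \in \overline{\Omega}_n \cap \overline{\Bb_d(0;R)}$, which is within $d_H^{(R)}(\overline{\Omega}_n,\overline{\Omega}) \to 0$ of $\overline{\Omega} \cap \overline{\Bb_d(0;R)}$; hence there are points of $\overline{\Omega}$ converging to $z$, and $\overline{\Omega}$ being closed gives $z \in \overline{\Omega}$. Part (3) is the contrapositive-flavored dual: if $z_n \in \Cb^d \setminus \Omega_n$ and $z_n \to z$, suppose for contradiction $z \in \Omega$; then a small closed ball $\overline{\Bb_d(z;2r)} \subset \Omega$, and applying part (1) to this ball gives $\overline{\Bb_d(z;2r)} \subset \Omega_n$ for $n$ large, so $z_n \in \Omega_n$ for $n$ large once $\norm{z_n - z} < 2r$ — contradiction. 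Thus $z \in \Cb^d \setminus \Omega$.

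The main obstacle is the one genuinely non-formal point in part (1): upgrading ``$\overline{\Omega}_n \to \overline{\Omega}$ in local Hausdorff distance'' to ``$K \subset \Omega_n$'' rather than merely ``$K \subset \overline{\Omega}_n$.'' This is exactly where convexity is essential — without it the claim is false — and the key lemma is that a closed convex set containing an open ball contains that ball in its topological interior, so that Hausdorff-approximating finitely many interior points of $\Omega$ by points of $\overline{\Omega}_n$ (with a little room to spare) and then taking convex hulls lands inside $\Omega_n$ itself. Once this is set up, parts (2) and (3) are short, with (3) reducing to (1).
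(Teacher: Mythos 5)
Your parts (2) and (3) are fine and match the paper, which simply declares them immediate from the definition; deducing (3) from (1) is a clean way to make that precise. For part (1) the paper gives no argument at all — it cites \cite[Lemma 4.4]{Z2016} — so your self-contained convexity argument is a genuinely different route, and its architecture is right: cover $K$ by finitely many points $w_j$ with $\overline{\Bb_d(w_j;2\eta)}\subset\Omega$, get a small ball around each $w_j$ into $\Omega_n$ for large $n$, and finish with convex hulls together with the fact that an open ball contained in the closure of an open convex set lies in the set itself.

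One step, however, is asserted in a form that is not literally true, and it is exactly the crux you yourself identify as ``the main obstacle.'' You write that the balls $\overline{\Bb_d(w_j;2\eta)}$ ``lie in $\overline{\Omega}_n$ once $n$ is large by Hausdorff convergence applied at radius $R$.'' Hausdorff convergence only gives that every point of $\overline{\Bb_d(w_j;2\eta)}$ is within $\epsilon_n:=d_H^{(R)}(\overline{\Omega}_n,\overline{\Omega})$ of $\overline{\Omega}_n\cap\overline{\Bb_d(0;R)}$; it does not give containment, and for non-convex approximating sets containment can genuinely fail (a ball with a thin slit removed Hausdorff-approximates the ball without containing any fixed smaller ball). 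To close this you must use the convexity of $\overline{\Omega}_n$ — not only of $\Omega$ — via a separation argument: if $C$ is convex and every point of $\overline{\Bb_d(w;\rho)}$ lies within $\epsilon<\rho$ of $C$, then $\Bb_d(w;\rho-\epsilon)\subset\overline{C}$. Indeed, a point $y\in\Bb_d(w;\rho-\epsilon)\setminus\overline{C}$ admits a separating real-linear functional $f$ of norm one with $f(y)>c\geq\sup_C f$, and translating $y$ by $(\rho-\norm{y-w})u$ in the direction $u$ of increasing $f$ yields a point of $\overline{\Bb_d(w;\rho)}$ whose distance to $C$ exceeds $\rho-\norm{y-w}>\epsilon$, a contradiction. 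With this lemma, $\epsilon_n<\eta$ gives $\Bb_d(w_j;\eta)\subset\overline{\Omega}_n$, your interior lemma upgrades this to $\Bb_d(w_j;\eta)\subset\Omega_n$, and the convex-hull step finishes. So the gap is fillable and the plan is sound, but as written the key quantitative input is missing at the one place it is needed.
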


\begin{proof} A proof Part (1) can be found in~\cite[Lemma 4.4]{Z2016}. Parts (2) and (3) follow immediately from the definition.
\end{proof}
 
 The Kobayashi distance also behaves as one would hope under this notion of convergence. 
 
 \begin{proposition}\label{prop:convergence_of_kob} Suppose that a sequence $\Omega_n$ converges to $\Omega$ in $\Xb_d$. Then 
 \begin{align*}
 \lim_{n \rightarrow \infty} K_{\Omega_n} = K_\Omega
 \end{align*}
 and the convergence is uniform on compact subsets of $\Omega \times \Omega$.
 \end{proposition}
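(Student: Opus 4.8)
\textbf{Proof proposal for Proposition~\ref{prop:convergence_of_kob}.}

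The plan is to prove the two inequalities
\[
\limsup_{n \to \infty} K_{\Omega_n}(z,w) \leq K_\Omega(z,w)
\quad\text{and}\quad
K_\Omega(z,w) \leq \liminf_{n \to \infty} K_{\Omega_n}(z,w)
\]
locally uniformly on $\Omega \times \Omega$, and then upgrade pointwise convergence to uniform convergence on compacta using monotonicity/equicontinuity. For the first (upper bound on $K_{\Omega_n}$), I would fix $z,w \in \Omega$ and an analytic chain nearly realizing $K_\Omega(z,w)$, i.e. a finite sequence of holomorphic disks $f_1,\dots,f_k \colon \Db \to \Omega$ whose endpoints chain $z$ to $w$ with total hyperbolic length of the connecting parameters within $\varepsilon$ of $K_\Omega(z,w)$. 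Each disk $f_j$ has image a compact subset of $\Omega$; by Proposition~\ref{prop:haus_conv_compacts}(1), for $n$ large this image lies in $\Omega_n$, so the same chain is admissible for $\Omega_n$ and witnesses $K_{\Omega_n}(z,w) \leq K_\Omega(z,w) + \varepsilon$. (Strictly, the chain's endpoints depend on $z,w$; to get local uniformity one runs this argument for the finitely many endpoints of a fixed chain joining a fixed basepoint to a net of points, or invokes the triangle inequality together with the continuity of $K_\Omega$.)

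For the second (lower bound), I would use a normal families argument. Suppose $K_{\Omega_n}(z_n, w_n) \to L$ with $z_n \to z$, $w_n \to w$ in $\Omega$. Pick analytic chains in $\Omega_n$ of total length at most $K_{\Omega_n}(z_n,w_n) + 1/n$; the key point is that the number of disks in a near-optimal chain can be bounded (this is part of Venturini's~\cite{V1989} result relating the chain definition to the integrated-length definition, already invoked in the excerpt), and the disks can be taken to map into $\Omega_n$ with controlled images because $\Omega_n \to \Omega$ and $\Omega$ is bounded, hence the $\Omega_n$ are uniformly bounded for large $n$ (a $\Cb$-properly convex domain converging to a bounded domain has uniformly bounded intersection with any fixed large ball, and the relevant disks stay in a fixed ball). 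Uniform boundedness gives a normal family, so after passing to a subsequence each disk $f_j^{(n)}$ converges locally uniformly to a holomorphic disk $f_j \colon \Db \to \Cb^d$; by Proposition~\ref{prop:haus_conv_compacts}(2) the image lies in $\overline{\Omega}$, and since a non-constant such limit disk with image in $\overline{\Omega}$ that is not entirely in $\Omega$ would give a non-constant holomorphic disk in $\partial\Omega$ — impossible? — one must be slightly careful: instead use that $\Omega$ is $\Cb$-properly convex and argue the limit chain stays in $\Omega$, or shrink the disks slightly. The limiting chain then joins $z$ to $w$ inside $\Omega$ with total length $\leq L$, giving $K_\Omega(z,w) \leq L$.

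Finally, to pass from pointwise to locally uniform convergence, I would note that all the distances $K_{\Omega_n}$ (for $n$ large, once a fixed compact $K \subset \Omega$ is contained in all $\Omega_n$) are uniformly Lipschitz in each variable with respect to a fixed background metric on a neighborhood of $K$: indeed the upper bound argument gives $K_{\Omega_n}(p,q) \leq C\,d_{\Euc}(p,q)$ for $p,q$ in a slightly smaller compact, with $C$ depending only on $K$ and $\Omega$ (via Lemma~\ref{lem:kob_inf_bound} applied on $\Omega$, plus containment $K \subset \Omega_n$). Equicontinuity plus pointwise convergence on a dense set yields uniform convergence on $K \times K$ by Arzel\`a--Ascoli.

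The main obstacle I anticipate is the lower bound: controlling the near-optimal analytic chains in $\Omega_n$ so that they pass to a limiting chain \emph{inside} $\Omega$ rather than escaping to the boundary. The containment $\overline{\Omega}$ for limit disks (Proposition~\ref{prop:haus_conv_compacts}(2)) is easy, but ensuring the limit chain remains in the open set $\Omega$ — equivalently ruling out that length is ``lost to the boundary'' in the limit — requires either a uniform bound on the number of disks in an almost-optimal chain (so no degeneration into infinitely many tiny disks can occur) together with openness of $\Omega$, or an \`a priori interior estimate; Venturini's theorem and the convexity estimates of Lemma~\ref{lem:kob_inf_bound}--\ref{lem:hyperplanes} are the natural tools to make this rigorous.
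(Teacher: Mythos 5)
The paper does not actually prove this proposition; it cites \cite[Theorem 4.1]{Z2016}. Your outline has the right general shape for \emph{bounded} domains, but there is a genuine gap: the proposition must hold for all domains in $\Xb_d$, and these are typically unbounded (the paper applies the proposition constantly to rescaled sequences $A_n\Omega$ converging to unbounded limits in $\Kb_d(r)$). Your lower-bound argument rests on ``$\Omega$ is bounded, hence the $\Omega_n$ are uniformly bounded for large $n$'' in order to extract a normal family, and this simply fails in the required generality. To rescue normality you would need a substitute for uniform boundedness, e.g.\ the hyperplane estimate of Lemma~\ref{lem:hyperplanes}: a $\Cb$-properly convex $\Omega$ admits $d$ disjoint supporting complex hyperplanes in general position, and for large $n$ the $\Omega_n$ admit nearby ones, so the Kobayashi balls $\{z : K_{\Omega_n}(z_0,z)\le R\}$ lie in a fixed Euclidean ball; combined with the distance-decreasing property this bounds the competing disks. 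None of this is in your proposal, and without it the lower bound is not established in the cases the paper actually needs.

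Two smaller but real problems. First, Venturini's theorem identifies the chain definition with the integrated-length definition; it does \emph{not} bound the number of disks in a near-optimal chain, so the step you attribute to it is unsupported. The correct tool is convexity: on a convex domain the one-disk (Lempert) infimum already equals $K_\Omega$, so a single disk suffices and the chain-degeneration issue disappears. Second, the obstacle you flag at the end (ensuring the limit disk stays in $\Omega$ rather than escaping into $\partial\Omega$) is left unresolved, but convexity settles it: if $f:\Db\rightarrow\overline{\Omega}$ is a holomorphic limit with $f(0)=z\in\Omega$, then for every real-affine functional $\ell$ with $\ell<c$ on $\Omega$ the harmonic function $\ell\circ f\le c$ is $<c$ at $0$, hence $<c$ everywhere by the maximum principle, so $f(\Db)\subset\Omega$. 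Finally, in the upper bound the image $f_j(\Db)$ of a whole disk need not be relatively compact in $\Omega$; you must precompose with $\lambda\mapsto r\lambda$, $r<1$, before invoking Proposition~\ref{prop:haus_conv_compacts}. With these repairs your strategy works, but as written it does not prove the statement.
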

 
\begin{proof} See for instance~\cite[Theorem 4.1]{Z2016}. \end{proof}

We will frequently use the following observation. 

\begin{observation}\label{obs:AA_for_geod}  Suppose $\Omega_n$ converges to $\Omega$ in $\Xb_d$ and $\sigma_n :[0,T_n] \rightarrow \Omega_n$ is a sequence of geodesics where 
\begin{align*}
\lim_{n \rightarrow\infty} \sigma_n(0) = z_0 \in \Omega
\end{align*}
and $T = \lim_{n \rightarrow \infty} T_n \in [0,\infty]$. Then there exists a subsequence $\sigma_{n_j}$ which converges locally uniformly to a geodesic $\sigma : [0,T]\cap \Rb \rightarrow \Omega$. In particular, if $T < \infty$, then 
$$
\lim_{n \rightarrow \infty} \sigma_{n_j}(T_{n_j}) = \sigma(T) \in \Omega. 
$$ 
\end{observation} 

\begin{proof} Fix $ R>0$ and let $B = \{ z \in \Omega : K_\Omega(z,z_0)\leq R+1\}$. Then $B$ is compact and so Proposition~\ref{prop:haus_conv_compacts} implies that $B \subset \Omega_n$ for $n$ sufficiently large. Further, Proposition~\ref{prop:convergence_of_kob} implies that $K_{\Omega_n}(\sigma_n(0), \partial B) > R$ for $n$ sufficiently large. So 
$$
\sigma_n( [0,T_n] \cap [0,R]) \subset B
$$ 
for $n$ sufficiently large. Then Proposition~\ref{prop:convergence_of_kob} and the Arzel\`a-Ascoli theorem imply that $\sigma_n|_{[0,T_n] \cap [0,R]}$ has a convergent subsequence and the limit is a geodesic in $\Omega$. 

Since $R>0$ was arbitrary, there exists a subsequence $\sigma_{n_j}$ which converges locally uniformly to a geodesic $\sigma : [0,T] \cap \Rb \rightarrow \Omega$.
\end{proof}

Next let $\Aff(\Cb^d)$ be the group of complex affine isomorphisms of $\Cb^d$. Then $\Aff(\Cb^d)$ acts on $\Xb_d$ and $\Xb_{d,0}$. Remarkably, the action of $\Aff(\Cb^d)$ on $\Xb_{d,0}$ is co-compact.

\begin{theorem}[Frankel \cite{F1991}]\label{thm:frankel_compactness}
The group $\Aff(\Cb^d)$ acts co-compactly on $\Xb_{d,0}$, that is there exists a compact set $K \subset \Xb_{d,0}$ such that $\Aff(\Cb^d) \cdot K = \Xb_{d,0}$. 
\end{theorem}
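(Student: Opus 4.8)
The plan is to produce, for each pair $(\Omega, z) \in \Xb_{d,0}$, an affine map $A \in \Aff(\Cb^d)$ so that $A \cdot (\Omega, z) = (A\Omega, Az)$ lies in a fixed compact set $K \subset \Xb_{d,0}$, and then show $K$ is compact. The natural normalization is this: first translate so that the marked point goes to the origin, i.e.\ replace $\Omega$ by $\Omega - z$, so we may assume $(\Omega, 0) \in \Xb_{d,0}$. Now I want to use a linear map to make $\Omega$ ``round'' at scale $1$ around $0$. The classical device is the \emph{Kobayashi indicatrix} (or equivalently the unit ball of the infinitesimal Kobayashi norm $k_\Omega(0;\cdot)$): since $\Omega$ is $\Cb$-properly convex, $k_\Omega(0;\cdot)$ is a genuine norm on $\Cb^d$ (this uses Theorem~\ref{thm:barth} together with the openness/boundedness), and its unit ball $I_\Omega := \{ v : k_\Omega(0;v) < 1\}$ is a bounded balanced convex open set. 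Apply a linear map $T \in \GL(\Cb^d)$ carrying $I_\Omega$ to a set squeezed between $\Bb_d(0;1)$ and $\Bb_d(0;r_d)$ for a dimensional constant $r_d$ — e.g.\ by John's ellipsoid theorem applied to the convex body $I_\Omega$, one can arrange $\Bb_d(0;1) \subset T I_\Omega \subset \Bb_d(0; \sqrt{2d})$ (the factor can be tuned; any fixed bound works). Replacing $\Omega$ by $T\Omega$, and using that the Kobayashi norm transforms covariantly under biholomorphisms ($k_{T\Omega}(0; Tv) = k_\Omega(0;v)$), we have normalized $(\Omega, 0)$ so that its indicatrix is pinched between two fixed Euclidean balls.

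Next I would translate these indicatrix bounds into bounds directly on $\Omega$ using Lemma~\ref{lem:kob_inf_bound}, which says $\frac{\norm v}{2\delta_\Omega(0;v)} \le k_\Omega(0;v) \le \frac{\norm v}{\delta_\Omega(0;v)}$ for a convex domain. From $k_\Omega(0; v) \ge \norm v / \delta_\Omega(0;v)^{-1}\cdot\tfrac12$... more precisely: the containment $\Bb_d(0;1)\subset I_\Omega$ gives $k_\Omega(0;v)<1$ whenever $\norm v<1$, hence $\delta_\Omega(0;v) \ge \tfrac12$ for all unit $v$ — wait, that direction needs the lower bound $k_\Omega \ge \tfrac{\norm v}{2\delta_\Omega(0;v)}$, which yields $\delta_\Omega(0;v)\ge \tfrac12 \norm v / k_\Omega(0;v)$, so on the indicatrix boundary $\delta_\Omega(0;v) \ge \tfrac12\norm v$; combined with $I_\Omega \subset \Bb_d(0;\sqrt{2d})$ one gets a lower bound on $\delta_\Omega$ in every direction, i.e.\ $\Omega$ contains a fixed ball $\Bb_d(0;c_d)$. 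Symmetrically, $I_\Omega \supset \Bb_d(0;1)$ together with the upper bound $k_\Omega \le \norm v/\delta_\Omega(0;v)$ forces $\delta_\Omega(0;v)$ to be bounded \emph{above} in every direction, so $\Omega \subset \Bb_d(0;C_d)$ for a fixed $C_d$. Thus after our affine normalization every $(\Omega,0)$ satisfies $\Bb_d(0;c_d) \subset \Omega \subset \Bb_d(0;C_d)$.

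Finally I would let $K$ be the set of all $(\Omega, 0) \in \Xb_{d,0}$ with $\Bb_d(0;c_d) \subset \Omega \subset \Bb_d(0; C_d)$, which by the previous paragraph contains a point of every $\Aff(\Cb^d)$-orbit, and check $K$ is compact in the local Hausdorff topology. Compactness is a Blaschke-selection argument: any sequence $\Omega_n$ with uniformly bounded closures $\overline{\Omega_n} \subset \overline{\Bb_d(0;C_d)}$ has a subsequence along which $\overline{\Omega_n}$ converges in the Hausdorff metric on compact subsets of $\overline{\Bb_d(0;C_d)}$ to a compact convex set $\overline\Omega$ with $\Bb_d(0;c_d)\subset \Omega$, so $\Omega$ is a bounded convex domain; boundedness forces $\Cb$-properness, so $\Omega \in \Xb_d$ and $(\Omega,0)\in K$. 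One should also note the local Hausdorff convergence for $R \ge C_d$ is just the ordinary Hausdorff convergence here. I expect the main obstacle to be the careful bookkeeping in the step that turns indicatrix pinching into the two-sided Euclidean sandwich for $\Omega$ — in particular making sure the inequalities from Lemma~\ref{lem:kob_inf_bound} are applied in the right direction and that the directional distances $\delta_\Omega(0;v)$ (over complex lines through $0$) genuinely control the Euclidean shape of the convex body $\Omega$; this is where convexity is essential (the boundary in each complex direction is met at a controlled radius, and convexity propagates this to a ball containment). Everything else is either standard (John's theorem, Blaschke selection) or quoted from the excerpt.
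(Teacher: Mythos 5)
There is a genuine gap, and it is fatal to the strategy rather than a bookkeeping issue. Your fundamental set $K$ consists of domains with $\Omega \subset \Bb_d(0;C_d)$, i.e.\ \emph{bounded} domains with uniformly bounded diameter. But $\Xb_{d,0}$ contains unbounded domains — e.g.\ the strip $\{z \in \Cb : \abs{{\rm Im}(z)} < 1\}$ when $d=1$, or a tube $C + i\Rb^d$ over a bounded convex base when $d \geq 2$ — and an affine image of an unbounded set is unbounded, so no such $K$ can meet every $\Aff(\Cb^d)$-orbit. The specific step that fails is the inference from ``$\delta_\Omega(0;v)$ is bounded above in every direction'' to ``$\Omega \subset \Bb_d(0;C_d)$'': the quantity $\delta_\Omega(0;v)$ records only the \emph{nearest} boundary point on the complex line $\Cb \cdot v$, and says nothing about the diameter of $\Omega \cap \Cb\cdot v$. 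The strip illustrates this: its nearest boundary point to $0$ is at distance $1$ and its indicatrix at $0$ is pinched between two fixed disks, yet it is unbounded. (Your lower containment $\Bb_d(0;c_d)\subset\Omega$ is fine, as is the Blaschke-selection compactness of $K$; also note the containments in your ``symmetrically'' sentence are swapped — an upper bound on $\delta_\Omega(0;v)$ requires $I_\Omega \subset \Bb_d(0;R)$ together with the \emph{upper} Graham estimate — and the convexity of the Kobayashi indicatrix, needed for John's theorem, itself requires justification, e.g.\ Lempert's identity of the Kobayashi and Carath\'eodory metrics on convex domains.)

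The way out, which is what the paper does via Theorem~\ref{thm:normalizing}, is to accept that the fundamental compact set must contain unbounded domains: compactness in the local Hausdorff topology does not require uniform boundedness. The set $\Kb_d(1)$ imposes (i) a fixed inner polydisk-type region (preventing collapse, which is the analogue of your lower containment) and (ii) a flag of complex affine subspaces $e_j + \Span_{\Cb}\{e_{j+1},\dots,e_d\}$ disjoint from $\Omega$, obtained by iteratively choosing nearest boundary points in nested supporting hyperplanes and rescaling by a diagonal matrix. Condition (ii) replaces your outer ball: it is preserved under local Hausdorff limits and forces any limit domain to contain no complex affine line (the argument at the end of Proposition~\ref{prop:Kdr_compact}), which is exactly what is needed for the limit to stay in $\Xb_d$. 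If you want to repair your proof you must replace the outer containment $\Omega \subset \Bb_d(0;C_d)$ by a normalization of this kind that controls the domain ``at infinity'' without bounding it.
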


Suppose $\Omega \subset \Cb^d$ is a $\Cb$-properly convex domain and $z_n \in \Omega$ is a sequence. Then Theorem~\ref{thm:frankel_compactness} implies that there exists a sequence of affine maps $A_n \in \Aff(\Cb^d)$ such that 
\begin{align*}
\left\{ A_n(\Omega,z_n) : n \in \Nb\right\}
\end{align*}
 is relatively compact in $\Xb_{d,0}$. So there exist $n_j \rightarrow \infty$ such that $A_{n_j}(\Omega,z_{n_j})$ converges to some $(U,u)$ in $\Xb_{d,0}$. The next result shows that the domain $U$ only depends on the choice of $z_{n_j}$. 

\begin{proposition}\label{prop:limit_domain_depends_on_sequence}
Suppose $(\Omega_n,z_n) \in \Xb_{d,0}$, $A_n \in \Aff(\Cb^d)$, and $B_n \in \Aff(\Cb^d)$ are such that 
\begin{align*}
\lim_{n \rightarrow \infty} A_n(\Omega_n, z_n) =\left(U_1, u_1\right) \text{ and } \lim_{n \rightarrow \infty} B_n(\Omega_n, z_n) =\left(U_2, u_2\right)
\end{align*}
in $\Xb_{d,0}$. Then there exist $n_j \rightarrow \infty$ such that 
\begin{align*}
B_{n_j} A_{n_j}^{-1} 
\end{align*}
converges to some  $T \in \Aff(\Cb^d)$ and 
\begin{align*}
T \left(U_1, u_1\right) = \left(U_2, u_2\right).
\end{align*}
\end{proposition}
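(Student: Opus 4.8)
The plan is to extract the transformation $T$ as a subsequential limit of $B_n A_n^{-1}$ and then check it does what we want. First I would set $C_n := B_n A_n^{-1} \in \Aff(\Cb^d)$. Applying $C_n$ to the first convergence relation, we have $C_n A_n(\Omega_n, z_n) = B_n(\Omega_n, z_n) \to (U_2, u_2)$, while $A_n(\Omega_n, z_n) \to (U_1, u_1)$. So the point is to show that a sequence of affine maps which sends a convergent sequence in $\Xb_{d,0}$ to another convergent sequence must itself subconverge (along some $n_j$) to an affine map. This is a normal-families / equicontinuity argument: writing $C_n(w) = M_n w + b_n$ with $M_n \in \GL(\Cb^d)$ and $b_n \in \Cb^d$, I would argue that $(M_n)$ and $(b_n)$ are bounded and that $(M_n^{-1})$ is bounded, so that a subsequence converges to an invertible $M$ and a vector $b$, giving $T(w) = Mw + b \in \Aff(\Cb^d)$.

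To get these bounds I would use that the Kobayashi distance is an affine invariant — $K_{C_n \Omega_n}(C_n p, C_n q) = K_{\Omega_n}(p,q)$ — together with Proposition~\ref{prop:convergence_of_kob}. Pick a small closed ball $\overline{\Bb_d(u_1; r)} \subset U_1$; by Proposition~\ref{prop:haus_conv_compacts}(1) it lies in $A_n(\Omega_n)$ for large $n$, and by Proposition~\ref{prop:convergence_of_kob} the $K_{A_n\Omega_n}$-diameter of this ball stays bounded, say by $R_1$. Its image under $C_n$ lies in $B_n(\Omega_n)$, and $C_n(u_1)$ stays near $u_2$ (since $C_n(u_1) = C_n A_n(z_n) = B_n(z_n) \to u_2$). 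Now $B_n\Omega_n \to U_2$, so for large $n$ the set $B_n \Omega_n$ is, on a fixed large Euclidean ball, Hausdorff-close to $U_2$; since $U_2$ is $\Cb$-properly convex, Lemma~\ref{lem:kob_inf_bound} (or Lemma~\ref{lem:hyperplanes}) gives a definite lower bound on $K_{B_n\Omega_n}$-distances in terms of Euclidean distances for points in a neighborhood of $u_2$, uniformly in $n$. Combining: the $K$-bound $R_1$ on the image forces the Euclidean image $C_n(\overline{\Bb_d(u_1;r)})$ to stay in a fixed compact set and, more importantly, to not collapse — its Euclidean diameter is bounded above and below. Applying the same reasoning to $C_n^{-1} = A_n B_n^{-1}$ (using that a small ball around $u_2$ sits inside $B_n\Omega_n$ and maps into $A_n\Omega_n$ with bounded $K$-distances) controls $M_n^{-1}$. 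These two-sided bounds on $\{M_n\}$ and the bound on $\{b_n\}$ (from $C_n(u_1) \to u_2$) give, along a subsequence $n_j$, $M_{n_j} \to M$, $b_{n_j}\to b$, and $M$ invertible, so $C_{n_j} \to T$ locally uniformly on $\Cb^d$.

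Finally I would verify $T(U_1, u_1) = (U_2, u_2)$. The point statement $T(u_1) = u_2$ is immediate: $T(u_1) = \lim C_{n_j}(u_1) = \lim B_{n_j}(z_{n_j}) = u_2$. For the domain statement, $C_{n_j}$ converges locally uniformly to the homeomorphism $T$ of $\Cb^d$, $A_{n_j}\Omega_{n_j} \to U_1$, and $B_{n_j}\Omega_{n_j} = C_{n_j}(A_{n_j}\Omega_{n_j}) \to U_2$ in $\Xb_d$; a short Hausdorff-convergence argument (using Proposition~\ref{prop:haus_conv_compacts}(1)--(3) applied to interior points and exterior points and the local uniform convergence $C_{n_j} \to T$, $C_{n_j}^{-1}\to T^{-1}$) shows that the image of the limit is the limit of the images, i.e. $T(U_1) = U_2$. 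The main obstacle is the middle paragraph: getting the \emph{uniform} (in $n$) non-degeneracy lower bound that prevents $M_n$ from collapsing a direction — this is where $\Cb$-proper convexity of the limit domains $U_1, U_2$ is essential, entering through the Kobayashi estimates of Lemmas~\ref{lem:kob_inf_bound} and~\ref{lem:hyperplanes}, since without it the Kobayashi distance could degenerate and the whole compactness scheme would fail.
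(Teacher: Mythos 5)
Your proposal is correct and rests on the same engine as the paper's proof --- the affine invariance of the Kobayashi distance plus Proposition~\ref{prop:convergence_of_kob} --- but the compactness and invertibility steps are implemented differently. The paper treats $T_n = B_nA_n^{-1}$ directly as Kobayashi isometries $(A_n\Omega_n, K_{A_n\Omega_n}) \to (B_n\Omega_n, K_{B_n\Omega_n})$, extracts a locally uniform limit by Arzel\`a--Ascoli, and then gets everything at once from the observation that the limit $T$ is simultaneously affine (as a limit of affine maps) and an isometry $(U_1,K_{U_1}) \to (U_2,K_{U_2})$: an isometry is a bijection $U_1 \to U_2$, injectivity on the open set $U_1$ forces the linear part to be invertible (so $T \in \Aff(\Cb^d)$), and surjectivity gives $T(U_1)=U_2$ with no separate Hausdorff-convergence argument. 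Your route of decomposing $C_n(w)=M_nw+b_n$ and proving two-sided bounds on $M_n$, $M_n^{-1}$, $b_n$ is more hands-on and needs exactly the uniform comparability of $K_{B_n\Omega_n}$ with the Euclidean metric near $u_2$ that you identify as the main obstacle; the paper's Arzel\`a--Ascoli step quietly needs the same input, so neither argument avoids it, but the paper's isometry trick spares you the explicit lower bound on $M_n^{-1}$ and the final image computation. One small slip to repair: you write $C_n(u_1)=C_nA_n(z_n)=B_n(z_n)$, but $A_nz_n$ converges to $u_1$ without being equal to it, so the correct chain is $C_n(A_nz_n)=B_nz_n \to u_2$ together with $C_n(u_1)-C_n(A_nz_n)=M_n(u_1-A_nz_n)\to 0$ once $\{M_n\}$ is known to be bounded; this also fixes the corresponding step in your verification that $T(u_1)=u_2$.
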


\begin{proof} The map $T_n = B_n A_n^{-1} : \Cb^d \rightarrow \Cb^d$ induces an isometry 
\begin{align*}
(A_n \Omega_n, K_{A_n \Omega}) \rightarrow (B_n \Omega_n, K_{B_n \Omega})
\end{align*}
with $T_n(A_nz_n) = B_n z_n$. Then by Proposition~\ref{prop:convergence_of_kob} and the Arzel\`a-Ascoli theorem (see the proof of Observation~\ref{obs:AA_for_geod}), we can pass to a subsequence so that $T_n$ converges locally uniformly to an isometry 
\begin{align*}
T : \left(U_1, K_{U_1} \right) \rightarrow \left(U_2, K_{U_2} \right)
\end{align*}
with $T(u_1) = u_2$. Then $T$, being a limit of affine maps of $\Cb^d$, is affine. Since $T$ is an isometry, it is a bijection $U_1 \rightarrow U_2$. Then since $T$ is injective on $U_1$, we have $T \in \Aff(\Cb^d)$ and since $T$ is onto we have $T(U_1,u_1)=(U_2,u_2)$.  \end{proof}

\section{Normalizing maps}\label{sec:normalizing_maps}

The main result of this section is Theorem~\ref{thm:normalizing} where we construct affine maps which ``normalize'' the following data: a $\Cb$-properly convex domain $\Omega$ and some $z_0 \in \Omega$, $\xi \in \partial \Omega$, $q \in [z_0,\xi)$. The results in this section are refinements of various arguments in~\cite{F1989b, F1991}.

\begin{definition} For $r \in (0,1]$ let $\Kb_d(r) \subset \Xb_d$ denote the set of $\Cb$-properly convex domains $\Omega \subset \Cb^d$ where 
\begin{enumerate}
\item $r\Db \cdot e_1 \subset \Omega$ and $\Db \cdot e_j \subset \Omega$ for $j=2,\dots, d$
\item $e_j \in \partial \Omega$ and
\begin{align*}
\left( e_j + \Span_{\Cb}\{e_{j+1}, \dots, e_d\} \right) \cap \Omega =\emptyset
\end{align*}
for $j=1,\dots, d$.
\end{enumerate}
\end{definition}

We first verify that these sets are compact in  $\Xb_d$. 

\begin{proposition}\label{prop:Kdr_compact} For any $r \in (0,1]$, the set $\Kb_d(r)$ is compact in $\Xb_d$. \end{proposition}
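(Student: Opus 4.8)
The plan is to show compactness of $\Kb_d(r)$ by establishing that it is closed in $\Xb_d$ and that it is relatively compact, and then combine the two. The main structural observation is that the conditions defining $\Kb_d(r)$ provide simultaneous ``inner'' and ``outer'' control: condition (1) forces each domain $\Omega \in \Kb_d(r)$ to contain a fixed neighborhood of the origin (namely the convex hull of $r\Db \cdot e_1$ and $\Db \cdot e_j$ for $j \geq 2$), while condition (2) exhibits, for each $j$, a complex hyperplane $H_j := e_j + \Span_{\Cb}\{e_{j+1},\dots,e_d\} + \Span_{\Cb}\{\text{remaining directions}\}$... more precisely an affine subspace through $e_j$ disjoint from $\Omega$; since $\Omega$ is convex and contains a fixed ball around $0$, being disjoint from these $d$ affine subspaces through the points $e_1,\dots,e_d$ confines $\Omega$ to a bounded region. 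I would first make this boundedness quantitative: produce $R_0 = R_0(r,d)$ such that every $\Omega \in \Kb_d(r)$ satisfies $\Omega \subset \Bb_d(0;R_0)$, using convexity together with the fact that a supporting complex affine subspace through $e_j$ avoiding a fixed ball around $0$ cannot be ``too far'' while still bounding a convex set containing that ball.

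First I would prove \emph{uniform boundedness}. Fix $\Omega \in \Kb_d(r)$. For each $j$, the set $A_j := e_j + \Span_{\Cb}\{e_{j+1},\dots,e_d\}$ is disjoint from $\Omega$; by the Hahn--Banach separation theorem there is a real affine hyperplane separating the convex open set $\Omega$ from $A_j$, and since $A_j$ is a complex affine subspace one gets a complex affine hyperplane $\wt{H}_j$ with $\Omega$ on one side and $A_j \subset \wt{H}_j \cup (\text{closed side})$. Because $\Omega$ contains $\tfrac{r}{2}\Bb_d$ (say) and $\wt{H}_j$ passes through $e_j$ which lies at distance $1$ from the origin, the hyperplanes $\wt{H}_1,\dots,\wt{H}_d$ together with convexity pin $\Omega$ inside a bounded polytope-like region; here I would either argue directly that the intersection of the $d$ closed half-spaces bounded by the $\wt{H}_j$ that contains a fixed ball around $0$ and has each $\wt{H}_j$ at distance $\leq 1$ from $0$ is bounded (this uses that the $\wt{H}_j$ cannot all be ``nearly parallel'' since they are normal-direction-constrained by passing through the $e_j$ and missing the ball), or cite the analogous estimate from~\cite{F1989b, F1991}. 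This gives the common $R_0$, so all closures $\overline{\Omega}$, $\Omega \in \Kb_d(r)$, sit in the fixed compact set $\overline{\Bb_d(0;R_0)}$, and convergence in $\Xb_d$ of a sequence in $\Kb_d(r)$ is just Hausdorff convergence of the closures in this fixed ball.

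Next I would prove \emph{sequential compactness}. Take a sequence $\Omega_n \in \Kb_d(r)$. By Blaschke's selection theorem (the compactness of the Hausdorff metric on compact convex subsets of the fixed ball $\overline{\Bb_d(0;R_0)}$), after passing to a subsequence the closures $\overline{\Omega}_n$ converge in Hausdorff distance to a compact convex set $\overline{\Omega}_\infty$; since by condition (1) each $\Omega_n$ contains the fixed nonempty open convex set $W := \inte\big(\text{conv}(r\overline{\Db}\cdot e_1 \cup \bigcup_{j\geq 2}\overline{\Db}\cdot e_j)\big)$, the limit contains $\overline{W}$ and hence has nonempty interior; set $\Omega_\infty := \inte(\overline{\Omega}_\infty)$, which is then a bounded convex domain and $\overline{\Omega_\infty} = \overline{\Omega}_\infty$. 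I must check $\Omega_\infty \in \Kb_d(r)$: condition (1) passes to the limit because $\overline{W} \subset \overline{\Omega}_\infty$ and $W$ is open; for condition (2), $e_j \in \partial\Omega_n$ for all $n$ gives $e_j \in \overline{\Omega}_\infty$, and I need $e_j \notin \Omega_\infty$ and $A_j \cap \Omega_\infty = \emptyset$ — both follow from Proposition~\ref{prop:haus_conv_compacts}(3): the complement condition $z_n \in \Cb^d \setminus \Omega_n \to z$ implies $z \in \Cb^d \setminus \Omega_\infty$; applying this to points of $A_j$ (which lie in $\Cb^d\setminus\Omega_n$ for every $n$) shows $A_j \subset \Cb^d \setminus \Omega_\infty$, and applied to $e_j$ itself it also shows $e_j \notin \Omega_\infty$, so $e_j \in \partial\Omega_\infty$. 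Finally $\Omega_\infty$ is automatically $\Cb$-properly convex since it is bounded, so $\Omega_\infty \in \Kb_d(r)$ and $\Omega_n \to \Omega_\infty$ in $\Xb_d$.

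The \textbf{main obstacle} I anticipate is the uniform boundedness step: one must genuinely use that the separating hyperplanes $\wt{H}_j$ pass through the specified points $e_j$ (not arbitrary points) in order to get a boundedness constant depending only on $r$ and $d$, because a priori a convex domain disjoint from $d$ complex hyperplanes could still be unbounded if those hyperplanes shared a common parallel direction — ruling this out is where the specific nesting structure $A_j = e_j + \Span_{\Cb}\{e_{j+1},\dots,e_d\}$ and the containment $r\Db\cdot e_1 \subset \Omega$, $\Db\cdot e_j \subset \Omega$ must be exploited carefully (the $A_j$ ``fill up'' complementary coordinate directions as $j$ decreases). The remaining steps — Blaschke selection and the limit-membership checks via Proposition~\ref{prop:haus_conv_compacts} — are routine, and I would keep them brief.
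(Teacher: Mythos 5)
Your proof has a genuine gap: the uniform boundedness claim is false. Domains in $\Kb_d(r)$ need not be bounded, and indeed the whole point of this class in the paper is that it contains the unbounded limits produced by the normalizing maps of Theorem~\ref{thm:normalizing} (for instance, the domain $D_1$ in Section~\ref{sec:local_m_convexity} satisfies $(-\infty,0]\cdot e_1 + \Db\cdot e_2 \subset D_1$ yet lies in $\Kb_d(r_0)$). Concretely, $\Omega = \{(z_1,z_2) \in \Cb^2 : \Real z_1 + \abs{z_2}^2 < 1\}$ belongs to $\Kb_2(r)$ for every $r \in (0,1]$ and contains the ray $(-\infty,0]\cdot e_1$. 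The separation argument you sketch only yields $d$ real half-spaces in $\Rb^{2d}$, whose intersection is generically unbounded; condition (2) simply does not confine $\Omega$ to a ball. Consequently Blaschke selection inside a fixed compact ball is unavailable, and — more seriously — your final line ``$\Omega_\infty$ is automatically $\Cb$-properly convex since it is bounded'' evaporates. That membership $\Omega_\infty \in \Xb_d$ is precisely the nontrivial part of the statement, and it is where the nested structure of the subspaces $e_j + \Span_{\Cb}\{e_{j+1},\dots,e_d\}$ must actually be used: if $\Cb\cdot v \subset \Omega_\infty$, disjointness from $e_1+\Span_{\Cb}\{e_2,\dots,e_d\}$ forces $v_1=0$, then disjointness from $e_2+\Span_{\Cb}\{e_3,\dots,e_d\}$ forces $v_2=0$, and so on, giving $v=0$. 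You correctly sensed that this nesting is the crux, but aimed it at boundedness rather than at $\Cb$-proper convexity of the limit.

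The remainder of your argument is salvageable and matches the paper: replace Blaschke selection by a diagonal argument extracting Hausdorff limits of $\overline{\Omega}_n \cap \overline{\Bb_d(0;m)}$ for each $m$ (this is exactly how the paper produces the limit set $C$ and $\Omega_\infty = \inte(C)$), note that nonempty interior of the limit follows from condition (1) as you say, and verify conditions (1) and (2) in the limit via Proposition~\ref{prop:haus_conv_compacts}(2)--(3) exactly as you do. Then append the iterative argument above to conclude $\Omega_\infty \in \Xb_d$ (via Observation~\ref{obs:asymptotic_cone_1}, it suffices to rule out $\Cb\cdot v \subset \Omega_\infty$ for $v \neq 0$ since $0 \in \Omega_\infty$).
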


\begin{proof}
Suppose $\Omega_n$ is a sequence in $\Kb_d(r)$. For each $R>0$, the set 
 \begin{align*}
 \left\{ K \subset \overline{\Bb_d(0;R)} : K \text{ is compact}\right\}
\end{align*}
is compact in the Hausdorff topology, see for instance~\cite[Proposition 3.6, Theorem 4.2]{M1950}. So we can find nested subsequences 
\begin{align*}
(n_{1,j})_{j=1}^{\infty} \supset (n_{2,j})_{j=1}^{\infty} \supset \dots
 \end{align*}
 such that 
 \begin{align*}
 \lim_{j\rightarrow \infty} \overline{\Omega}_{n_{m,j}} \cap \overline{\Bb_d(0;m)} = C_m
 \end{align*}
 where $C_m$ is a closed convex domain. Since the sequences are nested,
 $$
 C_1 \subset C_2 \subset \dots 
 $$
 So $C : = \cup_{m = 1}^{\infty} C_m$ is convex and $C_m = C \cap \overline{\Bb_d(0;m)}$ for every $m \geq 1$. 
  
 Let $\Omega_\infty$ denote the interior of $C$. Since 
 \begin{align*}
 {\rm ConvHull} \left\{ r\Db \cdot e_1, \Db \cdot e_2, \dots, \Db \cdot e_d \right\} \subset \Omega_n
 \end{align*}
 for every $n \geq 1$, we see that 
  \begin{align*}
 {\rm ConvHull} \left\{ r\Db \cdot e_1, \Db \cdot e_2, \dots, \Db \cdot e_d \right\} \subset C.
 \end{align*}
 So $C$ has non-empty interior. So $\Omega_\infty$ is non-empty and hence $\overline{\Omega}_\infty = C$. Then, by definition, $\Omega_{n_{m,m}}$ converges to $\Omega_\infty$ in the local Hausdorff topology. 
 
 We claim that $\Omega_\infty \in \Kb_d(r)$. Since each $\Omega_n$ is in $\Kb_d(r)$, Proposition~\ref{prop:haus_conv_compacts} Parts (2) and (3) imply that
 \begin{enumerate}
\item $r\Db \cdot e_1 \subset \Omega_\infty$ and $\Db \cdot e_j \subset \Omega_\infty$ for $j=2,\dots, d$
\item $e_j \in \partial \Omega_\infty$ and
\begin{align*}
\left( e_j + \Span_{\Cb}\{e_{j+1}, \dots, e_d\} \right) \cap \Omega_\infty =\emptyset
\end{align*}
for $j=1,\dots, d$.
\end{enumerate} 
So we just have to show that $\Omega_\infty \in \Xb_d$.  Since $0 \in \Omega_\infty$, using Observation~\ref{obs:asymptotic_cone_1} it is enough to show: if $\Cb \cdot v \subset \Omega_\infty$ for some $v \in \Cb^d$, then $v=0$. So suppose that $\Cb \cdot v \subset \Omega_\infty$. Since 
\begin{align*}
\left( e_1+ \Span_{\Cb}\{e_{2}, \dots, e_d\} \right) \cap \Omega_\infty =\emptyset
\end{align*}
we must have $v_1 = 0$. Then since 
\begin{align*}
\left( e_2+ \Span_{\Cb}\{e_{3}, \dots, e_d\} \right) \cap \Omega_\infty=\emptyset
\end{align*}
we must have $v_2 = 0$. Repeating the same argument shows that $v_3 = v_4 = \dots = v_d = 0$. So $v=0$ and hence $\Omega_\infty \in \Xb_d$. 
\end{proof}

\begin{theorem}\label{thm:normalizing} If $\Omega \subset \Cb^d$ is a $\Cb$-properly convex domain, $z_0 \in \Omega$, $\xi \in \partial \Omega$, $H$ is a supporting complex hyperplane of $\Omega$ at $\xi$, $q \in (\xi,z_0]$, and 
\begin{align*}
r:= \frac{\delta_\Omega(z_0)}{\norm{\xi-z_0}},
\end{align*}
then there exists an affine map $A$ with the following properties:
\begin{enumerate}
\item $A\Omega \in \Kb_d(r)$,
\item $A(q)=0$,
\item $A(\xi)=e_1$, 
\item $A(H) = e_1 + \Span_{\Cb}\{ e_2, \dots, e_d\}$, and 
\item if $\delta_{H} = \max\{ \delta_\Omega(q;v) : v\in -\xi + H \text{ non-zero}\}$, then
\begin{align*}
\norm{A(z_1) - A(z_2)} \geq \frac{r}{\sqrt{2} \delta_{H}} \norm{z_1-z_2}
\end{align*}
for all  $z_1, z_2 \in \Cb^d$. 
\end{enumerate}
\end{theorem}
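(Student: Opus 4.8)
The plan is to build the affine map $A$ in stages, composing a translation, a linear map, and then a diagonal rescaling, at each stage tracking the normalization conditions (1)--(4). First I would translate so that $q$ goes to the origin; since $\Omega$ is $\Cb$-properly convex this does not affect membership in $\Xb_d$. Next, note that $q \in (\xi,z_0]$ means $\xi - q$ is a nonzero vector; after a linear change of coordinates I may assume $\xi - q$ is a positive multiple of $e_1$, so that $A_0(\xi) = \lambda e_1$ for some $\lambda > 0$, and then rescale the $e_1$-coordinate so that $A_0(\xi) = e_1$ exactly, giving (2) and (3). The supporting hyperplane $H$ at $\xi$ contains $\xi$ and, being complex affine of complex codimension one, is a translate of a complex hyperplane $H_0$; the vectors $-\xi + H$ span $H_0$, which is a $(d-1)$-dimensional complex subspace transverse to $e_1$ (transversality holds because $H$ supports $\Omega$ at $\xi$ while $q \in \Omega$ lies on the $e_1$-axis through $\xi$, so $H_0$ cannot contain $\Cb e_1$). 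Choose a basis of $H_0$ and arrange, by a further linear map fixing the $e_1$-direction, that $H_0 = \Span_\Cb\{e_2,\dots,e_d\}$; this is where I set up the flag structure needed for condition (1)'s nesting of the $e_j + \Span_\Cb\{e_{j+1},\dots,e_d\}$ hyperplanes.

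The substantive part is arranging the second half of condition (1) — the whole decreasing flag of hyperplanes each avoiding $\Omega$ and each passing through $e_j \in \partial\Omega$ — together with condition (4), the lower Lipschitz bound with the explicit constant $r/(\sqrt d\,\delta_H)$. After the reductions above I have $e_1 \in \partial(A_0\Omega)$ with supporting hyperplane $e_1 + \Span_\Cb\{e_2,\dots,e_d\}$ avoiding $A_0\Omega$. To produce $e_2,\dots,e_d$ I would look inside the supporting hyperplane at $\xi$: the set $\delta_\Omega(q; v)$ measured along directions $v \in -\xi + H$ records how far $\Omega$ extends from $q$ in the hyperplane directions, and $\delta_H$ is the maximum of these. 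One rescales each coordinate $e_j$ ($j \geq 2$) by a factor comparable to $1/\delta_H$ so that the relevant boundary point along that coordinate direction lands at $e_j$ and the corresponding sub-hyperplane $e_j + \Span_\Cb\{e_{j+1},\dots,e_d\}$ avoids the rescaled domain; convexity (and the fact that the original supporting hyperplane already separates $\Omega$) propagates the avoidance down the flag. Simultaneously I must check the disk conditions $r\Db\cdot e_1 \subset A\Omega$ and $\Db\cdot e_j \subset A\Omega$: the first comes from $\delta_{A\Omega}(0) \geq r\cdot\|A(\xi)\| $-type bookkeeping using $r = \delta_\Omega(z_0)/\|\xi - z_0\|$ and the definition of $r$, while the latter disks follow from the chosen rescaling factors being exactly calibrated to the boundary distances along $e_j$.

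For condition (4) I would bound the operator norm of $A^{-1}$, equivalently the smallest singular value of the linear part of $A$. The linear part is (block-) triangular with respect to the flag $\Cb e_1 \subset \Cb e_1 \oplus \Cb e_2 \subset \cdots$: the $e_1$-block has size controlled by $r$ (coming from the rescaling that sent $\lambda e_1$ to $e_1$, with $\lambda$ comparable to $\|\xi - q\| \leq \|\xi - z_0\|$ and the disk of radius $r$ forcing a lower bound) and the remaining $(d-1)$ blocks have size controlled by $1/\delta_H$. Multiplying a vector by such a linear map and using that the smallest of $d$ triangular blocks expands by at least $\min\{r, c/\delta_H\}$ while the $\sqrt d$ absorbs the passage between the $\ell^\infty$-type per-coordinate estimate and the Euclidean norm, one gets the stated bound $\|A(z_1) - A(z_2)\| \geq \frac{r}{\sqrt d\,\delta_H}\|z_1 - z_2\|$ after observing $r \leq 1$ and $\delta_H \geq$ (something like the disk radius) so that $r/(\sqrt d\,\delta_H)$ is indeed the worst of the block expansion constants.

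The main obstacle I expect is the bookkeeping that makes the \emph{single} explicit constant $r/(\sqrt d\,\delta_H)$ work uniformly: one has to choose the rescaling factors for the hyperplane directions neither too large (or the $\Db\cdot e_j$ disks escape $\Omega$) nor too small (or a sub-hyperplane in the flag cuts into $\Omega$), and show that the same factors simultaneously give the clean lower Lipschitz constant. Getting all of (1)--(4) from one choice of $A$, rather than proving each separately with its own constant, is the delicate point; the convexity of $\Omega$ and the separating property of the supporting hyperplane $H$ are what force consistency, and I would lean on Graham's estimate (Lemma~\ref{lem:kob_inf_bound}) only implicitly, the real input being elementary convex geometry: a supporting hyperplane at $\xi$ together with the inscribed disks pin down both the expansion and the contraction of $A$ up to the factor $\sqrt d$.
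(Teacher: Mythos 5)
Your overall skeleton (translate $q$ to the origin, send $\xi$ to $e_1$, rescale along a flag adapted to the supporting hyperplane, then bound the inverse of the linear part) matches the paper's strategy, but the central device that makes conditions (1) and (4) hold simultaneously is missing. You say that one ``chooses a basis of $H_0$'' and rescales each $e_j$ so that a boundary point lands at $e_j$, and that ``convexity propagates the avoidance down the flag.'' For an arbitrary basis of $-\xi+H$ this fails: there is no reason that the sub-hyperplane $e_j+\Span_{\Cb}\{e_{j+1},\dots,e_d\}$ misses $A\Omega$, nor that the full disk $\Db\cdot e_j$ lies inside $A\Omega$. The paper's proof resolves this with a specific inductive choice: having set $P_1=-\xi+H$, it takes $x_j$ to be a \emph{closest} point of $P_{j-1}\cap\partial\Omega$ to $q$ and takes $P_j$ to be the \emph{orthogonal complement} of $\Cb\cdot x_j$ inside $P_{j-1}$. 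The closest-point property gives $\Bb_d(0;\tau_j)\cap P_{j-1}\subset\Omega$ (hence the inscribed disk), and it forces any supporting hyperplane of $\Omega\cap P_{j-1}$ at $x_j$ to be tangent to that inscribed sphere, hence equal to $x_j+P_j$ (hence the avoidance). Without this mechanism — or a substitute for it — your flag does not exist, and this is precisely the ``delicate point'' you flag but do not resolve.

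Two smaller problems in condition (4). First, the rescaling factors are $1/\tau_j$ with $\tau_j\le\delta_H$, so they are bounded \emph{below} by $1/\delta_H$ but need not be comparable to it; only the one-sided bound is needed, but your phrasing suggests a two-sided estimate you cannot have. Second, your plan to read off the smallest singular value of a (block-)triangular matrix from its diagonal blocks is not valid in general — the smallest singular value of a triangular matrix can be far smaller than its smallest diagonal entry. The paper avoids this by factoring $A=\Lambda U$ where $U^{-1}e_j=x_j/\tau_j$ are \emph{unit} vectors, so $\norm{U^{-1}v}\le\sum_j\abs{v_j}\le\sqrt{d}\,\norm{v}$, and then $\norm{A(z_1)-A(z_2)}\ge\big(\norm{U^{-1}}\max_j\tau_j\big)^{-1}\norm{z_1-z_2}$ combined with $\tau_1\le\delta_H/r$ and $\tau_j\le\delta_H$ for $j\ge2$ gives exactly the constant $r/(\sqrt{d}\,\delta_H)$. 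Your construction could be repaired to produce this factorization (the preimages of the $e_j$ are boundary points at distance $\tau_j$, so normalizing them gives unit vectors), but as written the estimate is not justified.
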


\begin{remark} Notice that $-\xi+H$ is the complex hyperplane through $0$ which is parallel to $H$. \end{remark}

\begin{proof} By translating $\Omega$ we can assume that $q=0$. 

Since $\overline{\Omega}$ contains the convex hull of $\Bb_d(z_0;\delta_\Omega(z_0))$ and $\xi$ we see that: 
\begin{align}
\label{eq:dist_to_bd_dist_to_xi}
\delta_\Omega(0) \geq \frac{\delta_\Omega(z_0)}{\norm{z_0-\xi}} \norm{0-\xi} = r \norm{\xi}. 
\end{align}

We select points $x_1, \dots, x_d \in \partial \Omega$ and complex linear subspaces 
\begin{align*}
P_1 \supset P_2\supset \dots \supset P_d=\{0\}
\end{align*}
with $\dim_{\Cb} P_j = d-j$ using the following procedure. First let $x_1=\xi$ and $P_1 = -\xi+H$. Then assuming $x_1, \dots, x_{j-1}$ and $P_1,\dots, P_{j-1}$ have already been selected, let  $x_{j}$ be a point in $P_{j-1} \cap \partial \Omega$ closest to $q=0$ and let $P_{j}$ be the orthogonal complement of $\Cb \cdot x_j$ in $P_{j-1}$. Then define
\begin{align*}
\tau_j = \norm{x_j}.
\end{align*}

We claim that 
\begin{align}
\label{eq:hyperplane_normalizing_construction}
(x_j+P_j) \cap \Omega = \emptyset
\end{align}
for all $1 \leq j \leq d$. Since $x_1 = \xi$ and $P_1 = -\xi+H$, this clearly holds when $j=1$. Suppose $j > 1$. Then, since $P_{j-1} \cap \Omega$ is convex and $x_j \in \partial (P_{j-1} \cap \Omega)$, there exists a codimension one complex linear subspace $H_j \subset P_{j-1}$ such that $(x_j+H_j) \cap \Omega = \emptyset$. But by our choice of $x_j$ we have 
\begin{align*}
\Bb_d(0;\tau_j) \cap P_{j-1} \subset \Omega
\end{align*}
and $\norm{x_j}=\tau_j$. So $x_j+H_j$ must be tangent to $\partial \Bb_d(0;\tau_j)$ at $x_j$. Hence $H_j =  P_{j}$ and so $(x_j+P_j) \cap \Omega = \emptyset$. 

We next claim that $P_j = \Span_{\Cb}\{ x_{j+1}, \dots, x_d\}$ for all $1 \leq j \leq d$. By construction 
\begin{align*}
P_{j} = \Cb \cdot x_{j+1} + P_{j+1}
\end{align*}
where $P_0 := \Cb^d$. Thus 
\begin{align}
\label{eq:span_normalizing_construction}
P_j &= \Cb \cdot x_{j+1} + P_{j+1} = \Cb \cdot x_{j+1}+\Cb\cdot x_{j+2} + P_{j+2}\notag\\
&=\dots=  \Cb \cdot x_{j+1}+\cdots +\Cb \cdot x_{d} = \Span_{\Cb}\{ x_{j+1}, \dots, x_d\}.
\end{align}

Combining Equations~\eqref{eq:hyperplane_normalizing_construction} and~\eqref{eq:span_normalizing_construction} yields
\begin{align}
\label{eq:Pk_span}
\Omega \cap (x_j + \Span_{\Cb} \{x_{j+1}, \dots, x_d\}) = \emptyset
\end{align}
for all $1 \leq j \leq d$. 

Next let $\Lambda \in \GL_{d}(\Cb)$ be the diagonal matrix 
\begin{align*}
\begin{pmatrix}
\tau_1^{-1} & & \\
& \ddots & \\
& & \tau_{d}^{-1}
\end{pmatrix}.
\end{align*}
Then let $U \in \GL_d(\Cb)$ be the linear map such that 
\begin{align*}
\Lambda U(x_j) = e_j 
\end{align*}
for all $1 \leq j \leq d$. Notice that Equation~\eqref{eq:span_normalizing_construction} with $j=0$ implies that $x_1,\dots,x_d$ is a basis and so $U$ is uniquely defined. Finally, let $A = \Lambda U$. 

By construction we have $A(0)=0$ (that is, $A(q)=0$), $A(\xi)=e_1$, and 
$$
A(H) = A(x_1+P_1) = e_1 + \Span_{\Cb}\{ e_2, \dots, e_d\}.
$$

 We claim that $A\Omega \in \Kb_d(r)$. Since $\tau_1 = \norm{\xi}$, Equation~\eqref{eq:dist_to_bd_dist_to_xi} implies that
\begin{align*}
r \Db \cdot e_1\subset A\Omega.
\end{align*}
Further, for $j\geq 2$ we have 
\begin{align*}
\Db \cdot e_j\subset A\Omega
\end{align*}
since $x_j$ is a closest point to $q=0$ in $P_{j-1} \cap \partial \Omega$. Equation~\eqref{eq:Pk_span} and the definition of $A$ imply that $e_j \in \partial A\Omega$ and
\begin{align}
A\Omega \cap (e_j + \Span_{\Cb} \{e_{j+1}, \dots, e_d\}) = \emptyset.
\end{align}
So $A\Omega \in \Kb_d(r)$. 

Notice that 
\begin{align*}
\norm{A(z_1) - A(z_2)} =\norm{\Lambda U(z_1-z_2)} \geq \frac{1}{\norm{U^{-1}}\max \tau_j} \norm{z_1-z_2}
\end{align*}
for any  $z_1, z_2 \in \Cb^d$. Further, 
\begin{align*}
\tau_1 = \norm{\xi} \leq \frac{1}{r} \delta_\Omega(0) \leq \frac{1}{r} \delta_{H}
\end{align*}
and 
\begin{align*}
\tau_j =\delta_\Omega(0;x_j) \leq \delta_{H}
\end{align*}
for $j \geq 2$. So 
\begin{align*}
\norm{A(z_1) - A(z_2)} \geq \frac{r}{ \delta_{H}\norm{U^{-1}}} \norm{z_1-z_2}
\end{align*}
for any  $z_1, z_2 \in \Cb^d$. Thus we just have to bound $\norm{U^{-1}}$ from above. Now
\begin{align*}
U^{-1}(e_j) = \frac{x_j}{\tau_j}
\end{align*}
for all $1 \leq j \leq d$ and by construction $x_2,\dots, x_d$ are pairwise orthogonal. Hence 
\begin{align*}
\norm{U^{-1} v } \leq \abs{v_1} \norm{\frac{x_1}{\tau_1}}+\norm{\sum_{j=2}^d v_j \frac{x_j}{\tau_j} } =\abs{v_1} +  \sqrt{\sum_{j=2}^d \abs{v_j}^2}  \leq \sqrt{2} \norm{v}. 
\end{align*}
Thus
\begin{equation*}
\norm{A(z_1) - A(z_2)} \geq \frac{r}{\sqrt{2} \delta_{H}} \norm{z_1-z_2}
\end{equation*}
for all  $z_1, z_2 \in \Cb^d$. 
\end{proof}

Using Theorem~\ref{thm:normalizing} we can provide a proof of Theorem~\ref{thm:frankel_compactness}.

\begin{corollary} Define
\begin{align*}
\Kb_{d,0} := \{ (\Omega, 0) : \Omega \in \Kb_d(1) \}.
\end{align*}
Then $\Kb_{d,0}$ is a compact subset of $\Xb_{d,0}$ and $\Aff(\Cb^d) \cdot \Kb_{d,0} = \Xb_{d,0}$. 
\end{corollary}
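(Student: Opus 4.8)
The plan is to obtain both conclusions directly from the two preceding results: compactness of $\Kb_{d,0}$ from Proposition~\ref{prop:Kdr_compact}, and the identity $\Aff(\Cb^d)\cdot\Kb_{d,0}=\Xb_{d,0}$ from Theorem~\ref{thm:normalizing}.

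First I would check that $\Kb_{d,0}$ is a well-defined subset of $\Xb_{d,0}$: for $\Omega\in\Kb_d(1)$ we have $\Db\cdot e_1\subset\Omega$, and $0\in\Db\cdot e_1$, so $0\in\Omega$. Recall the topology on $\Xb_{d,0}$ is the one in which $(\Omega_n,z_n)\to(\Omega,z)$ exactly when $\Omega_n\to\Omega$ in $\Xb_d$ and $z_n\to z$ in $\Cb^d$. Since the base point is identically $0$ along $\Kb_{d,0}$, the natural bijection $\Kb_d(1)\to\Kb_{d,0}$, $\Omega\mapsto(\Omega,0)$, is a homeomorphism onto its image. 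As $\Kb_d(1)$ is compact by Proposition~\ref{prop:Kdr_compact} (with $r=1$), so is $\Kb_{d,0}$.

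Next, the inclusion $\Aff(\Cb^d)\cdot\Kb_{d,0}\subseteq\Xb_{d,0}$ is immediate, since $\Aff(\Cb^d)$ acts on $\Xb_{d,0}$. For the reverse inclusion, fix $(\Omega,z_0)\in\Xb_{d,0}$. Because $\Omega$ is $\Cb$-properly convex it cannot equal $\Cb^d$, so $\partial\Omega$ is a nonempty closed set and $\delta_\Omega(z_0)<\infty$; intersecting $\partial\Omega$ with a closed ball of radius $\delta_\Omega(z_0)+1$ about $z_0$ gives a nonempty compact set on which $w\mapsto\norm{w-z_0}$ attains its infimum, so we may choose $\xi\in\partial\Omega$ with $\norm{\xi-z_0}=\delta_\Omega(z_0)$. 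Let $H$ be a complex supporting hyperplane of $\Omega$ at $\xi$ — one exists because a real supporting hyperplane at $\xi$ (which exists by convexity) contains a complex hyperplane through $\xi$ still disjoint from $\Omega$ — and set $q:=z_0$, noting $z_0\in(\xi,z_0]$. With these choices the constant $r$ of Theorem~\ref{thm:normalizing} equals $\delta_\Omega(z_0)/\norm{\xi-z_0}=1$, so Theorem~\ref{thm:normalizing} supplies an affine map $A$ with $A\Omega\in\Kb_d(1)$ and $A(q)=A(z_0)=0$. Hence $A(\Omega,z_0)=(A\Omega,0)\in\Kb_{d,0}$, so $(\Omega,z_0)\in\Aff(\Cb^d)\cdot\Kb_{d,0}$, completing the proof.

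I do not expect a genuine obstacle, since all the real content has been pushed into Theorem~\ref{thm:normalizing} and Proposition~\ref{prop:Kdr_compact}. The only points needing a moment's care are that a nearest boundary point $\xi$ exists even when $\Omega$ is unbounded (handled above) and that $q=z_0$ is an admissible choice in Theorem~\ref{thm:normalizing}; both are routine. As a byproduct this yields a self-contained proof of Frankel's Theorem~\ref{thm:frankel_compactness}, with compact set $K=\Kb_{d,0}$.
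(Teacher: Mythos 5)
Your proposal is correct and follows the same route as the paper: compactness of $\Kb_{d,0}$ is read off from Proposition~\ref{prop:Kdr_compact}, and surjectivity of the $\Aff(\Cb^d)$-action is obtained by applying Theorem~\ref{thm:normalizing} with $q=z_0$ and $\xi$ a nearest boundary point, so that $r=1$. The extra details you supply (existence of a nearest boundary point for unbounded $\Omega$, existence of a complex supporting hyperplane) are correct but routine, and the paper leaves them implicit.
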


\begin{proof} Since $\Kb_d(1)$ is a compact subset of $\Xb_d$, we see that  $\Kb_{d,0}$ is a compact subset of $\Xb_{d,0}$. Now fix some $(\Omega, q) \in \Xb_{d,0}$. Then apply Theorem~\ref{thm:normalizing} with $z_0 = q$ and $\xi \in \partial \Omega$ such that $\norm{q-\xi} = \delta_\Omega(q)$. Then 
\begin{align*}
\norm{\xi-z_0} = \delta_\Omega(z_0),
\end{align*}
and so there exists an affine map $A \in \Aff(\Cb^d)$ such that $A\Omega \in \Kb_d(1)$ and $A(q) = 0$. So $A(\Omega, q) \in \Kb_{d,0}$. Then since  $(\Omega, q) \in \Xb_{d,0}$ was arbitrary we see that $\Aff(\Cb^d) \cdot \Kb_{d,0} = \Xb_{d,0}$. 
\end{proof}

The following ``extension'' result will allow us to reduce many arguments to the $d=2$ case. 

\begin{proposition}\label{prop:compact_on_slices} Suppose $\Omega \subset \Cb^d$ is a $\Cb$-properly convex domain. If $1 \leq m \leq d$ and
\begin{align*}
\Omega \cap \Span_{\Cb}\{e_1,\dots, e_m\} \in \Kb_m(r),
\end{align*}
then there exists $A \in \GL_d(\Cb)$ such that $A\Omega \in \Kb_d(r)$ and $A|_{\Span_{\Cb}\{e_1,\dots, e_m\}} = \id$. 
\end{proposition}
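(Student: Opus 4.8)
Write $E := \Span_\Cb\{e_1,\dots,e_m\}$, so the hypothesis is that $\Omega\cap E\in\Kb_m(r)$. The plan is to produce $A$ by prescribing $A^{-1}$ on the ``new'' basis vectors. That is, I will choose $w_{m+1},\dots,w_d\in\Cb^d$ which, together with $e_1,\dots,e_m$, form a basis of $\Cb^d$, and let $A\in\GL_d(\Cb)$ be the linear map with $A(e_k)=e_k$ for $k\leq m$ and $A(w_k)=e_k$ for $k>m$; then $A|_E=\id$ automatically. Writing $W:=\Span_\Cb\{w_{m+1},\dots,w_d\}$, so that $W\oplus E=\Cb^d$, and pulling the defining conditions of $\Kb_d(r)$ back through $A^{-1}$ (which fixes $E$ pointwise, so that $A\Omega\cap E=\Omega\cap E\in\Kb_m(r)$), one sees that the conditions of $\Kb_d(r)$ indexed by $j\leq m$ concerning only $E$ hold automatically -- namely $r\Db e_1,\Db e_2,\dots,\Db e_m\subset A\Omega$, and $e_j\in\partial\Omega$ for $j\leq m$, the latter because $e_j\in\overline{\Omega\cap E}\setminus\Omega$ -- and the remaining requirements become: (i) $\Db w_k\subset\Omega$ for $m<k\leq d$; (ii) $w_k\in\partial\Omega$ for $m<k\leq d$; (iii) $(\Pi_j+W)\cap\Omega=\emptyset$ for $1\leq j\leq m$, where $\Pi_j:=e_j+\Span_\Cb\{e_{j+1},\dots,e_m\}$; and (iv) $(w_j+\Span_\Cb\{w_{j+1},\dots,w_d\})\cap\Omega=\emptyset$ for $m<j\leq d$.

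The crucial point is that (iii) depends only on the subspace $W$, not on a chosen basis of it, so I would fix $W$ first to make (iii) automatic. For each $j\leq m$ the complex affine subspace $\Pi_j$ lies in $E$ and is disjoint from $\Omega$ (it is disjoint from $\Omega\cap E$ by the $\Kb_m(r)$ condition, and $\Pi_j\subset E$), so separating the open convex set $\Omega$ from $\Pi_j$ by a real hyperplane, and using that a $\Cb$-linear functional whose real part is bounded below on a complex affine subspace is constant there, yields a $\Cb$-linear $\psi_j$ on $\Cb^d$ with $\psi_j(e_j)=1$, $\psi_j\equiv 0$ on $\Span_\Cb\{e_{j+1},\dots,e_m\}$, and $\{\psi_j=1\}\cap\Omega=\emptyset$. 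Put $W:=\bigcap_{j=1}^m\ker\psi_j$. Since the matrix $(\psi_j(e_k))_{1\leq j,k\leq m}$ is lower triangular with $1$'s on the diagonal it is invertible, which forces $W\cap E=\{0\}$ and $\dim_\Cb W=d-m$, hence $W\oplus E=\Cb^d$; and for \emph{every} basis $w_{m+1},\dots,w_d$ of $W$ we have $\Pi_j+W\subset\{\psi_j=1\}$ (as $\psi_j\equiv 1$ on $\Pi_j$ and $\psi_j\equiv 0$ on $W$), which misses $\Omega$. This secures (iii).

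It remains to pick a basis of $W$ realizing (i), (ii), (iv), and here I would simply rerun the construction from the proof of Theorem~\ref{thm:normalizing} \emph{inside} $W$: set $R_0:=W$ and, having chosen $R_{k-1}$, let $w_{m+k}$ be a point of $\partial\Omega\cap R_{k-1}$ nearest $0$ and $R_k$ the orthogonal complement of $\Cb\cdot w_{m+k}$ in $R_{k-1}$. This is legitimate because $0\in\Omega$ (as $r\Db e_1\subset\Omega\cap E$) and because $\Cb$-proper convexity of $\Omega$ forces $\Omega\cap R_{k-1}\neq R_{k-1}$, so $\partial\Omega\cap R_{k-1}$ is nonempty and closed. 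Exactly as in Theorem~\ref{thm:normalizing}: the open ball of radius $\norm{w_{m+k}}$ in $R_{k-1}$ lies in $\Omega$, whence $\Db w_{m+k}\subset\Omega$ (giving (i)) and $w_{m+k}\in\partial\Omega$ (giving (ii)), and any complex supporting hyperplane of $\Omega\cap R_{k-1}$ at $w_{m+k}$ must be tangent to that sphere, hence equal $w_{m+k}+R_k$, so $(w_{m+k}+R_k)\cap\Omega=\emptyset$; since $R_k=\Span_\Cb\{w_{m+k+1},\dots,w_d\}$ and $R_{d-m}=\{0\}$, this is exactly (iv). Assembling $A$ from this basis finishes the proof.

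The only real work is the decoupling in the middle step -- seeing that (iii) constrains $W$ alone, and that $\bigcap_j\ker\psi_j$ is genuinely a complement of $E$ (the triangularity of $(\psi_j(e_k))$). Once $W$ is in hand, everything reduces to a verbatim repetition of the $\Kb$-normalization argument of Theorem~\ref{thm:normalizing}, carried out within $W$.
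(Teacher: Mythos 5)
Your proof is correct and follows essentially the same route as the paper's: keep $x_j=e_j$ for $j\leq m$, produce a complementary subspace from separating complex hyperplanes, and then choose the remaining basis vectors by the nearest-point/orthogonal-complement procedure of Theorem~\ref{thm:normalizing}. The only (organizational) difference is that you build the complement $W$ in one step as $\bigcap_{j=1}^m\ker\psi_j$, whereas the paper constructs the nested flag $P_1\supset\dots\supset P_m$ one codimension at a time; the verifications are otherwise identical.
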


\begin{proof} We will select points $x_1, \dots, x_d \in \partial \Omega$ and complex linear subspaces $P_1, \dots, P_d \subset \Cb^d$ with 
\begin{enumerate}
\item $P_1 \supset P_2 \supset \dots \supset P_d=\{0\}$, 
\item $\dim_{\Cb} P_j = d-j$ for $1 \leq j \leq d$, and
\item $\Span_{\Cb}\{e_{j+1},\dots,e_m\} \subset P_j$ for $1 \leq j \leq m-1$.
\end{enumerate}

First for $1 \leq j \leq m$, let $x_j = e_j$. Then we select $P_1,\dots, P_m$ sequentially as follows. Since $\Omega$ is convex and 
\begin{align*}
(e_1 + \Span_{\Cb}\{e_2,\dots,e_m\}) \cap \Omega = \emptyset,
\end{align*}
there exists a complex linear subspace $P_1$ such that $\dim_{\Cb} P_1 = d-1$,
\begin{align*}
 \Span_{\Cb}\{e_2,\dots,e_m\} \subset P_1,
\end{align*}
and 
\begin{align*}
(e_1+P_1) \cap \Omega = \emptyset.
\end{align*}
Then assuming $1 \leq j \leq m-1$ and we have already selected $P_1,\dots, P_j$, we select $P_{j+1}$ as follows. Since $\Omega \cap P_j$ is convex, 
\begin{align*}
\Span_{\Cb}\{e_{j+1},\dots,e_m\} \subset P_j,
\end{align*}
and 
\begin{align*}
(e_{j+1} + \Span_{\Cb}\{e_{j+2},\dots,e_m\}) \cap \Omega = \emptyset,
\end{align*}
there exists a codimension one complex linear subspace $P_{j+1} \subset P_j$ such that 
\begin{align*}
\Span_{\Cb}\{e_{j+2},\dots,e_m\} \subset P_{j+1}
\end{align*}
 and $(e_{j+1} + P_{j+1}) \cap \Omega = \emptyset$. 

Next we select $x_{m+1},\dots, x_d$ and  $P_{m+1}, \dots, P_d$. Supposing $j \geq m$ and that $x_1, \dots, x_{j}$ and $P_1, \dots, P_{j}$ have already been selected, we pick $x_{j+1}$ and $P_{j+1}$ as follows: let $x_{j+1}$ be a point in $P_{j} \cap \partial \Omega$ closest to $0$ and let $P_{j+1}$ be a $(d-j-1)$-dimensional complex subspace such that $P_{j+1} \subset P_{j}$ and $(x_{j+1} + P_{j+1}) \cap \Omega = \emptyset$. Since $P_{j} \cap \Omega$ is convex and $x_{j+1} \in \partial (P_{j} \cap \Omega)$, such a subspace exists.

Now let $A \in \GL_d(\Cb)$ be the complex linear map with $A(x_j)=e_j$ for $1 \leq j \leq d$. Since $x_1, \dots, x_d$ is a basis of $\Cb^d$, the linear map $A$ is well defined. Since $x_j=e_j$ when $1 \leq j \leq m$ we see that $A|_{\Span_{\Cb}\{e_1, \dots, e_m\}} = \id$. Arguing as in the proof of Theorem~\ref{thm:normalizing} shows that $A \Omega \in \Kb_d(r)$. 
\end{proof}

\part{Necessary and sufficient conditions for Gromov hyperbolicity} 

\section{Prior work and outline of the proof of Theorem~\ref{thm:main_equivalence}}\label{sec:prior_work_outline_Gromov}

In this section we recall some prior results concerning the Gromov hyperbolicity of the Kobayashi metric. Then we give an outline of the proof of Theorem~\ref{thm:main_equivalence}. 

In~\cite{Z2016}, we established the following necessary conditions. 

\begin{theorem}\label{thm:prior_nec}\cite{Z2016}
Suppose $\Omega$ is a $\Cb$-properly convex domain and $(\Omega, K_\Omega)$ is Gromov hyperbolic, then:
\begin{enumerate}
\item $\Omega$ has simple boundary, 
\item if $D \in \overline{\Aff(\Cb^d) \cdot \Omega} \cap \Xb_d$, then $(D,K_D)$ is Gromov hyperbolic, and
\item every domain in $\overline{\Aff(\Cb^d) \cdot \Omega} \cap \Xb_d$ has simple boundary. 
 \end{enumerate}
\end{theorem}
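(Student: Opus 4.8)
The plan is to prove the three statements in order, deducing each from the previous one together with the stability of Gromov hyperbolicity under the operations involved. The underlying principle is that Gromov hyperbolicity of $(\Omega, K_\Omega)$ is (a) inherited by limits of affine translates, and (b) incompatible with the presence of an analytic disk in the boundary.

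First I would prove (1): if $\Omega$ has non-simple boundary, then $(\Omega, K_\Omega)$ is not Gromov hyperbolic. By Proposition~\ref{prop:affine_vs_holomorphic_disks}, non-simple boundary means there is a non-constant complex affine map $\varphi\colon \Db \to \partial\Omega$; after an affine change of coordinates I may take $\varphi(\zeta) = \zeta e_2$ for $\zeta \in \Db$, so the complex line $\Cb e_2$ meets $\overline{\Omega}$ in a disk lying in $\partial\Omega$. Pick a point $z_0 \in \Omega$ near this disk; I want to exhibit a ``fat'' family of geodesics contradicting thin triangles (Theorem~\ref{thm:thin_triangle}). Concretely, take two boundary points $p_\pm = \pm \tfrac12 e_2$ sitting in the flat disk, and consider geodesic rays from $z_0$ heading toward $p_+$ and toward $p_-$. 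Using Lemma~\ref{lem:hyperplanes_along_lines} applied to the complex line through $z_0$ parallel to $e_2$ — on which $K_\Omega$ is comparable to the hyperbolic metric of a segment — together with the convexity estimate Lemma~\ref{lem:kob_inf_bound}, one shows that the $K_\Omega$-geodesic joining points $z_0 + t e_2$ and $z_0 - t e_2$ (for $t \to 1$) stays a bounded distance from $z_0$ in the $e_2$-direction while its endpoints recede to infinity; comparing this with the Euclidean geometry of the flat piece of the boundary forces geodesic triangles with a side of length $\to\infty$ that fail to be $\delta$-thin for any fixed $\delta$. This is essentially the standard argument that a domain containing a ``linearly embedded bidisk-like'' region near the boundary cannot be Gromov hyperbolic; I expect this to be the main obstacle, since making the quasi-geodesic estimates quantitative in the non-smooth convex setting requires care with the two-sided bounds in Lemmas~\ref{lem:kob_inf_bound} and~\ref{lem:hyperplanes_along_lines} and a stability lemma identifying near-geodesics.

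Next, (2): suppose $D \in \overline{\Aff(\Cb^d)\cdot\Omega} \cap \Xb_d$, so there are $A_n \in \Aff(\Cb^d)$ with $A_n\Omega \to D$ in $\Xb_d$. Since each $A_n$ is a biholomorphism, $(A_n\Omega, K_{A_n\Omega})$ is isometric to $(\Omega, K_\Omega)$, hence $\delta$-hyperbolic with the same $\delta$. By Proposition~\ref{prop:convergence_of_kob}, $K_{A_n\Omega} \to K_D$ locally uniformly on $D \times D$, and by Proposition~\ref{prop:haus_conv_compacts}(1) every compact subset of $D$ eventually lies in $A_n\Omega$. Therefore the Gromov product inequality $(x|z)_w \geq \min\{(x|y)_w,(y|z)_w\} - \delta$, which holds for $K_{A_n\Omega}$ for all quadruples in a fixed compact set once $n$ is large, passes to the limit and holds for $K_D$; since $x,y,z,w \in D$ were arbitrary, $(D,K_D)$ is $\delta$-hyperbolic, proving (2).

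Finally, (3) is immediate by combining (1) and (2): if $D \in \overline{\Aff(\Cb^d)\cdot\Omega} \cap \Xb_d$, then $(D,K_D)$ is Gromov hyperbolic by (2), hence $D$ has simple boundary by (1) applied to $D$ in place of $\Omega$. I would also remark that (1) is just the special case $D = \Omega$ of (3) (valid since $\Omega \in \overline{\Aff(\Cb^d)\cdot\Omega} \cap \Xb_d$), so the logical content is really (1) plus the limiting argument (2).
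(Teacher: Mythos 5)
Your handling of parts (2) and (3) is correct and matches the paper's logic: the paper itself only cites \cite{Z2016} for (1) and (2) and observes that (3) is immediate from them, and your proof of (2) -- transporting the four--point $\delta$-inequality through the isometries $A_n$ and passing to the limit using Proposition~\ref{prop:convergence_of_kob} together with Proposition~\ref{prop:haus_conv_compacts}(1) -- is exactly the standard argument behind the cited Theorem~1.8 of \cite{Z2016}. No complaints there.

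The gap is in part (1), which is the genuinely nontrivial ingredient (Theorem~1.6 of \cite{Z2016}), and the configuration you describe does not yield a contradiction as stated. First, a small issue of setup: with the flat disk normalized to $\Db\cdot e_2\subset\partial\Omega$, the points $z_0\pm t e_2$ move \emph{parallel} to the disk, so there is no reason for them to approach $\partial\Omega$ (hence no reason for $K_\Omega(z_0, z_0\pm te_2)\to\infty$); to get recession to infinity you must push the points \emph{toward} the disk. Second, and more seriously, the phenomenon you isolate -- two families of points whose mutual Kobayashi distance stays bounded (because the segment between them runs parallel to the flat disk, where $\delta_\Omega(\cdot;e_2)$ is bounded below, so Lemma~\ref{lem:kob_inf_bound} gives a short connecting path) while their distance to a fixed basepoint diverges -- is perfectly consistent with Gromov hyperbolicity: it occurs for $\sigma(t)$ and $\sigma(t+1)$ along any geodesic ray, and more relevantly the two rays toward $p_+$ and $p_-$ simply fellow-travel, so every triangle with a vertex at $z_0$ and the other two vertices on these rays \emph{is} thin. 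Also note that your description is internally inconsistent: a bounded-length geodesic that stays within bounded distance of $z_0$ cannot have endpoints receding to infinity from $z_0$. The actual obstruction requires a two-parameter escape -- points moving both toward the disk (in the $e_1$-type direction transverse to it) and along it -- producing something like a quasi-flat, or equivalently an explicit family of geodesic triangles whose fatness one certifies by combining the upper bounds from the flat disk with the lower bounds of Lemmas~\ref{lem:hyperplanes} and~\ref{lem:hyperplanes_along_lines} applied to two \emph{different} separating hyperplanes. Identifying and estimating that second direction is the whole content of the cited theorem, and it is missing from your sketch; as written, part (1) is not proved.
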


\begin{proof} Part (1) is~\cite[Theorem 1.6]{Z2016} and Part (2) is~\cite[Theorem 1.8]{Z2016}. Part (3) is an immediate consequence of Parts (1) and (2).\end{proof}

In~\cite{Z2016} we also established a sufficient condition for the Kobayashi metric to be Gromov hyperbolic, however the result requires several definitions to state. 

\begin{definition}
Given a curve $\sigma: \Rb \rightarrow \Cb^d$ the \emph{forward accumulation set of $\sigma$} is
\begin{align*}
\sigma(\infty) := \left\{ z \in \Cb^d \cup \{\infty\} : \text{ there exist } t_n \rightarrow \infty \text{ with } \sigma(t_n) \rightarrow z\right\}
\end{align*}
and the \emph{backward accumulation set of $\sigma$} is
\begin{align*}
\sigma(-\infty) := \left\{ z \in \Cb^d \cup \{\infty\} : \text{ there exist } t_n \rightarrow -\infty \text{ with } \sigma(t_n) \rightarrow z\right\}.
\end{align*}
\end{definition}

\begin{definition}\label{defn:well_behaved}
Suppose $\Omega \subset \Cb^d$ is a domain. We say \emph{geodesics in $(\Omega, K_\Omega)$ are well-behaved} if 
\begin{align*}
\sigma(\infty) \cap \sigma(-\infty) = \emptyset
\end{align*}
for every geodesic line $\sigma: \Rb \rightarrow \Omega$.
\end{definition}

\begin{definition}\label{defn:visibility_seqn}
Suppose $\Omega_n$ converges to $\Omega$ in $\Xb_d$. We say $\Omega_n$ is a \emph{visibility sequence} if for every sequence $\sigma_n :[a_n,b_n] \rightarrow \Omega_n$ of geodesics with 
\begin{align*}
\lim_{n \rightarrow\infty} \sigma_n(a_n) = \xi \in \Cb^d \cup\{\infty\},
\end{align*}
\begin{align*}
\lim_{n \rightarrow\infty} \sigma_n(b_n) = \eta \in \Cb^d \cup\{\infty\},
\end{align*}
and $\xi \neq \eta$, then there exist sequences $n_j \rightarrow \infty$ and $T_j \in [a_{n_j},b_{n_j}]$ such that $\sigma_{n_j}(T_j)$ converges to a point in $\Omega$. \end{definition}

\begin{remark} Informally the visibility condition says that geodesic segments between distinct points ``bend'' into the domain. 
\end{remark}

\begin{theorem}\label{thm:prior_suff}\cite[Theorem 8.3]{Z2016}
Suppose $\Omega$ is a $\Cb$-properly convex domain. Assume for any sequence $u_n \in \Omega$ there exist $n_j \rightarrow \infty$ and $A_j \in \Aff(\Cb^d)$ so that 
\begin{enumerate}
\item $A_j(\Omega, u_{n_j})$ converges to some $(\Omega_\infty, u_\infty)$ in $\Xb_{d,0}$,
\item geodesics in $(\Omega_\infty, K_{\Omega_\infty})$ are well behaved, and
\item $A_j\Omega$ is a visibility sequence. 
\end{enumerate}
Then $(\Omega, K_\Omega)$ is Gromov hyperbolic. 
\end{theorem}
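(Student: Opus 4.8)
The plan is to argue by contradiction via the thin‑triangles characterization of Gromov hyperbolicity (Theorem~\ref{thm:thin_triangle}), fed by a rescaling argument built from the hypotheses. Since $\Omega$ is $\Cb$-properly convex, $(\Omega, K_\Omega)$ is a proper geodesic metric space (Theorem~\ref{thm:barth}); so if it were not Gromov hyperbolic, then for each $n$ there would be a geodesic triangle with vertices $x_n, y_n, z_n \in \Omega$ and a point $p_n$ on the side $[x_n, y_n]$ with
\begin{align*}
\min\left\{ K_\Omega\big(p_n, [y_n, z_n]\big),\ K_\Omega\big(p_n, [z_n, x_n]\big)\right\} \geq n.
\end{align*}
As $x_n \in [z_n,x_n]$ and $y_n \in [y_n,z_n]$, this forces $K_\Omega(p_n,x_n) \to \infty$ and $K_\Omega(p_n,y_n) \to \infty$ as well. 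I then apply the hypothesis to the sequence $u_n := p_n$, obtaining (after passing to a subsequence) affine maps $A_j$ with $A_j(\Omega, p_{n_j}) \to (\Omega_\infty, u_\infty)$ in $\Xb_{d,0}$, such that geodesics in $(\Omega_\infty, K_{\Omega_\infty})$ are well behaved and $A_j\Omega$ is a visibility sequence; note $\Omega_\infty \in \Xb_d$, so $(\Omega_\infty, K_{\Omega_\infty})$ is again proper and geodesic by Theorem~\ref{thm:barth}.

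Next I rescale the three sides. Parametrizing $[x_{n_j}, y_{n_j}]$ as a $K_\Omega$-geodesic sending $0$ to $p_{n_j}$ and pushing forward by $A_j$ gives geodesics $\sigma_j : [a_j, b_j] \to A_j\Omega$ with $\sigma_j(0) = A_j p_{n_j}$, $a_j = -K_\Omega(p_{n_j}, x_{n_j}) \to -\infty$, and $b_j = K_\Omega(p_{n_j}, y_{n_j}) \to \infty$; by Proposition~\ref{prop:convergence_of_kob} and the Arzel\`a-Ascoli theorem, a further subsequence converges locally uniformly to a geodesic line $\sigma_\infty : \Rb \to \Omega_\infty$ with $\sigma_\infty(0) = u_\infty$. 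By contrast, the rescaled sides $A_j[x_{n_j}, z_{n_j}]$ and $A_j[y_{n_j}, z_{n_j}]$ eventually avoid every compact $K \subset \Omega_\infty$: a point of $A_j[x_{n_j},z_{n_j}]$ lying in $K$ would, using that compact subsets of $\Omega_\infty$ have finite $K_{\Omega_\infty}$-diameter, that $K_{A_j\Omega} \to K_{\Omega_\infty}$ uniformly on compacts (Proposition~\ref{prop:convergence_of_kob}), and that $A_j p_{n_j} \to u_\infty \in \Omega_\infty$, keep $K_\Omega(p_{n_j}, [x_{n_j}, z_{n_j}])$ bounded, contradicting the choice of $p_n$. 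The same estimate shows that, after a further subsequence, the limits $\hat x, \hat y, \hat z \in \Cb^d \cup \{\infty\}$ of $A_j x_{n_j}, A_j y_{n_j}, A_j z_{n_j}$ all lie outside $\Omega_\infty$.

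Now I use the visibility property. If $\hat x \neq \hat z$, applying Definition~\ref{defn:visibility_seqn} to the geodesics $A_j([x_{n_j}, z_{n_j}])$, whose endpoints converge to $\hat x \neq \hat z$, yields a subsequence and time‑shifts along which they converge locally uniformly to a geodesic in $\Omega_\infty$; in particular a sequence of points of $A_j[x_{n_j},z_{n_j}]$ converges to an interior point of $\Omega_\infty$, contradicting the previous paragraph. Hence $\hat x = \hat z$ and, symmetrically, $\hat y = \hat z$, so $\hat x = \hat y = \hat z =: \hat\xi \in (\Cb^d\cup\{\infty\})\setminus\Omega_\infty$. The proof is then finished once we know that $\hat x \in \sigma_\infty(-\infty)$ and $\hat y \in \sigma_\infty(\infty)$: for then $\hat\xi \in \sigma_\infty(-\infty) \cap \sigma_\infty(\infty) \neq \emptyset$, contradicting that geodesics in $\Omega_\infty$ are well behaved.

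The remaining claim, $\hat x \in \sigma_\infty(-\infty)$ (the case of $\hat y$ being symmetric), is the technical heart, and I expect it to be the main obstacle. Since $\Omega_\infty$ is biholomorphic to a bounded domain, $\sigma_\infty(t)$ leaves every compact subset of $\Omega_\infty$ as $t \to -\infty$, so $\sigma_\infty(-\infty)$ contains some $\eta \in (\Cb^d\cup\{\infty\})\setminus\Omega_\infty$; one must show $\eta = \hat x$. The idea is a diagonal argument: choose $j(k) \to \infty$ and $s_k \to -\infty$ with $a_{j(k)} < s_k$ and $\sigma_{j(k)}(s_k)$ close to $\sigma_\infty(s_k)$, and look at the truncated geodesics $\sigma_{j(k)}|_{[a_{j(k)}, s_k]}$, whose endpoints then converge to $\hat x$ and to $\eta$. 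If $\hat x \neq \eta$, the visibility property produces, after re‑centering at some times $S_k \in [a_{j(k)}, s_k]$, a locally uniform limit which is a geodesic $\gamma_\infty$ in $\Omega_\infty$; but then $K_{A_{j(k)}\Omega}\big(\sigma_{j(k)}(S_k), A_{j(k)}p_{n_{j(k)}}\big) = \abs{S_k}$ converges to the finite number $K_{\Omega_\infty}(\gamma_\infty(0), u_\infty)$ (by Proposition~\ref{prop:convergence_of_kob}), forcing $\abs{S_k}$ to stay bounded and contradicting $S_k \leq s_k \to -\infty$. Hence $\eta = \hat x$. Apart from this step — correctly managing the several diverging parameters while applying the visibility hypothesis to the truncated geodesics — the argument is a fairly standard normal‑families‑plus‑rescaling scheme, with well‑behavedness of limiting geodesics supplying the final contradiction.
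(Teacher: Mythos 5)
Your proposal is correct and follows essentially the same route as the paper: a thin-triangle contradiction, rescaling at the far point $u_n$, locally uniform convergence of the long side to a geodesic line, the visibility hypothesis applied to the short sides, and well-behavedness of limit geodesics for the final contradiction. The only differences are cosmetic — you re-derive inline what the paper isolates as Lemma~\ref{lem:limits_of_geod} (identifying $\sigma_\infty(\pm\infty)$ with the limits of the rescaled vertices), and you extract the contradiction as $\hat x=\hat y$ versus $\sigma_\infty(-\infty)\cap\sigma_\infty(\infty)=\emptyset$, whereas the paper uses well-behavedness first to get $x_\infty\neq y_\infty$ and then contradicts the divergence of $K_\Omega(u_n,\sigma_{x_nz_n}(0))$.
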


Theorem 8.3 in~\cite{Z2016} is formulated in a different way,  so we will provide the argument. But first a lemma.  

\begin{lemma}\label{lem:limits_of_geod}
Assume that $\Omega_n$ is a visibility sequence converging to some $\Omega_\infty$ in $\Xb_{d}$ and $\sigma_n :[0,T_n] \rightarrow \Omega_n$ is a sequence of geodesics which converges locally uniformly to a geodesic $\sigma: [0,\infty) \rightarrow \Omega_\infty$. Then 
\begin{align}
\label{eq:limits_equal}
\lim_{t \rightarrow \infty} \sigma(t) = \lim_{n \rightarrow \infty} \sigma_n(T_n) \in \Cb^d \cup \{\infty\}
\end{align}
(in particular, the two limits exist). 
\end{lemma}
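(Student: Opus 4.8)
\textbf{Proof plan for Lemma~\ref{lem:limits_of_geod}.}

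The plan is to argue by contradiction: suppose the conclusion fails. Since $\Cb^d \cup \{\infty\}$ is compact, by passing to a subsequence we may assume $\sigma_n(T_n) \to \eta \in \Cb^d \cup \{\infty\}$ and that there exist $t_k \to \infty$ with $\sigma(t_k) \to \xi \in \Cb^d \cup \{\infty\}$, where (negating~\eqref{eq:limits_equal}) we have $\xi \ne \eta$, OR the limit $\lim_{n\to\infty}\sigma_n(T_n)$ exists but $\lim_{t\to\infty}\sigma(t)$ does not (so $\sigma$ has two distinct forward accumulation points); either way we can extract two genuinely distinct points to play with. Let me organize around producing, from this data, a sequence of geodesic segments in $\Omega_n$ whose two endpoints converge to distinct points of $\Cb^d \cup\{\infty\}$, then invoke the visibility hypothesis to get a limiting geodesic $\sigma_\infty$ defined on an interval $(a,b)$, and finally derive a contradiction from the fact that the original $\sigma$ and $\sigma_\infty$ should be forced to agree but have incompatible behavior at their endpoints.

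Here are the steps in order. First I would set up the contradiction hypothesis carefully and reduce to the case that $\sigma(t_k) \to \xi$ for some sequence $t_k \to \infty$, while $\sigma_n(T_n) \to \eta$ with $\xi \ne \eta$. (The sub-case where $\sigma$ has no forward limit is handled the same way by choosing $t_k$ so that $\sigma(t_k) \to \xi$ and $s_k$ with $\sigma(s_k)\to\xi'$, $\xi\ne\xi'$, and running the argument with the geodesic subsegments $\sigma_n|_{[t_k, s_k]}$; I would remark on this rather than repeat it.) Second, fix $k$ large; since $\sigma_n \to \sigma$ locally uniformly and $\sigma(t_k)$ is an interior point of $\Omega_\infty$, for $n$ large $\sigma_n(t_k)$ is well-defined and close to $\sigma(t_k)$, and $t_k \le T_n$ eventually (because $\sigma$ is defined on all of $[0,\infty)$, so $T_n \to \infty$ — this needs a one-line justification: if $T_n$ stayed bounded along a subsequence, the local uniform limit $\sigma$ would only be defined on a bounded interval). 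Third, consider the geodesic segments $\sigma_n|_{[t_k, T_n]}$. Their initial endpoints $\sigma_n(t_k) \to \sigma(t_k)$ as $n \to \infty$, and their terminal endpoints $\sigma_n(T_n) \to \eta$. Now apply a diagonal argument: choose $k = k(n) \to \infty$ slowly enough that $\sigma_n(t_{k(n)})$ still converges to $\xi$ (possible since $\sigma(t_k) \to \xi$); relabel these as geodesics $\tau_n := \sigma_n|_{[t_{k(n)}, T_n]}$ with $\tau_n(t_{k(n)}) \to \xi$ and $\tau_n(T_n) \to \eta$, $\xi \ne \eta$.

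Fourth, invoke Definition~\ref{defn:visibility_seqn}: since $\Omega_n$ is a visibility sequence, there are $n_j \to \infty$ and parameters $S_j \in [t_{k(n_j)}, T_{n_j}]$ such that $\tau_{n_j}(\cdot + S_j)$ converges locally uniformly to a geodesic $\rho:(a,b)\to\Omega_\infty$ with $a = \lim (t_{k(n_j)} - S_j)$ and $b = \lim(T_{n_j}-S_j)$. Fifth — and this is where I expect the main obstacle — I need to locate where the ``recentering'' parameter $S_j$ sits relative to the unshifted $\sigma_n$ picture and extract a contradiction. The key point: $\rho$ is the limit of the same curves $\sigma_{n_j}$ that converge (unshifted) to $\sigma$, so along any parameter that stays within the overlap, $\rho$ and $\sigma$ are translates of the \emph{same} limiting geodesic; hence $\rho(s) = \sigma(s + c)$ for a constant $c$ on the relevant range, provided $S_j - (\text{original basepoint offset})$ stays bounded. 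If instead $S_j \to \infty$ (i.e. the recentering escapes to the far end), then the forward endpoint behavior of $\rho$ near $b$ must reproduce $\eta$, while its backward endpoint near $a$ is either a point of $\sigma(\infty)$ or $\xi$; comparing: $\rho$ is a geodesic line (or ray) in $\Omega_\infty$ whose two ends accumulate at $\xi$ (or a point of $\sigma(\infty)$) and at $\eta$. But $\sigma$ itself is a geodesic ray in $\Omega_\infty$ with $\lim_{k}\sigma(t_k) = \xi$, and $\eta$ (being a limit of $\sigma_n(T_n)$ with $\sigma_n \to \sigma$ locally uniformly, $T_n \to \infty$) lies in the forward accumulation set $\sigma(\infty) \subseteq \partial\Omega_\infty \cup \{\infty\}$ — so $\xi, \eta \in \sigma(\infty)$, i.e. $\sigma$ has (at least) two distinct forward accumulation points. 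That is a statement purely about the single geodesic ray $\sigma$ in the limit domain $\Omega_\infty$, and it contradicts nothing by itself — so the real contradiction must come from pairing this with the visibility-produced geodesic, which forces the "bending into the domain" that is incompatible with a geodesic segment whose endpoints stay near $\partial\Omega_\infty$. Concretely: take the segments $\sigma_{n_j}|_{[s_j, T_{n_j}]}$ where $\sigma(s_j) \to \eta'$ is some forward accumulation point of $\sigma$ distinct from $\eta$ (exists because $\sigma$ has two, as just shown), with $\sigma_{n_j}(s_j) \to \eta'$ and $\sigma_{n_j}(T_{n_j}) \to \eta$; visibility gives a limiting geodesic $\sigma_\infty : (a',b') \to \Omega_\infty$, hence a genuine geodesic line or ray with interior points, whose two ends converge to $\eta'$ and $\eta$ respectively — but then $d_{\Euc}(\sigma_\infty(0), \partial\Omega_\infty) > 0$ forces a uniform lower bound contradicting that the $\sigma_{n_j}$-segments, being long geodesics in domains converging to $\Omega_\infty$ with both endpoints escaping to the boundary, must pass arbitrarily close to $\partial\Omega_\infty$ only if... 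I would instead cleanly finish by the standard observation that a visibility sequence forces: \emph{if} $\sigma_n(a_n) \to \xi$ and $\sigma_n(b_n)\to\eta$ with $\xi\ne\eta$ \emph{then} (after recentering) a subsequential limit geodesic exists, and in particular the recentered basepoints $\sigma_{n_j}(S_j)$ stay in a \emph{compact} subset of $\Omega_\infty$; tracking this compact set back against the unshifted convergence $\sigma_n \to \sigma$ pins down $S_j$ to a bounded range, forcing $\rho = \sigma(\cdot + c)$, whence $b - a < \infty$ would give $T_n$ bounded (contradiction) and $b = \infty$ gives $\lim_{t\to\infty}\sigma(t) = \lim_{t\to\infty}\rho(t-c) = \eta$, contradicting $\sigma(t_k) \to \xi \ne \eta$. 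The delicate bookkeeping of how the two limits (the unshifted one to $\sigma$ and the recentered one to $\rho$) interact — i.e.\ controlling $S_j$ — is the crux, and I would handle it by using properness of $(\Omega_\infty, K_{\Omega_\infty})$ (Theorem~\ref{thm:barth}) together with the fact that $K_{\Omega_{n_j}} \to K_{\Omega_\infty}$ uniformly on compacta (Proposition~\ref{prop:convergence_of_kob}) to convert "the recentered basepoint lies in a fixed compact set" into "$S_j$ lies in a bounded interval."
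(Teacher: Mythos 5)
Your proposal is correct and follows essentially the same route as the paper's proof: negate the conclusion to extract two distinct subsequential limits $\xi\neq\eta$, pass to the late sub-segments $\sigma_n|_{[t_{k(n)},T_n]}$ via a diagonal argument, invoke the visibility hypothesis to produce a recentered limit geodesic $\rho$ with recentering parameters $S_j$, and obtain the contradiction from the behavior of $S_j$. The one streamlining to make: the clean finish is exactly the computation you flag in your last sentence — $S_j = K_{\Omega_{n_j}}\big(\sigma_{n_j}(0),\sigma_{n_j}(S_j)\big) \to K_{\Omega_\infty}\big(\sigma(0),\rho(0)\big) < \infty$ by Proposition~\ref{prop:convergence_of_kob}, while $S_j \geq t_{k(n_j)} \to \infty$ because Definition~\ref{defn:visibility_seqn} forces $S_j$ to lie in the domain of the sub-segment — so the detour through ``$\rho = \sigma(\cdot + c)$'' and the endpoint analysis of $\rho$ (which would anyway presuppose $\lim_{t \to b}\rho(t) = \eta$, something the visibility definition does not directly provide) should be dropped.
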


\begin{proof} Suppose for a contradiction that Equation~\eqref{eq:limits_equal} is false. Then there exist $s_m \rightarrow \infty$, $n_m \rightarrow \infty$, and $\eta, \xi \in \Cb^d \cup \{\infty\}$ such that $\sigma(s_m) \rightarrow \eta$,  $\sigma_{n_m}(T_{n_m}) \rightarrow \xi$, and $\eta \neq \xi$. Since $s_m, T_{n_m} \rightarrow \infty$, Proposition~\ref{prop:convergence_of_kob} implies that $\eta, \xi \in \partial \Omega_\infty \cup \{\infty\}$. 

Since $\sigma_{n}$ converges locally uniformly to $\sigma$ we can pick $s_n^\prime$ so that $\sigma_{n}(s_n^\prime) \rightarrow \eta$. Since $\eta \in \partial \Omega_\infty \cup \{\infty\}$, Observation~\ref{obs:AA_for_geod} implies that $s_n^\prime \rightarrow \infty$. 

Let $a_m  = \min\{ s_{n_m}^\prime, T_{n_m}\}$ and $b_m = \max\{ s_{n_m}^\prime, T_{n_m}\}$. 
Since $\Omega_{n}$ is a visibility sequence we can pass to another subsequence and find $S_m \in [a_m, b_m]$ so that $\sigma_{n_m}(S_m)$ converges to a point $z_\infty \in \Omega_\infty$. Notice that $S_m \rightarrow \infty$ since  $a_{m} \rightarrow \infty$. But then by Proposition~\ref{prop:convergence_of_kob}
\begin{align*}
\infty & > K_{\Omega_\infty}(z_\infty, \sigma(0)) = \lim_{m \rightarrow \infty} K_{\Omega_{n_m}}(\sigma_{n_m}(S_m), \sigma_{n_m}(0))   = \lim_{m \rightarrow \infty} S_m = \infty.
\end{align*}
So we have a contradiction.

\end{proof}

\begin{proof}[Proof of Theorem~\ref{thm:prior_suff}]
Suppose for a contradiction that $(\Omega, K_\Omega)$ is not Gromov hyperbolic. Then by Theorem~\ref{thm:thin_triangle}, for every $n \in \Nb$ there exists a geodesic triangle with vertices $x_n, y_n, z_n \in \Omega$ and edges $\sigma_{x_ny_n}, \sigma_{y_nz_n}, \sigma_{z_nx_n}$ such that 
\begin{align*}
K_\Omega(u_n, \sigma_{y_nz_n} \cup \sigma_{z_nx_n}) > n
\end{align*}
for some $u_n$ in the geodesic $\sigma_{x_ny_n}$. Notice that 
\begin{align}
\label{eq:un_dist_to_xn_yn_zn}
K_\Omega(u_n, \{x_n,y_n,z_n\}) > n.
\end{align}

After possibly passing to a subsequence, there exist affine maps $A_n \in \Aff(\Cb^d)$ such that 
\begin{enumerate}
\item $A_n(\Omega, u_{n})$ converges to some $(\Omega_\infty,u_\infty)$ in $\Xb_{d,0}$,
\item geodesics in $(\Omega_\infty, K_{\Omega_\infty})$ are well behaved, and
\item $A_n\Omega$ is a visibility sequence. 
\end{enumerate}
By passing to another subsequence we can suppose that $A_nx_n$, $A_ny_n$, $A_nz_n$ converge to $x_\infty$, $y_\infty$, $z_\infty$ in $\Cb^d \cup \{\infty\}$. 

We can parameterize $\sigma_{x_ny_n}:[a_n,b_n] \rightarrow \Omega$ so that $\sigma_{x_ny_n}(0)=u_n$. Notice that Equation~\eqref{eq:un_dist_to_xn_yn_zn} implies that 
\begin{align*}
\lim_{n \rightarrow \infty} a_n = \lim_{n \rightarrow \infty} - K_\Omega(x_n,u_n)=- \infty 
\end{align*} 
and 
\begin{align*}
\lim_{n \rightarrow \infty} b_n = \lim_{n \rightarrow \infty} K_\Omega(u_n,y_n)=\infty.
\end{align*} 
Observation~\ref{obs:AA_for_geod} implies that we can pass to a subsequence so that $A_n\sigma_{x_ny_n}$ converges to a geodesic $\sigma_{xy} : \Rb \rightarrow \Omega_\infty$ with $\sigma_{xy}(0) = u_\infty$. 

By Lemma~\ref{lem:limits_of_geod}
\begin{align*}
\lim_{t \rightarrow -\infty} \sigma_{xy}(t) = \lim_{n \rightarrow \infty} A_n x_n = x_\infty
\end{align*}
and
\begin{align*}
\lim_{t \rightarrow \infty} \sigma_{xy}(t) = \lim_{n \rightarrow \infty} A_n y_n = y_\infty. 
\end{align*}
Since geodesics in $(\Omega_\infty, K_{\Omega_\infty})$ are well behaved, we have $x_\infty \neq y_\infty$. So by possibly relabelling $x_n$ and $y_n$, we may assume that $z_\infty \neq x_\infty$. Then since $A_n\Omega$ is a visibility sequence, we can pass to a subsequence and reparametrize $\sigma_{z_nx_n}$ to assume that $A_n\sigma_{z_nx_n}(0)$ converges to a point $w_\infty \in \Omega_\infty$. Then by Proposition~\ref{prop:convergence_of_kob}
\begin{align*}
K_{\Omega_\infty}(u_\infty,w_\infty) 
&= \lim_{n \rightarrow \infty} K_{A_n\Omega}(A_nu_n, A_n\sigma_{z_nx_n}(0)) =
\lim_{n \rightarrow \infty} K_{\Omega}(u_n, \sigma_{z_nx_n}(0)) \\
& \geq \lim_{n \rightarrow \infty} K_{\Omega}(u_n, \sigma_{z_nx_n})\geq \lim_{n \rightarrow \infty} n = \infty.
\end{align*}
So we have a contradiction. 

\end{proof}

\subsection{A sufficient condition for visibility}\label{sec:suff_for_visible}

Motivated by work of Mercer, in~\cite{Z2016} we established a sufficient condition for a sequence of convex domains to be a visibility sequence. 

\begin{definition}[{Mercer \cite[Definition 2.7]{M1993}}] For $m \geq 1$, a bounded convex domain $\Omega$ is called \emph{$m$-convex} if there exists $C >0$ such that 
\begin{align}
\label{eqn:defn_m_convex}
\delta_{\Omega}(z; v) \leq C \delta_{\Omega}(z)^{1/m}
\end{align}
for all $z \in \Omega$ and non-zero $v \in \Cb^d$.
\end{definition}

\begin{remark}\label{rmk:m_convex_values} When $d=1$, any convex domain is 1-convex since $\delta_\Omega(z;v) = \delta_\Omega(z)$ for every $z \in \Omega$ and non-zero $v \in \Cb$. When $d \geq 2$, any $m$ that satisfies Equation~\eqref{eqn:defn_m_convex} has to be at least two: by Alexandrov's theorem $\partial \Omega$ contains a $C^2$ point $\xi$ and 
\begin{align*}
\delta_\Omega(\xi + t n_\xi; v) \approx t^{1/2} \approx \delta_\Omega(\xi + t n_\xi)^{1/2}
\end{align*}
if $t > 0$ is sufficiently small, $n_\xi$ is the inward pointing unit normal vector of $\partial \Omega$ at $\xi$, and $\xi+\Cb \cdot v$ is tangent to $\partial \Omega$ at $\xi$.
\end{remark}

When $\Omega$ is a smoothly bounded convex domain, it is easy to show that $\Omega$ is $m$-convex for some $m$ if and only if $\partial \Omega$ has finite type in the sense of D'Angelo, see for instance~\cite[Section 9]{Z2016}. Thus, for convex domains $m$-convexity can be viewed as a low regularity analogue of finite type. 

For $m$-convex domains, Mercer proved a type of visibility result for complex geodesics, see~\cite[Lemma 3.3]{M1993}. Motivated by this result we established the following visibility result for sequences of domains. 

\begin{proposition}\label{prop:visible_sequence}\cite[Proposition 7.8]{Z2016}
Suppose $\Omega_n$ converges to $\Omega$ in $\Xb_d$. Assume for any $R>0$ there exist $C, N>0$ and $m \geq 1$ such that 
\begin{align*}
\delta_{\Omega_n}(z; v) \leq C \delta_{\Omega_n}(z)^{1/m}
\end{align*}
for all $n\geq N$, $z \in \Bb_d(0;R) \cap \Omega_n$, and $v \in \Cb^d$ non-zero. Then $\Omega_n$ is a visibility sequence. 
\end{proposition}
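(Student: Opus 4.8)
The plan is to argue by contradiction, following the structure of Mercer's visibility argument for complex geodesics but adapted to a sequence of domains. Suppose $\Omega_n \to \Omega$ in $\Xb_d$ and that the uniform $m$-convexity estimate holds on balls, but $\Omega_n$ fails to be a visibility sequence. Then there is a sequence of geodesics $\sigma_n : [a_n, b_n] \to \Omega_n$ with $\sigma_n(a_n) \to \xi$, $\sigma_n(b_n) \to \eta$, $\xi \neq \eta$, yet no subsequence of the reparametrized geodesics converges locally uniformly to a geodesic line/segment of the ``correct'' length. The first step is to reduce to a bounded region: since $\xi \neq \eta$ and both endpoints converge, I want to extract a compact set $K \subset \Cb^d$ that the geodesics must pass through but near which they cannot stay — more precisely, I will show the geodesics must repeatedly exit a neighborhood of $\partial\Omega$, forcing some point $\sigma_n(t_n)$ with $t_n$ bounded away from the endpoints to stay in a fixed compact subset of the interiors, from which Arzel\`a–Ascoli (via Proposition~\ref{prop:convergence_of_kob}) would give a limiting geodesic — contradicting the failure of visibility unless the geodesics escape to the boundary.

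The heart of the matter is therefore a quantitative estimate showing that a Kobayashi geodesic between two points that are ``far apart in direction'' must penetrate a definite distance into $\Omega_n$. Here is where the $m$-convexity hypothesis enters. Fix $R$ so that $\xi, \eta$ (if finite) lie well inside $\Bb_d(0;R)$; by the uniform estimate there are $C, N, m$ with $\delta_{\Omega_n}(z;v) \le C\,\delta_{\Omega_n}(z)^{1/m}$ for $n \ge N$ and $z \in \Bb_d(0;R)\cap\Omega_n$. Suppose the geodesic $\sigma_n$ stays within Euclidean distance $\epsilon$ of $\partial\Omega_n$ throughout. Using Lemma~\ref{lem:kob_inf_bound}, the Kobayashi length of a curve that stays $\epsilon$-close to the boundary and has Euclidean diameter $\asymp \norm{\xi - \eta}$ is bounded \emph{below} by something like $\tfrac{1}{2}\norm{\xi-\eta}/\sup\delta_{\Omega_n}(z;v)$ along the relevant direction, which by $m$-convexity is $\gtrsim \norm{\xi-\eta}\,\epsilon^{-1/m}$ — this blows up as $\epsilon \to 0$, which is the wrong direction. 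Instead the right move (this is Mercer's trick) is: take the midpoint $w_n$ of the geodesic; if $\delta_{\Omega_n}(w_n)$ were tiny, then using Lemma~\ref{lem:hyperplanes} applied to a supporting hyperplane near the near-boundary point, together with the $m$-convexity bound controlling the transverse extent $\delta_{\Omega_n}(w_n; v) \lesssim \delta_{\Omega_n}(w_n)^{1/m}$, one shows the two endpoints $\sigma_n(a_n), \sigma_n(b_n)$ are \emph{forced to be close} to each other (distance $\to 0$), contradicting $\xi \neq \eta$. So $\delta_{\Omega_n}(w_n)$ is bounded below, and the $w_n$ (or at least $\sigma_n(t_n)$ for $t_n$ a bounded distance from $a_n$) lie in a compact subset of $\Omega_n$; passing to a subsequence and applying Proposition~\ref{prop:convergence_of_kob} plus Arzel\`a–Ascoli yields the desired limiting geodesic with the correct domain of definition, i.e.\ the visibility conclusion.

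I would organize the write-up as: (i) set up the contradiction and fix $R$, $C$, $N$, $m$; (ii) prove the key lemma — if $z_1, z_2 \in \Bb_d(0;R)\cap\Omega_n$ and $[z_1,z_2]$ passes within $\epsilon$ of a boundary point whose supporting hyperplane is $H$, then $K_{\Omega_n}(z_1,z_2) \ge$ (something large) unless $\norm{z_1 - z_2}$ is correspondingly small — via Lemma~\ref{lem:hyperplanes} (estimating $d_{\Euc}(H, z_i)$ from below for the endpoint that is far from the near-boundary midpoint) combined with $m$-convexity to see that if $z_i$ is also near $H$ then $z_i$ is pinned near the near-boundary point; (iii) apply this with $z_1 = \sigma_n(a_n + s)$, $z_2 = \sigma_n(b_n - s)$ for suitable $s$, using that $\sigma_n$ is a \emph{geodesic} (so $K_{\Omega_n}(z_1,z_2) = \abs{(b_n - s) - (a_n + s)}$ is huge) to conclude the minimum of $\delta_{\Omega_n}$ along the middle portion is bounded below; (iv) extract the locally uniform limit and identify its domain of definition, matching Definition~\ref{defn:visibility_seqn}. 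The main obstacle I anticipate is step (ii): getting the geometry right so that the $m$-convexity estimate and the hyperplane estimate combine in the correct direction — one must be careful that $m$-convexity is being used to bound the \emph{transverse width} of $\Omega_n$ near its boundary, which is what prevents a near-boundary geodesic from connecting two genuinely distant boundary points cheaply, and that the point at which the supporting hyperplane is chosen is the near-boundary point of the geodesic, not one of the endpoints.
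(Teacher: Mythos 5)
The paper does not actually prove Proposition~\ref{prop:visible_sequence}: it is imported from \cite[Proposition 7.8]{Z2016}, whose argument is indirect (a uniform H\"older estimate for \emph{complex} geodesics following Mercer, then a transfer to real geodesics), with a direct argument attributed to \cite[Proposition 4.5.10]{BC2017}. Your proposal attempts the direct route and assembles the right raw materials --- Lemma~\ref{lem:kob_inf_bound} for infinitesimal lower bounds, Lemma~\ref{lem:hyperplanes}, the $m$-convexity hypothesis to control transverse width, and Proposition~\ref{prop:convergence_of_kob} plus Arzel\`a--Ascoli for the final extraction --- but the central quantitative step (that the geodesics must penetrate a definite distance into $\Omega_n$) is not carried out, and the assembly you sketch in steps (ii)--(iii) does not close.

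Concretely: your key lemma has the form ``$K_{\Omega_n}(z_1,z_2)$ is large unless $\norm{z_1-z_2}$ is small,'' and you propose to exploit it by observing that $K_{\Omega_n}(\sigma_n(a_n+s),\sigma_n(b_n-s))=(b_n-a_n)-2s$ is huge. But ``$K$ large'' is the vacuous horn of that dichotomy: since $\sigma_n(a_n)$ and $\sigma_n(b_n)$ converge to boundary points, $b_n-a_n\to\infty$ whether or not the geodesic hugs the boundary, so no lower bound on $\delta_{\Omega_n}$ along the middle portion follows. The same problem defeats your first estimate: the lower bound $\ell_{\Omega_n}(\sigma_n)\gtrsim\norm{\xi-\eta}\,\epsilon^{-1/m}$ (valid when the geodesic stays $\epsilon$-close to $\partial\Omega_n$) would have to be played off against an \emph{upper} bound for $K_{\Omega_n}(\sigma_n(a_n),\sigma_n(b_n))$, and the only available one, $\tfrac12\log\delta_{\Omega_n}(\sigma_n(a_n))^{-1}+\tfrac12\log\delta_{\Omega_n}(\sigma_n(b_n))^{-1}+C$, also blows up. The missing idea is an \emph{integrated} estimate along the geodesic: for a unit-speed geodesic, Lemma~\ref{lem:kob_inf_bound} gives $\norm{\sigma_n'(t)}\leq 2\,\delta_{\Omega_n}(\sigma_n(t);\sigma_n'(t))\leq 2C\,\delta_{\Omega_n}(\sigma_n(t))^{1/m}$ a.e.; combining this with the two-sided comparison $\delta_{\Omega_n}(z)\asymp e^{-2K_{\Omega_n}(z_0,z)}$ on $\Bb_d(0;R)\cap\Omega_n$ (one direction from Lemma~\ref{lem:hyperplanes} applied to a supporting hyperplane at the nearest boundary point, the other from integrating the upper bound of Lemma~\ref{lem:kob_inf_bound} along $[z_0,z]$) and with $K_{\Omega_n}(z_0,\sigma_n(t))\geq\max\{K_{\Omega_n}(z_0,\sigma_n(a_n))-(t-a_n),\,K_{\Omega_n}(z_0,\sigma_n(b_n))-(b_n-t),\,D_n\}$, where $D_n:=K_{\Omega_n}(z_0,\sigma_n)$, and then integrating in $t$, yields $\norm{\sigma_n(a_n)-\sigma_n(b_n)}\leq C'(1+D_n)e^{-2D_n/m}$. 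Since the left-hand side stays bounded below by roughly $\norm{\xi-\eta}>0$, the $D_n$ are bounded, so the geodesics pass through a fixed compact subset of the limit domain, and only then does your Arzel\`a--Ascoli step apply. Without this integration step (or Mercer's H\"older estimate for complex geodesics, which is the route taken in \cite{Z2016}), the argument has a genuine gap.
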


The proof in~\cite[Proposition 7.8]{Z2016} is somewhat indirect: first a visibility result for complex geodesics is established and then this is used to establish a visibility result for geodesics. A more direct argument can be found in~\cite[Proposition 4.5.10]{BC2017}.

\subsection{Outline of the proof of Theorem~\ref{thm:main_equivalence}} Theorem~\ref{thm:prior_nec} provides one direction of the desired equivalence, so we only have to consider the case when $\Omega \subset \Cb^d$ is a bounded convex domain and every domain in 
\begin{align*}
\overline{\Aff(\Cb^d) \cdot \Omega} \cap \Xb_d
\end{align*}
has simple boundary. 

We will use Theorem~\ref{thm:prior_suff} to show that $(\Omega, K_\Omega)$ is Gromov hyperbolic. Here is the sketch of the argument: fix a sequence $u_n \in \Omega$. Then by Theorem~\ref{thm:frankel_compactness} we can find a sequence of affine maps $A_n$ such that $\{A_n(\Omega, u_n) : n \in \Nb\}$ is relatively compact in $\Xb_{d,0}$. Then by passing to a subsequence we can suppose that $A_n(\Omega,u_n)$ converges to some $(\Omega_\infty,u_\infty) \in \Xb_{d,0}$. To apply Theorem~\ref{thm:prior_suff}, we need to show that $A_n \Omega$ is a visibility sequence and geodesics in $\Omega_\infty$ are well behaved. This will be accomplished as follows:
\begin{enumerate}
\item In Section~\ref{sec:local_m_convexity}, we prove general results which imply that $A_n \Omega$ satisfies the hypothesis of Proposition~\ref{prop:visible_sequence} and hence is a visibility sequence. 
\item In Section~\ref{sec:m_convex_vs_gromov}, we discuss the general relationship between $m$-convexity and Gromov hyperbolicity. This is not necessary for the proof of Theorem~\ref{thm:main_equivalence}, but clarifies the relationship between the two definitions. 
\item In Section~\ref{sec:behavior_of_geodesics}, we prove general results which will imply that geodesics in $\Omega_\infty$ are well behaved. 
\item In Section~\ref{sec:pf_of_thm_main_equivalence}, we prove a generalization of Theorem~\ref{thm:main_equivalence}.
\end{enumerate}

\section{Local $m$-convexity}\label{sec:local_m_convexity}

In this section we establish the following sufficient condition for a local $m$-convexity condition to hold. 

\begin{theorem}\label{thm:loc_m_convex} Suppose $\Kc \subset \Xb_d$ is a compact set and every domain in 
\begin{align*}
\overline{\Aff(\Cb^d) \cdot \Kc} \cap \Xb_d
\end{align*} 
has simple boundary. Then for any $R > 0$ there exist $C>0$ and $m \geq 1$ such that 
\begin{align*}
\delta_{\Omega}(z; v) \leq C \delta_{\Omega}(z)^{1/m}
\end{align*}
for all $\Omega \in \Kc$, $z \in \Bb_d(0;R) \cap \Omega$, and $v \in \Cb^d$ non-zero. 
\end{theorem}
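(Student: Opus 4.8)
The plan is to argue by contradiction via an anisotropic rescaling, using Theorem~\ref{thm:normalizing} to put the rescaled domains into the compact sets $\Kb_d(r)$. Suppose the conclusion fails for some $R>0$. Taking $C=m=n$ in the negation, we get $\Omega_n\in\Kc$, points $z_n\in\Bb_d(0;R)\cap\Omega_n$, and unit vectors $v_n$ with $\delta_{\Omega_n}(z_n;v_n)>n\,\delta_{\Omega_n}(z_n)^{1/n}$; write $\epsilon_n=\delta_{\Omega_n}(z_n)$ and $\eta_n=\delta_{\Omega_n}(z_n;v_n)$. First I would record, after passing to a subsequence with $\Omega_n\to\Omega_\infty\in\Kc$ (compactness of $\Kc$), $z_n\to z_\infty$, $v_n\to v_\infty$: (a) $\eta_n$ is bounded, since otherwise, as the disk $z_n+\rho\Db\cdot v_n$ lies in $\Omega_n$ whenever $\rho<\eta_n$, Proposition~\ref{prop:haus_conv_compacts} would force $\overline{\Omega_\infty}$ to contain the complex line $z_\infty+\Cb\cdot v_\infty$; this is impossible because $\Omega_\infty$ is $\Cb$-properly convex, and because $\Omega_\infty\in\Kc\subset\overline{\Aff(\Cb^d)\cdot\Kc}\cap\Xb_d$ has simple boundary (Proposition~\ref{prop:affine_vs_holomorphic_disks} rules out the line lying in $\partial\Omega_\infty$); (b) $\epsilon_n\to0$, since otherwise $z_\infty\in\Omega_\infty$ and $\eta_n\to\delta_{\Omega_\infty}(z_\infty;v_\infty)<\infty$ while $n\,\epsilon_n^{1/n}\to\infty$; (c) consequently $\epsilon_n<(\eta_n/n)^n$, which with the bound on $\eta_n$ gives $\epsilon_n=o(\eta_n^K)$ for every $K\in\Nb$, and in particular $r_n^{1/n}\to0$ where $r_n:=\epsilon_n/\eta_n$.

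Next I would rescale. After a phase rotation of $v_n$ the point $\xi_n:=z_n+\eta_n v_n$ lies in $\partial\Omega_n$; fix a supporting hyperplane $H_n$ of $\Omega_n$ at $\xi_n$ and apply Theorem~\ref{thm:normalizing} with $(z_0,\xi,H,q)=(z_n,\xi_n,H_n,z_n)$. This produces an affine map $A_n$ with $A_n\Omega_n\in\Kb_d(r_n)$, $A_n(z_n)=0$, and $A_n(\xi_n)=e_1$. Since $A_n$ is affine with $A_n(z_n)=0$ and $A_n(z_n+\eta_n v_n)=e_1$, its linear part sends $\eta_n v_n$ to $e_1$, so $A_n$ carries the disk $\{z_n+\eta_n\zeta v_n:\zeta\in\Db\}\subset\Omega_n$ onto $\Db\cdot e_1$; together with the inclusions $\Db\cdot e_j\subset A_n\Omega_n$ for $j\ge2$ from the definition of $\Kb_d$, this gives $\tfrac1d\Db^d\subset A_n\Omega_n$ for all $n$. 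By local Hausdorff compactness we may pass to a subsequence along which $A_n\Omega_n$ converges to a closed convex set whose (nonempty) interior $\Omega_\infty^*$ contains $\tfrac1d\Db^d$; moreover $\Db\cdot e_1\subset\Omega_\infty^*$, the conditions $e_j\in\partial\Omega_\infty^*$ and $\big(e_j+\Span_\Cb\{e_{j+1},\dots,e_d\}\big)\cap\Omega_\infty^*=\emptyset$ pass to the limit by Proposition~\ref{prop:haus_conv_compacts}, and, exactly as in the proof of Proposition~\ref{prop:Kdr_compact}, these force $\Omega_\infty^*\in\Xb_d$. Hence $\Omega_\infty^*\in\overline{\Aff(\Cb^d)\cdot\Kc}\cap\Xb_d$, so $\Omega_\infty^*$ has simple boundary.

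The main obstacle — the technical heart of the argument — is to contradict this by exhibiting a non-constant affine disk in $\partial\Omega_\infty^*$. I expect to produce it from the images under $A_n$ of the supporting complex hyperplanes $G_n$ to $\Omega_n$ at the boundary points nearest $z_n$: one shows that $A_n(G_n)$ subconverges to a complex hyperplane $G_\infty$ supporting $\Omega_\infty^*$, and the key claim is that $G_\infty\cap\partial\Omega_\infty^*$ contains a disk of positive radius parallel to $e_1$. The mechanism is the usual one: writing $\partial\Omega_n$ near the nearest point as a convex graph over the corresponding tangent hyperplane, the graphing function is $0$ at the nearest point and $O(\epsilon_n)$ at horizontal distance $\asymp\eta_n$ (this is what $\eta_n=\delta_{\Omega_n}(z_n;v_n)$ says, once one checks $v_n$ is nearly tangential, with angle defect $\lesssim\epsilon_n/\eta_n$), so by convexity it is $O(\epsilon_n)$ on the whole horizontal segment of length $\asymp\eta_n$; after the rescaling — which magnifies the $v_n$-direction by $\asymp1/\eta_n$ and the normal direction by $\asymp1/\epsilon_n$ — this graph over the unit disk arising from $\Db\cdot e_1$ becomes a convex function bounded by $\epsilon_n/\epsilon_n\asymp1$, and the hypothesis $\epsilon_n=o(\eta_n^K)$ for all $K$ (equivalently $r_n^{1/n}\to0$) forces it to flatten to a constant on the full disk rather than limiting to a genuinely curved profile such as $\abs{\,\cdot\,}^{2k}$. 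A disk of positive radius then sits on $G_\infty\cap\partial\Omega_\infty^*$, contradicting the simple boundary of $\Omega_\infty^*$ via Proposition~\ref{prop:affine_vs_holomorphic_disks}. I expect the step requiring the most care to be this final limiting analysis: controlling the boundary graphs uniformly in $n$, identifying the limiting hyperplane $G_\infty$, and pinning down the radius of the resulting disk.
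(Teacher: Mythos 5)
Your setup is sound and parallels the paper's: the negation with $C=m=n$, the boundedness of $\eta_n=\delta_{\Omega_n}(z_n;v_n)$, the fact that $\epsilon_n=\delta_{\Omega_n}(z_n)\to0$ with $\epsilon_n=o(\eta_n^K)$ for every $K$, and the use of Theorem~\ref{thm:normalizing} (plus the observation that the full disk $\Db\cdot e_1$ is inscribed) to place the rescaled domains in the compact family $\Kb_d(1)$. The gap is in the final ``flattening'' step, which is the entire content of the theorem, and the mechanism you propose for it does not work. After the anisotropic rescaling (by $\asymp 1/\eta_n$ in the $v_n$-direction and $\asymp1/\epsilon_n$ in the normal direction), the boundary graph becomes $\widetilde\phi_n(u)=\phi_n(\eta_n u)/\epsilon_n$, a convex function with $\widetilde\phi_n(0)=0$ and $\widetilde\phi_n\le1$ on the unit disk; its limit can be \emph{any} such convex function. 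The information $\epsilon_n=o(\eta_n^K)$ constrains the unrescaled $\phi_n$ but is completely erased by the rescaling: the model $\phi_n(h)=(\epsilon_n/\eta_n^2)\abs{h}^2$ is compatible with an arbitrary relation between $\epsilon_n$ and $\eta_n$ (including $\epsilon_n=e^{-1/\eta_n}$) and rescales to exactly $\abs{u}^2$ for every $n$. So your limit $\Omega_\infty^*$ can perfectly well be a paraboloid-type domain with simple boundary, and no contradiction results. A single rescaled limit centered at $z_n$ simply does not detect the degeneration; the degeneration only becomes visible at a different scale along the normal ray (in the model above, at unit height the slice in the $v_n$-direction has radius $\eta_n\epsilon_n^{-1/2}\to\infty$).

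The paper supplies exactly the two ingredients your outline is missing. First, before rescaling it replaces $z_m$ by the point $t_mz_m$ on the segment back to the basepoint that \emph{minimizes} $r_{\Omega_m}(tz_m)^{1/m}/\delta_{\Omega_m}(tz_m;v_m)$; this extremal choice upgrades the single inequality at $z_m$ to $\delta_{\Omega_m}(tz_m;v_m)\le C_m\,r_{\Omega_m}(tz_m)^{1/m}$ for all $t\in(0,1]$, i.e.\ along the entire ray. Second, in the limit $D_1$ of the rescaled domains this yields an inscribed half-tube $(-\infty,0]\cdot e_1+\Db\cdot e_2\subset D_1$ whose boundary is touched at every cross-section ($\delta_{D_1}(-te_1;e_2)=1$ for all $t\ge0$, the paper's Claim 3), and a \emph{second} rescaling translating along the tube then produces a limit $D_2$ with the affine disk $e_1+\Db\cdot e_2$ in its boundary. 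Without the extremal repositioning the first limit carries no more information than membership in $\Kb_d(r)$, which is compatible with simple boundary, and without the second limit the tube structure never degenerates into a boundary disk. To repair your argument you would need to incorporate both steps (or an equivalent choice of the ``worst scale'' along the normal ray before taking the limit).
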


Before proving the theorem, we state and prove two corollaries.

\begin{corollary}\label{cor:m_convex}
Suppose  $\Omega$ is a $\Cb$-properly convex domain and every domain in $\overline{\Aff(\Cb^d) \cdot \Omega} \cap \Xb_d$ has simple boundary. Then for any $R > 0$ there exist $C>0$ and $m\geq 1$ such that 
\begin{align*}
\delta_{\Omega}(z; v) \leq C \delta_{\Omega}(z)^{1/m}
\end{align*}
for all $z \in \Bb_d(0;R) \cap \Omega$ and $v \in \Cb^d$ non-zero. 
\end{corollary}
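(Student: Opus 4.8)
The plan is to deduce this directly from Theorem~\ref{thm:loc_m_convex} by taking the compact set $\Kc$ to be the singleton $\{\Omega\}$. First I would observe that $\{\Omega\}$ is a compact subset of $\Xb_d$, since any one-point set in a topological space is compact. Next, because $\Aff(\Cb^d) \cdot \{\Omega\} = \Aff(\Cb^d) \cdot \Omega$, the orbit closures coincide:
\begin{align*}
\overline{\Aff(\Cb^d) \cdot \{\Omega\}} \cap \Xb_d = \overline{\Aff(\Cb^d) \cdot \Omega} \cap \Xb_d.
\end{align*}
Hence the standing hypothesis, that every domain in $\overline{\Aff(\Cb^d) \cdot \Omega} \cap \Xb_d$ has simple boundary, is exactly the hypothesis of Theorem~\ref{thm:loc_m_convex} for $\Kc = \{\Omega\}$.

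Applying Theorem~\ref{thm:loc_m_convex} to this $\Kc$ and the given $R > 0$ then produces constants $C > 0$ and $m \geq 1$ such that $\delta_{\Omega}(z;v) \leq C \delta_{\Omega}(z)^{1/m}$ for all $\Omega \in \{\Omega\}$ — i.e.\ for the domain $\Omega$ itself — for every $z \in \Bb_d(0;R) \cap \Omega$ and every non-zero $v \in \Cb^d$. This is precisely the claimed estimate, so the corollary follows.

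There is no substantive obstacle: the entire content is carried by Theorem~\ref{thm:loc_m_convex}, and the passage to a singleton is purely formal. The only point worth recording explicitly is that the conclusion of Theorem~\ref{thm:loc_m_convex} is \emph{uniform} over $\Omega \in \Kc$, so restricting to a one-element family costs nothing; this is also why stating the corollary separately is useful later, when the theorem will be invoked for families of rescaled or limiting domains rather than for a single fixed one.
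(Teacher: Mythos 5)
Your proposal is correct and coincides with the paper's own proof, which likewise applies Theorem~\ref{thm:loc_m_convex} to the singleton $\Kc = \{\Omega\}$. The additional remarks on compactness of the singleton and equality of orbit closures are accurate but routine.
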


\begin{proof}[Proof of Corollary~\ref{cor:m_convex}]
Simply apply Theorem~\ref{thm:loc_m_convex} to $\Kc := \{ \Omega\}$. 
\end{proof}

\begin{corollary}\label{cor:simple_bd_implies_visibility}
Suppose $\Omega$ is a $\Cb$-properly convex domain and every domain in $\overline{\Aff(\Cb^d) \cdot \Omega} \cap \Xb_d$ has simple boundary. If $A_n \in \Aff(\Cb^d)$ is a sequence of affine maps such that $A_n \Omega$ converges to some $\Omega_\infty$ in $\Xb_{d}$, then the sequence $A_n \Omega$ is a visibility sequence. \end{corollary}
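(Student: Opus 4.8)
\textbf{Proof proposal for Corollary~\ref{cor:simple_bd_implies_visibility}.}

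The plan is to deduce this directly from Corollary~\ref{cor:m_convex} together with the visibility criterion of Proposition~\ref{prop:visible_sequence}, using an affine change of coordinates so that the sequence $A_n\Omega$ fits into the framework of a single compact family. First I would fix $R > 0$ and aim to produce constants $C, N > 0$ and an exponent $m \geq 1$ such that $\delta_{A_n\Omega}(z;v) \leq C\,\delta_{A_n\Omega}(z)^{1/m}$ for all $n \geq N$, all $z \in \Bb_d(0;R) \cap A_n\Omega$, and all nonzero $v$; Proposition~\ref{prop:visible_sequence} then immediately gives that $A_n\Omega$ is a visibility sequence.

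The subtlety is that Corollary~\ref{cor:m_convex} gives an $m$-convexity estimate on bounded subsets of $\Omega$ itself, not of the $A_n\Omega$, and the maps $A_n$ may distort balls of radius $R$ in $A_n\Omega$ to very large (or very small) regions of $\Omega$. So the first real step is a normalization. Since $A_n\Omega$ converges to $\Omega_\infty \in \Xb_d$, after translating we may assume $0 \in \Omega_\infty$; by Proposition~\ref{prop:haus_conv_compacts}(1) there is a fixed ball $\Bb_d(0;\rho) \subset A_n\Omega$ for all large $n$, so $A_n^{-1}(\Bb_d(0;\rho)) \subset \Omega$ lies in a fixed compact subset of $\Omega$ (using that $K_\Omega$-balls of fixed radius about $A_n^{-1}(0)$ have controlled Euclidean size only if $A_n^{-1}(0)$ stays in a compact set — which one arranges by noting $A_n^{-1}(0)$ is the image under the isometry $A_n^{-1}$ of the bounded-$K$ region, hence...). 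Actually the cleaner route: apply Corollary~\ref{cor:m_convex} not to $\Omega$ but observe that the family $\{A_n\Omega : n \geq N\} \cup \{\Omega_\infty\}$ is relatively compact in $\Xb_d$ by hypothesis, and every domain in $\overline{\Aff(\Cb^d)\cdot \Omega}\cap\Xb_d$ — which contains $\Omega_\infty$ and every $A_n\Omega$, and is closed and $\Aff$-invariant — has simple boundary; moreover $\overline{\Aff(\Cb^d)\cdot(A_n\Omega)} = \overline{\Aff(\Cb^d)\cdot\Omega}$ for each $n$. Hence the closure of this relatively compact family is a compact set $\Kc \subset \Xb_d$ with $\overline{\Aff(\Cb^d)\cdot\Kc}\cap\Xb_d \subset \overline{\Aff(\Cb^d)\cdot\Omega}\cap\Xb_d$, so every domain in it has simple boundary.

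Therefore Theorem~\ref{thm:loc_m_convex} applies to $\Kc$: for the given $R$ there exist $C > 0$ and $m \geq 1$ so that $\delta_D(z;v)\leq C\,\delta_D(z)^{1/m}$ for all $D \in \Kc$, all $z \in \Bb_d(0;R)\cap D$, and all nonzero $v$. Since $A_n\Omega \in \Kc$ for all large $n$, the hypothesis of Proposition~\ref{prop:visible_sequence} is verified, and we conclude that $A_n\Omega$ is a visibility sequence. The main obstacle, and the point requiring the most care, is the bookkeeping in the previous paragraph: checking that the relatively compact family $\{A_n\Omega\}\cup\{\Omega_\infty\}$ has closure still contained in the simple-boundary class — i.e. that $\overline{\Aff(\Cb^d)\cdot(A_n\Omega)}$ does not escape $\overline{\Aff(\Cb^d)\cdot\Omega}$, which holds because $A_n\Omega \in \Aff(\Cb^d)\cdot\Omega$ so their orbits, hence orbit closures, coincide. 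Once that is in place the result is a one-line invocation of Theorem~\ref{thm:loc_m_convex} and Proposition~\ref{prop:visible_sequence}.
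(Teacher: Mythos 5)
Your proposal is correct and follows essentially the same route as the paper: form the compact family $\Kc = \{A_n\Omega : n \geq 1\} \cup \{\Omega_\infty\}$, observe that $\overline{\Aff(\Cb^d)\cdot\Kc}\cap\Xb_d \subset \overline{\Aff(\Cb^d)\cdot\Omega}\cap\Xb_d$ so every domain in it has simple boundary, apply Theorem~\ref{thm:loc_m_convex} to get the uniform local $m$-convexity estimate, and conclude via Proposition~\ref{prop:visible_sequence}. The initial digression about normalizing via a fixed ball in $\Omega_\infty$ is unnecessary (as you yourself recognize), but the argument you settle on is exactly the paper's.
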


\begin{proof}
Since $A_n \Omega$ converges to $\Omega_\infty$, the set $\Kc = \{ A_n \Omega : n \geq 1\} \cup \{\Omega_\infty\}$ is compact in $\Xb_{d}$. Further, 
\begin{align*}
\overline{\Aff(\Cb^d) \cdot \Omega_\infty} \cap \Xb_d \subset \overline{\Aff(\Cb^d) \cdot \Omega} \cap \Xb_d
\end{align*} 
and so
\begin{align*}
\overline{\Aff(\Cb^d) \cdot \Kc} \cap \Xb_d = \overline{\Aff(\Cb^d) \cdot \Omega} \cap \Xb_d.
\end{align*} 
So Theorem~\ref{thm:loc_m_convex} implies that for any $R > 0$ there exist $C>0$ and $m\geq 1$ such that 
\begin{align*}
\delta_{A_n\Omega}(z; v) \leq C \delta_{A_n\Omega}(z)^{1/m}
\end{align*}
for all $n \geq 1$, $z \in \Bb_d(0;R) \cap A_n\Omega$, and $v \in \Cb^d$ non-zero. Then $A_n\Omega$ is a visibility sequence by Proposition~\ref{prop:visible_sequence}.

\end{proof}

The rest of the section is devoted to the proof of Theorem~\ref{thm:loc_m_convex}. So fix a compact set $\Kc \subset \Xb_d$ where every domain in $\overline{\Aff(\Cb^d) \cdot \Kc} \cap \Xb_d$ has simple boundary.  

\begin{lemma}\label{lem:WOLOG_contains_zero} Without loss of generality we can assume that $0 \in \Omega$ for every $\Omega \in \Kc$. \end{lemma}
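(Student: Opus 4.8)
The plan is to reduce to the case $0 \in \Omega$ by a compactness-plus-covering argument, using only affine translations so that the hypothesis on $\overline{\Aff(\Cb^d) \cdot \Kc} \cap \Xb_d$ is preserved. First I would observe that if a domain $\Omega \in \Kc$ does \emph{not} contain the origin, we still want to move some interior point of $\Omega$ to $0$ and thereby land in a new compact family. The subtle point is that a single translation works for one domain, but we need a \emph{uniform} $C$ and $m$ over all of $\Kc$; so the reduction has to be done at the level of the whole compact set.

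Here are the steps I would carry out. (1) For each $\Omega \in \Kc$, the set $\Omega$ is open and nonempty, so pick $p_\Omega \in \Omega$; by Proposition~\ref{prop:haus_conv_compacts}(1) (or directly from the definition of the local Hausdorff topology) the condition ``$p \in \Omega$'' is an open condition in $(\Omega, p) \in \Xb_{d,0}$, and one can choose $p_\Omega$ locally continuously in a neighborhood of $\Omega$ in $\Kc$. (2) Since $\Kc$ is compact, cover it by finitely many such neighborhoods $V_1, \dots, V_k$ on which we have continuous choices $\Omega \mapsto p_\Omega^{(i)} \in \Omega$; the translations $T_\Omega^{(i)}(z) = z - p_\Omega^{(i)}$ then send $\{(\Omega, p_\Omega^{(i)}) : \Omega \in \overline{V_i}\}$ to a compact subset $\Kc_i' \subset \Xb_{d,0}$ whose image $\Kc_i \subset \Xb_d$ is compact and consists of domains containing $0$. (3) Crucially, $\overline{\Aff(\Cb^d) \cdot \Kc_i} \cap \Xb_d \subset \overline{\Aff(\Cb^d) \cdot \Kc} \cap \Xb_d$ because $\Kc_i \subset \Aff(\Cb^d) \cdot \Kc$; hence every domain in $\overline{\Aff(\Cb^d) \cdot \Kc_i} \cap \Xb_d$ has simple boundary, so the version of Theorem~\ref{thm:loc_m_convex} for families containing $0$ applies to each $\Kc_i$. (4) Finally, given $\Omega \in \Kc$ and the target radius $R$, note that $\Omega$ lies in some $V_i$, the translated domain $\Omega - p_\Omega^{(i)}$ lies in $\Kc_i$, and translation by the vector $p_\Omega^{(i)}$ (which ranges over a compact, hence bounded, set, say $\norm{p_\Omega^{(i)}} \le M$) moves $\Bb_d(0;R)$ into $\Bb_d(0; R+M)$, preserves all the Euclidean quantities $\delta_\Omega(z)$ and $\delta_\Omega(z;v)$ exactly, and so transports the estimate on $\Kc_i$ at radius $R+M$ back to the estimate on $\Kc$ at radius $R$, with $C = \max_i C_i$ and $m = \max_i m_i$.

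Concretely, I would phrase the lemma's proof as: replace $\Kc$ by the finite union $\Kc_1 \cup \dots \cup \Kc_k$ of translated compact families, each contained in $\{\Omega \in \Xb_d : 0 \in \Omega\}$ and each satisfying the simple-boundary hypothesis, prove Theorem~\ref{thm:loc_m_convex} for families containing $0$, and then undo the bounded translations. The only thing one must check carefully is the openness/local continuity of the choice $\Omega \mapsto p_\Omega$, which follows because convergence $\Omega_n \to \Omega$ in $\Xb_d$ implies that every compact subset of $\Omega$ eventually lies in $\Omega_n$ (Proposition~\ref{prop:haus_conv_compacts}(1)); in fact one can even avoid continuity entirely and just use: each $\Omega \in \Kc$ has a Euclidean ball $\Bb_d(p_\Omega; \rho_\Omega) \subset \Omega$, and by compactness finitely many of the open sets $\{D \in \Xb_d : \Bb_d(p_\Omega;\rho_\Omega) \subset D\}$ cover $\Kc$, giving finitely many translation vectors that suffice.

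I expect the main (and really only) obstacle to be a bookkeeping one: making sure the reduction is genuinely uniform, i.e.\ that one cannot be forced to use infinitely many translation vectors (handled by compactness of $\Kc$) and that the translation vectors stay bounded (handled by compactness of the relevant subset of $\Xb_{d,0}$, so that $R$ only needs to be enlarged by a fixed additive constant $M$). Everything else is immediate from the fact that Euclidean translations are isometries of $\Cb^d$ that fix the family $\overline{\Aff(\Cb^d)\cdot \Kc}\cap \Xb_d$ and preserve $\delta_\Omega(\cdot)$ and $\delta_\Omega(\cdot\,;v)$ exactly. Hence, after this lemma, for the remainder of the proof of Theorem~\ref{thm:loc_m_convex} we may and do assume $0 \in \Omega$ for all $\Omega \in \Kc$.
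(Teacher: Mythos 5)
Your proposal is correct and is essentially the paper's argument: both use compactness of $\Kc$ to produce uniformly bounded translation vectors moving an interior point of each domain to the origin, observe that the translated family lies in $\Aff(\Cb^d)\cdot\Kc$ so the simple-boundary hypothesis on the orbit closure is preserved, and then undo the translations at the cost of enlarging $R$ by a fixed additive constant. The paper packages the compactness step as a single uniform claim (there is one $R>0$ such that every $\Omega\in\Kc$ contains a ball $\Bb_d(z;1/R)$ with $\norm{z}\le R$, proved by contradiction along a convergent subsequence) and forms one translated compact family $\Kc_0$ rather than your finite subcover and finitely many families $\Kc_i$, but this is only a difference in bookkeeping.
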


\begin{proof} We first claim that there exists $r > 0$ such that: for every $\Omega \in \Kc$ there exists $z \in \Omega$ with $\norm{z} \leq r$ and $\delta_\Omega(z) \geq 1/r$. Suppose not, then for every $n \in\Nb$ there exists $\Omega_n \in \Kc$ with 
\begin{align*}
\left\{ z \in \Omega_n : \norm{z} \leq n \text{ and } \delta_{\Omega_n}(z) \geq 1/n \right\}  = \emptyset.
\end{align*}
Since $\Kc$ is compact, we can pass to a subsequence and suppose that $\Omega_n$ converges to some $\Omega_\infty$ in $\Xb_d$. 

Fix some $u \in \Omega_\infty$. Then let $r_0 = \max\{ \norm{u}, \frac{2}{\delta_\Omega(u)}\}$. Then  $\Bb_d(u;2/r_0) \subset \Omega_\infty$ and so by Proposition~\ref{prop:haus_conv_compacts} part (1) there exists $N > 0$ such that $\overline{\Bb_d(u;1/r_0)} \subset \Omega_n$ for every $n \geq N$. Thus 
\begin{align*}
u \in \left\{ z \in \Omega_n :  \norm{z} \leq n \text{ and } \delta_{\Omega_n}(z) \geq 1/n  \right\} 
\end{align*}
when $n \geq \max\{r_0,N\}$ and so we have a contradiction. Hence there exists some $r> 0$ with the desired property. 

Next let $\Kc_0$ denote the set of domains of the form $-z+\Omega$ where $\Omega \in \Kc$, $z \in \Omega$, $\norm{z} \leq r$, and $\delta_\Omega(z) \geq 1/r$. Then $\Kc_0$ is compact in $\Xb_{d}$ and $0 \in \Omega$ for every $\Omega \in \Kc_0$. Further $\Kc_0 \subset \Aff(\Cb^d) \cdot \Kc$ and so
\begin{align*}
\overline{\Aff(\Cb^d) \cdot \Kc_0} \cap \Xb_d=\overline{\Aff(\Cb^d) \cdot \Kc} \cap \Xb_d.
\end{align*}
Hence $\Kc_0$ satisfies the hypothesis of Theorem~\ref{thm:loc_m_convex}. Finally, since every domain in $\Kc$ is a bounded translate of a domain in $\Kc_0$, if Theorem~\ref{thm:loc_m_convex}  is true for $\Kc_0$ it is also true for $\Kc$. 
\end{proof}

Using Lemma~\ref{lem:WOLOG_contains_zero}, we may assume that $0 \in \Omega$ for every $\Omega \in \Kc$. Then, since $\Kc$ is compact, there exists $\delta_0 > 0$ such that 
\begin{align*}
\overline{\Bb_d(0;\delta_0)} \subset \Omega
\end{align*}
for every $\Omega \in \Kc$.

Next for $\Omega \in \Kc$ and $z\in \Omega \setminus \{0\}$, define $\pi_\Omega(z) \in \partial \Omega \cup \{\infty\}$ as follows: if 
\begin{align*}
\Omega \cap \Rb_{\geq 0} \cdot z =  \Rb_{\geq 0} \cdot z, 
\end{align*}
then let $\pi_\Omega(z) = \infty$. Otherwise, let 
\begin{align*}
\{ \pi_\Omega(z) \} = \partial\Omega \cap \Rb_{\geq 0} \cdot z .
\end{align*}

For the rest of section fix $R > 0$. Then for $\Omega \in \Kc$ let
\begin{align*}
\Omega^{(R)} := \left\{ z \in \Omega  \setminus \{0\}: \norm{z} \leq R \text{ and } \norm{\pi_\Omega(z)} \leq R+1\right\}.
\end{align*}
Also for $z \in \Omega^{(R)}$, let $T_\Omega(z) \subset \Cb^d$ denote the set of unit vectors $v \in \Cb^d$ where
\begin{align*}
(\pi_\Omega(z) + \Cb \cdot v) \cap \Omega = \emptyset.
\end{align*}
Notice that, since $ \Omega$ is convex, the set $T_\Omega(z)$ consists of a union of complex hyperplanes intersected with the unit sphere. 

Define
\begin{align*}
r_0:=\frac{\delta_0}{R+1}.
\end{align*}

\begin{lemma}\label{lem:dist_est} If $\Omega \in \Kc$ and  $z \in \Omega^{(R)} $, then
\begin{align*}
\delta_{\Omega}(z) \geq  r_0 \norm{ \pi_\Omega(z) - z }.
\end{align*}
\end{lemma}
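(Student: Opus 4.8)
The plan is to exploit convexity together with the fact that $\Omega$ contains a fixed ball $\overline{\Bb_d(0;\delta_0)}$, mimicking the estimate~\eqref{eq:dist_to_bd_dist_to_xi} that appeared in the proof of Theorem~\ref{thm:normalizing}. The point is that if $z \in \Omega^{(R)}$, then $\pi_\Omega(z) \in \partial\Omega$ is a genuine (finite) boundary point lying on the ray $\Rb_{\geq 0} \cdot z$, and it satisfies $\norm{\pi_\Omega(z)} \leq R+1$. The convex hull of $\overline{\Bb_d(0;\delta_0)}$ and the point $\pi_\Omega(z)$ is contained in $\overline{\Omega}$; this hull is a ``cone-like'' region that pinches from radius $\delta_0$ at the origin down to $0$ at $\pi_\Omega(z)$, and $z$ sits on the axis of this cone between $0$ and $\pi_\Omega(z)$.

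First I would write $z = t\, \pi_\Omega(z)$ for some $t \in (0,1)$ (if $\pi_\Omega(z) \neq \infty$, which holds on $\Omega^{(R)}$), so that $\norm{\pi_\Omega(z) - z} = (1-t)\norm{\pi_\Omega(z)}$. Then, by convexity of $\Omega$ and the containment $\overline{\Bb_d(0;\delta_0)} \subset \Omega$, the ball centered at $z$ of radius $(1-t)\delta_0$ is contained in $\Omega$ — this is just the statement that the convex hull of $\Bb_d(0;\delta_0)$ and $\pi_\Omega(z)$ contains a shrunken ball about each interior point of the axis, with the shrink factor equal to the fraction of the way from $\pi_\Omega(z)$ back toward the origin. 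Hence
\begin{align*}
\delta_\Omega(z) \geq (1-t)\delta_0 = \frac{\delta_0}{\norm{\pi_\Omega(z)}} \cdot (1-t)\norm{\pi_\Omega(z)} = \frac{\delta_0}{\norm{\pi_\Omega(z)}}\norm{\pi_\Omega(z) - z}.
\end{align*}
Finally, since $z \in \Omega^{(R)}$ gives $\norm{\pi_\Omega(z)} \leq R+1$, we get $\delta_0/\norm{\pi_\Omega(z)} \geq \delta_0/(R+1) = r_0$, and the claimed bound $\delta_\Omega(z) \geq r_0 \norm{\pi_\Omega(z) - z}$ follows.

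There is essentially no hard part here; the only thing to be careful about is the elementary convexity fact that the convex hull of a ball $\Bb_d(p;\rho)$ and a point $x$ contains $\Bb_d\bigl((1-s)x + s p; \, s\rho\bigr)$ for all $s \in [0,1]$, applied with $p = 0$, $x = \pi_\Omega(z)$, $\rho = \delta_0$, and $s = 1-t$ so that $(1-s)x + sp = t\,\pi_\Omega(z) = z$. This is immediate since any point of that ball can be written as a convex combination, with weights $1-s$ and $s$, of $x$ and a point of $\Bb_d(p;\rho)$. The case $\pi_\Omega(z) = \infty$ does not occur for $z \in \Omega^{(R)}$ by definition of $\Omega^{(R)}$, so no separate argument is needed.
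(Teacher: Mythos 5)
Your proof is correct and is exactly the argument the paper intends: the paper's proof is the one-line remark that $\Omega$ contains the convex hull of $\Bb_d(0;\delta_0)$ and $\pi_\Omega(z)$, and you have simply filled in the details of that same convex-hull estimate, including the correct use of $\norm{\pi_\Omega(z)}\leq R+1$ to pass from $\delta_0/\norm{\pi_\Omega(z)}$ to $r_0$.
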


\begin{proof} Notice that $\overline{\Omega}$ contains the convex hull of $\Bb_d(0;\delta_0)$ and $\pi_\Omega(z)$. \end{proof}

We first establish the theorem for certain base points and directions. 

\begin{lemma} There exist $C_0>0$ and $m \geq 1$ such that: if $\Omega \in \Kc$, then
\begin{align*}
\delta_{\Omega}(z;v) \leq C_0 \delta_{\Omega}(z)^{1/m}
\end{align*}
for all $z \in \Omega^{(R)}$ and $v \in T_\Omega(z)$.
\end{lemma}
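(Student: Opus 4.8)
The plan is to argue by contradiction, exploiting compactness in $\Xb_d$ together with the normalizing maps of Theorem~\ref{thm:normalizing}. Suppose the conclusion fails. Then for every $m \in \Nb$ there exist $\Omega_m \in \Kc$, a point $z_m \in \Omega_m^{(R)}$, and a unit vector $v_m \in T_{\Omega_m}(z_m)$ with
\begin{align*}
\delta_{\Omega_m}(z_m; v_m) > m\, \delta_{\Omega_m}(z_m)^{1/m}.
\end{align*}
Write $\xi_m := \pi_{\Omega_m}(z_m) \in \partial \Omega_m$; by definition of $\Omega_m^{(R)}$ we have $\|\xi_m\| \le R+1$, $z_m \in [0,\xi_m)$, and $\delta_{\Omega_m}(z_m) \ge r_0 \|\xi_m - z_m\|$ by Lemma~\ref{lem:dist_est}. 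The vector $v_m$ spans a complex line through $\xi_m$ disjoint from $\Omega_m$, so there is a supporting hyperplane $H_m$ of $\Omega_m$ at $\xi_m$ containing $\xi_m + \Cb v_m$. Since each $\Omega_m \in \Kc$ contains $\overline{\Bb_d(0;\delta_0)}$ and $\Kc$ is compact, the ratio $\delta_{\Omega_m}(0)/\|\xi_m\|$ (playing the role of $r$ in Theorem~\ref{thm:normalizing}) is bounded below by a fixed constant $r_1 > 0$.

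Next I would apply Theorem~\ref{thm:normalizing} with $z_0 = 0$, the boundary point $\xi_m$, the supporting hyperplane $H_m$, and $q = z_m \in (\xi_m, 0]$. This produces affine maps $A_m$ with $A_m \Omega_m \in \Kb_d(r_1)$, $A_m(z_m) = 0$, $A_m(\xi_m) = e_1$, and the Lipschitz-type lower bound
\begin{align*}
\|A_m(w_1) - A_m(w_2)\| \ge \frac{r_1}{\sqrt{d}\,\delta_{H_m}} \|w_1 - w_2\|
\end{align*}
for all $w_1, w_2$, where $\delta_{H_m} = \max\{\delta_{\Omega_m}(z_m; v) : v \in -\xi_m + H_m \text{ nonzero}\}$. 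Because $z_m \in \Bb_d(0;R)$, $\xi_m \in \overline{\Bb_d(0;R+1)}$, and $\Omega_m$ is $\Cb$-properly convex and sandwiched inside a fixed ball (again by compactness of $\Kc$), the quantity $\delta_{H_m}$ is bounded above by a fixed constant; hence $A_m$ is uniformly bi-Lipschitz-from-below on the relevant region, with constant independent of $m$. Now compare the two Euclidean quantities after applying $A_m$: on one hand $\|A_m(\xi_m) - A_m(z_m)\| = \|e_1\| = 1$, so $\delta_{A_m\Omega_m}(0)$ is comparable to $1$ up to the $\Kb_d(r_1)$-constants; on the other hand, $A_m$ distorts the direction $v_m$ by a bounded-below factor, so $\delta_{A_m\Omega_m}(0; A_m v_m/\|A_m v_m\|)$ stays bounded below by a fixed positive constant while $\delta_{A_m\Omega_m}(0)$ is bounded. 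Passing to a subsequence, $A_m \Omega_m$ converges in $\Xb_d$ (by compactness of $\Kb_d(r_1)$, Proposition~\ref{prop:Kdr_compact}) to some $\Omega_\infty \in \Kb_d(r_1) \subset \overline{\Aff(\Cb^d)\cdot\Kc} \cap \Xb_d$, and the directions $A_m v_m / \|A_m v_m\|$ converge to some unit vector $v_\infty$. By Proposition~\ref{prop:haus_conv_compacts}, the segment $0 + \Rb_{\ge 0} v_\infty$ (suitably truncated, after translating to a limit of $z_m$) stays in $\overline{\Omega_\infty}$; more precisely the limit of the relevant complex line segment lies in $\partial \Omega_\infty$ (it is a limit of segments lying outside or on the boundary of $A_m\Omega_m$), giving a nonconstant complex affine — hence, by Proposition~\ref{prop:affine_vs_holomorphic_disks}, holomorphic — disk in $\partial \Omega_\infty$. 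This contradicts the hypothesis that every domain in $\overline{\Aff(\Cb^d)\cdot\Kc} \cap \Xb_d$ has simple boundary.

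The main obstacle will be the bookkeeping in the last step: making precise that the non-degeneracy lower bound $\delta_{A_m\Omega_m}(0; \cdot) \gtrsim 1$ in direction $v_m$, which comes from $\delta_{\Omega_m}(z_m;v_m) > m\,\delta_{\Omega_m}(z_m)^{1/m}$ and the uniform bi-Lipschitz control, survives in the limit to produce a genuine (nonconstant, i.e. positive-length) affine disk in $\partial\Omega_\infty$ rather than a degenerate point — one must track that $\delta_{\Omega_m}(z_m)$ is bounded away from $0$ is \emph{false} in general, so instead one normalizes so that $\delta_{A_m\Omega_m}(0)$ is bounded above (it is, since $A_m\Omega_m \in \Kb_d(r_1)$ forces $e_1 \in \partial A_m\Omega_m$) while the transverse extent $\delta_{A_m\Omega_m}(0; A_m v_m)$ is bounded below by a constant, using that $m\,\delta_{\Omega_m}(z_m)^{1/m} \to \infty \cdot \delta_{\Omega_m}(z_m)^{1/m}$ beats $\delta_{H_m}$ for $m$ large — this requires care when $\delta_{\Omega_m}(z_m)$ does not tend to $0$, in which case $m$-convexity would already be automatic, so one may first reduce to the case $\delta_{\Omega_m}(z_m) \to 0$. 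Once that reduction is in place, the ratio $\delta_{A_m\Omega_m}(0;A_mv_m)/\delta_{A_m\Omega_m}(0)$ is forced to infinity, which in the limit gives an entire complex line inside $\Omega_\infty$ along $v_\infty$ through the appropriate base point — contradicting $\Omega_\infty \in \Xb_d$ — or at the very least a nonconstant affine disk in $\partial\Omega_\infty$, contradicting simplicity. I would organize the estimate so that this dichotomy ($\Cb v_\infty \subset \Omega_\infty$ versus affine disk in $\partial\Omega_\infty$) is made explicit, since both cases contradict the standing hypotheses.
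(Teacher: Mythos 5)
Your outline has the right opening moves (contradiction, rescaling by normalizing maps, compactness of $\Kb_d(r)$), but the single rescaled limit you propose cannot produce a contradiction, and the two ideas that actually close the argument are missing.

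First, the quantitative core does not work. Writing $A_m(\cdot)=b_m+g_m(\cdot)$, the transverse extent after normalization is $\delta_{A_m\Omega_m}(0;g_mv_m)=\norm{g_mv_m}\,\delta_{\Omega_m}(z_m;v_m)$, and the only lower bound available from Theorem~\ref{thm:normalizing}(4) is $\norm{g_mv_m}\geq r_1/(\sqrt{d}\,\delta_{H_m})$. Since $v_m$ itself lies in the supporting hyperplane direction $-\xi_m+H_m$, one has $\delta_{H_m}\geq \delta_{\Omega_m}(z_m;v_m)$, so the product is bounded \emph{below} by a constant but there is no reason for it to tend to infinity (if $v_m$ nearly realizes the maximum defining $\delta_{H_m}$, the product is comparable to $1$). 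Meanwhile $\delta_{A_m\Omega_m}(0)$ is bounded above and below for all domains in $\Kb_d(r_1)$. A transverse extent that is bounded above and below at the \emph{interior} point $0$ of the limit $\Omega_\infty$ is consistent with $\Omega_\infty$ being, say, the unit ball: neither branch of your dichotomy is forced. The disjoint complex line $e_1+\Cb v_\infty$ you do obtain in the limit is merely a supporting complex line at the boundary point $e_1$, which every convex domain possesses; to get an analytic disk \emph{in} $\partial\Omega_\infty$ you would need a disk of definite radius contained in $\overline{\Omega_\infty}$ and based at a \emph{boundary} point, and your normalization places the relevant disk around the interior point $0$.

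What is missing are the two steps the paper uses to convert "disk of definite size near the boundary" into "disk in the boundary of a limit domain." (i) Before rescaling, one replaces $z_m$ by the point $t_mz_m$ minimizing $t\mapsto r_{\Omega_m}(tz_m)^{1/m}/\delta_{\Omega_m}(tz_m;v_m)$ on $(0,1]$, so that the contradiction inequality holds at \emph{every} point of the ray $(0,z_m]$, and one normalizes so that $\delta$ in the direction $v_m$ becomes exactly $1$ (the map $B_m$ sending $z_m\mapsto 0$, $\pi_{\Omega_m}(z_m)\mapsto e_1$, $z_m+\delta_{\Omega_m}(z_m;v_m)v_m\mapsto e_2$). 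This extremal property is precisely what forces the first limit $D_1$ to satisfy $\delta_{D_1}(-te_1;e_2)=1$ for all $t\geq 0$, i.e.\ it pins actual boundary points $-te_1+\lambda_te_2\in\partial D_1$ at distance exactly $1$. (ii) One then performs a \emph{second} rescaling, collapsing the $e_1$-direction by $1/(k+1)$, which pushes the base points $-ke_1$ to the boundary point $e_1$ while the unit disks $\Db\cdot e_2$ persist; only in this second limit $D_2$ does one obtain $e_1+\Db\cdot e_2\subset\partial D_2$ and the contradiction with simple boundary. Neither step is cosmetic, and without them the argument as you have set it up stalls at a limit domain about which nothing contradictory can be said.
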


\begin{proof} For $\Omega \in \Kc$ and $z \in \Omega^{(R)}$ define $r_\Omega(z): = \norm{\pi_\Omega(z)-z}$. By the estimate in Lemma~\ref{lem:dist_est} it is enough to prove that there exist $C>0$ and $m \geq1 $  such that 
\begin{align*}
\delta_{\Omega}(z;v) \leq C r_\Omega(z)^{1/m}
\end{align*}
for all $\Omega \in \Kc$, $z \in \Omega^{(R)}$, and $v \in T_\Omega(z)$.

Suppose for a contradiction that such $C>0$, $m \geq 1$ do not exist. Then for each $m \in \Nb$ we can find $\Omega_m \in \Kc$, $z_m \in \Omega^{(R)}_m$, and $v_m \in T_{\Omega_m}(z_m)$ such that 
\begin{align*}
\delta_{\Omega_m}(z_m;v_m) = C_m r_{\Omega_m}(z_m)^{1/m}
\end{align*}
and $C_m \geq m$. Since $\Kc$ is compact in $\Xb_d$ we have
\begin{align*}
M:=\sup \left\{ \delta_{ \Omega}(z;v) : \Omega \in \Kc, z \in \Omega \cap \overline{\Bb_d(0;R)}, v \in \Cb^d \setminus\{0\} \right\} < \infty.
\end{align*}
Then, since $C_m \geq m$, we must have 
\begin{align}
\label{eq:r_goes_to_zero}
\lim_{m \rightarrow \infty} r_{\Omega_m}(z_m) \leq \lim_{m \rightarrow \infty} \left(\frac{M}{C_m}\right)^m= 0.
\end{align} 

Since $\Omega_{m}$ is convex, the function $f_m:[0,1]\rightarrow \Rb$ defined by
\begin{align*}
f_m(t) = \frac{\norm{\pi_{\Omega_m}(z_m)-tz_m}^{1/m}}{\delta_{\Omega_m}(tz_m;v_m)}
\end{align*}
is continuous. Let $t_m \in [0,1]$ be a minimum point of $f$. Notice that $f_m(1) = \frac{1}{C_m} \leq \frac{1}{m}$ and
$$
f_m(0) = \frac{ \norm{\pi_{\Omega_m}(z_m)}^{1/m}}{ \delta_{\Omega_m}(0;v_m)} \geq \frac{ \delta_0^{1/m}}{M}.
$$
So for $m$ sufficiently large, $f_m(1) < f_m(0)$ and hence $t_m \in (0,1]$. So after possibly passing to a tail of the sequence, replacing $z_m$ with $t_mz_m$, and increasing $C_m$, we can assume that each $z_m$ has the following extremal property: 
\begin{align}
\label{eq:maximal_choice}
\delta_{ \Omega_m}(tz_m;v_m) \leq C_m r_{\Omega_m}(tz_m)^{1/m}
\end{align}
for all $t \in (0,1]$. Finally, by replacing $v_m$ by some $e^{i\theta_m}v_m$ where $\theta_m \in \Rb$, we can assume that 
\begin{align*}
z_m + C_m r_{\Omega_m}(z_m)^{1/m} v_m \in \partial \Omega_m.
\end{align*}
Notice that $v_m$ is still contained in $T_{\Omega_m}(z_m)$. 

Let 
\begin{align*}
a_m := \pi_{\Omega_m}(z_m) \in \partial \Omega_{m}
\end{align*}
and 
\begin{align*}
b_m:=z_m + C_m r_{\Omega_m}(z_m)^{1/m} v_m \in \partial \Omega_m.
\end{align*}
Then let $B_m \in \Aff(\Cb^d)$ be an affine map such that $B_m(z_m)=0$, $B_m(a_m)=e_1$, and $B_m(b_m) = e_2$. By Lemma~\ref{lem:dist_est}, we see that 
\begin{align*}
r_0\Db \cdot e_1 \subset B_m\Omega_m
\end{align*}
and since $v_m \in T_{\Omega_m}(z_m)$ we see that 
\begin{align*}
B_m \Omega_m \cap (e_1 + \Cb \cdot e_2) = \emptyset.
\end{align*}
By construction $e_2=B_m(b_m) \in \partial B_m \Omega_m$ and since $\delta_{\Omega_m}(z_m;v_m) = \norm{b_m-z_m}$ we see that 
\begin{align*}
\Db \cdot e_2 \subset B_m\Omega_m.
\end{align*}
Thus 
\begin{align*}
B_m \Omega_m \cap \Span_{\Cb}\{e_1, e_2\} \in \Kb_2(r_0).
\end{align*}
So by Proposition~\ref{prop:compact_on_slices},  we can assume that $B_m \Omega_m \in \Kb_d(r_0)$. Then, since $\Kb_d(r_0)$ is compact, we can pass to a subsequence so that $B_m \Omega_m$ converges to some $\mathcal{D}_1$ in $\Xb_d$. 

Next define
\begin{align*}
\mathcal{C}:=\bigcup_{-\infty < t < 1} \Bb_1\Big(t; r_0\abs{t-1}\Big) \subset \Cb.
\end{align*}
Then $\mathcal{C}$ is a convex open cone in $\Cb$ based at $1$. 

\medskip

\noindent \textbf{Claim 1:} $\mathcal{C} \times \{(0,\dots, 0)\}\subset \mathcal{D}_1$. 

\medskip

\noindent \emph{Proof of Claim 1:} Since 
$$
B_m(a_m+\lambda z_m)  = \left(1 + \frac{\lambda}{r_{\Omega_m}(z_m)} \right) e_1 \quad \text{for all } \lambda \in \Cb,
$$
Lemma~\ref{lem:dist_est} implies 
\begin{align*}
\mathcal{C} \times \{(0,\dots, 0)\} \cap \Bb_d\left(0; \frac{\norm{\pi_{\Omega_m}(z_m)}}{r_{\Omega_m}(z_m)}\right) \subset B_m \Omega_m.
\end{align*}
So it suffices to show that 
\begin{align*}
\lim_{m \rightarrow \infty} \frac{\norm{\pi_{\Omega_m}(z_m)}}{r_{\Omega_m}(z_m)} = \infty.
\end{align*}
Using the fact that $\delta_{\Omega_m}(0) \geq \delta_0$, we have 
\begin{align}
\label{eq:z_m_lower_bound}
\liminf_{m \rightarrow \infty}\norm{\pi_\Omega(z_m)} \geq \liminf_{m \rightarrow \infty}\delta_{\Omega_m}(0) \geq \delta_0. 
\end{align}
Then combining Equations~\eqref{eq:r_goes_to_zero} and~\eqref{eq:z_m_lower_bound} yields 
\begin{align*}
\lim_{m \rightarrow \infty} \frac{\norm{\pi_{\Omega_m}(z_m)}}{r_{\Omega_m}(z_m)} = \infty.
\end{align*}
This proves Claim 1. \hspace*{\fill}$\blacktriangleleft$

\medskip

\noindent \textbf{Claim 2:} $\Rb_{\leq 0} \cdot e_1 + \Db \cdot e_2 \subset \mathcal{D}_1$.

\medskip

\noindent \emph{Proof of Claim 2:}
By Claim 1 we have
\begin{align*}
\Rb_{\leq 0}\cdot e_1 \subset  \mathcal{D}_1.
\end{align*}
Since $\Dc_1 \in \Kb_d(r_0)$ we have $\Db \cdot e_2 \subset  \mathcal{D}_1$. So by Observation~\ref{obs:asymptotic_cone_1}
\begin{align*}
\Rb_{\leq 0} \cdot e_1 + \Db \cdot e_2 \subset  \mathcal{D}_1.
\end{align*}
This proves Claim 2. \hspace*{\fill}$\blacktriangleleft$

\medskip

\noindent \textbf{Claim 3:}  For each $t \geq 0$ there exists some $\lambda_t \in \partial \Db$ such that
\begin{align*}
-te_1 + \lambda_t e_2 \in \partial  \mathcal{D}_1. 
\end{align*}

\medskip

\noindent \emph{Proof of Claim 3:} 
Since $ \mathcal{D}_1 \in \Kb_d(r_0)$, we have $e_2 \in \partial  \mathcal{D}_1$ and so the claim is true when $t =0$. Next fix $t > 0$. Then for $m$ sufficiently large
\begin{align*}
B_m^{-1}(-te_1) \in (0,z_m)
\end{align*}
and
\begin{align*}
r_{\Omega_m}( B_m^{-1}(-te_1)) = (1+t)r_{\Omega_m}(z_m).
\end{align*}
Then by Equation~\eqref{eq:maximal_choice}
\begin{align*}
\delta_{ \Omega_m}(B_m^{-1}(-te_1);v_m) \leq C_m r_{\Omega_m}( B_m^{-1}(-te_1))^{1/m} =  C_m(1+t)^{1/m} r_{\Omega_m}(z_m)^{1/m}.
\end{align*}
Then
\begin{align*}
\delta_{ B_m\Omega_m}(-te_1;e_2) = \frac{1}{C_mr_{\Omega}(z_m)^{1/m}} \delta_{\Omega_m}(B_m^{-1}(-te_1);v_m) \leq (1+t)^{1/m}.
\end{align*}
So
\begin{align*}
\delta_{ \mathcal{D}_1}(-te_1; e_2) =\lim_{m \rightarrow \infty} \delta_{ B_m\Omega}(-te_1;e_2)  \leq \lim_{m \rightarrow \infty} (1+t)^{1/m}=1.
\end{align*}

By Claim 2, we have $\delta_{ \mathcal{D}_1}(-te_1;e_2) \geq 1$ and so we must have
\begin{align*}
\delta_{ \mathcal{D}_1}(-te_1;e_2) = 1.
\end{align*}
This proves Claim 3.  \hspace*{\fill}$\blacktriangleleft$

\medskip

Now for each $k \in \Nb$, let $A_k \in \Aff(\Cb^2)$ be the affine map 
\begin{align*}
A_k(z) =e_1+ \begin{pmatrix} \frac{1}{k+1} & 0 \\ 0 & \lambda_k^{-1} \end{pmatrix} (z - e_1).
\end{align*}

\noindent \textbf{Claim 4:} For all $k \geq 0$, 
\begin{align*}
A_k( \mathcal{D}_1 \cap \Span_{\Cb}\{e_1,e_2\}) \in \Kb_2(r_0).
\end{align*}

\medskip

\noindent \emph{Proof of Claim 4:} Let $\mathcal{U}_k := A_k( \mathcal{D}_1 \cap \Span_{\Cb}\{e_1,e_2\})$. 

Since $A_k(e_1 + \Cb \cdot e_2)=e_1+\Cb\cdot e_2$ and $ \mathcal{D}_1 \in \Kb_d(r_0)$, we see that $e_1 \in \partial \mathcal{U}_k$ and $(e_1+\Cb \cdot e_2) \cap \mathcal{U}_k = \emptyset$. By Claim 3, $-ke_1 + \lambda_k e_2 \in \partial  \mathcal{D}_1$ and so
\begin{align*}
e_2 = A_k(-ke_1 + \lambda_k e_2) \in \partial \mathcal{U}_k.
\end{align*}
By Claim 2 and 3, $\delta_{ \mathcal{D}_1}(-ke_1;e_2) = 1$ and so 
\begin{align*}
\Db \cdot e_2 = A_k(-ke_1+\Db\cdot e_2) \subset \mathcal{U}_k.
\end{align*}
 Finally, by Claim 1 
\begin{align*}
\mathcal{C} \times \{0\} = A_k \left( \mathcal{C} \times \{0\} \right)  \subset \mathcal{U}_k
\end{align*}
and so $r_0\Db \cdot e_1 \subset \mathcal{U}_k$. Thus $\mathcal{U}_k \in \Kb_2(r_0)$. \hspace*{\fill}$\blacktriangleleft$

\medskip

Now using Proposition~\ref{prop:compact_on_slices} we can extend $A_k$ to an affine automorphism of $\Cb^d$ such that $A_k \mathcal{D}_1 \in \Kb_d(r_0)$. Then by passing to a subsequence we can suppose that $A_k \mathcal{D}_1$ converges to some $\mathcal{D}_2$ in $\Xb_d$. Now since each $A_k\mathcal{D}_1$ is in $\Kb_d(r_0)$ we see that 
\begin{align}
\label{eq:supporting_hyperplane_D}
(e_1 + \Span_{\Cb}\{e_2,\dots, e_d\})\cap \mathcal{D}_2 = \emptyset.
\end{align}
Further,
\begin{align*}
\left(1-\frac{1}{k+1}\right)e_1 + \Db \cdot e_2 = A_k( \Db \cdot e_2) \subset A_k \mathcal{D}_1
\end{align*}
and so $e_1 + \Db \cdot e_2 \subset \overline{\mathcal{D}}_2$. Then Equation~\eqref{eq:supporting_hyperplane_D} implies that $e_1+\Db \cdot e_2 \subset \partial \mathcal{D}_2$. But 
\begin{align*}
\mathcal{D}_2 \subset \overline{\Aff(\Cb^d) \cdot \mathcal{D}_1} \cap \Xb_d \subset \overline{\Aff(\Cb^d) \cdot \Kc} \cap \Xb_d
\end{align*}
which contradicts the assumption that every domain in $\overline{\Aff(\Cb^d) \cdot \Kc} \cap \Xb_d$ has simple boundary. 
\end{proof}

\begin{lemma}\label{lem:Omega_R_any_direction} There exists $C_1 > 0$ such that: if $\Omega \in \Kc$, then
\begin{align*}
\delta_{\Omega}(z;v) \leq C_1 \delta_{\Omega}(z)^{1/m}
\end{align*}
for all $z \in \Omega^{(R)}$ and $v \in \Cb^d$ non-zero.
\end{lemma}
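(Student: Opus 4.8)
The plan is to reduce an arbitrary direction to the tangential directions already controlled by the preceding lemma, using the normalizing maps of Theorem~\ref{thm:normalizing}. Since $\delta_\Omega(z;v)$ depends only on the complex line $z+\Cb v$, it suffices to bound it for unit vectors $v$. So fix $\Omega \in \Kc$, $z \in \Omega^{(R)}$, a unit vector $v$, and set $\xi := \pi_\Omega(z) \in \partial\Omega$ (finite, since $z \in \Omega^{(R)}$). Choose a complex supporting hyperplane $H$ of $\Omega$ at $\xi$ and apply Theorem~\ref{thm:normalizing} with $z_0 = q = z$; note that $q = z \in (\xi,z]$ and that, since $\xi \in \partial\Omega$ while $z \in \Omega$, we have $0 < r := \delta_\Omega(z)/\norm{\xi - z} \leq 1$, so the hypotheses are met. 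We obtain an affine isomorphism $A(w) = L(w - z)$ with $L \in \GL_d(\Cb)$, $A(z) = 0$, $A\Omega \in \Kb_d(r)$, and $\norm{Lu} \geq \frac{r}{\sqrt d\, \delta_H}\norm u$ for all $u \in \Cb^d$, where $\delta_H = \max\{\delta_\Omega(z;w) : w \in (-\xi + H)\setminus\{0\}\}$.

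Two elementary observations let us estimate the quantities appearing here. First, every nonzero $w \in -\xi + H$ satisfies $\xi + \Cb w \subseteq H$, hence $(\xi + \Cb w)\cap\Omega = \emptyset$; so $w/\norm{w} \in T_\Omega(z)$ and the preceding lemma gives $\delta_\Omega(z;w) = \delta_\Omega(z;w/\norm w) \leq C_0 \delta_\Omega(z)^{1/m}$, whence $\delta_H \leq C_0\delta_\Omega(z)^{1/m}$. Second, Lemma~\ref{lem:dist_est} gives $\delta_\Omega(z) \geq r_0\norm{\xi - z}$, so $r \geq r_0$; combined with $r \leq 1$ and the inclusion $\Kb_d(r) \subseteq \Kb_d(r_0)$ valid whenever $r \geq r_0$, we get $A\Omega \in \Kb_d(r_0)$. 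Since $\Kb_d(r_0)$ is a compact subset of $\Xb_d$ (Proposition~\ref{prop:Kdr_compact}) and $0$ lies in each of its members, the same argument used for the supremum in the proof of the preceding lemma shows $C_* := \sup\{\delta_D(0;w) : D \in \Kb_d(r_0),\ w \neq 0\} < \infty$.

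Finally, transport the estimate through $A$. The map $A$ carries the line $z + \Cb v$ onto the line $\Cb \cdot Lv$ through $A(z) = 0$ and carries $\partial\Omega$ onto $\partial(A\Omega)$. Let $\zeta^\ast$ be a point of $\partial(A\Omega)\cap \Cb Lv$ nearest to $0$, so $\norm{\zeta^\ast} = \delta_{A\Omega}(0;Lv) \leq C_*$; then $A^{-1}(\zeta^\ast) = z + L^{-1}\zeta^\ast \in \partial\Omega \cap (z + \Cb v)$, and since the bound $\norm{Lu} \geq \frac{r}{\sqrt d\,\delta_H}\norm u$ is equivalent to $\norm{L^{-1}}_{\mathrm{op}} \leq \frac{\sqrt d\,\delta_H}{r}$,
\begin{align*}
\delta_\Omega(z;v) \leq \norm{L^{-1}\zeta^\ast} \leq \norm{L^{-1}}_{\mathrm{op}}\,\delta_{A\Omega}(0;Lv) \leq \frac{\sqrt d\,\delta_H}{r}\,C_* \leq \frac{\sqrt d\, C_0 C_*}{r_0}\,\delta_\Omega(z)^{1/m}.
\end{align*}
Taking $C_1 := \sqrt d\, C_0 C_* / r_0$, which is independent of $\Omega$ and $z$, completes the proof.

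I expect the only delicate points to be bookkeeping: verifying the hypotheses of Theorem~\ref{thm:normalizing} for the choice $q = z$, the inclusion $\Kb_d(r) \subseteq \Kb_d(r_0)$, and the finiteness of $C_*$ (handled verbatim as the corresponding supremum in the preceding lemma). The conceptual content is entirely in the first observation above: directions parallel to a supporting hyperplane at $\pi_\Omega(z)$ lie in $T_\Omega(z)$ and are therefore already controlled, and Theorem~\ref{thm:normalizing}(4) converts control of those directions into a lower bound on the distortion of the normalizing map — which is exactly what is needed to spread the estimate to all directions.
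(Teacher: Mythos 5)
Your proof is correct and follows essentially the same route as the paper's: normalize via Theorem~\ref{thm:normalizing} so that $z\mapsto 0$ and $\pi_\Omega(z)\mapsto e_1$, bound $\delta_H$ by the preceding lemma (since directions parallel to the supporting hyperplane lie in $T_\Omega(z)$), and pull back the uniform bound $\sup\{\delta_D(0;w):D\in\Kb_d(r_0)\}$ through the distortion estimate of part (4). The only (harmless) difference is that you take the base point $z_0=q=z$ and invoke Lemma~\ref{lem:dist_est} to get $r\geq r_0$, whereas the paper takes $z_0=0$ and checks $r_0\norm{\xi}\leq\delta_\Omega(0)$ directly; both yield $A\Omega\in\Kb_d(r_0)$ and the same constant up to relabeling.
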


\begin{proof}
Define
\begin{align*}
M_1 :=\sup \left\{ \delta_{ \Omega}(0;v) : \Omega \in \Kb_d(r_0), v \in \Cb^d \setminus\{0\} \right\} < \infty
\end{align*}
(recall that $r_0 = \delta_0/(R+1)$). We claim that 
\begin{align*}
C_1 := \frac{\sqrt{2}M_1C_0}{r_0}
\end{align*}
suffices. 

Fix $\Omega \in \Kc$, $z \in \Omega^{(R)}$, and $v \in \Cb^d$ non-zero. Let $\xi = \pi_\Omega(z)$ and $H$ be a supporting hyperplane of $\Omega$ at $\xi$. Notice that 
\begin{align*}
r_0 \norm{\xi-0} \leq r_0(R+1) = \delta_0 \leq \delta_\Omega(0).
\end{align*}
So by Theorem~\ref{thm:normalizing}, there exists an affine map $A$ such that $A\Omega \in \Kb_d(r_0)$, $A(z)=0$, and if $\delta_{H} = \max\{ \delta_\Omega(z;v) : v\in -\xi + H \text{ non-zero}\}$, then
\begin{align}
\label{eqn:lower_bd_on_A}
\norm{A(z_1) - A(z_2)} \geq \frac{r_0}{\sqrt{2} \delta_{H}} \norm{z_1-z_2}
\end{align}
for all  $z_1, z_2 \in \Cb^d$.

By the previous Lemma 
\begin{align*}
\delta_{H} \leq C_0 \delta_\Omega(z)^{1/m}.
\end{align*}
Suppose $A(\cdot) = b + g(\cdot)$ where $g \in \GL_d(\Cb)$ and $b \in \Cb^d$. Then Equation~\eqref{eqn:lower_bd_on_A} implies
\begin{equation*}
\delta_\Omega(z;v) \leq \frac{\sqrt{2}\delta_{H}}{r_0}\delta_{A\Omega}(0;g(v)) \leq \frac{\sqrt{2}}{r_0}C_0 \delta_\Omega(z)^{1/m} M_1= C_1\delta_\Omega(z)^{1/m}. \qedhere
\end{equation*}

\end{proof}

\begin{lemma} There exists $C_2 > 0$ such that: if $\Omega \in \Kc$, then
\begin{align*}
\delta_{\Omega}(z;v) \leq C_2 \delta_{\Omega}(z)^{1/m}
\end{align*}
for all $z \in \Omega \cap \Bb_d(0;R)$ and $v \in \Cb^d$ non-zero.
\end{lemma}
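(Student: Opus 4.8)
The plan is to combine Lemma~\ref{lem:Omega_R_any_direction} with a uniform lower bound on $\delta_\Omega(z)$ at the points not covered by that lemma. Recall that $\Omega \cap \Bb_d(0;R)$ is covered by $\Omega^{(R)}$ together with the set of $z \in \Omega \cap \Bb_d(0;R)$ for which $z = 0$, or $\pi_\Omega(z) = \infty$, or $\norm{\pi_\Omega(z)} > R+1$. On $\Omega^{(R)}$ the desired estimate is exactly Lemma~\ref{lem:Omega_R_any_direction}, so everything reduces to these \emph{exceptional} points, where I claim $\delta_\Omega(z)$ is bounded below by the uniform constant $r_0 = \delta_0/(R+1)$; the estimate at such points is then trivial.

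First I would record the uniform upper bound $M_0 := \sup\{\delta_\Omega(x;v) : \Omega \in \Kc,\ x \in \Omega \cap \Bb_d(0;R),\ v \in \Cb^d \setminus \{0\}\}$, which is finite: this was already noted earlier in this section, and in any case follows from the compactness of $\Kc$ in $\Xb_d$, since if the supremum were infinite then, after passing to a convergent subsequence of witnesses and using Proposition~\ref{prop:haus_conv_compacts}, one would produce a complex affine line contained in a domain of $\Kc \subset \Xb_d$, a contradiction.

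Next I would establish the lower bound. Since $0 \in \Omega$ and $\overline{\Bb_d(0;\delta_0)} \subset \Omega$ for every $\Omega \in \Kc$, we have $\delta_\Omega(0) \geq \delta_0 \geq r_0$. If $z \neq 0$ and either $\pi_\Omega(z) = \infty$ or $\norm{\pi_\Omega(z)} > R+1$, then the point $p := \tfrac{R+1}{\norm{z}}\, z$ lies in $\Omega$: it is either on the ray $\Rb_{\geq 0} z \subset \Omega$, or it lies on the open segment $[0,\pi_\Omega(z)) \subset \Omega$ because $\norm{p} = R+1 < \norm{\pi_\Omega(z)}$. Setting $t := \norm{z}/(R+1) \in (0,1)$, so that $tp = z$, convexity gives $\overline{\Bb_d(z; (1-t)\delta_0)} = \overline{\Bb_d(tp; (1-t)\delta_0)} \subset \mathrm{conv}\!\big(\overline{\Bb_d(0;\delta_0)} \cup \{p\}\big) \subset \Omega$, hence $\delta_\Omega(z) \geq (1-t)\delta_0 > \tfrac{\delta_0}{R+1} = r_0$, using $\norm{z} < R$.

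Finally, for $z$ an exceptional point and any non-zero $v$ we get $\delta_\Omega(z;v) \leq M_0 \leq (M_0/r_0^{1/m})\,\delta_\Omega(z)^{1/m}$, while Lemma~\ref{lem:Omega_R_any_direction} gives $\delta_\Omega(z;v) \leq C_1\,\delta_\Omega(z)^{1/m}$ for $z \in \Omega^{(R)}$; so $C_2 := \max\{C_1,\ M_0 r_0^{-1/m}\}$ works, with the same exponent $m$ fixed earlier in the section, and this also completes the proof of Theorem~\ref{thm:loc_m_convex}. I do not expect a genuine obstacle here: the content is a routine case analysis of the points where the radial projection $\pi_\Omega$ is undefined or far from the base point. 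The only mild care needed is to check that the lower bound $\delta_\Omega(z) \geq r_0$ is genuinely uniform over the family $\Kc$ — which is precisely why the constant $\delta_0$, obtained from the compactness of $\Kc$, must enter.
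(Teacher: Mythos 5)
Your proposal is correct and follows essentially the same route as the paper: reduce to the points outside $\Omega^{(R)}$, bound $\delta_\Omega(z;v)$ above by the uniform supremum $M_2$ over the compact family, and bound $\delta_\Omega(z)$ below by $r_0=\delta_0/(R+1)$ via convexity, taking $C_2=\max\{C_1,\ M_2 r_0^{-1/m}\}$. The only difference is that you spell out the convex-hull argument for the lower bound $\delta_\Omega(z)\geq r_0$, which the paper asserts in one line.
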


\begin{proof} Let 
\begin{align*}
M_2 :=\sup \left\{ \delta_{ \Omega}(z;v) : \Omega \in \Kc, z \in \Omega \cap \Bb_d(0;R), v \in \Cb^d \setminus\{0\} \right\} < \infty.
\end{align*}
We claim that
\begin{align*}
C_2 = \max\left\{ C_1, \frac{M_2}{r_0^{1/m}}\right\}
\end{align*}
suffices. 

Fix $\Omega \in \Kc$, $z \in \Omega \cap \Bb_d(0;R)$, and $v \in \Cb^d$ non-zero. By the last lemma we only have to consider the case when $z \notin \Omega^{(R)}$. We consider two cases. 

\medskip
\noindent \textbf{Case 1:} $z=0$. Then 
\begin{align*}
\delta_{ \Omega}(0;v) \leq M_2 \leq \frac{M_2}{\delta_0^{1/m}}\delta_\Omega(0)^{1/m} \leq C_2 \delta_\Omega(0)^{1/m}
\end{align*}
since $r_0 =\delta_0/(R+1)< \delta_0$. 

\medskip
\noindent \textbf{Case 2:}  $\norm{\pi_\Omega(z)} > R+1$. Since $\overline{\Omega}$ contains the convex hull of $\Bb_d(0;\delta_0)$ and $\pi_\Omega(z)$, 
\begin{align*}
\delta_{ \Omega}(z) \geq \frac{\delta_0}{R+1}=r_0.
\end{align*} 
Then
\begin{equation*}
\delta_{\Omega}(z;v) \leq M_2 \leq \frac{M_2}{r_0^{1/m}} \delta_{\Omega}(z)^{1/m} \leq C_2 \delta_{\Omega}(z)^{1/m}. \qedhere
\end{equation*}
\end{proof}

This completes the proof of Theorem~\ref{thm:loc_m_convex}.

\section{$m$-convexity versus Gromov hyperbolicity}\label{sec:m_convex_vs_gromov}

As mentioned in Section~\ref{sec:suff_for_visible}, for smoothly bounded convex domains it is easy to show that $\Omega$ is $m$-convex for some $m$ if and only if $\partial \Omega$ has finite type. In particular, we have the following equivalences.

\begin{theorem}\cite[Theorem 1.1]{Z2016} Suppose $\Omega$ is a bounded convex domain with $C^\infty$ boundary. Then the following are equivalent: 
\begin{enumerate}
\item $\partial \Omega$ has finite type in the sense of D'Angelo,
\item $(\Omega, K_\Omega)$ is Gromov hyperbolic, 
\item $\Omega$ is $m$-convex for some $m\geq1$. 
\end{enumerate}
\end{theorem}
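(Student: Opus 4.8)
The plan is to establish the cycle $(1)\Rightarrow(2)\Rightarrow(3)\Rightarrow(1)$, using the machinery already developed in the excerpt together with the rescaling technology for smoothly bounded finite type convex domains. Note first that $(3)\Rightarrow(1)$ is the low-regularity-to-regularity direction: if $\Omega$ has $C^\infty$ boundary and some boundary point $\xi$ has infinite D'Angelo type, then by \cite[Lemma 6.1]{Z2016} there is a sequence of affine maps $A_n$ fixing $\xi$ with $A_n\Omega$ converging to a $\Cb$-properly convex domain whose boundary contains an analytic disk. Pulling this back, one produces points $z_n\to\xi$ and directions $v_n$ along which $\delta_\Omega(z_n;v_n)$ decays slower than any power of $\delta_\Omega(z_n)$, contradicting $m$-convexity for every $m$; so $(3)$ forces finite type. (Alternatively, one can cite the well-known fact that for smoothly bounded convex domains $m$-convexity is equivalent to finite type, as remarked in Section~\ref{sec:suff_for_visible}.)

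For $(1)\Rightarrow(2)$, I would combine the discussion following Theorem~\ref{thm:main_equivalence} with Theorem~\ref{thm:main_equivalence} itself. If $\partial\Omega$ has finite type in the sense of D'Angelo, the Bedford--Pinchuk \cite{BP1994} rescaling method shows every domain in $\overline{\Aff(\Cb^d)\cdot\Omega}\cap\Xb_d$ is affinely equivalent either to $\Omega$ or to a polynomial model domain $\{\Imag(z_1)>P(z_2,\dots,z_d)\}$ with $P$ nondegenerate; each such domain has simple boundary. Theorem~\ref{thm:main_equivalence} then gives that $(\Omega,d_\Omega)$ is Gromov hyperbolic, and since the Kobayashi and K\"ahler--Einstein metrics are bi-Lipschitz by Frankel~\cite{F1991}, $(\Omega,K_\Omega)$ is Gromov hyperbolic as well.

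For $(2)\Rightarrow(3)$, suppose $(\Omega,K_\Omega)$ is Gromov hyperbolic. By Theorem~\ref{thm:prior_nec} every domain in $\overline{\Aff(\Cb^d)\cdot\Omega}\cap\Xb_d$ has simple boundary. Since $\Omega$ is bounded it lies in $\Xb_d$ after an affine normalization (via Theorem~\ref{thm:normalizing}), so we may apply Corollary~\ref{cor:m_convex} with $R$ chosen so that $\Omega\subset\Bb_d(0;R)$: there exist $C>0$ and $m\geq1$ with $\delta_\Omega(z;v)\leq C\delta_\Omega(z)^{1/m}$ for all $z\in\Omega$ and nonzero $v$. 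This is precisely $m$-convexity; and since $\partial\Omega$ is not a real hyperplane near any point (convex bounded with smooth boundary), one checks $m\geq2$. Thus $(3)$ holds.

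The main obstacle is the $(2)\Rightarrow(3)$ step, but in fact all the work has been front-loaded into Corollary~\ref{cor:m_convex} (hence Theorem~\ref{thm:loc_m_convex}), so the remaining task is just bookkeeping: reconciling the ``local'' $m$-convexity statement (valid on $\Bb_d(0;R)\cap\Omega$ for a normalized domain) with the global definition of $m$-convexity for a bounded domain, and verifying that the exponent can be taken $\geq 2$ rather than merely $\geq 1$. The $(3)\Rightarrow(1)$ direction is also somewhat delicate because it requires translating the affine-rescaling limit containing an analytic disk into a quantitative failure of the $\delta_\Omega(z;v)\leq C\delta_\Omega(z)^{1/m}$ estimate; this is exactly the content of the infinite-type discussion preceding Corollary~\ref{cor:finite_type}, so I would simply invoke that.
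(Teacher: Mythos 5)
Your proposal is correct and assembles exactly the ingredients the paper itself points to: the statement is quoted from \cite[Theorem 1.1]{Z2016}, with $(1)\Leftrightarrow(2)$ obtained from Theorem~\ref{thm:main_equivalence} together with the Bedford--Pinchuk rescaling discussion (this is Corollary~\ref{cor:finite_type}, transferred to $K_\Omega$ via Frankel), $(2)\Rightarrow(3)$ being Corollary~\ref{cor:m_convex_Gromov_hyp} globalized over the bounded domain, and $(1)\Leftrightarrow(3)$ the elementary finite-type-versus-$m$-convexity computation referenced in Section~\ref{sec:suff_for_visible}. The only cosmetic point is that passing from ``$m$-convex for some $m\geq1$'' to ``$m$-convex for some $m\geq2$'' needs no boundary geometry: on a bounded domain the $m$-convexity condition is monotone in $m$, so one may simply replace $m$ by $\max\{m,2\}$.
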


In the non-smooth case, Gromov hyperbolicity implies ``local'' $m$-convexity. 

\begin{corollary}\label{cor:m_convex_Gromov_hyp}
Suppose $\Omega$ is a $\Cb$-properly convex domain and $(\Omega, K_\Omega)$ is Gromov hyperbolic. Then for any $R > 0$ there exist $C>0$ and $m \geq 1$ such that 
\begin{align*}
\delta_{\Omega}(z; v) \leq C \delta_{\Omega}(z)^{1/m}
\end{align*}
for all $z \in \Bb_d(0;R) \cap \Omega$ and $v \in \Cb^d$ non-zero. 
\end{corollary}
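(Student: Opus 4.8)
The plan is to deduce Corollary~\ref{cor:m_convex_Gromov_hyp} directly from Corollary~\ref{cor:m_convex} together with Theorem~\ref{thm:prior_nec}. The key observation is that the hypothesis of Corollary~\ref{cor:m_convex} is precisely the statement that every domain in $\overline{\Aff(\Cb^d) \cdot \Omega} \cap \Xb_d$ has simple boundary, and this is exactly what Theorem~\ref{thm:prior_nec}(3) provides once we know $(\Omega, K_\Omega)$ is Gromov hyperbolic.

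\begin{proof}[Proof of Corollary~\ref{cor:m_convex_Gromov_hyp}]
Since $(\Omega, K_\Omega)$ is Gromov hyperbolic, Theorem~\ref{thm:prior_nec} part (3) implies that every domain in $\overline{\Aff(\Cb^d) \cdot \Omega} \cap \Xb_d$ has simple boundary. Thus $\Omega$ satisfies the hypothesis of Corollary~\ref{cor:m_convex}, and applying that corollary yields the desired conclusion: for any $R > 0$ there exist $C > 0$ and $m \geq 1$ such that $\delta_{\Omega}(z;v) \leq C \delta_{\Omega}(z)^{1/m}$ for all $z \in \Bb_d(0;R) \cap \Omega$ and all non-zero $v \in \Cb^d$.
\end{proof}

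There is essentially no obstacle here: the content has already been packaged into the two earlier results, and this corollary is a two-line combination recording the consequence of interest. The only thing worth double-checking is that $\Omega$ being a $\Cb$-properly convex domain (rather than bounded) is compatible with both cited statements, which it is, since Theorem~\ref{thm:prior_nec} and Corollary~\ref{cor:m_convex} are both stated for $\Cb$-properly convex domains.
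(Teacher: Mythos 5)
Your proof is correct and matches the paper's own argument exactly: the paper likewise proves this corollary by combining Theorem~\ref{thm:prior_nec} (part (3)) with Corollary~\ref{cor:m_convex}. Nothing further is needed.
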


\begin{proof}[Proof of Corollary~\ref{cor:m_convex_Gromov_hyp}]
This is a consequence of Theorem~\ref{thm:prior_nec} and Corollary~\ref{cor:m_convex}.
\end{proof}

However, as the next example shows, $m$-convexity does not, in general, imply Gromov hyperbolicity.

\begin{example}\label{ex:intersections} Let $\Omega_1, \dots, \Omega_d$ be bounded strongly convex domains with $C^\infty$ boundaries such that: $0 \in \partial \Omega_j$, the real hyperplane
\begin{align*}
\{ (z_1,\dots, z_d) \in \Cb^d : { \rm Re}(z_j) = 0\}
\end{align*}
 is  tangent to $\Omega_j$ at $0$, and 
 \begin{align*}
\Omega_j \subset \{ (z_1,\dots, z_d) \in \Cb^d : { \rm Re}(z_j) > 0\}.
\end{align*}
Define $\Omega= \cap_{j=1}^d \Omega_j$. Since each $\Omega_j$ has smooth boundary, we see that 
\begin{align*}
(\epsilon, \dots, \epsilon) \in \Omega
\end{align*}
for $\epsilon > 0$ sufficiently small. So $\Omega$ is non-empty. Further, since each $\Omega_j$ is strongly convex, there exists $C > 0$ such that 
\begin{align*}
\delta_{\Omega_j}(z;v) \leq C \delta_{\Omega_j}(z)^{1/2}
\end{align*}
for all $1 \leq j \leq d$, $z \in \Omega_j$, and $v \in \Cb^d$ non-zero. Then for $z \in \Omega$ and $v \in \Cb^d$ non-zero
\begin{align*}
\delta_{\Omega}(z;v)=\min_{1 \leq j\leq d} \delta_{\Omega_j}(z;v) \leq \min_{1 \leq j\leq d} C \delta_{\Omega_j}(z)^{1/2} = C \delta_{\Omega}(z)^{1/2}.
\end{align*}
So $\Omega$ is $2$-convex. However $n \cdot \Omega$ converges in the local Hausdorff topology to 
\begin{align*}
D =  \{ (z_1,\dots, z_d) \in \Cb^d : { \rm Re}(z_1) > 0, \dots,  { \rm Re}(z_d) > 0\}.
\end{align*}
Since $D$ does not have simple boundary, Theorem~\ref{thm:prior_nec} implies that $(\Omega,K_\Omega)$ is not Gromov hyperbolic.  
\end{example}

\section{The behavior of geodesics in a fixed domain}\label{sec:behavior_of_geodesics}

In this section we study the asymptotic behavior of geodesics in a fixed convex domain. Recall, from Definition~\ref{defn:end_comp}, that $\overline{\Omega}^{\End}$ denotes the end compactification of $\overline{\Omega}$. 

We first establish the following visibility result. 

\begin{proposition}\label{prop:geod_in_domain_1}
Suppose $\Omega$ is a $\Cb$-properly convex domain and every domain in $\overline{\Aff(\Cb^d) \cdot \Omega} \cap \Xb_d$ has simple boundary. Assume $\sigma_n : [a_n,b_n] \rightarrow \Omega$ is a sequence of geodesics such that 
\begin{align*}
\lim_{n \rightarrow \infty} \sigma_n(a_n) = \xi\in  \overline{\Omega}^{\End}
\end{align*}
and 
\begin{align*}
 \lim_{n \rightarrow \infty} \sigma_n(b_n) = \eta \in \overline{\Omega}^{\End}.
 \end{align*}
 If $\xi \neq \eta$, then exist sequences $n_j \rightarrow \infty$ and $T_j \in [a_{n_j},b_{n_j}]$ such that $\sigma_{n_j}(T_j)$ converges to a point in $\Omega$.
\end{proposition}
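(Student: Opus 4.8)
The plan is to reduce to the visibility-sequence machinery already developed: apply affine rescalings centered along the geodesics $\sigma_n$ and invoke Corollary~\ref{cor:simple_bd_implies_visibility} to get a visibility sequence, then read off the conclusion. More precisely, first I would normalize the situation. By Theorem~\ref{thm:frankel_compactness} there exist affine maps $A_n \in \Aff(\Cb^d)$ so that, after passing to a subsequence, $A_n\Omega$ converges to some $\Omega_\infty$ in $\Xb_d$; crucially, since $\overline{\Aff(\Cb^d)\cdot\Omega}\cap\Xb_d$ consists of domains with simple boundary, Corollary~\ref{cor:simple_bd_implies_visibility} tells us that $A_n\Omega$ is a \emph{visibility sequence}. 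The subtlety is that I want to rescale at points \emph{on} the geodesics $\sigma_n$, not at arbitrary base points, so that the rescaled geodesics stay under control; but I am free to choose the $A_n$ after first translating/affinely moving a suitable point $\sigma_n(T_n)$ of $\sigma_n$ into a fixed compact region of $\Xb_{d,0}$, and Corollary~\ref{cor:simple_bd_implies_visibility} applies to any such sequence since it only uses that $A_n\Omega$ converges in $\Xb_d$.

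\textbf{Choice of base points.} The key step is selecting the translation parameters $T_j \in [a_{n_j}, b_{n_j}]$. The natural choice, following the proof of Theorem~\ref{thm:prior_suff} and Definition~\ref{defn:visibility_seqn}, is dictated by the visibility condition itself: since $\sigma_n(a_n) \to \xi$ and $\sigma_n(b_n)\to\eta$ with $\xi \neq \eta$ in $\overline{\Omega}^{\End}$, I first need to know the \emph{Euclidean} endpoints are distinct — if $\Omega$ is bounded this is immediate since $\overline{\Omega}^{\End} = \overline{\Omega}$, and if $\Omega$ is unbounded one uses Observation~\ref{obs:asymptotic_cone_3} to see $\overline{\Omega}^{\End}\setminus\overline{\Omega}$ is at most two points lying along $\Rb\cdot v$ for the asymptotic cone direction, and distinct ends correspond to distinct limiting directions, so in all cases $\xi$ and $\eta$ have distinct images in $\Cb^d\cup\{\infty\}$. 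Then I apply the visibility-sequence definition (Definition~\ref{defn:visibility_seqn}) directly to the sequence $A_n\sigma_n : [a_n,b_n]\to A_n\Omega$ — but to do that I should pick $A_n$ so that $A_n\sigma_n(a_n)$ and $A_n\sigma_n(b_n)$ converge to distinct points of $\Cb^d\cup\{\infty\}$. This is arranged by choosing, for each $n$, a point $q_n := \sigma_n(t_n)$ on $\sigma_n$ and an affine map $A_n$ with $A_n(\Omega, q_n)$ lying in the fixed compact set $\Kb_{d,0}$ from the corollary to Theorem~\ref{thm:normalizing}; passing to a subsequence, $A_n\Omega \to \Omega_\infty$ and the endpoints $A_n\sigma_n(a_n), A_n\sigma_n(b_n)$ converge in $\Cb^d\cup\{\infty\}$. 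One then needs that these two limits are distinct — this is where one argues that if they coincided, then since $q_n$ lies on the geodesic between them and the Kobayashi distances $K_{A_n\Omega}(A_nq_n, A_n\sigma_n(a_n))$, $K_{A_n\Omega}(A_nq_n, A_n\sigma_n(b_n))$ would both have to blow up or both stay bounded, one gets either a contradiction with $\xi\neq\eta$ (using Proposition~\ref{prop:convergence_of_kob} to transfer back) or else the geodesic degenerates in a way that forces $\xi = \eta$ after all.

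\textbf{Conclusion via the visibility sequence.} Once $A_n\sigma_n(a_n) \to \xi' \neq \eta' \leftarrow A_n\sigma_n(b_n)$ in $\Cb^d\cup\{\infty\}$ with $A_n\Omega$ a visibility sequence, Definition~\ref{defn:visibility_seqn} hands me $n_j\to\infty$ and $S_j \in [a_{n_j}, b_{n_j}]$ so that $A_{n_j}\sigma_{n_j}(\cdot + S_j)$ converges locally uniformly to a geodesic $\tau : (a',b') \to \Omega_\infty$ with $a' = \lim (a_{n_j} - S_j)$, $b' = \lim(b_{n_j}-S_j)$. But I want the limit to land in $\Omega$ itself, not in $\Omega_\infty$. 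The fix is that I do not actually need the external rescaling: once I know $A_n\Omega$ is a visibility sequence I can instead run the visibility argument with $A_n = \mathrm{id}$ for all $n$ — i.e., I invoke Definition~\ref{defn:visibility_seqn} for the \emph{constant} sequence $\Omega_n = \Omega$, which is trivially a convergent sequence in $\Xb_d$, and whose visibility follows from the same Corollary~\ref{cor:simple_bd_implies_visibility} (applied with $A_n = \mathrm{id}$). This directly yields $T_j$ and the geodesic $\sigma : (a,b)\to\Omega$ with the stated endpoint formulas, provided the Euclidean endpoints $\xi, \eta$ are distinct in $\Cb^d\cup\{\infty\}$, which we established above. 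I expect the main obstacle to be exactly that first point: correctly handling the passage from "$\xi\neq\eta$ in $\overline{\Omega}^{\End}$" to "distinct points of $\Cb^d\cup\{\infty\}$" in the unbounded case, where two distinct ends can sit over the same or antipodal directions of the asymptotic cone, so one must use Observation~\ref{obs:asymptotic_cone_3} together with the fact that a geodesic tending to one end cannot simultaneously accumulate at the other — otherwise the whole visibility apparatus cannot even be triggered. Everything after that is a direct application of Definition~\ref{defn:visibility_seqn} via Corollary~\ref{cor:simple_bd_implies_visibility}.
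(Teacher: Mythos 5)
Your argument works in the case where at least one of $\xi,\eta$ lies in $\Cb^d$: there the two limits are indeed distinct points of $\Cb^d\cup\{\infty\}$, and Corollary~\ref{cor:simple_bd_implies_visibility} applied to the constant sequence $A_n=\Id$ gives the conclusion directly. This is exactly the paper's Case 1, and the long preliminary detour through Frankel compactness and rescaling at points $q_n$ on the geodesics is unnecessary (as you yourself conclude).

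The gap is in the remaining case, where $\Omega$ is unbounded and $\xi,\eta$ are two \emph{distinct ends}. You assert that ``distinct ends correspond to distinct limiting directions, so in all cases $\xi$ and $\eta$ have distinct images in $\Cb^d\cup\{\infty\}$.'' This is false: the map $\overline{\Omega}^{\End}\rightarrow\Cb^d\cup\{\infty\}$ collapses \emph{both} ends to the single point $\infty$ of the one-point compactification, so $\sigma_n(a_n)$ and $\sigma_n(b_n)$ both converge to $\infty$ in $\Cb^d\cup\{\infty\}$ and Definition~\ref{defn:visibility_seqn} cannot be invoked — precisely the obstruction you flag at the end without resolving (your internal contradiction between ``distinct limiting directions'' and ``the same or antipodal directions'' is the symptom). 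The missing idea is the paper's Case 2: since the two ends are separated by a compact set, by Observation~\ref{obs:asymptotic_cone_3} there is an $R>0$ such that $\sigma_n(a_n)$ and $\sigma_n(b_n)$ eventually lie in different components of $\overline{\Omega}\setminus\overline{\Bb_d(0;R)}$, hence each geodesic passes through $\overline{\Bb_d(0;R)}$ at some time $a_n'\in[a_n,b_n]$. One then applies the visibility corollary (with $A_n=\Id$) to the sub-geodesics anchored at the bounded points $\sigma_n(a_n')$, whose limit is now distinct from $\infty$, or concludes by Arzel\`a--Ascoli when $\sigma_n(a_n')$ accumulates in $\Omega$. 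Without this separation argument the proof does not go through.
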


\begin{remark}\ \begin{enumerate}
\item Informally this proposition says that geodesics joining two distinct points in $\overline{\Omega}^{\End}$ ``bend'' into the domain. 
\item Notice that in Definition~\ref{defn:visibility_seqn} we consider the one point compactification of $\Cb^d$ while in Proposition~\ref{prop:geod_in_domain_1} we consider the end compactification of $\overline{\Omega}$. 
\end{enumerate}
 \end{remark}

\begin{proof} By Corollary~\ref{cor:simple_bd_implies_visibility} the constant sequence $\Omega, \Omega, \dots$ is a visibility sequence. Up to relabeling $\xi$ and $\eta$ it is enough to consider two cases:

\medskip
\noindent \textbf{Case 1:} $\xi \in \Cb^d$. In this case, the Proposition follows immediately from applying the visibility property to the geodesics $\sigma_n$. 

\medskip
\noindent \textbf{Case 2:} $\xi, \eta \notin \Cb^d$. Then there exists $R > 0$ such that $\sigma_n(a_n)$ and $\sigma_n(b_n)$ are in different connected components of $\overline{\Omega} \setminus \overline{\Bb_d(0;R)}$ for $n$ sufficiently large. So there exist $a_n^\prime \in [a_n,b_n]$ such that $\norm{\sigma_n(a_n^\prime)}\leq R$ when $n$ is sufficiently large.  Passing to a subsequence, we can assume that $\sigma_n(a_n^\prime) \rightarrow \xi^\prime \in \Cb^d$. Then we can apply the visibility property to the sequence of geodesics $\sigma_n|_{[a_n^\prime, b_n]}$. 
\end{proof} 

\begin{proposition}\label{prop:geod_in_domain_2} Suppose $\Omega$ is a $\Cb$-properly convex domain and every domain in $\overline{\Aff(\Cb^d) \cdot \Omega} \cap \Xb_d$ has simple boundary. If $\sigma: [0,\infty) \rightarrow \Omega$ is a geodesic ray, then 
\begin{align*}
\lim_{t \rightarrow \infty} \sigma(t)
\end{align*}
exists in $\partial_{\End} \Omega$.
\end{proposition}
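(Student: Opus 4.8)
The plan is to show that a geodesic ray $\sigma:[0,\infty)\to\Omega$ is eventually trapped in a single end of $\overline{\Omega}$ and converges there. First I would invoke Observation~\ref{obs:asymptotic_cone_3}: the compactification $\overline{\Omega}^{\End}$ adds either one point (if $\Omega$ is unbounded with $\AC(\Omega)$ not a line, or more precisely whenever $\overline{\Omega}^{\End}\setminus\overline{\Omega}$ is a single point), two points (if $\AC(\Omega)=\Rb\cdot v$), or nothing (if $\Omega$ is bounded). In the bounded case $\partial_{\End}\Omega = \partial\Omega$, so I need to show $\lim_{t\to\infty}\sigma(t)$ exists in $\partial\Omega$: since $\Omega$ is bounded the forward accumulation set $\sigma(\infty)$ is a nonempty subset of $\partial\Omega$, and if it contained two distinct points $\xi\neq\eta$, I would pick $s_n\to\infty$ with $\sigma(s_n)\to\xi$ and $t_n\to\infty$ with $\sigma(t_n)\to\eta$, arrange $s_n<t_n$, and apply the visibility statement Proposition~\ref{prop:geod_in_domain_1} (Case 1 applies since $\xi\in\Cb^d$) to the geodesic segments $\sigma|_{[s_n,t_n]}$; this produces a subsequence of reparametrizations converging locally uniformly to a geodesic line $\tau:(a,b)\to\Omega$. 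But $\sigma|_{[s_n,t_n]}$ is itself a subsegment of the fixed geodesic $\sigma$, and the distance from $\sigma(0)$ to the segment $\sigma|_{[s_n,t_n]}$ tends to infinity (because $s_n\to\infty$), which forces $a=b$ after the recentering — contradicting that $\tau$ is a nondegenerate geodesic. Hence $\sigma(\infty)$ is a single point and $\lim_{t\to\infty}\sigma(t)$ exists in $\partial\Omega = \partial_{\End}\Omega$.

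Next I would handle the unbounded cases. The essential new phenomenon is that $\sigma$ might escape to infinity, i.e. $\norm{\sigma(t)}\to\infty$; I claim that when this happens, $\sigma$ eventually stays in one connected component of $\overline{\Omega}\setminus\overline{\Bb_d(0;R)}$ for each large $R$, hence converges to one of the ends at infinity. Indeed, if $\sigma$ left and re-entered $\overline{\Bb_d(0;R)}$ for arbitrarily large times, or if $\sigma(\infty)$ met two distinct ends, I would again select times $s_n<t_n\to\infty$ with $\sigma(s_n)\to\xi$ and $\sigma(t_n)\to\eta$ for distinct $\xi,\eta\in\partial_{\End}\Omega$ and apply Proposition~\ref{prop:geod_in_domain_1} (now in the relevant case, including Case 2 when both points are at infinity) to the segments $\sigma|_{[s_n,t_n]}$, deriving the same contradiction from the fact that these are subsegments of a single geodesic at unbounded distance from $\sigma(0)$. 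The remaining possibility is that $\sigma$ does not escape to infinity, i.e. $\liminf_{t\to\infty}\norm{\sigma(t)} < \infty$; then there is $R$ and $t_n\to\infty$ with $\norm{\sigma(t_n)}\leq R$, so all accumulation points of $\sigma$ lying at finite distance are in $\partial\Omega\cap\overline{\Bb_d(0;R)}$, and if $\sigma(\infty)$ were not a single point one runs the same visibility argument. In every case $\sigma(\infty)$ is forced to be a single point of $\overline{\Omega}^{\End}$, and since $s_n\to\infty$ forces that point into $\overline{\Omega}^{\End}\setminus\Omega = \partial_{\End}\Omega$ (points of $\Omega$ are at finite Kobayashi distance from $\sigma(0)$), the limit lies in $\partial_{\End}\Omega$ as claimed.

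The main obstacle, and the step I would be most careful about, is the contradiction extraction: making precise that applying Proposition~\ref{prop:geod_in_domain_1} to subsegments $\gamma_n = \sigma|_{[s_n,t_n]}$ of one fixed geodesic $\sigma$ is incompatible with $s_n\to\infty$. The point is that the proposition gives centering parameters $T_n\in[s_n,t_n]$ and a limit geodesic $\tau$ defined on $(a,b)$ with $a = \lim(s_n - T_n)$, $b=\lim(t_n-T_n)$; since $\gamma_n(T_n)=\sigma(T_n)$ has $K_\Omega(\sigma(0),\sigma(T_n)) = T_n \geq s_n \to \infty$, the limit domain here is the fixed $\Omega$ and $\tau(0) = \lim\sigma(T_n)$ must be a boundary point or $\infty$ — but $\tau$ is a genuine geodesic of $(\Omega,K_\Omega)$, whose image lies in $\Omega$, contradiction. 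This is essentially the mechanism already used in the proof of Lemma~\ref{lem:limits_of_geod}, and I would model the argument on that lemma, simply replacing the converging sequence of domains there by the constant sequence $\Omega_n \equiv \Omega$. One should also double-check the bookkeeping that $s_n < t_n$ can always be arranged when $\sigma(\infty)$ has two distinct points (interleave the two sequences), and that in the two-ended case ($\AC(\Omega) = \Rb v$) a geodesic ray cannot accumulate on both ends — which again follows from the visibility argument since the two ends are distinct points of $\overline{\Omega}^{\End}$.
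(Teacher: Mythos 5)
Your proposal is correct and follows essentially the same route as the paper: take two distinct accumulation points $\xi\neq\eta$ in $\partial_{\End}\Omega$, choose interleaved times $a_n\leq b_n\to\infty$, apply Proposition~\ref{prop:geod_in_domain_1} to the segments $\sigma|_{[a_n,b_n]}$ to extract recentering times $T_n$ and a limit geodesic into $\Omega$, and contradict $K_\Omega(\sigma(0),\sigma(T_n))=T_n\geq a_n\to\infty$. The case analysis by boundedness and by which ends the ray meets is harmless but unnecessary, since Proposition~\ref{prop:geod_in_domain_1} is already stated uniformly for the end compactification.
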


\begin{proof} Suppose not, then there exist sequences $a_n \rightarrow \infty$ and $b_n \rightarrow \infty$ such that 
\begin{align*}
\lim_{n \rightarrow \infty} \sigma_n(a_n) = \xi\in \partial_{\End} \Omega
\end{align*}
and 
\begin{align*}
 \lim_{n \rightarrow \infty} \sigma_n(b_n) = \eta \in \partial_{\End} \Omega,
 \end{align*}
but $\xi \neq \eta$. By passing to subsequences we can suppose that $a_n \leq b_n$ for all $n$. Then by Proposition~\ref{prop:geod_in_domain_1} and passing to a subsequence there exist $T_n \in [a_n, b_n]$ such that $\sigma(T_n)$ converges to some $z_\infty \in \Omega$. Then 
\begin{align*}
\infty > K_\Omega(\sigma(0),z_\infty) = \lim_{n \rightarrow \infty} K_\Omega(\sigma(0), \sigma(T_n)) \geq \lim_{n \rightarrow \infty} a_n =\infty
\end{align*}
and we have a contradiction. 
\end{proof}

The final result of this section requires a definition. First recall, from Definition~\ref{defn:asym_cone}, that ${\rm AC}(\Omega)$ is the asymptotic cone of $\Omega$. 

\begin{definition}\label{defn:totally_real} \ \begin{enumerate}
\item A real linear subspace $V \subset \Cb^d$ is \emph{totally real} if $V \cap i V = \{0\}$. 
\item When $\Omega$ is a $\Cb$-properly convex domain, ${ \rm AC}(\Omega)$ is \emph{totally real} if 
\begin{align*}
\Span_{\Rb} { \rm AC}(\Omega)
\end{align*} is totally real. 
\end{enumerate}
\end{definition}

\begin{proposition}\label{prop:showing_well_behaved} Suppose $\Omega$ is a $\Cb$-properly convex domain and every domain in $\overline{\Aff(\Cb^d) \cdot \Omega} \cap \Xb_d$ has simple boundary. Further assume that
\begin{enumerate}
\item $\Omega$ is bounded or 
\item $\Omega$ is unbounded and $\rm{AC}(\Omega)$ is not totally real.
\end{enumerate}
If $\sigma: \Rb \rightarrow \Omega$ is a geodesic, then 
\begin{align*}
\lim_{t \rightarrow \infty} \sigma(t) \neq \lim_{t \rightarrow -\infty} \sigma(t)
\end{align*}
 in $\partial_{\End} \Omega$.
\end{proposition}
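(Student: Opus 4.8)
The plan is to argue by contradiction. By Proposition~\ref{prop:geod_in_domain_2} the limits $\xi^+ := \lim_{t\to\infty}\sigma(t)$ and $\xi^- := \lim_{t\to-\infty}\sigma(t)$ exist in $\partial_{\End}\Omega$, so I would suppose $\xi^+ = \xi^- =: \xi$ and look for a contradiction. Using Observation~\ref{obs:asymptotic_cone_3} I would first note that $\xi$ is either a point of $\partial\Omega$ or the unique point of $\overline{\Omega}^{\End}\setminus\overline{\Omega}$ at infinity: the two-ended case of Observation~\ref{obs:asymptotic_cone_3} is excluded because ${\rm AC}(\Omega)=\Rb\cdot v$ is always totally real, hence ruled out by hypothesis~(2); and under hypothesis~(1) only the former possibility occurs. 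The two cases are then handled by parallel rescaling arguments.

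In the case $\xi\in\partial\Omega$: after a complex affine change of coordinates I would take $\xi=0$ and fix a complex hyperplane $H$ supporting $\Omega$ at $0$. By Corollary~\ref{cor:m_convex} there are $C>0$ and $m\ge1$ with $\delta_\Omega(z;v)\le C\delta_\Omega(z)^{1/m}$ near $0$, so $\Omega$ is uniformly thin in tangential directions near $\xi$. I would choose $s_n\to-\infty$ and $t_n\to+\infty$ with $\sigma(s_n),\sigma(t_n)\to\xi$ and construct affine maps $B_n$ --- built from the normalizing maps of Theorem~\ref{thm:normalizing} applied at points tending to $\xi$, composed with dilations adapted to the exponent $m$ --- such that $B_n\Omega\in\Kb_d(r)$ for a fixed $r\in(0,1]$, $B_n(\xi)=e_1$, the points $B_n\sigma(s_n),B_n\sigma(t_n)$ remain in a fixed compact subset of $\Cb^d$, and $K_{B_n\Omega}(B_n\sigma(s_n),B_n\sigma(t_n))=t_n-s_n\to\infty$. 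Passing to a subsequence (Proposition~\ref{prop:Kdr_compact}) I would obtain $B_n\Omega\to\Omega_\infty\in\Kb_d(r)\subset\Xb_d$; since $\Omega_\infty\in\overline{\Aff(\Cb^d)\cdot\Omega}\cap\Xb_d$ it has simple boundary, and $B_n\Omega$ is a visibility sequence by Corollary~\ref{cor:simple_bd_implies_visibility}. Using Proposition~\ref{prop:convergence_of_kob}, the Arzel\`a-Ascoli theorem and Lemma~\ref{lem:limits_of_geod}, I would extract from the rescaled geodesic segments a limiting geodesic line $\gamma\colon\Rb\to\Omega_\infty$ with $\lim_{t\to\pm\infty}\gamma(t)=\lim_n B_n(\xi)=e_1\in\partial\Omega_\infty$. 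The remaining --- and most delicate --- point is to convert the existence of a geodesic line whose two ends both converge to $e_1$ into a non-constant complex affine disk in $\partial\Omega_\infty$: I would do this via a second rescaling centred near $e_1$ inside $\Omega_\infty$, exploiting convexity and the supporting hyperplane $e_1+\Span_\Cb\{e_2,\dots,e_d\}$ of $\Omega_\infty$ at $e_1$, ending at a domain in $\overline{\Aff(\Cb^d)\cdot\Omega}\cap\Xb_d$ with non-simple boundary. By Proposition~\ref{prop:affine_vs_holomorphic_disks} this contradicts the hypothesis.

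When $\xi$ is the point at infinity, $\Omega$ is unbounded and $\norm{\sigma(t)}\to\infty$ as $t\to\pm\infty$, and this is where hypothesis~(2) is used. I would rescale $\Omega$ by homotheties toward infinity (again composed with normalizations) so that the images of suitable points $\sigma(s_n),\sigma(t_n)$ stay in a fixed compact set while $B_n\Omega$ converges. The role of the ``not totally real'' assumption is precisely to prevent the rescaled domains from converging to a domain containing a complex affine line, so the limit $\Omega_\infty$ is again $\Cb$-properly convex, lies in $\overline{\Aff(\Cb^d)\cdot\Omega}\cap\Xb_d$, and hence has simple boundary; then, exactly as above, one extracts a geodesic line in $\Omega_\infty$ with coincident endpoints and produces a non-constant complex affine disk in $\partial\Omega_\infty$, a contradiction. (This also explains why hypothesis~(2) is needed: without it the rescaled domains can degenerate and no contradiction is available, consistent with the fact that the conclusion can fail for certain tube-like domains whose asymptotic cone is totally real.)

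I expect the main obstacle to be the construction and control of the rescaling maps $B_n$, where three things must be arranged at once: (a) $B_n\Omega$ stays in a fixed compact family $\Kb_d(r)$ and its limit remains $\Cb$-properly convex --- this is exactly where local $m$-convexity (Corollary~\ref{cor:m_convex}), respectively the ``not totally real'' hypothesis, is essential; (b) the geodesic $\sigma$ is not entirely pushed out to infinity under $B_n$, so that a bi-infinite geodesic survives in $\Omega_\infty$; and (c) the passage from ``a geodesic line both of whose ends converge to a single boundary point'' to ``a non-constant complex affine disk in the boundary'', for which I anticipate needing an additional rescaling near $e_1$ together with the explicit supporting hyperplane supplied by membership in $\Kb_d(r)$.
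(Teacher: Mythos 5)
Your setup (existence of both limits via Proposition~\ref{prop:geod_in_domain_2}, reduction via Observation~\ref{obs:asymptotic_cone_3} to the two cases $\xi\in\partial\Omega$ and $\xi=\infty$, rescaling into a compact family $\Kb_d(r)$, and the visibility of the rescaled sequence via Corollary~\ref{cor:simple_bd_implies_visibility}) matches the paper. But the step you yourself flag as ``most delicate'' --- passing from a geodesic line $\gamma:\Rb\to\Omega_\infty$ with $\lim_{t\to\pm\infty}\gamma(t)=e_1$ to a non-constant complex affine disk in the boundary of some domain in the orbit closure --- is a genuine gap, not a routine technicality. You give no argument for it, and the natural routes are blocked: you cannot invoke Gromov hyperbolicity of $(\Omega_\infty,K_{\Omega_\infty})$ together with Theorem~\ref{thm:compactification} (as the paper does in an analogous situation in Lemma~\ref{lem:D1}), because Gromov hyperbolicity is exactly what this proposition is helping to establish; and the visibility statements only constrain geodesics joining \emph{distinct} endpoints, so they say nothing directly about a geodesic whose two ends coincide. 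As written, your plan does not close.

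The paper avoids this issue by arranging the rescaling the other way around. In the case $\xi\in\partial\Omega$, it takes $z_n\in[z_0,\xi)$ tending to $\xi$ and the normalizing maps $A_n$ of Theorem~\ref{thm:normalizing} at $z_n$; simple boundary of $\Omega$ forces $\sup_{\norm{v}=1}\delta_\Omega(z_n;v)\to0$, so part~(4) of Theorem~\ref{thm:normalizing} gives $\norm{A_n\sigma(0)}\to\infty$ while $A_n(\xi)=e_1$ is fixed. Visibility is then applied separately to the two half-geodesics $t\mapsto A_n\sigma(t)$ and $t\mapsto A_n\sigma(-t)$, each of which joins a point escaping to $\infty$ to a point converging to $e_1$ --- two \emph{distinct} points of $\Cb^d\cup\{\infty\}$ --- yielding reparametrization times $T_{i,n}\to\infty$ and the contradiction $K_{\Omega_\infty}(\gamma_1(0),\gamma_2(0))=\lim_n(T_{1,n}+T_{2,n})=\infty$. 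The case $\xi=\infty$ is handled symmetrically with contractions toward a boundary point of a complex line meeting ${\rm AC}(\Omega)$ in an open cone (this, rather than preventing a complex line in the limit, is where ``not totally real'' enters). So the missing idea is: do not extract a full limiting geodesic line and try to read off a boundary disk; instead normalize so that the two halves of the geodesic each acquire distinct endpoints, and let visibility alone produce the contradiction. Note also that your appeal to Corollary~\ref{cor:m_convex} and dilations ``adapted to the exponent $m$'' to keep $B_n\sigma(s_n),B_n\sigma(t_n)$ in a compact set is the opposite of what is needed.
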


\begin{remark} \ \begin{enumerate}
\item If $\Bc = \{ x \in \Rb^d : \norm{x} < 1\}$ and $\Omega = \Bc + i\Rb^d$, then one can show that every domain in $\overline{\Aff(\Cb^d) \cdot \Omega} \cap \Xb_d$ has simple boundary, but there exists a geodesic $\sigma: \Rb \rightarrow \Omega$ with 
\begin{align*}
\lim_{t \rightarrow \infty} \sigma(t) = \lim_{t \rightarrow -\infty} \sigma(t) \in \partial_{\End} \Omega.
\end{align*}
Thus some extra assumption is necessary when $\Omega$ is unbounded. 
\item When $\Omega$ is unbounded and $\rm{AC}(\Omega)$ is not totally real, then $\overline{\Omega}^{\rm End}$ is simply the one-point compactification of $\overline{\Omega}$ (see Observation~\ref{obs:asymptotic_cone_3}).
\end{enumerate}
\end{remark}

\begin{proof} By Proposition~\ref{prop:geod_in_domain_2} both limits exist. Suppose for a contradiction that 
\begin{align*}
\xi := \lim_{t \rightarrow \infty} \sigma(t) = \lim_{t \rightarrow -\infty} \sigma(t) \in \partial_{\End}\Omega.
\end{align*}

\medskip

\noindent \textbf{Case 1:} $\xi \in \Cb^d$. Fix some $z_0 \in \Omega$ and let $z_n \in [z_0, \xi)$ be a sequence converging to $\xi$. By Theorem~\ref{thm:normalizing}, there exist $r > 0$ and affine maps $A_n \in \Aff(\Cb^d)$ such that $A_n\Omega \in \Kb_d(r)$, $A_n(z_n)=0$, and $A_n(\xi)=e_1$. Since $\Kb_d(r)$ is compact, we can pass to a subsequence and assume that $A_n \Omega$ converges to some $\Omega_\infty$ in $\Xb_d$. By Corollary~\ref{cor:simple_bd_implies_visibility}, the sequence $A_n \Omega$ is a visibility sequence. 

Consider the geodesics $\gamma_{1,n} : [0,\infty) \rightarrow A_n \Omega$ and $\gamma_{2,n} : [0,\infty) \rightarrow A_n \Omega$ given by $\gamma_{1,n}(t) = A_n \sigma(t)$ and $\gamma_{2,n}(t) = A_n\sigma(-t)$. Since $\Omega$ has simple boundary and $z_n \rightarrow \xi \in \partial \Omega$, we see that 
\begin{align*}
\lim_{n \rightarrow \infty} \max\left\{ \delta_\Omega(z_n;v) : v \in \Cb^d, \norm{v}=1\right\}=0.
\end{align*}
So by Theorem~\ref{thm:normalizing} part (5), 
\begin{align*}
\lim_{n \rightarrow \infty} \norm{A_n\sigma(0)} 
&= \lim_{n \rightarrow \infty} \norm{A_n\sigma(0)-0}= \lim_{n \rightarrow \infty} \norm{A_n\sigma(0)-A_n z_n} \\
& \geq \frac{r}{\sqrt{2}} \norm{\sigma(0)-\xi} \lim_{n \rightarrow \infty} \frac{1}{\max\{ \delta_\Omega(z_n;v) : \norm{v}=1\}} = \infty. 
\end{align*}
So
\begin{align*}
\lim_{n \rightarrow \infty} \norm{\gamma_{j,n}(0)} =\lim_{n \rightarrow \infty} \norm{A_n\sigma(0)} = \infty.
\end{align*}
Further, for any $n$ we have 
\begin{align*}
\lim_{t \rightarrow \infty} \gamma_{j,n}(t) = A_n(\xi) = e_1.
\end{align*}
So we can find $b_{1,n}, b_{2,n} \rightarrow \infty$ such that 
\begin{align*}
\lim_{n \rightarrow \infty} \gamma_{j,n}(b_{j,n}) = e_1.
\end{align*}
Since $A_n \Omega$ is a visibility sequence, after passing to a subsequence there exist $T_{j,n} \in [0,b_{j,n}]$ so that  $\lim_{n \rightarrow \infty} \gamma_{j,n}(T_{j,n}) = z_j \in \Omega_\infty$. Notice that since $\lim_{n \rightarrow \infty} \norm{\gamma_{j,n}(0)} = \infty$, the ``in particular'' part of Observation~\ref{obs:AA_for_geod} implies that  
\begin{align*}
\lim_{n \rightarrow \infty} T_{1,n} = \lim_{n \rightarrow \infty} T_{2,n} =\infty.
\end{align*}
But then Proposition~\ref{prop:convergence_of_kob} implies
\begin{align*}
\infty & > K_{\Omega_\infty}(z_1,z_2) = \lim_{n \rightarrow \infty} K_{A_n\Omega}(\gamma_{1,n}(T_{1,n}), \gamma_{2,n}(T_{2,n})) \\
& = \lim_{n \rightarrow \infty} K_{\Omega}(\sigma(T_{1,n}), \sigma(-T_{2,n})) = \lim_{n \rightarrow \infty} T_{1,n} + T_{2,n} = \infty
\end{align*}
and we have a contradiction. 

\medskip

\noindent \textbf{Case 2:} $\xi \notin \Cb^d$. Then $\Omega$ is unbounded and so $\rm{AC}(\Omega)$ is not totally real. This implies that there exists a complex line $L$ such that $L \cap \rm{AC}(\Omega)$ has non-empty interior in $L$. By applying an affine transformation to $\Omega$ we can assume that $L = \Cb \cdot e_1$, $\sigma(0) =0$, $e_1 \in \partial \Omega$, and
\begin{align*}
 \{ (x+iy, 0,\dots, 0) : x < 1 - \alpha \abs{y} \} \subset \Cb \cdot e_1 \cap \Omega
\end{align*}
for some $\alpha > 0$. 

Let $A_n \in \Aff(\Cb^d)$ be an affine map such that 
$$A_n(ze_1) =\left(1+\frac{1}{n}(z-1)\right)e_1.$$ 
Then $e_1=A_n(e_1)\in \partial A_n \Omega$ and 
\begin{align*}
 \{ (x+iy, 0,\dots, 0) : x < 1 - \alpha \abs{y} \} \subset \Cb \cdot e_1 \cap A_n\Omega.
 \end{align*}
 So there exists some $r > 0$ such that $A_n \Omega \cap \Cb \cdot e_1 \in \Kb_1(r)$ for all $n$. Then using Proposition~\ref{prop:compact_on_slices} we can assume that $A_n \Omega \in \Kb_d(r)$ for all $n$. Since $\Kb_d(r)$ is compact, we can pass to a subsequence and assume that $A_n \Omega$ converges to some $\Omega_\infty$ in $\Xb_{d}$. By Corollary~\ref{cor:simple_bd_implies_visibility}, the sequence $A_n \Omega$ is a visibility sequence.
 
 Consider the geodesics $\gamma_{1,n} : [0,\infty) \rightarrow A_n \Omega$ and $\gamma_{2,n} : [0,\infty) \rightarrow A_n \Omega$ given by $\gamma_{1,n}(t) = A_n \sigma(t)$ and $\gamma_{2,n}(t) = A_n\sigma(-t)$. By construction 
 \begin{align*}
 \lim_{n \rightarrow \infty} \gamma_{j,n}(0) = \lim_{n \rightarrow \infty} A_n(0) = \lim_{n \rightarrow \infty} \left(1-\frac{1}{n} \right) e_1 = e_1
 \end{align*}
 and 
  \begin{align*}
 \lim_{t \rightarrow \infty} \norm{\gamma_{j,n}(t) }=\infty
 \end{align*}
 for every $n$. Since $A_n \Omega$ is a visibility sequence, after passing to a subsequence there exist $T_{1,n}, T_{2,n} \in [0,\infty)$ so that $\lim_{n \rightarrow \infty} \gamma_{j,n}(T_{j,n})=z_j \in \Omega_\infty$. Notice that since $\lim_{n \rightarrow \infty} \gamma_{j,n}(0) = e_1 \in \partial \Omega_\infty$, the ``in particular'' part of Observation~\ref{obs:AA_for_geod} implies that  
\begin{align*}
\lim_{n \rightarrow \infty} T_{1,n} = \lim_{n \rightarrow \infty} T_{2,n} =\infty.
\end{align*}
 But then Proposition~\ref{prop:convergence_of_kob} implies
\begin{align*}
\infty & > K_{\Omega_\infty}(z_1,z_2) = \lim_{n \rightarrow \infty} K_{A_n\Omega}(\gamma_{1,n}(T_{1,n}), \gamma_{2,n}(-T_{2,n})) \\
& = \lim_{n \rightarrow \infty} K_{\Omega}(\sigma(T_{1,n}), \sigma(-T_{2,n})) = \lim_{n \rightarrow \infty} T_{1,n} + T_{2,n} = \infty
\end{align*}
and we have a contradiction. 
\end{proof}

\section{Proof of Theorem~\ref{thm:main_equivalence}}\label{sec:pf_of_thm_main_equivalence}

In this section we establish Theorem~\ref{thm:main_equivalence} by proving the following stronger result. 

\begin{theorem}\label{thm:main_equivalence_general_version} Suppose  $\Omega$ is $\Cb$-properly convex and  either
\begin{enumerate}
\item $\Omega$ is bounded or 
\item $\Omega$ is unbounded and $\rm{AC}(\Omega)$ is not totally real (see Definition~\ref{defn:totally_real}). 
\end{enumerate}
Then $(\Omega, K_\Omega)$ is Gromov hyperbolic if and only if every domain in 
\begin{align*}
\overline{\Aff(\Cb^d) \cdot \Omega} \cap \Xb_d
\end{align*}
has simple boundary. 
\end{theorem}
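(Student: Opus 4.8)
The plan is to prove the two implications of the equivalence separately, with the forward direction immediate and the reverse direction carrying all the weight. For the forward direction there is nothing to do: if $(\Omega,K_\Omega)$ is Gromov hyperbolic then Theorem~\ref{thm:prior_nec}(3) says directly that every domain in $\overline{\Aff(\Cb^d)\cdot\Omega}\cap\Xb_d$ has simple boundary, and this uses neither boundedness of $\Omega$ nor the hypothesis on ${\rm AC}(\Omega)$. For the reverse direction, assume every domain in $\overline{\Aff(\Cb^d)\cdot\Omega}\cap\Xb_d$ has simple boundary; I want to verify the three hypotheses of Theorem~\ref{thm:prior_suff}. So fix a sequence $u_n\in\Omega$. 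By Theorem~\ref{thm:frankel_compactness} there are affine maps $A_n$ with $\{A_n(\Omega,u_n)\}$ relatively compact in $\Xb_{d,0}$, and after passing to a subsequence $A_n(\Omega,u_n)\to(\Omega_\infty,u_\infty)$ in $\Xb_{d,0}$ --- this is hypothesis~(1). Since $\Omega_\infty\in\overline{\Aff(\Cb^d)\cdot\Omega}\cap\Xb_d$ we have $\overline{\Aff(\Cb^d)\cdot\Omega_\infty}\cap\Xb_d\subseteq\overline{\Aff(\Cb^d)\cdot\Omega}\cap\Xb_d$, so every domain in the orbit closure of $\Omega_\infty$ also has simple boundary; Corollary~\ref{cor:simple_bd_implies_visibility} then gives that $A_n\Omega$ is a visibility sequence, which is hypothesis~(3). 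It remains only to check hypothesis~(2): that geodesics in $(\Omega_\infty,K_{\Omega_\infty})$ are well-behaved.

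The crucial technical point --- and what I expect to be the main obstacle --- is that the limit domain $\Omega_\infty$ again satisfies the dichotomy: either $\Omega_\infty$ is bounded, or it is unbounded and ${\rm AC}(\Omega_\infty)$ is not totally real. The key is that the linear part of an affine map of $\Cb^d$ commutes with multiplication by $i$. In the bounded case, after a harmless translation assume $\Omega\subseteq\Bb_d(0;R)$, write $A_n(z)=b_n+g_n(z)$ with $g_n\in\GL_d(\Cb)$, and suppose $\Omega_\infty$ is unbounded, so there is a unit vector $v\in{\rm AC}(\Omega_\infty)$. Since $\Omega_\infty$ is open and convex and contains $u_\infty+\Rb_{\geq0}v$, it contains a fixed-radius tube $\Bb_d(u_\infty+sv;\rho_0)$ about this ray for all $s\geq0$; pulling these balls back by $A_n$ via Proposition~\ref{prop:haus_conv_compacts}(1) and using $\Omega\subseteq\Bb_d(0;R)$ forces $\norm{g_n^{-1}(v)}\to0$, whence also $\norm{g_n^{-1}(iv)}=\norm{g_n^{-1}(v)}\to0$; a parallel pullback/pushforward comparison in the $iv$-direction then shows $iv\in{\rm AC}(\Omega_\infty)$, so that $\Span_{\Rb}{\rm AC}(\Omega_\infty)\supseteq\Cb v$ and ${\rm AC}(\Omega_\infty)$ is not totally real. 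When $\Omega$ is itself unbounded one argues the same way, using in addition that ${\rm AC}(A_n\Omega)=g_n({\rm AC}(\Omega))$ remains not totally real (a $\Cb$-linear image of a subspace that is not totally real is again not totally real, since $g_n$ commutes with multiplication by $i$) and that this property survives in the limit unless $\Omega_\infty$ has become bounded. The delicate parts are the norm estimates used to transport asymptotic directions between $\Omega_\infty$ and $\Omega$ --- in particular promoting ``the rescaling maps shrink the $v$-direction'' to ``the limit contains the whole $iv$-ray'', which I expect will also draw on the simple-boundary hypothesis --- and the bookkeeping in the unbounded case.

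Granting that $\Omega_\infty$ satisfies the dichotomy, the conclusion is routine. Proposition~\ref{prop:geod_in_domain_2} shows every geodesic ray in $\Omega_\infty$ has a limit in $\partial_{\End}\Omega_\infty$, and Proposition~\ref{prop:showing_well_behaved} (whose hypotheses $\Omega_\infty$ now meets) shows that a geodesic line $\sigma:\Rb\to\Omega_\infty$ has limits $\xi_+:=\lim_{t\to+\infty}\sigma(t)$ and $\xi_-:=\lim_{t\to-\infty}\sigma(t)$ that are distinct in $\partial_{\End}\Omega_\infty$. If both $\xi_\pm$ lie in $\partial\Omega_\infty\subseteq\Cb^d$, then $\sigma(\infty)=\{\xi_+\}$ and $\sigma(-\infty)=\{\xi_-\}$ are disjoint; if exactly one of them, say $\xi_-$, is an ideal point, then $\sigma(-\infty)=\{\infty\}$ while $\sigma(\infty)=\{\xi_+\}\subseteq\Cb^d$, again disjoint; and $\xi_+,\xi_-$ cannot both be ideal points, since by Observation~\ref{obs:asymptotic_cone_3} two distinct ideal points would force ${\rm AC}(\Omega_\infty)=\Rb\cdot w$, which is totally real, contradicting the dichotomy. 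Hence $\sigma(\infty)\cap\sigma(-\infty)=\emptyset$ in $\Cb^d\cup\{\infty\}$ for every geodesic line $\sigma$, i.e. geodesics in $(\Omega_\infty,K_{\Omega_\infty})$ are well-behaved. All three hypotheses of Theorem~\ref{thm:prior_suff} are verified, so $(\Omega,K_\Omega)$ is Gromov hyperbolic, which completes the reverse implication and hence the proof.
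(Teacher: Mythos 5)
Your overall architecture is the paper's: the forward implication is Theorem~\ref{thm:prior_nec}, and the reverse implication verifies the hypotheses of Theorem~\ref{thm:prior_suff}, with visibility coming from Corollary~\ref{cor:simple_bd_implies_visibility} and well-behavedness from Proposition~\ref{prop:showing_well_behaved} once one knows that the dichotomy (bounded, or unbounded with non--totally-real asymptotic cone) passes to the limit domain $\Omega_\infty$; that last statement is exactly the paper's Lemma~\ref{lem:keep_asymptotic_cone}, and your closing paragraph deducing Definition~\ref{defn:well_behaved} from it via Observation~\ref{obs:asymptotic_cone_3} is also the paper's argument. The problem is your proof of the key lemma in the bounded case. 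The inference ``$\norm{g_n^{-1}(iv)}\to 0$, hence a pullback/pushforward comparison shows $iv\in{\rm AC}(\Omega_\infty)$'' is not valid: smallness of $g_n^{-1}(w)$ does not place $w$ in the asymptotic cone of the limit. Take $\Omega=\Db\times\Db$ and $A_n(z_1,z_2)=(n^2(z_1+1),\,n(z_2+1))$; then $A_n\Omega\to\Omega_\infty=\{\Real z_1>0\}\times\{\Real z_2>0\}$, and $g_n^{-1}={\rm diag}(n^{-2},n^{-1})\to 0$, so $\norm{g_n^{-1}w}\to0$ for \emph{every} $w$, yet ${\rm AC}(\Omega_\infty)=\{\Real w_1\geq0,\ \Real w_2\geq 0\}$ is a proper cone. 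Worse, the conclusion you are driving at is itself false: $v=(i,1)/\sqrt2$ lies in ${\rm AC}(\Omega_\infty)$ while $iv=(-1,i)/\sqrt2$ does not. Only the weaker statement that \emph{some} complex line meets $\Span_{\Rb}{\rm AC}(\Omega_\infty)$ in a two-real-dimensional set can hold, and your argument provides no way to locate that line; your remark that the simple-boundary hypothesis will ``also'' enter here is a placeholder rather than an argument (and indeed Lemma~\ref{lem:keep_asymptotic_cone} does not use that hypothesis at all).

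What the paper does instead is track the base points: fix $z\in\Omega_\infty$ and set $z_n=A_n^{-1}(z)$, passing to a subsequence so $z_n\to z'\in\overline{\Omega}$. If $z'\in\Omega$, Proposition~\ref{prop:limit_domain_depends_on_sequence} shows $\Omega_\infty$ is an affine image of $\Omega$ and hence bounded. If $z'\in\partial\Omega$, one uses that $\Omega$ contains the convex hull of $\Bb_d(z_0;\delta_\Omega(z_0))$ and the boundary point $\xi_n$ on the ray from $z_0$ through $z_n$; intersecting this hull with the complex line through $z_0$ and $z_n$ produces a planar cone at $\xi_n$ of angle and radius independent of $n$, and since $\norm{z_n-\xi_n}\to0$ the normalizing maps (Proposition~\ref{prop:compact_on_slices}) blow this truncated cone up to an infinite open cone inside a complex line of the limit, which is what certifies that ${\rm AC}(\Omega_\infty)$ is not totally real. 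This ``cone over an interior ball, restricted to the complex line of degeneration'' device is the idea your proposal is missing. Your unbounded case has a smaller soft spot of the same nature: ``not totally real survives in the limit'' requires the observation that $g_n$ carries a planar cone $C$ in a complex line to $u_nC$ for some unitary $u_n$, so that compactness of ${\rm U}(d)$ prevents the cones from collapsing in the limit.
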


\begin{remark} If $\Bc = \{ x \in \Rb^d : \norm{x} < 1\}$ and $\Omega = \Bc + i\Rb^d$, then one can show that every domain in $\overline{\Aff(\Cb^d) \cdot \Omega} \cap \Xb_d$ has simple boundary. However, $\Bc$ is bounded and so $(\Omega, K_\Omega)$ is not Gromov hyperbolic by Corollary~\ref{cor:tube_domains}. Thus some extra assumption is necessary when $\Omega$ is unbounded. 
 \end{remark}

We need one lemma.

\begin{lemma}\label{lem:keep_asymptotic_cone} 
Suppose  $\Omega$ is $\Cb$-properly convex and either
\begin{enumerate}
\item $\Omega$ is bounded or 
\item $\Omega$ is unbounded and $\rm{AC}(\Omega)$ is not totally real.
\end{enumerate}
If $\mathcal{D} \in \overline{\Aff(\Cb^d) \cdot \Omega} \cap \Xb_d$, then either 
\begin{enumerate}
\item $\mathcal{D}$ is bounded or 
\item $\mathcal{D}$ is unbounded and ${\rm AC}(\mathcal{D})$ is not totally real.
\end{enumerate}
\end{lemma}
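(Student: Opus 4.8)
The plan is to analyze how the asymptotic cone behaves under passage to a limit $D \in \overline{\Aff(\Cb^d)\cdot\Omega}\cap\Xb_d$. Write $D = \lim_n A_n\Omega$ in $\Xb_d$ for some $A_n\in\Aff(\Cb^d)$, and write $A_n(z) = b_n + g_n(z)$ with $g_n\in\GL_d(\Cb)$. The key point is that $\Cb$-proper convexity of every limit forces the directions in which the domains ``run off to infinity'' to be controlled. First I would dispose of the easy implication: if $\Omega$ is bounded but $D$ is unbounded, then $\AC(D)$ contains a nonzero vector $v$, so by Observation~\ref{obs:asymptotic_cone_1} we have $z + \Rb_{\geq 0}v \subset D$ for all $z\in D$; since $D$ is $\Cb$-properly convex, $\Cb\cdot v\not\subset D$, so $\Span_\Rb\AC(D)$ being totally real is exactly the statement we must rule out. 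So in the bounded case I must show: if $D$ is unbounded, then $\AC(D)$ is not totally real.

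The mechanism is this. Since $\Omega$ is bounded, for any $v\in\AC(D)\setminus\{0\}$ the ray $z_0 + \Rb_{\geq 0}v$ lies in $D = \lim A_n\Omega$, so (by Proposition~\ref{prop:haus_conv_compacts}) for each $t\geq 0$ there is a point of $A_n\Omega$ near $z_0 + tv$, i.e.\ $A_n^{-1}(z_0 + tv)\in\Omega$. Because $\Omega$ is bounded, $A_n^{-1}$ must contract the segment $[z_0, z_0+tv]$ to bounded length, which forces $\|g_n^{-1}v\|\to 0$, i.e.\ $v$ lies in the ``collapsing directions'' of $g_n$. Now suppose for contradiction that $\Span_\Rb\AC(D)$ is totally real. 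Pick a maximal $\Rb$-independent set $v_1,\dots,v_k\in\AC(D)$; since the span is totally real, $v_1,\dots,v_k$ together with a suitable completion still have $g_n^{-1}v_j\to 0$ for each $j$. The goal is to show $k$ is at most $2d-1$ always and that one can reach a contradiction with $\Cb$-properness of $D$ — more precisely, I would argue that if $\AC(D)$ is totally real of dimension $k$, one can apply a further affine rescaling (as in the proof of Theorem~\ref{thm:loc_m_convex}, the argument producing $D_2$) to make $\AC$ grow, eventually producing a limit domain in $\overline{\Aff(\Cb^d)\cdot\Omega}\cap\Xb_d$ that contains a complex line, contradicting membership in $\Xb_d$. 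Alternatively, and more cleanly: use that $\AC(D)$ totally real means $\AC(D)\cap i\AC(D) = (0)$, rescale in the real direction $v\in\AC(D)$ by $A(z) = tz$ type maps composed appropriately, and show the rescaled limit has $\AC$ strictly larger, iterating until it contains $\Cb\cdot v$.

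For the unbounded case with $\AC(\Omega)$ not totally real, the argument is similar but one must additionally track that the non-totally-real part of $\AC$ survives in the limit. Here $\AC(\Omega)$ contains a complex-$2$-real-dimensional piece spanned by some $v, iv$; one checks that under $A_n$, the image $A_n\Omega$ still has $g_n v, g_n(iv)\in\AC(A_n\Omega)$, and by Observation~\ref{obs:asymptotic_cone_1} the limit $D$ inherits any accumulation direction of $g_n v/\|g_n v\|$; a compactness/normalization argument (normalizing $g_n$ so these vectors converge) shows $\AC(D)$ still contains a complex line's worth of real directions or is at least not totally real — unless $D$ turns out bounded, which is the allowed alternative. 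The main obstacle I anticipate is the bounded case: ruling out a totally real unbounded asymptotic cone for $D$ requires carefully exploiting that \emph{every} domain in $\overline{\Aff(\Cb^d)\cdot\Omega}\cap\Xb_d$ has the no-complex-line property, and packaging the iterated-rescaling argument so that each rescaling stays within the orbit closure; this is the same flavor of argument as Claims 1--4 in the proof of Theorem~\ref{thm:loc_m_convex}, and I would model the bookkeeping on that proof.
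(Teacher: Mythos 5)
Your treatment of the unbounded case is essentially the paper's: one tracks the image under the linear parts $g_n$ of a two-real-dimensional cone lying in a complex line inside $\mathrm{AC}(\Omega)$, uses that $g_n(iv)=ig_n(v)$ (equivalently, that $g_n$ restricted to a complex line is a positive multiple of a unitary, so the image cone is a rotated copy of the original), and passes to a subsequential limit to place a non-totally-real cone inside $\mathrm{AC}(D)$. In particular $D$ is automatically unbounded in that case, so your hedge ``unless $D$ turns out bounded'' is unnecessary there.

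The bounded case, however, has a genuine gap, and it is exactly the step you flag as the main obstacle. Your plan is to assume $\mathrm{AC}(D)$ is totally real and iterate rescalings ``until the limit contains a complex line, contradicting membership in $\Xb_d$.'' But this lemma carries no hypothesis that every domain in $\overline{\Aff(\Cb^d)\cdot\Omega}\cap\Xb_d$ has simple boundary; it must hold for an arbitrary bounded convex $\Omega$ (a polydisk, say). The orbit closure $\overline{\Aff(\Cb^d)\cdot\Omega}$, before intersecting with $\Xb_d$, always contains domains with complex lines in their interior or boundary, so producing such a limit contradicts nothing: the only limits whose $\Cb$-proper convexity you are entitled to use are those you can independently prove lie in $\Xb_d$ (e.g.\ by normalizing into some $\Kb_d(r)$), and those by construction contain no complex line, so the intended contradiction cannot materialize. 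You have imported the contradiction structure of Theorem~\ref{thm:loc_m_convex}, where the simple-boundary hypothesis on the whole orbit closure is available, into a statement where it is not. Moreover the assertion that each rescaling strictly enlarges the asymptotic cone and that the iteration terminates is not substantiated. The paper's argument is direct, not by contradiction: fix $z\in D$, set $z_n=A_n^{-1}z$, and split on whether $z_n$ accumulates at a point of $\Omega$ (then $(D,z)=T(\Omega,z')$ by Proposition~\ref{prop:limit_domain_depends_on_sequence}, so $D$ is bounded) or at a point of $\partial\Omega$. In the latter case, the convex hull of $\Bb_d(z_0;\delta_\Omega(z_0))$ and the boundary point $\xi_n\in z_0+\Rb_{>0}(z_n-z_0)$ contains a truncated two-dimensional sector of uniform radius and opening angle, lying in the complex line through $z_0$ and $z_n$ and based at $\xi_n$; the normalization sending $\xi_n\mapsto e_1$ and $z_n\mapsto 0$ dilates this sector by $1/\norm{z_n-\xi_n}\rightarrow\infty$, so the limit contains a full sector $\{ze_1:\angle(z-e_1,-e_1)<\theta\}$ and its asymptotic cone contains a two-dimensional cone inside a complex line. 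This positive construction is what your plan is missing.
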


\begin{proof} Suppose that $\mathcal{D} \in \overline{\Aff(\Cb^d) \cdot \Omega} \cap \Xb_d$. Then there is a sequence $A_n \in \Aff(\Cb^d)$ such that $A_n \Omega \rightarrow \mathcal{D}$. We break the proof into two cases. 

\medskip

\noindent \textbf{Case 1:}  $\Omega$ is unbounded. Then $\rm{AC}(\Omega)$ is not totally real. Then, since ${\rm AC}(\Omega)$ is convex, there exists a complex line $L$ through $0$ such that $\mathcal{C}:=L \cap \rm{AC}(\Omega)$ is a convex cone with non-empty interior in $L$.

Suppose that $A_n(\cdot) = b_n + g_n(\cdot)$ for some $b_n \in \Cb^d$ and $g_n \in \GL_d(\Cb)$. Then ${ \rm AC}(A_n\Omega) = g_n {\rm AC}(\Omega)$. Since $g_n \in \GL_d(\Cb)$ and $\mathcal{C}$ is a one-dimensional cone, there exists a unitary matrix $u_n \in { \rm U}(d)$ such that $g_n \mathcal{C} = u_n \mathcal{C}$. By passing to a subsequence we can suppose that $u_n \rightarrow u \in { \rm U}(d)$. Then $u \mathcal{C} \subset { \rm AC}(\mathcal{D})$. So $\mathcal{D}$ is unbounded and ${ \rm AC}(\mathcal{D})$ is not totally real. 

\medskip

\noindent \textbf{Case 2:} $\Omega$ is bounded. Now fix some $z \in \mathcal{D}$. Then by passing to a tail of $(A_n)_{n \in \Nb}$, we can assume that $z \in A_n\Omega$ for all $n$. So if $z_n : = A_n^{-1}z$, then $A_n(\Omega, z_n)$ converges to $(\mathcal{D},z)$ in $\Xb_{d,0}$. By passing to a subsequence we can suppose that $z_n \rightarrow z^\prime \in \overline{\Omega}$. Now we consider two cases based on the location of $z^\prime$. 

\medskip 

\noindent \textbf{Case 2(a):} $z^\prime \in \Omega$. Then $(\Omega, z_n)$ converges to $(\Omega, z^\prime)$ in $\Xb_{d,0}$ and so by Proposition~\ref{prop:limit_domain_depends_on_sequence}
\begin{align*}
(\mathcal{D},z) = T(\Omega, z^\prime)
\end{align*}
for some $T \in \Aff(\Cb^d)$. Then $\mathcal{D} = T \Omega$ and so $\mathcal{D}$ is bounded.

\medskip

\noindent \textbf{Case 2(b):} $z^\prime \in \partial \Omega$. Fix some $z_0 \in \Omega$. For each $n$, let $L_n$ denote the complex line containing $z_0$ and $z_n$. Let $\xi_n \in \partial \Omega$ be the point of intersection with the ray $z_0 + \Rb_{>0}(z_n-z_0)$. Since $\overline{\Omega}$ contains the convex hull of $\Bb_d(z_0; \delta_\Omega(z_0))$ and $\xi_n$, there exist some $r > 0$ and $\theta \in (0,\pi/2]$, which are independent of $n$, such that 
\begin{align*}
\mathcal{C}_n := \{ z \in L_n : \norm{z-\xi_n} < r, \angle(z-\xi_n, z_n-\xi_n) < \theta\} \subset \Omega.
\end{align*}

Next let $B_n \in \Aff(\Cb^d)$ be an affine map such that $B_n(\xi_n)=e_1$ and $B_n(z_n) = 0$. Then, since $\mathcal{C}_n \subset \Omega$, we see that 
\begin{align*}
\left\{ ze_1 : \abs{z-1} < \frac{r}{r_n}, \, \angle(z-1, -1) < \theta\right\} \subset B_n\Omega
\end{align*}
where $r_n = \norm{z_n-\xi_n}$. In particular, there exists some $\epsilon > 0$, which is independent of $n$, such that 
\begin{align*}
B_n \Omega \cap \Cb \cdot e_1 \in \Kb_1(\epsilon). 
\end{align*}
But then, using Proposition~\ref{prop:compact_on_slices}, we can assume that $B_n \Omega \in \Kb_d(\epsilon)$. Then by passing to a subsequence we can suppose that $B_n(\Omega, z_n)=(B_n\Omega, 0)$ converges to some $(\mathcal{D}^\prime, 0)$ in  $\Xb_{d,0}$. Then by Proposition~\ref{prop:limit_domain_depends_on_sequence} there exists some $T \in \Aff(\Cb^d)$ such that $\mathcal{D} = T\mathcal{D}^\prime$. Finally since $r_n \rightarrow 0$ we see that
\begin{align*}
\left\{ ze_1 :  \angle(z-1, -1) < \theta\right\} \subset \mathcal{D}^\prime.
\end{align*}
So $\rm{AC}(\mathcal{D}^\prime)$, and hence $\rm{AC}(\mathcal{D})$, is not totally real.
 
\medskip

\end{proof}

\begin{proof}[Proof of Theorem~\ref{thm:main_equivalence_general_version}] If $(\Omega, K_\Omega)$ is Gromov hyperbolic, then Theorem~\ref{thm:prior_nec} implies that every domain in 
\begin{align*}
\overline{\Aff(\Cb^d) \cdot \Omega} \cap \Xb_d
\end{align*}
has simple boundary. 

Next suppose that every domain in 
\begin{align*}
\overline{\Aff(\Cb^d) \cdot \Omega} \cap \Xb_d
\end{align*}
has simple boundary. We will use Theorem~\ref{thm:prior_suff} to deduce that $(\Omega, K_\Omega)$ is Gromov hyperbolic. Fix a sequence $u_n \in \Omega$. By Theorem~\ref{thm:frankel_compactness} there exist sequences $n_k \rightarrow \infty$ and $A_k \in \Aff(\Cb^d)$ such that  $A_k(\Omega, u_{n_k})$ converges to some $(\Omega_\infty, u_\infty)$ in $\Xb_{d,0}$. By Lemma~\ref{lem:keep_asymptotic_cone} either 
\begin{enumerate}
\item $\Omega_\infty$ is bounded or 
\item $\Omega_\infty$ is unbounded and ${ \rm AC}(\Omega_\infty)$ is not totally real.
\end{enumerate}
Then Observation~\ref{obs:asymptotic_cone_3} implies that $\overline{\Omega}_\infty^{\End}$ coincides with either $\overline{\Omega}_\infty$ or the one point compactification of $\overline{\Omega}_\infty$. In either case we have an embedding $\overline{\Omega}_\infty^{\End} \hookrightarrow \Cb^d \cup\{\infty\}$. Then, since 
\begin{align*}
\overline{\Aff(\Cb^d) \cdot \Omega_\infty} \cap \Xb_d \subset \overline{\Aff(\Cb^d) \cdot \Omega} \cap \Xb_d,
\end{align*}
Proposition~\ref{prop:showing_well_behaved} implies that geodesics in $(\Omega_\infty, K_{\Omega_\infty})$ are well behaved. Further, Corollary~\ref{cor:simple_bd_implies_visibility} implies that $A_k\Omega$ is a visibility sequence. 

Then since $u_n \in \Omega$ was an arbitrary sequence, Theorem~\ref{thm:prior_suff} implies that $(\Omega, K_\Omega)$ is Gromov hyperbolic.

\end{proof}

\part{Subelliptic estimates}

\section{Prior work and the outline of the proof of Theorem~\ref{thm:intersection}}\label{sec:prior_outline_subelliptic}

We will use the following result of Straube in the proof of Theorem~\ref{thm:intersection}. 

\begin{theorem}[Straube \cite{S1997}]\label{thm:straube}
Suppose $\Omega$ is a bounded pseudoconvex domain in $\Cb^d$ and $\partial \Omega$ is the graph of a Lipschitz function near some $\xi \in \partial \Omega$. Assume that there exist $C_0>0$, $m > 2$, a neighborhood $U$ of $\xi$ in $\Cb^d$, and a bounded plurisubharmonic function $G: U \cap \Omega \rightarrow \Rb$ such that 
\begin{align*}
i\partial \bar{\partial} G(z) \geq \frac{C_0}{\delta_{\Omega}(z)^{2/m}} i\partial \bar{\partial} \norm{z}^2 \text{ on } U \cap \Omega
\end{align*}
as currents. Then there exists a neighborhood $V$ of $\xi$ and there exists a constant $C_1 >0$ such that 
\begin{align*}
\norm{u}_{\frac{1}{m}, V \cap \Omega} \leq C_1 ( \|\bar{\partial} u\|_0 + \|\bar{\partial}^* u\|_0)
\end{align*}
for all $u \in L^2_{(0,q)}(\Omega) \cap{ \rm dom}(\bar{\partial}) \cap { \rm dom}(\bar{\partial}^*)$ and $1 \leq q \leq d$. 
\end{theorem}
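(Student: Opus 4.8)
The plan is to run Catlin's microlocal argument for subelliptic estimates, using the single plurisubharmonic function $G$ in place of the family of bounded plurisubharmonic weights Catlin constructs on a smoothly bounded domain, and taking care of the low boundary regularity. \textbf{Step 1 (localization).} It suffices to fix $\psi \in C^\infty_c(U)$ with $\psi \equiv 1$ on a smaller neighborhood $V$ of $\xi$ and to prove
\[
\norm{\psi u}_{1/m}^2 \lesssim \norm{\overline{\partial} u}_0^2 + \norm{\overline{\partial}^* u}_0^2 + \norm{u}_0^2
\]
for all $u \in L^2_{(0,q)}(\Omega) \cap \dom(\overline{\partial}) \cap \dom(\overline{\partial}^*)$: the term $\norm{u}_0^2$ is removed at the end since $\norm{u}_0 \lesssim \norm{\overline{\partial}u}_0 + \norm{\overline{\partial}^* u}_0$ on $(0,q)$-forms with $q\geq 1$ over a bounded pseudoconvex domain, and the errors produced by commuting $\psi$ past $\overline{\partial}$ and $\overline{\partial}^*$ are supported where $\delta_\Omega$ is bounded below and are controlled by interior elliptic estimates. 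After a bi-Lipschitz change of coordinates flattening $\partial\Omega$ near $\xi$ to $\{t=0\}$, with $\Omega$ corresponding to $\{t>0\}$ and $\delta_\Omega(z) \asymp t$, it further suffices to bound the fractional Sobolev norm of order $1/m$ of $\psi u$ in the $2d-1$ variables transverse to $t$, realized through difference quotients; the normal contribution and the return to the full norm on $V\cap\Omega$ follow from pseudolocal interior estimates for the $\overline{\partial}$-Neumann complex.

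\textbf{Step 2 (weight and weighted estimate).} Normalizing so that $0\leq G \leq 1$, set $\varphi := -\log(2-G)$, which is bounded and plurisubharmonic on $U\cap\Omega$ with self-bounded gradient:
\[
i\partial\overline{\partial}\varphi = i\partial\varphi\wedge\overline{\partial}\varphi + \frac{1}{2-G}\, i\partial\overline{\partial}G \ \geq\ i\partial\varphi\wedge\overline{\partial}\varphi + \frac{C_0}{2}\, \delta_\Omega(z)^{-2/m}\, i\partial\overline{\partial}\norm{z}^2 .
\]
I would mollify $\varphi$ so that it is $C^2$ up to the portion of $\partial\Omega$ in use, preserving this inequality up to an error that vanishes in the limit, and then feed it into the Kohn--Morrey--Hörmander identity on the pseudoconvex domain $\Omega$. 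Pseudoconvexity makes the Levi boundary term nonnegative, and the $\partial\varphi$-error terms produced by the weighted adjoint $\overline{\partial}^*_\varphi$ are absorbed by the self-bounded-gradient term $i\partial\varphi\wedge\overline{\partial}\varphi$ via Cauchy--Schwarz. What survives, applied to $v = \psi u$ and with the bounded weight traded for the Euclidean one, is the weighted $L^2$ estimate
\[
\int_{V\cap\Omega}\frac{\abs{\psi u}^2}{\delta_\Omega(z)^{2/m}}\ \lesssim\ \norm{\overline{\partial}u}_0^2 + \norm{\overline{\partial}^* u}_0^2 + \norm{u}_0^2 .
\]

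\textbf{Step 3 (upgrade to a Sobolev estimate).} Decompose $\psi u = \sum_{\nu\geq 0} w_\nu$ into Littlewood--Paley pieces of tangential frequency $\sim 2^\nu$ (smoothed difference quotients in the straightened coordinates). For each $\nu$ split $\Omega$ into the slab $\{0 < t < 2^{-\nu}\}$ and its complement: on the slab, the estimate of Step 2 localized to that slab and to frequencies $\sim 2^\nu$ bounds $2^{2\nu/m}\norm{w_\nu}_{L^2}^2$ there by the right-hand side restricted to frequency $\sim 2^\nu$; on $\{t > 2^{-\nu}\}$ the boundary sits at distance $\gtrsim 2^{-\nu}$, so interior subelliptic estimates for $\overline{\partial}\,\overline{\partial}^* + \overline{\partial}^*\overline{\partial}$ with constants uniform in $\nu$ bound $2^{2\nu/m}\norm{w_\nu}_{L^2}^2$ on that region as well (the commutator of the frequency projection with $\overline{\partial}$ and $\overline{\partial}^*$ is $L^2$-bounded because $\overline{\partial}$ only involves the straightened coordinates and the Lipschitz gradient lies in $L^\infty$). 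Summing over $\nu$ and using orthogonality gives
\[
\norm{\psi u}_{1/m}^2 \asymp \sum_\nu 2^{2\nu/m}\norm{w_\nu}_{L^2}^2 \lesssim \norm{\overline{\partial}u}_0^2 + \norm{\overline{\partial}^* u}_0^2 + \norm{u}_0^2 ,
\]
and absorbing $\norm{u}_0^2$ as in Step 1 finishes the proof with $V$ taken to be the interior of $\{\psi = 1\}$.

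\textbf{Main obstacle.} The delicate point is Step 3 over a merely Lipschitz boundary: one must replace smooth tangential vector fields by difference quotients along the bi-Lipschitz-straightened graph, control their commutators with $\overline{\partial}$ and $\overline{\partial}^*$ (which see only the gradient of a Lipschitz function, hence sit in $L^\infty$ but no better), and verify that the interior estimate on $\{t > 2^{-\nu}\}$ has constants independent of $\nu$. A secondary nuisance is making the Kohn--Morrey--Hörmander identity rigorous for $v = \psi u$ against a weight that is singular at $\partial\Omega$ — handled by exhausting $\Omega$ from the inside and approximating — and checking that the mollification of $G$ in Step 2 preserves the Hessian lower bound up to errors vanishing in the limit.
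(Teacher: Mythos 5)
A preliminary remark: the paper does not prove this theorem --- it is quoted from Straube~\cite{S1997} (with the smooth case attributed to Catlin) --- so the comparison below is with Straube's published argument rather than with anything in the paper. Your Step 2 is correct and coincides with the first half of that argument: the self-bounded-gradient weight $\varphi=-\log(2-G)$, the twisted Kohn--Morrey--H\"ormander inequality justified by exhausting $\Omega$ and mollifying $G$, and the resulting weighted estimate
\begin{align*}
\int_{V\cap\Omega}\frac{\abs{\psi u}^2}{\delta_\Omega(z)^{2/m}} \lesssim \norm{\overline{\partial}u}_0^2+\norm{\overline{\partial}^*u}_0^2+\norm{u}_0^2 .
\end{align*}

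The gap is Step 3, and it is not merely a ``delicate point'' but the place where the proposed route fails. To sum the dyadic pieces you must apply $\overline{\partial}$ and $\overline{\partial}^*$ to each $w_\nu$, so the tangential frequency projections must preserve $\dom(\overline{\partial}^*)$, i.e.\ the $\overline{\partial}$-Neumann boundary condition. Constant-coefficient multipliers do not preserve it; multipliers adapted to the graph (equivalently, the operators obtained after your bi-Lipschitz flattening) involve coefficients that are only Lipschitz, so $\overline{\partial}^*$ in the new frame has merely $L^\infty$ coefficients and the commutators of the projections with $\overline{\partial}^*$ are first-order operators with bounded measurable coefficients --- not $O(1)$ on $L^2$, let alone summable in $\nu$. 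This is precisely why Catlin's microlocalization requires a $C^\infty$ boundary and special boundary charts, and your sketch offers no substitute mechanism. Straube's actual proof avoids microlocalization altogether: its second half is the elementary inequality, valid on bounded Lipschitz domains for $0\le\epsilon<1/2$ (which covers $\epsilon=1/m$; the endpoint $m=2$ needs separate care),
\begin{align*}
\norm{v}_\epsilon^2 \le C\left(\norm{\delta_\Omega^{-\epsilon}v}_0^2+\norm{\delta_\Omega^{1-\epsilon}\nabla v}_0^2\right),
\end{align*}
combined with the standard interior estimate $\norm{\delta_\Omega\nabla u}_0\lesssim \norm{\overline{\partial}u}_0+\norm{\overline{\partial}^*u}_0+\norm{u}_0$ on bounded pseudoconvex domains, localized to the dyadic strips $\{2^{-j-1}<\delta_\Omega<2^{-j}\}$ so that the weighted estimate of Step 2 also controls $\norm{\delta_\Omega^{1-\epsilon}\nabla(\psi u)}_0$. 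Replacing your Step 3 by these two facts --- neither of which requires commuting anything past the boundary condition --- closes the proof; as written, Step 3 does not.
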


\begin{remark} For smoothly bounded pseudoconvex domains, Theorem~\ref{thm:straube} is due to Catlin~\cite[Theorem 2.2]{C1987}. 
\end{remark}

In the case of smoothly bounded convex domains with finite type in the sense of D'Angelo, McNeal~\cite{M1994} constructed functions satisfying the hypotheses of Theorem~\ref{thm:straube} (see ~\cite{M2002,NPT2013} for some corrections). We will construct such functions using a similar approach, however McNeal's work relies heavily on the smoothness of the boundary and in particular on properties of families of convex polynomials with bounded degree. In our proof, we replace McNeal's algebraic and analytic arguments with metric space arguments using the Gromov hyperbolicity assumption. Throughout the argument we also use the geometric estimates established in Section~\ref{sec:local_m_convexity}. 

The proof of Theorem~\ref{thm:intersection} has the following outline: 
\begin{enumerate}
\item In Section~\ref{sec:vm}, we recall the construction of ``visual metrics'' on the Gromov boundary of a Gromov hyperbolic metric space. 
\item In Section~\ref{sec:vm_and_normalizing}, we study how visual metrics behave under the normalizing maps defined in Section~\ref{sec:normalizing_maps}.
\item In Section~\ref{sec:psh_on_normal_domains}, we construct well behaved plurisubharmonic functions on normalized domains.
\item In Section~\ref{sec:psh_on_general_domains}, we use the results from the previous two sections to construct functions satisfying the hypothesis of Theorem~\ref{thm:straube}.
\item In Section~\ref{sec:pf_of_thm_intersection}, we prove Theorem~\ref{thm:intersection}. 
\item In Section~\ref{sec:optimal_estimate}, we explain the order of subelliptic estimate obtained by our argument. 
\end{enumerate}

The visual metric is analogous to the metric considered by McNeal in~\cite[Section 5]{M1994}. The normalizing maps are analogous to the ``polydisk coordinates'' considered by McNeal in~\cite[Section 3]{M1994}. The constructions in Sections~\ref{sec:psh_on_normal_domains} and~\ref{sec:psh_on_general_domains} are analogous to McNeal's constructions in~\cite[Propositions 3.1, 3.2]{M1994}. 

\section{Visual metrics}\label{sec:vm}

Suppose $(X,d)$ is a proper geodesic Gromov hyperbolic metric space. As in Section~\ref{sec:GH_basics}, let $\partial_GX$ be the Gromov boundary of $X$ and let $\overline{X}^G = X \cup \partial_GX$ denote the Gromov compactification. In this expository section we recall the construction of visual metrics on $\overline{X}^G$. 

\begin{theorem}\label{thm:visible_metric} There exist $C > 1$ and $\lambda > 0$ such that: For every $x_0 \in X$ there exists a function 
\begin{align*}
d_{x_0} : \overline{X}^G \times \overline{X}^G \rightarrow [0,\infty)
\end{align*}
with the following properties
\begin{enumerate}
\item $d_{x_0}(x,y) = d_{x_0}(y,x)$ for all $x,y \in \overline{X}^G$,
\item $d_{x_0}(x,y) \leq d_{x_0}(x,z) + d_{x_0}(z,y)$ for all $x,y,z \in \overline{X}^G$, and  
\item for all $x,y \in \overline{X}^G$
\begin{align*}
\frac{1}{C} e^{-\lambda d(x_0, \gamma_{x,y})} \leq d_{x_0}(x,y) \leq Ce^{-\lambda d(x_0, \gamma_{x,y})}
\end{align*}
where $\gamma_{x,y}$ is any geodesic in $(X,d)$ joining $x$ to $y$. 
\end{enumerate}
Moreover, $d_{x_0}$ restricts to a metric on $\partial_G X$ which generates the standard topology. 
\end{theorem}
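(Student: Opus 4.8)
The plan is to follow the standard construction of a visual metric via the Gromov product, as in Bourdon, Ghys--de la Harpe, or the treatment in \cite{BH1999, DSU2017}; the only novelty needed here is bookkeeping to make the constants $C$ and $\lambda$ uniform over the choice of basepoint $x_0$, which is automatic for a fixed $\delta$-hyperbolic space. First I would extend the Gromov product $(\cdot|\cdot)_{x_0}$ to the boundary: for $\xi,\eta \in \overline{X}^G$ set $(\xi|\eta)_{x_0} = \sup \liminf_{i,j} (x_i|y_j)_{x_0}$ over sequences $x_i \to \xi$, $y_j \to \eta$ (with the convention $(\xi|\xi)_{x_0} = \infty$). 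The basic lemma, valid in any $\delta$-hyperbolic space, is that this extended product satisfies $(\xi|\eta)_{x_0} \geq \min\{(\xi|\zeta)_{x_0},(\zeta|\eta)_{x_0}\} - 2\delta$ (the extension costs an extra $\delta$ or two), and that when $X$ is proper geodesic it is comparable, up to an additive constant depending only on $\delta$, to $d(x_0,\gamma_{\xi\eta})$ for any geodesic $\gamma_{\xi\eta}$ joining $\xi$ to $\eta$ (using Theorem~\ref{thm:thin_triangle} to pass between the four-point condition and thin triangles).

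Next I would fix $\lambda>0$ small enough that $e^{\lambda\delta'} < \sqrt{2}$, say, where $\delta'$ is the thinness constant associated to $\delta$ by Theorem~\ref{thm:thin_triangle}, and define the "quasi-metric" $\rho_{x_0}(\xi,\eta) = e^{-\lambda(\xi|\eta)_{x_0}}$ (and $\rho_{x_0}(\xi,\xi)=0$). This $\rho_{x_0}$ is symmetric and satisfies a relaxed triangle inequality $\rho_{x_0}(\xi,\eta) \leq K\max\{\rho_{x_0}(\xi,\zeta),\rho_{x_0}(\zeta,\eta)\}$ with $K = e^{2\lambda\delta'} < 2$. One then applies the standard chain-construction (Frink's lemma): define
\begin{align*}
d_{x_0}(\xi,\eta) = \inf\left\{ \sum_{k=0}^{n-1} \rho_{x_0}(\zeta_k,\zeta_{k+1}) : \zeta_0 = \xi, \ \zeta_n = \eta, \ \zeta_k \in \overline{X}^G \right\}.
\end{align*}
Symmetry and the genuine triangle inequality for $d_{x_0}$ are immediate from the definition, giving properties (1) and (2). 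The content of Frink's lemma is that, because $K < 2$, one has $\tfrac{1}{2}\rho_{x_0} \leq d_{x_0} \leq \rho_{x_0}$; combining this with the comparison $(\xi|\eta)_{x_0} = d(x_0,\gamma_{\xi,\eta}) + O_\delta(1)$ yields property (3) with a constant $C$ depending only on $\delta$ and $\lambda$. Finally, $d_{x_0}$ restricted to $\partial_G X$ separates points because $(\xi|\eta)_{x_0} < \infty$ whenever $\xi \neq \eta$ in $\partial_G X$, so $d_{x_0}(\xi,\eta) \geq \tfrac12 e^{-\lambda(\xi|\eta)_{x_0}} > 0$; that it induces the correct topology on $\partial_G X$ follows from property (3) and the description of the topology on $\overline{X}^G$ in Section~\ref{sec:GH_basics}.

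The main obstacle is not conceptual but one of careful constant-tracking in two places: first, verifying that the \emph{extended} Gromov product still obeys the four-point inequality with a controlled (basepoint-independent) defect — this requires the routine but fiddly observation that different choices of approximating sequences change $(\xi|\eta)_{x_0}$ by at most $2\delta$ — and second, ensuring $\lambda$ is chosen once and for all, independently of $x_0$, so that the relaxed-triangle constant $K$ stays below $2$ for every basepoint simultaneously; this is fine precisely because the hyperbolicity constant $\delta$ (and hence $\delta'$) does not depend on $x_0$. I would present the first point as a short lemma and otherwise cite \cite{BH1999} or \cite{DSU2017} for Frink's lemma and the comparison between the Gromov product and $d(x_0,\gamma_{\xi,\eta})$, keeping the proof brief since the result is standard.
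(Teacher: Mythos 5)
Your proposal is correct and follows essentially the same route as the paper, which likewise sketches the standard construction (extend the Gromov product to $\overline{X}^G$ by $\liminf$ over approximating sequences, set $\rho_{x_0}=e^{-\lambda(\cdot|\cdot)_{x_0}}$ for $\lambda$ small, chain-infimize, and compare the Gromov product to $d(x_0,\gamma_{x,y})$ via the $2\delta$ observation) and defers the verification to \cite[Section 3.6.2]{DSU2017}. Your additional detail on the Frink/chain lemma and the constant-tracking is a correct filling-in of the step the paper labels ``miraculous.''
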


\begin{remark} \ \begin{enumerate}
\item The function $d_{x_0}$ restricted to $\partial_G X$ is often called a \emph{visual metric}.
\item By definition, if $\gamma : [0,\infty) \rightarrow X$ is a geodesic ray, then 
$$
\lim_{t \rightarrow \infty} \gamma(t) \in \partial_G X
$$
exists and equals the equivalence class of $\gamma$. So in condition (3), if $x \in \partial_G X$, then $x= \lim_{t \rightarrow -\infty} \gamma_{x,y}(t)$. Likewise, if $y \in \partial_G X$, then $y= \lim_{t \rightarrow \infty} \gamma_{x,y}(t)$.
\item Condition (3) implies that $d_{x_0}(x,x) = 0$ if and only if $x \in \partial_G X$. Thus $d_{x_0}$ is not a metric on all of $\overline{X}^G$. To obtain a metric, one could define
\begin{align*}
\overline{d}_{x_0}(x,y) = \min\{ \lambda d(x,y), d_{x_0}(x,y) \}
\end{align*}
where $d(x,y) : = \infty$ when $x$ or $y$ is in $\partial_G X$. For a proof that this works see for instance~\cite[Section  3.6.3]{DSU2017}. 
\item If $(X,d)$ is $\delta$-hyperbolic (in the sense of Definition~\ref{defn:GH}), then any $0 < \lambda \leq \frac{1}{\delta}\log(2)$ satisfies Theorem~\ref{thm:visible_metric}, see the proof of Proposition 3.6.8 in~\cite{DSU2017}. 
\end{enumerate}
\end{remark}

We will sketch the standard construction of $d_{x_0}$. For more details and proofs, see for instance~\cite[Section 3.6.2]{DSU2017}. 

Recall that the \emph{Gromov product of $x,y,z \in X$} is defined to be 
\begin{align*}
(x|y)_z = \frac{1}{2} \left( d(x,z)+d(y,z) - d(x,y) \right).
\end{align*}
In a $\delta$-hyperbolic metric space, the Gromov product is, up to a bounded additive error, an easy to understand geometric quantity. 

\begin{observation} Suppose $\gamma : [a,b] \rightarrow X$ is a geodesic with $\gamma(a) = x$ and $\gamma(b)=y$, then 
\begin{align*}
d(z, \gamma)-2\delta \leq (x|y)_z \leq d(z, \gamma).
\end{align*}
\end{observation}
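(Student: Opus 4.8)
The plan is to prove the two inequalities separately. The upper bound $(x|y)_z \le d(z,\gamma)$ requires no hyperbolicity and I would get it from the triangle inequality alone: fix an arbitrary point $p$ on $\gamma$; since $\gamma$ is a geodesic with endpoints $x$ and $y$ we have $d(x,y) = d(x,p) + d(p,y)$, so substituting into the definition of the Gromov product and using $d(x,z) \le d(x,p) + d(p,z)$ together with $d(y,z) \le d(y,p) + d(p,z)$ gives
\begin{align*}
2(x|y)_z = d(x,z) + d(y,z) - d(x,p) - d(p,y) \le 2 d(p,z).
\end{align*}
Taking the infimum over $p \in \gamma$ yields $(x|y)_z \le d(z,\gamma)$.

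For the lower bound, hyperbolicity enters through Theorem~\ref{thm:thin_triangle}: let $\delta$ be a thinness constant and consider the $\delta$-thin geodesic triangle with vertices $x,y,z$ whose sides are $\gamma$ together with any choice of geodesic segments $[x,z]$ and $[y,z]$. Put $g := (x|y)_z$, $a := (y|z)_x$, $b := (x|z)_y$, and recall the elementary facts that $a+b = d(x,y)$ and that each Gromov product $(u|v)_w$ is at most $\min\{d(w,u),d(w,v)\}$. These bounds let me mark three points: the point $p$ on $[x,z]$ with $d(x,p)=a$, for which $d(z,p) = d(x,z)-a = g$; the point $p^\ast$ on $[y,z]$ with $d(y,p^\ast)=b$, for which $d(z,p^\ast) = d(y,z)-b = g$; and the point $q$ on $\gamma$ with $d(x,q)=a$ and $d(y,q)=b$. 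Geometrically $p$, $p^\ast$, $q$ are the images on the three sides of the center of the comparison tripod of the triangle $xyz$, all lying at distance $g$ from $z$.

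The only substantive step — and the one place I expect real work — is the estimate $\min\{d(p,q),d(p^\ast,q)\} \le 2\delta$. By $\delta$-thinness, the point $q$, which lies on the side $\gamma$, is within $\delta$ of some point $q'$ of $[x,z] \cup [y,z]$. If $q' \in [x,z]$, then $p$ and $q'$ both lie on the geodesic $[x,z]$ and $\abs{d(x,q') - d(x,q)} \le d(q,q') \le \delta$ with $d(x,q) = d(x,p) = a$, so $d(p,q') \le \delta$ and hence $d(p,q) \le 2\delta$; therefore $d(z,\gamma) \le d(z,q) \le d(z,p) + d(p,q) \le g + 2\delta$. If instead $q' \in [y,z]$, the identical computation with $(x,a,p)$ replaced by $(y,b,p^\ast)$ gives $d(p^\ast,q) \le 2\delta$ and again $d(z,\gamma) \le d(z,p^\ast) + 2\delta = g + 2\delta$. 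Either way $d(z,\gamma) \le (x|y)_z + 2\delta$, which combined with the first part completes the argument. I would close with the remark that this is nothing but the standard identification of the Gromov product with the distance from $z$ to the branch point of the comparison tripod on $xyz$, and that pinning down the precise constant depends on the convention for $\delta$-thinness used in Theorem~\ref{thm:thin_triangle}; the detailed bookkeeping can be found in~\cite[Chapter III.H.1]{BH1999}.
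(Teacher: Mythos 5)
Your proof is correct, and the upper bound is handled exactly as in the paper (triangle inequality after splitting $d(x,y)$ at a point of $\gamma$). For the lower bound, however, you take a genuinely different route. The paper works directly with the four-point definition: it chooses the equiradial point $w$ on $\gamma$ where $(x|z)_w=(y|z)_w$, observes $(x|y)_w=0$, applies the $\delta$-hyperbolicity inequality to get $(x|z)_w=(y|z)_w\le\delta$, and then uses the exact identity $d(z,w)=(x|y)_z+(x|z)_w+(y|z)_w-(x|y)_w$ to conclude $d(z,\gamma)\le d(z,w)\le (x|y)_z+2\delta$. You instead pass through Theorem~\ref{thm:thin_triangle} and run the tripod argument with the internal points $p,p^\ast,q$; your bookkeeping there ($d(x,q')$ within $\delta$ of $a$ along $[x,z]$, hence $d(p,q)\le 2\delta$) is sound. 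The trade-off is precisely the one you flag at the end: your $2\delta$ is measured against a thinness constant, which by Theorem~\ref{thm:thin_triangle} differs from the four-point constant $\delta$ used in the surrounding section, whereas the paper's argument keeps the constant in terms of the four-point $\delta$ and avoids invoking geodesic triangles at all (it only needs $w$ to lie on $\gamma$, whose existence follows from continuity of $t\mapsto (x|z)_{\gamma(t)}-(y|z)_{\gamma(t)}$). Since the observation is only used up to a bounded additive error in the construction of visual metrics, this discrepancy in the constant is harmless, but the paper's computation is shorter and more self-contained.
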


\begin{remark} The upper bound on $(x|y)_z$ holds for any metric space. \end{remark}

\begin{proof} The second inequality follows from the triangle inequality. To prove the first, pick $w$ in the image of $\gamma$ such that $(x|z)_w = (y|z)_w$. Notice that $(x|y)_w =0$. Since $(X,d)$ is $\delta$-hyperbolic
\begin{align*}
(x|z)_w=(y|z)_w=\min\{ (x|z)_w, (y|z)_w\} \leq \delta + (x|y)_w = \delta.
\end{align*}
A calculation shows that 
\begin{align*}
d(z,w) & = (x|y)_z +(x|z)_w+(y|z)_w - (x|y)_w \\
& = (x|y)_z +(x|z)_w+(y|z)_w
\end{align*}
and so
\begin{equation*}
d(z,\gamma) \leq d(z,w) = (x|y)_z +(x|z)_w+(y|z)_w \leq (x|y)_z + 2 \delta.  \qedhere
\end{equation*}
\end{proof}

Next we extend the Gromov product by taking limits. For $x_0 \in X$ and $(x,y) \in \overline{X}^G\times\overline{X}^G - X\times X$ we define
\begin{align*}
(x|y)_{x_0} := \begin{cases} 
\liminf_{x_n \rightarrow x} (x_n|y)_{x_0} & \text{if } x \in \partial_G X, \, y \in X \\
\liminf_{y_n \rightarrow y} (x|y_n)_{x_0} & \text{if } x \in X, \, y \in \partial_G X \\
\liminf_{x_n \rightarrow x, y_n \rightarrow y} (x_n|y_n)_{x_0} & \text{if } x,y \in \partial_G X
\end{cases}.
\end{align*}

This extension has the following properties. 

\begin{proposition}\label{prop:GP_basic_properties} Assume $x_0 \in X$. 
\begin{enumerate}
\item If $x,y \in \overline{X}^G$, then $(x|y)_{x_0} = \infty$ if and only if $x \in \partial_G X$ and $x=y$.
\item If $x \in \partial_GX$, then the sets 
$$
U_n(x,x_0) = \left\{ y \in \overline{X}^G : (x|y)_{x_0} > n\right\} \quad n=1,2,\dots 
$$
form a neighborhood basis of $x$. 
\item If $x,y \in X$ and $z \in \overline{X}^G$, then 
$$
\abs{ (x|z)_{x_0} - (y|z)_{x_0}} \leq d(x,y). 
$$
\end{enumerate}
\end{proposition}

\begin{proof} Parts (1) and (2) follow from the standard model of the Gromov boundary as equivalence classes of escaping sequences, see~\cite[Section 3.4.2]{DSU2017} or~\cite[Section 2]{KB2002}. Part (3) follows from the triangle inequality. 
\end{proof}

For $\lambda > 0$ sufficiently small define $\rho_{x_0} : \overline{X}^G \times \overline{X}^G \rightarrow [0,\infty)$ by
\begin{align*}
\rho_{x_0}(x,y) = \exp \left( -\lambda (x|y)_{x_0} \right).
\end{align*}
Finally the function $d_{x_0}$ is defined by 
\begin{align*}
d_{x_0}(x,y) = \inf \left\{ \sum_{j=1}^N \rho_{x_0}(x_j,x_{j+1}) : N > 0; x_1, \dots, x_{N+1} \in \overline{X}^G; x_1 = x, x_{N+1}=y\right\}.
\end{align*}
Miraculously, this yields a function which satisfies Theorem~\ref{thm:visible_metric}, see~\cite[Section 3.6.2]{DSU2017} for details. 

We end this discussion with some observations.

\begin{observation}\label{prop:continuity of visual metric} If $(x_n,y_n) \rightarrow (x,y)$ in $\overline{X}^G \times \overline{X}^G$, then 
$$
d_{x_0}(x,y) = \lim_{n \rightarrow \infty} d_{x_0}(x_n, y_n).
$$
\end{observation} 

\begin{proof} Notice that 
$$
\abs{d_{x_0}(x_n,y_n) - d_{x_0}(x,y)} \leq \abs{d_{x_0}(x_n,y_n) - d_{x_0}(x_n,y)}+\abs{d_{x_0}(x_n,y) - d_{x_0}(x,y)}. 
$$
We first prove that $ \abs{d_{x_0}(x_n,y_n) - d_{x_0}(x_n,y)}$ converges to zero. 

\medskip
\noindent \textbf{Case 1:} Assume $y \in X$. Then we can assume that $y_n \in X$ for all $n$. By the mean value theorem and Proposition~\ref{prop:GP_basic_properties}  part (3), we have 
$$
\abs{\rho_{x_0}(z,y_n) -\rho_{x_0}(z,y)} \leq \abs{(z|y_n)_{x_0} -(z|y)_{x_0}}  \leq d(y_n,y)
$$
for all $z \in \overline{X}^G$. Thus
$$
\lim_{n \rightarrow \infty} \abs{d_{x_0}(x_n,y_n) - d_{x_0}(x_n,y)} \leq \lim_{n \rightarrow \infty} d(y_n,y) = 0.
$$

\noindent \textbf{Case 2:} Assume $y \in \overline{X}^G$. Let $C >1$, $\lambda >0$ be the constants from Theorem~\ref{thm:visible_metric}. Then
$$
\lim_{n \rightarrow \infty}\abs{d_{x_0}(x_n,y_n) - d_{x_0}(x_n,y)} \leq \lim_{n \rightarrow \infty} d_{x_0}(y_n, y) \leq  \lim_{n \rightarrow \infty} Ce^{-\lambda (y_n|y)_{x_0} } = 0
$$ 
by Proposition~\ref{prop:GP_basic_properties}  part (2).

Thus in all cases
$$
\lim_{n \rightarrow \infty}\abs{d_{x_0}(x_n,y_n) - d_{x_0}(x_n,y)}=0.
$$
The same argument shows that 
$$
\lim_{n \rightarrow \infty}\abs{d_{x_0}(x_n,y) - d_{x_0}(x,y)}=0
$$
and hence the proof is complete.

\end{proof} 

As an immediate corollary we obtain:

\begin{observation}\label{obs:the sets V are open} If $\xi \in \partial_G X$ and $r > 0$, then the set 
$$
V_{x_0}(\xi;r) :=\left\{ x \in \overline{X}^G : d_{x_0}(\xi,x) < r\right\}
$$
is an open neighborhood of $\xi$ in $\overline{X}^G$. 
\end{observation}

\section{Visual metrics and normalizing maps}\label{sec:vm_and_normalizing}

For the rest of the section, let $\Omega \subset \Cb^d$ be a $\Cb$-properly convex domain with Gromov hyperbolic Kobayashi metric. Then fix some $z_0 \in \Omega$ and some $R > \norm{z_0}$.

Let $d_{z_0}$ denote the function constructed in  Theorem~\ref{thm:visible_metric} for the metric space $(\Omega, K_\Omega)$. Using Theorem~\ref{thm:compactification} we can view $d_{z_0}$ as a function on $\overline{\Omega}^{\End} \times \overline{\Omega}^{\End}$. Let $C_v > 1$ and $\lambda > 0$ be constants such that: for all $x,y \in \overline{\Omega}^{\End}$
\begin{align*}
\frac{1}{C_v} \exp \Big(-\lambda K_\Omega(z_0, \gamma_{x,y})\Big) \leq d_{z_0}(x,y) \leq C_v\exp \Big( -\lambda K_\Omega(z_0, \gamma_{x,y}) \Big)
\end{align*}
when $\gamma_{x,y}$ is a geodesic in $(\Omega, K_\Omega)$ joining $x$ to $y$. Then for $\xi \in \partial \overline{\Omega}^{\End}$ and $r > 0$ define 
\begin{align*}
V_{z_0}(\xi;r) := \left\{ z \in \overline{\Omega}^{\End} : d_{z_0}(\xi, z) < r\right\}.
\end{align*}
The goal of this section is to relate these sets to the normalizing maps constructed in Section~\ref{sec:normalizing_maps}. To that end,  we make the following definitions. 

\begin{definition}\label{defn:visual_metric_normalizing_map}
For $\xi \in \partial \Omega$ and $\epsilon \in (0,1)$, let $q_{\xi, \epsilon} \in [z_0, \xi)$ denote the unique point where
\begin{align*}
K_\Omega(q_{\xi, \epsilon},z_0) = \frac{1}{\lambda}\log\frac{1}{\epsilon}
\end{align*}
and 
\begin{align*}
K_\Omega(q^\prime,z_0) > \frac{1}{\lambda}\log\frac{1}{\epsilon}
\end{align*}
for every $q^\prime \in (q_{\xi, \epsilon},\xi)$. Then let $A_{\xi,\epsilon}$ denote an affine map satisfying Theorem~\ref{thm:normalizing} with $A_{\xi,\epsilon}(q_{\xi, \epsilon})=0$ and $A_{\xi,\epsilon}(\xi)=e_1$.
\end{definition}

In this section we will establish the following four propositions about these normalizing maps and their relationship with the visual metric. We will list the propositions in order of importance, but prove them in a different order. 

\begin{proposition}\label{prop:comparison} There exist $\epsilon_0 \in (0,1]$ and an increasing function $\tau : (0,\infty) \rightarrow (0,\infty)$ with
\begin{align*}
\lim_{r \searrow 0} \tau(r) = 0
\end{align*}
such that: if $\xi\in \partial \Omega \cap \Bb_d(0;R)$, $r > 0$, and $\epsilon \in (0,\epsilon_0/r) \cap (0,1)$, then
\begin{align*}
\overline{\Omega} \cap A^{-1}_{\xi,\epsilon} \Bb_d(e_1;r) \subset V_{z_0}(\xi; \tau(r) \epsilon)
\end{align*}
and
\begin{align*}
V_{z_0}(\xi; r\epsilon) \subset \overline{\Omega} \cap A^{-1}_{\xi,\epsilon} \Bb_d(e_1;\tau(r))
\end{align*}
for every $r > 0$. 
\end{proposition}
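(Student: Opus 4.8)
The plan is to relate distances measured by the visual metric $d_{z_0}$ to Euclidean distances measured after applying the normalizing map $A_{\xi,\epsilon}$, by translating both sides through the key property (3) of Theorem~\ref{thm:visible_metric}, which says $d_{z_0}(\xi,z)$ is comparable to $e^{-\lambda K_\Omega(z_0,\gamma_{\xi,z})}$. First I would note that, by definition of $q_{\xi,\epsilon}$, the point $q_{\xi,\epsilon}$ lies at Kobayashi distance $\tfrac{1}{\lambda}\log\tfrac1\epsilon$ from $z_0$ along $[z_0,\xi)$; since $[z_0,\xi)$ is (a reparametrization of) a geodesic ray to $\xi$ by convexity, we have $d_{z_0}(\xi, q_{\xi,\epsilon}) \asymp \epsilon$ up to the constant $C_v$. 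So the claim essentially reduces to: for $z \in \overline\Omega$, the visual distance $d_{z_0}(\xi, z)$ is comparable to $\epsilon$ times a function of the Euclidean distance $\norm{A_{\xi,\epsilon}(z) - e_1}$, that function tending to $0$ as that Euclidean distance does.

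The main mechanism for this is that $A_{\xi,\epsilon}$ maps $\Omega$ into a domain in $\Kb_d(r_\epsilon)$ with $r_\epsilon = \delta_\Omega(q_{\xi,\epsilon})/\norm{\xi - q_{\xi,\epsilon}}$, and — crucially — $A_{\xi,\epsilon}\Omega$ lies in the \emph{compact} family $\Kb_d(r_0)$ for a \emph{uniform} $r_0$ once $\epsilon r$ is bounded, because Gromov hyperbolicity forces $\delta_\Omega(q_{\xi,\epsilon})/\norm{\xi-q_{\xi,\epsilon}}$ to be bounded below (this is where I expect to invoke the simple-boundary/$m$-convexity machinery of Section~\ref{sec:local_m_convexity}, or directly an estimate showing the normalized domains don't degenerate; the restriction $\epsilon \in (0,\epsilon_0/r)$ is precisely what guarantees the relevant base point stays in a controlled region). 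On such a compact family, $K_{A_{\xi,\epsilon}\Omega}$ is uniformly comparable to a fixed reference, so $K_\Omega(q_{\xi,\epsilon}, z) = K_{A_{\xi,\epsilon}\Omega}(0, A_{\xi,\epsilon}(z))$ is controlled, in both directions, by a function of $\norm{A_{\xi,\epsilon}(z)}$ alone (using Lemma~\ref{lem:kob_inf_bound} and compactness). Then a Gromov-product computation — $(\xi|z)_{z_0} \approx K_\Omega(z_0, q_{\xi,\epsilon}) - (\text{error controlled by } K_\Omega(q_{\xi,\epsilon}, z) \text{ and the thin-triangle constant})$ — converts this into the two-sided bound on $d_{z_0}(\xi,z)$, yielding $\tau(r)$ with $\tau(r)\to 0$ as $r \searrow 0$ and $\tau$ non-decreasing after replacing it by $r\mapsto \sup_{s\le r}\tau(s)$.

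Concretely, here are the steps in order. (i) Record that $[z_0,\xi)$ is a geodesic for $K_\Omega$ (convexity, standard) and that $\xi$ is its endpoint in $\overline\Omega^{\rm End} \cong \overline\Omega^G$ via Theorem~\ref{thm:compactification}; deduce $d_{z_0}(\xi, q_{\xi,\epsilon}) \in [\tfrac1{C_v}\epsilon, C_v\epsilon]$. (ii) Show there is $r_0 > 0$ and $\epsilon_0 \in (0,1]$ so that $A_{\xi,\epsilon}\Omega \in \Kb_d(r_0)$ whenever $\xi \in \partial\Omega \cap \Bb_d(0;R)$ and $\epsilon \in (0,\epsilon_0/r)\cap(0,1)$ — this is the step I expect to be the main obstacle, since one must quantify how $\delta_\Omega(q_{\xi,\epsilon})/\norm{\xi-q_{\xi,\epsilon}}$ stays bounded below, using Gromov hyperbolicity (equivalently, the simple-boundary hypothesis through Corollary~\ref{cor:m_convex} applied along $[z_0,\xi)$). (iii) Using Proposition~\ref{prop:convergence_of_kob}/compactness of $\Kb_d(r_0)$ plus Lemma~\ref{lem:kob_inf_bound}, produce continuous increasing functions $\phi_\pm:[0,\infty)\to[0,\infty]$ with $\phi_\pm(0)=0$ such that $\phi_-(\norm{A_{\xi,\epsilon}(z)-e_1}) \le K_\Omega(q_{\xi,\epsilon},z)$ and, for $A_{\xi,\epsilon}(z) \in \Bb_d(e_1;r)$, $K_\Omega(q_{\xi,\epsilon},z)\le \phi_+(r)$; also translate the set $V_{z_0}(\xi;r\epsilon)$ into a bound $K_\Omega(z_0,\gamma_{\xi,z}) \ge \tfrac1\lambda\log\tfrac{1}{C_v r\epsilon}$, hence $K_\Omega(q_{\xi,\epsilon},z)$ small. (iv) Combine via the thin-triangles estimate (Theorem~\ref{thm:thin_triangle}) relating $K_\Omega(z_0,\gamma_{\xi,z})$ to $K_\Omega(z_0,q_{\xi,\epsilon}) - K_\Omega(q_{\xi,\epsilon},z)$ up to an additive constant, then exponentiate and use property (3) of Theorem~\ref{thm:visible_metric} to land on the two displayed inclusions, reading off $\tau(r)$ from $\phi_\pm$, $C_v$, $\lambda$ and the hyperbolicity constant, and finally replacing $\tau$ by its sup-envelope to make it non-decreasing.
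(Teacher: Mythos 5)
Your high-level plan (normalize by $A_{\xi,\epsilon}$, compare $d_{z_0}(\xi,\cdot)$ to $\epsilon$ times something depending on $\norm{A_{\xi,\epsilon}(z)-e_1}$) points in the right direction, but the mechanism you propose in steps (iii)--(iv) does not work, and the difficulty you isolate in step (ii) is not where the difficulty lies. On (ii): $A_{\xi,\epsilon}\Omega\in\Kb_d(r_0)$ with $r_0=\delta_\Omega(z_0)/(2R)$ is immediate from Theorem~\ref{thm:normalizing}(1), because the parameter $r$ there is $\delta_\Omega(z_0)/\norm{\xi-z_0}$, computed at $z_0$, not $\delta_\Omega(q_{\xi,\epsilon})/\norm{\xi-q_{\xi,\epsilon}}$; no hyperbolicity is needed, and the constraint $\epsilon<\epsilon_0/r$ has nothing to do with this (its role is to keep the end at infinity of an unbounded $\Omega$ out of $V_{z_0}(\xi;r\epsilon)$, which is what makes $\tau_2(r)$ finite). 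Also, in (i), $[z_0,\xi)$ is only an $(\alpha,\beta)$-quasi-geodesic (Proposition~\ref{prop:quasi_geodesic}), not a geodesic, so even the comparison $d_{z_0}(\xi,q_{\xi,\epsilon})\asymp\epsilon$ requires the Morse lemma.

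The serious gaps are in (iii)--(iv). First, the bound $K_\Omega(q_{\xi,\epsilon},z)\le\phi_+(r)$ for $A_{\xi,\epsilon}(z)\in\Bb_d(e_1;r)$ is false: such $z$ may lie on or arbitrarily near $\partial\Omega$ (e.g.\ $z=\xi$), so this point-to-point distance is unbounded. What must be controlled is the \emph{location of the geodesic} $\gamma_{\xi,z}$, and this is the technical core of the argument (Lemmas~\ref{lem:D1}--\ref{lem:D3}, proved by visibility/compactness contradictions, not by metric estimates on points). Second, and fatally for the first inclusion, any route through the point $q_{\xi,\epsilon}$ via Gromov products is capped: $(q_{\xi,\epsilon}|z)_{z_0}\le K_\Omega(z_0,q_{\xi,\epsilon})=\tfrac1\lambda\log\tfrac1\epsilon$ always, so chaining hyperbolicity inequalities through $q_{\xi,\epsilon}$ can only yield $d_{z_0}(\xi,z)\lesssim\epsilon$, never $\lesssim\tau(r)\epsilon$ with $\tau(r)\to0$. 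The decay in $r$ comes from showing that the \emph{entire} geodesic $\gamma_{\xi,z}$ lies in $A^{-1}_{\xi,\epsilon}\Bb_d(e_1;D_1(r))$ and is therefore an additional $\tfrac12\log\tfrac1{D_1(r)}$ farther from $z_0$ than $q_{\xi,\epsilon}$ is, via the hyperplane estimate of Lemma~\ref{lem:hyperplanes} applied to $e_1+\Span_{\Cb}\{e_2,\dots,e_d\}$; your sketch has no source for this extra term. Finally, the reverse inclusion ($\tau_2$) is essentially unaddressed: its finiteness and its decay require a separate two-case compactness argument (depending on whether $\epsilon$ stays bounded below), using Proposition~\ref{prop:limit_domain_depends_on_sequence} and one-endedness of the limit domains, which your outline does not supply.
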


\begin{proposition}\label{prop:doubling_proposition} There exists $L \geq 1$ such that: If $S \geq 1$, $\xi \in \partial \Omega \cap \Bb_d(0;R)$, and $\epsilon \in \left(0,\frac{\epsilon_0}{S}\right)$, then 
\begin{align*}
V_{z_0}(\xi;S\epsilon) \subset \xi+ L\tau(S)\Big( V_{z_0}(\xi;\epsilon) - \xi\Big).
\end{align*}
\end{proposition}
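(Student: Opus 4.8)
The plan is to deduce the doubling estimate from Proposition~\ref{prop:comparison} by trapping both visual-metric balls $V_{z_0}(\xi;S\epsilon)$ and $V_{z_0}(\xi;\epsilon)$ between Euclidean balls pulled back by the \emph{same} normalizing map $A:=A_{\xi,\epsilon}$, and then using the fact that these pulled-back regions scale linearly about the point $\xi=A^{-1}(e_1)$.

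The key geometric observation I would record first is a scaling lemma. Writing $A(z)=e_1+g(z-\xi)$ with $g\in\GL_d(\Cb)$ the linear part, one has $A^{-1}\Bb_d(e_1;\rho)=\{z:\norm{g(z-\xi)}<\rho\}$, so for any $\rho\geq\rho'>0$,
\begin{align*}
\overline{\Omega}\cap A^{-1}\Bb_d(e_1;\rho)\subset \xi+\frac{\rho}{\rho'}\Big(\overline{\Omega}\cap A^{-1}\Bb_d(e_1;\rho')-\xi\Big).
\end{align*}
Indeed, if $z$ lies in the left-hand side, then $z':=\xi+\frac{\rho'}{\rho}(z-\xi)$ lies on the segment $[\xi,z]\subset\overline{\Omega}$ by convexity (here $\xi\in\partial\Omega\subset\overline{\Omega}$), while $\norm{g(z'-\xi)}=\frac{\rho'}{\rho}\norm{g(z-\xi)}<\rho'$; hence $z'\in\overline{\Omega}\cap A^{-1}\Bb_d(e_1;\rho')$ and $z=\xi+\frac{\rho}{\rho'}(z'-\xi)$.

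Next I would apply Proposition~\ref{prop:comparison} twice, both times with this same $\epsilon$. With radius $S$ (permitted since $\epsilon<\epsilon_0/\max\{S,r\}\leq\epsilon_0/S$ and $\epsilon<\epsilon_0\leq1$), the second inclusion there gives $V_{z_0}(\xi;S\epsilon)\subset\overline{\Omega}\cap A^{-1}\Bb_d(e_1;\tau(S))$. With radius $r$ (permitted since $\epsilon<\epsilon_0/r$), the first inclusion gives $\overline{\Omega}\cap A^{-1}\Bb_d(e_1;r)\subset V_{z_0}(\xi;\tau(r)\epsilon)\subset V_{z_0}(\xi;\epsilon)$, the last step using $\tau(r)\leq1$. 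Combining these with the scaling lemma applied at $\rho=\tau(S)$, $\rho'=r$ yields
\begin{align*}
V_{z_0}(\xi;S\epsilon)\subset \xi+\frac{\tau(S)}{r}\Big(\overline{\Omega}\cap A^{-1}\Bb_d(e_1;r)-\xi\Big)\subset \xi+\frac{\tau(S)}{r}\Big(V_{z_0}(\xi;\epsilon)-\xi\Big),
\end{align*}
which is the claim.

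The only point requiring care is the hypothesis $\rho\geq\rho'$, i.e. $\tau(S)\geq r$, needed to run the scaling lemma. This is automatic once we arrange that the function $\tau$ furnished by Proposition~\ref{prop:comparison} satisfies $\tau(r)\geq r$ for all $r>0$: replacing $\tau$ by $r\mapsto\max\{\tau(r),r\}$ only enlarges the right-hand sides of the two inclusions in Proposition~\ref{prop:comparison}, and preserves monotonicity and $\lim_{r\searrow0}\tau(r)=0$, so it is harmless; and then $\tau(r)\leq1$ forces $r\leq1\leq S$, whence $\tau(S)\geq\tau(r)\geq r$. With this normalization of $\tau$ fixed throughout the section, the argument is complete. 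I do not expect a genuine obstacle here: no new estimate beyond Proposition~\ref{prop:comparison} and elementary convexity is used, and the real work in this circle of ideas already lives in Proposition~\ref{prop:comparison}.
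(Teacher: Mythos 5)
Your proof is correct and follows essentially the same route as the paper's: two applications of Proposition~\ref{prop:comparison} with the same normalizing map $A_{\xi,\epsilon}$, linked by the scaling-about-$\xi$ step and the monotonicity $V_{z_0}(\xi;\tau(r)\epsilon)\subset V_{z_0}(\xi;\epsilon)$. You are in fact slightly more careful than the paper, which writes the scaling step as an equality (only the inclusion you prove is needed, and only that direction holds in general) and leaves $\tau(S)\geq r$ implicit; your normalization $\tau(r)\geq r$ is already built into the paper's construction of $\tau_1$, so no modification is actually required.
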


\begin{proof}[Proof of Proposition~\ref{prop:doubling_proposition} assuming Proposition~\ref{prop:comparison}] Fix $r \in (0,\tau(1)]$ with $\tau(r) \leq 1$. Notice that $r \leq 1$: if $\tau(1) \leq 1$, then $r \leq \tau(1) \leq 1$ and if $\tau(1) > 1$, then $r < 1$ since $\tau(r) \leq 1$ and $\tau$ is increasing. Then let $L = \frac{1}{r}$. 

Fix $S \geq 1$ and $\epsilon \in \left(0,\frac{\epsilon_0}{S}\right)$. Since $\frac{\tau(S)}{r} \geq \frac{\tau(1)}{r} \geq 1$, $\Omega$ is convex, and $\xi \in \overline{\Omega}$, we have 
$$
\overline{\Omega} \subset \xi + \frac{\tau(S)}{r} \left( \overline{\Omega} - \xi\right).
$$
Since $r \leq 1 \leq S$, we have $\epsilon \in  \left(0,\frac{\epsilon_0}{r}\right)$. Then
\begin{align*}
V_{z_0}(\xi;S\epsilon)&  \subset  \overline{\Omega} \cap A^{-1}_{\xi,\epsilon} \Bb_d(e_1;\tau(S)) \subset \xi + \frac{\tau(S)}{r}\Big(\overline{\Omega} \cap A^{-1}_{\xi,\epsilon} \Bb_d(e_1;r)-\xi\Big) \\
& \subset \xi+ \frac{\tau(S)}{r}\Big( V_{z_0}(\xi;\tau(r)\epsilon) - \xi\Big) \subset \xi+ \frac{\tau(S)}{r}\Big( V_{z_0}(\xi;\epsilon) - \xi\Big).
\end{align*}

\end{proof}

\begin{proposition}\label{prop:visual_metrics_along_line} There exist $\alpha \geq 1$, $B \geq 1$ such that: if $\xi\in \partial \Omega \cap \Bb_d(0;R)$ and $\epsilon \in (0,1)$, then
\begin{align*}
\frac{1}{B} \epsilon^{2/\lambda} \leq \delta_\Omega(q_{\xi,\epsilon}) \leq B\epsilon^{2/(\alpha \lambda)}.
\end{align*}
Moreover, if $q \in [z_0, \xi)$, then 
\begin{align*}
q \in V_{z_0}\left(\xi;B\norm{q-\xi}^{\lambda/2}\right).
\end{align*}
\end{proposition}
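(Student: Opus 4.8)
The plan is to relate all three quantities — $\delta_\Omega(q_{\xi,\epsilon})$, $K_\Omega(q_{\xi,\epsilon},z_0)$, and the visual distance $d_{z_0}(\xi,q_{\xi,\epsilon})$ — to the single geometric quantity $\norm{q_{\xi,\epsilon}-\xi}$, and then chase the definitions. First I would record that, by Definition~\ref{defn:visual_metric_normalizing_map}, $q_{\xi,\epsilon}$ lies on the segment $[z_0,\xi)$ with $K_\Omega(q_{\xi,\epsilon},z_0)=\frac{1}{\lambda}\log\frac1\epsilon$. The segment $[z_0,\xi)$ is a quasi-geodesic ray in $(\Omega,K_\Omega)$: indeed, since $K_\Omega$ restricted to a segment ending at a boundary point is comparable to the one-dimensional hyperbolic distance, Lemma~\ref{lem:kob_inf_bound} gives the two-sided bound
\begin{align*}
\frac{1}{2}\log\frac{\norm{z_0-\xi}}{\norm{q-\xi}} \leq K_\Omega(z_0,q) \leq \frac{1}{2}\log\frac{\norm{z_0-\xi}}{\norm{q-\xi}} + C
\end{align*}
for a constant $C=C(\Omega,z_0,R)$ valid for all $\xi\in\partial\Omega\cap\Bb_d(0;R)$ and $q\in[z_0,\xi)$; the upper bound here comes from integrating $k_\Omega(\gamma(t);\gamma'(t))\le \norm{\gamma'(t)}/\delta_\Omega(\gamma(t))$ along the segment and using that $\delta_\Omega$ along $[z_0,\xi)$ is bounded below by a constant multiple of $\norm{q-\xi}$ once we leave a fixed neighborhood of $z_0$. (The constant $\alpha$ in the statement will absorb the additive slack $C$ after exponentiating.) Combining this with $K_\Omega(q_{\xi,\epsilon},z_0)=\frac1\lambda\log\frac1\epsilon$ yields
\begin{align*}
\frac{1}{B_1}\epsilon^{2/\lambda} \leq \norm{q_{\xi,\epsilon}-\xi} \leq B_1\,\epsilon^{2/(\alpha\lambda)}
\end{align*}
for suitable $\alpha\ge 1$, $B_1\ge 1$ independent of $\xi$.

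Next I would pass from $\norm{q_{\xi,\epsilon}-\xi}$ to $\delta_\Omega(q_{\xi,\epsilon})$. One inequality is trivial convexity: $\delta_\Omega(q_{\xi,\epsilon}) \leq \norm{q_{\xi,\epsilon}-\xi}$ since $\xi\in\partial\Omega$. For the reverse, since $\Omega$ contains the convex hull of $\Bb_d(z_0;\delta_\Omega(z_0))$ and $\xi$, and $\delta_\Omega(z_0)$ is bounded below uniformly over our compact family of relevant data, we get $\delta_\Omega(q) \geq c\,\norm{q-\xi}$ for $q\in[z_0,\xi)$ with $c$ independent of $\xi$ — this is exactly the computation already used in Equation~\eqref{eq:dist_to_bd_dist_to_xi} and in Lemma~\ref{lem:dist_est}. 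Feeding these two-sided comparisons into the displayed bound on $\norm{q_{\xi,\epsilon}-\xi}$ produces
\begin{align*}
\frac{1}{B}\epsilon^{2/\lambda} \leq \delta_\Omega(q_{\xi,\epsilon}) \leq B\,\epsilon^{2/(\alpha\lambda)},
\end{align*}
after enlarging $B$ and $\alpha$ as needed.

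For the "moreover" clause, fix $q\in[z_0,\xi)$ and set $\epsilon_q := \norm{q-\xi}^{\lambda/2}$, so that by the comparison above $K_\Omega(q,z_0)$ is within a bounded additive constant of $\frac1\lambda\log\frac1{\epsilon_q}$. Now consider a geodesic $\gamma_{\xi,q}$ in $(\Omega,K_\Omega)$ joining $\xi$ to $q$ (using Theorem~\ref{thm:compactification} to make sense of $\xi$ as a boundary point); since $q$ itself lies between $z_0$ and $\xi$, the point $q$ is a bounded Kobayashi-distance from this geodesic, so $K_\Omega(z_0,\gamma_{\xi,q}) \geq K_\Omega(z_0,q) - O(1) \geq \frac1\lambda\log\frac1{\epsilon_q} - O(1)$. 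Property (3) of Theorem~\ref{thm:visible_metric} (in the form recorded at the start of Section~\ref{sec:vm_and_normalizing}) then gives $d_{z_0}(\xi,q) \leq C_v\exp(-\lambda K_\Omega(z_0,\gamma_{\xi,q})) \leq C_v e^{O(\lambda)}\,\epsilon_q = B\,\norm{q-\xi}^{\lambda/2}$ for a suitable $B$, which is precisely the assertion $q\in V_{z_0}(\xi;B\norm{q-\xi}^{\lambda/2})$.

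The main obstacle is making the additive $O(1)$ errors genuinely uniform over $\xi\in\partial\Omega\cap\Bb_d(0;R)$: the Kobayashi metric, the geodesics, and the visual metric all depend on $\xi$, and one must argue that the comparison constants do not degenerate as $\xi$ varies or as $q\to\xi$. This is handled by the usual compactness/normalization machinery — Proposition~\ref{prop:Kdr_compact}, Theorem~\ref{thm:normalizing}, and the lower bound $\delta_\Omega(q)\gtrsim\norm{q-\xi}$ coming from the cone $\mathrm{ConvHull}(\Bb_d(z_0;\delta_\Omega(z_0)),\xi)\subset\Omega$ — so that every estimate above is controlled by data living in a fixed compact subset of $\Xb_{d,0}$. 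Once uniformity is in hand, the rest is bookkeeping with exponentials, and the exponents $2/\lambda$ and $2/(\alpha\lambda)$ fall out of Lemma~\ref{lem:kob_inf_bound} exactly as written.
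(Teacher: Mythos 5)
Your first part follows the paper's route, but one displayed inequality is wrong as stated: the upper bound $K_\Omega(z_0,q)\leq \tfrac12\log\frac{\norm{z_0-\xi}}{\norm{q-\xi}}+C$ with coefficient exactly $\tfrac12$ is false for general (non-$C^2$) convex boundaries. Integrating $k_\Omega(\gamma(t);\gamma'(t))\leq \norm{\gamma'(t)}/\delta_\Omega(\gamma(t))$ together with $\delta_\Omega(q)\geq \frac{\delta_0}{2R}\norm{q-\xi}$ produces a \emph{multiplicative} loss, i.e.\ $K_\Omega(z_0,q)\leq \frac{\alpha}{2}\log\frac{1}{\norm{q-\xi}}+\beta$ with $\alpha$ of order $R/\delta_\Omega(z_0)$, and that multiplicative constant is exactly where the exponent $2/(\alpha\lambda)$ comes from; your parenthetical claim that ``$\alpha$ absorbs the additive slack'' is therefore the wrong explanation, even though the conclusion you write down is correct. (The paper packages this as Proposition~\ref{prop:quasi_geodesic} plus Lemma~\ref{lem:dist_est_normalizing_maps}. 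Note also that the lower bound on $K_\Omega(z_0,q)$ is not obtained by integrating Lemma~\ref{lem:kob_inf_bound} along the segment --- a competing path could a priori be shorter --- but from the separating-hyperplane estimate, Lemma~\ref{lem:hyperplanes_along_lines}.) These points are repairable.

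The ``moreover'' clause, however, has a genuine gap. You need a \emph{lower} bound on $K_\Omega(z_0,\gamma_{\xi,q})$: you must show that the entire geodesic joining $q$ to $\xi$ stays far from $z_0$. Your justification --- that $q$ is a bounded Kobayashi distance from this geodesic --- gives only the reverse inequality $K_\Omega(z_0,\gamma_{\xi,q})\leq K_\Omega(z_0,q)$ (indeed $q$ is an endpoint of $\gamma_{\xi,q}$, so it lies on it), and says nothing to prevent the geodesic from swinging back toward $z_0$ before escaping to $\xi$. The paper rules this out with the Morse Lemma (Theorem~\ref{thm:morse_lemma}): a geodesic from $q=\sigma_\xi(t_0)$ to $\sigma_\xi(t_n)$ lies within Hausdorff distance $M$ of the quasi-geodesic $\sigma_\xi|_{[t_0,t_n]}$, and every point $\sigma_\xi(t)$ with $t\geq t_0$ satisfies $K_\Omega(z_0,\sigma_\xi(t))\geq t\geq t_0$ by Lemma~\ref{lem:hyperplanes_along_lines}, whence $K_\Omega(z_0,\gamma_n)\geq t_0-M$; one then passes to a limit (Arzel\`a--Ascoli plus Theorem~\ref{thm:compactification}) to obtain the geodesic ray to $\xi$ with the same bound. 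Without this geodesic-stability input, the estimate $d_{z_0}(\xi,q)\leq B\norm{q-\xi}^{\lambda/2}$ does not follow from what you have written.
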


\begin{remark} In the special case when $\partial \Omega$ is a $C^{2}$ hypersurface, one can choose $\alpha=1$.  \end{remark}

\begin{proposition}\label{prop:normalizing_maps_visual_metric} There exist $r_0 \in (0,1)$, $m_1 >0 $, $C_0 >0$ such that: if $\xi\in \partial \Omega \cap \Bb_d(0;R)$ and $\epsilon \in (0,1)$, then
\begin{align*}
A_{\xi,\epsilon}\Omega \in \Kb_d(r_0)
\end{align*}
and 
\begin{align*}
\norm{A_{\xi,\epsilon}(z_1) - A_{\xi,\epsilon}(z_2)} \geq \frac{C_0}{\epsilon^{1/m_1}} \norm{z_1-z_2}
\end{align*}
for all $z_1,z_2 \in \Cb^d$.
\end{proposition}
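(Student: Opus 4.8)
The plan is to apply Theorem~\ref{thm:normalizing} at the specific base point $q_{\xi,\epsilon}$ and then control the two constants appearing there, namely the ratio $r = \delta_\Omega(z_0)/\norm{\xi - z_0}$ and the quantity $\delta_H$, in terms of $\epsilon$. First I would note that by Definition~\ref{defn:visual_metric_normalizing_map}, $A_{\xi,\epsilon}$ is an affine map furnished by Theorem~\ref{thm:normalizing} applied with this $z_0$, the boundary point $\xi$, a supporting hyperplane $H$ at $\xi$, and $q = q_{\xi,\epsilon}$. Since $\xi$ ranges over the \emph{compact} set $\partial\Omega \cap \overline{\Bb_d(0;R)}$ and $z_0$ is fixed inside $\Omega$, the ratio $r = \delta_\Omega(z_0)/\norm{\xi-z_0}$ is bounded below by a positive constant $r_0 = r_0(z_0,R) \in (0,1)$ independent of $\xi$ (both numerator and denominator are bounded, and the denominator is bounded away from $0$ since $z_0 \in \Omega$). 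Because $\Kb_d(r) \subset \Kb_d(r_0)$ whenever $r \geq r_0$, conclusion (1) of Theorem~\ref{thm:normalizing} gives $A_{\xi,\epsilon}\Omega \in \Kb_d(r_0)$ immediately, establishing the first assertion.

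For the expansion estimate, conclusion (4) of Theorem~\ref{thm:normalizing} gives
\begin{align*}
\norm{A_{\xi,\epsilon}(z_1) - A_{\xi,\epsilon}(z_2)} \geq \frac{r}{\sqrt{d}\,\delta_H}\norm{z_1-z_2} \geq \frac{r_0}{\sqrt{d}\,\delta_H}\norm{z_1-z_2},
\end{align*}
where $\delta_H = \max\{\delta_\Omega(q_{\xi,\epsilon};v) : v \in -\xi+H \text{ non-zero}\}$. So it remains to bound $\delta_H$ from above by a constant times $\epsilon^{1/m_1}$ for a suitable $m_1 \geq 2$; then $C_0 := r_0/\sqrt{d}$ times that constant works. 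The key point is that $-\xi+H$ is a complex hyperplane through $0$, so every nonzero $v \in -\xi+H$ is tangential to the supporting hyperplane, and $\delta_H$ measures how far $\Omega$ extends from $q_{\xi,\epsilon}$ in such tangential complex directions. I would invoke the local $m$-convexity estimate: by Theorem~\ref{thm:prior_nec}, Gromov hyperbolicity of $(\Omega,K_\Omega)$ implies every domain in $\overline{\Aff(\Cb^d)\cdot\Omega}\cap\Xb_d$ has simple boundary, so Corollary~\ref{cor:m_convex} (applied with radius $R$, noting $q_{\xi,\epsilon} \in [z_0,\xi) \subset \Bb_d(0;R)$ after possibly enlarging $R$ to accommodate $z_0$ and the segment) yields $C>0$ and $m \geq 1$ with
\begin{align*}
\delta_\Omega(q_{\xi,\epsilon};v) \leq C\,\delta_\Omega(q_{\xi,\epsilon})^{1/m}
\end{align*}
for all nonzero $v$; in particular $\delta_H \leq C\,\delta_\Omega(q_{\xi,\epsilon})^{1/m}$. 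Finally I would use Proposition~\ref{prop:visual_metrics_along_line}, which gives $\delta_\Omega(q_{\xi,\epsilon}) \leq B\,\epsilon^{2/(\alpha\lambda)}$, so that $\delta_H \leq CB^{1/m}\epsilon^{2/(\alpha\lambda m)}$. Setting $m_1$ to be any integer with $m_1 \geq \alpha\lambda m/2$ and $m_1 \geq 2$, and $C_0 := \frac{r_0}{\sqrt{d}\,CB^{1/m}}$, gives the claimed inequality.

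The main obstacle is the logical dependency: Proposition~\ref{prop:normalizing_maps_visual_metric} is here listed as depending on Proposition~\ref{prop:visual_metrics_along_line} (for the bound $\delta_\Omega(q_{\xi,\epsilon}) \leq B\epsilon^{2/(\alpha\lambda)}$), so I must make sure the proof of Proposition~\ref{prop:visual_metrics_along_line} does not circularly rely on Proposition~\ref{prop:normalizing_maps_visual_metric}; if it does, I would instead derive the needed upper bound on $\delta_\Omega(q_{\xi,\epsilon})$ directly from Lemma~\ref{lem:hyperplanes_along_lines} (relating $K_\Omega(q_{\xi,\epsilon},z_0) = \frac1\lambda\log\frac1\epsilon$ to the distance along the segment $[z_0,\xi)$) combined with the $m$-convexity estimate of Corollary~\ref{cor:m_convex}, which converts closeness to the boundary along one direction into $\delta_\Omega(q_{\xi,\epsilon})$ being small. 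A secondary technical point is ensuring all constants ($r_0$, $C$, $m$, and hence $C_0$, $m_1$) are genuinely uniform over $\xi \in \partial\Omega \cap \overline{\Bb_d(0;R)}$ and $\epsilon \in (0,1)$ — this is where compactness of $\partial\Omega \cap \overline{\Bb_d(0;R)}$ and the uniformity built into Corollary~\ref{cor:m_convex} and Theorem~\ref{thm:normalizing} are essential.
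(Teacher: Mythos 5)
Your proposal is correct and follows essentially the same route as the paper: Theorem~\ref{thm:normalizing} part (4) with $r_0 = \delta_\Omega(z_0)/(2R)$, the local $m$-convexity estimate (the paper packages your combination of Theorem~\ref{thm:prior_nec} and Corollary~\ref{cor:m_convex} as Corollary~\ref{cor:m_convex_Gromov_hyp}) to bound $\delta_H$, and Proposition~\ref{prop:visual_metrics_along_line} to convert $\delta_\Omega(q_{\xi,\epsilon})$ into a power of $\epsilon$. Your worry about circularity is moot, since Proposition~\ref{prop:visual_metrics_along_line} is proved via quasi-geodesics and the Morse lemma without reference to this proposition.
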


Proposition~\ref{prop:doubling_proposition} should be compared to~\cite[Proposition 2.5]{M1994} and Proposition~\ref{prop:normalizing_maps_visual_metric} should be compared to~\cite[Equation (2.7)]{M1994}.

\subsection{Proof of Proposition~\ref{prop:visual_metrics_along_line}} Let $\delta_0:=\delta_\Omega(z_0)$. If $\xi \in  \partial \Omega \cap \Bb_d(0;R)$ and $q \in [z_0, \xi)$, then 
\begin{align}
\label{eq:one_in_prop_metrics_along_line}
\frac{\delta_0}{2R}\norm{q-\xi} \leq \delta_\Omega(q) \leq \norm{q-\xi}
\end{align}
since $\overline{\Omega}$ contains the convex hull of $\Bb_d(z_0;\delta_0)$ and $\xi$.

By Proposition~\ref{prop:quasi_geodesic} there exist $\alpha_0 \geq 1$, $\beta_0 \geq 0$ such that: if $\xi \in \partial \Omega \cap \Bb_d(0;R)$, then the curve $\sigma_\xi: [0,\infty) \rightarrow \Omega$ given by 
\begin{align*}
\sigma_\xi(t) = \xi + e^{-2t}\left( z_0-\xi \right)
\end{align*}
is an $(\alpha_0,\beta_0)$-quasi-geodesic.

\begin{lemma}\label{lem:dist_est_normalizing_maps}  There exist $\alpha \geq 1$, $\beta > 0$ such that: if $\xi \in \partial\Omega \cap \Bb_d(0;R)$ and $q \in [z_0, \xi)$, then 
\begin{align*}
-\beta + \frac{1}{2} \log \frac{1}{\delta_\Omega(q)} \leq K_\Omega(q,z_0) \leq \beta + \frac{\alpha}{2} \log \frac{1}{\delta_\Omega(q)}.
\end{align*}
\end{lemma}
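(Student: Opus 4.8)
The plan is to derive Lemma~\ref{lem:dist_est_normalizing_maps} from Proposition~\ref{prop:quasi_geodesic} together with the Morse Lemma (Theorem~\ref{thm:morse_lemma}), by comparing the explicit quasi-geodesic $\sigma_\xi(t) = \xi + e^{-2t}(z_0 - \xi)$ with an honest geodesic from $z_0$ to $q$, and then estimating the Kobayashi distance along $\sigma_\xi$ using Lemma~\ref{lem:hyperplanes_along_lines}. First I would record that $\sigma_\xi(0) = z_0$ and that any given $q \in [z_0,\xi)$ equals $\sigma_\xi(t_q)$ for a unique $t_q \geq 0$, namely $t_q = \tfrac12 \log \frac{\norm{z_0-\xi}}{\norm{q-\xi}}$; since $\xi \in \Bb_d(0;R)$ and $z_0$ is fixed, $\norm{z_0-\xi}$ is bounded above and below by constants depending only on $\Omega$, $z_0$, $R$, so up to an additive constant $t_q = \tfrac12 \log \frac{1}{\norm{q-\xi}}$.

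The main work is the two-sided comparison of $K_\Omega(z_0,q)$ with $t_q$. For the lower bound $K_\Omega(z_0,q) \geq -\beta' + \tfrac12 \log \tfrac{1}{\norm{q-\xi}}$: apply Lemma~\ref{lem:hyperplanes_along_lines} along the complex line $L$ through $z_0$ and $q$ (which contains $\xi$), using the point $\xi \in L \setminus (\Omega \cap L)$, to get $K_\Omega(z_0,q) \geq \tfrac12 \abs{\log \frac{\norm{z_0-\xi}}{\norm{q-\xi}}} = t_q$, and then absorb the boundedness of $\norm{z_0-\xi}$ into the constant $\beta$. For the upper bound $K_\Omega(z_0,q) \leq \beta' + \tfrac\alpha2 \log\tfrac1{\norm{q-\xi}}$: since $\sigma_\xi|_{[0,t_q]}$ is an $(\alpha,\beta)$-quasi-geodesic joining $z_0$ to $q$, the definition of quasi-geodesic gives directly $K_\Omega(\sigma_\xi(0),\sigma_\xi(t_q)) \leq \alpha t_q + \beta$, i.e. $K_\Omega(z_0,q) \leq \alpha t_q + \beta$, and again $\alpha t_q = \tfrac\alpha2 \log\tfrac{\norm{z_0-\xi}}{\norm{q-\xi}} \leq \tfrac\alpha2 \log\tfrac1{\norm{q-\xi}} + (\text{const})$. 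Collecting the additive constants into a single enlarged $\beta$ (which also requires $\beta \geq 1$ say, to cover the case $\norm{q-\xi}$ close to $\norm{z_0-\xi}$ where $t_q$ could be slightly negative after the normalization) finishes the proof.

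I do not expect a serious obstacle here: both inequalities are essentially immediate once the right tools are named, and the Morse Lemma is not even strictly needed for this particular lemma — the quasi-geodesic definition suffices for the upper bound and Lemma~\ref{lem:hyperplanes_along_lines} handles the lower bound. The only mildly delicate point is bookkeeping the additive constant so that the stated inequalities hold uniformly over all $\xi \in \partial\Omega \cap \Bb_d(0;R)$ and all $q \in [z_0,\xi)$, including $q$ very close to $z_0$; this is handled by choosing $\beta$ large enough to dominate $\tfrac{\alpha}{2}\log\norm{z_0-\xi}$ and $-\tfrac12 \log \norm{z_0 - \xi}$ over the compact range of possible values of $\norm{z_0-\xi}$. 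Here is the proof.

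\begin{proof}
Fix $\xi \in \partial\Omega \cap \Bb_d(0;R)$ and $q \in [z_0,\xi)$. Write $\sigma_\xi(t) = \xi + e^{-2t}(z_0-\xi)$ as in Proposition~\ref{prop:quasi_geodesic}, so that $\sigma_\xi(0) = z_0$ and $q = \sigma_\xi(t_q)$ where
\begin{align*}
t_q = \frac{1}{2}\log\frac{\norm{z_0-\xi}}{\norm{q-\xi}} \geq 0.
\end{align*}
Since $\overline{\Omega}$ is bounded (it is contained in $\Bb_d(0;R')$ for some $R' \geq R$) and $z_0 \in \Omega$ is fixed, there is a constant $c_0 \geq 1$, depending only on $\Omega$, $z_0$, and $R$, with $c_0^{-1} \leq \norm{z_0-\xi} \leq c_0$. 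Hence
\begin{align*}
\frac{1}{2}\log\frac{1}{\norm{q-\xi}} - \frac{1}{2}\log c_0 \leq t_q \leq \frac{1}{2}\log\frac{1}{\norm{q-\xi}} + \frac{1}{2}\log c_0.
\end{align*}

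For the lower bound, let $L$ be the complex affine line through $z_0$ and $q$; then $\xi \in L$ and $\xi \notin \Omega$, so $\xi \in L \setminus (\Omega \cap L)$. By Lemma~\ref{lem:hyperplanes_along_lines},
\begin{align*}
K_\Omega(z_0,q) \geq \frac{1}{2}\abs{\log\frac{\norm{z_0-\xi}}{\norm{q-\xi}}} = t_q \geq \frac{1}{2}\log\frac{1}{\norm{q-\xi}} - \frac{1}{2}\log c_0.
\end{align*}

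For the upper bound, the curve $\sigma_\xi|_{[0,t_q]}$ is an $(\alpha,\beta)$-quasi-geodesic joining $z_0$ to $q$, so by the definition of a quasi-geodesic,
\begin{align*}
K_\Omega(z_0,q) = K_\Omega\big(\sigma_\xi(0),\sigma_\xi(t_q)\big) \leq \alpha t_q + \beta \leq \frac{\alpha}{2}\log\frac{1}{\norm{q-\xi}} + \frac{\alpha}{2}\log c_0 + \beta.
\end{align*}

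Replacing $\beta$ by $\beta' := \beta + \frac{\alpha}{2}\log c_0$ (which is at least the original $\beta$) yields both inequalities in the statement.
\end{proof}
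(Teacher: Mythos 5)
Your proof is correct and follows essentially the same route as the paper: the upper bound comes from the $(\alpha,\beta)$-quasi-geodesic property of $\sigma_\xi$ (Proposition~\ref{prop:quasi_geodesic}) and the lower bound from Lemma~\ref{lem:hyperplanes_along_lines} applied with the boundary point $\xi$ on the line through $z_0$ and $q$. The only cosmetic difference is in how the additive constant is bookkept — the paper uses $\norm{z_0-\xi}\le 2R$ for the upper bound and $\norm{z_0-\xi}\ge\delta_\Omega(z_0)$ for the lower bound, where you package both into a single constant $c_0$.
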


\begin{remark} The proof below shows that $\alpha = \alpha_0$ satisfies the lemma, however this may not be the optimal choice. \end{remark}

\begin{proof} Fix $\xi \in \partial \Omega \cap \Bb_d(0;R)$ and $q \in [z_0,\xi)$. Then $q = \sigma_\xi(t)$ where 
\begin{align*}
t = \frac{1}{2} \log \frac{\norm{z_0-\xi}}{\norm{q-\xi}}.
\end{align*}
So 
\begin{align*}
K_\Omega(z_0, q) =K_\Omega(\sigma_\xi(0), \sigma_\xi(t)) \leq \alpha_0 t + \beta_0 \leq \beta_0  + \frac{\alpha_0}{2} \log(2R) + \frac{\alpha_0}{2} \log \frac{1}{\norm{q-\xi}}.
\end{align*}
Thus Equation~\eqref{eq:one_in_prop_metrics_along_line} implies that 
\begin{align*}
K_\Omega(z_0, q) \leq  \beta_0  + \frac{\alpha_0}{2} \log(2R) + \frac{\alpha_0}{2} \log \frac{1}{\delta_\Omega(q)}.
\end{align*}
For the lower bound, Lemma~\ref{lem:hyperplanes_along_lines} and Equation~\eqref{eq:one_in_prop_metrics_along_line} imply
\begin{align*}
K_\Omega(z_0, q) \geq  \frac{1}{2} \log \frac{\norm{z_0-\xi}}{\norm{q-\xi}} \geq \frac{1}{2}\log \frac{\delta_0^2}{2R} + \frac{1}{2} \log \frac{1}{\delta_\Omega(q)}.
\end{align*}
So $\alpha = \alpha_0$ and
$$
\beta = \max\left\{ -\frac{1}{2}\log \frac{\delta_0^2}{2R}, \beta_0  + \frac{\alpha_0}{2} \log(2R) \right\}
$$
suffice. 
\end{proof}

\begin{proof}[Proof of Proposition~\ref{prop:visual_metrics_along_line}] 

Since
\begin{align*}
K_\Omega(q_{\xi,\epsilon},z_0) = \frac{1}{\lambda}\log\frac{1}{\epsilon},
\end{align*}
the last lemma implies that 
\begin{align*}
e^{-2\beta} \epsilon^{2/\lambda} \leq \delta_\Omega(q_{\xi,\epsilon}) \leq e^{2\beta/\alpha} \epsilon^{2/(\alpha \lambda)}.
\end{align*}
This proves the first part of the Proposition. 

Now fix some $q \in [z_0,\xi)$. Then $q = \sigma_\xi(t_0)$ where 
\begin{align*}
t_0 = \frac{1}{2} \log \frac{\norm{z_0-\xi}}{\norm{q-\xi}} .
\end{align*} 
Fix a sequence $t_0 < t_1 < t_2 < \dots$ converging to $\infty$ and for each $n$ let $\gamma_n :[0,b_n] \rightarrow \Omega$ be a geodesic joining $q=\sigma_\xi(t_0)$ to $\sigma_\xi(t_n)$. Then by Theorem~\ref{thm:morse_lemma} there exists $M > 0$, which does depend on $q$, such that 
\begin{align}
\label{eq:haus_dist_geodesics}
\max\left\{ \max_{t \in [t_0,t_n]} K_\Omega\left(\sigma_\xi(t), \gamma_n\right), \max_{t \in [0,b_n]} K_\Omega\left(\gamma_n(t), \sigma_\xi|_{[t_0,t_n]}\right) \right\} \leq M
\end{align}
for all $n \geq 1$. 

Using the Arzel\`a-Ascoli theorem and passing to a subsequence we can suppose that $\gamma_n$ converges to a geodesic ray $\gamma: [0,\infty) \rightarrow \Omega$. By the definition of the Gromov boundary and Theorem~\ref{thm:compactification}, we have
\begin{align*}
\lim_{t \rightarrow \infty} \gamma(t) = \xi.
\end{align*}

Equation~\eqref{eq:haus_dist_geodesics} implies that 
\begin{align*}
\max\left\{ \sup_{t \geq t_0} K_\Omega(\sigma_\xi(t), \gamma), \sup_{t \geq 0} K_\Omega\left(\gamma(t), \sigma_\xi|_{[t_0,\infty)}\right) \right\} \leq M.
\end{align*}
Hence
\begin{align*}
K_\Omega(z_0, \gamma) \geq -M + K_\Omega\left(z_0, \sigma_{\xi}|_{[t_0,\infty)}\right).
\end{align*}
But by Lemma~\ref{lem:hyperplanes_along_lines}
\begin{align*}
K_\Omega(z_0, \sigma_{\xi}(t)) \geq \frac{1}{2} \log \frac{ \norm{z_0-\xi}}{\norm{\sigma_\xi(t) - \xi}} = t
\end{align*}
for all $t \geq 0$. And so 
\begin{align*}
K_\Omega(z_0, \gamma) &\geq -M +t_0 = -M + \frac{1}{2} \log \frac{\norm{z_0-\xi}}{\norm{q-\xi}} \\
& \geq -M + \frac{1}{2} \log \delta_0 + \frac{1}{2} \log \frac{1}{\norm{q-\xi}}.
\end{align*}
Then 
\begin{equation*}
d_{z_0}(q, \xi) \leq C_v \exp \left( -\lambda K_\Omega(z_0, \gamma) \right) \leq C_v \exp\left( \lambda M\right) \delta_0^{-\lambda/2}  \norm{q-\xi}^{\lambda/2}. 
\end{equation*}
Thus $B= \max\left\{ e^{2\beta}, C_v\exp\left( \lambda M\right) \delta_0^{-\lambda/2}\right\}$ suffices. 
\end{proof}

\subsection{Proof of Proposition~\ref{prop:normalizing_maps_visual_metric}}

Fix some $\xi\in \partial \Omega \cap \Bb_d(0;R)$ and $\epsilon \in (0,1)$. Then
\begin{align*}
r_0 \norm{\xi-z_0} \leq \delta_\Omega(z_0)
\end{align*}
where $r_0 := \delta_\Omega(z_0)/(2R)$. So $A_{\xi,\epsilon} \Omega \in \Kb_d(r_0)$ by Theorem~\ref{thm:normalizing} part (1).

By Corollary~\ref{cor:m_convex_Gromov_hyp}, there exist $m_0 > 0$ and $c_0 > 0$ such that 
\begin{align*}
\delta_\Omega(z;v)  \leq c_0 \delta_\Omega(z)^{1/m_0}
\end{align*}
for every $z \in \Omega \cap \Bb_d(0;R)$ and $v \in \Cb^d$ non-zero. Since $q_{\xi,\epsilon} \in (z_0,\xi)$ and $z_0, \xi \in \Bb_d(0;R)$ we see that $q_{\xi,\epsilon} \in \Bb_d(0;R)$. So by Theorem~\ref{thm:normalizing} part (5) 
\begin{align*}
\norm{A(z_1) - A(z_2)} \geq \frac{r_0}{\sqrt{2} c_0 \delta_\Omega(q_{\xi,\epsilon})^{1/m_0}} \norm{z_1-z_2}
\end{align*}
for all  $z_1, z_2 \in \Cb^d$. Hence by Proposition~\ref{prop:visual_metrics_along_line}
\begin{align*}
\norm{A(z_1) - A(z_2)} \geq \frac{r_0}{\sqrt{2} c_0 \delta_\Omega(q_{\xi,\epsilon})^{1/m_0}} \norm{z_1-z_2} \geq \frac{r_0}{\sqrt{2}c_0B^{1/m_0} \epsilon^{2/(\alpha \lambda m_0)}} \norm{z_1-z_2}
\end{align*}
for all  $z_1, z_2 \in \Cb^d$. So $C_0 = \frac{r_0}{\sqrt{2}c_0B^{1/m_0}}$ and
\begin{align}
\label{eq:m_1_equality}
m_1 = \frac{\alpha \lambda m_0}{2}
\end{align}
suffice.

\subsection{Proof of Proposition~\ref{prop:comparison}} We begin by defining $\epsilon_0 \in (0,1]$. If $\Omega$ is bounded, let $\epsilon_0 = 1$. If $\Omega$ is unbounded, define $\epsilon_0$ to be the minimum of 1 and
\begin{align*}
\frac{1}{2} \min\left\{ d_{z_0}(\xi,\eta) : \xi \in \partial \Omega \cap \overline{\Bb_d(0;R)}, \ \eta \in \overline{\Omega}^{\End} \setminus \Cb^d \right\}
\end{align*}
(notice that this number exists by Proposition~\ref{prop:continuity of visual metric}). Then 
\begin{equation}
\label{eqn:Vz0 is in C}
\overline{V_{z_0}(\xi, \epsilon_0)} \subset \overline{\Omega} \subset \Cb^d
\end{equation}
for all $\xi \in \partial\Omega \cap \overline{\Bb_d(0;R)}$. 

The proposition will follow from a series of lemmas. 

\begin{lemma}\label{lem:D1} For any $r > 0$ there exists $D_1(r) < \infty$ such that:  if $\xi \in \partial\Omega \cap \Bb_d(0;R)$, $\epsilon \in (0,1)$, and $\gamma: [a,b] \rightarrow \Omega$ is a geodesic with $\gamma(a),\gamma(b) \in A^{-1}_{\xi,\epsilon}\Bb_d(e_1;r)$, then 
\begin{align*}
\gamma \subset A^{-1}_{\xi,\epsilon}\Bb_d(e_1;D_1(r)). 
\end{align*}
\end{lemma}

\begin{remark} This lemma says that a geodesic segment that starts and ends close to $e_1$ in $A_{\xi,\epsilon}\Omega$ stays close to $e_1$. \end{remark}

\begin{proof}Suppose for a contradiction that such a $D_1(r)$ does not exist for some $r > 0$. Then for each $n \in \Nb$ there exist $\xi_n \in \partial \Omega \cap \Bb_d(0;R)$, $\epsilon_n \in (0,1)$, a geodesic $\gamma_n: [a_n,b_n] \rightarrow \Omega$, and $t_n \in [a_n,b_n]$ where $\gamma_n(a_n),\gamma_n(b_n) \in A^{-1}_{\xi_n,\epsilon_n}\Bb_d(e_1;r)$ and
\begin{align*}
n \leq \norm{A_{\xi_n,\epsilon_n}\gamma_n(t_n)-e_1}.
\end{align*}
By Proposition~\ref{prop:normalizing_maps_visual_metric} each $\Omega_n : =A_{\xi_n, \epsilon_n}\Omega$ is in $\Kb_d(r_0)$, so by passing to a subsequence we can suppose that $\Omega_n$ converges to some $\Omega_\infty \in \Kb_d(r_0)$. Then Corollary~\ref{cor:simple_bd_implies_visibility} implies that $\Omega_n$ is a visibility sequence. 

Consider the geodesics $\wh{\gamma}_{n,1}: = A_{\xi_n, \epsilon_n}\gamma_n|_{[a_n,t_n]}$ and $\wh{\gamma}_{n,2}: = A_{\xi_n, \epsilon_n}\gamma_n|_{[t_n,b_n]}$. Notice that $\wh{\gamma}_{n,1}(a_n), \wh{\gamma}_{n,2}(b_n) \in \Bb_d(e_1;r)$ and 
\begin{align*}
\norm{\wh{\gamma}_{n,1}(t_n)}=\norm{\wh{\gamma}_{n,2}(t_n)}=\norm{A_{\xi_n,\epsilon_n}\gamma_n(t_n)}\geq n-1. 
\end{align*}
So using the fact that $\Omega_n$ is a visibility sequence, we can pass to subsequences and find $T_{n,1} \in [a_n, t_n]$ and $T_{n,2} \in [t_n,b_n]$ such that  $\wh{\gamma}_{n,1}(T_{n,1}) \rightarrow z_1 \in \Omega_\infty$ and $\wh{\gamma}_{n,2}(T_{n,2})\rightarrow z_2 \in \Omega_\infty$. Since 
\begin{align*}
\lim_{n \rightarrow \infty} \norm{\wh{\gamma}_{n,2}(t_n)}=\infty,
\end{align*}
the ``in particular'' part of Observation~\ref{obs:AA_for_geod} implies that  
\begin{align*}
 \lim_{n \rightarrow \infty} T_{n,2}-T_{n,1} \geq \lim_{n \rightarrow \infty} T_{n,2}-t_n =\infty.
\end{align*}
Then Proposition~\ref{prop:convergence_of_kob} implies that
\begin{align*}
\infty > K_{\Omega_\infty}(z_1,z_2) & = \lim_{n \rightarrow \infty} K_{\Omega_n}(\wh{\gamma}_{n,1}(T_{n,1}),\wh{\gamma}_{n,2}(T_{n,2})) = \lim_{n \rightarrow \infty} K_{\Omega}(\gamma_n(T_{n,1}), \gamma_n(T_{n,2})) \\
& = \lim_{n \rightarrow \infty} T_{n,2}-T_{n,1} = \infty.
\end{align*}
So we have a contradiction. \end{proof}

\begin{lemma}\label{lem:D1part2} We can assume that $D_1$ is an increasing function with
\begin{align*}
\lim_{r \searrow 0} D_1(r) = 0.
\end{align*}

\end{lemma} 

\begin{proof} For $r > 0$ fixed, let $\wt{D}_1(r)$ be the infimum of all numbers satisfying Lemma~\ref{lem:D1}.  Notice that $\wt{D}_1(r)$ itself may not satisfy the lemma and so we define $D_1(r): = r + \wt{D}_1(r)$. Then, by definition, $\wt{D}_1$ is non-decreasing and so $D_1$ is increasing. Further, $D_1$ satisfies Lemma~\ref{lem:D1}. 

Suppose that $\lim_{r \searrow 0} D_1(r)$ does not equal zero. Then there exists $D_0 > 0$ such that: for each $n \in \Nb$ there exist $\xi_n \in \partial \Omega \cap \Bb_d(0;R)$, $\epsilon_n \in (0,1)$, a geodesic $\gamma_n: [a_n,b_n] \rightarrow \Omega$, and $t_n \in [a_n,b_n]$ where $\gamma_n(a_n),\gamma_n(b_n) \in A^{-1}_{\xi_n,\epsilon_n}\Bb_d(e_1;1/n)$ and
\begin{align*}
D_0 \leq \norm{A_{\xi_n,\epsilon_n}\gamma_n(t_n)-e_1}.
\end{align*}
By definition
\begin{align*}
\norm{A_{\xi_n,\epsilon_n}\gamma_n(t_n)-e_1} \leq D_1(1/n) \leq D_1(1). 
\end{align*}

Now $\Omega_n : =A_{\xi_n, \epsilon_n}\Omega$ is in $\Kb_d(r_0)$, so by passing to a subsequence we can suppose that $\Omega_n$ converges to some $\Omega_\infty \in \Kb_d(r_0)$.  Then Corollary~\ref{cor:simple_bd_implies_visibility} implies that $\Omega_n$ is a visibility sequence. By passing to another subsequence we can suppose that 
\begin{align*}
\lim_{n \rightarrow \infty} A_{\xi_n,\epsilon_n}\gamma_n(t_n) = \eta \in \overline{\Omega}_\infty \cap \overline{\Bb_d(e_1;D_1(1))} \setminus \Bb_d(e_1;D_0).
\end{align*}
We divide the proof into two cases based on the location of $\eta$. 

\medskip

\noindent \textbf{Case 1:} $\eta \in \partial \Omega_\infty$. Consider the geodesics $\wh{\gamma}_{n,1}: = A_{\xi_n, \epsilon_n}\gamma_n|_{[a_n,t_n]}$ and $\wh{\gamma}_{n,2}: = A_{\xi_n, \epsilon_n}\gamma_n|_{[t_n,b_n]}$. Notice that 
\begin{align*}
\lim_{n \rightarrow \infty} \wh{\gamma}_{n,1}(a_n)=e_1= \lim_{n \rightarrow \infty} \wh{\gamma}_{n,2}(b_n)
\end{align*}
and 
\begin{align*}
\lim_{j \rightarrow \infty} \wh{\gamma}_{n,1}(t_n)=\eta= \lim_{n \rightarrow \infty} \wh{\gamma}_{n,2}(t_n).
\end{align*}
Since $\norm{\eta-e_1}\geq D_0$ and $\Omega_n$ is a visibility sequence, we can pass to a subsequence and find $T_{n,1} \in [a_n, t_n]$ and $T_{n,2} \in [t_n,b_n]$ such that  $\wh{\gamma}_{n,1}(T_{n,1}) \rightarrow z_1 \in \Omega_\infty$ and $\wh{\gamma}_{n,2}(T_{n,2})\rightarrow z_2 \in \Omega_\infty$. Since $\eta \in \partial \Omega_\infty$, the ``in particular'' part of Observation~\ref{obs:AA_for_geod} implies that  
\begin{align*}
 \lim_{n \rightarrow \infty} T_{n,2}-T_{n,1} \geq \lim_{n \rightarrow \infty} T_{n,2}-t_n=\infty.
\end{align*}
Then Proposition~\ref{prop:convergence_of_kob} implies
\begin{align*}
\infty > K_{\Omega_\infty}(z_1,z_2) & = \lim_{n \rightarrow \infty} K_{\Omega_j}(\wh{\gamma}_{n,1}(T_{n,1}) ,\wh{\gamma}_{n,2}(T_{n,2}) ) = \lim_{n \rightarrow \infty} K_{\Omega}(\gamma_n(T_{n,1}), \gamma_n(T_{n,2})) \\
& = \lim_{n \rightarrow \infty} T_{n,2}-T_{n,1} = \infty. 
\end{align*}
So we have a contradiction. 

\medskip

\noindent \textbf{Case 2:} $\eta \in \Omega_\infty$. Using Observation~\ref{obs:AA_for_geod}, Lemma~\ref{lem:limits_of_geod}, and passing to a subsequence, we can assume that the geodesics $A_{\xi_n,\epsilon_n}\gamma_n(\cdot + t_n)$ converges locally uniformly to a geodesic $\wh{\gamma} : \Rb \rightarrow \Omega_\infty$ where
\begin{align*}
\lim_{t \rightarrow -\infty} \wh{\gamma}(t) = \lim_{n \rightarrow \infty} A_{\xi_n,\epsilon_n}\gamma_n(a_n) = e_1
\end{align*}
and 
\begin{align*} 
\lim_{t \rightarrow \infty} \wh{\gamma}(t) = \lim_{n \rightarrow \infty} A_{\xi_n,\epsilon_n}\gamma_n(b_n) = e_1.
\end{align*}
Since 
\begin{align*}
\Omega_\infty \in \overline{\Aff(\Cb^d) \cdot \Omega} \cap \Xb_d,
\end{align*}
Theorem~\ref{thm:prior_nec} implies that $(\Omega_\infty, K_{\Omega_\infty})$ is Gromov hyperbolic. However, then by Theorem~\ref{thm:compactification} and the definition of the Gromov boundary the geodesic rays $t \rightarrow \wh{\gamma}(t)$ and $t \rightarrow \wh{\gamma}(-t)$ are in the same equivalence class. But then 
\begin{align*}
\infty > \limsup_{t \rightarrow \infty} K_{\Omega_\infty}(\wh{\gamma}(t), \wh{\gamma}(-t)) = \limsup_{t \rightarrow \infty} 2t = \infty.
\end{align*}
So we have a contradiction. 

Thus  $\lim_{r \searrow 0} D_1(r) = 0$. 
\end{proof}

\begin{lemma}\label{lem:D2} For any $r > 0$ there exists $D_2(r) < \infty$ such that:  if $\xi \in \partial\Omega \cap \Bb_d(0;R)$, $\epsilon \in (0,1)$, and $\gamma: [a,b] \rightarrow \Omega$ is a geodesic with $\gamma(a) \in A^{-1}_{\xi,\epsilon}\Bb_d(e_1;r)$ and $\gamma(b) \notin A^{-1}_{\xi,\epsilon} \Bb_d(e_1;2r)$, then
\begin{align*}
K_\Omega(q_{\xi,\epsilon}, \gamma) \leq D_2(r).
\end{align*}
\end{lemma}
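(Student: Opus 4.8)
The plan is to argue by contradiction, using the same compactness-plus-visibility scheme already employed in the proof of Lemma~\ref{lem:D1}. Suppose no such $D_2(r)$ exists for some fixed $r>0$. Then for each $n\in\Nb$ there are $\xi_n\in\partial\Omega\cap\Bb_d(0;R)$, $\epsilon_n\in(0,1)$, and a geodesic $\gamma_n:[a_n,b_n]\rightarrow\Omega$ with $\gamma_n(a_n)\in A^{-1}_{\xi_n,\epsilon_n}\Bb_d(e_1;r)$, $\gamma_n(b_n)\notin A^{-1}_{\xi_n,\epsilon_n}\Bb_d(e_1;2r)$, and $K_\Omega(q_{\xi_n,\epsilon_n},\gamma_n)>n$. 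Write $U_n:=A_{\xi_n,\epsilon_n}\Omega$ and $\wt{\gamma}_n:=A_{\xi_n,\epsilon_n}\gamma_n$. Since affine maps of $\Cb^d$ are biholomorphisms, hence Kobayashi isometries, and $A_{\xi_n,\epsilon_n}(q_{\xi_n,\epsilon_n})=0$, we have
\begin{align*}
K_{U_n}(0,\wt{\gamma}_n)=K_\Omega(q_{\xi_n,\epsilon_n},\gamma_n)>n.
\end{align*}
By Proposition~\ref{prop:normalizing_maps_visual_metric} each $U_n$ lies in the compact set $\Kb_d(r_0)$ (Proposition~\ref{prop:Kdr_compact}), so after passing to a subsequence $U_n\rightarrow U_\infty$ in $\Xb_d$ with $U_\infty\in\Kb_d(r_0)$; in particular $0\in U_n$ for all $n$ and $0\in U_\infty$. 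Since $(\Omega,K_\Omega)$ is Gromov hyperbolic, Theorem~\ref{thm:prior_nec} shows every domain in $\overline{\Aff(\Cb^d)\cdot\Omega}\cap\Xb_d$ has simple boundary, so Corollary~\ref{cor:simple_bd_implies_visibility} shows $U_n$ is a visibility sequence.

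Next I would extract limits of the two endpoints. The points $\wt{\gamma}_n(a_n)$ lie in the compact set $\overline{\Bb_d(e_1;r)}$, while the points $\wt{\gamma}_n(b_n)$ lie in $\Cb^d$ with $\norm{\wt{\gamma}_n(b_n)-e_1}\geq 2r$; so after passing to a further subsequence $\wt{\gamma}_n(a_n)\rightarrow p$ with $\norm{p-e_1}\leq r$, and $\wt{\gamma}_n(b_n)\rightarrow q$ in $\Cb^d\cup\{\infty\}$ with either $q=\infty$ or $\norm{q-e_1}\geq 2r$. In either case $p\neq q$. Thus the geodesics $\wt{\gamma}_n:[a_n,b_n]\rightarrow U_n$ satisfy the hypotheses of the visibility condition (Definition~\ref{defn:visibility_seqn}), so there exist $n_j\rightarrow\infty$ and $T_j\in[a_{n_j},b_{n_j}]$ such that $\wt{\gamma}_{n_j}(\cdot+T_j)$ converges locally uniformly to a geodesic $\sigma:I\rightarrow U_\infty$, where $I\subset\Rb$ is the open interval with endpoints $\lim_j(a_{n_j}-T_j)\leq 0$ and $\lim_j(b_{n_j}-T_j)\geq 0$ (so $0\in\overline{I}$).

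Finally I would derive the contradiction. Fix any $s_0\in I$. Since $s_0$ is strictly between the two endpoints of $I$, we have $s_0+T_j\in[a_{n_j},b_{n_j}]$ for all large $j$, so $\wt{\gamma}_{n_j}(s_0+T_j)$ is a point of the geodesic $\wt{\gamma}_{n_j}$ and therefore
\begin{align*}
K_{U_{n_j}}(0,\wt{\gamma}_{n_j})\leq K_{U_{n_j}}\left(0,\wt{\gamma}_{n_j}(s_0+T_j)\right).
\end{align*}
On the other hand $\wt{\gamma}_{n_j}(s_0+T_j)\rightarrow\sigma(s_0)\in U_\infty$, so Proposition~\ref{prop:convergence_of_kob} gives $K_{U_{n_j}}(0,\wt{\gamma}_{n_j}(s_0+T_j))\rightarrow K_{U_\infty}(0,\sigma(s_0))<\infty$, contradicting $K_{U_{n_j}}(0,\wt{\gamma}_{n_j})>n_j\rightarrow\infty$. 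Hence $D_2(r)$ exists.

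The argument is a direct repetition of the compactness/visibility scheme used for Lemma~\ref{lem:D1}, and I do not anticipate a serious obstacle. The only point requiring a moment's care is verifying that the two endpoint limits $p$ and $q$ are genuinely distinct in $\Cb^d\cup\{\infty\}$ — this is precisely what the gap between the radii $r$ and $2r$ in the hypotheses on $\gamma_n$ provides — so that the visibility condition may legitimately be invoked.
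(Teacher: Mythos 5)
Your proposal is correct and follows essentially the same route as the paper's proof: argue by contradiction, normalize by $A_{\xi_n,\epsilon_n}$ into the compact family $\Kb_d(r_0)$, invoke Corollary~\ref{cor:simple_bd_implies_visibility} to get a visibility sequence, extract a limit geodesic, and contradict $K_\Omega(q_{\xi_n,\epsilon_n},\gamma_n)\rightarrow\infty$ via Proposition~\ref{prop:convergence_of_kob}. Your explicit verification that the endpoint limits $p$ and $q$ are distinct (using the gap between the radii $r$ and $2r$) is exactly the point the paper handles by noting $\norm{\wh{\gamma}_n(a_n)-\wh{\gamma}_n(b_n)}>r$.
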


\begin{remark} This lemma says that a geodesic in $A_{\xi,\epsilon}\Omega$ that starts close to $e_1$ and ends far from $e_1$ must pass close to $0=A_{\xi,\epsilon}(q_{\xi,\epsilon})$. \end{remark}

\begin{proof} 
Suppose for a contradiction that such a $D_2(r)$ does not exist for some $r > 0$. Then for each $n \in \Nb$ there exist $\xi_n \in \partial \Omega \cap \Bb_d(0;R)$, $\epsilon_n \in (0,1)$, and a geodesic $\gamma_n: [a_n,b_n] \rightarrow \Omega$ where $\gamma_n(a_n) \in A^{-1}_{\xi_n,\epsilon_n}\Bb_d(e_1;r)$, $\gamma_n(b_n) \notin A^{-1}_{\xi,\epsilon_n} \Bb_d(e_1;2r)$, and
\begin{align*}
K_\Omega(q_{\xi_n,\epsilon_n}, \gamma_n) \geq n. 
\end{align*}
By Proposition~\ref{prop:normalizing_maps_visual_metric} each $\Omega_n : =A_{\xi_n, \epsilon_n}\Omega$ is in $\Kb_d(r_0)$, so by passing to a subsequence we can suppose that $\Omega_n$ converges to some $\Omega_\infty \in \Kb_d(r_0)$. Then Corollary~\ref{cor:simple_bd_implies_visibility} implies that $\Omega_n$ is a visibility sequence. 

Consider the geodesics $\wh{\gamma}_n: = A_{\xi_n, \epsilon_n}\gamma_n$. Then $\wh{\gamma}_n(a_n) \in \Bb_d(e_1;r)$ and $\wh{\gamma}_n(b_n) \notin \Bb_d(e_1;2r)$. So 
\begin{align*}
\norm{\wh{\gamma}_n(a_n)-\wh{\gamma}_n(b_n)} > r.
\end{align*}
Since $\Omega_n$ is a visibility sequence, we can pass to a subsequence and find $T_{n} \in [a_n, b_n]$ such that  $\wh{\gamma}_{n}(T_{n}) \rightarrow z \in \Omega_\infty$. Then 
\begin{align*}
\infty > K_{\Omega_\infty}(0, z) & = \lim_{n \rightarrow \infty} K_{\Omega_n}(0, \wh{\gamma}_n(T_n)) = \lim_{n \rightarrow \infty} K_{\Omega}(q_{\xi_n,\epsilon_n}, \gamma_n(T_n)) \\
& \geq \lim_{n \rightarrow \infty} K_\Omega(q_{\xi_n,\epsilon_n}, \gamma_n) = \infty. 
\end{align*}
So we have a contradiction. Hence for each $r > 0$, there exists some $D_2(r) > 0$ with the desired property. 
\end{proof}

\begin{lemma}\label{lem:D3} For any $r > 0$ there exists $D_3(r) < \infty$ such that:  if $\xi \in \partial\Omega \cap \Bb_d(0;R)$, $\epsilon \in (0,1)$, and $\gamma: [a,b] \rightarrow \Omega$ is a geodesic with $\gamma(a) \in A^{-1}_{\xi,\epsilon}\Bb_d(e_1;r)$ and $\gamma(b) = z_0$, then
\begin{align*}
K_\Omega(q_{\xi,\epsilon}, \gamma) \leq D_3(r).
\end{align*}
Moreover, we can assume that $D_3$ is an increasing function. 
\end{lemma}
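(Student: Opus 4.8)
The plan is to deduce the lemma from Lemma~\ref{lem:D2} together with the distance estimate in Lemma~\ref{lem:dist_est_normalizing_maps}, with no further compactness argument needed. First I would reformulate the statement: it suffices to prove that
\[
M(r):=\sup\left\{ K_\Omega(q_{\xi,\epsilon},\gamma)\right\}<\infty
\]
for every $r>0$, where the supremum is taken over all $\xi\in\partial\Omega\cap\Bb_d(0;R)$, all $\epsilon\in(0,1)$, and all geodesics $\gamma\colon[a,b]\to\Omega$ with $\gamma(a)\in A^{-1}_{\xi,\epsilon}\Bb_d(e_1;r)$ and $\gamma(b)=z_0$. Indeed, the function $M$ is automatically non-decreasing (the supremum is over a larger family of geodesics as $r$ grows) and by construction satisfies the conclusion of the lemma, so one may take $D_3=M$.

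To bound $M(r)$, fix such $\xi$, $\epsilon$, $\gamma$ and split into two cases according to whether or not $z_0\in A^{-1}_{\xi,\epsilon}\Bb_d(e_1;2r)$. If $z_0\notin A^{-1}_{\xi,\epsilon}\Bb_d(e_1;2r)$, then, since $\gamma(b)=z_0$, the geodesic $\gamma$ satisfies exactly the hypotheses of Lemma~\ref{lem:D2}, and hence $K_\Omega(q_{\xi,\epsilon},\gamma)\le D_2(r)$. If instead $z_0\in A^{-1}_{\xi,\epsilon}\Bb_d(e_1;2r)$, I would compute $A_{\xi,\epsilon}(z_0)$ explicitly: writing $q_{\xi,\epsilon}=z_0+t(\xi-z_0)$ with $t\in[0,1)$, the linear part $g$ of $A_{\xi,\epsilon}$ sends $(1-t)(\xi-z_0)$ to $e_1$ (because $A_{\xi,\epsilon}(q_{\xi,\epsilon})=0$ and $A_{\xi,\epsilon}(\xi)=e_1$), so $A_{\xi,\epsilon}(z_0)=\tfrac{-t}{1-t}e_1$ and $\norm{A_{\xi,\epsilon}(z_0)-e_1}=\tfrac{1}{1-t}$. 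The case hypothesis then forces $\tfrac{1}{1-t}<2r$, whence
\[
\norm{q_{\xi,\epsilon}-\xi}=(1-t)\norm{\xi-z_0}>\tfrac{1}{2r}\,\norm{\xi-z_0}\ge\tfrac{\delta_\Omega(z_0)}{2r},
\]
using $\norm{\xi-z_0}\ge\delta_\Omega(z_0)$. Since $q_{\xi,\epsilon}\in[z_0,\xi)$ and $\xi\in\partial\Omega\cap\Bb_d(0;R)$, Lemma~\ref{lem:dist_est_normalizing_maps} gives $K_\Omega(z_0,q_{\xi,\epsilon})\le\beta+\tfrac{\alpha}{2}\log\tfrac{2r}{\delta_\Omega(z_0)}$, and therefore
\[
K_\Omega(q_{\xi,\epsilon},\gamma)\le K_\Omega(q_{\xi,\epsilon},\gamma(b))=K_\Omega(q_{\xi,\epsilon},z_0)\le\beta+\tfrac{\alpha}{2}\log\tfrac{2r}{\delta_\Omega(z_0)}.
\]

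Combining the two cases gives $M(r)\le\max\bigl\{D_2(r),\ \beta+\tfrac{\alpha}{2}\log\tfrac{2r}{\delta_\Omega(z_0)}\bigr\}<\infty$, which finishes the proof. I do not expect a genuine obstacle here: the only point one has to notice is the explicit description of $A_{\xi,\epsilon}$ on the complex line through $z_0$ and $\xi$, which converts the geometric statement ``$z_0$ lands near $e_1$ after normalizing'' into the quantitative statement ``$q_{\xi,\epsilon}$ stays bounded away from $\xi$'', and hence into a bound on $K_\Omega(z_0,q_{\xi,\epsilon})$ depending only on $r$; everything else is immediate from Lemma~\ref{lem:D2} and Lemma~\ref{lem:dist_est_normalizing_maps}. (Note in particular that the second case can occur only when $r>1/2$, since $\norm{A_{\xi,\epsilon}(z_0)-e_1}=\tfrac{1}{1-t}\ge 1$.)
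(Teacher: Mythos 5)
Your proposal is correct and follows essentially the same route as the paper: the same case split on whether $z_0$ lands in $A^{-1}_{\xi,\epsilon}\Bb_d(e_1;2r)$, with Lemma~\ref{lem:D2} handling the first case and Lemma~\ref{lem:dist_est_normalizing_maps} the second. The only difference is cosmetic — you compute $A_{\xi,\epsilon}(z_0)=\tfrac{-t}{1-t}e_1$ exactly rather than bounding $\norm{A_{\xi,\epsilon}(z_0)}$ by $2r+1$ via the triangle inequality, which yields the marginally sharper constant $2r$ in place of the paper's $2r+2$.
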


\begin{remark} This lemma is similar to Lemma~\ref{lem:D2}, however the increasing condition on $D_3$ (which may not hold for $D_2$) will be important for later estimates. 
\end{remark}

\begin{proof} Define $\wt{D}_3(r) \in (0,\infty]$ to be the smallest number satisfying the first part of the lemma (since the inequality is not strict, there does indeed exist a smallest number). 

We claim that $\wt{D}_3(r) < \infty$ for every $r > 0$. Suppose that $\xi \in \partial\Omega \cap \Bb_d(0;R)$, $\epsilon \in (0,1)$, and $\gamma: [a,b] \rightarrow \Omega$ is a geodesic with $\gamma(a) \in A^{-1}_{\xi,\epsilon}\Bb_d(e_1;r)$ and $\gamma(b) = z_0$. 

If $z_0 \notin A^{-1}_{\xi,\epsilon}\Bb_d(e_1;2r)$, then 
\begin{align*}
K_\Omega(q_{\xi,\epsilon}, \gamma) \leq D_2(r)
\end{align*}
by Lemma~\ref{lem:D2}. 

Next consider the case when $z_0 \in A^{-1}_{\xi,\epsilon}\Bb_d(e_1;2r)$. Since $A_{\xi,\epsilon}(q_{\xi,\epsilon}) = 0$, $A_{\xi,\epsilon}(\xi) = e_1$, and $q_{\xi,\epsilon} \in (z_0, \xi)$ we see that 
\begin{align*}
2r+1 \geq \norm{A_{\xi,\epsilon}(z_0)} = \frac{1}{\norm{q_{\xi,\epsilon}-\xi}}\norm{q_{\xi,\epsilon}-z_0} \geq \frac{1}{\norm{q_{\xi,\epsilon}-\xi}}\left( \norm{\xi-z_0}-\norm{\xi-q_{\xi,\epsilon}}\right).
\end{align*}
So 
\begin{align*}
\norm{q_{\xi,\epsilon}-\xi} \geq \frac{1}{2r+2} \norm{\xi-z_0} \geq \frac{\delta_\Omega(z_0)}{2r+2}.
\end{align*}
Then by Lemma~\ref{lem:dist_est_normalizing_maps} 
\begin{align*}
K_\Omega(q_{\xi,\epsilon}, \gamma) \leq K_\Omega(q_{\xi,\epsilon}, z_0) \leq \beta + \frac{\alpha}{2} \log \frac{1}{\norm{q_{\xi,\epsilon}-\xi}} \leq \beta + \frac{\alpha}{2} \log  \frac{2r+2}{\delta_\Omega(z_0)}.
\end{align*}

Thus 
\begin{align*}
\wt{D}_3(r) \leq \max\left\{ \beta + \frac{\alpha}{2} \log  \frac{2r+2}{\delta_\Omega(z_0)}, D_2(r) \right\}
\end{align*}
is finite. 

Finally, by definition $\wt{D}_3$ is non-decreasing and so $D_3(r) := \wt{D}_3(r) + r$ is increasing and satisfies the lemma.
\end{proof}

For $r > 0$, let $\wt{\tau}_1(r) \in (0,\infty]$ be the infimum of all numbers $\tau > 0$ such that 
\begin{align*}
\overline{\Omega} \cap A^{-1}_{\xi,\epsilon} \Bb(e_1;r) \subset V_{z_0}(\xi; \tau \epsilon)
\end{align*}
for all $\xi\in \partial \Omega \cap \Bb_d(0;R)$ and $\epsilon \in (0,1)$. Then define $\tau_1(r) : = r + \wt{\tau}_1(r)$. Notice that 
\begin{align*}
\overline{\Omega} \cap A^{-1}_{\xi,\epsilon} \Bb(e_1;r) \subset V_{z_0}(\xi; \tau_1(r) \epsilon)
\end{align*}
for all $\xi\in \partial \Omega \cap \Bb_d(0;R)$ and $\epsilon \in (0,1)$. 

\begin{lemma} $\tau_1(r) < \infty$ for every $r > 0$, $\tau_1$ is increasing,  and $\lim_{r \searrow 0} \tau_1(r) = 0$. \end{lemma}

\begin{proof} We first prove that $\tau_1(r) < \infty$ for every $r > 0$. Fix $r>0$, $\xi\in \partial \Omega \cap \Bb_d(0;R)$, $\epsilon \in (0,1)$, and $y \in \overline{\Omega} \cap A^{-1}_{\xi,\epsilon} \Bb_d(e_1;r)$. Let $\gamma: (-\infty, b) \rightarrow \Omega$ be a geodesic such that 
\begin{align*}
\lim_{t \rightarrow -\infty} \gamma(t) = \xi \text{ and } \lim_{t \rightarrow b} \gamma(t) = y.
\end{align*}
(notice that $b < \infty$ when $y \in \Omega$ and $b=\infty$ when $y \in \partial \Omega$). Then by Lemma~\ref{lem:D1}
\begin{align*}
\gamma \subset A^{-1}_{\xi,\epsilon} \Bb_d(e_1;D_1(r)).
\end{align*} 
Let $T = K_\Omega(z_0, \gamma)$. Then there exists a geodesic $\sigma:[0,T] \rightarrow \Omega$ with $\sigma(0) = z_0$ and $\sigma(T) \in \gamma$. Then 
\begin{align}
\label{eq:inclusion_sigma}
\sigma(T) \in \gamma \subset A^{-1}_{\xi,\epsilon}\Bb_d(e_1;D_1(r)). 
\end{align}
Hence, if $D_4(r) = D_3(D_1(r))$, then by Lemma~\ref{lem:D3}
\begin{align*}
K_\Omega(\sigma(t_0), q_{\xi,\epsilon}) \leq D_4(r)
\end{align*}
for some $t_0 \in [0,T]$. Then
\begin{align*}
K_\Omega(z_0, \gamma)& = K_\Omega(z_0,\sigma(t_0))+K_\Omega(\sigma(t_0), \sigma(T)) \\
& \geq K_\Omega(z_0,q_{\xi,\epsilon}) +K_\Omega(q_{\xi,\epsilon}, \sigma(T)) - 2D_4(r) \\
& =\frac{1}{\lambda}\log\frac{1}{\epsilon}+K_\Omega(q_{\xi,\epsilon}, \sigma(T)) - 2D_4(r).
\end{align*}
Thus 
\begin{align*}
d_{z_0}(\xi,y) 
&\leq C_v \exp\left( -\lambda K_\Omega(z_0, \gamma) \right) \leq C_v\epsilon \exp\left(2\lambda D_4(r) \right) \exp\left(-\lambda K_\Omega(q_{\xi,\epsilon}, \sigma(T)) \right).
\end{align*}

Next consider the complex hyperplane $H:=e_1+\Span_{\Cb}\{e_2,\dots, e_d\}$. Then $H \cap A_{\xi,\epsilon}\Omega = \emptyset$ since $A_{\xi,\epsilon}\Omega \in \Kb_d(r_0)$. Then Lemma~\ref{lem:hyperplanes} and Equation~\eqref{eq:inclusion_sigma} imply that
\begin{align*}
K_\Omega(q_{\xi,\epsilon}, \sigma(T)) = K_{A_{\xi,\epsilon}\Omega}(A_{\xi,\epsilon}q_{\xi,\epsilon}, A_{\xi,\epsilon} \sigma(T)) \geq  \frac{1}{2} \log \frac{d_{\Euc}(0,H)}{d_{\Euc}(A_{\xi,\epsilon}\sigma(T), H)} \geq \frac{1}{2} \log \frac{1}{D_1(r)}.
\end{align*}
Then, 
\begin{align*}
d_{z_0}(\xi,y) \leq C_v \epsilon \exp\left(2\lambda D_4(r) \right) D_1(r)^{\lambda/2}.
\end{align*}

Since $\xi\in \partial \Omega \cap \Bb_d(0;R)$, $\epsilon \in (0,1)$, and $y \in \overline{\Omega} \cap A^{-1}_{\xi,\epsilon} \Bb_d(e_1;r)$ were arbitrary we have
\begin{align*}
\tau_1(r) \leq r + C_v \exp\left(2\lambda D_4(r) \right) D_1(r)^{\lambda/2} < \infty.
\end{align*}
This proves the first assertion.

By definition, $\wt{\tau}_1$ is non-decreasing and so $\tau_1$ is increasing. Thus the second assertion is true. 

To prove the last assertion, first notice that $D_4$ is increasing by Lemmas~\ref{lem:D3} and~\ref{lem:D1part2}. Then Lemma~\ref{lem:D1part2} implies that
\begin{equation*}
\lim_{r \searrow 0} \tau_1(r)  \leq C_v \exp\left(2\lambda D_4(1) \right) \lim_{r \searrow 0} D_1(r)^{\lambda/2}=0. \qedhere
\end{equation*}
\end{proof}

Next for $r > 0$ let $\tau_2(r) \in (0,\infty]$ be the smallest number such that 
\begin{align*}
V_{z_0}(\xi; r\epsilon) \subset \overline{\Omega} \cap A^{-1}_{\xi,\epsilon} \Bb_d(e_1;\tau_2(r))
\end{align*}
for all $\xi\in \partial \Omega \cap \Bb_d(0;R)$ and $\epsilon \in (0,\epsilon_0/r) \cap (0,1)$. Observation~\ref{obs:the sets V are open} and Equation~\eqref{eqn:Vz0 is in C} imply that  $V_{z_0}(\xi; r\epsilon)$ is an open set in $\overline{\Omega}$ and hence $\tau_2(r)$ exists.

\begin{lemma}\label{lem:limit_of_tau2} $\lim_{r \searrow 0} \tau_2(r) = 0$. \end{lemma}

\begin{proof} Suppose not, then there exists $\tau_0 > 0$ such that: for every $n \in \Nb$ there exist $\xi_n\in \partial \Omega \cap \Bb_d(0;R)$, $\epsilon_n \in (0,1)$, and $y_n \in V_{z_0}(\xi_n; \frac{1}{n}\epsilon_n)$ with
\begin{align*}
\tau_0 \leq \norm{A_{\xi_n,\epsilon_n}y_n-e_1}.
\end{align*}
Let $\gamma_n : (-\infty, b_n) \rightarrow \Omega$ be a geodesic with 
\begin{align*}
\lim_{t \rightarrow -\infty} \gamma_n(t) = \xi_n \text{ and } \lim_{t \rightarrow b_n} \gamma_n(t) = y_n
\end{align*}
(notice that $b_n < \infty$ when $y_n \in \Omega$ and $b_n=\infty$ when $y_n \in \partial \Omega$). By Lemma~\ref{lem:D2}, there exists $t_n \in (-\infty, b_n)$ such that 
\begin{align*}
K_\Omega(q_{\xi_n,\epsilon_n}, \gamma_n(t_n)) \leq D_2(\tau_0/2).
\end{align*}
Then
\begin{align*}
K_\Omega(z_0, \gamma_n) \leq K_\Omega(z_0, q_{\xi_n,\epsilon_n}) + K_\Omega(q_{\xi_n,\epsilon_n}, \gamma_n(t_n)) \leq \frac{1}{\lambda} \log \frac{1}{\epsilon_n} + D_2(\tau_0/2).
\end{align*}
So 
\begin{align*}
\frac{1}{n} \epsilon_n &\geq d_{z_0}(\xi_n, y_n) \geq \frac{1}{C_v} \exp( -\lambda K_\Omega(z_0, \gamma_n)) \\
& \geq \frac{1}{C_v} \exp(-\lambda D_2(\tau_0/2))\epsilon_n.
\end{align*}
Then sending $n \rightarrow \infty$ yields a contradiction. Thus $\lim_{r \searrow 0} \tau_2(r) = 0$.
\end{proof} 

\begin{lemma}For any $r> 0$,  $\sup_{s \in (0,r]} \tau_2(s) < \infty$.\end{lemma} 

\begin{proof} Suppose for a contradiction that $\sup_{s \in (0,r]} \tau_2(s) = \infty$ for some $r > 0$. Then for every $n \in\Nb$ there exist $s_n \in (0,r]$, $\xi_n \in \partial \Omega \cap \Bb_d(0;R)$, $\epsilon_n \in (0,\epsilon_0/s_n) \cap (0,1)$, and 
$$
y_n \in V_{z_0}(\xi_n; s_n\epsilon_n) \setminus A^{-1}_{\xi_n,\epsilon_n} \Bb_d(e_1;n+1).
$$
Notice that $y_n \in \overline{\Omega} \subset \Cb^d$ by our choice of $\epsilon_0$, see Equation~\eqref{eqn:Vz0 is in C}. Also
\begin{align*}
\norm{A_{\xi_n,\epsilon_n} y_n } \geq \norm{A_{\xi_n,\epsilon_n} y_n -e_1}-\norm{e_1} \geq n.
\end{align*}

By passing to a subsequence we can suppose that $s_n \rightarrow s_\infty$, $\epsilon_n \rightarrow \epsilon_\infty$, and $\Omega_n:=A_{\xi_n,\epsilon_n}\Omega$ converges to some $\Omega_\infty$ in $\Kb_d(r_0)$. By Lemma~\ref{lem:limit_of_tau2} we must have $s_\infty \neq 0$ and so 
$$
\epsilon_\infty \leq \frac{\epsilon_0}{s_\infty}.
$$
Also, Corollary~\ref{cor:simple_bd_implies_visibility} implies that $\Omega_n$ is a visibility sequence. 

We consider two cases. 

\medskip

\noindent \textbf{Case 1:} $\epsilon_\infty > 0$. Then 
\begin{align*}
\sup_{n \in \Nb} K_\Omega(z_0, q_{\xi_n,\epsilon_n}) =\sup_{n \in \Nb} \frac{1}{\lambda} \log \frac{1}{\epsilon_n}< \infty.
\end{align*}
So we can pass to a subsequence such that $q_{\xi_n,\epsilon_n} \rightarrow q \in \Omega$. Then $(\Omega, q_{\xi_n,\epsilon_n}) \rightarrow (\Omega,q)$ and $A_{\xi_n,\epsilon_n}(\Omega, q_{\xi_n,\epsilon_n}) \rightarrow (\Omega_\infty,0)$. So by Proposition~\ref{prop:limit_domain_depends_on_sequence}, we can pass to a subsequence where $A_{\xi_n,\epsilon_n} \rightarrow A \in \Aff(\Cb^d)$. Then
\begin{align*}
\lim_{n \rightarrow \infty} \norm{y_n }=\lim_{n \rightarrow \infty} \norm{A^{-1}A_{\xi_n,\epsilon_n} y_n } =\infty.
\end{align*}
By passing to another subsequence we can suppose that $\xi_n \rightarrow \xi \in \partial \Omega \cap \overline{\Bb_d(0;R)}$ and $y_n \rightarrow \eta \in \overline{\Omega}^{\rm End} \setminus \Cb^d$. Then Proposition~\ref{prop:continuity of visual metric} implies that
$$
d_{z_0}(\eta, \xi) = \lim_{n \rightarrow \infty} d_{z_0}(y_n, \xi_n) \leq s_\infty \epsilon_\infty \leq \epsilon_0
$$
which contradicts the definition of $\epsilon_0$, see Equation~\eqref{eqn:Vz0 is in C}.

\medskip

\noindent \textbf{Case 2:} $\epsilon_\infty = 0$. Then 
\begin{align}
\label{eq:epsilon_n_goes_to_zero}
\lim_{n \rightarrow \infty} \norm{\xi_n-q_{\xi_n,\epsilon_n}} = 0.
\end{align}
We first show that $\overline{\Omega}_\infty$ is one-ended. By construction $A_{\xi_n,\epsilon_n}(z_0) = t_n e_1$ for some $t_n \leq 0$. Since $z_0$, $q_{\xi_n,\epsilon_n}$, and $\xi_n$ are co-linear 
\begin{align*}
-t_n & = \abs{t_n} =\norm{A_{\xi_n,\epsilon_n}(z_0) - A_{\xi_n,\epsilon_n}(q_{\xi_n,\epsilon_n})} =  \frac{\norm{z_0 - q_{\xi_n,\epsilon_n}}}{\norm{\xi_n-q_{\xi_n,\epsilon_n}}}.
\end{align*}
Then, since 
\begin{align*}
\liminf_{n \rightarrow \infty}  & \norm{z_0-q_{\xi_n,\epsilon_n}} \geq \liminf_{n \rightarrow \infty}  \norm{z_0-\xi_n} - \norm{\xi_n-q_{\xi_n,\epsilon_n}}\\
& =\liminf_{n \rightarrow \infty}  \norm{z_0-\xi_n} \geq \delta_\Omega(z_0),
\end{align*} 
Equation~\eqref{eq:epsilon_n_goes_to_zero} implies that $t_n \rightarrow -\infty$. So $-e_1 \in {\rm AC}(\Omega_\infty)$. Since $\Omega_\infty \in \Kb_d(r_0)$, we have 
\begin{align*}
\Big(e_1 + \Span_{\Cb}\{e_2,\dots, e_d\} \Big)\cap \Omega_\infty = \emptyset
\end{align*}
and so $e_1 \notin {\rm AC}(\Omega_\infty)$. Thus $\overline{\Omega}_\infty$ is one-ended by Observation~\ref{obs:asymptotic_cone_3}. 

Now let $\gamma_n : (-\infty, b_n) \rightarrow \Omega$ be a geodesic with 
\begin{align*}
\lim_{t \rightarrow -\infty} \gamma_n(t) = \xi \text{ and } \lim_{t \rightarrow b_n} \gamma_n(t) = y_n.
\end{align*}
(notice that $b_n < \infty$ when $y_n \in \Omega$ and $b_n=\infty$ when $y_n \in \partial \Omega$). Next consider the geodesics $\wh{\gamma}_n = A_{\xi_n,\epsilon_n}\gamma_n:(-\infty, b_n) \rightarrow \Omega_n$. Since $\Omega_n$ is a visibility sequence, after passing to a subsequence there exists a sequence $T_n \in (-\infty, b_n)$ such that $\wh{\gamma}_n(T_n)$ converges to a point in $\Omega_\infty$. Passing to a further subsequence, we can assume that
$$
b : = \lim_{n \rightarrow \infty} b_n-T_n \in [0,\infty]
$$ 
exists and, by Observation~\ref{obs:AA_for_geod}, that $\wh{\gamma}_n(\cdot +T_n)$ converges locally uniformly to a geodesic $\wh{\gamma} : (-\infty, b] \cap \Rb \rightarrow \Omega_\infty$.  Since 
$$
\lim_{n \rightarrow \infty} \norm{A_{\xi_n,\epsilon_n}y_n}=\infty,
$$
the ``in particular'' part of Observation~\ref{obs:AA_for_geod} implies that $b=\infty$. Then, by Lemma~\ref{lem:limits_of_geod}
\begin{align*}
\lim_{t \rightarrow \infty} \norm{\wh{\gamma}(t)} = \lim_{n \rightarrow \infty} \norm{A_{\xi_n,\epsilon_n}y_n}=\infty. 
\end{align*}

Next let $\sigma_n :[0,c_n] \rightarrow \Omega$ be a sequence of geodesics with $\sigma_n(c_n) = z_0$ and $\sigma_n(0) = q_{\xi_n,\epsilon_n}$. Notice that 
\begin{align*}
c_n =K_\Omega(q_{\xi_n,\epsilon_n}, z_0) =  \frac{1}{\lambda} \log \frac{1}{\epsilon_n} \rightarrow \infty.
\end{align*}
Consider the geodesic $\wh{\sigma}_n = A_{\xi_n,\epsilon_n} \sigma_n : [0,c_n] \rightarrow \Omega_n$. Then $\wh{\sigma}_n(0) = 0$, so using Observation~\ref{obs:AA_for_geod} we can pass to a subsequence such that $\wh{\sigma}_n$ converges locally uniformly to a geodesic $\wh{\sigma} : [0,\infty) \rightarrow \Omega_\infty$.  By Lemma~\ref{lem:limits_of_geod}
 \begin{align*}
\lim_{t \rightarrow \infty} \norm{\wh{\sigma}(t)} =  \lim_{n \rightarrow \infty} \norm{\wh{\sigma}_n(c_n) }= \lim_{n \rightarrow \infty} \norm{A_{\xi_n, \epsilon_n} z_0}=  \lim_{n \rightarrow \infty} \abs{t_n} = \infty. 
\end{align*}

Since 
\begin{align*}
\Omega_\infty \in \overline{\Aff(\Cb^d) \cdot \Omega} \cap \Xb_d,
\end{align*}
Theorem~\ref{thm:prior_nec} implies that $(\Omega_\infty, K_{\Omega_\infty})$ is Gromov hyperbolic. Then, since $\overline{\Omega}_\infty$ is one-ended, Theorem~\ref{thm:compactification} implies that $\wh{\gamma}|_{[0,\infty)}$ and $\wh{\sigma}$ are in the same equivalence class of rays in $\partial_G \Omega_\infty$. So
\begin{align*}
M:=\sup_{t \geq 0} K_{\Omega_\infty}( \wh{\sigma}(t), \wh{\gamma}(t)) < \infty.
\end{align*}

Now fix some 
\begin{align*}
T > M+1 + \frac{1}{\lambda} \log ( r  C_v).
\end{align*}
Notice that $T+T_n < b_n$ for $n$ sufficiently large since $b=\infty$. Then for $n$ sufficiently large, Proposition~\ref{prop:convergence_of_kob} implies that
\begin{align*}
K_\Omega(\sigma_n(T),& \gamma_n(T+T_n)) = K_{\Omega_n}( \wh{\sigma}_n(T), \wh{\gamma}_n(T+T_n)) \\
& \leq 1+K_{\Omega_\infty}( \wh{\sigma}(T), \wh{\gamma}(T))\leq 1+ M.
\end{align*}
Then for $n$ sufficiently large
\begin{align*}
K_\Omega(z_0, \gamma_n) & \leq K_\Omega(z_0, \gamma_n(T+T_n))\leq K_\Omega(z_0, \sigma_n(T)) + K_\Omega(\sigma_n(T), \gamma_n(T+T_n))\\
&  \leq c_n - T + 1+M  = \frac{1}{\lambda} \log \frac{1}{\epsilon_n} - T + 1+M
\end{align*}
and
\begin{align*}
d_{z_0}(\xi_n, y_n) 
&\geq \frac{1}{C_v} \exp ( -\lambda K_\Omega(z_0, \gamma_n)) \geq \frac{\epsilon_n}{C_v}\exp( \lambda T-\lambda M-\lambda)  \\
& > r \epsilon_n.
\end{align*}
Thus $y_n \notin V_{z_0}(\xi_n; r\epsilon_n)$ for $n$ sufficiently large and hence we have a contradiction.

\end{proof}

Finally we can finish the proof of Proposition~\ref{prop:comparison} by setting 
\begin{align*}
\tau(r)=\tau_1(r)+\sup_{s \in (0,r]} \tau_2(s).
\end{align*}

\section{Plurisubharmonic functions on normalized domains}\label{sec:psh_on_normal_domains}

In this section we construct special plurisubharmonic functions on normalized domains. This construction is similar to the proof of~\cite[Proposition 3.1]{M1994}.

\begin{proposition}\label{prop:plurisubharmonic_on_normal_domain} For any $d \in \Nb$ and $a,r >0$ there exist $C,b > 0$ such that: if $\Omega \in \Kb_d(r)$, then there exists a $C^\infty$ plurisubharmonic function $F: \Omega \rightarrow [0,1]$ with
\begin{align*}
i\partial \bar{\partial} F(z) \geq C i\partial \bar{\partial} \norm{z}^2 \text{ on } \Bb_d(e_1;a) \cap \Omega 
\end{align*}
and 
\begin{align*}
\supp(F) \subset \Bb_d(e_1;b) \cap \Omega.
\end{align*}
\end{proposition}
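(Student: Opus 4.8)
The plan is to construct $F$ as a sum of two pieces: a "barrier" piece that is plurisubharmonic everywhere on $\Omega$ and has large Hessian near $e_1$, multiplied by a cutoff so that its support stays inside a ball $\Bb_d(e_1;b)$, plus correction terms that absorb the bad contributions produced by differentiating the cutoff. The key geometric input is that $\Omega \in \Kb_d(r)$, so the complex hyperplane $H = e_1 + \Span_{\Cb}\{e_2,\dots,e_d\}$ satisfies $H \cap \Omega = \emptyset$, and moreover $r\Db\cdot e_1 \subset \Omega$ while $\Db\cdot e_j \subset \Omega$ for $j \geq 2$. Write $z = (z_1, z')$ with $z' = (z_2,\dots,z_d)$. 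Since $\Omega$ lies on one side of $H$, the function $\Real(1 - z_1)$ is positive on $\Omega$; this is the linear functional we will use to build a bounded plurisubharmonic function with controlled complex Hessian, following McNeal's strategy in~\cite[Proposition 3.1]{M1994}.

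First I would set $u(z) = -\log\Real(1-z_1) + \abs{z_1}^2 + \abs{z'}^2$, or a variant thereof; the point is that $-\log\Real(1-z_1)$ is plurisubharmonic on $\{\Real z_1 < 1\} \supset \Omega$ and, near $e_1$ where $\Real(1-z_1)$ is small, its complex Hessian in the $z_1$ direction blows up, while the $\abs{z_1}^2 + \abs{z'}^2$ term supplies a definite positive Hessian $\geq c\, i\partial\overline\partial\norm{z}^2$ uniformly. Since the domains in $\Kb_d(r)$ are uniformly bounded near $e_1$ in the relevant sense and $\Real(1-z_1)$ is bounded below on $\Bb_d(e_1;a)\cap\Omega$ away from $H$... actually it is not bounded below there, which is exactly why we get a large Hessian but must be careful that $u$ stays bounded — so instead I would not use the logarithm directly but rather compose with a bounded increasing concave function, or simply use $u(z) = \abs{z_1}^2 + \abs{z'}^2 - A\,\Real(1-z_1)$ type expressions whose real parts control a cutoff. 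The cleanest route: take $\chi$ a smooth cutoff equal to $1$ on $\Bb_d(e_1;2a)$ and supported in $\Bb_d(e_1;b)$, and set $F = \chi(z)\cdot\big(\abs{z_1}^2+\abs{z'}^2\big) + K\,\eta(\Real(1-z_1))$ where $\eta$ is a bounded convex-in-the-appropriate-sense function and $K$ is large; then compute $i\partial\overline\partial F$, observing that the terms where derivatives land on $\chi$ are supported in the annulus $\Bb_d(e_1;b)\setminus\Bb_d(e_1;2a)$, on which $\Real(1-z_1) \geq c(a,b) > 0$ uniformly over $\Kb_d(r)$ by convexity and the containment $r\Db\cdot e_1\subset\Omega$; there the second term, with $K$ chosen depending on $a,b,r,d$, dominates. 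On $\Bb_d(e_1;a)\cap\Omega$ only the first term contributes (since $\chi\equiv 1$) and gives exactly $i\partial\overline\partial F \geq i\partial\overline\partial\norm{z}^2$.

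The steps in order: (1) record the uniform geometry of $\Kb_d(r)$ — in particular that $\Real(1-z_1) \geq c_1 > 0$ on $\Bb_d(e_1;b)\setminus\Bb_d(e_1;2a)$ intersected with any $\Omega\in\Kb_d(r)$, which follows from $H\cap\Omega=\emptyset$ plus the fact that $\Omega$ contains fixed disks forcing a uniform opening angle, so that one cannot approach $H$ from inside $\Omega$ too close to $e_1\pm$... this needs the convexity estimate from Proposition~\ref{prop:Kdr_compact} and compactness of $\Kb_d(r)$; (2) fix $\chi$; (3) write down the candidate $F$ and compute its Levi form, splitting into the region where $\chi\equiv1$ and the transition annulus; (4) choose the large constant $K$ (equivalently $C$ and $b$) to make the transition-annulus terms nonnegative; (5) normalize $F$ to take values in $[0,1]$ by rescaling, adjusting constants. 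The main obstacle I anticipate is step (1)–(4) interplay: ensuring the plurisubharmonic "background" term has a Levi form that genuinely dominates the cutoff error terms \emph{uniformly over the compact family} $\Kb_d(r)$ — the error terms involve $\partial\chi\wedge\overline\partial(\text{background})$ and $\partial\overline\partial\chi$, so the background function's gradient must be controlled from below on the annulus, and one must verify $-\log\Real(1-z_1)$ (or its bounded surrogate) has gradient bounded below there. Compactness of $\Kb_d(r)$ reduces all these to finitely-verifiable pointwise statements, but the bookkeeping of constants is the delicate part; I would lean on the uniform estimates already packaged in Section~\ref{sec:normalizing_maps} and Proposition~\ref{prop:Kdr_compact} to avoid re-deriving them.
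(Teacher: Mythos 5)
There is a genuine gap here --- in fact two. First, the geometric claim underpinning your step (1) is false: membership in $\Kb_d(r)$ does not force $\Real(1-z_1)$ to be bounded below on the transition annulus $\left(\Bb_d(e_1;b)\setminus\Bb_d(e_1;2a)\right)\cap\Omega$. For instance, the product of strips $\{\Real z_1<1\}\cap\{\Real z_2<1\}\cap\{\Real z_1>-10\}\cap\{\Real z_2>-10\}$ belongs to $\Kb_2(r)$ for suitable $r$, yet contains points $(1-\epsilon,it)$ with $\epsilon$ arbitrarily small and $\abs{t}$ anywhere in $(2a,b)$; the supporting hyperplane is pinned to $\partial\Omega$ only at $e_1$ itself, and nothing in the definition of $\Kb_d(r)$ prevents $\Omega$ from hugging it far from $e_1$. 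Second, and more structurally, the correction term $K\eta(\Real(1-z_1))$ cannot absorb the cutoff errors: since $\Real(1-z_1)$ is pluriharmonic, the Levi form of $\eta(\Real(1-z_1))$ for convex $\eta$ is a nonnegative multiple of $i\,dz_1\wedge d\bar z_1$ and so has rank at most one, whereas the error terms $\norm{z}^2\,i\partial\overline\partial\chi$ and $i\partial\chi\wedge\overline\partial\norm{z}^2+i\partial\norm{z}^2\wedge\overline\partial\chi$ are negative in the $z_2,\dots,z_d$ directions on the annulus. Moreover, to preserve $\supp(F)\subset\Bb_d(e_1;b)\cap\Omega$ the correction would itself have to vanish outside that set while being plurisubharmonic on all of $\Omega$ with a full-rank Levi form on the annulus --- which is essentially the proposition you are trying to prove, relocated to the annulus. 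This is not a matter of bookkeeping constants; the architecture of spatial cutoff plus rank-one correction does not close.

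The paper's proof avoids spatial cutoffs entirely. It picks, for each $\Omega\in\Kb_d(r)$, supporting functionals $v_1,\dots,v_d$ (one for each of the $d$ excluded affine subspaces in the definition of $\Kb_d(r)$, not just the hyperplane at $e_1$) with a triangular structure, and sets $h(z)=\sum_j e^{2\Real\ip{z,v_j}-2}+\sum_j\ln\abs{1/(2-\ip{z,v_j})}$. The exponentials are bounded on $\Omega$ because $\Real\ip{z,v_j}<1$ there, and their Levi form evaluated on $\xi$ dominates $e^{2\Real\ip{z,v_\ell}-2}\abs{\xi_\ell}^2$ where $\ell$ is the first nonzero coordinate of $\xi$ --- this is exactly where all $d$ functionals are needed to get positivity in every direction, something $\Real z_1$ alone can never give. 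The logarithmic terms tend to $-\infty$ as $\norm{z}\to\infty$, uniformly over the compact family, so $h$ lies in $[-\alpha,\alpha]$ on $\Bb_d(e_1;a)\cap\Omega$ and is $\leq -2\alpha$ off $\Bb_d(e_1;b)\cap\Omega$. One then takes $F=\chi\circ h$ with $\chi$ convex, increasing, and identically zero on $(-\infty,-2\alpha]$: the identity $i\partial\overline\partial(\chi\circ h)=(\chi'\circ h)\,i\partial\overline\partial h+(\chi''\circ h)\,i\partial h\wedge\overline\partial h$ shows plurisubharmonicity is preserved with no error terms, and the support condition is automatic. Replacing your spatial cutoff with this composition-with-a-flat-convex-function device is the essential missing idea.
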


The rest of the section is devoted to the proof of the Proposition. 

\begin{definition} Given $\Omega \in \Kb_d(r)$ we say that a list of vectors $(v_1, \dots, v_d)$ is \emph{$\Omega$-supporting} if 
\begin{align*}
e_j + \Span_{\Cb}\{e_{j+1}, \dots, e_d\} \subset \{ z \in \Cb^d : {\rm Re} \ip{z,v_j} = 1\}
\end{align*}
and 
\begin{align*}
\Omega \subset  \{ z \in \Cb^d : {\rm Re} \ip{z,v_j} < 1\}
\end{align*}
for all $j \in \{1,\dots, d\}$.
\end{definition}

\begin{lemma} If  $\Omega \in \Kb_d(r)$, then there exists a list of $\Omega$-supporting vectors. \end{lemma}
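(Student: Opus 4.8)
The statement is purely geometric: given $\Omega \in \Kb_d(r)$, I want to produce, for each $j = 1,\dots,d$, a complex affine function $\ell_j(z) = \ip{z,v_j}$ with $\Real \ell_j < 1$ on $\Omega$ and $\Real \ell_j \equiv 1$ on the affine subspace $e_j + \Span_\Cb\{e_{j+1},\dots,e_d\}$. The key input is the second defining property of $\Kb_d(r)$: for each $j$, $\bigl(e_j + \Span_\Cb\{e_{j+1},\dots,e_d\}\bigr) \cap \Omega = \emptyset$, so this affine subspace is disjoint from the open convex set $\Omega$. The plan is to invoke the Hahn--Banach separation theorem (in the real-linear-functional form on $\Cb^d \cong \Rb^{2d}$) to separate $\Omega$ from the disjoint affine subspace $E_j := e_j + \Span_\Cb\{e_{j+1},\dots,e_d\}$ by a real hyperplane, then upgrade the separating real-linear functional to a $\Cb$-linear one.

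\textbf{Key steps.} First, fix $j$. Since $\Omega$ is open convex and $E_j$ is an affine subspace disjoint from $\Omega$, Hahn--Banach gives a nonzero $\Rb$-linear functional $\varphi_j : \Cb^d \to \Rb$ and a constant $c_j$ with $\varphi_j < c_j$ on $\Omega$ and $\varphi_j \geq c_j$ on $E_j$; because $E_j$ is an affine subspace, $\varphi_j$ must be \emph{constant} on $E_j$ (a nonconstant affine functional on a subspace is unbounded below), so in fact $\varphi_j \equiv c_j$ on $E_j$ and $\varphi_j < c_j$ on $\Omega$. Second, write $\varphi_j(z) = \Real\ip{z,w_j}$ for a unique $w_j \in \Cb^d$; this uses the standard identification of $\Rb$-linear functionals on $\Cb^d$ with $z \mapsto \Real\ip{z,w}$. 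Since $\varphi_j$ is constant on $E_j$, which is a \emph{complex} affine subspace, one checks that $\varphi_j(z + i u) = \varphi_j(z)$ for all $u \in \Span_\Cb\{e_{j+1},\dots,e_d\}$ and $z \in E_j$; combined with invariance under real translations in those directions, this forces the complex-linear part of $\ip{\cdot, w_j}$ to kill $\Span_\Cb\{e_{j+1},\dots,e_d\}$, so $\Real\ip{z, w_j}$ is genuinely constant (not merely its real part) on $E_j$. Third, normalize: since $0 \in \Omega$ (indeed $\Kb_d(r)$ contains $r\Db \cdot e_1$ and $\Db \cdot e_\ell$, hence $0$) we have $\varphi_j(0) = 0 < c_j$, so $c_j > 0$; rescale $w_j$ by $1/c_j$ to get $v_j := w_j / c_j$, giving $\Real\ip{z,v_j} < 1$ on $\Omega$ and $\Real\ip{z,v_j} = 1$ on $E_j$. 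Doing this for each $j$ produces the $\Omega$-supporting list $(v_1,\dots,v_d)$.

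\textbf{Main obstacle.} The only subtle point is verifying that the separating functional is \emph{exactly} constant on $E_j$ as a real-affine function AND that this constancy, together with $E_j$ being a complex-affine subspace, is enough to conclude $e_j + \Span_\Cb\{e_{j+1},\dots,e_d\} \subset \{\Real\ip{z,v_j} = 1\}$ in the sense demanded by the definition — i.e.\ that the real-part condition holds on the whole complex subspace. This is really the observation that a real-linear functional constant on a complex line $\Cb\cdot u$ must vanish on it: if $\Real\ip{tu + is u, w} = $ const for all $t,s \in \Rb$ then $\Real\ip{u,w} = \Imag\ip{u,w} = 0$, hence $\ip{u,w}=0$. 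Applying this coordinate-by-coordinate to $u = e_{j+1},\dots,e_d$ settles it. Everything else is routine convexity and linear algebra; no analysis is needed, and the boundedness of $\Omega$ is not even required, only $\Cb$-proper convexity (and in fact only convexity plus $0 \in \Omega$).
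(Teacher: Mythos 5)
Your proof is correct and follows essentially the same route as the paper: separate the open convex set $\Omega$ from the disjoint complex affine subspace $e_j+\Span_{\Cb}\{e_{j+1},\dots,e_d\}$ by a real hyperplane, represent the separating functional as $\Real\ip{\cdot,w_j}$, and normalize using $0\in\Omega$ to make the constant equal to $1$. The paper simply asserts the existence of the separating real hyperplane in one line; your additional verification that the functional is constant on the affine subspace and that the complex pairing annihilates the direction space is a correct (and harmless) elaboration of the same argument.
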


\begin{proof}
Since $\Omega$ is convex and 
\begin{align*}
(e_j + \Span_{\Cb}\{e_{j+1}, \dots, e_d\}) \cap \Omega = \emptyset
\end{align*}
there exists a real hyperplane $H_j$ such that $H_j \cap \Omega = \emptyset$ and 
\begin{align*}
e_j + \Span_{\Cb}\{e_{j+1}, \dots, e_d\} \subset H_j.
\end{align*}
Since $0 \in \Omega$, for each $j$ we can pick $v_j \in \Cb^d$ such that $H_j = \{ z \in \Cb^d : {\rm Re} \ip{z,v_j} = 1\}$ and $\Omega \subset  \{ z \in \Cb^d : {\rm Re} \ip{z,v_j} < 1\}$. 
\end{proof}

\begin{lemma}\label{lem:supp_vec_estimate} If $\Omega \in \Kb_d(r)$ and $(v_1,\dots, v_d)$ is $\Omega$-supporting, then
\begin{enumerate}
\item $1 \leq \abs{v_{1,1}} \leq r^{-1}$,
\item  $v_{j,j}=1$ when $j > 1$, 
\item $v_{j,\ell}=0$ when $\ell > j$,
\item $\abs{v_{j,1}} \leq r^{-1}$ for $1 \leq j \leq d$,
\item $\abs{v_{j,\ell}} \leq 1$ for $1 < \ell \leq j$.
\end{enumerate}
In particular, 
\begin{align*}
\norm{v_j} \leq \sqrt{ r^{-2}+(j-1)}
\end{align*}
for $1 \leq j \leq d$. 
\end{lemma}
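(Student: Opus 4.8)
The plan is to read all five estimates, and then the norm bound, directly off the two defining properties of an $\Omega$-supporting list together with the inclusions $r\Db\cdot e_1\subset\Omega$ and $\Db\cdot e_\ell\subset\Omega$ (for $2\le\ell\le d$) and the half-space bound $\Omega\subset\{z:\Real\ip{z,v_j}<1\}$ that are built into membership in $\Kb_d(r)$. Throughout, $\ip{\cdot,\cdot}$ denotes the standard Hermitian inner product.

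First I would exploit the hyperplane condition. Fix $j$ and note that, since $e_j+\Span_\Cb\{e_{j+1},\dots,e_d\}$ is contained in the affine real hyperplane $\{z:\Real\ip{z,v_j}=1\}$, subtracting the value at $z=e_j$ shows that the real-linear functional $w\mapsto\Real\ip{w,v_j}$ vanishes identically on $\Span_\Cb\{e_{j+1},\dots,e_d\}$. Because that span is a complex subspace it is stable under multiplication by $i$, so $\Imag\ip{w,v_j}$ also vanishes there, and hence $\ip{w,v_j}=0$ for every such $w$; taking $w=e_\ell$ with $\ell>j$ gives $v_{j,\ell}=0$, which is item (3). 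Evaluating $\Real\ip{z,v_j}=1$ at $z=e_j$ gives $\Real v_{j,j}=1$.

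Next I would convert the disks sitting inside $\Omega$ into coordinate bounds on $v_j$. For $\abs{c}<r$ the point $ce_1$ lies in $\Omega$, so $\Real\ip{ce_1,v_j}<1$; letting $\abs{c}\nearrow r$ yields $r\abs{v_{j,1}}\le 1$, that is item (4), and running the same argument with $e_\ell$ in place of $e_1$ and radius $1$ gives $\abs{v_{j,\ell}}\le 1$ for $2\le\ell\le d$, which combined with item (3) is item (5). For the diagonal entries: when $j=1$, item (4) supplies the upper bound $\abs{v_{1,1}}\le r^{-1}$ while $\abs{v_{1,1}}\ge\abs{\Real v_{1,1}}=1$ supplies the lower bound, giving item (1); when $j>1$ the inclusion $\Db\cdot e_j\subset\Omega$ forces $\abs{v_{j,j}}\le 1$, and together with $\Real v_{j,j}=1$ this pins down $v_{j,j}=1$, which is item (2). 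Finally, using item (3) to discard the coordinates with $\ell>j$, one has $\norm{v_j}^2=\abs{v_{j,1}}^2+\sum_{\ell=2}^j\abs{v_{j,\ell}}^2\le r^{-2}+(j-1)$ by items (4) and (5), and taking square roots gives the displayed estimate.

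No step here is genuinely difficult; the only place that needs a moment's thought is the passage from ``$\Real\ip{\cdot,v_j}$ vanishes on $\Span_\Cb\{e_{j+1},\dots,e_d\}$'' to ``$\ip{\cdot,v_j}$ vanishes there'', which is exactly the statement that a real hyperplane containing a complex linear subspace must in fact annihilate that subspace as a complex-linear functional. Everything else is bookkeeping, and I would just need to remember that item (5) asserts a bound only in the range $1<\ell\le j$ precisely because the coordinates with $\ell>j$ have already been shown to vanish in the first step.
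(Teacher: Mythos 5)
Your proof is correct and follows essentially the same route as the paper's: the inclusions $r\Db\cdot e_1\subset\Omega$ and $\Db\cdot e_\ell\subset\Omega$ give (4) and (5), the hyperplane condition gives $\Real v_{j,j}=1$ and (3), and combining these yields (1), (2), and the norm bound. You simply spell out in more detail the step (vanishing of $\Real\ip{\cdot,v_j}$ on a complex subspace forces vanishing of $\ip{\cdot,v_j}$) that the paper leaves implicit.
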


\medskip

\begin{proof} Since
\begin{align*}
r\Db \cdot e_1 \subset \Omega \subset \{ z \in \Cb^d : {\rm Re} \ip{z,v_j} < 1\}
\end{align*}
 we must have $ \abs{v_{j,1}} \leq r^{-1}$ for $1 \leq j \leq d$. This proves (4).
 
When $1 < \ell \leq d$, 
\begin{align*}
\Db \cdot e_\ell \subset \Omega \subset \{ z \in \Cb^d : {\rm Re} \ip{z,v_j} < 1\}
\end{align*}
and so $\abs{v_{j,\ell}} \leq 1$. This proves (5). 

Since 
\begin{align*}
e_j + \Span_{\Cb}\{e_{j+1}, \dots, e_d\} \subset \{ z \in \Cb^d : {\rm Re} \ip{z,v_j} = 1\}
\end{align*}
we must have ${ \rm Re}(v_{j,j})=1$ and $v_{j,\ell}=0$ when $\ell > j$. This proves (3) and when combined with (5) (respectively (4)) implies (2) (respectively (1)). 
 \end{proof}

\begin{lemma} For any $d \in \Nb$, $r \in (0,1]$, and $a > 0$ there exist $\alpha,b,C > 0$ with the following property:  If $\Omega \in \Kb_d(r)$, $(v_1,\dots, v_d)$ is $\Omega$-supporting, and $h : \Omega \rightarrow (-\infty, 1)$ is defined by 
\begin{align*}
h(z) = \frac{1}{d}\sum_{j=1}^d e^{2{ \rm Re} \ip{z,v_j} -2} + \sum_{j=1}^d \ln \abs{ \frac{1}{2- \ip{z,v_j}}},
\end{align*}
then 
\begin{enumerate}
\item $-\alpha \leq h(z)$ on $\Bb_d(e_1;a) \cap \Omega$,
\item $h(z) \leq -2\alpha$ on $\Omega \setminus \Bb_d(e_1;b)$, 
\item $h$ is strictly plurisubharmonic on $\Omega$, and
\item $i\partial \bar{\partial} h(z) \geq C i\partial \bar{\partial} \norm{z}^2$ on $\Bb_d(e_1;a) \cap \Omega$. 
\end{enumerate}
\end{lemma}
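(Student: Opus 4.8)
The plan is to analyze the function
\begin{align*}
h(z) = \sum_{j=1}^d e^{2{\rm Re}\ip{z,v_j}-2} + \sum_{j=1}^d \ln\abs{\frac{1}{2-\ip{z,v_j}}}
\end{align*}
directly, using the uniform control provided by Lemma~\ref{lem:supp_vec_estimate} and the compactness statement of Lemma~\ref{lem:key_compactness}. Write $\varphi_j(z) := \ip{z,v_j}$, which is a complex-linear function with $\norm{v_j}$ bounded uniformly in terms of $r$ and $d$. Each summand $e^{2{\rm Re}\varphi_j-2}$ is plurisubharmonic (it is $e^{\varphi_j + \bar\varphi_j - 2}$ composed appropriately, or more simply $\exp$ of a pluriharmonic function), and each summand $-\ln\abs{2-\varphi_j} = -\ln\abs{\cdot}\circ(2-\varphi_j)$ is plurisubharmonic on the region where $2-\varphi_j\neq 0$. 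Since $\Omega\subset\{{\rm Re}\ip{z,v_j}<1\}$ we have ${\rm Re}(2-\varphi_j)>1>0$ on $\Omega$, so $2-\varphi_j$ never vanishes on $\Omega$ and $h$ is well-defined and smooth there; moreover each term is actually strictly plurisubharmonic in the direction $v_j$, which will give (3) after summing (the $v_j$ span $\Cb^d$ since $v_{j,j}\neq 0$ and $v_{j,\ell}=0$ for $\ell>j$, so the matrix is triangular with nonzero diagonal).

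For the quantitative Hessian bound (4), I would compute $i\partial\bar\partial h$ explicitly. The term $e^{2{\rm Re}\varphi_j-2}$ contributes $4e^{2{\rm Re}\varphi_j-2}\, i\partial\varphi_j\wedge\bar\partial\bar\varphi_j$, and $-\ln\abs{2-\varphi_j}$ contributes $\tfrac12\abs{2-\varphi_j}^{-2}\, i\partial\varphi_j\wedge\bar\partial\bar\varphi_j$ (up to a harmless constant factor; $-\log\abs{w}$ is harmonic away from $0$, so in fact this term is pluriharmonic — I should instead use $\ln\frac{1}{\abs{2-\varphi_j}}$ only for the level-set/barrier role and rely on the exponential terms, or replace the logarithm term by something genuinely strictly psh; let me reconsider). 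Actually the cleaner route: the exponential terms alone give $i\partial\bar\partial h \ge \sum_j 4 e^{2{\rm Re}\varphi_j - 2}\, i\partial\varphi_j \wedge\bar\partial\bar\varphi_j$. On $\Bb_d(e_1;a)\cap\Omega$, we have ${\rm Re}\varphi_1(z) = {\rm Re}\ip{z,v_1}$; since $e_1\in\partial\Omega$ lies on $\{{\rm Re}\ip{z,v_1}=1\}$ (because the supporting hyperplane $H_1$ contains $e_1$) and $z$ is within $a$ of $e_1$, we get ${\rm Re}\varphi_1(z)\ge 1 - a\norm{v_1}\ge 1-a r^{-1}$, hence $e^{2{\rm Re}\varphi_1-2}$ is bounded below by a positive constant depending only on $a,r,d$. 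Combined with a lower bound $\abs{\varphi_j(w)}^2\gtrsim$ (nondegeneracy of the $v_j$ as a frame, uniform over the compact set in Lemma~\ref{lem:key_compactness}), we would need the $v_j$'s, not just $v_1$, to span — so I would use that $\sum_j c_j i\partial\varphi_j\wedge\bar\partial\bar\varphi_j \ge c\, i\partial\bar\partial\norm{z}^2$ whenever all $c_j\ge c_0>0$ and $(v_1,\dots,v_d)$ is a basis with operator norm of the change-of-basis matrix bounded; this gives (4), where the constant is uniform by Lemma~\ref{lem:key_compactness}. This means I actually need a positive lower bound on every $e^{2{\rm Re}\varphi_j-2}$ on $\Bb_d(e_1;a)\cap\Omega$, which follows similarly: ${\rm Re}\varphi_j(z)$ could a priori be very negative, but $e^{2{\rm Re}\varphi_j-2}>0$ always, and on the compact set of Lemma~\ref{lem:key_compactness} intersected with $\{z: \norm{z-e_1}\le a\}$, the continuous positive function $\min_j e^{2{\rm Re}\varphi_j-2}$ attains a positive minimum.

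For (1), on $\Bb_d(e_1;a)\cap\Omega$ the quantity ${\rm Re}\varphi_j$ is bounded (above by $1$ since $z\in\Omega$, below by $1-a\norm{v_j}$), so the exponential terms are bounded; and $\abs{2-\varphi_j}\in[1, 3+a\norm{v_j}]$ roughly, so the logarithmic terms are bounded — all bounds uniform by Lemma~\ref{lem:supp_vec_estimate}. This yields $\abs{h}\le\alpha$ for a uniform $\alpha$. For (2), I would choose $b>a$ large: as $\norm{z-e_1}$ grows (with $z\in\Omega$), since $\Omega$ is bounded in the $e_2,\dots,e_d$ directions near $e_1$... hmm, $\Omega$ need not be bounded, but $\Omega\subset\{{\rm Re}\varphi_j<1\}$ for all $j$. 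The key: if $\norm{z-e_1}$ is large but ${\rm Re}\varphi_1(z)$ stays near $1$, then $z$ moves far within the hyperplane direction, and I claim ${\rm Re}\varphi_k(z)\to-\infty$ for some $k$ — this uses that $(v_1,\dots,v_d)$ is a basis, so $z\mapsto({\rm Re}\varphi_1(z),\dots,{\rm Re}\varphi_d(z))$ together with imaginary parts is a real-linear isomorphism; if all ${\rm Re}\varphi_j$ stayed bounded above and below then $z$ would stay in a bounded set. Hence on $\Bb_d(e_1;b)^c\cap\Omega$ (equivalently the region far from $e_1$), some ${\rm Re}\varphi_k\le -N$, making $\ln\abs{\frac{1}{2-\varphi_k}}\le -\ln(N+2)$ which dominates: since the other log terms are $\le 0$ and the exponential terms, while positive, are controlled — wait, $e^{2{\rm Re}\varphi_j-2}\le 1$ for $z\in\Omega$ since ${\rm Re}\varphi_j<1$. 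So $h(z)\le d\cdot 1 + (-\ln(N+2)) + 0 = d - \ln(N+2)$, which is $\le -2\alpha$ once $N$ (hence $b$) is large enough. The main obstacle is making the dependence of $b$ on the data uniform: I need that the implication ``$\norm{z-e_1}\ge b,\ z\in\Omega\in\Kb_d(r)$'' $\Rightarrow$ ``$\min_k{\rm Re}\varphi_k(z)\le -N$'' holds with $b$ depending only on $N,r,d$ and not on $\Omega$ or the choice of supporting vectors; this again follows from Lemma~\ref{lem:key_compactness} by a compactness/contradiction argument (if $z_n\in\Omega_n$ with $\norm{z_n-e_1}\to\infty$ but $\min_k{\rm Re}\varphi_{j,n}(z_n)\ge -N$, extract convergent subsequences of $(\Omega_n, v_{1,n},\dots,v_{d,n})$ and of $z_n/\norm{z_n}$ to reach a contradiction with $\Cb$-properness, since the limit direction would lie in the asymptotic cone yet be pinched between finitely many half-spaces forcing it to be a line). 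I expect this uniform barrier estimate (2) to be the technical heart; parts (1), (3), (4) are routine given the estimates of Lemma~\ref{lem:supp_vec_estimate}.
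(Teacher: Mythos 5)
Your overall strategy is the same as the paper's: estimate each term of $h$ directly using the uniform bounds of Lemma~\ref{lem:supp_vec_estimate} and the compactness of Lemma~\ref{lem:key_compactness}. Parts (1), (3) and (4) are handled essentially as in the paper (the triangular structure of the $v_j$ makes the exponential terms alone strictly plurisubharmonic, and compactness upgrades this to the uniform bound (4)); the stray factor of $4$ in the Hessian and the lower bound $1-a\norm{v_j}$ for ${\rm Re}\ip{z,v_j}$ (correct only for $j=1$, since $e_1$ need not lie on $H_j$ for $j\geq 2$) are harmless slips.

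The proof of (2), however, rests on a false claim. You assert that if $z\in\Omega$ and $\norm{z-e_1}$ is large then some ${\rm Re}\ip{z,v_k}$ must be very negative, justified by ``if all ${\rm Re}\varphi_j$ stayed bounded above and below then $z$ would stay in a bounded set.'' The map $z\mapsto({\rm Re}\ip{z,v_1},\dots,{\rm Re}\ip{z,v_d})$ has a $d$-dimensional real kernel, so bounding the real parts does not bound $z$. Concretely, take $\Omega=\{z: {\rm Re}(z_1)<1,\ \abs{z_2}<1,\dots,\abs{z_d}<1\}\in\Kb_d(1)$ with supporting vectors $v_j=e_j$, and $z=(\tfrac12+iT,0,\dots,0)$ with $T\to\infty$: then $\norm{z-e_1}\to\infty$ while ${\rm Re}\ip{z,v_1}=\tfrac12$ and ${\rm Re}\ip{z,v_j}=0$ for $j\geq2$. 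For the same reason your fallback compactness argument cannot produce a contradiction: the limit direction $ie_1$ lies in the asymptotic cone and satisfies ${\rm Re}\ip{ie_1,v_j}=0$ for all $j$, but this does not force a complex affine line into the limit domain, so $\Cb$-properness is not violated. The correct mechanism --- and the one the paper uses --- is that $(v_1,\dots,v_d)$ is a basis whose inverse matrix is uniformly bounded by Lemma~\ref{lem:supp_vec_estimate}, so $\max_j\abs{\ip{z,v_j}}\gtrsim\norm{z}$ and hence $\max_j\abs{2-\ip{z,v_j}}\to\infty$ uniformly as $z\to\infty$; the corresponding logarithm term then tends to $-\infty$ whether the real or the imaginary part of $\ip{z,v_j}$ blows up. Since every logarithm term is $\leq 0$ on $\Omega$ (as $\abs{2-\ip{z,v_j}}\geq{\rm Re}(2-\ip{z,v_j})>1$) and every exponential term is $<1$, one gets $h(z)\leq d-\ln\max_j\abs{2-\ip{z,v_j}}$, which is $\leq-2\alpha$ once $\norm{z-e_1}$ exceeds a uniform $b$.
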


\begin{proof} If $\Omega \in \Kb_d(r)$ and $(v_1,\dots, v_d)$ is $\Omega$-supporting,  then
$$
2{ \rm Re} \ip{z,v_j} -2 < 0 \quad \text{and} \quad \abs{\frac{1}{2- \ip{z,v_j}}} \leq  \frac{1}{2- { \rm Re} \ip{z,v_j} } < 1
$$
for all $1 \leq j \leq d$ and $z \in \Omega$. So $h$ does indeed map $\Omega$ into $(-\infty, 1)$. 

The existence of some $\alpha > 0$ satisfying Part (1) follows from Lemma~\ref{lem:supp_vec_estimate}. 

Lemma~\ref{lem:supp_vec_estimate} also implies that there exists $\epsilon > 0$ such that: if $\Omega \in \Kb_d(r)$ and $(v_1,\dots, v_d)$ is $\Omega$-supporting, then 
\begin{equation}
\label{eqn:reproducing_formula}
\max_{1 \leq j \leq d} \abs{\ip{z,v_j}} \geq \epsilon \norm{z}
\end{equation}
for all $z \in \Cb^d$. Hence, if $\Omega \in \Kb_d(r)$, $(v_1,\dots, v_d)$ is $\Omega$-supporting, and $z \in \Omega$ then
\begin{align*}
 \sum_{j=1}^d \ln \abs{ \frac{1}{2- \ip{z,v_j}}}  < \ln \frac{1}{2+\epsilon\norm{z}}. 
\end{align*}
So there exists some $b > 0$ satisfying Part (2).

Next we show that any such $h$ is strictly plurisubharmonic. Suppose  $\Omega \in \Kb_d(r)$ and $(v_1,\dots, v_d)$ is $\Omega$-supporting. Fix some $X \in \Cb^d$. The second sum in the definition of $h$ is pluriharmonic on $\Omega$, so 
\begin{align*}
\sum_{j,k=1}^d \frac{\partial^2h(z)}{\partial z_j \partial \bar{z}_k} X_j\bar{X}_k = \frac{1}{d}\sum_{j=1}^d e^{2{ \rm Re} \ip{z,v_j} -2}\abs{\ip{X,v_j}}^2.
\end{align*}
Then using Equation~\eqref{eqn:reproducing_formula}
\begin{align}
\label{eqn:lower_bd_on_hessian_of_h}
\sum_{j,k=1}^d \frac{\partial^2h(z)}{\partial z_j \partial \bar{z}_k} X_j\bar{X}_k \geq \frac{e^{-2(1+\norm{z})}}{d}\sum_{j=1}^d \abs{\ip{X,v_j}}^2 \geq  \frac{e^{-2(1+\norm{z})}}{d} \epsilon^2 \norm{X}^2 > 0. 
\end{align}
Hence $h$ is strictly plurisubharmonic on $\Omega$. 

Finally,  Equation~\eqref{eqn:lower_bd_on_hessian_of_h} implies that there exists some $C$ satisfying part (4). 
\end{proof}

\begin{proof}[Proof of Proposition~\ref{prop:plurisubharmonic_on_normal_domain}]

 Let $\chi: \Rb \rightarrow [0,\infty)$ be a convex $C^\infty$ function such that 
\begin{enumerate}
\item $\chi(x) = 0$ on $\left(-\infty, -2\alpha\right]$,
\item $\chi^\prime(x) > 0$ and $\chi^{\prime\prime}(x) > 0$ on $\left(-2\alpha, \infty\right)$, and
\item $\chi(1) = 1$.
\end{enumerate}
Let $\kappa := \min\{ \chi^{\prime\prime}(r) : r \in [-\alpha, 1] \}$. 

Suppose $\Omega \in \Kb_d(r)$, $(v_1,\dots, v_d)$ is $\Omega$-supporting, and let $h :\Omega \rightarrow \Rb$ be the function from the last lemma. Then define $F: \Omega \rightarrow[0, 1]$ by $F=\chi \circ h$. Then by construction $\supp(F) \subset \Bb_d(e_1;b) \cap \Omega$. Moreover
\begin{align*}
i\partial \bar{\partial} F(z)
&= (\chi^{\prime\prime} \circ h)(z)i\partial \bar{\partial}h(z)+(\chi^\prime \circ h(z))^2 i\partial h\wedge \bar{\partial} h
\end{align*}
and so $F$ is plurisubharmonic on $\Omega$. Finally, when $z \in \Omega \cap \Bb_d(e_1;a)$ we have
\begin{equation*}
i\partial \bar{\partial} F(z)
\geq (\chi^{\prime\prime} \circ h)(z)i\partial \bar{\partial}h(z) \geq \kappa C i\partial \bar{\partial} \norm{z}^2 
\end{equation*}
where $C > 0$ is the constant in the last lemma. 
\end{proof}

\section{Plurisubharmonic functions on convex domains}\label{sec:psh_on_general_domains}

In this section we construct functions satisfying the hypothesis of Theorem~\ref{thm:straube}.  This construction uses ideas from the proofs of~\cite[Propositions 3.1, 3.2]{M1994} and~\cite[Theorem 2]{S1997}.

\begin{theorem}\label{thm:final_plurisubharmonic_fcn}
Suppose $\Omega \subset \Cb^d$ is a $\Cb$-properly convex domain and $(\Omega, K_\Omega)$ is Gromov hyperbolic. If $\xi_0 \in \partial \Omega$, then there exist $C > 0$, $m_2 > 2$, a neighborhood $U$ of $\xi_0$, and a bounded continuous plurisubharmonic function $G : U \cap \Omega \rightarrow [0,1]$ 
such that 
\begin{align*}
i\partial \bar{\partial}G(z) \geq \frac{1}{\delta_{\Omega}(z)^{2/m_2}} i\partial \bar{\partial} \norm{z}^2 \text{ on } U \cap \Omega.
\end{align*}
\end{theorem}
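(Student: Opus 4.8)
\textbf{Proof strategy for Theorem~\ref{thm:final_plurisubharmonic_fcn}.} The plan is to glue together copies of the plurisubharmonic bump functions from Proposition~\ref{prop:plurisubharmonic_on_normal_domain}, one for each dyadic scale $\epsilon = 2^{-k}$, pulled back via the normalizing maps $A_{\xi,\epsilon}$ of Definition~\ref{defn:visual_metric_normalizing_map}, and summed with carefully chosen constants so that near $\partial\Omega$ the resulting function has Hessian blowing up like $\delta_\Omega(z)^{-2/m}$. Concretely, fix $z_0 \in \Omega$ and $R > \norm{z_0}$ with $\xi_0 \in \Bb_d(0;R)$. By Proposition~\ref{prop:normalizing_maps_visual_metric} there are $r_0 \in (0,1)$, $m_1 \geq 2$, $C_0 > 0$ so that for every $\xi \in \partial\Omega \cap \Bb_d(0;R)$ and $\epsilon \in (0,1)$ we have $A_{\xi,\epsilon}\Omega \in \Kb_d(r_0)$ and $A_{\xi,\epsilon}$ expands distances by at least $C_0 \epsilon^{-1/m_1}$. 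Apply Proposition~\ref{prop:plurisubharmonic_on_normal_domain} with this $r_0$ and some fixed $a > 0$ to get constants $C_{\rm psh}, b > 0$ and, for each $\xi$ and each dyadic $\epsilon_k = 2^{-k}$, a function $F_{\xi,k} : A_{\xi,\epsilon_k}\Omega \to [0,1]$ supported in $\Bb_d(e_1;b) \cap A_{\xi,\epsilon_k}\Omega$ with $i\partial\overline{\partial}F_{\xi,k} \geq C_{\rm psh}\, i\partial\overline{\partial}\norm{z}^2$ on $\Bb_d(e_1;a)\cap A_{\xi,\epsilon_k}\Omega$. Set $\Phi_{\xi,k} := F_{\xi,k} \circ A_{\xi,\epsilon_k}$ on $\Omega$: this is plurisubharmonic, takes values in $[0,1]$, and (by the expansion estimate) satisfies $i\partial\overline{\partial}\Phi_{\xi,k}(z) \geq C_{\rm psh} C_0^2\, \epsilon_k^{-2/m_1}\, i\partial\overline{\partial}\norm{z}^2$ on the pullback of the $a$-ball. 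The candidate is then $G := \sum_{k \geq k_0} \epsilon_k^{2/m_2}\, \Phi_{\xi_0,k}$ for a single base point $\xi_0$, with $m_2 > m_1$ chosen so the series of the bounding constants converges; the point is that at a $z$ with $\delta_\Omega(z) \asymp \epsilon_k^{?}$ the term $\Phi_{\xi_0,k}$ contributes a Hessian of size $\epsilon_k^{2/m_2 - 2/m_1}$, and we want this comparable to $\delta_\Omega(z)^{-2/m_2}$.

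\textbf{Key steps.} First I would pin down the geometric dictionary between $\delta_\Omega(z)$ and the scale parameter: by Proposition~\ref{prop:visual_metrics_along_line}, $\delta_\Omega(q_{\xi_0,\epsilon}) \asymp \epsilon^{c}$ for an explicit exponent $c$ (between $2/\lambda$ and $2/(\alpha\lambda)$), and more generally, using the normalizing maps and Lemma~\ref{lem:dist_est} style estimates, a point $z$ near $\xi_0$ with $\delta_\Omega(z) \approx \epsilon_k^{c}$ should, after applying $A_{\xi_0,\epsilon_k}$, land in $\Bb_d(e_1;a)\cap A_{\xi_0,\epsilon_k}\Omega$ — i.e. at unit scale in the normalized picture — for the \emph{right} value of $k$ depending on $z$. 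This is where Proposition~\ref{prop:comparison} (the two-sided comparison between the sets $A^{-1}_{\xi,\epsilon}\Bb_d(e_1;r)$ and the visual-metric balls $V_{z_0}(\xi;r\epsilon)$) enters: it guarantees that for each $z$ close enough to $\xi_0$ there is exactly one (or boundedly many) scales $k$ for which $A_{\xi_0,\epsilon_k}(z)$ lies in the good region $\Bb_d(e_1;a)$, and that the Euclidean distance $\norm{A_{\xi_0,\epsilon_k}(z) - e_1}$ transitions from small to order-one as $k$ crosses that critical value. Second, I would verify the series $G = \sum_k \epsilon_k^{2/m_2}\Phi_{\xi_0,k}$ converges locally uniformly to a bounded continuous function (each term is bounded by $\epsilon_k^{2/m_2} = 2^{-2k/m_2}$) and is plurisubharmonic as a locally uniform limit of finite sums of plurisubharmonic functions. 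Third, the Hessian lower bound: at $z$ with $\delta_\Omega(z)$ small, let $k = k(z)$ be the critical scale so that $A_{\xi_0,\epsilon_k}(z) \in \Bb_d(e_1;a)$; then $i\partial\overline{\partial}G(z) \geq \epsilon_k^{2/m_2}\, i\partial\overline{\partial}\Phi_{\xi_0,k}(z) \geq \epsilon_k^{2/m_2} \cdot C_{\rm psh}C_0^2 \epsilon_k^{-2/m_1}\, i\partial\overline{\partial}\norm{z}^2 = C_{\rm psh}C_0^2\, \epsilon_k^{2/m_2 - 2/m_1}\, i\partial\overline{\partial}\norm{z}^2$, and then I plug in $\epsilon_k \asymp \delta_\Omega(z)^{1/c}$ and choose $m_2$ so that $(2/m_1 - 2/m_2)/c = 2/m_2$, i.e. $m_2 = m_1(1 + 1/c)$ (up to adjusting constants and rounding to satisfy $m_2 \geq 2$). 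Finally, restrict to a neighborhood $U$ of $\xi_0$ small enough that every $z \in U \cap \Omega$ admits such a critical scale with $k(z) \geq k_0$, which is exactly what Proposition~\ref{prop:comparison} provides once $U$ is inside $V_{z_0}(\xi_0; \epsilon_0)$.

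\textbf{Main obstacle.} The delicate point is the bookkeeping that, for a \emph{given} boundary point $\xi_0$, a single stack of bumps $\{\Phi_{\xi_0,k}\}_k$ centered along the segment $[z_0,\xi_0)$ actually controls $\delta_\Omega(z;v)$-type quantities — hence the full Hessian — at \emph{all} points $z$ in a neighborhood of $\xi_0$, not just those on or very near that segment. In McNeal's smooth finite-type argument this is handled by the polydisk/polynomial machinery; here it has to come entirely from convexity plus the fact that every normalized domain $A_{\xi_0,\epsilon_k}\Omega$ lies in the compact family $\Kb_d(r_0)$, so the bump $F_{\xi_0,k}$ has a Hessian bounded below on a \emph{fixed} ball $\Bb_d(e_1;a)$ independent of the domain. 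Thus the real work is: (i) showing $U \cap \Omega$ is covered by the regions $A^{-1}_{\xi_0,\epsilon_k}(\Bb_d(e_1;a)\cap A_{\xi_0,\epsilon_k}\Omega)$ as $k$ ranges over $k \geq k_0$ — which follows from Proposition~\ref{prop:comparison} together with the doubling statement Proposition~\ref{prop:doubling_proposition} to ensure consecutive scales overlap — and (ii) checking that on the overlap regions the exponent matching is consistent, so that the piecewise estimate assembles into the uniform bound $i\partial\overline{\partial}G(z) \geq C\,\delta_\Omega(z)^{-2/m_2}\, i\partial\overline{\partial}\norm{z}^2$ on all of $U \cap \Omega$. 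A secondary technical nuisance is that $F_{\xi_0,k}$ is only defined on $A_{\xi_0,\epsilon_k}\Omega$ and supported in $\Bb_d(e_1;b)$, so after pullback its support is a shrinking Euclidean neighborhood of $\xi_0$ (of diameter $\asymp \epsilon_k$ by the expansion bound); one must check the supports are nested/locally finite near $\xi_0$ so the sum is locally finite away from $\xi_0$ and the continuity/boundedness claims hold up to $\partial\Omega$. I expect no essentially new idea is needed beyond these two comparison/doubling propositions, but the constant-chasing to land the exponent $2/m_2$ exactly is where care is required.
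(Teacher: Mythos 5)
Your overall architecture (pull back the bumps of Proposition~\ref{prop:plurisubharmonic_on_normal_domain} by the maps $A_{\xi,\epsilon}$, sum over dyadic scales with weights tuned so the Hessian gain matches $\delta_\Omega(z)^{-2/m_2}$) is the right one and matches the paper up to the exponent bookkeeping. But there is a genuine gap at the point you yourself flag as the ``main obstacle'': a stack of bumps attached to the \emph{single} boundary point $\xi_0$ does not control the Hessian on a full Euclidean neighborhood $U\cap\Omega$. By Proposition~\ref{prop:comparison}, the good region $\overline{\Omega}\cap A_{\xi_0,\epsilon_k}^{-1}\Bb_d(e_1;a)$ is contained in the visual ball $V_{z_0}(\xi_0;\tau(a)\epsilon_k)$, so the union over $k\geq k_0$ of these regions sits inside $V_{z_0}(\xi_0;\tau(a)2^{-k_0})$. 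A point $z\in U\cap\Omega$ with $\delta_\Omega(z)$ small but lying near a \emph{different} boundary point $\xi$ with $d_{z_0}(\xi,\xi_0)\gtrsim 2^{-k_0}$ has $d_{z_0}(z,\xi_0)$ bounded below, hence lies in none of your good regions (indeed possibly outside the support of every $\Phi_{\xi_0,k}$), and your $G$ can have vanishing Hessian there. Your assertion that coverage of $U\cap\Omega$ ``follows from Proposition~\ref{prop:comparison} together with the doubling statement'' is false for a single base point; the doubling property does not make the visual cone over one boundary point swallow a Euclidean neighborhood.

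The missing step is a covering argument \emph{at each fixed scale} $\epsilon$: choose a maximal family $\xi_1,\dots,\xi_n\in\partial\Omega\cap\Bb_d(0;R)$ with the balls $V_{z_0}(\xi_j;\epsilon/2)$ disjoint, so that the balls $V_{z_0}(\xi_j;2\epsilon)$ cover $V_\epsilon:=\bigcup_\xi V_{z_0}(\xi;\epsilon)$, and use Proposition~\ref{prop:doubling_proposition} in a Vitali/Lebesgue-measure argument to bound the overlap multiplicity of the supports $V_{z_0}(\xi_j;\epsilon/c_1)$ by a constant $M^{2d}$; then $F_\epsilon=M^{-2d}\sum_j F_{\xi_j,\epsilon}$ is still bounded by $1$ and has the Hessian lower bound on all of $V_\epsilon$. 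This is exactly the role the doubling proposition plays in the paper (Lemma~\ref{lem:plurisub_two}), not the ``consecutive scales overlap'' role you assign it. Once you have the scale-$\epsilon$ functions $F_\epsilon$, the dyadic summation is Straube's trick and your exponent matching goes through (the paper sums the contributions of all scales $k_0\leq k\leq K$ with $2^{-K}\asymp\delta_\Omega(z)$ rather than isolating one critical scale, but a single-scale lower bound also works since all terms are nonnegative). One further caution: the relation $\delta_\Omega(z)\asymp\epsilon^c$ you invoke is only two-sided with \emph{different} exponents ($2/\lambda$ versus $2/(\alpha\lambda)$ in Proposition~\ref{prop:visual_metrics_along_line}); the argument only needs the one-sided inclusion $S_\delta\subset V_{B\delta^{\lambda/2}}$, so you should phrase the exponent matching using that containment rather than a genuine asymptotic equivalence.
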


For the rest of the section fix $\Omega \subset \Cb^d$ a $\Cb$-properly convex domain where $(\Omega, K_\Omega)$ is Gromov hyperbolic. Then fix some $z_0 \in \Omega$ and $\xi_0 \in \partial \Omega$. Finally, fix some $R > 0$ with $z_0, \xi_0 \in \Bb_d(0;R)$. 

As in Section~\ref{sec:vm_and_normalizing}, let $d_{z_0}$ denote the function constructed in  Theorem~\ref{thm:visible_metric} for the metric space $(\Omega, K_\Omega)$. Using Theorem~\ref{thm:compactification} we can view $d_{z_0}$ as a function on $\overline{\Omega}^{\End} \times \overline{\Omega}^{\End}$. Let $C_v > 1$ and $\lambda > 0$ be constants such that: for all $x,y \in \overline{\Omega}^{\End}$
\begin{align*}
\frac{1}{C_v} \exp \Big(-\lambda K_\Omega(z_0, \gamma_{x,y})\Big) \leq d_{z_0}(x,y) \leq C_v\exp \Big( -\lambda K_\Omega(z_0, \gamma_{x,y}) \Big)
\end{align*}
when $\gamma_{x,y}$ is a geodesic in $(\Omega, K_\Omega)$ joining $x$ to $y$. As before, for $\xi \in \overline{\Omega}^{\End}$ and $r > 0$ define 
\begin{align*}
V_{z_0}(\xi;r) := \left\{ z \in \overline{\Omega}^{\End} : d_{z_0}(\xi, z) < r\right\}.
\end{align*}

\begin{lemma}\label{lem:plurisub_one} There exist $c_1, \epsilon_1 \in (0,1)$ and $m_1 >0$ such that: For any $\xi \in \partial \Omega \cap \Bb_d(0;R)$ and $\epsilon \in (0,\epsilon_1)$ there is a smooth plurisubharmonic function $F_{\xi,\epsilon} : \Omega \rightarrow [0,1]$ with
\begin{align*}
i\partial \bar{\partial} F_{\xi,\epsilon} (z) \geq \frac{c_1}{\epsilon^{2/m_1}} i\partial \bar{\partial} \norm{z}^2 \text{ on } V_{z_0}(\xi; 2\epsilon) \cap \Omega\end{align*}
and 
\begin{align*}
\supp(F_{\xi,\epsilon} ) \subset V_{z_0}\left(\xi; \frac{\epsilon}{c_1}\right).
\end{align*}
\end{lemma}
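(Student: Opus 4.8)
The plan is to transport the plurisubharmonic functions $F$ built on normalized domains in Proposition~\ref{prop:plurisubharmonic_on_normal_domain} back to $\Omega$ via the normalizing maps $A_{\xi,\epsilon}$ of Definition~\ref{defn:visual_metric_normalizing_map}, and to use Proposition~\ref{prop:comparison} to translate the ``close to $e_1$'' conditions into ``inside a visual-metric ball'' conditions. Concretely, set $r_0, m_1, C_0$ as in Proposition~\ref{prop:normalizing_maps_visual_metric}, so that $A_{\xi,\epsilon}\Omega \in \Kb_d(r_0)$ and $\norm{A_{\xi,\epsilon}(z_1)-A_{\xi,\epsilon}(z_2)} \geq C_0 \epsilon^{-1/m_1}\norm{z_1-z_2}$ for all $\xi \in \partial\Omega \cap \Bb_d(0;R)$ and $\epsilon \in (0,1)$. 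Let $\tau$ and $\epsilon_0$ be as in Proposition~\ref{prop:comparison}. Choose a radius $a>0$ so that $\tau(a) \leq 2$ (possible since $\tau(r)\searrow 0$); apply Proposition~\ref{prop:plurisubharmonic_on_normal_domain} with this $a$ and with $r = r_0$ to obtain constants $b, C > 0$ and, for each $D \in \Kb_d(r_0)$, a $C^\infty$ plurisubharmonic $F_D : D \to [0,1]$ with $i\partial\bar\partial F_D \geq C\, i\partial\bar\partial\norm{z}^2$ on $\Bb_d(e_1;a) \cap D$ and $\supp(F_D) \subset \Bb_d(e_1;b) \cap D$. Then for $\xi \in \partial\Omega \cap \Bb_d(0;R)$ and $\epsilon$ small, define $F_{\xi,\epsilon} := F_{A_{\xi,\epsilon}\Omega} \circ A_{\xi,\epsilon}$ on $\Omega$; it takes values in $[0,1)$ and is plurisubharmonic since $A_{\xi,\epsilon}$ is biholomorphic (affine).

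Next I would verify the two geometric inclusions. For the support: $\supp(F_{\xi,\epsilon}) \subset A_{\xi,\epsilon}^{-1}\Bb_d(e_1;b) \cap \Omega$, and the first inclusion of Proposition~\ref{prop:comparison} (applied with $r = b$, valid once $\epsilon < \epsilon_0/b$) gives $\overline\Omega \cap A_{\xi,\epsilon}^{-1}\Bb_d(e_1;b) \subset V_{z_0}(\xi;\tau(b)\epsilon)$; so taking $c_1 \leq 1/\tau(b)$ yields $\supp(F_{\xi,\epsilon}) \subset V_{z_0}(\xi;\epsilon/c_1)$. For the Hessian estimate: by the second inclusion of Proposition~\ref{prop:comparison} (applied with $r = 2$, valid once $\epsilon < \epsilon_0/2$), $V_{z_0}(\xi;2\epsilon) \subset \overline\Omega \cap A_{\xi,\epsilon}^{-1}\Bb_d(e_1;\tau(2))$, and since we arranged $\tau(a) \leq 2$ — rather, since I want $V_{z_0}(\xi;2\epsilon)$ to land inside $A_{\xi,\epsilon}^{-1}\Bb_d(e_1;a) \cap \Omega$ I should instead choose $a$ so that $\tau(2) \leq a$, i.e. pick $a := \tau(2)$ at the outset. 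With that choice, on $V_{z_0}(\xi;2\epsilon)$ we have $A_{\xi,\epsilon}z \in \Bb_d(e_1;a) \cap A_{\xi,\epsilon}\Omega$, so by the chain rule and the lower bound on the differential of $A_{\xi,\epsilon}$,
\begin{align*}
i\partial\bar\partial F_{\xi,\epsilon}(z) = A_{\xi,\epsilon}^*\big(i\partial\bar\partial F_{A_{\xi,\epsilon}\Omega}\big)(z) \geq C\, A_{\xi,\epsilon}^*\big(i\partial\bar\partial \norm{w}^2\big)(z) \geq \frac{C C_0^2}{\epsilon^{2/m_1}}\, i\partial\bar\partial \norm{z}^2,
\end{align*}
using that the linear part $g$ of $A_{\xi,\epsilon}$ satisfies $\norm{g v} \geq C_0 \epsilon^{-1/m_1}\norm{v}$, hence $g^*(i\partial\bar\partial\norm{w}^2) \geq C_0^2 \epsilon^{-2/m_1} i\partial\bar\partial\norm{z}^2$. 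Setting $c_1 := \min\{1/\tau(b),\, CC_0^2,\, 1/2\}$ and $\epsilon_1 := \epsilon_0/\max\{b,2\}$ (intersected with $(0,1)$) finishes the construction, with $m_1$ the same integer from Proposition~\ref{prop:normalizing_maps_visual_metric}.

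The one point requiring a little care, and the likely main obstacle, is the measurable/selection issue: Proposition~\ref{prop:plurisubharmonic_on_normal_domain} produces a function $F_D$ for each $D \in \Kb_d(r_0)$, but for the composition $F_{\xi,\epsilon}$ to make sense we only need one $F_D$ per domain, which is fine — there is no need for the assignment $D \mapsto F_D$ to be continuous in $D$, since each $F_{\xi,\epsilon}$ is defined individually. One should also double-check that $F_{\xi,\epsilon}$ is genuinely valued in $[0,1)$ rather than $[0,1]$: since $\supp(F_D) \subset \Bb_d(e_1;b) \cap D$ is contained in a set whose closure (in $\overline D$) avoids the boundary hyperplane $e_1 + \Span_{\Cb}\{e_2,\dots,e_d\}$, and $F_D \leq 1$ with the value $1$ attained only on a compact set interior to $\Omega$; if strict inequality is needed one simply rescales $F_D$ by a factor slightly less than $1$, which affects only the constant $C$. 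Finally, the continuity of $G$ claimed downstream is automatic here since $F_{\xi,\epsilon}$ is $C^\infty$. All other steps are routine bookkeeping with the constants from Propositions~\ref{prop:comparison}, \ref{prop:normalizing_maps_visual_metric}, and \ref{prop:plurisubharmonic_on_normal_domain}.
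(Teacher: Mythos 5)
Your proposal is correct and follows essentially the same route as the paper: pull back the function from Proposition~\ref{prop:plurisubharmonic_on_normal_domain} (applied with $r=r_0$ and $a=\tau(2)$) via $A_{\xi,\epsilon}$, use Proposition~\ref{prop:comparison} to convert the Euclidean balls $\Bb_d(e_1;\tau(2))$ and $\Bb_d(e_1;b)$ into the visual balls $V_{z_0}(\xi;2\epsilon)$ and $V_{z_0}(\xi;\epsilon/c_1)$, and use the lower bound on the linear part of $A_{\xi,\epsilon}$ from Proposition~\ref{prop:normalizing_maps_visual_metric} to produce the factor $\epsilon^{-2/m_1}$. Your corrected choice $a:=\tau(2)$ is exactly the paper's, and the remaining remarks (rescaling to get values in $[0,1)$, no need for continuity of the assignment $D\mapsto F_D$) are minor and harmless.
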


\begin{remark}\label{rmk:m_1_equals} The $m_1$ in Lemma~\ref{lem:plurisub_one} can be taken to be the $m_1$ from  Proposition~\ref{prop:normalizing_maps_visual_metric}. \end{remark}

\begin{proof} For $\xi \in \partial \Omega \cap \Bb_d(0;R)$ and $\epsilon \in (0,1)$, let $A_{\xi,\epsilon} \in \Aff(\Cb^d)$ be the affine map from Definition~\ref{defn:visual_metric_normalizing_map}. By Proposition~\ref{prop:normalizing_maps_visual_metric} there exist $r_0, C_0, m_1 > 0$ (which do not depend on $\xi$ or $\epsilon$) such that $A_{\xi,\epsilon} \Omega \in \Kb_d(r_0)$ and 
\begin{align}
\label{eq:A_lower_bound_in_proof}
\norm{A_{\xi,\epsilon}(z_1) - A_{\xi,\epsilon}(z_2)} \geq \frac{C_0}{\epsilon^{1/m_1}} \norm{z_1-z_2}
\end{align}
for all $z_1, z_2 \in \Cb^d$. Then let $\epsilon_0 \in (0,1]$ and $\tau: (0,\infty) \rightarrow (0,\infty)$ be the constant and function from Proposition~\ref{prop:comparison}. Also, let $C_1>0, b >1$ be the constants in Proposition~\ref{prop:plurisubharmonic_on_normal_domain} associated to $r=r_0$ and $a = \tau(2)$. Finally let 
$$
\epsilon_1 = \frac{\epsilon_0}{\max\{2,b\}}.
$$
 
Fix $\xi \in \partial \Omega \cap \Bb_d(0;R)$ and $\epsilon \in (0,\epsilon_1)$. By Proposition~\ref{prop:plurisubharmonic_on_normal_domain} there exists a smooth plurisubharmonic function $F : A_{\xi,\epsilon}\Omega \rightarrow [0,1]$ such that 
\begin{align*}
i\partial \bar{\partial} F(z) \geq C_1 i\partial \bar{\partial} \norm{z}^2 \text{ on } \Bb_d(e_1;\tau(2)) \cap A_{\xi,\epsilon}\Omega 
\end{align*}
and 
\begin{align*}
\supp(F) \subset \Bb_d(e_1;b) \cap A_{\xi,\epsilon}\Omega.
\end{align*}
Then define $F_{\xi,\epsilon} = F \circ A_{\xi,\epsilon} : \Omega \rightarrow [0,1]$. Then 
\begin{align*}
\supp(F_{\xi,\epsilon}) \subset \Omega \cap A_{\xi,\epsilon}^{-1} \Bb_d(e_1;b) \subset V_{z_0}(\xi; \tau(b)\epsilon).
\end{align*}
Moreover, if $A_{\xi,\epsilon}(\cdot) = b_0 + g(\cdot)$ where $z_0 \in \Cb^d$ and $g \in \GL_d(\Cb)$, then Equation~\eqref{eq:A_lower_bound_in_proof} implies that 
\begin{align*}
\norm{gz} \geq \frac{C_0}{\epsilon^{1/m_1}}\norm{z}
\end{align*}
for all $z \in \Cb^d$. 

Since $\epsilon < \frac{\epsilon_0}{2}$,  Proposition~\ref{prop:comparison} implies
$$
V_{z_0}(\xi; 2\epsilon) \subset \overline{\Omega} \cap A_{\xi,\epsilon}^{-1} \Bb_d(e_1;\tau(2)).
$$
So if $z \in V_{z_0}(\xi; 2\epsilon) \cap \Omega$ and $X \in \Cb^d$, we have
\begin{align*}
i\partial \bar{\partial} F_{\xi,\epsilon} (z)(X,\bar{X}) & = i\partial \bar{\partial} F(A_{\xi,\epsilon}z)(gX, \overline{gX}) \geq C_1\norm{gX}^2 \\
& \geq \frac{C_1C_0^2}{\epsilon^{2/m_1}} \norm{X}^2.
\end{align*}
Hence $c_1 = \min\left\{ C_1C_0^2, \frac{1}{\tau(b)} \right\}$ satisfies the lemma. 
\end{proof}

Next define 
\begin{align*}
V_\epsilon : = \cup \left\{ V_{z_0}(\xi;\epsilon) : \xi \in \partial\Omega \cap \Bb_d(0;R)\right\}.
\end{align*}

\begin{lemma}\label{lem:plurisub_two} There exist $c_2 \in (0,1)$ and $\epsilon_2 \in (0,\epsilon_1)$ such that: for any $\epsilon \in (0,\epsilon_2)$ there is a plurisubharmonic function $F_{\epsilon} : \Omega \rightarrow [0,1]$ with
\begin{align*}
i\partial \bar{\partial} F_{\epsilon} (z) \geq \frac{c_2}{\epsilon^{2/m_1}} i\partial \bar{\partial} \norm{z}^2 \text{ on } V_{\epsilon} \cap \Omega.
\end{align*}
\end{lemma}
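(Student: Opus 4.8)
The plan is to build $F_\epsilon$ by summing the localized functions $F_{\xi,\epsilon}$ of Lemma~\ref{lem:plurisub_one} over a finite family of boundary points $\xi_1,\dots,\xi_M\in\partial\Omega\cap\Bb_d(0;R)$ whose balls $V_{z_0}(\xi_k;\epsilon/5)$ form a maximal $\tfrac{\epsilon}{5}$-separated set (with respect to the visual metric $d_{z_0}$), and then dividing by a uniform constant $N$ that controls how many of the supports $\supp F_{\xi_k,\epsilon}\subset V_{z_0}(\xi_k;\epsilon/c_1)$ can overlap. With $F_\epsilon:=\tfrac1N\sum_k F_{\xi_k,\epsilon}$, the range stays in $[0,1]$ because at each point at most $N$ summands are nonzero and each lies in $[0,1)$; and the Hessian estimate survives on $V_\epsilon$ because, given $z\in V_{z_0}(\xi;\epsilon)$, maximality produces a $\xi_k$ with $d_{z_0}(\xi,\xi_k)<\epsilon/5$, hence $d_{z_0}(z,\xi_k)<2\epsilon$, so $z\in V_{z_0}(\xi_k;2\epsilon)$ and Lemma~\ref{lem:plurisub_one} gives $i\partial\overline{\partial}F_{\xi_k,\epsilon}(z)\ge \tfrac{c_1}{\epsilon^{2/m_1}}i\partial\overline{\partial}\norm z^2$ while all other summands contribute nonnegative forms; thus $c_2:=c_1/N$ works.

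The heart of the matter is the uniform overlap bound $N$, which I expect to be the main obstacle. I would derive it from a doubling property of the sets $V_{z_0}(\xi;\cdot)$. Namely, for a fixed constant $c\ge 1$ and $\epsilon\in(0,\epsilon_0/c)$, Proposition~\ref{prop:comparison} (second inclusion with $r=c$, and first inclusion with some fixed $r(c)>0$ chosen so that $\tau(r(c))\le 1/c$, which exists since $\tau$ is nondecreasing with $\tau(s)\to 0$) traps $V_{z_0}(\xi;c\epsilon)$ between $A_{\xi,\epsilon}^{-1}\Bb_d(e_1;r(c))\cap\overline\Omega$ and $A_{\xi,\epsilon}^{-1}\Bb_d(e_1;\tau(c))\cap\overline\Omega$. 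Since $A_{\xi,\epsilon}\Omega\in\Kb_d(r_0)$ by Proposition~\ref{prop:normalizing_maps_visual_metric}, and since $D\mapsto\Vol\big(\Bb_d(e_1;r(c))\cap D\big)$ is a positive, lower semicontinuous function on the compact set $\Kb_d(r_0)$ (Proposition~\ref{prop:Kdr_compact} together with Proposition~\ref{prop:haus_conv_compacts}(1)), it has a positive minimum; taking Lebesgue measures and cancelling the Jacobian $\lvert\det L_{\xi,\epsilon}\rvert$ of the linear part of $A_{\xi,\epsilon}$ yields a constant $\kappa(c)<\infty$, independent of $\xi$ and $\epsilon$, with
\begin{align*}
\Vol\big(V_{z_0}(\xi;c\epsilon)\cap\Omega\big)\le \kappa(c)\,\Vol\big(V_{z_0}(\xi;\tfrac\epsilon c)\cap\Omega\big).
\end{align*}
Then a standard volume-packing argument finishes it: if $S=\{k: z\in\supp F_{\xi_k,\epsilon}\}$, the $\xi_k$ with $k\in S$ all lie within $d_{z_0}$-distance $2\epsilon/c_1$ of each other, and since $d_{z_0}$ satisfies the triangle inequality on all of $\overline\Omega^{\rm End}$ the balls $V_{z_0}(\xi_k;\epsilon/10)$, $k\in S$, are pairwise disjoint and contained in $V_{z_0}(\xi_{k_0};(2/c_1+1/10)\epsilon)$ for any fixed $k_0\in S$; choosing $k_0$ to minimize $\Vol\big(V_{z_0}(\xi_{k_0};\epsilon/10)\cap\Omega\big)>0$ and invoking the doubling inequality shows $\#S\le\kappa$ for a fixed $\kappa$. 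Set $N:=\lceil\kappa\rceil$.

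A few routine points need to be settled along the way: $d_{z_0}$ restricts to a genuine metric on $\partial_{\rm End}\Omega$ and, via Theorem~\ref{thm:compactification}, $\overline\Omega^{\rm End}$ is compact and metrizable, so $\partial\Omega\cap\Bb_d(0;R)$ is totally bounded and the maximal separated set $\{\xi_1,\dots,\xi_M\}$ is finite (hence $F_\epsilon$ is a finite sum of smooth plurisubharmonic functions, so smooth and plurisubharmonic); the constant $\epsilon_2\le\epsilon_1$ must be taken small enough that the finitely many invocations of Proposition~\ref{prop:comparison} are legitimate, i.e. $\epsilon_2<\epsilon_0/c$ for the finitely many constants $c$ that appear; and the open sets $V_{z_0}(\xi_k;\epsilon/10)\cap\Omega$ are nonempty (they contain interior points near $\xi_k\in\partial\Omega$), so have positive Lebesgue measure. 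None of these present real difficulty; the only genuinely delicate step is the doubling estimate, where the key feature exploited is that the normalizing maps $A_{\xi,\epsilon}$ rescale drastically but their Jacobians cancel when one compares the volumes of two $V_{z_0}$-balls about the same center.
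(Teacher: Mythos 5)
Your proposal is correct and follows essentially the same route as the paper: a maximal separated net of boundary points whose $2\epsilon$-balls cover $V_\epsilon$, a uniform bound on the overlap multiplicity of the supports via the volume-packing argument (choosing the center of smallest volume and applying doubling), and then averaging, with $c_2 = c_1/N$. The only divergence is that you re-derive the volume-doubling inequality directly from Proposition~\ref{prop:comparison} by cancelling the Jacobian of $A_{\xi,\epsilon}$, whereas the paper simply reads it off from Proposition~\ref{prop:doubling_proposition} (whose dilation factor immediately gives the volume ratio $M^{2d}$); both are valid and rest on the same mechanism.
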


\begin{proof} By Proposition~\ref{prop:doubling_proposition} there exist $\epsilon_2 \in (0,\epsilon_1)$ and $M > 0$ such that 
\begin{align*}
V_{z_0}\left(\xi; \frac{2\epsilon}{c_1}+\frac{\epsilon}{2}\right) \subset \xi + M \cdot\left( V_{z_0}\left(\xi;\frac{\epsilon}{2}\right) - \xi\right)
\end{align*}
for all $\xi \in \partial \Omega \cap \Bb_d(0;R)$ and $\epsilon \in (0,\epsilon_2)$. 

Fix $\epsilon \in (0,\epsilon_2)$. Let $\{ \xi_j : j \in J\} \subset \partial \Omega \cap \Bb_d(0;R)$ be a maximal set such that the sets $V_{z_0}(\xi_j;\epsilon/2)$ are pairwise disjoint. We claim that 
\begin{align*}
V_\epsilon \subset \cup_{j\in J} V_{z_0}(\xi_j;2\epsilon).
\end{align*}
If not, there exist $\xi \in \partial \Omega \cap \Bb_d(0;R)$ and $z \in V_{z_0}(\xi;\epsilon)$ such that 
\begin{align*}
d_{z_0}(z,\xi_j) > 2\epsilon 
\end{align*}
for all $j \in J$. Then 
\begin{align*}
d_{z_0}(\xi,\xi_j) \geq \min_{j=1,\dots, n} d_{z_0}(z,\xi_j)-d_{z_0}(z,\xi) > \epsilon
\end{align*}
for all $j \in J$. Hence $V_{z_0}(\xi;\epsilon/2)$ is disjoint from each $V_{z_0}(\xi_j;\epsilon/2)$. This contradicts the maximality. 

\medskip

\noindent \textbf{Claim:} If $z \in \Omega$, then 
$$
0 \leq \#\left\{ j : z \in V_{z_0}\left(\xi_j; \frac{\epsilon}{c_1}\right)\right\} \leq M^{2d}.
$$ 
\medskip

\noindent \emph{Proof of Claim:} This is just the proof of the Claim on page 124 in~\cite{M1994}: Suppose that
\begin{align*}
z \in \cap_{k=1}^\ell V_{z_0}\left(\xi_{j_k}; \frac{\epsilon}{c_1}\right)
\end{align*}
and 
\begin{align*}
\mu \left( V_{z_0}\left(\xi_{j_1}; \frac{\epsilon}{2}\right)\right) \leq \dots \leq \mu \left( V_{z_0}\left(\xi_{j_\ell}; \frac{\epsilon}{2}\right) \right)
\end{align*}
where $\mu$ is the Lebesgue measure on $\Cb^d$ (recall that these sets are open in $\overline{\Omega}$ by Observation~\ref{obs:the sets V are open}). Then 
\begin{align*}
  \mu\left(V_{z_0}\left(\xi_{j_1}; \frac{\epsilon}{2}\right)\right) & \leq \frac{1}{\ell}\sum_{k=1}^\ell \mu\left(V_{z_0}\left(\xi_{j_k}; \frac{\epsilon}{2}\right)\right) = \frac{1}{\ell}\mu\left( \cup_{k=1}^\ell V_{z_0}\left(\xi_{j_k}; \frac{\epsilon}{2}\right)\right)\\
 &  \leq \frac{1}{\ell}\mu \left( V_{z_0}\left(\xi_{j_1}; \frac{2\epsilon}{c_1}+\frac{\epsilon}{2}\right)\right) \leq \frac{M^{2d}}{\ell}\mu \left( V_{z_0}\left(\xi_{j_1}; \frac{\epsilon}{2}\right)\right).
\end{align*}
So $\ell \leq M^{2d}$. \hspace*{\fill}$\blacktriangleleft$

\medskip

Now by the previous lemma, for each $j \in J$ there exists $F_j : \Omega \rightarrow [0,1]$ such that 
\begin{align*}
i\partial \bar{\partial} F_{j} (z) \geq \frac{c_1}{\epsilon^{2/m_1}} i\partial \bar{\partial}\norm{z}^2 \text{ on } V_{z_0}(\xi_j; 2\epsilon) \cap \Omega
\end{align*}
and 
\begin{align*}
\supp(F_j ) \subset V_{z_0}\left(\xi_j; \frac{\epsilon}{c_1}\right).
\end{align*}

Finally we define 
\begin{align*}
F_{\epsilon} = \frac{1}{M^{2d}} \sum_{j \in J} F_j.
\end{align*}
Then $F_{\epsilon}$ is a smooth plurisubharmonic function, maps into $[0,1]$, and 
\begin{align*}
i\partial \bar{\partial} F_{\epsilon} (z) \geq \frac{c_2}{ \epsilon^{2/m_1}} i\partial \bar{\partial}\norm{z}^2 \text{ on } V_{\epsilon}
\end{align*}
where $c_2 = c_1 M^{-2d}$. 
\end{proof}

For $\delta > 0$ define 
\begin{align*}
S_\delta := \{ z \in \Omega : \exists \xi \in \partial \Omega \cap \Bb_d(0;R) \text{ such that } z \in [z_0,\xi) \text{ and } \norm{z-\xi} < \delta\}.
\end{align*}

\begin{lemma}\label{lem:neighborhood_inclusions} There exist $B > 0$ and a neighborhood $U$ of $\xi_0$ such that 
\begin{enumerate}
\item $S_{\delta} \subset V_{B\delta^{\lambda/2}}$ for all $\delta > 0$,
\item if $z \in U \cap \Omega$ and $\delta_\Omega(z) \leq \delta$, then $z \in S_{B\delta}$. 
\end{enumerate}
\end{lemma}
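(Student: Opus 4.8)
The plan is to prove the two containments essentially by unwinding definitions and then invoking the quasi-geodesic estimates of Section~\ref{sec:vm_and_normalizing}, in particular Proposition~\ref{prop:visual_metrics_along_line} and Lemma~\ref{lem:dist_est_normalizing_maps}. Let $\delta_0 := \delta_\Omega(z_0)$; since $\overline{\Omega}$ contains the convex hull of $\Bb_d(z_0;\delta_0)$ and any point of $\partial\Omega$, the elementary two-sided estimate
\begin{align*}
\frac{\delta_0}{2R}\norm{z-\xi} \leq \delta_\Omega(z) \leq \norm{z-\xi}
\end{align*}
holds whenever $z \in [z_0,\xi)$ with $\xi \in \partial\Omega \cap \Bb_d(0;R)$ (this is Equation~\eqref{eq:one_in_prop_metrics_along_line} from the proof of Proposition~\ref{prop:visual_metrics_along_line}). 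This is the geometric engine that lets us pass freely between the Euclidean quantity $\norm{z-\xi}$ and $\delta_\Omega(z)$ on the cone-like region $S_\delta$.

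First I would prove (1). Fix $z \in S_\delta$, so $z \in [z_0,\xi)$ for some $\xi \in \partial\Omega \cap \Bb_d(0;R)$ with $\norm{z-\xi} < \delta$. By the ``moreover'' part of Proposition~\ref{prop:visual_metrics_along_line}, $z \in V_{z_0}(\xi; B\norm{z-\xi}^{\lambda/2}) \subset V_{z_0}(\xi; B\delta^{\lambda/2})$, using the constant $B$ from that proposition and monotonicity in the radius. Since $\xi \in \partial\Omega \cap \Bb_d(0;R)$, this immediately gives $z \in V_{B\delta^{\lambda/2}}$ by the definition of $V_\epsilon$ as a union over boundary points in $\Bb_d(0;R)$. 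One should enlarge $B$ if necessary so that it simultaneously serves both parts of the statement, but this is harmless.

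Next I would prove (2). Choose the neighborhood $U$ of $\xi_0$ small enough that every $z \in U \cap \Omega$ lies on a segment $[z_0,\xi)$ with $\xi \in \partial\Omega \cap \Bb_d(0;R)$ and with $\xi$ close to $\xi_0$; concretely, take $U = \Bb_d(\xi_0; \rho)$ for $\rho$ small. For such $z$, let $\xi$ be the point where the ray from $z_0$ through $z$ meets $\partial\Omega$; convexity and $z \in U$ force $\xi \in \Bb_d(0;R)$ and $\norm{z - \xi}$ comparable to $\delta_\Omega(z)$ via the displayed two-sided estimate above. Thus if $\delta_\Omega(z) \leq \delta$ then $\norm{z-\xi} \leq \frac{2R}{\delta_0}\delta_\Omega(z) \leq \frac{2R}{\delta_0}\delta$, so $z \in S_{B\delta}$ provided $B \geq 2R/\delta_0$. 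The only subtlety is to confirm $z$ genuinely lies on $[z_0,\xi)$ rather than beyond $\xi$: this holds because $z \in \Omega$ and $\xi$ is the first boundary hit of the ray, and because for $z$ near $\xi_0 \in \partial\Omega$ the nearest-boundary direction and the $z_0$-to-$\xi$ direction are both nearly the inward normal, so $z$ is on the near side.

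The main obstacle, such as it is, is bookkeeping: one must pick $B$ and $U$ in a way that makes both statements hold with a single $B$, and one must be slightly careful that the point $\xi \in \partial\Omega$ chosen in part (2) (as the radial projection of $z$ from $z_0$) is the \emph{same kind} of point used to define $S_\delta$, i.e. that $z$ lies on the open segment $[z_0,\xi)$ and $\xi \in \Bb_d(0;R)$. Both follow from shrinking $U$ around $\xi_0$ and the boundedness of $\Omega$ (so $\partial\Omega \subset \Bb_d(0;R)$ after possibly increasing the fixed $R$, which was already chosen with $z_0,\xi_0 \in \Bb_d(0;R)$; if needed one replaces $R$ by a larger value so that $\overline{\Omega} \subset \Bb_d(0;R)$). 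No hard analysis is involved — everything reduces to Proposition~\ref{prop:visual_metrics_along_line} and elementary convex geometry.
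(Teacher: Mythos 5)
Your proof is correct and follows essentially the same route as the paper: part (1) is the ``moreover'' clause of Proposition~\ref{prop:visual_metrics_along_line} applied to $q=z$, and part (2) is the two-sided comparison $\frac{\delta_0}{2R}\norm{z-\xi}\leq\delta_\Omega(z)$ coming from the convex hull of $\Bb_d(z_0;\delta_0)$ and $\xi$, after shrinking $U$ so that the radial projection of $z$ from $z_0$ lands in $\partial\Omega\cap\Bb_d(0;R)$. The extra worries you raise (that $z$ lies on $[z_0,\xi)$, and taking a single $B$ as the maximum of the two constants) are automatic and harmless, respectively.
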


\begin{proof} By Proposition~\ref{prop:visual_metrics_along_line} there exists $B_0 \geq 1$ such that: if $q \in [z_0, \xi)$, then 
\begin{align*}
q \in V_{z_0}\left(\xi;B_0\norm{q-\xi}^{\lambda/2}\right).
\end{align*}
So 
\begin{align*}
S_{\delta} \subset V_{B_0\delta^{\lambda/2}}
\end{align*}
for all $\delta > 0$. 

Let $\delta_0 := \delta_\Omega(z_0)$ and pick $U$ a sufficiently small neighborhood of $\xi_0$ such that: if $z \in U \cap \Omega$, then there exists some $\xi \in  \partial \Omega \cap \Bb_d(0;R)$ with $z \in [z_0,\xi)$. 

Fix $\delta > 0$ and $z \in U \cap \Omega$ with $\delta_\Omega(z) \leq \delta$. Then there exists $\xi \in  \partial \Omega \cap \Bb_d(0;R)$ with $z \in [z_0,\xi)$. Since $\overline{\Omega}$ contains the convex hull of $\Bb_d(z_0;\delta_0)$ and $\xi$, we have 
\begin{align*}
\frac{\delta_0}{2R} \norm{z-\xi} \leq \delta_\Omega(z) \leq \delta.
\end{align*}
So $z \in S_{B_1\delta}$ where $B_1 = \frac{2R}{\delta_0}$. 

Then $B = \max\{ B_0, B_1\}$ satisfies the conclusion of the lemma. 
\end{proof}

\begin{proof}[Proof of Theorem~\ref{thm:final_plurisubharmonic_fcn}] Define
\begin{align*}
\delta_1 =\frac{1}{B^{2/\lambda}}\epsilon_2^{2/\lambda}.
\end{align*}
By Lemmas~\ref{lem:plurisub_two} and~\ref{lem:neighborhood_inclusions}, for each $\delta \in (0,\delta_1)$ there exists a smooth plurisubharmonic function $F_{\delta} : \Omega \rightarrow [0,1]$ such that  
\begin{align*}
i\partial \bar{\partial} F_{\delta} (z) \geq \frac{c_3}{\delta^{2/\ell}} i\partial \bar{\partial} \norm{z}^2 \text{ on } S_{\delta}\end{align*}
where $c_3 = c_2B^{-2/m_1}$ and $\ell  = 2m_1/\lambda$. 

Now we use the argument on page 464 in~\cite{S1997}: Pick $k_0 \in \Nb$ such that $2^{-k_0} < \delta_1$. Then pick any
\begin{align}
\label{eq:defn_of_m2}
m_2 > \max\left\{\frac{2m_1}{\lambda}, 2\right\}
\end{align}
and define
\begin{align*}
F(z) = \sum_{k=k_0}^{\infty} 2^{-2k(1/\ell-1/m_2)} F_{2^{-k}}.
\end{align*}
Since each $F_{2^{-k}}$ is bounded in absolute value by 1, the sum is uniformly convergent. Thus $F$ is a bounded continuous function. Since each $F_{2^{-k}}$ is plurisubharmonic, $F$ is as well. By decreasing $U$, we can assume that: if $z \in U \cap \Omega$, then $B\delta_\Omega(z) < 2^{-k_0}$. Now fix some $z \in U \cap \Omega$. Then there exists some $K \geq k_0$ such that
\begin{align*}
\frac{1}{2^{K+1}} \leq B\delta_\Omega(z) \leq \frac{1}{2^K}.
\end{align*}
Then $z \in S_{2^{-k}}$ for all $k_0 \leq k \leq K$. Hence there exists $c_4 > 0$ (independent of $z$) such that
\begin{align*}
i\partial \bar{\partial} F (z)
& \geq \sum_{k=k_0}^K  \frac{c_32^{2k/\ell}}{ 2^{2k(1/\ell-1/m_2)} } i\partial \bar{\partial} \norm{z}^2 \\
& \geq c_4 2^{2(K+1)/m_2} i\partial \bar{\partial} \norm{z}^2 \geq \frac{c_4}{B^{2/m_2}\delta_\Omega(z)^{2/m_2}}i\partial \bar{\partial} \norm{z}^2.
\end{align*}

Then let $G = \frac{1}{\sum_{k=k_0}^{\infty} 2^{-2k(1/\ell-1/m_2)} } F$. 
\end{proof}

\begin{remark}\label{rmk:value_of_ell} When $d \geq 2$, we always have $\max\left\{ \frac{2m_1}{\lambda},2\right\} =\frac{2m_1}{\lambda}$ in Equation~\eqref{eq:defn_of_m2}. To see this, first observe that Equation~\eqref{eq:m_1_equality} implies that
\begin{align*}
m_1  = \frac{\alpha \lambda m_0}{2}
\end{align*}
where $\alpha \geq 1$ is the constant in Lemma~\ref{lem:dist_est_normalizing_maps} and $m_0 > 0$ is the constant from Corollary~\ref{cor:m_convex_Gromov_hyp}. Remark~\ref{rmk:m_convex_values} implies that $m_0 \geq 2$. Thus 
\begin{align*}
 \frac{2m_1}{\lambda} = \frac{2}{\lambda}\frac{\alpha \lambda m_0}{2} = \alpha m_0 \geq 2. 
\end{align*}
\end{remark}

\section{Proof of Theorem~\ref{thm:intersection}}\label{sec:pf_of_thm_intersection}

In this section we prove the following strengthening of Theorem~\ref{thm:intersection}. 

\begin{theorem}\label{thm:intersection_body} Suppose $\Omega_1,\dots, \Omega_n \subset \Cb^d$ are $\Cb$-properly convex domains and each $(\Omega_j, d_{\Omega_j})$ is Gromov hyperbolic. If $\Omega := \cap_{j=1}^n \Omega_j$ is bounded and non-empty, then $\Omega$ satisfies a subelliptic estimate.  \end{theorem}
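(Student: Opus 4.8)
First, observe that $\Omega=\bigcap_{j=1}^m\Omega_j$, being a non-empty bounded intersection of convex sets, is bounded, convex, and in particular pseudoconvex. By Frankel's theorem~\cite{F1991} the Kobayashi and K\"ahler--Einstein distances on each $\Cb$-properly convex $\Omega_j$ are bi-Lipschitz, and since each of $(\Omega_j,K_{\Omega_j})$ and $(\Omega_j,d_{\Omega_j})$ is a proper geodesic space, Gromov hyperbolicity of $(\Omega_j,d_{\Omega_j})$ forces Gromov hyperbolicity of $(\Omega_j,K_{\Omega_j})$. The plan is: (i) near every $\xi\in\partial\Omega$ produce a bounded plurisubharmonic function on a neighborhood of $\xi$ in $\Omega$ whose complex Hessian dominates $\delta_\Omega(z)^{-2/m}\,i\partial\overline{\partial}\norm{z}^2$ for some fixed $m\ge2$; (ii) apply Straube's local subelliptic estimate, Theorem~\ref{thm:straube}; (iii) patch the finitely many local estimates into a global one. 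The analytically substantial part of (i) for a \emph{single} Gromov hyperbolic convex domain is already Theorem~\ref{thm:final_plurisubharmonic_fcn}; the new ingredient is to combine such functions for the various $\Omega_j$.

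Fix $\xi\in\partial\Omega$ and let $J=\{\,j:\xi\in\partial\Omega_j\,\}$, which is non-empty because $\partial\Omega\subset\bigcup_j\partial\Omega_j$. I would use the elementary identity $\delta_\Omega(z)=\min_{1\le j\le m}\delta_{\Omega_j}(z)$ for $z\in\Omega$ (since $\Cb^d\setminus\Omega=\bigcup_j(\Cb^d\setminus\Omega_j)$); because $\delta_{\Omega_j}$ is bounded below near $\xi$ for $j\notin J$, there is a neighborhood $U$ of $\xi$ on which both $\delta_\Omega<1$ and $\delta_\Omega(z)=\min_{j\in J}\delta_{\Omega_j}(z)$. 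Applying Theorem~\ref{thm:final_plurisubharmonic_fcn} to each $\Omega_j$ with $j\in J$ and base point $\xi_0=\xi$, then shrinking $U$, I get constants $C_j>0$, numbers $m_j\ge2$, and bounded continuous plurisubharmonic functions $G_j:U\cap\Omega_j\to\Rb$ with $i\partial\overline{\partial}G_j\ge C_j\delta_{\Omega_j}^{-2/m_j}\,i\partial\overline{\partial}\norm{z}^2$ as currents on $U\cap\Omega_j$. As $\Omega\subset\Omega_j$, the function $G:=\sum_{j\in J}G_j$ is bounded, continuous and plurisubharmonic on $U\cap\Omega$; setting $m:=\max_{j\in J}m_j$, $C_0:=\min_{j\in J}C_j$, and choosing for $z\in U\cap\Omega$ an index $j_0\in J$ with $\delta_{\Omega_{j_0}}(z)=\delta_\Omega(z)$, the inequalities $0<\delta_\Omega(z)<1$ and $m_{j_0}\le m$ yield
\begin{align*}
i\partial\overline{\partial}G(z) &\ \ge\ i\partial\overline{\partial}G_{j_0}(z)\ \ge\ \frac{C_{j_0}}{\delta_\Omega(z)^{2/m_{j_0}}}\,i\partial\overline{\partial}\norm{z}^2\\
&\ \ge\ \frac{C_0}{\delta_\Omega(z)^{2/m}}\,i\partial\overline{\partial}\norm{z}^2
\end{align*}
as currents on $U\cap\Omega$.

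Next I would note that, $\Omega$ being bounded and convex, its boundary is near every point the graph of a convex --- hence locally Lipschitz --- function over a supporting hyperplane, so after one more shrinking of $U$ the set $\partial\Omega\cap U$ is a Lipschitz graph; thus Theorem~\ref{thm:straube} applies to $G$ and gives a neighborhood $V_\xi$ of $\xi$ and $C_\xi>0$ with $\norm{u}_{1/m,\,V_\xi\cap\Omega}\le C_\xi(\norm{\overline{\partial}u}_0+\norm{\overline{\partial}^*u}_0)$ for all $u\in L^2_{(0,q)}(\Omega)\cap\dom(\overline{\partial})\cap\dom(\overline{\partial}^*)$, $1\le q\le d$. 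Since $\partial\Omega$ is compact, finitely many such $V_{\xi_1},\dots,V_{\xi_N}$ cover it; let $\epsilon>0$ be the least of the corresponding orders. Adjoining an interior open set $V_0\Subset\Omega$ with $\overline{\Omega}\subset V_0\cup\bigcup_k V_{\xi_k}$ --- on which interior ellipticity of $\overline{\partial}\oplus\overline{\partial}^*$ gives $\norm{u}_{1,V_0}\lesssim\norm{u}_0+\norm{\overline{\partial}u}_0+\norm{\overline{\partial}^*u}_0$ --- a partition of unity subordinate to this cover, together with the fact that multiplication by a smooth cutoff preserves $\dom(\overline{\partial})\cap\dom(\overline{\partial}^*)$ and creates only zeroth-order commutator terms, produces $\norm{u}_\epsilon\lesssim\norm{u}_0+\norm{\overline{\partial}u}_0+\norm{\overline{\partial}^*u}_0$; finally the basic $L^2$ estimate $\norm{u}_0\lesssim\norm{\overline{\partial}u}_0+\norm{\overline{\partial}^*u}_0$, valid on bounded pseudoconvex domains for $q\ge1$, absorbs the zeroth-order term, leaving a subelliptic estimate of order $\epsilon$ on $\Omega$.

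The main obstacle is not located in this argument but upstream, inside Theorem~\ref{thm:final_plurisubharmonic_fcn}; here the genuinely load-bearing facts are the identity $\delta_\Omega=\min_j\delta_{\Omega_j}$ --- which is what lets the Hessian lower bound survive summation of the $G_j$ once the exponents $1/m_j$ are reconciled --- and the classical, but commutator-sensitive, localization of subelliptic estimates, for which I would follow the standard scheme (see~\cite{S1997} and the surveys~\cite{BS1999,CD2010}).
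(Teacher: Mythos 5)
Your proposal is correct and follows essentially the same route as the paper: reduce to the Kobayashi metric via Frankel, sum the plurisubharmonic functions from Theorem~\ref{thm:final_plurisubharmonic_fcn} over the indices $j$ with $\xi\in\partial\Omega_j$ using $\delta_\Omega=\min_j\delta_{\Omega_j}$ and $m=\max_j m_j$, invoke Straube's theorem locally, and patch by compactness of $\partial\Omega$. The only divergence is in the final patching step, where you use a partition of unity with commutator terms absorbed by the basic $L^2$ estimate, while the paper avoids cutoffs by directly splitting the Gagliardo double integral defining $\norm{u}_{1/m}$ into near-diagonal pairs (covered by the $V_j\times V_j$) and far pairs (controlled by $\norm{u}_0$); both are standard and correct.
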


For the rest of the section fix $\Omega = \cap_{j=1}^n \Omega_j$ as in the statement of Theorem~\ref{thm:intersection_body}.

\begin{lemma} For every $\xi \in \partial \Omega$, there is a neighborhood $W$ of $\xi$, $C>0$, $m > 2$, and a bounded continuous plurisubharmonic function $G : W \cap \Omega \rightarrow [0,1]$ 
such that 
\begin{align*}
i\partial \bar{\partial} G(z) \geq \frac{C}{\delta_{\Omega}(z)^{2/m}} i\partial \bar{\partial} \norm{z}^2 \text{ on } W \cap \Omega.
\end{align*}
\end{lemma}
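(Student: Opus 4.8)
The plan is to reduce the lemma to a local statement about a single convex domain and then invoke Theorem~\ref{thm:final_plurisubharmonic_fcn}, which produces exactly such a plurisubharmonic function for a $\Cb$-properly convex domain with Gromov hyperbolic Kobayashi metric. First I would fix $\xi \in \partial \Omega$. Since $\Omega = \cap_{j=1}^m \Omega_j$, the point $\xi$ lies in $\overline{\Omega}_j$ for each $j$; let $J \subset \{1,\dots,m\}$ be the set of indices with $\xi \in \partial \Omega_j$ (this set is non-empty, since otherwise $\xi$ would be an interior point of every $\Omega_j$ and hence of $\Omega$). For each $j \in J$, apply Theorem~\ref{thm:final_plurisubharmonic_fcn} to $\Omega_j$ at the boundary point $\xi$: this yields constants $C_j > 0$, $m_j \geq 2$, a neighborhood $U_j$ of $\xi$, and a bounded continuous plurisubharmonic function $G_j : U_j \cap \Omega_j \rightarrow \Rb$ with
\begin{align*}
i\partial \overline{\partial} G_j(z) \geq \frac{C_j}{\delta_{\Omega_j}(z)^{2/m_j}} i\partial \overline{\partial} \norm{z}^2 \text{ on } U_j \cap \Omega_j.
\end{align*}

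Next I would set $W := \cap_{j \in J} U_j$ (shrinking if necessary so that $\overline{W}$ avoids all the $\Omega_j$ with $j \notin J$, which is possible since $\xi$ is an interior point of those), $m := \max_{j \in J} m_j$, and $G := \sum_{j \in J} G_j$ restricted to $W \cap \Omega$. Then $G$ is bounded, continuous, and plurisubharmonic on $W \cap \Omega$ as a finite sum of such functions. The key point is the Hessian estimate: on $W \cap \Omega$ we have $\delta_\Omega(z) = \min_{1 \le j \le m} \delta_{\Omega_j}(z)$, and after shrinking $W$ so that $\delta_{\Omega_j}(z)$ is bounded below by a positive constant for $j \notin J$ (again using that $\xi$ is interior to those domains), we get $\delta_\Omega(z) = \min_{j \in J} \delta_{\Omega_j}(z)$ on $W \cap \Omega$, possibly after a further shrink. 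Therefore for each fixed $z \in W \cap \Omega$ there is some $j_0 \in J$ with $\delta_\Omega(z) = \delta_{\Omega_{j_0}}(z)$, and since every term $i\partial\overline\partial G_j(z)$ is positive semidefinite,
\begin{align*}
i\partial \overline{\partial} G(z) \geq i\partial \overline{\partial} G_{j_0}(z) \geq \frac{C_{j_0}}{\delta_{\Omega_{j_0}}(z)^{2/m_{j_0}}} i\partial \overline{\partial}\norm{z}^2 \geq \frac{C}{\delta_\Omega(z)^{2/m}} i\partial \overline{\partial} \norm{z}^2
\end{align*}
where in the last step I use $\delta_\Omega(z) = \delta_{\Omega_{j_0}}(z) \leq \mathrm{diam}(\Omega)$ and $m_{j_0} \leq m$ to absorb the exponent mismatch into a uniform constant $C := \min_{j \in J} C_j \cdot \mathrm{diam}(\Omega)^{-2(1/m_j - 1/m)}$ (bounding $\delta_\Omega(z)^{-2/m_{j_0}} = \delta_\Omega(z)^{-2/m}\cdot\delta_\Omega(z)^{2/m - 2/m_{j_0}} \geq \delta_\Omega(z)^{-2/m} \mathrm{diam}(\Omega)^{2/m - 2/m_{j_0}}$ since $2/m - 2/m_{j_0} \le 0$ and $\delta_\Omega(z) \le \mathrm{diam}(\Omega)$).

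I expect the only genuinely delicate point to be the bookkeeping around which boundary pieces are "active" near $\xi$ — i.e., verifying that $W$ can be chosen so that $\delta_\Omega = \min_{j \in J}\delta_{\Omega_j}$ there and that the functions $G_j$ are simultaneously defined — but this is routine since the finitely many domains $\Omega_j$ with $j \notin J$ contain $\xi$ in their interiors, so $\delta_{\Omega_j}$ stays bounded away from $0$ on a small enough neighborhood. Everything else is a direct citation of Theorem~\ref{thm:final_plurisubharmonic_fcn} plus the elementary facts that sums of plurisubharmonic functions are plurisubharmonic and that $\delta_\Omega$ is the minimum of the $\delta_{\Omega_j}$. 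No obstacle of substance remains; the work of the paper is all in Theorem~\ref{thm:final_plurisubharmonic_fcn} and its supporting machinery.
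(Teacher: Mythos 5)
Your proposal is correct and follows essentially the same route as the paper: apply Theorem~\ref{thm:final_plurisubharmonic_fcn} to each $\Omega_j$ whose boundary contains $\xi$, sum the resulting functions, and use that $\delta_\Omega=\min_j\delta_{\Omega_j}$ near $\xi$ together with the positive semidefiniteness of each Hessian term. Your extra care with the exponent mismatch between $m_{j_0}$ and $m=\max_j m_j$ is a detail the paper glosses over (it simply takes $C=\min_j C_j$), and is handled correctly.
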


\begin{proof} By relabeling we can suppose that $\xi \in \partial \Omega_j$ for $1 \leq j \leq \ell$ and $\xi \in \Omega_j$ for $\ell+1 \leq j \leq n$. Then there exists a neighborhood $U_0$ of $\xi$ such that: if $z \in U_0 \cap \Omega$, then
\begin{align*}
\delta_\Omega(z) = \min_{1 \leq j \leq \ell} \delta_{\Omega_j}(z).
\end{align*}
By Theorem~\ref{thm:final_plurisubharmonic_fcn}, for each $1 \leq j \leq \ell$, there exist constants $C_j>0$, $m_j > 2$, a neighborhood $U_j$ of $\xi$, and a bounded continuous plurisubharmonic function $G_j : U_j \cap \Omega_j \rightarrow [0,1]$ 
such that 
\begin{align*}
i\partial \bar{\partial} G_j(z) \geq \frac{C_j}{\delta_{\Omega_j}(z)^{2/m_j}} i\partial \bar{\partial} \norm{z}^2 \text{ on } U_j \cap \Omega_j.
\end{align*}
Then $G= \frac{1}{\ell}\sum_{j=1}^\ell G_j$ satisfies the conclusion of the lemma with $W = \cap_{j=0}^\ell U_j$, $C = \frac{1}{\ell}\min_{1\leq j \leq \ell} C_j$, and $m = \max_{1 \leq j \leq \ell} m_j$. 
\end{proof}

So by Straube's theorem (Theorem~\ref{thm:straube} above) for each $\xi \in \partial \Omega$ there exist constants $C_\xi >0, m_\xi > 2$ and a neighborhood $V_\xi$ of $\xi$ in $\Cb^d$ such that 
\begin{align*}
\norm{u}_{\frac{1}{m_\xi}, V_\xi \cap \Omega} \leq C_\xi ( \|\bar{\partial} u\|_0 + \|\bar{\partial}^* u\|_0)
\end{align*}
for all $u \in L^2_{(0,q)}(\Omega) \cap { \rm dom}(\bar{\partial}) \cap { \rm dom}(\bar{\partial}^*)$.  Since $\partial \Omega$ is compact, we can find $\xi_1,\dots, \xi_N \in \partial \Omega$ such that if $V_j := V_{\xi_j}$, then
\begin{align*}
\partial \Omega \subset \cup_{1 \leq j \leq N} V_{j}.
\end{align*} 
Let $C_j = C_{\xi_j}$ and $m = \max_{1 \leq j \leq N} m_{\xi_j}$.

Next fix a relatively compact open set $V_0 \subset \Omega$ where $\overline{\Omega} \subset \cup_{j=0}^N V_j$. Using standard interior estimates, see for instance Proposition 5.1.1 and Equation (4.4.6) in~\cite{CS2001}, we have the following estimate.

\begin{lemma} There exists $C_0 > 0$ such that: 
\begin{align*}
\norm{u}_{\frac{1}{m}, V_0} \leq C_0 ( \|\bar{\partial} u\|_0 + \|\bar{\partial}^* u\|_0)
\end{align*}
for every $u \in L^2_{(0,q)}(\Omega) \cap { \rm dom}(\bar{\partial}) \cap { \rm dom}(\bar{\partial}^*)$.
\end{lemma}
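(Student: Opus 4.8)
The plan is to reduce the statement to two standard facts: the interior a priori estimate for the $\overline{\partial}$-complex, and the global basic estimate on the bounded pseudoconvex domain $\Omega$.

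First I would note that $\overline{V_0}$ is a compact subset of $\Omega$. Since the sets $V_1,\dots,V_N$ are open and their union contains $\partial\Omega$, the set $\cup_{1\le j\le N}V_j$ is an open neighborhood of $\partial\Omega$; hence $\overline{\Omega}\setminus\cup_{j\ge 1}V_j$ is a closed subset of the compact set $\overline{\Omega}$ disjoint from $\partial\Omega$, so it is a compact subset of $\Omega$, and it contains $\overline{V_0}$. I would then fix an open set $W$ with $\overline{V_0}\subset W$ and $\overline{W}\subset\Omega$ compact, together with a cutoff $\chi\in C^\infty_c(W)$ satisfying $\chi\equiv 1$ on a neighborhood of $\overline{V_0}$.

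Next I would apply the interior estimate for the $\overline{\partial}$-Neumann problem (Proposition~5.1.1 of~\cite{CS2001}; equivalently, ellipticity of $\overline{\partial}\oplus\overline{\partial}^*$ in the interior, obtained by the Morrey--Kohn--H\"ormander identity applied to the compactly supported form $\chi u$, for which no boundary terms occur). This yields a constant $C_1>0$, depending only on $\chi$ and $W$, with $\norm{u}_{1,V_0}\le\norm{\chi u}_1\le C_1\bigl(\norm{\overline{\partial}u}_{0,W}+\norm{\overline{\partial}^* u}_{0,W}+\norm{u}_{0,W}\bigr)$ for every $u\in L^2_{(0,q)}(\Omega)\cap\dom(\overline{\partial})\cap\dom(\overline{\partial}^*)$; the terms produced by differentiating $\chi$ are of order zero in $u$ and are absorbed into $\norm{u}_{0,W}$. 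Since $V_0\subset W\subset\Omega$ I may replace norms over $W$ by norms over $\Omega$, and since $1/m\le 1$ one has $\norm{u}_{1/m,V_0}\le\norm{u}_{1,V_0}$, giving $\norm{u}_{1/m,V_0}\le C_1(\|\overline{\partial}u\|_0+\|\overline{\partial}^*u\|_0+\|u\|_0)$.

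Finally, $\Omega$ is bounded and pseudoconvex (being an intersection of convex domains) and $1\le q\le d$, so the global basic estimate — Equation~(4.4.6) in~\cite{CS2001}, i.e. the Morrey--Kohn--H\"ormander inequality on a bounded pseudoconvex domain — provides a constant $C_2>0$ with $\|u\|_0^2\le C_2(\|\overline{\partial}u\|_0^2+\|\overline{\partial}^*u\|_0^2)$. Substituting this into the previous bound and taking $C_0$ suitably large (for instance $C_0=C_1(1+\sqrt{C_2})$) completes the proof. I do not anticipate a genuine obstacle; the only points requiring care are verifying that $V_0$ is relatively compact in $\Omega$ and that the interior estimate is applied to the compactly supported form $\chi u$, so that no boundary contributions enter.
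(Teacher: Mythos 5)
Your proof is correct and follows exactly the route the paper takes: the paper's "proof" consists of citing Proposition~5.1.1 (interior ellipticity of $\overline{\partial}\oplus\overline{\partial}^*$, applied after cutting off on the relatively compact set $V_0$) and Equation~(4.4.6) (the basic estimate on a bounded pseudoconvex domain, used to absorb the $\|u\|_0$ term) of~\cite{CS2001}, which are precisely the two ingredients you supply. You have simply written out the details — including the verification that $\overline{V_0}$ is compact in $\Omega$ — that the paper leaves implicit.
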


Let $\Vc = \cup_{j=0}^N V_j$. Then let $\chi_0,\dots, \chi_N$ be a smooth partition of unity subordinate to the open cover $\Vc = \cup_{j=0}^N V_j$, that is:
\begin{enumerate}
\item each $\chi_j :  \Vc \rightarrow [0,1]$ is smooth and ${\rm supp}(\chi_j) \subset V_j$,
\item $\sum_{j=0}^N \chi_j = 1$ on $\Vc$.
\end{enumerate}
Since $\overline{\Omega} \subset \Vc$, there exists some constant $B>0$ such that: if $0 \leq j \leq N$ and $u|_{V_j \cap \Omega} \in W^{2,1/m}_{0,q}(V_j \cap \Omega)$, then 
\begin{align*}
\norm{\chi_j u}_{\frac{1}{m}, \Omega} \leq B\norm{u}_{\frac{1}{m}, V_{j} \cap \Omega}.
\end{align*}
Finally, if $u \in L^2_{(0,q)}(\Omega) \cap { \rm dom}(\bar{\partial}) \cap { \rm dom}(\bar{\partial}^*)$, then
\begin{align*}
\norm{u}_{\frac{1}{m}} \leq \sum_{j=0}^N \norm{\chi_j u}_{\frac{1}{m}} \leq B \sum_{j=0}^N\norm{u}_{\frac{1}{m}, V_{j} \cap \Omega} \leq B\left( \max_{0 \leq j \leq N} C_j \right)(N+1) \left(   \|\bar{\partial} u\|_0 + \|\bar{\partial}^* u\|_0 \right).
\end{align*}

\section{The order of subelliptic estimate}\label{sec:optimal_estimate}

In this section we describe the order of subelliptic estimate obtained by our argument in the special case of a bounded convex domain with Gromov hyperbolic Kobayashi metric. 

For a bounded convex domain $\Omega \subset \Cb^d$, define 
\begin{align*}
m_\star(\Omega):= \inf\{ m \geq 1: \Omega \text{ is $m$-convex} \} \in [1,\infty].
\end{align*}
By Remark~\ref{rmk:m_convex_values}, if $d=1$, then $m_\star(\Omega)=1$ and if $d \geq 2$, then $m_\star(\Omega) \geq 2$. Further, by Corollary~\ref{cor:m_convex_Gromov_hyp}, if $(\Omega, K_\Omega)$ is Gromov hyperbolic, then $m_\star(\Omega) < \infty$. 

We say that a bounded convex domain $\Omega \subset \Cb^d$ is \emph{$\alpha$-regular} if for any $z_0 \in \Omega$ there exists some $B=B(\alpha,z_0) > 0$ such that 
\begin{align*}
K_\Omega(q, z_0) \leq B + \frac{\alpha}{2} \log \frac{1}{\delta_\Omega(q)} 
\end{align*}
for all $q \in \Omega$. Then define 
\begin{align*}
\alpha_\star(\Omega):= \inf\{ \alpha >0: \Omega \text{ is $\alpha$-regular} \} \in [1,\infty)
\end{align*}
Lemma~\ref{lem:hyperplanes} implies that $\alpha_\star(\Omega) \geq 1$ and Proposition~\ref{prop:quasi_geodesic} implies that $\alpha_\star(\Omega) < \infty$. 

\begin{theorem}\label{thm:optimal_order} Suppose $d \geq 2$, $\Omega \subset \Cb^d$ is a bounded convex domain, and $(\Omega, K_\Omega)$ is Gromov hyperbolic. If 
\begin{align*}
\epsilon <  \frac{1}{\alpha_\star(\Omega) m_\star(\Omega)},
\end{align*}
then a subelliptic estimate of order $\epsilon$ holds on $\Omega$. 
\end{theorem}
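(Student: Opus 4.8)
The plan is to re-examine the construction of the weighted plurisubharmonic function in Sections~\ref{sec:vm_and_normalizing}--\ref{sec:psh_on_general_domains}, tracking the exponents quantitatively and exploiting the fact that the base point $z_0$ used throughout that construction may be chosen arbitrarily in $\Omega$. Fix $\epsilon < 1/(\alpha_\star(\Omega)m_\star(\Omega))$. Since $\alpha_\star(\Omega)<\infty$ by Proposition~\ref{prop:quasi_geodesic}, $m_\star(\Omega)<\infty$ by Corollary~\ref{cor:m_convex_Gromov_hyp}, and $1/(\alpha m_0)\to 1/(\alpha_\star(\Omega)m_\star(\Omega))$ as $\alpha\searrow\alpha_\star(\Omega)$ and $m_0\searrow m_\star(\Omega)$, we may choose $\alpha>\alpha_\star(\Omega)$ and $m_0>m_\star(\Omega)$ (so $m_0>2$) with $\epsilon<1/(\alpha m_0)$. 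By the definition of $\alpha_\star(\Omega)$ there is a point $z_0\in\Omega$ at which $\Omega$ is $\alpha$-regular; together with Lemma~\ref{lem:hyperplanes_along_lines}, which gives the matching lower bound $K_\Omega(\sigma_{z_0\xi}(s),\sigma_{z_0\xi}(t))\ge\abs{t-s}$ automatically, this says precisely that the curves $\sigma_\xi(t)=\xi+e^{-2t}(z_0-\xi)$ are $(\alpha,\beta)$-quasi-geodesics uniformly in $\xi\in\partial\Omega$ for some $\beta>0$. A short convexity argument (using boundedness of $\Omega$) shows $m$-convexity for one exponent implies $m'$-convexity for every $m'\ge m$, so $\Omega$ is $m_0$-convex and hence $\delta_\Omega(z;v)\le c_0\delta_\Omega(z)^{1/m_0}$ for all $z\in\Omega$ and nonzero $v$.

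Next I would carry out the construction of Sections~\ref{sec:vm_and_normalizing}--\ref{sec:psh_on_general_domains} verbatim, but with this specific $z_0$, $\alpha$, $\beta$, $m_0$, and with $R$ chosen so that $\Omega\subset\Bb_d(0;R)$. The exponents enter at only three points. First, in Proposition~\ref{prop:visual_metrics_along_line}: via Lemma~\ref{lem:dist_est_normalizing_maps}, which uses only the upper half of the quasi-geodesic inequality (i.e.\ exactly $\alpha$-regularity at $z_0$), one obtains $\delta_\Omega(q_{\xi,\epsilon})\le B\epsilon^{2/(\alpha\lambda)}$. Second, feeding this and the $m_0$-convexity bound into the proof of Proposition~\ref{prop:normalizing_maps_visual_metric} shows that $m_1:=\alpha\lambda m_0/2$ works there, matching Equation~\eqref{eq:m_1_equality}. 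Third, in the proof of Theorem~\ref{thm:final_plurisubharmonic_fcn} one then has $\ell=2m_1/\lambda=\alpha m_0$, so the visual-metric exponent $\lambda$ cancels; since $\epsilon<1/(\alpha m_0)$, the interval $(\alpha m_0,\,1/\epsilon)$ is nonempty, and we pick $m_2$ there. Covering $\partial\Omega$ by finitely many of the resulting neighborhoods gives, for every $\xi_0\in\partial\Omega$, a neighborhood $U$ and a bounded continuous plurisubharmonic $G:U\cap\Omega\to\Rb$ with $i\partial\overline{\partial}G(z)\ge C\delta_\Omega(z)^{-2/m_2}\,i\partial\overline{\partial}\norm{z}^2$ on $U\cap\Omega$, with this $m_2$ uniform over $\partial\Omega$.

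Finally, applying Straube's theorem (Theorem~\ref{thm:straube}) at each boundary point gives local estimates $\norm{u}_{1/m_2,\,V\cap\Omega}\le C(\norm{\overline{\partial}u}_0+\norm{\overline{\partial}^*u}_0)$; since $\Omega$ is bounded and $\epsilon<1/m_2$ we have $\norm{u}_{\epsilon,\,V\cap\Omega}\le C'\norm{u}_{1/m_2,\,V\cap\Omega}$, and patching these local estimates with the standard interior estimate exactly as in Section~\ref{sec:pf_of_thm_intersection} produces a global subelliptic estimate of order $\epsilon$ on $\Omega$. The main obstacle is bookkeeping rather than a new idea: one must check that nothing in Sections~\ref{sec:vm_and_normalizing}--\ref{sec:psh_on_general_domains} uses any property of the base point beyond membership in $\Omega$ (so the $\alpha$-regular point is admissible), that the additive constant $\beta$ in the quasi-geodesic bound may be taken uniform in $\xi$ for that $z_0$ (which it is, by the very form of the definition of $\alpha$-regular), and that the occurrences of $\alpha$, $m_0$, $\lambda$ in the final exponent combine to exactly $\alpha m_0$. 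In the special case $\partial\Omega\in C^2$ one may take $\alpha=1$ (Remark following Proposition~\ref{prop:quasi_geodesic}), so the argument then yields every order $\epsilon<1/m_\star(\Omega)$.
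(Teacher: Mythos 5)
Your proposal is correct and follows essentially the same route as the paper's proof: choose $\alpha>\alpha_\star(\Omega)$, $m_0>m_\star(\Omega)$ with $\epsilon<1/(\alpha m_0)$, verify the curves $\sigma_\xi$ are uniform $(\alpha,\beta)$-quasi-geodesics via Lemma~\ref{lem:hyperplanes_along_lines}, track the exponent through Equations~\eqref{eq:m_1_equality} and~\eqref{eq:defn_of_m2} to get $m_2>\alpha m_0$ in Theorem~\ref{thm:final_plurisubharmonic_fcn}, and conclude with Straube's theorem and the local-to-global patching of Section~\ref{sec:pf_of_thm_intersection}. The only cosmetic difference is that the paper takes $m_2=\epsilon^{-1}$ directly rather than an intermediate $m_2\in(\alpha m_0,1/\epsilon)$ followed by Sobolev-norm monotonicity.
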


Before proving Theorem~\ref{thm:optimal_order} we calculate $\alpha_\star$ and $m_\star$ for some classes of domains. 

\begin{proposition}\label{prop:alpha_star_reg} Suppose $\Omega \subset \Cb^d$ is a bounded convex domain and $\partial \Omega$ is $C^{1}$. If $\epsilon > 0$ and $z_0 \in \Omega$, then there exists 
$B =B(\epsilon, z_0)> 0$ such that 
\begin{align*}
K_\Omega(q, z_0) \leq B + \frac{1+\epsilon}{2} \log \frac{1}{\delta_\Omega(q)} 
\end{align*}
for all $q \in \Omega$. In particular, $\alpha_\star(\Omega) = 1$. 
\end{proposition}

\begin{proof} For $\xi \in \partial \Omega$ let $n_{\xi} \in \Cb^d$ denote the inward pointing unit normal vector of $\partial \Omega$ at $\xi$. 

Fix $\epsilon > 0$. Since $\partial \Omega$ is $C^1$, there exists $t_0=t_0(\epsilon) > 0$ such that 
$$
\delta_\Omega(\xi + t n_{\xi}) \geq \frac{1}{1+\epsilon} t
$$ 
for all $\xi \in \partial \Omega$ and $t \in (0,t_0)$. 

For $\xi \in \partial \Omega$ let $\gamma_\xi: [0,\infty) \rightarrow \Omega$ denote the curve 
$$
\gamma_\xi(t) = \xi+ t_0e^{-2t}n_\xi.
$$
Then for $0 \leq t$ we have
\begin{align*}
K_\Omega(\gamma_{\xi}(t), \gamma_{\xi}(0)) &\leq \int_0^t k_\Omega( \gamma_{\xi}(r); \gamma_{\xi}^\prime(r))dr \leq \int_0^t \frac{\norm{ \gamma_{\xi}^\prime(r)}}{\delta_\Omega( \gamma_{\xi}(r))}dr \\
&  \leq (1+\epsilon)  \int_0^t dr = (1+\epsilon)t.
\end{align*}

Now fix $q \in \Omega$. Then $q = \xi + \delta_\Omega(q) n_\xi$ where $\xi \in \partial \Omega$ is a point in $\partial \Omega$ closest to $q$. If $\delta_\Omega(q) \geq t_0$, then 
$$
K_\Omega(q,z_0) \leq B_0 + \frac{1+\epsilon}{2} \log \frac{1}{\delta_\Omega(q)} 
$$
where 
$$
B_0 = \frac{1+\epsilon}{2} \log \frac{1}{t_0} +\max\{ K_\Omega(z,z_0) : z \in \Omega \text{ and } \delta_\Omega(z) \geq t_0\}.
$$
If $\delta_\Omega(q) \leq t_0$, then $q = \gamma_\xi(t)$ where $t = \frac{1}{2} \log \frac{t_0}{\delta_\Omega(q)} \geq 0$. Then 
\begin{align*}
K_\Omega(q, z_0) & \leq K_\Omega( q, \gamma_\xi(0)) + K_\Omega(\gamma_\xi(0),z_0) \\
& \leq  B_1+ \frac{1+\epsilon}{2} \log \frac{1}{\delta_\Omega(q)}
\end{align*} 
where 
$$
B_1 = \frac{1+\epsilon}{2} \log t_0 + \max \{ K_\Omega(\gamma_\xi(0),z_0)  : \xi \in \partial \Omega\}.
$$
Since $B = \max\{B_1,B_2\}$ does not depend on $q$ this completes the proof. 

\end{proof}

Next we compute $m_\star(\Omega)$ in the special case when $\partial \Omega$ is $C^\infty$. To do this we need to define the line type at a boundary point. Given a function $f: \Cb \rightarrow \Rb$ with $f(0)=0$ let $\nu(f)$ denote the order of vanishing of $f$ at $0$. Suppose that $D \subset \Cb^d$ is a domain with $C^\infty$ boundary and 
\begin{align*}
D = \{ z \in \Cb^d : r(z) < 0\}
\end{align*}
 where $r$ is a $C^\infty$ function with $\nabla r \neq 0$ near $\partial D$. The \emph{line type of a boundary point} $\xi \in \partial D$  is defined to be
\begin{align*}
\ell(D,\xi)=\sup \{ \nu( r \circ \psi) | \ \psi : \Cb \rightarrow \Cb^d & \text{ is a non-constant complex affine map }\\
& \text{ with $\psi(0)=\xi$} \}.
\end{align*}
Notice that $\nu(r\circ \psi) \geq 2$ if and only if $\psi(\Cb)$ is tangent to $D$. McNeal~\cite{M1992} proved that if $D$ is convex  then $\xi \in \partial \Omega$ has finite line type if and only if it has finite type in the sense of D'Angelo (also see~\cite{BS1992}). 

\begin{proposition} Suppose $d \geq 2$, $\Omega \subset \Cb^d$ is a bounded convex domain, and $\partial \Omega$ is $C^{\infty}$. Then 
\begin{align*}
m_\star(\Omega) = \max_{\xi \in \partial \Omega} \ell(\Omega, \xi).
\end{align*}
\end{proposition}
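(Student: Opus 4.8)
The plan is to prove the two inequalities $m_\star(\Omega)\ge \max_{\xi\in\partial\Omega}\ell(\Omega,\xi)$ and $m_\star(\Omega)\le \max_{\xi\in\partial\Omega}\ell(\Omega,\xi)$ separately. First I would note that the two quantities are $+\infty$ simultaneously: if some $\xi$ has infinite line type then by McNeal's identification of line type with D'Angelo type and the equivalences recalled in Section~\ref{sec:m_convex_vs_gromov} (or by the argument below applied with arbitrarily large contact orders) $\Omega$ fails to be $m$-convex for every $m$; and if $L:=\max_{\xi}\ell(\Omega,\xi)<\infty$ then Corollary~\ref{cor:m_convex_Gromov_hyp} already gives $m_\star(\Omega)<\infty$. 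So I may assume $L<\infty$.

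For the lower bound, fix $\xi_0\in\partial\Omega$, put $L_0:=\ell(\Omega,\xi_0)$, and fix a $C^\infty$ defining function $r$ for $\Omega$. The integer $\nu\big(\zeta\mapsto r(\xi_0+\zeta v)\big)$ depends only on the line $\Cb v$, and for each $k$ the set of lines with $\nu\ge k$ is closed in $\Pb^{d-1}$ (it is cut out by the vanishing of finitely many derivatives depending polynomially on $v$); since $\nu$ is integer valued and bounded by $L_0$ the supremum is attained, so there is a unit $v_0$ with $g(\zeta):=r(\xi_0+\zeta v_0)$ vanishing to order $L_0$ at $0$, hence $g(\zeta)=O(|\zeta|^{L_0})$. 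Let $\nu$ be the inward unit normal at $\xi_0$ and set $z_\delta:=\xi_0+\delta\nu$. Since $v_0$ is a complex tangent vector, $\Cb v_0$ lies in the real tangent hyperplane $\ker\big(\nabla r(\xi_0)\cdot\big)$, so the first-order term of $r$ at $\xi_0$ in a direction $\zeta v_0$ vanishes; a Taylor expansion gives $r(z_\delta+\zeta v_0)\le -c_0\delta+O(|\zeta|^{L_0})+O(\delta|\zeta|)+O(\delta^2)$ with $c_0=|\nabla r(\xi_0)|>0$, so for a suitable $c>0$ one has $z_\delta+\zeta v_0\in\Omega$ whenever $|\zeta|\le c\,\delta^{1/L_0}$. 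Therefore $\delta_\Omega(z_\delta;v_0)\ge c\,\delta^{1/L_0}$ while $\delta_\Omega(z_\delta)\le d_{\Euc}(z_\delta,\xi_0)=\delta$. If $m<L_0$ then $\delta_\Omega(z_\delta;v_0)/\delta_\Omega(z_\delta)^{1/m}\ge c\,\delta^{1/L_0-1/m}\to\infty$ as $\delta\searrow 0$, so $\Omega$ is not $m$-convex; hence $m_\star(\Omega)\ge L_0$, and taking the supremum over $\xi_0$ gives $m_\star(\Omega)\ge L$.

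For the upper bound I would show that $\Omega$ is $m$-convex for every $m>L$, which forces $m_\star(\Omega)\le L$. Fix $z$ near $\partial\Omega$, let $\xi$ be a nearest boundary point, $\delta:=\delta_\Omega(z)$, so $z=\xi+\delta\nu_\xi$, and let $v$ be a unit vector. If the complex line $z+\Cb v$ meets the real supporting hyperplane $H_\xi$ at Euclidean distance $\le C\delta^{1/L}$ of $z$ we are done, since $H_\xi\cap\Omega=\emptyset$; this is the case unless $v$ makes an angle $\lesssim\delta^{1-1/L}$ with the complex tangent space $T^{\Cb}_\xi\partial\Omega$. In the remaining case I would invoke McNeal's analysis of convex domains of finite type \cite{M1992,M1994}: the order of contact $k(\xi,v)$ of $\xi+\Cb v$ with $\partial\Omega$ at $\xi$ satisfies $k(\xi,v)\le\ell(\Omega,\xi)\le L$, and for convex domains $r(\xi+\zeta v)\gtrsim|\zeta|^{k(\xi,v)}$ uniformly in $\xi$ and $v$; a Taylor expansion as above (now using this lower bound, and first replacing $v$ by a genuine complex tangent vector within $\lesssim\delta^{1-1/L}$ of it) produces a boundary point on $z+\Cb v$ within distance $\lesssim\delta^{1/k(\xi,v)}\le\delta^{1/L}$ of $z$. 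Combining the two cases yields $\delta_\Omega(z;v)\le C\,\delta_\Omega(z)^{1/L}$ near $\partial\Omega$, and since $\Omega$ is bounded this extends to all of $\Omega$ after enlarging $C$; thus $\Omega$ is $L$-convex and $m_\star(\Omega)\le L$.

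The main obstacle is the uniform lower bound $r(\xi+\zeta v)\gtrsim|\zeta|^{k(\xi,v)}$ used in the upper bound: the lowest-order term of $r$ restricted to a complex tangent line is a nonnegative homogeneous polynomial that may degenerate along a ray, so the vanishing order alone does not control it from below, and ruling this out genuinely uses convexity. This is exactly the content of McNeal's work on convex domains of finite type, which I would cite rather than reprove; by contrast the lower bound $m_\star(\Omega)\ge L$ uses only the trivial one-sided estimate $g(\zeta)=O(|\zeta|^{L_0})$.
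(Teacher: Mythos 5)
Your overall strategy is the standard one, and it is essentially what the paper silently defers to: the paper's ``proof'' is a one-line citation of \cite[Section 9]{Z2016}, so your proposal is a fleshed-out version of the omitted calculation rather than a different route. The lower bound $m_\star(\Omega)\ge\max_\xi\ell(\Omega,\xi)$ is complete and correct as written: the maximizing direction exists because the vanishing orders are integers bounded by the (finite) line type, the Taylor expansion at $z_\delta=\xi_0+\delta\nu$ is handled correctly, and since $\delta_\Omega(z;v)$ is an \emph{infimum} over boundary points of the complex line, producing a disk of radius $c\,\delta^{1/L_0}$ inside $\Omega\cap(z_\delta+\Cb v_0)$ does give $\delta_\Omega(z_\delta;v_0)\ge c\,\delta^{1/L_0}$ and hence the failure of $m$-convexity for $m<L_0$.

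One correction to the upper bound. The uniform estimate you isolate as the crux is stated incorrectly: for a convex domain it is \emph{not} true that $r(\xi+\zeta v)\gtrsim|\zeta|^{k(\xi,v)}$ for all small $\zeta$, and convexity does not rule out degeneration of the leading homogeneous term along a real ray. For example, $r=-\Real(z_1)+(\Real z_2)^2+(\Imaginary z_2)^4$ is convex, the line type of $0$ in the direction $e_2$ is $2$, yet $r(\zeta e_2)=t^4$ along $\zeta=it$. What is true (and what McNeal's analysis provides, with the nontrivial part being uniformity in $\xi$ and $v$) is the one-sided bound $\max_{|\zeta|=t}r(\xi+\zeta v)\gtrsim t^{k(\xi,v)}$: the degree-$k$ homogeneous part is nonnegative and not identically zero, hence positive somewhere on the unit circle. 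Fortunately this weaker statement is exactly what your argument needs, again because $\delta_\Omega(z;v)$ is an infimum: you only have to exhibit \emph{one} point of $\partial\Omega$ on the line $z+\Cb v$ at distance $\lesssim\delta^{1/k}$, so it suffices to find one $\zeta_0$ with $|\zeta_0|\approx\delta^{1/k}$ and $r(z+\zeta_0 v)\ge 0$. With that reformulation, and with the perturbation bookkeeping you sketch (replacing $v$ by a nearby complex tangent vector costs an error $O(|\zeta|\,\delta^{1-1/L})$, which for $|\zeta|=M\delta^{1/k}$ with $k\le L$ is at most $O(M\delta)$ and is beaten by the main term $\gtrsim M^{k}\delta$ for $M$ large), the argument closes. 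The remaining uniformity over $\xi$ and $v$ is legitimately deferred to McNeal's work on convex domains of finite type, which is the same source the paper implicitly relies on.
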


\begin{proof} This is a straightforward calculation, see for instance~\cite[Section 9]{Z2016}. \end{proof}

\subsection{Proof of Theorem~\ref{thm:optimal_order}} This is simply a matter of tracking the constants in the proof of 
Theorem~\ref{thm:intersection}. 

Fix 
\begin{align*}
\epsilon <  \frac{1}{\alpha_\star(\Omega) m_\star(\Omega)}
\end{align*}
and let $m := \epsilon^{-1}$. Then there exist $m_0 \geq 2$, $\alpha \geq 1$, and $z_0 \in \Omega$ such that 
\begin{enumerate}
\item $m > m_0 \alpha$,
\item $\Omega$ is $m_0$-convex,
\item $\Omega$ is $\alpha$-regular. 
\end{enumerate}
Fix $z_0 \in \Omega$ and let $\lambda$ be the constant associated to $d_{z_0}$ in Sections~\ref{sec:vm_and_normalizing} and~\ref{sec:psh_on_general_domains}. 

Notice that, by definition, $\alpha$ satisfies Lemma~\ref{lem:dist_est_normalizing_maps} and so by Equation~\eqref{eq:m_1_equality}
\begin{align*}
m_1 : = \frac{\alpha \lambda m_0}{2}
\end{align*}
satisfies the conclusion of Proposition~\ref{prop:normalizing_maps_visual_metric}. Hence $m_1$ also satisfies the conclusion of Lemmas~\ref{lem:plurisub_one} and~\ref{lem:plurisub_two} (see Remark~\ref{rmk:m_1_equals}). Then by Equation~\eqref{eq:defn_of_m2} and Remark~\ref{rmk:value_of_ell}, any
\begin{align*}
m_2 > \frac{2m_1}{\lambda} = \alpha m_0
\end{align*}
satisfies the conclusion of Theorem~\ref{thm:final_plurisubharmonic_fcn}. In particular, $m$ does. Then Straube's theorem (Theorem~\ref{thm:straube} above) implies that a local subelliptic estimate of order $\epsilon=\frac{1}{m}$ holds at every boundary point. Then by the ``local to global'' proof in Section~\ref{sec:pf_of_thm_intersection} we see that a subelliptic estimate of order $\epsilon=\frac{1}{m}$ holds on $\Omega$.

\part{Examples}

\section{The Hilbert distance}\label{sec:intro_to_hilbert_metric}

In this expository section we recall the definition of the Hilbert distance and then state some of its properties. 

Suppose $\Omega \subset \Rb^d$ is a convex domain. Given $x,y \in \Omega$ distinct let $L_{x,y}$ be the real line containing them  and let $a,b \in \partial \Omega \cup \{\infty\}$ be the endpoints of $\overline{\Omega} \cap L_{x,y}$ with the ordering $a,x,y,b$. Then define the \emph{Hilbert pseudo-distance between $x,y$} to be
\begin{align*}
H_\Omega(x,y) = \frac{1}{2} \log \frac{ \norm{x-b}\norm{y-a}}{ \norm{y-b}\norm{x-a}}
\end{align*}
where we define 
\begin{align*}
\frac{ \norm{x-\infty}}{\norm{y-\infty}}=\frac{ \norm{y-\infty}}{\norm{x-\infty}}=1.
\end{align*}
In the case when $\Omega$ does not contain any affine real lines, we see that $H_\Omega(x,y) > 0$ for all $x,y \in \Omega$ distinct. This motivates the following definition. 

\begin{definition} A convex domain $\Omega \subset \Rb^d$ is called \emph{$\Rb$-properly} convex if $\Omega$ does not contain any affine real lines. \end{definition}

\begin{theorem}\label{thm:Hilbert_basic_properties} \ 
\begin{enumerate} \item If $\Omega \subset \Rb^d$ is a $\Rb$-properly convex domain, then $(\Omega, H_\Omega)$ is a proper geodesic metric space. For $x,y \in \Omega$ distinct, there exists a geodesic line $\gamma : \Rb \rightarrow \Omega$ whose image is $L_{x,y} \cap \Omega$. 
\item If $\Omega \subset \Rb^d$ is a convex domain and $V \subset \Rb^d$ is an affine subspace intersecting $\Omega$, then 
\begin{align*}
H_\Omega(x,y) = H_{\Omega \cap V}(x,y)
\end{align*}
for all $x,y \in \Omega \cap V$. 
\item If $\Omega \subset \Rb^d$ is a convex domain and $A \in \Aff(\Rb^d)$ is an affine automorphism of $\Rb^d$, then 
\begin{align*}
H_\Omega(x,y) = H_{A\Omega}(Ax,Ay)
\end{align*}
for all $x,y \in \Omega$. 
\end{enumerate}
\end{theorem}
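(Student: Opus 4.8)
The plan is to dispatch statements (3) and (2) directly and to spend the real effort on (1). For (3), every $A\in\Aff(\Rb^d)$ sends lines to lines, preserves the order of collinear points, and sends $\partial\Omega$ to $\partial(A\Omega)$; since $A$ is the restriction of a projective transformation of $\Pb^d$ it preserves the cross-ratio defining $H_\Omega$ (and the convention at $\infty$ transforms consistently), so $H_\Omega(x,y)=H_{A\Omega}(Ax,Ay)$. For (2), if $x,y\in\Omega\cap V$ then the line $L_{x,y}\subset V$, hence $\overline{\Omega}\cap L_{x,y}$ and the relative closure $\overline{\Omega\cap V}\cap L_{x,y}$ coincide; in particular the endpoints $a,b$ are the same, so the defining formula yields $H_\Omega(x,y)=H_{\Omega\cap V}(x,y)$.

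For (1), symmetry is clear (swapping $x$ and $y$ swaps $a$ and $b$, and the expression is invariant under this simultaneous swap), $H_\Omega(x,x)=0$, and non-degeneracy for $\Rb$-properly convex $\Omega$ was noted before the statement. The substantive point is the triangle inequality, which I would obtain from the \emph{Funk function}: for $x,y\in\Omega$ let $p_\Omega(x,y)\in\partial\Omega\cup\{\infty\}$ be the point where the ray from $x$ through $y$ exits $\overline{\Omega}$, and put $\Phi_\Omega(x,y)=\log\bigl(\norm{x-p_\Omega(x,y)}/\norm{y-p_\Omega(x,y)}\bigr)$, interpreted as $0$ when $p_\Omega(x,y)=\infty$. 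Reading off the cross-ratio gives $H_\Omega(x,y)=\tfrac12\bigl(\Phi_\Omega(x,y)+\Phi_\Omega(y,x)\bigr)$, so it suffices to prove $\Phi_\Omega(x,z)\le\Phi_\Omega(x,y)+\Phi_\Omega(y,z)$. I would use two elementary facts: $\Phi$ is monotone non-increasing under inclusion of domains (moving the exit point outward pushes the ratio toward $1$), and for a half-space one has $\Phi_{\{f<c\}}(a,b)\ge\log\bigl((c-f(a))/(c-f(b))\bigr)$, with equality when the ray from $a$ toward $b$ points toward $\{f=c\}$. Choosing a supporting hyperplane $\Pi=\{f=c\}$ of $\Omega$ at $p:=p_\Omega(x,z)$ — so that $\Omega\subset\{f<c\}$ and the ray from $x$ through $z$ meets $\Pi$ precisely at $p$ — and writing $g=c-f>0$ on $\Omega$, convexity gives
\begin{align*}
\Phi_\Omega(x,z) &= \Phi_{\{f<c\}}(x,z) = \log\frac{g(x)}{g(z)} = \log\frac{g(x)}{g(y)}+\log\frac{g(y)}{g(z)} \\
&\le \Phi_\Omega(x,y)+\Phi_\Omega(y,z),
\end{align*}
and symmetrizing yields the triangle inequality for $H_\Omega$.

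The geodesic assertion I would get from multiplicativity of the cross-ratio: if $x,y,z\in\Omega$ are collinear with $y$ between $x$ and $z$, the chord endpoints $a,b$ are shared by all three pairs and $H_\Omega(x,z)=H_\Omega(x,y)+H_\Omega(y,z)$; since $H_\Omega(x_0,\cdot)$ is a strictly increasing homeomorphism of $L_{x,y}\cap\Omega$ onto $\Rb$ (visible from the formula), reparametrizing $L_{x,y}\cap\Omega$ by signed $H_\Omega$-length produces a geodesic line with that image. For properness, every closed $H_\Omega$-ball about a fixed $x_0$ is Euclidean-bounded — otherwise a sequence $x_n$ with $\norm{x_n}\to\infty$ and $H_\Omega(x_0,x_n)$ bounded would, via the cross-ratio formula, force the chords of $\Omega$ through $x_0$ in the limiting direction to be unbounded on both sides, whence (by a convexity/limiting argument as in Observation~\ref{obs:asymptotic_cone_1}) $\Omega$ would contain an entire affine real line, contradicting $\Rb$-proper convexity — and it is bounded away from $\partial\Omega$, since $H_\Omega(x_0,x)\to\infty$ as $x\to\partial\Omega$ (a factor of the cross-ratio blows up); since $H_\Omega$ induces the Euclidean subspace topology on $\Omega$, such a ball is compact.

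The one genuinely nontrivial ingredient is the triangle inequality, and the Funk-function reduction above turns it into the trivial half-space computation together with monotonicity under inclusion; the rest is bookkeeping with the $\infty$-conventions and with the case of unbounded $\Omega$.
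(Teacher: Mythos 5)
Your proposal is correct, but it is worth noting that the paper does not actually prove this theorem: it declares parts (2) and (3) immediate from the definition and cites Busemann--Kelly \cite[Section 28]{BK1953} for part (1). Your treatment of (2) and (3) matches what the paper takes as immediate (affine maps preserve the cross-ratio of collinear points, and the chord $L_{x,y}\cap\Omega$ is unchanged by slicing with an affine $V$ containing it). For (1) you supply the standard self-contained argument that the paper outsources: the decomposition $H_\Omega=\tfrac12(\Phi_\Omega(x,y)+\Phi_\Omega(y,x))$ into Funk functions, the triangle inequality for $\Phi_\Omega$ via a supporting hyperplane at the exit point $p_\Omega(x,z)$ together with monotonicity under inclusion, additivity of the cross-ratio along chords for the geodesic claim, and the $\Rb$-proper-convexity/asymptotic-cone argument for properness. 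All of these steps check out; the only places where you assert rather than argue are (i) that $\norm{x-b}\to 0$ as $x\to\partial\Omega$ along chords from a fixed basepoint (needed for $H_\Omega(x_0,x)\to\infty$ near the boundary; it follows from a short convex-hull argument with $\Bb_d(x_0;\delta_\Omega(x_0))$) and (ii) that $H_\Omega$ induces the Euclidean topology on $\Omega$ (needed to conclude compactness of closed balls; it follows from continuity of the gauge/exit-point map and an asymptotic-cone argument in the unbounded case). Both are routine, and your reduction of the triangle inequality to the half-space computation is exactly the classical proof. What your route buys is independence from the reference; what the paper's route buys is brevity, since these are classical facts.
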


Properties (2) and (3) in Theorem~\ref{thm:Hilbert_basic_properties} are immediate from the definition and a proof of Property (1) can be found in~\cite[Section 28]{BK1953}.

We also can define an infinitesimal Hilbert pseudo-metric. Given $x \in \Omega$ and a non-zero $v \in \Rb^d$ let $a,b \in \partial \Omega \cup \{\infty\}$ be the endpoints of $\overline{\Omega} \cap (x+ \Rb \cdot v)$. Then define the \emph{Hilbert norm of $v$ at $x$} to be
\begin{align*}
h_\Omega(x;v) = \frac{\norm{v}}{2} \left( \frac{1}{\norm{x-a}} + \frac{1}{\norm{x-b}} \right). 
\end{align*}
Given a piecewise $C^1$ curve $\sigma : [0,1] \rightarrow \Omega$ we define the \emph{Hilbert length of $\sigma$} to be 
\begin{align*}
\ell_{H,\Omega}(\sigma) : = \int_0^1 h_\Omega(\sigma(t); \sigma^\prime(t)) dt.
\end{align*} 

It is fairly straightforward to establish the following. 

\begin{proposition} If $\Omega \subset \Rb^d$ is a properly convex domain, then 
\begin{align*}
H_\Omega(x,y) = \inf\left\{ \ell_{H,\Omega}(\sigma) : \sigma : [0,1] \rightarrow \Omega \text{ is piecewise $C^1$}, \sigma(0)=x, \sigma(1) = y\right\}.
\end{align*}
\end{proposition}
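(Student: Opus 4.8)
The statement asserts that the Hilbert (integrated) distance equals the infimum of Hilbert lengths of piecewise $C^1$ curves. My plan is to prove the two inequalities separately, exploiting the one-dimensional formula for $H_\Omega$ on lines and the relation between the infinitesimal norm $h_\Omega$ and the cross-ratio.

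First I would set up the notation carefully and reduce to the one-dimensional picture. Given distinct $x,y \in \Omega$, let $L_{x,y}$ be the line through them and $a,b \in \partial\Omega \cup \{\infty\}$ the endpoints of $\overline{\Omega}\cap L_{x,y}$ in the order $a,x,y,b$. By Theorem~\ref{thm:Hilbert_basic_properties}(2), $H_\Omega(x,y) = H_{\Omega\cap L_{x,y}}(x,y)$ and similarly the infinitesimal norm restricts. So for the inequality $H_\Omega(x,y) \ge \inf \ell_{H,\Omega}$ it suffices to exhibit one curve realizing $H_\Omega(x,y)$: take the straight segment $\sigma_0(t) = (1-t)x + ty$ inside the interval $(a,b)\subset L_{x,y}$, and compute directly that $\int_0^1 h_\Omega(\sigma_0(t);\sigma_0'(t))\,dt$ telescopes to exactly the cross-ratio expression defining $H_\Omega(x,y)$. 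Concretely, parametrize the interval affinely, observe that $h_\Omega$ along a line is $\tfrac{1}{2}(\tfrac{1}{s-a_0} + \tfrac{1}{b_0-s})\,|ds|$ for the appropriate real coordinates (with the obvious limiting convention when $a$ or $b$ is at infinity), and integrate: the antiderivative is $\tfrac{1}{2}\log\frac{s-a_0}{b_0-s}$, whose increment from $x$ to $y$ is precisely $\tfrac{1}{2}\log\frac{\|x-b\|\|y-a\|}{\|y-b\|\|x-a\|}$. This gives $\inf\ell_{H,\Omega} \le H_\Omega(x,y)$.

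For the reverse inequality $H_\Omega(x,y) \le \ell_{H,\Omega}(\sigma)$ for every piecewise $C^1$ curve $\sigma$ from $x$ to $y$: the key point is a \emph{projection (distance-decreasing) property}. For any supporting-hyperplane pair, or more simply, for the function $u(z)$ that records the logarithmic position of $z$ along a fixed line direction relative to the two boundary points $a,b$, one checks that $|du(z)(v)| \le 2 h_\Omega(z;v)$; equivalently, the nearest-point/cross-ratio coordinate is $1$-Lipschitz from $(\Omega, \ell_{H,\Omega})$ to $\Rb$ after rescaling. Applying this with the two boundary points $a,b$ of $L_{x,y}$ (or, when these lie outside $\Omega$'s support in higher dimensions, with the two closed half-spaces bounded by supporting hyperplanes of $\Omega$ through $a$ and through $b$), one gets
\begin{align*}
H_\Omega(x,y) = \tfrac{1}{2}\bigl| \log\tfrac{\|x-b\|\|y-a\|}{\|y-b\|\|x-a\|} \bigr| = |u(y) - u(x)| = \Bigl| \int_0^1 \tfrac{d}{dt} u(\sigma(t))\,dt \Bigr| \le \int_0^1 2 h_\Omega(\sigma(t);\sigma'(t))\cdot \tfrac12\,dt = \ell_{H,\Omega}(\sigma).
\end{align*}
Taking the infimum over $\sigma$ finishes the argument. (One must handle the case $a$ or $b = \infty$ by a limiting/affine-change-of-coordinates argument, using Theorem~\ref{thm:Hilbert_basic_properties}(3) to move an affine copy of $\Omega$ so that both endpoints are finite, or simply by noting the relevant term drops out.)

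The main obstacle I anticipate is the second inequality: producing the right $1$-Lipschitz "coordinate function" $u$ and verifying the infinitesimal bound $|du(z)(v)| \le 2h_\Omega(z;v)$ cleanly, including the degenerate cases where one or both of $a,b$ escape to infinity or where $\Omega$ is not strictly convex near these points (so that the supporting hyperplane is not unique). The cleanest route is probably to fix, once and for all, a line $L$ through $x,y$, work entirely inside the convex curve $\Omega \cap L$ using Theorem~\ref{thm:Hilbert_basic_properties}(2) to reduce the infimum over curves in $\Omega$ to curves in $\Omega\cap L$ — wait, that reduction is not valid since a competitor curve $\sigma$ need not stay on $L$. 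So instead the honest approach is the projection-onto-$L$ map $\pi: \Omega \to \Omega \cap L$ (nearest point in the direction transverse to $L$, or orthogonal projection), check it is Hilbert-length-nonincreasing (a standard convexity computation, since projecting can only move $z$ closer to both "ends" in the appropriate sense), and then apply the one-dimensional identity from the first paragraph to the projected curve $\pi\circ\sigma$. That avoids supporting-hyperplane subtleties and is the step I'd spend the most care on.
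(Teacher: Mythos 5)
The paper does not actually supply a proof of this proposition (it is stated as ``fairly straightforward''), so your proposal stands on its own. Your first inequality is fine: the straight segment from $x$ to $y$, parametrized along $L_{x,y}$, has Hilbert length exactly $\tfrac{1}{2}\log\tfrac{\norm{x-b}\norm{y-a}}{\norm{y-b}\norm{x-a}}$ by the antiderivative computation you describe, and the conventions for $a$ or $b$ at infinity cause no trouble.

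The second inequality is where the gap lies, and you end up committing to the wrong one of your two candidate routes. The projection $\pi:\Omega\to\Omega\cap L$ that you settle on at the end does not work: Euclidean orthogonal projection of a point of $\Omega$ onto $L$ need not even land in $\Omega\cap L$ (take a long thin sliver nearly parallel to $L$), and nearest-point projection onto a Hilbert geodesic is \emph{not} distance-nonincreasing in a general Hilbert geometry --- these spaces are not nonpositively curved, and the ``standard convexity computation'' you invoke is exactly what fails. Your first route, by contrast, is correct but under-specified. The right coordinate function is built from supporting hyperplanes $H_a$, $H_b$ of $\Omega$ at $a$ and $b$: set $u(z)=\tfrac{1}{2}\log\tfrac{d_{\Euc}(z,H_b)}{d_{\Euc}(z,H_a)}$, note $u(x)-u(y)=H_\Omega(x,y)$ by collinearity of $a,x,y,b$, and prove $\abs{du(z)(v)}\leq h_\Omega(z;v)$ (with no factor of $2$ --- your constants as written do not close up). That last bound is the crux and is not a term-by-term triangle inequality: writing $du=du_b-du_a$ with $\abs{du_a(z)(v)}=\tfrac{\norm{v}}{2}\norm{z-q_a}^{-1}$ where $q_a=(z+\Rb v)\cap H_a$, the estimate $\abs{du}\leq\abs{du_a}+\abs{du_b}$ can exceed $h_\Omega(z;v)$ when $q_a$ and $q_b$ lie on the \emph{same} side of $z$ along the line $z+\Rb v$ (possible since $H_a,H_b$ need not be parallel). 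In that case the two terms of $du$ have the same sign and cancel, so $\abs{du}=\tfrac{\norm{v}}{2}\abs{\norm{z-q_a}^{-1}-\norm{z-q_b}^{-1}}$, which is again dominated by $h_\Omega(z;v)$; only when $q_a,q_b$ straddle $z$ do the terms add, and then each is separately dominated by the corresponding term of $h_\Omega$. Without this sign analysis the Lipschitz claim is unproved. (A cleaner alternative that avoids hyperplanes entirely: subdivide $\sigma$, apply the triangle inequality for $H_\Omega$, and use your first-half identity $H_\Omega(z,w)=\int_0^1 h_\Omega((1-s)z+sw;w-z)\,ds$ together with local uniform continuity of $h_\Omega$ on compact subsets of $\Omega$ to pass to the integral as the mesh tends to zero.)
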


We will also use the following result of Karlsson and Noskov.

\begin{theorem}[Karlsson-Noskov~\cite{KN2002}]\label{thm:KN} Suppose $\Omega \subset \Rb^d$ is a $\Rb$-properly convex domain. If $(\Omega, H_\Omega)$ is Gromov hyperbolic, then 
\begin{enumerate}
\item $\Omega$ is strictly convex (that is, $\partial \Omega$ does not contain any line segments of positive length),
\item $\partial \Omega$ is a $C^1$ hypersurface. 
\end{enumerate}
\end{theorem}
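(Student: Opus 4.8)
The plan is to prove the contrapositive of each assertion: if $\partial\Omega$ contains a segment of positive length, or fails to be $C^1$ at some point, then $(\Omega,H_\Omega)$ is not Gromov hyperbolic. I will repeatedly use three soft facts, all immediate from the excerpt: (a) a metric subspace of a $\delta$-hyperbolic space is $\delta$-hyperbolic, since the defining four-point inequality only involves distances; (b) by Theorem~\ref{thm:Hilbert_basic_properties}(2)--(3) every affine slice $\Omega\cap V$ is isometrically embedded in $(\Omega,H_\Omega)$, and every $A\in\Aff(\Rb^d)$ is an isometry $(\Omega,H_\Omega)\to(A\Omega,H_{A\Omega})$; (c) $(\Omega,H_\Omega)$ is proper geodesic with the Euclidean segment $[x,y]$ realizing a geodesic (Theorem~\ref{thm:Hilbert_basic_properties}(1)), so by Theorem~\ref{thm:thin_triangle} it suffices in each case to exhibit geodesic triangles that fail to be $\delta'$-thin for every $\delta'$. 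I will also use the standard fact — proved exactly as Proposition~\ref{prop:convergence_of_kob} via the cross-ratio formula, cf.\ \cite{KN2002} — that if $\Rb$-properly convex domains $\Omega_n$ converge locally Hausdorff to an $\Rb$-properly convex domain $\Omega_\infty$, then $H_{\Omega_n}\to H_{\Omega_\infty}$ locally uniformly, so a uniform $\delta$-hyperbolicity bound passes to the limit.

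\emph{Strict convexity.} Suppose $[p,q]\subset\partial\Omega$ with $p\neq q$, put $x=\tfrac12(p+q)$ (a point of the relative interior of the segment, so $x\in\partial\Omega$), fix $o\in\Omega$, and choose $a_n\in[o,p)$, $b_n\in[o,q)$ with $a_n\to p$, $b_n\to q$. Let $c_n$ be the midpoint of $[a_n,b_n]$, so $c_n\to x$ and $\delta_\Omega(c_n)\to 0$. Consider the geodesic triangle with vertices $o,a_n,b_n$ and Euclidean-segment sides. I would show $H_\Omega(c_n,[o,a_n]\cup[o,b_n])\to\infty$, which by Theorem~\ref{thm:thin_triangle} finishes the argument. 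Since $[o,p)\subset\Omega$ and $[p,q]\subset\partial\Omega$ we have $[o,p]\cap[p,q]=\{p\}$ and $[o,q]\cap[p,q]=\{q\}$; hence the Euclidean distance from $x$ to $[o,p]\cup[o,q]$ is a number $\epsilon_0>0$, and so $|c_n-w|\ge\epsilon_0/2$ for all $w\in[o,a_n]\cup[o,b_n]$ once $n$ is large. Fix such a $w$, let $\ell_w$ be the line through $c_n$ and $w$ and $\{\alpha_w,\beta_w\}=\ell_w\cap\partial\Omega$, with $\alpha_w$ on the $c_n$-side of $w$ and $\beta_w$ on the $w$-side. From the cross-ratio definition of $H_\Omega$,
\begin{align*}
H_\Omega(c_n,w)\ \ge\ \tfrac12\log\!\Big(1+\tfrac{|c_n-w|}{\min(|c_n-\alpha_w|,\,|w-\beta_w|)}\Big),
\end{align*}
so it is enough to show $\min(|c_n-\alpha_w|,|w-\beta_w|)\to 0$ uniformly in $w$. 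Let $H$ be a supporting hyperplane of $\Omega$ at $x$; then $[p,q]\subset H$ and $\delta_\Omega(c_n)\le d_{\Euc}(H,c_n)\to 0$. If $d_{\Euc}(H,w)\ge h_0$ for a fixed small $h_0$, the direction from $c_n$ pointing away from $w$ has a component toward $H$ bounded below, so the ray from $c_n$ in that direction exits $\Omega$ within Euclidean distance $O(d_{\Euc}(H,c_n))$, giving $|c_n-\alpha_w|\to0$ uniformly; if instead $d_{\Euc}(H,w)<h_0$, then for $n$ large $w$ lies within $o(1)$ of $p$ or of $q$, and since $\ell_w$ then runs nearly along the flat face $[p,q]$, a short computation shows $\beta_w$ lies within $o(1)$ of that endpoint, so $|w-\beta_w|\to0$ uniformly. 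Either way the claim follows, and $(\Omega,H_\Omega)$ is not Gromov hyperbolic.

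\emph{$C^1$ regularity.} Suppose $\partial\Omega$ is not $C^1$ at $\xi$, i.e.\ $\Omega$ has two distinct supporting hyperplanes $H_1\neq H_2$ at $\xi$. Let $P$ be the $2$-dimensional affine plane through $\xi$ orthogonal to $H_1\cap H_2$; then $H_1\cap P$ and $H_2\cap P$ are distinct lines through $\xi$, so $\Omega\cap P$ is a $2$-dimensional $\Rb$-properly convex domain with a genuine corner at $\xi$. Its tangent cone $C$ at $\xi$, taken inside $P$, is a full-dimensional convex cone contained in the wedge $(H_1^-\cap H_2^-)\cap P$ of angle $<\pi$, hence $C$ contains no line and $\partial C$ (in $P$) is a union of two rays, so $\partial(\xi+C)$ contains segments of positive length. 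By the strict-convexity case applied to the $\Rb$-properly convex domain $\xi+C$, the space $(\xi+C,H_{\xi+C})$ is not Gromov hyperbolic. On the other hand, the dilations $A_n(z)=\xi+2^n(z-\xi)$ of $P$ satisfy $A_n(\Omega\cap P)\to\xi+C$ locally Hausdorff, and each $A_n(\Omega\cap P)$ is isometric to $\Omega\cap P$, which is a metric subspace of $(\Omega,H_\Omega)$; hence if $(\Omega,H_\Omega)$ were $\delta$-hyperbolic, all the $A_n(\Omega\cap P)$ would be $\delta$-hyperbolic and, by the convergence fact above, so would $\xi+C$ — a contradiction. Therefore $\partial\Omega$ is $C^1$.

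The main obstacle is the uniform estimate $H_\Omega(c_n,w)\to\infty$ in the strict-convexity step: one must control how the chord of $\Omega$ through $c_n$ and $w$ meets $\partial\Omega$, uniformly over all $w$ on the two other sides, and in particular handle separately the delicate regime where $w$ lies near an endpoint of the flat face (there the chord becomes nearly tangent to $\partial\Omega$ and the naive bound via the supporting hyperplane $H$ degenerates). Once that is in place, the $C^1$ statement is a formal consequence by rescaling to the tangent cone, modulo the routine continuity of the Hilbert distance under local Hausdorff convergence.
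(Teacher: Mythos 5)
The paper does not prove this statement --- it is quoted from Karlsson--Noskov --- so I am judging your argument on its own terms. Your overall architecture is sound: the reduction of the $C^1$ statement to the strict-convexity statement via slicing by a $2$-plane, rescaling to the tangent cone, and the continuity of $H$ under local Hausdorff convergence is correct (all the soft facts (a)--(c) you invoke are fine, as is the cross-ratio lower bound $H_\Omega(c_n,w)\ge\tfrac12\log(1+|c_n-w|/\min(|c_n-\alpha_w|,|w-\beta_w|))$). The problem is the central quantitative claim in the strict-convexity step.

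The claim that $\min(|c_n-\alpha_w|,|w-\beta_w|)\to 0$ uniformly in $w$ --- equivalently, that the \emph{Euclidean midpoint} $c_n$ of $[a_n,b_n]$ drifts to infinite Hilbert distance from the other two sides --- is false. Take $\Omega=\{(x,y):x>0,\,y>0\}$, $p=(0,0)$, $q=(1,0)$, $o=(1,1)$, $a_n=(\epsilon_n,\epsilon_n)$, $b_n=(1,\epsilon_n)$. For $w=b_n$ the chord through $c_n=(\tfrac{1+\epsilon_n}{2},\epsilon_n)$ and $w$ is the horizontal line $y=\epsilon_n$, which exits $\Omega$ only at $(0,\epsilon_n)$; hence $\beta_w=\infty$, $|c_n-\alpha_w|\approx\tfrac12$, and a direct cross-ratio computation gives $H_\Omega(c_n,b_n)\to\tfrac12\log 2$. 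So $c_n$ stays at bounded distance from the side $[o,b_n]$ and your triangle inequality never becomes fat at the midpoint. Your Case B ("$\beta_w$ lies within $o(1)$ of that endpoint") is exactly what breaks: it requires $p$ and $q$ to be the endpoints of the \emph{maximal} face $L\cap\overline{\Omega}$ (you never impose this; if the face extends beyond $q$, $\beta_w$ converges to the far end of the face, not to $q$), and it fails outright when the maximal face is a ray, since then $\beta_w$ can escape to infinity. This gap is not cosmetic, because your $C^1$ step applies the strict-convexity case to a planar proper cone $\xi+C$, which is affinely the quadrant --- precisely the failing configuration. The theorem is of course still true there (the quadrant's Hilbert metric is quasi-isometric to a normed plane), and the triangles you chose \emph{are} fat, but the witness is not the midpoint: in the quadrant the point of $[a_n,b_n]$ with first coordinate $\epsilon_n^{1/3}$ is at distance $\asymp\log(1/\epsilon_n)$ from both other sides. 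To repair the proof you must either (i) take $[p,q]$ equal to the maximal face when it is compact and run a compactness argument on the chords (using the asymptotic cone to rule out $\alpha_w,\beta_w\to\infty$), treating faces containing rays by a separate direct computation, or (ii) abandon the midpoint and choose the witness point on $[a_n,b_n]$ drifting toward an endpoint of the face, or (iii) follow Karlsson--Noskov's Gromov-product computation rather than thin triangles.
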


Next we consider the space of $\Rb$-properly convex domains. 

\begin{definition} \ \begin{enumerate}
\item Let $\Yb_d$ denote the space of $\Rb$-properly convex domains in $\Rb^d$ endowed with the local Hausdorff topology. 
\item Let $\Yb_{d,0} = \{ (\Omega, x) : \Omega \in \Yb_d, x \in \Omega\} \subset \mathbb{Y}_d \times \Rb^d$. 
\end{enumerate}
\end{definition}

As in the complex case, the group $\Aff(\Rb^d)$ acts co-compactly on $\Yb_{d,0}$. 

\begin{theorem}[Benz\'{e}cri~\cite{B1959}]\label{thm:real_affine_cocompact}
The group $\Aff(\Rb^d)$ acts co-compactly on $\Yb_{d,0}$, that is, there exists a compact set $K \subset \Yb_{d,0}$ such that $\Aff(\Rb^d) \cdot K = \Yb_{d,0}$. 
\end{theorem}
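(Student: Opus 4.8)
The plan is to transcribe into the real setting the derivation of Theorem~\ref{thm:frankel_compactness} from the normalizing maps of Section~\ref{sec:normalizing_maps}. First I would introduce, for $r \in (0,1]$, the set $\Kb_d^{\Rb}(r) \subset \Yb_d$ of $\Rb$-properly convex domains $C \subset \Rb^d$ with $r(-1,1)e_1 \subset C$, $(-1,1)e_j \subset C$ for $2 \le j \le d$, $e_j \in \partial C$, and $\big(e_j + \Span_{\Rb}\{e_{j+1},\dots,e_d\}\big) \cap C = \emptyset$ for $1 \le j \le d$. The argument of Proposition~\ref{prop:Kdr_compact} then goes through verbatim with complex spans replaced by real spans and Observation~\ref{obs:asymptotic_cone_1} replaced by its obvious real analogue: a local-Hausdorff limit $C_\infty$ of domains in $\Kb_d^{\Rb}(r)$ contains the convex hull of $r(-1,1)e_1 \cup (-1,1)e_2 \cup \cdots \cup (-1,1)e_d$, hence has nonempty interior and inherits the $d$ supporting conditions; and any line $\Rb v \subset C_\infty$ must have $v_1 = 0$ by the $j=1$ condition, then $v_2 = 0$ by the $j=2$ condition, and so on, forcing $v = 0$. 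Thus $C_\infty \in \Kb_d^{\Rb}(r)$, so $\Kb_d^{\Rb}(r)$ is compact in $\Yb_d$, and consequently $\Kb_{d,0}^{\Rb} := \{(C,0) : C \in \Kb_d^{\Rb}(1)\}$ is compact in $\Yb_{d,0}$.

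Next I would prove the real analogue of Theorem~\ref{thm:normalizing}: if $C$ is $\Rb$-properly convex, $x_0 \in C$, $\xi \in \partial C$, and $r := \delta_C(x_0)/\norm{\xi - x_0}$, then there is an affine map $A \in \Aff(\Rb^d)$ with $AC \in \Kb_d^{\Rb}(r)$, $A(x_0) = 0$, and $A(\xi) = e_1$. The construction is the same: translate so that $x_0 = 0$, so $\delta_C(0) = r\norm{\xi}$; set $x_1 = \xi$ and $P_1$ a supporting hyperplane of $C$ at $\xi$ translated through $0$; inductively let $x_j$ be a closest point of $P_{j-1} \cap \partial C$ to $0$ and $P_j$ the orthogonal complement of $\Rb x_j$ in $P_{j-1}$. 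The closest point $x_j$ exists because an $\Rb$-properly convex domain contains no line, hence does not contain the positive-dimensional subspace $P_{j-1}$, so $\partial C \cap P_{j-1}$ is a nonempty closed set on which the distance to $0$ attains its infimum. Writing $\tau_j := \norm{x_j}$, the open ball $B(0,\tau_j) \cap P_{j-1}$ lies in $C$, so the supporting hyperplane of $P_{j-1}\cap C$ at $x_j$ inside $P_{j-1}$ is tangent to $\partial B(0,\tau_j)$ at $x_j$, i.e.\ equals $P_j$; hence $(x_j + P_j)\cap C = \emptyset$ and $P_j = \Span_{\Rb}\{x_{j+1},\dots,x_d\}$. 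Taking $\Lambda = \operatorname{diag}(\tau_1^{-1},\dots,\tau_d^{-1})$ and $U \in \GL_d(\Rb)$ with $\Lambda U x_i = e_i$, the map $A = \Lambda U$ satisfies $A(0)=0$, $A(\xi)=e_1$, and $AC \in \Kb_d^{\Rb}(r)$ exactly as in the complex case.

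Finally, given $(C,q) \in \Yb_{d,0}$, choose $\xi \in \partial C$ realizing $\delta_C(q) = \norm{q-\xi}$ (possible since $C \neq \Rb^d$ and $\partial C$ is closed), and apply the previous paragraph with $x_0 = q$; then $r = 1$, so $A(C,q) \in \Kb_{d,0}^{\Rb}$. Hence $\Aff(\Rb^d)\cdot\Kb_{d,0}^{\Rb} = \Yb_{d,0}$, which proves the theorem with $K = \Kb_{d,0}^{\Rb}$. I do not expect a genuine obstacle: every step is the real shadow of an argument already carried out in Section~\ref{sec:normalizing_maps}, and the only points needing a moment's care are the existence of the nearest boundary points $x_j$ (which is exactly where $\Rb$-proper convexity, hence the absence of any line or positive-dimensional linear subspace inside $C$, is used) and the verification that the $d$ coordinate supporting conditions exclude lines from the local-Hausdorff limit. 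One could alternatively invoke the classical route through John's maximal-volume inscribed ellipsoid together with a basepoint translation, but the transcription above handles unbounded $\Rb$-properly convex domains uniformly and keeps the paper self-contained.
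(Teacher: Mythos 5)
Your proof is correct. The paper itself does not prove this statement: it cites Benz\'{e}cri~\cite{B1959} (who proved a projective version) and points to Frankel~\cite{F1991} for a direct proof. What you have written is precisely the real-variable transcription of the paper's own proof of the complex analogue, Theorem~\ref{thm:frankel_compactness}, via the normalizing maps of Section~\ref{sec:normalizing_maps} (Proposition~\ref{prop:Kdr_compact} plus Theorem~\ref{thm:normalizing} and its corollary), which is also essentially how Frankel's direct argument goes. The two points you flag as needing care are exactly the right ones, and both are handled correctly: the nearest boundary point $x_j$ in $P_{j-1}\cap\partial C$ exists because $\Rb$-proper convexity prevents $C$ from containing the linear subspace $P_{j-1}$, so the relative boundary is a nonempty closed set on which the distance to the origin is attained; and in the compactness argument the $d$ coordinate supporting conditions kill any line $\Rb v$ in the limit coordinate by coordinate, just as in the complex case. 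One cosmetic remark: you should also note (as the complex proof implicitly does) that $\xi\notin P_1$ because the supporting hyperplane $H=\xi+P_1$ misses the interior point $0$, so that $x_1,\dots,x_d$ is genuinely a basis and $U$ is well defined; this is immediate but worth a line.
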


\begin{remark} To be precise, Benz\'{e}cri established a real projective variant of the above result which easily implies Theorem~\ref{thm:real_affine_cocompact}. A direct proof can also be found in~\cite{F1991}.
\end{remark}

Using the definition of the Hilbert distance it is not difficult to observe that the Hilbert distance is continuous on $\Yb_d$. 

\begin{observation}\label{obs:Hilbert_metric_conv} Suppose $\Omega_n \subset \Rb^d$ is a sequence of convex domains converging to a convex domain $\Omega$ in the local Hausdorff topology. Then 
\begin{align*}
H_\Omega = \lim_{n \rightarrow \infty} H_{\Omega_n}
\end{align*}
locally uniformly on $\Omega \times \Omega$. 
\end{observation}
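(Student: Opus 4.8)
The plan is to argue directly from the cross-ratio formula defining $H_\Omega$, using a compactness argument to get the uniformity. First I would fix a compact set $K \subset \Omega \times \Omega$; since the coordinate projections of $K$ are compact subsets of $\Omega$, I can pick a compact convex set $L \subset \Omega$ containing both of them and set $r_0 := \tfrac{1}{3}\inf\{\norm{x-w} : x \in L,\ w \in \partial\Omega\} > 0$. Using the local Hausdorff convergence $\Omega_n \to \Omega$ together with the real analogue of Proposition~\ref{prop:haus_conv_compacts} (parts (2) and (3) are immediate from the definition of the topology), one records that for all large $n$ one has $L \subset \Omega_n$ and $\inf\{\norm{x-w} : x \in L,\ w \in \partial\Omega_n\} \geq r_0$: otherwise there are points $c_n \notin \Omega_n$ at distance $<r_0$ from $L$, lying in a fixed bounded set, so a subsequential limit $c_\infty$ would satisfy $\operatorname{dist}(c_\infty, L) \leq r_0 < 3r_0$, hence $c_\infty \in \Omega$, contradicting $c_\infty \notin \Omega$ given by Proposition~\ref{prop:haus_conv_compacts}(3).

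With this in hand, the region near the diagonal is controlled by the infinitesimal metric: if a segment $[x,y]$ lies inside $\{z : \delta_\Omega(z) \geq r_0\}$, then $h_\Omega(z;v) \leq \norm{v}/r_0$ along it, so $H_\Omega(x,y) \leq \norm{x-y}/r_0$; the same bound holds for $H_{\Omega_n}$ once $n$ is large, since then $L \subset \{z : \delta_{\Omega_n}(z) \geq r_0\}$. Hence for every $\epsilon > 0$ there is $\delta > 0$ with $H_\Omega(x,y), H_{\Omega_n}(x,y) < \epsilon/2$ whenever $(x,y) \in K$, $\norm{x-y} < \delta$, and $n$ is large.

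The main step is a joint-continuity statement for the endpoints of the chord: if $\Omega_n \to \Omega$ in the local Hausdorff topology and $x_n \to x_\infty \in \Omega$, $y_n \to y_\infty \in \Omega$ with $x_\infty \neq y_\infty$, then $H_{\Omega_n}(x_n,y_n) \to H_\Omega(x_\infty,y_\infty)$. To prove it I would parametrize the line through $x_n,y_n$ by $t \mapsto x_n + t(y_n-x_n)$, so $\overline{\Omega}_n$ meets it in the segment $t \in [t_{a,n}, t_{b,n}]$ with $t_{a,n} < 0 < 1 < t_{b,n}$ (values $\pm\infty$ allowed), and a short computation gives $H_{\Omega_n}(x_n,y_n) = \tfrac{1}{2}\bigl(\log\tfrac{t_{b,n}}{t_{b,n}-1} + \log\tfrac{\abs{t_{a,n}}+1}{\abs{t_{a,n}}}\bigr)$, where the convention in the definition assigns the value $0$ to a summand with infinite parameter. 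For $s \in [1, t_{b,\infty})$ the point $x_\infty + s(y_\infty - x_\infty)$ lies in $\Omega$, so by Proposition~\ref{prop:haus_conv_compacts}(1) the point $x_n + s(y_n-x_n)$ lies in $\Omega_n$ for $n$ large, giving $\liminf_n t_{b,n} \geq t_{b,\infty}$; for $s > t_{b,\infty}$ the point $x_\infty + s(y_\infty - x_\infty)$ lies outside $\overline{\Omega}$, so $x_n + s(y_n-x_n) \notin \overline{\Omega}_n$ for $n$ large by Proposition~\ref{prop:haus_conv_compacts}(2), giving $\limsup_n t_{b,n} \leq t_{b,\infty}$; thus $t_{b,n} \to t_{b,\infty}$ in $[1,\infty]$, and symmetrically $t_{a,n} \to t_{a,\infty}$ in $[-\infty,0]$. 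Since $x_\infty$ and $y_\infty$ are interior points, $t_{b,\infty} > 1$ and $\abs{t_{a,\infty}} > 0$, so the two continuous functions $s \mapsto \log\tfrac{s}{s-1}$ on $[1,\infty]$ and $u \mapsto \log\tfrac{u+1}{u}$ on $[0,\infty]$ (each with value $0$ at $\infty$) are evaluated away from their poles, and $H_{\Omega_n}(x_n,y_n) \to H_\Omega(x_\infty,y_\infty)$.

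Finally I would assemble these by contradiction. If $\sup_{(x,y)\in K}\abs{H_{\Omega_n}(x,y) - H_\Omega(x,y)}$ did not tend to $0$, there would be $\epsilon_0 > 0$, indices $n_k \to \infty$, and $(x_k,y_k) \in K$ with $\abs{H_{\Omega_{n_k}}(x_k,y_k) - H_\Omega(x_k,y_k)} \geq \epsilon_0$; after passing to a subsequence, $x_k \to x_\infty$ and $y_k \to y_\infty$ with $x_\infty, y_\infty \in \Omega$. If $x_\infty = y_\infty$, then $\norm{x_k-y_k} < \delta(\epsilon_0)$ for $k$ large, and the near-diagonal estimate makes both $H$'s smaller than $\epsilon_0/2$, a contradiction. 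If $x_\infty \neq y_\infty$, the endpoint statement applied both to $(\Omega_{n_k})$ and to the constant sequence $(\Omega)$ shows that $H_{\Omega_{n_k}}(x_k,y_k)$ and $H_\Omega(x_k,y_k)$ share the limit $H_\Omega(x_\infty,y_\infty)$, again a contradiction. The only points needing care are the bookkeeping at infinity, so that unbounded $\Omega$ and $\Omega_n$ are covered, and checking that the cited facts about local Hausdorff convergence carry over verbatim to real convex domains; both are routine, so I expect the endpoint-convergence step to be the only substantive part of the argument.
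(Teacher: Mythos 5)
Your proof is correct. The paper offers no argument for this observation at all --- it is stated as an immediate consequence of the cross-ratio definition --- so there is no proof to compare against; your write-up is the natural direct argument (endpoint continuity of the chord parameters $t_{a,n}, t_{b,n}$ under local Hausdorff convergence, plus the infinitesimal bound $h_\Omega(z;v)\leq \norm{v}/r_0$ to handle the diagonal), and it correctly covers the degenerate cases where an endpoint is at infinity, which is exactly the bookkeeping one must do since the observation is stated for convex domains that need not be $\Rb$-properly convex or bounded.
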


As a consequence of Theorem~\ref{thm:KN} and Observation~\ref{obs:Hilbert_metric_conv} we have the following. 

\begin{corollary}\label{cor:hilbert_gromov_necessary} Suppose $\Omega \subset \Rb^d$ is a $\Rb$-properly convex domain and $(\Omega, H_\Omega)$ is Gromov hyperbolic. Then
\begin{enumerate}
\item if $\mathcal{D} \in \overline{\Aff(\Rb^d) \cdot \Omega} \cap \Yb_d$, then $(\mathcal{D},H_\mathcal{D})$ is Gromov hyperbolic, 
\item every domain in $\overline{\Aff(\Rb^d) \cdot \Omega} \cap \Yb_d$ is strictly convex,
\item every domain in $\overline{\Aff(\Rb^d) \cdot \Omega} \cap \Yb_d$ has $C^1$ boundary. 
\end{enumerate}
\end{corollary}

Recently, Benoist completely characterized the convex domains which have Gromov hyperbolic Hilbert metric in terms of the derivatives of local defining functions. To state his result we need some definitions. 

\begin{definition} Suppose $\mathcal{U} \subset \Rb^d$ is an open set and $F: \mathcal{U} \rightarrow \Rb$ is a $C^1$ function. Then for $x,x+h \in \mathcal{U}$ define
\begin{align*}
 D_x(h) :=F(x+h)-F(x)-F^\prime(x) \cdot h.
\end{align*}
Then $F$ is said to be \emph{quasi-symmetric} if there exists $H \geq 1$ so that 
\begin{align*}
D_x(h) \leq H D_x(-h)
\end{align*}
whenever $x,x+h,x-h \in \mathcal{U}$.
\end{definition}

\begin{definition}\label{defn:quasi_symmetric}
Suppose $\Omega \subset \Rb^d$ is a bounded convex domain. Then $\Omega$ is said to have \emph{quasi-symmetric boundary} if its boundary is $C^1$ and is everywhere locally the graph of a quasi-symmetric function. 
\end{definition}

\begin{theorem}[{Benoist~\cite[Theorem 1.4]{B2003}}]
 Suppose $\Omega \subset \Rb^d$ is a bounded convex domain. Then the following are equivalent:
\begin{enumerate}
 \item $(\Omega,H_{\Omega})$ is Gromov hyperbolic,
\item $\Omega$ has quasi-symmetric boundary.
\end{enumerate}
\end{theorem}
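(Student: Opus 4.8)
The plan is to characterize Gromov hyperbolicity of $(C,H_C)$ through the behaviour of the Gromov product on the boundary, and then to translate that behaviour into the analytic quasi-symmetry condition on local defining functions. First I would record the structural input: by Theorem~\ref{thm:KN}, if $(C,H_C)$ is Gromov hyperbolic then $C$ is strictly convex with $C^1$ boundary, and in that case the geodesics of $(C,H_C)$ joining two boundary points are exactly the Euclidean chords (Theorem~\ref{thm:Hilbert_basic_properties}(1)), so the visual boundary $\partial_G(C,H_C)$ is canonically $\partial C$. Fixing a basepoint $x_0\in C$ and using the visual metric of Theorem~\ref{thm:visible_metric}, Gromov hyperbolicity becomes the four-point inequality for the boundary Gromov product $(\xi|\eta)_{x_0}$, $\xi,\eta\in\partial C$, which by the Observation in Section~\ref{sec:vm} equals, up to a bounded additive error, the Hilbert distance $H_C(x_0,[\xi,\eta])$ from $x_0$ to the chord.

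The core of the argument is a local comparison. Near a boundary point $\zeta$, write $\partial C$ as the graph of a convex $C^1$ function $F$ over the tangent hyperplane, let $x\in\partial C$ be near $\zeta$, and let $x^{\pm}$ be the boundary points obtained from $x$ by moving $\pm h$ in the base variable. Using the Hilbert-length integrand $h_\Omega$ together with strict convexity, one estimates $H_C$ from $x_0$ to the chord $[x^+,x^-]$: the transverse gap of $C$ seen from a point deep along a geodesic pointing at $\zeta$ is governed by the curving quantities $D_x(h)=F(x+h)-F(x)-F'(x)\cdot h$ and $D_x(-h)$, giving
\begin{align*}
(x^+|x^-)_{x_0} = \frac{1}{2}\log\frac{1}{\max\{D_x(h),D_x(-h)\}} + O(1),
\end{align*}
uniformly in $\zeta$, $x$, $h$ (the $O(1)$ absorbing the dependence on $x_0$ and on the graph neighbourhood; the Morse Lemma, Theorem~\ref{thm:morse_lemma}, is used to compare the straight-chord geodesic with nearby broken geodesics). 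Feeding this into the $\delta$-inequality for a triangle with vertices $x^+$, $x^-$ and a third boundary point placed on the far side of $x$ forces $\min\{D_x(h),D_x(-h)\}\ge e^{-2\delta'}\max\{D_x(h),D_x(-h)\}$, i.e. $D_x(h)\le HD_x(-h)$ with $H=e^{2\delta'}$, which is exactly quasi-symmetry (Definition~\ref{defn:quasi_symmetric}). Conversely, if every boundary point has a quasi-symmetric graph neighbourhood, the same comparison shows the boundary Gromov product satisfies the four-point inequality with uniform constant, so $(C,H_C)$ is Gromov hyperbolic.

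The constants in the comparison must be made uniform, and for this I would invoke Benz\'ecri's theorem (Theorem~\ref{thm:real_affine_cocompact}): affine changes of coordinates preserve both the Hilbert metric (Theorem~\ref{thm:Hilbert_basic_properties}(3)) and the quasi-symmetry constant of a local defining function, so one may normalize $(C,x_0)$ into a fixed compact family in $\Yb_{d,0}$ and pass to limits via Observation~\ref{obs:Hilbert_metric_conv}. For the ``only if'' direction this also gives a clean contrapositive: if quasi-symmetry fails there are $x_n\in\partial C$ and $h_n$ with $D_{x_n}(h_n)/D_{x_n}(-h_n)\to\infty$; after affine normalization so that $\norm{h_n}$ and $D_{x_n}(-h_n)$ are both comparable to $1$, the normalized domains subconverge in $\Yb_d$ to some $D\in\overline{\Aff(\Rb^d)\cdot C}\cap\Yb_d$ whose boundary contains a genuine segment (the limit of the ever-flatter $-h_n$ side), contradicting the strict convexity of $D$ guaranteed by Corollary~\ref{cor:hilbert_gromov_necessary}.

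The hard part is the local comparison estimate linking $(x^+|x^-)_{x_0}$ to $D_x(\pm h)$: one must control Hilbert distances to a chord that may run very close to $\partial C$, keep the error term genuinely additive and uniform across all scales and all boundary points, and handle the ``turning region'' where a nearly-optimal broken geodesic bends, which is precisely where the strict-convexity and $C^1$ input from Theorem~\ref{thm:KN} and the Morse Lemma do the real work.
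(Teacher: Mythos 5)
First, a point of comparison: the paper does not prove this statement at all --- it is quoted from Benoist \cite{B2003} and used as a black box --- so your proposal can only be measured against Benoist's actual argument, whose overall architecture (relate Gromov products of nearby boundary points to the second differences $D_x(\pm h)$ of a local graphing function, and use Benz\'ecri cocompactness for uniformity) you have correctly identified. The two-point estimate $(x^+|x^-)_{x_0}=\tfrac12\log\frac{1}{\max\{D_x(h),D_x(-h)\}}+O(1)$ is right. The gaps are in how you propose to use it.

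For the direction ``hyperbolic $\Rightarrow$ quasi-symmetric'', the estimate you feed into the four-point inequality sees only $\max\{D_x(h),D_x(-h)\}$ and is therefore blind to the ratio you need to bound. With the natural third vertex $y=(x,F(x))$, the products $(x^{\pm}|y)_{x_0}$ are governed by the chord depths $S_{\pm}=\tfrac12\bigl(F(x)+F(x\pm h)\bigr)-F(x\pm h/2)$, and convexity gives only $S_{\pm}\le\tfrac12 D_x(\pm h)$; the four-point inequality then yields $\max\{D_x(h),D_x(-h)\}\le C\max\{S_+,S_-\}\le\tfrac{C}{2}\max\{D_x(h),D_x(-h)\}$, which is vacuous once $C\ge 2$. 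No single application of the $\delta$-inequality to such a triple ``forces'' $D_x(h)\le H D_x(-h)$; Benoist extracts quasi-symmetry only through intermediate doubling-type conditions and an iteration over scales. The viable version of this direction is in fact the contrapositive you mention in passing: if $D_{x_n}(h_n)/D_{x_n}(-h_n)\to\infty$, normalize affinely so that $\norm{h_n}\asymp 1$ and $D_{x_n}(h_n)\asymp 1$, pass to a limit domain in $\overline{\Aff(\Rb^d)\cdot C}\cap\Yb_d$ whose boundary contains the tangent segment on the $-h_n$ side, and contradict Corollary~\ref{cor:hilbert_gromov_necessary}(2). That argument (which parallels Section~\ref{sec:local_m_convexity} of this paper) should replace, not merely supplement, the four-point computation. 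The more serious gap is the converse: ``the same comparison shows the boundary Gromov product satisfies the four-point inequality'' is an assertion, not a proof. The local estimate controls Gromov products of pairs of \emph{nearby boundary points} against a \emph{fixed basepoint}; Gromov hyperbolicity requires the four-point condition for arbitrary quadruples of interior points, i.e.\ uniformly thin triangles at all scales and all configurations, including triangles whose sides run close to $\partial C$ far from one another. This is the hard half of Benoist's theorem, it uses the doubling property of quasi-symmetric convex functions together with cocompactness in an essential, non-formal way, and the proposal contains no argument for it.
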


\section{Proof of Corollary~\ref{cor:Hilbert_metric}}\label{sec:pf_of_cor_hilbert_metric}

In this section we prove Corollary~\ref{cor:Hilbert_metric}. For the rest of the section suppose that $\Omega \subset \Cb^d$ is a bounded convex domain and $(\Omega, H_\Omega)$ is Gromov hyperbolic. Suppose for a contradiction that  $(\Omega, K_\Omega)$ is not Gromov hyperbolic. 

Since $(\Omega, K_\Omega)$ is not Gromov hyperbolic, Theorem~\ref{thm:main_equivalence}  implies that there exist affine maps $A_n \in \Aff(\Cb^d)$ such that $A_n \Omega \rightarrow \Omega_\infty$ in $\Xb_d$ and $\partial \Omega_\infty$ has non-simple boundary. Then by Proposition~\ref{prop:affine_vs_holomorphic_disks}, $\partial \Omega_\infty$ contains an affine disk. Then without loss of generality we can assume that $0 \in \Omega_\infty$ and $e_1 + \Db \cdot e_2 \subset \partial \Omega_\infty$. Pick $\lambda \in \Cb$ such that $-\lambda e_2 \in \partial \Omega_\infty$ and $\norm{0-\lambda e_2} = \delta_{\Omega_\infty}(0;e_2)$. By rotating $\Omega_\infty$ we can assume, in addition, that $\lambda \in \Rb_{>0}$. 

Let $V = \Span_{\Rb}\{e_1,e_2\}$ and $C=V \cap \Omega_\infty$. 

\medskip

\noindent \textbf{Claim:} $C$ is a $\Rb$-properly convex domain in $V$ and $(C,H_C)$ is not Gromov hyperbolic. 

\medskip

\noindent \emph{Proof of Claim:}
By construction $e_1 + [-1,1] \cdot e_2 \subset \partial C$ which implies by convexity that 
\begin{align}
\label{eq:boundary_line}
(e_1 + \Rb \cdot e_2) \cap C = \emptyset.
\end{align}
Further $\lambda e_2 \in \partial C$. We claim that $C$ is $\Rb$-properly convex. Suppose that $a+\Rb \cdot v \subset C$ for some $a,v \in \Span_{\Rb}\{e_1,e_2\}$. Since $0 \in C$, the real analogue of Observation~\ref{obs:asymptotic_cone_1} implies that $\Rb \cdot v \subset C$. If $v=v_1e_1+v_2e_2$, then Equation~\eqref{eq:boundary_line} implies that $v_1 = 0$. Then, since $\lambda e_2 \in \partial C$, we must have $v_2 = 0$. So $v=0$ and hence $C$ is $\Rb$-properly convex. Finally, since $e_1 + [-1,1] \cdot e_2 \subset \partial C$, Theorem~\ref{thm:KN} implies that $(C,H_C)$ is not Gromov hyperbolic. \hspace*{\fill}$\blacktriangleleft$

\medskip

For a convex domain $\mathcal{D} \subset \Cb^d$ and $x,y,z \in \mathcal{D}$ define the Gromov product associated to $H_\mathcal{D}$ by
\begin{align*}
(x|y)_z^{H,\mathcal{D}} : = \frac{1}{2} \left( H_\mathcal{D}(x,z) + H_\mathcal{D}(y,z) - H_\mathcal{D}(x,y) \right). 
\end{align*}
Since $(\Omega, H_\Omega)$ is Gromov hyperbolic, there exists $\delta > 0$ such that 
\begin{align*}
(x|z)_w^{H,\Omega}  \geq \min\left\{ (x|y)_w^{H,\Omega}, (y|z)_w^{H,\Omega}\right\} - \delta
\end{align*}
for every $x,y,z,w \in \Omega$. So by Theorem~\ref{thm:Hilbert_basic_properties} part (3) and Observation~\ref{obs:Hilbert_metric_conv} 
\begin{align*}
(x|z)_w^{H,\Omega_\infty}  \geq \min\left\{ (x|y)_w^{H,\Omega_\infty}, (y|z)_w^{H,\Omega_\infty}\right\} - \delta
\end{align*}
for every $x,y,z,w \in \Omega_\infty$ (notice that $\Omega_\infty$ may not be $\Rb$-properly convex and so $H_{\Omega_\infty}$ may not be a distance on $\Omega_\infty$, but this doesn't matter). So by Theorem~\ref{thm:Hilbert_basic_properties} part (2) \begin{align*}
(x|z)_w^{H,C}  \geq \min\left\{ (x|y)_w^{H,C}, (y|z)_w^{H,C}\right\} - \delta
\end{align*}
for every $x,y,z,w \in C$. But then $(C,H_C)$ is Gromov hyperbolic which contradicts the claim.

\section{Tube domains}\label{sec:tube_domains}

In this section we establish Corollary~\ref{cor:tube_domains} by proving Propositions~\ref{prop:one_direction} and~\ref{prop:the_other_direction} below. 

\begin{proposition}\label{prop:one_direction} Suppose $d \geq 2$, $C \subset \Rb^d$ is a $\Rb$-properly convex domain, and $\Omega = C + i\Rb^d$. If $(\Omega, K_\Omega)$ is Gromov hyperbolic, then $(C,H_C)$ is Gromov hyperbolic and $C$ is unbounded. \end{proposition}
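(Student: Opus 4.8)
I will prove the two assertions — that $C$ is unbounded and that $(C,H_C)$ is Gromov hyperbolic — separately. Throughout, note that since $C$ is $\Rb$-properly convex the domain $\Omega = C + i\Rb^d$ is $\Cb$-properly convex, so $(\Omega, K_\Omega)$ is a proper geodesic metric space by Theorem~\ref{thm:barth}, and it is Gromov hyperbolic by hypothesis; hence by Theorem~\ref{thm:thin_triangle} its geodesic triangles are $\delta'$-thin for some $\delta'$. The one structural feature of tube domains I will exploit is that for every $y \in \Rb^d$ the map $T_y\colon z \mapsto z + iy$ is an affine biholomorphism of $\Omega$ (it fixes real parts), hence a $K_\Omega$-isometry.

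\textbf{Step 1: $C$ is unbounded.} Suppose not, so $C$ is bounded. Fix $x_0 \in C$ and set $z_0 = x_0 + i0$, $\delta_0 = \operatorname{dist}(x_0,\partial C) > 0$. Using $d \geq 2$, let $e_1,e_2$ be the first two standard basis vectors of $\Rb^d$ and define $F\colon \Rb^2 \to \Omega$ by $F(s,t) = z_0 + i(se_1 + te_2)$. I claim $F$ is bi-Lipschitz from the Euclidean plane to $(\Omega,K_\Omega)$. For the upper bound, by the translation isometries it suffices to bound $K_\Omega(z_0, z_0 + i\eta)$ for $\eta \in \Rb^d$; integrating along the straight path $u \mapsto z_0 + iu\eta$ and applying Lemma~\ref{lem:kob_inf_bound} together with the identity $\delta_\Omega(z_0 + iu\eta; i\eta) = \operatorname{dist}(x_0, \partial C \cap (x_0 + \Rb\eta)) \geq \delta_0$ (the relevant complex slice of $\Omega$ is bounded by two lines parallel to $i\eta$, and the nearest boundary point is realized in the real part) gives $K_\Omega(z_0,z_0+i\eta) \leq \norm{\eta}/\delta_0$. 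For the lower bound, the coordinate projection $\pi_j\colon \Cb^d \to \Cb$ is holomorphic and maps $\Omega$ onto the vertical strip $\pi_j^{\Rb}(C) + i\Rb$, of finite width since $C$ is bounded; a conformal map of such a strip to the upper half-plane shows $K$-distance between points differing by a vertical displacement $\ell$ is at least $(\pi/w)|\ell|$, and applying this for $j=1,2$ bounds $K_\Omega(F(s,t),F(s',t'))$ below by a constant times $\norm{(s,t)-(s',t')}$. Thus $F$ is a bi-Lipschitz embedding of $\Rb^2$. This is impossible in a proper geodesic Gromov hyperbolic space: for a Euclidean triangle with vertices $(0,0),(N,0),(0,N)$, the Morse Lemma (Theorem~\ref{thm:morse_lemma}) forces the $F$-images of its sides to lie within a bounded distance of actual $K_\Omega$-geodesics, which form a $\delta'$-thin triangle, yet the $K_\Omega$-distance from $F(N/2,N/2)$ to the images of the two legs grows linearly in $N$ — a contradiction. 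Hence $C$ is unbounded.

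\textbf{Step 2: $(C,H_C)$ is Gromov hyperbolic.} Consider the real-slice map $\iota\colon C \to \Omega$, $\iota(x) = x + i0$. I claim $\tfrac12 H_C(x,y) \leq K_\Omega(\iota(x),\iota(y)) \leq 2 H_C(x,y)$ for all $x,y \in C$. Upper bound: by Theorem~\ref{thm:Hilbert_basic_properties}(1) the $H_C$-geodesic from $x$ to $y$ is the segment $[x,y]\subset C\subset\Rb^d$; integrating the Kobayashi metric along $\iota[x,y]$, using Lemma~\ref{lem:kob_inf_bound} with $\delta_\Omega(x+i0;v) = \operatorname{dist}(x,\partial C\cap (x+\Rb v))$ for real $v$ and the elementary inequality $1/\min(a,b)\leq 1/a+1/b$, yields $K_\Omega(\iota(x),\iota(y)) \leq 2H_C(x,y)$. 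Lower bound: let $L$ be the complex affine line through $\iota(x),\iota(y)$ and $a,b$ the endpoints (ordered $a,x,y,b$) of $\overline{C}\cap L_{x,y}$; since $C$ contains no real line, $a$ and $b$ are not both at infinity, and $a,b \in L\setminus(\Omega\cap L)$, so Lemma~\ref{lem:hyperplanes_along_lines} applied with $\xi \in \{a,b\}$ and the triangle inequality for $\log$ give $K_\Omega(\iota(x),\iota(y)) \geq \tfrac14\big|\log\tfrac{\norm{x-b}\,\norm{y-a}}{\norm{x-a}\,\norm{y-b}}\big| = \tfrac12 H_C(x,y)$. Consequently $\iota$ is bi-Lipschitz, so (parametrizing by $H_C$-arclength) it sends $H_C$-geodesics to $(2,0)$-quasigeodesics. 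Given a geodesic triangle in $(C,H_C)$ — which is proper geodesic by Theorem~\ref{thm:Hilbert_basic_properties}(1) — its $\iota$-image is a triangle of $(2,0)$-quasigeodesics in $(\Omega,K_\Omega)$; by the Morse Lemma each side stays within a uniform $M$ of a genuine $K_\Omega$-geodesic with the same endpoints, that geodesic triangle is $\delta'$-thin, hence the image triangle is $(\delta'+2M)$-thin, and pulling back through the bi-Lipschitz inequality shows the original triangle is $(2\delta'+4M)$-thin. Since this bound is uniform, Theorem~\ref{thm:thin_triangle} gives that $(C,H_C)$ is Gromov hyperbolic.

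\textbf{Expected main obstacle.} The substance of the argument is entirely in the two-sided metric comparisons. In Step 1 the delicate point is obtaining a \emph{linear} (not merely logarithmic) lower bound on vertical displacement, which is why one must project onto finite-width strips rather than use the disjoint complex hyperplanes of Lemma~\ref{lem:hyperplanes}; and in Step 2 the lower bound $K_\Omega(\iota(x),\iota(y)) \gtrsim H_C(x,y)$ is where Lemma~\ref{lem:hyperplanes_along_lines}, applied at \emph{both} endpoints of the chord $\overline{C}\cap L_{x,y}$, is needed in order to recover the full Hilbert cross-ratio. Once these estimates are established, the exclusion of quasi-flats in Step 1 and the transfer of triangle-thinness in Step 2 are routine applications of the Morse Lemma and Theorem~\ref{thm:thin_triangle}.
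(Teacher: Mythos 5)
Your proof is correct, and the overall strategy is the same as the paper's: Part (2) follows from the fact that $(C,H_C)$ quasi-isometrically embeds in $(\Omega,K_\Omega)$, and Part (1) from the fact that a bounded $C$ would force a quasi-isometrically embedded Euclidean plane, which a Gromov hyperbolic space cannot contain. The differences are in how the metric comparisons are established, and they are worth noting. For the two-sided comparison on the real slice, the paper (Lemma~\ref{lem:QI_embedding}) gets $K_\Omega\leq H_C$ by citing Pflug--Zwonek and $H_C\leq 2K_\Omega$ by projecting arbitrary curves in $\Omega$ down to $C$ and comparing the infinitesimal metrics; you instead prove $K_\Omega\leq 2H_C$ directly by integrating $k_\Omega$ along the real segment (using that $\delta_\Omega(x;v)$ for real $x,v$ equals the one-sided real distance, plus $1/\min(a,b)\leq 1/a+1/b$), and prove $K_\Omega\geq\tfrac12 H_C$ by applying Lemma~\ref{lem:hyperplanes_along_lines} at \emph{both} endpoints of the real chord and adding the two logarithms. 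This is self-contained (no external citation) at the cost of a slightly weaker constant, and the observation that the two chord endpoints together reconstruct the full Hilbert cross-ratio is a nice alternative to the projection argument. For the flat, the paper's Lemma~\ref{lem:QI_flat_embedding} gets the linear lower bound from the uniform bound $\delta_\Omega(z;v)\leq A_1$ (valid everywhere by imaginary-translation invariance) together with Lemma~\ref{lem:kob_inf_bound}, which is a bit more direct than your coordinate-projection-onto-strips argument, but both are valid; and you supply the Morse-lemma details for excluding the flat, which the paper leaves implicit.
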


Before proving the proposition we establish two lemmas. 

\begin{lemma}\label{lem:QI_embedding} Suppose $C \subset \Rb^d$ is a $\Rb$-properly convex domain and $\Omega = C + i\Rb^d$. Then 
\begin{align*}
K_\Omega(c_1,c_2) \leq H_C(c_1,c_2) \leq 2K_\Omega(c_1,c_2)
\end{align*}
for all $c_1,c_2 \in C$. 
\end{lemma}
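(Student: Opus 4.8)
The plan is to prove the two inequalities separately, each by exhibiting an appropriate competitor map and invoking the distance-decreasing property of the Kobayashi metric (together with the standard identity $K_{\Db}$, or rather $K_{\mathbb{H}}$ on the half-plane, equals the Poincar\'e distance). For the lower bound $K_\Omega(c_1,c_2) \le H_C(c_1,c_2)$, the natural competitor is a holomorphic map from a strip (or half-plane) into $\Omega$ built from the Hilbert geodesic in $C$: given $c_1,c_2 \in C$, let $L$ be the real line through them, let $a,b \in \partial C \cup \{\infty\}$ be the endpoints of $L \cap C$, and consider the affine real line $t \mapsto \ell(t) \subset L$. Since $\Omega = C + i\Rb^d$ is a tube, for any holomorphic $g : \Db \to \mathbb{H}$ (or a strip/half-plane mapping onto the relevant interval), the map $\lambda \mapsto \ell(\Real g(\lambda))$ composed appropriately lands in $C + i\Rb^d$ because the imaginary part is irrelevant for membership. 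Concretely, the one-variable map realizing the interval $(a,b)$ as a strip maps into $C+i\Rb$ along the line $L+i\Rb$, hence into $\Omega$; the Kobayashi distance between the two points in that strip equals $H_C(c_1,c_2)$ by the explicit formula for the Hilbert metric on a segment and the fact that $K$ on a strip/half-plane is the corresponding hyperbolic distance. Distance-decreasing then gives $K_\Omega(c_1,c_2) \le H_C(c_1,c_2)$.

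For the upper bound $H_C(c_1,c_2) \le 2 K_\Omega(c_1,c_2)$, the key observation is that the real-part projection $\Real : \Omega \to C$ — more precisely $z = x+iy \mapsto x$ — is a well-defined map onto $C$, but it is not holomorphic, so one cannot use it directly. Instead I would use Lemma~\ref{lem:hyperplanes_along_lines} (or Lemma~\ref{lem:hyperplanes}): for $c_1,c_2 \in C \subset \Omega$, the complex affine line $L_{\Cb}$ through $c_1,c_2$ is simply $L + i\Rb \cdot v$ where $L$ is the real line through them; its intersection with $\Omega$ is $(L\cap C)+i\Rb v$, whose complement points $\xi \in L_{\Cb}\setminus \Omega$ include the (real) endpoints $a,b$ of $L \cap C$. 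Applying Lemma~\ref{lem:hyperplanes_along_lines} at $\xi = a$ and at $\xi = b$ and combining the two bounds yields
\begin{align*}
K_\Omega(c_1,c_2) \ge \frac12 \left( \frac12 \left| \log \frac{\norm{c_1-a}}{\norm{c_2-a}} \right| + \frac12 \left| \log \frac{\norm{c_1-b}}{\norm{c_2-b}} \right| \right) \ge \frac14 \left| \log \frac{\norm{c_1-b}\norm{c_2-a}}{\norm{c_2-b}\norm{c_1-a}} \right| = \frac12 H_C(c_1,c_2),
\end{align*}
where the middle inequality is $|p|+|q| \ge |p-q|$ with $p = \log(\norm{c_1-a}/\norm{c_2-a})$, $q = \log(\norm{c_1-b}/\norm{c_2-b})$, noting the cross-ratio that appears is exactly $e^{2H_C(c_1,c_2)}$ (up to sign/ordering); one must be slightly careful when $a$ or $b$ is at infinity, in which case the corresponding term drops and the convention $\norm{x-\infty}/\norm{y-\infty}=1$ matches the degenerate Hilbert formula, so the inequality still holds. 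Rearranging gives $H_C(c_1,c_2) \le 2 K_\Omega(c_1,c_2)$.

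The main obstacle I anticipate is the lower bound: making precise the holomorphic map from a planar domain onto the one-complex-dimensional slice $(L\cap C)+i\Rb$ of $\Omega$ and checking that it realizes exactly the Hilbert distance rather than something merely comparable. The cleanest route is probably to reduce to $d=1$: the slice $\Omega \cap ((L\cap C)+i\Rb)$ is biholomorphic to a (possibly one- or two-sided) strip or half-plane in $\Cb$ via an affine map, the restriction of $K_\Omega$ to this slice is bounded above by the Kobayashi distance of the slice (distance-decreasing for the inclusion), and the Kobayashi distance of a strip $\{0 < \Real w < 1\}$ between two real points equals $\frac12|\log(t_1/t_2)\cdots|$ — exactly the one-dimensional Hilbert formula. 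I would also double-check orientation conventions so that the factor matches $H_C$ and not $2H_C$; this is the bookkeeping that could hide a factor-of-two slip, which is presumably why the stated inequality carries a $2$ on the right rather than being an equality.
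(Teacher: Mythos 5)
Your proof is essentially correct but takes a genuinely different route from the paper on both halves. For the lower bound, the paper first reduces to bounded $C$ (via Proposition~\ref{prop:convergence_of_kob} and Observation~\ref{obs:Hilbert_metric_conv}) and then cites Pflug--Zwonek; you instead argue directly by complexifying the chord, which maps the strip $\left\{w \in \Cb : \Real w \in I\right\}$ holomorphically into $\Omega$ because membership in a tube depends only on the real part. That is a legitimate self-contained replacement for the citation, but one sub-claim is false as stated: for a bounded interval, say $I=(0,1)$, the Kobayashi distance of the strip between real points $t_1<t_2$ is $\tfrac12\log\bigl(\tan(\pi t_2/2)/\tan(\pi t_1/2)\bigr)$, which is \emph{not} equal to the Hilbert distance $\tfrac12\log\frac{t_2(1-t_1)}{t_1(1-t_2)}$; it is only bounded above by it (this follows from $\sin(\pi t)\geq \pi t(1-t)$ on $[0,1]$, comparing the hyperbolic density of the strip with the infinitesimal Hilbert metric of the interval). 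The inequality happens to go in the direction you need, so the argument survives once ``equals'' is replaced by ``is at most''; genuine equality holds only in the half-plane case, i.e.\ when one endpoint of $L\cap C$ is at infinity.

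For the upper bound, the paper works infinitesimally: it projects $z=x+iy\mapsto x$, notes $\delta_\Omega(z;v)\leq \delta_C(P(z);P(v))$, and combines Graham's estimate (Lemma~\ref{lem:kob_inf_bound}) with the infinitesimal Hilbert formula to show that projecting a curve at most doubles its length. Your argument instead applies Lemma~\ref{lem:hyperplanes_along_lines} at the two real endpoints $a,b$ of $L\cap C$ and uses $\abs{p}+\abs{q}\geq\abs{p-q}$ together with $\abs{p-q}=2H_C(c_1,c_2)$; the bookkeeping checks out, including the degenerate case (at least one endpoint is finite since $C$ is $\Rb$-properly convex, and if the other is infinite the single finite endpoint already gives $H_C\leq K_\Omega$). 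Your route is more elementary and avoids the curve-length computation; the paper's route has the advantage of comparing the infinitesimal metrics, not just the distances.
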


\begin{remark} When $C$ is bounded, Pflug and Zwonek~\cite[Proposition 15]{PZ2018} proved that $K_\Omega(c_1,c_2) \leq H_C(c_1,c_2)$ for $c_1, c_2 \in C$. 
\end{remark}

\begin{proof} Using Proposition~\ref{prop:convergence_of_kob} and Observation~\ref{obs:Hilbert_metric_conv} it suffices to prove the lemma in the case when $C$ is bounded. Then by a result of Pflug and Zwonek~\cite[Proposition 15]{PZ2018} we have
\begin{align*}
K_\Omega(c_1,c_2) \leq H_C(c_1,c_2)
\end{align*}
for all $c_1,c_2 \in C$. 

For $c \in C$ and $v \in \Rb^d$ non-zero define
\begin{align*}
\delta_C(c;v) = \min\{ \norm{\xi-c} : \xi \in (c+\Rb \cdot v) \cap \partial C \}
\end{align*}
and define $\delta_C(c;0) = \infty$. Then, by definition, 
\begin{align}
\label{eq:hilbert_inf_est}
\frac{\norm{v}}{2\delta_C(c;v)} \leq h_C(c;v) \leq \frac{\norm{v}}{\delta_C(c;v)}
\end{align}
for all $c \in C$ and $v \in \Rb^d$. Then let $P: \Rb^d + i \Rb^d \rightarrow \Rb^d$ be the projection $P(x+iy) = x$. Notice that
\begin{align}
\label{eq:real_vs_complex_dist_to_bd_2}
\delta_\Omega(z;v) \leq \delta_C(P(z);P(v))
\end{align}
for all $z \in \Omega$ and non-zero $v \in \Cb^d$.

Fix $c_1, c_2 \in C$ and let $\sigma: [0,1] \rightarrow \Omega$ be a piecewise $C^1$ curve with $\sigma(0)=c_1$ and $\sigma(1) = c_2$. Then by Equation~\eqref{eq:hilbert_inf_est}, Equation~\eqref{eq:real_vs_complex_dist_to_bd_2}, and Lemma~\ref{lem:kob_inf_bound}
\begin{align*}
\ell_{H,C}(P\circ \sigma)& = \int_0^1 h _C(P\sigma(t); P\sigma^\prime(t)) dt \leq \int_0^1 \frac{\norm{P(\sigma^\prime(t))}}{\delta_C(P(\sigma(t)); P(\sigma^\prime(t)))} dt \\
& \leq  \int_0^1 \frac{\norm{\sigma^\prime(t)}}{\delta_\Omega(\sigma(t); \sigma^\prime(t))} dt \leq 2 \int_0^1 k_\Omega(\sigma(t); \sigma^\prime(t)) dt =2\ell_{\Omega}(\sigma).
\end{align*}
So
\begin{align*}
H_C(c_1,c_2) \leq 2 \ell_\Omega(\sigma).
\end{align*}
Then taking the infimum over all such curves we see that 
\begin{equation*}
H_C(c_1, c_2) \leq 2 K_\Omega(c_1,c_2). \qedhere
\end{equation*}
\end{proof}

\begin{lemma}\label{lem:QI_flat_embedding} Suppose $C \subset \Rb^d$ is a bounded convex domain and $\Omega = C + i\Rb^d$. If $c_0 \in C$, then there exists $A=A(c_0) \geq 1$ such that 
\begin{align*}
\frac{1}{A}\norm{y_1-y_2} \leq K_\Omega(c_0+iy_1,c_0+iy_2) \leq  A\norm{y_1-y_2}
\end{align*}
for all $y_1,y_2 \in \Rb^d$. 
\end{lemma}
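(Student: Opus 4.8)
The plan is to show that the restriction of the Kobayashi metric to a "vertical" slice $c_0 + i\Rb^d$ is bi-Lipschitz to the Euclidean metric on $\Rb^d$, with a constant depending only on $c_0$ (and $C$). The key point is that along such a slice the infinitesimal Kobayashi metric $k_\Omega(c_0+iy; iv)$ is comparable to $\norm{v}$ uniformly in $y$, because $\delta_\Omega(c_0+iy; iv)$ is trapped between two positive constants independent of $y$. For the upper bound on $k_\Omega$, note that since $C$ is bounded, for any direction $w \in \Cb^d$ the set $(c_0+iy + \Cb\cdot w)\cap\Omega$ has bounded real-part projection, hence $\delta_\Omega(c_0+iy;w)$ is bounded above by $\diam(C)$ divided by $\abs{\Real\text{-component of }w}$; when $w = iv$ with $v \in \Rb^d$, the line $c_0 + i y + \Cb \cdot (iv) = c_0 + \Rb v + i(\text{stuff})$ does cross $\partial\Omega$ (since $C$ is bounded in the direction $v$), giving $\delta_\Omega(c_0+iy;iv) \le C_1\norm{v}$ for a constant $C_1 = C_1(C)$. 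For the lower bound, since $c_0 \in C$ there is $\rho > 0$ with $\Bb_d(c_0;\rho)\cdot\text{(real ball)} \subset C$, so $c_0 + iy + \Bb_d(0;\rho) \subset \Omega$ for all $y$, hence $\delta_\Omega(c_0+iy;iv) \ge \rho$ for all unit $v$. Plugging these two bounds into Graham's estimate (Lemma~\ref{lem:kob_inf_bound}),
\begin{align*}
\frac{\norm{v}}{2C_1\norm{v}} = \frac{1}{2C_1} \le \frac{\norm{iv}}{2\delta_\Omega(c_0+iy;iv)} \le k_\Omega(c_0+iy;iv) \le \frac{\norm{iv}}{\delta_\Omega(c_0+iy;iv)} \le \frac{\norm{v}}{\rho},
\end{align*}
so $k_\Omega(c_0+iy;iv) \asymp \norm{v}$ uniformly in $y$.

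From the infinitesimal comparison the integrated statement follows in the usual way. For the upper bound on $K_\Omega(c_0+iy_1, c_0+iy_2)$, integrate $k_\Omega$ along the straight segment $t \mapsto c_0 + i\big((1-t)y_1 + ty_2\big)$, which stays in $\Omega$ and has velocity $i(y_2-y_1)$; this gives $K_\Omega(c_0+iy_1,c_0+iy_2) \le \tfrac{1}{\rho}\norm{y_1-y_2}$. For the lower bound, one option is to use that the map $\Omega \to C$ given by $P(x+iy) = x$ is distance-nonincreasing (it is the restriction of a linear projection and $\Omega = C + i\Rb^d$ is a product, so actually $K_\Omega$ dominates $K_{C+i\Rb^d\text{-slices}}$ appropriately) — but this kills the vertical direction, so instead I would translate vertically: the map $z \mapsto z + i(y_2 - y_1)$ is a biholomorphism of $\Omega$ onto itself, and more usefully, compose a one-variable argument. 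Concretely, project $\Omega$ to a single complex coordinate: after an $\Rb$-linear change of coordinates we may assume $y_2 - y_1 = \norm{y_1-y_2} e_1$; then the first-coordinate projection $\pi_1 : \Omega \to \pi_1(\Omega) =: \Omega_1 \subset \Cb$ is holomorphic, $\Omega_1$ is a convex domain in $\Cb$ whose real projection is the bounded interval $\pi_1(C)$, hence $\Omega_1$ is a horizontal strip-like region of finite width $W$, and $\pi_1(c_0 + iy_j) = (c_0)_1 + i(y_j)_1$. Since the Kobayashi distance is distance-nonincreasing under holomorphic maps, $K_\Omega(c_0+iy_1,c_0+iy_2) \ge K_{\Omega_1}\big((c_0)_1 + i(y_1)_1, (c_0)_1 + i(y_2)_1\big)$, and on a strip of width $W$ the Kobayashi (= Poincaré) distance between two points on a common vertical line at imaginary-parts differing by $s$ is $\ge c(W) s$ for an explicit constant; this yields $K_\Omega(c_0+iy_1,c_0+iy_2) \ge \tfrac{1}{A}\norm{y_1-y_2}$.

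The main obstacle I expect is making the lower bound genuinely uniform and clean: the naive projection $P(x+iy)=x$ is useless for the vertical direction, and one must instead project onto a suitable complex line adapted to the direction $y_2-y_1$ while controlling the width of the resulting planar domain by a quantity depending only on $C$ (e.g. $\diam C$) and using the explicit Poincaré estimate on a strip. Alternatively, one can avoid the planar reduction and argue directly with Lemma~\ref{lem:hyperplanes}: pick a real hyperplane $\{x : \langle x, u\rangle = \text{const}\}$ in $\Rb^d$ tangent to $C$ with $u$ chosen so that its complexification $H \subset \Cb^d$ misses $\Omega$ and separates the two points in the $y$-direction — but since vertical translation does not change $d_{\Euc}(H, z)$ when $H$ is a "real" complex hyperplane $\{z : \langle z, u\rangle_{\mathbb C} = c\}$ with $u \in \Rb^d$, this does not immediately see the vertical displacement either. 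So the planar-projection route seems the safest, and the real work is just the bookkeeping of the change of coordinates plus the one-variable strip estimate, both routine. Combining the two integrated bounds with $A := \max\{1/\rho, 1/c(W)\}$ completes the proof.
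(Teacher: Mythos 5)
Your proof is correct, and the upper bound is exactly the paper's (integrate Graham's estimate along the vertical segment, using $\delta_\Omega(c_0+iy)=\delta_\Omega(c_0)$). For the lower bound you take a genuinely different route. The paper's argument is shorter and more global: since $C$ is bounded and $\Omega$ is invariant under all imaginary translations $z\mapsto z+iy$, one has $\delta_\Omega(z;v)\leq A_1$ for \emph{every} $z\in\Omega$ and every nonzero $v\in\Cb^d$ (with $A_1$ comparable to $\diam C$), so Lemma~\ref{lem:kob_inf_bound} gives $k_\Omega(z;v)\geq\norm{v}/(2A_1)$ at every point of $\Omega$, and integrating along an arbitrary competing curve yields $K_\Omega(z_1,z_2)\geq\frac{1}{2A_1}\norm{z_1-z_2}$ globally; the lemma's lower bound is the special case $z_j=c_0+iy_j$. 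Your worry that a connecting curve can leave the slice (so that the infinitesimal estimate you proved only along the slice does not integrate) is what pushed you to the projection argument; the fix is simply to note that the uniform bound on $\delta_\Omega(\cdot;\cdot)$ holds everywhere, not just on the slice. Your alternative --- rotate so that $y_2-y_1\parallel e_1$, project holomorphically onto the first coordinate, land in the vertical strip $\pi_1(C)+i\Rb$ of width $W\leq\diam C$, and use the distance-decreasing property plus the one-variable strip estimate --- is also valid and buys nothing extra here, though it is the kind of argument that survives in settings where one only controls a single linear functional of $C$. Two small blemishes worth fixing: the inequality ``$\delta_\Omega(c_0+iy;iv)\leq C_1\norm{v}$'' is wrong as written, since $\delta_\Omega(z;v)$ does not depend on the length of $v$; the correct statement is $\delta_\Omega(z;v)\leq C_1$ with $C_1$ an absolute constant, which gives $k_\Omega(z;v)\geq\norm{v}/(2C_1)$ rather than the constant lower bound $1/(2C_1)$ appearing in your display (this slip does not propagate, since your actual lower bound comes from the projection). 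Also, $\pi_1(C)+i\Rb$ is a strip bounded in the \emph{real} direction and unbounded in the imaginary one, and the planar estimate you need there is again just Lemma~\ref{lem:kob_inf_bound}, since $\delta_{\Omega_1}(w)\leq W$ for all $w$ in the strip.
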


\begin{proof}

Since $C$ is bounded, there exists $A_1 > 0$ such that 
\begin{align*}
\delta_\Omega(z;v) \leq A_1
\end{align*}
for all $z \in C$ and $v \in \Cb^d$ non-zero. Since $\Omega$ is invariant under translations of the form $z \rightarrow z+iy$ with $y \in \Rb^d$, this implies that 
\begin{align*}
\delta_\Omega(z;v) \leq A_1
\end{align*}
for all $z \in \Omega$ and $v \in \Cb^d$ non-zero. Then by Lemma~\ref{lem:kob_inf_bound}
\begin{align*}
K_\Omega(z_1,z_2) \geq \frac{1}{2A_1} \norm{z_1-z_2}
\end{align*}
for all $z_1, z_2 \in \Omega$. 

Next, since $\Omega$ is invariant under translations of the form $z \rightarrow z+iy$ with $y \in \Rb^d$, we see that 
\begin{align*}
\delta_\Omega(c_0+iy) = \delta_\Omega(c_0)
\end{align*}
for every $y \in \Rb^d$. Now fix $y_1, y_2 \in \Rb^d$ and define $\sigma : [0,1] \rightarrow \Omega$ by $\sigma(t) = (1-t)(c_0+iy_1) + t(c_0+iy_2)$. Then Lemma~\ref{lem:kob_inf_bound} implies that
\begin{align*}
K_\Omega(c_0+iy_1,c_0+iy_2) \leq \int_0^1 k_\Omega(\sigma(t); \sigma^\prime(t)) dt  \leq \int_0^1 \frac{\norm{y_2-y_1}}{ \delta_\Omega(c_0)} dt =  \frac{\norm{y_2-y_1}}{ \delta_\Omega(c_0)}.
\end{align*}
So the Lemma is true with 
\begin{equation*}
A: = \max\{ 2A_1, \delta_\Omega(c_0)^{-1}\}. \qedhere
\end{equation*}
\end{proof}

\begin{proof}[Proof of Proposition~\ref{prop:one_direction}] By Lemma~\ref{lem:QI_embedding}, the inclusion map $(C,H_C) \hookrightarrow (\Omega, K_\Omega)$ is a quasi-isometric embedding. Then $(C,H_C)$ is Gromov hyperbolic, see~\cite[Chapter III.H, Theorem 1.9]{BH1999}. 

If $C$ is bounded and $c_0 \in C$, then Lemma~\ref{lem:QI_flat_embedding} implies that the map 
\begin{align*}
y \in (\Rb^d,d_{\Euc}) \rightarrow c_0+iy \in (\Omega, K_\Omega)
\end{align*}
is an quasi-isometric embedding. But since $(\Omega, K_\Omega)$ is Gromov hyperbolic and $d \geq 2$, this is impossible. So $C$ must be unbounded. 

\end{proof}

\begin{proposition}\label{prop:the_other_direction} Suppose $C \subset \Rb^d$ is a $\Rb$-properly convex domain and $\Omega = C + i\Rb^d$. If $(C,H_C)$ is Gromov hyperbolic and $C$ is unbounded, then $(\Omega, K_\Omega)$ is Gromov hyperbolic. \end{proposition}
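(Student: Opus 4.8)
The plan is to apply the characterization in Theorem~\ref{thm:main_equivalence_general_version}: since $C$ is unbounded, $\Omega = C + i\Rb^d$ is unbounded, and I must first check that $\rm{AC}(\Omega)$ is not totally real. Indeed, if $v \in \rm{AC}(C) \subset \Rb^d$ is any nonzero direction (which exists since $C$ is unbounded), then both $v$ and $iv$ lie in $\rm{AC}(\Omega)$ because $\Omega$ is invariant under all translations $z \mapsto z + iy$, $y \in \Rb^d$, and contains a translate of $c_0 + \Rb_{\geq 0} v$. Hence $\Span_\Rb \rm{AC}(\Omega)$ contains $\Cb \cdot v$ and is not totally real, so the unbounded case of Theorem~\ref{thm:main_equivalence_general_version} applies. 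Therefore it suffices to show that every domain in $\overline{\Aff(\Cb^d) \cdot \Omega} \cap \Xb_d$ has simple boundary.

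Next I would analyze the structure of $\overline{\Aff(\Cb^d)\cdot\Omega}\cap\Xb_d$. Take $A_n \in \Aff(\Cb^d)$ with $A_n\Omega \to D$ in $\Xb_d$ for some $\Cb$-properly convex $D$. The key geometric observation is that $\Omega$ carries a large real translation symmetry group (translations by $i\Rb^d$) and a scaling-type symmetry in the $C$-factor; more precisely, the real points of $\Omega$ form $C$, and $\Omega$ is a "tube." I want to argue that $D$ is again, up to affine equivalence, either a tube domain over a convex domain of the form $\overline{\Aff(\Rb^d)\cdot C}\cap\Yb_d$, or degenerates in a controlled way. Using Theorem~\ref{thm:KN} (Karlsson–Noskov), Gromov hyperbolicity of $(C,H_C)$ forces $C$ to be strictly convex with $C^1$ boundary, and the same holds for every domain in $\overline{\Aff(\Rb^d)\cdot C}\cap\Yb_d$ by Corollary~\ref{cor:hilbert_gromov_necessary}. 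I would then show: if $\partial D$ contained a nonconstant affine disk $\Db \to \partial D$ (equivalently, by Proposition~\ref{prop:affine_vs_holomorphic_disks}, failed to have simple boundary), then pulling back through the $A_n$ and intersecting with a suitable real $2$-plane (as in the proof of Corollary~\ref{cor:Hilbert_metric}, Section~\ref{sec:pf_of_cor_hilbert_metric}) would produce a line segment in the boundary of a limit of affine images of $C$, contradicting strict convexity.

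The cleanest route is probably to mimic the argument of Section~\ref{sec:pf_of_cor_hilbert_metric} directly rather than track the tube structure through all limits. Concretely: suppose $(\Omega,K_\Omega)$ is not Gromov hyperbolic; then by Theorem~\ref{thm:main_equivalence_general_version} (unbounded case, justified above) there exist $A_n$ with $A_n\Omega \to \Omega_\infty$ and $\Omega_\infty$ not having simple boundary, so $\partial\Omega_\infty$ contains an affine disk, normalized so that $0 \in \Omega_\infty$ and $e_1 + \Db\cdot e_2 \subset \partial\Omega_\infty$. I then want to transfer Gromov hyperbolicity of $(C,H_C)$ (a statement about a domain in $\Rb^d$) to a statement about $\Omega$ that survives the affine limit. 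The mechanism: Lemma~\ref{lem:QI_embedding} shows the inclusion $(C,H_C)\hookrightarrow(\Omega,K_\Omega)$ is a quasi-isometric embedding, but more useful here is that for any real affine $2$-plane $V$ meeting $\Omega$, $\Omega\cap V$ is itself (after an $\Rb$-affine change) a tube-type slice, and $H_{\Omega\cap V}$ relates to $K_\Omega$ via a $2$-plane version of Lemma~\ref{lem:QI_embedding}. Using the Gromov product inequality for $H_\Omega$-type metrics and passing to the limit as in Section~\ref{sec:pf_of_cor_hilbert_metric}, I would derive that a certain real convex domain $C_\infty$ obtained as a limit of $\Aff(\Rb^d)$-images of $C$ has a boundary segment, contradicting Theorem~\ref{thm:KN}.

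The main obstacle I anticipate is the bookkeeping in relating the complex affine maps $A_n$ acting on $\Omega = C + i\Rb^d$ to real affine maps acting on $C$: a general $A_n \in \Aff(\Cb^d)$ need not respect the tube structure at all, so the limit $\Omega_\infty$ need not be a tube domain, and I cannot directly intersect with the "real part" plane. The resolution should be to first use the translation symmetry $z \mapsto z + iy$ and co-compactness (Theorem~\ref{thm:frankel_compactness}) to replace $A_n$ by maps that are, up to bounded error, compatible with the product structure — or, alternatively, to run the contradiction entirely on the slice $V \cap \Omega_\infty$ where $V$ is the real span of the affine disk $e_1 + \Db\cdot e_2$, showing that this slice is a limit of Hilbert-isometric copies of slices of $C$, which are $\Rb$-properly convex with Gromov hyperbolic Hilbert metric, hence strictly convex — contradicting the presence of the segment $e_1 + [-1,1]\cdot e_2$ in its boundary. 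Getting the Hilbert-metric comparison to pass cleanly through the limit (using Observation~\ref{obs:Hilbert_metric_conv} and Theorem~\ref{thm:Hilbert_basic_properties}(2),(3)) is where the real care is needed.
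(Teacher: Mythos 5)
Your primary route is exactly the paper's: verify the hypotheses of Theorem~\ref{thm:main_equivalence_general_version} (your justification that ${\rm AC}(\Omega)$ is not totally real is correct, and more detailed than the paper's, which merely asserts it), and then show every domain in $\overline{\Aff(\Cb^d)\cdot\Omega}\cap\Xb_d$ has simple boundary by identifying that orbit closure with $\Aff(\Cb^d)\cdot\bigl(\overline{\Aff(\Rb^d)\cdot C}\cap\Yb_d+i\Rb^d\bigr)$ and invoking strict convexity of the real limits (Corollary~\ref{cor:hilbert_gromov_necessary}, via Proposition~\ref{prop:affine_vs_holomorphic_disks}). That identification is the paper's Lemma~\ref{lem:affine_orbits_equal}, and you correctly isolate both the obstacle (a general $A_n\in\Aff(\Cb^d)$ destroys the tube structure) and its resolution: normalize the base points $z_n=A_n^{-1}z_0=x_n+iy_n$ using the $i\Rb^d$-translation invariance of $\Omega$, apply co-compactness of the real affine group to $(C,x_n)$ --- note this is Benz\'ecri's Theorem~\ref{thm:real_affine_cocompact}, not Frankel's Theorem~\ref{thm:frankel_compactness} --- and conclude via Proposition~\ref{prop:limit_domain_depends_on_sequence} that the limit is affinely equivalent to a tube over a real limit. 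Two cautions about your alternatives. First, the route you call ``cleanest,'' mimicking Section~\ref{sec:pf_of_cor_hilbert_metric} directly, cannot work here: that argument takes Gromov hyperbolicity of $(\Omega,H_\Omega)$ as input, whereas for a tube $H_\Omega$ is not even a nondegenerate distance on $\Omega\subset\Rb^{2d}$ (the real lines $c_0+i\Rb\cdot y$ lie entirely in $\Omega$, so $H_\Omega$ vanishes along them), and your hypothesis only concerns $H_C$ on the $d$-dimensional base. Second, a real $2$-plane slice of an unnormalized limit $\Omega_\infty$ is not a limit of Hilbert-isometric copies of slices of $C$, so the slicing argument does not bypass the structural lemma. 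Once that lemma is proved, no Gromov-product or slicing machinery is needed: an affine disk in the boundary of a tube $C_\infty+i\Rb^d$ projects to a nondegenerate segment in $\partial C_\infty$, directly contradicting strict convexity.
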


We will need one lemma before proving the proposition. 

\begin{lemma}\label{lem:affine_orbits_equal} Suppose $C \subset \Rb^d$ is a $\Rb$-properly convex domain and $\Omega = C + i\Rb^d$. Then 
\begin{align*}
\overline{\Aff(\Cb^d) \cdot \Omega} \cap \Xb_d = \Aff(\Cb^d) \cdot \Big( \overline{\Aff(\Rb^d) \cdot C} \cap \Yb_d + i\Rb^d \Big).
\end{align*}
In particular, the following are equivalent 
\begin{enumerate}
\item every domain in $\overline{\Aff(\Rb^d) \cdot C} \cap \Yb_{d}$ is strictly convex
\item every domain in $\overline{\Aff(\Cb^d) \cdot \Omega} \cap \Xb_{d}$ has simple boundary. 
\end{enumerate}
\end{lemma}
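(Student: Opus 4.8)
\textbf{Plan of proof for Lemma~\ref{lem:affine_orbits_equal}.}

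The plan is to prove the set equality $\overline{\Aff(\Cb^d) \cdot \Omega} \cap \Xb_d = \Aff(\Cb^d) \cdot \big( \overline{\Aff(\Rb^d) \cdot C} \cap \Yb_d + i\Rb^d \big)$ by a double inclusion, and then deduce the ``in particular'' statement using Proposition~\ref{prop:affine_vs_holomorphic_disks}. For the inclusion ``$\supseteq$'', I would first observe that if $C' \in \overline{\Aff(\Rb^d) \cdot C} \cap \Yb_d$, say $B_n C \to C'$ with $B_n \in \Aff(\Rb^d)$, then viewing each $B_n$ as a complex affine map (it preserves $\Rb^d$ and commutes with the purely imaginary translations) one gets $B_n \Omega = B_n C + i\Rb^d \to C' + i\Rb^d$ in the local Hausdorff topology; since $C'$ is $\Rb$-properly convex, $C' + i\Rb^d$ is $\Cb$-properly convex (by the tube-domain fact from Bremermann cited in the introduction, or directly from Observation~\ref{obs:asymptotic_cone_1}), so $C' + i\Rb^d \in \overline{\Aff(\Cb^d)\cdot\Omega} \cap \Xb_d$, and then applying an arbitrary $A \in \Aff(\Cb^d)$ keeps us inside this $\Aff(\Cb^d)$-invariant set.

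For the reverse inclusion ``$\subseteq$'', suppose $D \in \overline{\Aff(\Cb^d) \cdot \Omega} \cap \Xb_d$, so $A_n \Omega \to D$ with $A_n \in \Aff(\Cb^d)$. Fix a base point and use Frankel's compactness (Theorem~\ref{thm:frankel_compactness}) together with Proposition~\ref{prop:limit_domain_depends_on_sequence} to reduce, after composing with a fixed affine map, to the situation of a convergent sequence; the key structural point is that $\Omega = C + i\Rb^d$ has a large translation group $\{z \mapsto z + iy : y \in \Rb^d\}$, so each $A_n \Omega$ inherits a $d$-real-dimensional group of affine translations in the directions $A_n(i\Rb^d)$. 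Passing to a subsequence, these translation directions converge to a $d$-dimensional real subspace $W \subset \Cb^d$, and $D$ is invariant under translation by $W$; a dimension count plus $\Cb$-proper convexity of $D$ forces $W$ to be a \emph{totally real} $d$-dimensional subspace and $W \cap iW = (0)$, so after a complex affine change of coordinates we may assume $W = i\Rb^d$, i.e. $D = (D \cap \Rb^d) + i\Rb^d$ is itself a tube domain over a convex $C' := D \cap \Rb^d \in \Yb_d$. It then remains to check $C' \in \overline{\Aff(\Rb^d) \cdot C}$: writing $A_n$ in the normalized coordinates as $A_n = (\text{real affine part}) + (\text{translation absorbed into } i\Rb^d)$, the real parts $B_n \in \Aff(\Rb^d)$ satisfy $B_n C \to C'$ by slicing the Hausdorff convergence $A_n \Omega \to D$ with $\Rb^d$ (using that the imaginary translations act trivially on the real slice), giving $C' \in \overline{\Aff(\Rb^d)\cdot C} \cap \Yb_d$.

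The main obstacle I anticipate is the bookkeeping in the ``$\subseteq$'' direction: showing that the limiting translation subspace $W$ is totally real and can be normalized to $i\Rb^d$, and then cleanly extracting the convergent sequence of \emph{real} affine maps from the convergent sequence of complex affine maps. The delicate point is that $A_n$ may have a large ``imaginary'' component that blows up but is harmless because it is a symmetry of $\Omega$; one has to quotient it out carefully — this is exactly where Proposition~\ref{prop:limit_domain_depends_on_sequence} (the limit domain depends only on the sequence of base points, not on the normalizing maps) does the work, by letting us replace $A_n$ with a better-behaved sequence having the same limit domain. Once the set equality is established, the ``in particular'' equivalence is immediate: by Proposition~\ref{prop:affine_vs_holomorphic_disks} a $\Cb$-properly convex domain has simple boundary iff its boundary contains no complex affine disk; a tube domain $C' + i\Rb^d$ has a complex affine disk in its boundary iff $\partial C'$ contains a real line segment, i.e. iff $C'$ fails to be strictly convex; and since $\Aff(\Cb^d)$ preserves the property of having simple boundary, (1) $\Leftrightarrow$ (2) follows from the displayed set equality.
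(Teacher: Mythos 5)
Your easy inclusion and your deduction of the ``in particular'' statement from Proposition~\ref{prop:affine_vs_holomorphic_disks} are fine and match the paper. The issue is the hard inclusion. Your route (pass to the limit $W$ of the translation subspaces $g_n(i\Rb^d)$, show $W$ is totally real and normalize $W=i\Rb^d$, conclude $D$ is a tube over $C':=D\cap\Rb^d$) is sound up to that point, but the final step --- ``the real parts $B_n\in\Aff(\Rb^d)$ satisfy $B_nC\to C'$ by slicing'' --- has a genuine gap. A complex affine map $A_n=b_n+g_n$ does not have a well-defined ``real affine part'': $g_n$ need not preserve $\Rb^d$, and $g_n(i\Rb^d)$ only converges to $i\Rb^d$ without equalling it, so there is no canonical $B_n\in\Aff(\Rb^d)$ to extract. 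Moreover, Proposition~\ref{prop:limit_domain_depends_on_sequence} cannot ``quotient out'' the bad part of $A_n$ for you: it only compares two sequences of normalizing maps \emph{both of which are already known to produce convergent sequences of pointed domains}. The entire content of the hard inclusion is the production of that second, better-behaved convergent sequence, and your sketch does not produce it. (Your plan could be completed by defining $B_n:=Q_n\circ A_n|_{\Rb^d}$, where $Q_n$ is the projection of $\Cb^d$ onto $\Rb^d$ along $g_n(i\Rb^d)$, and then proving a slicing lemma for local Hausdorff convergence of convex sets; but none of this is in your write-up.)

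The paper's argument avoids all of this and uses a tool you never invoke: Benz\'ecri's cocompactness theorem (Theorem~\ref{thm:real_affine_cocompact}). Fix $z_0\in D$, set $z_n=A_n^{-1}z_0=x_n+iy_n$, and let $T_n(z)=z-iy_n$ be the purely imaginary translation killing the imaginary part of the base point (this is a symmetry-adapted normalization, since $T_n\Omega=\Omega$ up to the tube structure). By Benz\'ecri, after passing to a subsequence there are $B_n\in\Aff(\Rb^d)$ with $B_n(C,x_n)\to(C_\infty,x_\infty)$ in $\Yb_{d,0}$; extending $B_n$ complex-linearly gives $B_nT_n(\Omega,z_n)\to(C_\infty+i\Rb^d,x_\infty)$ in $\Xb_{d,0}$. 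Now Proposition~\ref{prop:limit_domain_depends_on_sequence} applies to the two convergent sequences $A_n(\Omega,z_n)$ and $B_nT_n(\Omega,z_n)$ and yields $D=A(C_\infty+i\Rb^d)$ for some $A\in\Aff(\Cb^d)$, which is exactly membership in the right-hand set. You should either adopt this argument or supply the missing projection-and-slicing details in yours.
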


\begin{proof} Since every map $A \in \Aff(\Rb^d)$ extends to a map in $\Aff(\Cb^d)$ we see that 
\begin{align*}
 \Aff(\Cb^d) \cdot \Big( \overline{\Aff(\Rb^d) \cdot C} \cap \Yb_d + i\Rb^d \Big) \subset \overline{\Aff(\Cb^d) \cdot \Omega} \cap \Xb_d.
\end{align*}

For the other inclusion, suppose that $A_n \in \Aff(\Cb^d)$ and $A_n\Omega$ converges to some $\mathcal{D}$ in $\Xb_d$. Fix some $z_0 \in \mathcal{D}$. Then, after passing to a subsequence, we can suppose that $z_0 \in A_n \Omega$ for all $n$. Let $z_n = A_n^{-1}z_0$. Then $A_n(\Omega, z_n) \rightarrow (\mathcal{D},z_0)$ in $\Xb_{d,0}$.

Suppose $z_n = x_n + iy_n \in \Rb^d + i\Rb^d$. Then let $T_n \in \Aff(\Cb^d)$ denote the translation $T_n(z) = z- iy_n$. Next, by Theorem~\ref{thm:real_affine_cocompact}, we can pass to a subsequence and find $B_n \in \Aff(\Rb^d)$ such that $B_n(C, x_n)$ converges to some $(C_\infty, x_\infty)$ in $\Yb_{d,0}$. Then extending each $B_n$ to an affine automorphism of $\Cb^d$, 
\begin{align*}
B_n T_n (\Omega, z_n) \rightarrow (C_\infty+i\Rb^d, x_\infty)
\end{align*}
in $\Xb_{d,0}$. But then, by Proposition~\ref{prop:limit_domain_depends_on_sequence}, there exists some $A \in \Aff(\Cb^d)$ such that 
\begin{align*}
\mathcal{D} = A (C_\infty+i\Rb^d) \in  \Aff(\Cb^d) \cdot \Big( \overline{\Aff(\Rb^d) \cdot C} \cap \Yb_d + i\Rb^d \Big).
\end{align*}
Thus 
\begin{align*}
\overline{\Aff(\Cb^d) \cdot \Omega} \cap \Xb_d \subset \Aff(\Cb^d) \cdot \Big( \overline{\Aff(\Rb^d) \cdot C} \cap \Yb_d + i\Rb^d \Big).
\end{align*}

Finally, the in particular part follows from the main assertion and Proposition~\ref{prop:affine_vs_holomorphic_disks}.

\end{proof}

\begin{proof}[Proof of Proposition~\ref{prop:the_other_direction}] By Corollary~\ref{cor:hilbert_gromov_necessary}, every domain in $\overline{\Aff(\Rb^d) \cdot C} \cap \Yb_{d}$ is strictly convex. So by Lemma~\ref{lem:affine_orbits_equal} every domain in $\overline{\Aff(\Cb^d) \cdot \Omega} \cap \Xb_{d}$ has simple boundary. Since $C$ is unbounded, ${\rm AC}(\Omega)$ is not totally real and hence $(\Omega, K_\Omega)$ is Gromov hyperbolic by Theorem~\ref{thm:main_equivalence_general_version}.
\end{proof}

\section{The squeezing function}\label{sec:squeezing}

In this section we construct Example~\ref{ex:non_strongly_pseudoconvex} by showing that an example of Forn{\ae}ss and Wold satisfies all the desired conditions. Their example was constructed to be a counterexample to a natural question concerning the squeezing function.

Given a domain $\Omega \subset \Cb^d$ biholomorphic to a bounded domain, let $s_\Omega : \Omega \rightarrow (0,1]$ denote the \emph{squeezing function on $\Omega$}, that is 
\begin{align*}
s_\Omega(z) = \sup\{ r : & \text{ there exists a holomorphic embedding } \\
& f: \Omega \hookrightarrow \Bb_d \text{ with } f(z)=0 \text{ and } r\Bb_d \subset f(\Omega) \}.
\end{align*}
The quantity $s_\Omega(z)$ can be seen as a measure of how close the complex geometry of $\Omega$ at $z$ is to the complex geometry of the unit ball. 

For strongly pseudoconvex domains, Diederich, Forn{\ae}ss, and Wold~\cite[Theorem 1.1]{DFW2014} and Deng, Guan, and Zhang~\cite[Theorem 1.1]{DGZ2016} proved the following. 

\begin{theorem}\cite{DFW2014, DGZ2016}\label{thm:sq_on_str}
If $\Omega \subset \Cb^d$ is a bounded strongly pseudoconvex domain with $C^2$ boundary, then $\lim_{z \rightarrow \partial \Omega} s_\Omega(z) = 1$.
\end{theorem}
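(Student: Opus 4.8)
The plan is to prove the equivalent lower bound $\liminf_{z\to\partial\Omega} s_\Omega(z)\ge 1$; equality then follows since $s_\Omega\le 1$ by definition. It suffices to show $s_\Omega(p_j)\to 1$ for an arbitrary sequence $p_j\in\Omega$ with $\delta_\Omega(p_j)\to 0$. Since $\partial\Omega$ is $C^2$, for $j$ large the nearest boundary point $\xi_j$ to $p_j$ is well defined, and after passing to a subsequence $\xi_j\to\xi\in\partial\Omega$; the problem is now local near $\xi$. First I would convexify: taking a $C^2$ defining function, expanding it to second order at $\xi$, and applying a quadratic polynomial change of coordinates to remove the holomorphic part of the complex Hessian, strong pseudoconvexity forces the remaining Hermitian form to be positive definite, so in suitable local holomorphic coordinates $\Omega$ is strongly convex in a neighborhood $U$ of $\xi=0$, with $0$ on a supporting complex hyperplane. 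Since $s$ is a biholomorphic invariant, and since the competing maps in the definition of $s_\Omega(p_j)$ may be localized near $\xi$ (by localization of invariant metrics and the existence of holomorphic peak functions at strongly pseudoconvex boundary points), I may work with this strongly convex local piece.

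Next I would apply the anisotropic Pinchuk dilations $\Lambda_j\in\Aff(\Cb^d)$ adapted to $p_j$: translate and rotate so that $\xi_j\mapsto 0$, the inner normal at $\xi_j$ points along $e_1$, and $p_j$ lies on the positive $e_1$-axis, then dilate by $\delta_\Omega(p_j)^{-1}$ in $z_1$ and by $\delta_\Omega(p_j)^{-1/2}$ in $z_2,\dots,z_d$. Using the second-order Taylor expansion of the strongly convex boundary at $\xi_j$ and uniform control of the $C^2$ remainder, the rescaled domains $\Lambda_j(\Omega\cap U)$ converge, in the local Hausdorff topology, to the standard unbounded model of the ball,
\begin{align*}
\Hb_d=\left\{z\in\Cb^d:\Real(z_1)+\abs{z_2}^2+\cdots+\abs{z_d}^2<0\right\},
\end{align*}
and the base points $q_j:=\Lambda_j(p_j)$ converge to a fixed point $q_\infty\in\Hb_d$. (An alternative route avoiding explicit scaling is to biholomorphically reposition $\Omega$ near $\xi$ so that it sits locally outside a large ball tangent to $\partial\Omega$ at $\xi$ and compare directly with an inner osculating ball; this again reduces matters to a two-ball estimate.)

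By biholomorphic invariance, $s_\Omega(p_j)=s_{\Lambda_j(\Omega\cap U)}(q_j)$, so it remains to prove $\liminf_j s_{\Lambda_j(\Omega\cap U)}(q_j)\ge 1$. Since the Cayley transform is a biholomorphism $\Hb_d\to\Bb_d$, we have $s_{\Hb_d}\equiv 1$. Because $\Lambda_j(\Omega\cap U)$ is, near the part of $\partial\Hb_d$ relevant to $q_j$, squeezed between two convex regions that both converge to $\Hb_d$, composing the Cayley transform with these estimates produces, for each $\rho<1$ and all large $j$, one-to-one holomorphic maps $\Lambda_j(\Omega\cap U)\to\Bb_d$ carrying $q_j$ to $0$ whose images contain $\rho\Bb_d$. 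Hence $\liminf_j s_\Omega(p_j)\ge\rho$ for every $\rho<1$, so $s_\Omega(p_j)\to 1$.

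The hard part will be making the scaling limit quantitative: one needs precise uniform control of the $C^2$ Taylor remainder of a local defining function at the points $\xi_j$, strong enough to show that $\Lambda_j(\Omega\cap U)$ both contains a Euclidean ball of radius tending to infinity about $q_j$ and is contained in a convex region whose Cayley image omits only a shrinking neighborhood of $\partial\Bb_d$ --- all without any regularity beyond $C^2$. A secondary technical point is justifying the localization of the squeezing problem to $\Omega\cap U$, which rests on localization of the Kobayashi and Carath\'eodory metrics and on holomorphic peak functions at strongly pseudoconvex boundary points.
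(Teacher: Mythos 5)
This theorem is not proved in the paper at all; it is quoted from Diederich--Forn\ae ss--Wold and Deng--Guan--Zhang, so the only thing to assess is whether your sketch would actually constitute a proof. It would not, because of the localization step, which you dismiss as a ``secondary technical point'' but which is in fact the entire difficulty. The squeezing function is defined by \emph{globally} defined one-to-one holomorphic maps $f:\Omega\rightarrow \Bb_d$; to bound $s_\Omega(p_j)$ from below you must produce such a map on all of $\Omega$, not on $\Omega\cap U$. Your identity $s_\Omega(p_j)=s_{\Lambda_j(\Omega\cap U)}(q_j)$ silently replaces $\Omega$ by $\Omega\cap U$, and nothing you invoke repairs this: localization of the Kobayashi/Carath\'eodory metrics and the existence of peak functions give comparisons of infinitesimal invariants and of holomorphic functions into the disk, but they do not convert a univalent map defined on $\Omega\cap U$ into a univalent map defined on $\Omega$ with a large image. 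This is precisely why the cited proof of Diederich--Forn\ae ss--Wold requires their nontrivial ``exposing points'' theorem (built on Anders\'en--Lempert theory) producing a \emph{global} injective holomorphic map of $\overline{\Omega}$ that makes the chosen boundary point globally exposed and osculated by balls from inside and outside; only then does the two-ball comparison you have in mind apply. Your ``alternative route'' of repositioning $\Omega$ so that it sits ``locally outside a large ball tangent at $\xi$'' has the same defect: the containment must be global.

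A second, related problem: even granting the localization, you pass from local Hausdorff convergence $\Lambda_j(\Omega\cap U)\rightarrow \Hb_d$ to $s_{\Lambda_j(\Omega\cap U)}(q_j)\rightarrow s_{\Hb_d}(q_\infty)=1$. That requires \emph{lower} semicontinuity of $(D,z)\mapsto s_D(z)$ under this convergence, whereas only upper semicontinuity is available in general (this is exactly the direction used in Lemma~\ref{lem:SQ_1} of the paper, via \cite[Proposition 7.1]{Z2018}); an injective map on the limit domain neither restricts to nor extends to the approximating domains. This can be repaired by exhibiting genuine two-sided containments between domains biholomorphic to $\Bb_d$ with controlled geometry --- i.e., the osculating-ball argument --- but at that point the Pinchuk scaling is superfluous. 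In short: the convexification, scaling, and Cayley-transform computations are fine but routine; the missing global embedding is the theorem's actual content, and your proposal does not supply it.
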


 Based on the above theorem, it seems natural to ask if the converse holds.

\begin{question}\label{question:squeezing} (Forn{\ae}ss and Wold~\cite[Question 4.2]{FW2018}) Suppose $\Omega  \subset \Cb^d$ is a bounded pseudoconvex domain with $C^k$ boundary for some $k > 2$. If $\lim_{z \rightarrow \partial \Omega} s_\Omega(z) = 1$, is $\Omega$ strongly pseudoconvex?
\end{question}

In the convex case the answer is yes when $k>2$~\cite{Z2019} and no when $k=2$.

\begin{example}[Forn{\ae}ss and Wold~\cite{FW2018}]\label{ex:FW} For any $d \geq 2$ there exists a bounded convex domain $\Omega \subset \Cb^d$ with $C^2$ boundary such that $\Omega$ is not strongly pseudoconvex and $\lim_{z \rightarrow \partial \Omega} s_\Omega(z) = 1$.
\end{example}

The next theorem shows that the domains in Example~\ref{ex:FW} satisfy the claims in Example~\ref{ex:non_strongly_pseudoconvex}. 

\begin{theorem}\label{thm:squeezing} Suppose $d \geq 2$, $\Omega \subset \Cb^d$ is a bounded convex domain, $\partial\Omega$ is $C^{1}$, and $\lim_{z \rightarrow \partial\Omega} s_\Omega(z) = 1$. Then a subelliptic estimate of order $\epsilon$ holds for every $\epsilon \in (0,1/2)$. 
\end{theorem}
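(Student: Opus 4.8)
The plan is to reduce the statement to Theorem~\ref{thm:optimal_order} by establishing two facts about the interior geometry of $\Omega$: that $(\Omega,K_\Omega)$ is Gromov hyperbolic, and that $m_\star(\Omega)=2$. Granting these, Proposition~\ref{prop:alpha_star_reg} gives $\alpha_\star(\Omega)=1$ (this is the only place the boundary regularity $C^{1,\eta}$ enters), and Theorem~\ref{thm:optimal_order} then produces a subelliptic estimate of order $\epsilon$ for every $\epsilon<\frac{1}{\alpha_\star(\Omega)m_\star(\Omega)}=\frac12$, which is exactly the claim. Gromov hyperbolicity is also needed a priori: it is what guarantees, via Corollary~\ref{cor:m_convex_Gromov_hyp}, that $m_\star(\Omega)<\infty$ at all, so that the equality $m_\star(\Omega)=2$ is meaningful. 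Thus everything comes down to two consequences of the hypothesis $\lim_{z\to\partial\Omega}s_\Omega(z)=1$, both exploiting that the squeezing function is a biholomorphic invariant, so $s_{A\Omega}(Az)=s_\Omega(z)$ for $A\in\Aff(\Cb^d)$.

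\emph{Step 1: every $D\in\overline{\Aff(\Cb^d)\cdot\Omega}\cap\Xb_d$ is biholomorphic to $\Bb_d$ or affinely equivalent to $\Omega$, and in either case has simple boundary.} Given $A_n\in\Aff(\Cb^d)$ with $A_n\Omega\to D$, fix an interior point of $D$ and let $z_n\in\Omega$ be its $A_n$-preimages. If the $z_n$ remain in a compact subset of $\Omega$ then $D$ is affinely equivalent to $\Omega$ by Proposition~\ref{prop:limit_domain_depends_on_sequence}. Otherwise $z_n\to\partial\Omega$, so $r_n:=s_\Omega(z_n)\to1$; pick one-to-one $f_n\colon\Omega\to\Bb_d$ with $f_n(z_n)=0$ and $r_n\Bb_d\subset f_n(\Omega)$. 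The maps $f_n\circ A_n^{-1}\colon A_n\Omega\to\Bb_d$ are one-to-one with images containing $r_n\Bb_d$, and (using $A_n\Omega\to D$, boundedness into $\Bb_d$, and Cauchy estimates for equicontinuity) a subsequence converges locally uniformly on $D$ to a biholomorphism $D\to\Bb_d$; alternatively, transporting the estimate to a bounded realization of $D$, one may invoke the Deng–Guan–Zhang theorem~\cite{DGZ2016} that a bounded domain with a point of squeezing value $1$ is biholomorphic to the ball. In either case $(D,K_D)$ is Gromov hyperbolic, so $D$ has simple boundary by Theorem~\ref{thm:prior_nec}. It remains to treat $\Omega$ itself. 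If $\partial\Omega$ contained an affine disk $\zeta+\overline{\Db}v$, then approaching its center $\zeta$ along a segment from an interior point gives points $p_t$ with $\delta_\Omega(p_t;v)$ bounded below while $\delta_\Omega(p_t)\to0$; normalizing $(\Omega,p_t)$ affinely (Theorem~\ref{thm:normalizing}, Theorem~\ref{thm:frankel_compactness}) this bounded disk survives in a $\Cb$-properly convex affine limit of $\Omega$, which by the previous paragraph is biholomorphic to $\Bb_d$ and hence has simple boundary — a contradiction. So $\partial\Omega$ has no affine disk, hence simple boundary by Proposition~\ref{prop:affine_vs_holomorphic_disks}, and by Theorem~\ref{thm:main_equivalence} $(\Omega,K_\Omega)$ is Gromov hyperbolic.

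\emph{Step 2: $m_\star(\Omega)=2$.} By Step 1 and Corollary~\ref{cor:m_convex_Gromov_hyp}, $2\le m_\star(\Omega)<\infty$. Suppose $\Omega$ fails to be $m$-convex for some $m>2$: there are $z_n\to\partial\Omega$ and unit $v_n$ with $\delta_\Omega(z_n;v_n)\ge n\,\delta_\Omega(z_n)^{1/m}$. Choose $A_n\in\Aff(\Cb^d)$ with $A_n(\Omega,z_n)\to(D,u)$ in $\Xb_{d,0}$ (Theorem~\ref{thm:frankel_compactness}). By Step 1, $D\cong\Bb_d$; if $D$ were bounded, Proposition~\ref{prop:limit_domain_depends_on_sequence} would make $\Omega$ an affine image of $\Bb_d$, i.e.\ an ellipsoid, hence strongly convex and $2$-convex, so we may assume $D$ is unbounded — whence $D$ is, after an affine change of coordinates, a Siegel domain $\{\,\mathrm{Re}(z_1)>q(z_2,\dots,z_d)\,\}$ with $q$ a positive-definite Hermitian form. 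In such a domain the directional depths at a point of Euclidean depth $\delta$ are comparable to $\delta$ in the normal direction and to $\sqrt{\delta}$ in tangential directions. Transporting these comparisons back along $A_n$ via Proposition~\ref{prop:convergence_of_kob} and Lemma~\ref{lem:kob_inf_bound} shows that the tangential scalings of $A_n$ are comparable to the square root of the normal scaling, which forces $\delta_\Omega(z_n;v)\le C\sqrt{\delta_\Omega(z_n)}$ for all unit $v$ and all large $n$; since $m>2$ this contradicts $\delta_\Omega(z_n;v_n)\ge n\,\delta_\Omega(z_n)^{1/m}$. Hence $\Omega$ is $m$-convex for every $m>2$, i.e.\ $m_\star(\Omega)=2$.

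Combining, $\alpha_\star(\Omega)m_\star(\Omega)=2$, and Theorem~\ref{thm:optimal_order} yields a subelliptic estimate of order $\epsilon$ for every $\epsilon\in(0,1/2)$. I expect the main obstacle to be Step~2: the qualitative input "affine rescalings of $\Omega$ are biholomorphic to $\Bb_d$" must be upgraded to the quantitative estimate $\delta_\Omega(z;v)\lesssim\delta_\Omega(z)^{1/2}$, which requires matching the affine normalizations furnished by Frankel's theorem with the anisotropic (parabolic) geometry of the Siegel model of the ball and, in effect, ruling out uniformly that $\Omega$ is "too wide tangentially" near its boundary. A clean way to organize the argument is to isolate the single lemma that \emph{a bounded convex domain with $\lim_{z\to\partial\Omega}s_\Omega(z)=1$ has $m_\star=2$ and Gromov hyperbolic Kobayashi metric}, prove it through Steps~1--2, and then deduce Theorem~\ref{thm:squeezing} at once from Proposition~\ref{prop:alpha_star_reg} and Theorem~\ref{thm:optimal_order}.
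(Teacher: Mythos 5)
Your overall architecture coincides with the paper's: both reduce to Theorem~\ref{thm:optimal_order} by showing that $(\Omega,K_\Omega)$ is Gromov hyperbolic, that $m_\star(\Omega)=2$, and that $\alpha_\star(\Omega)=1$ via Proposition~\ref{prop:alpha_star_reg}. Your Step 1 is essentially the paper's argument: Lemma~\ref{lem:SQ_1} gets $D\cong\Bb_d$ from upper semicontinuity of the squeezing function on $\Xb_{d,0}$ plus the Deng--Guan--Zhang theorem, and the simplicity of boundaries of limit domains is checked by the same case division you use. (The paper deduces that $\Omega$ itself has simple boundary from $(2+a)$-convexity rather than from a rescaling at the center of a putative boundary disk, so it proves the two facts in the opposite order; your order requires your disk-rescaling sub-argument to be made precise, but that is a minor point.)

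The genuine gap is in Step 2, the quantitative heart of the theorem, which you yourself flag as the main obstacle but do not close. Two things go wrong as written. First, you assert that an unbounded convex limit domain biholomorphic to $\Bb_d$ is affinely a Siegel domain $\{\mathrm{Re}(z_1)>q(z_2,\dots,z_d)\}$; this is a nontrivial rigidity claim that is neither justified nor needed. The paper's substitute is Proposition~\ref{prop:asymptotic_shape}, which requires only the normalization $(e_1+\Span_{\Cb}\{e_2,\dots,e_d\})\cap D=\emptyset$, $D\cap\Cb\cdot e_1=\{\mathrm{Re}(z)<1\}$, and $D\cong\Bb_d$, and yields the asymptotic $\tfrac12=\lim_{t\to\infty}\tfrac1t\log\delta_D(-e^te_1;v)$. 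Second, ``transporting the comparisons back along $A_n$'' from a single basepoint cannot work: local Hausdorff convergence only controls $\delta_{A_n\Omega}$ on compact sets, so one point's worth of data in the limit cannot be played against an asymptotic ($t\to\infty$) statement about $D$. The missing idea is the extremal-point selection: replace $z_m$ by the point $t_mz_m$ minimizing $t\mapsto r_\Omega(tz_m)^{1/(2+a)}/\delta_\Omega(tz_m;v_m)$ along the ray from the interior basepoint, so that after the affine normalization sending $z_m\mapsto 0$, $\pi_\Omega(z_m)\mapsto e_1$, $b_m\mapsto e_2$ one obtains $\delta_{B_m\Omega}(-te_1;e_2)\le(1+t)^{1/(2+a)}$ for \emph{every} $t>0$, hence the same bound in the limit domain $D$ for all $t$; this gives $\lim_{t\to\infty}\tfrac1t\log\delta_D(-e^te_1;e_2)\le\tfrac{1}{2+a}<\tfrac12$, contradicting Proposition~\ref{prop:asymptotic_shape}. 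Without that selection (or an equivalent device producing estimates along an entire ray to infinity in $D$), Step 2 does not go through.
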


The theorem will require several lemmas. 

\begin{lemma}\label{lem:SQ_1}Suppose $\Omega \subset \Cb^d$ is a bounded convex domain and $\lim_{z \rightarrow \partial\Omega} s_\Omega(z) = 1$. If $z_n \in \Omega$ is a sequence with 
\begin{align*}
\lim_{n \rightarrow \infty} d_{\Euc}(z_n, \partial \Omega) = 0
\end{align*}
and $A_n \in \Aff(\Cb^d)$ are affine maps such that  $A_n(\Omega,z_n)$ converges to $(\mathcal{U},u)$ in $\Xb_{d,0}$, then $\mathcal{U}$ is biholomorphic to $\Bb_d$. 
\end{lemma}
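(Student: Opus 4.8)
\textbf{Plan for Lemma~\ref{lem:SQ_1}.} The plan is to exploit the fact that the squeezing function, applied near the boundary along the sequence $z_n$, produces biholomorphisms onto domains that are squeezed between two concentric balls whose radii converge to $1$, and then to pass to the limit. First I would fix, for each $n$, an injective holomorphic map $f_n : \Omega \to \Bb_d$ with $f_n(z_n) = 0$ and $r_n \Bb_d \subset f_n(\Omega) \subset \Bb_d$ where $r_n = s_\Omega(z_n) - 1/n$, say; since $d_{\Euc}(z_n,\partial\Omega) \to 0$, the hypothesis gives $r_n \to 1$. The composition $g_n := f_n \circ A_n^{-1}$ is then an injective holomorphic map defined on $A_n\Omega$, with $g_n(u) \to 0$ (using that $A_nz_n \to u$ and continuity), and $r_n \Bb_d \subset g_n(A_n\Omega) \subset \Bb_d$.

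The main work is a normal-families argument to extract a limit of the $g_n$. The key point is that the $g_n$ are uniformly bounded (into $\Bb_d$) and defined on domains $A_n\Omega$ that exhaust $U$ in the sense of Proposition~\ref{prop:haus_conv_compacts}(1): every compact $K \subset U$ is eventually contained in $A_n\Omega$. So by Montel's theorem and a diagonal argument over an exhaustion of $U$ by compacts, I can pass to a subsequence so that $g_n \to g$ locally uniformly on $U$, with $g : U \to \overline{\Bb_d}$ holomorphic and $g(u) = 0$; since $g(u)$ is an interior point of $\Bb_d$, the open mapping theorem (or just the maximum principle applied componentwise to $\norm{g}^2$) forces $g(U) \subset \Bb_d$. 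Injectivity of the limit $g$ follows from Hurwitz-type reasoning: a locally uniform limit of injective holomorphic maps between domains in $\Cb^d$ is either injective or has everywhere-degenerate differential (this is the standard $d$-dimensional Hurwitz phenomenon, e.g. as used throughout Frankel's work); the latter is ruled out because $g(U) \supset r\Bb_d$ for the relevant $r$ — more precisely, I will show $g(U) \supset \Bb_d$ directly, which simultaneously gives surjectivity and prevents degeneration.

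The remaining step is to show $g(U) = \Bb_d$. For the inclusion $\supset$: given $w \in \Bb_d$, for $n$ large $w \in r_n\Bb_d \subset g_n(A_n\Omega)$, so $w = g_n(p_n)$ for some $p_n \in A_n\Omega$; one must show the $p_n$ stay in a compact subset of $U$ (so that a subsequential limit $p \in U$ gives $g(p) = w$). This is where I expect the main obstacle: controlling the $p_n$ requires that $g$ does not push mass off to $\partial U$, and this is exactly the kind of statement that can fail without some properness input. Here is where the $C^{1,\epsilon}$ regularity of $\partial\Omega$ — hence of $\partial(A_n\Omega)$ in a normalized chart, via Theorem~\ref{thm:normalizing}-type control — should be used to get a uniform lower bound, via Lemma~\ref{lem:kob_inf_bound} and Proposition~\ref{prop:convergence_of_kob}, on the Kobayashi distance from a fixed compact set to the complement, preventing the $g_n^{-1}$ images of a fixed compact subset of $\Bb_d$ from escaping; alternatively one argues directly that $g$ is proper onto $\Bb_d$ using the inner ball condition from $r_n \to 1$ together with the outer $C^{1,\epsilon}$ barrier. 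Once $g : U \to \Bb_d$ is shown to be a proper injective holomorphic map, properness plus injectivity plus the fact that its image contains the nonempty open set $\bigcup_n r_n\Bb_d$ forces $g(U)$ to be both open and closed in $\Bb_d$, hence all of $\Bb_d$, so $g$ is a biholomorphism $U \to \Bb_d$, as desired.
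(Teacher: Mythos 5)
Your architecture is sound and is a genuinely different route from the paper's: the paper's proof is two lines, citing (a) upper semicontinuity of $(D,z)\mapsto s_D(z)$ on $\Xb_{d,0}$ (which gives $s_U(u)\geq \limsup_n s_{A_n\Omega}(A_nz_n)=\lim_n s_\Omega(z_n)=1$) and (b) the Deng--Guan--Zhang rigidity theorem, which says that $s_U(u)=1$ at a single point already forces $U$ to be biholomorphic to $\Bb_d$. What you propose is essentially to inline the proofs of both cited results in one normal-families argument. That is legitimate, and it is worth noting that the $C^{1,\epsilon}$ hypothesis plays no role in this lemma at all (the paper uses it only later, e.g.\ in Lemma~\ref{lem:SQ_2}); your instinct to invoke it is the first sign that the key step is not yet under control.

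The gap is exactly the step you flag: showing that $p_n:=g_n^{-1}(w)$ stays in a compact subset of $U$. The mechanism you gesture at ($C^{1,\epsilon}$ regularity of $\partial\Omega$, Theorem~\ref{thm:normalizing}-type normalization, an "outer barrier") is not the right tool and does not obviously close the gap. The correct mechanism is intrinsic: since $r_n\Bb_d\subset g_n(A_n\Omega)$, the map $g_n^{-1}|_{r_n\Bb_d}$ is holomorphic into $A_n\Omega$ and hence Kobayashi-distance-decreasing, so
\begin{align*}
K_{A_n\Omega}(A_nz_n,p_n)=K_{A_n\Omega}\big(g_n^{-1}(0),g_n^{-1}(w)\big)\leq K_{r_n\Bb_d}(0,w),
\end{align*}
which is bounded uniformly in $n$ for fixed $w$ because $r_n\rightarrow 1$. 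One then needs the fact that for $\Cb$-properly convex domains $A_n\Omega\rightarrow U$ with $A_nz_n\rightarrow u\in U$, Kobayashi balls of a fixed radius about $A_nz_n$ remain in a fixed compact subset of $U$. This follows from the convexity estimates already in the paper: if $p_n\rightarrow q\in\partial U$, supporting hyperplanes at nearest boundary points together with Lemma~\ref{lem:hyperplanes} force $K_{A_n\Omega}(A_nz_n,p_n)\rightarrow\infty$; if $\norm{p_n}\rightarrow\infty$, Lemma~\ref{lem:hyperplanes_along_lines} forces the complex lines through $A_nz_n$ and $p_n$ to meet $\Cb^d\setminus A_n\Omega$ only at distance tending to infinity from $A_nz_n$, which in the limit puts a complex affine line inside $U$ and contradicts $U\in\Xb_d$. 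With that compactness in hand, the rest of your argument (Montel, Hurwitz-type injectivity since $g(U)$ contains an open set, and $g(U)\subset\Bb_d$ together with $g(U)\supset\Bb_d$) goes through; your closing "properness plus open-and-closed" paragraph is then redundant, since containment in both directions already gives $g(U)=\Bb_d$.
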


\begin{proof} The function 
\begin{align*}
(\mathcal{D},z) \in \Xb_{d,0} \rightarrow s_\mathcal{D}(z)
\end{align*}
is upper semi-continuous (see for instance~\cite[Proposition 7.1]{Z2018}). So
\begin{align*}
1 \geq s_\mathcal{U}(u) \geq \lim_{n \rightarrow \infty} s_{A_n\Omega}(A_nz_n) = \lim_{n \rightarrow \infty} s_{\Omega}(z_n) =1.
\end{align*}
Hence $s_\mathcal{U}(u)=1$. Then by~\cite[Theorem 2.1]{DGZ2012}, $\mathcal{U}$ is biholomorphic to $\Bb_d$. 
\end{proof}

The proof of the next lemma uses the following result.

\begin{proposition}\cite[Proposition 2.1]{Z2019}\label{prop:asymptotic_shape} Suppose $\Omega \subset \Cb^d$ is a convex domain with 
\begin{enumerate}
\item $\Omega \cap (e_1+\Span_{\Cb}\{e_2,\dots, e_d\})=\emptyset$,
\item $\Omega \cap \Cb \cdot e_1 = \{ (z,0,\dots,0) \in \Cb^d:  { \rm Re}(z) < 1\}$, and
\item $\Omega$ is biholomorphic to $\Bb_d$. 
\end{enumerate}
If $v \in \Span_{\Cb}\{e_2,\dots, e_d\}$, then
\begin{align*}
\frac{1}{2} = \lim_{t \rightarrow \infty} \frac{1}{t} \log \delta_\Omega(-e^te_1; v). 
\end{align*}
\end{proposition}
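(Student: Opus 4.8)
The statement to prove is Proposition~\ref{prop:asymptotic_shape}: for a convex domain $\Omega$ biholomorphic to $\Bb_d$ satisfying the three normalization conditions, one has $\lim_{t\to\infty}\frac{1}{t}\log\delta_\Omega(-e^te_1;v)=\frac12$ for $v\in\Span_{\Cb}\{e_2,\dots,e_d\}$.

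\medskip

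The plan is to establish the two inequalities $\limsup \le \frac12$ and $\liminf \ge \frac12$ separately, in both cases exploiting the fact that the Kobayashi metric of $\Omega$ coincides (up to the biholomorphism) with that of $\Bb_d$, whose behavior near the boundary is completely explicit. First I would fix the point $p_t := -e^te_1 \in \Omega$ (which lies in $\Omega$ by condition (2)) and observe that by condition (1) the complex hyperplane $H = e_1 + \Span_{\Cb}\{e_2,\dots,e_d\}$ is disjoint from $\Omega$, so by Lemma~\ref{lem:hyperplanes} we get a lower bound $K_\Omega(p_t, p_s) \ge \frac12|\log\frac{d_{\Euc}(H,p_t)}{d_{\Euc}(H,p_s)}| = \frac12|t-s| + O(1)$; combined with the upper bound coming from the explicit affine parametrization $t\mapsto \sigma(t)$ of the geodesic ray along $\Cb\cdot e_1$ toward the boundary point $e_1$ (condition (2) makes $\Omega\cap\Cb\cdot e_1$ a half-plane, whose Kobayashi geometry is exact), the curve $t\mapsto p_t$ is a $(1,\beta)$-quasi-geodesic ray converging to $e_1 \in \partial\Omega$. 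The key quantitative input is then the Graham estimate (Lemma~\ref{lem:kob_inf_bound}): $K_\Omega(z;w) \asymp \frac{\|w\|}{\delta_\Omega(z;w)}$, so controlling $\delta_\Omega(p_t;v)$ is equivalent to controlling the infinitesimal Kobayashi norm $k_\Omega(p_t;v)$.

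\medskip

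For the upper bound $\limsup_{t\to\infty}\frac1t\log\delta_\Omega(p_t;v)\le\frac12$, equivalently $\liminf \frac1t\log k_\Omega(p_t;v) \ge -\frac12$: here I would transport everything to $\Bb_d$ via a biholomorphism $F:\Omega\to\Bb_d$ normalized so that $F(p_t)\to$ some boundary point $\zeta\in\partial\Bb_d$. Since the quasi-geodesic ray $t\mapsto p_t$ converges to $e_1$ and has Kobayashi length $\sim t$, its image $F(p_t)$ is a quasi-geodesic ray in $\Bb_d$ approaching $\zeta$ with $1-\|F(p_t)\|\asymp e^{-2t}$. The tangent vector $v$ is a fixed complex vector transverse to the "radial" direction $e_1$ (it lies in $\Span\{e_2,\dots,e_d\}$); the crux is to show that the $F$-image of $v$ retains a nontrivial component in the complex-tangential directions at $\zeta$, so that the standard estimate $k_{\Bb_d}(z;w) \asymp \frac{|w_{\mathrm{tang}}|}{\sqrt{1-\|z\|^2}} + \frac{|w_{\mathrm{norm}}|}{1-\|z\|^2}$ gives $k_\Omega(p_t;v) = k_{\Bb_d}(F(p_t);dF_{p_t}(v)) \gtrsim \frac{1}{\sqrt{1-\|F(p_t)\|^2}}\cdot|{\rm proj}_{\mathrm{tang}}dF_{p_t}(v)| \gtrsim e^{-t}\cdot(\text{something not decaying too fast})$. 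The control on $dF_{p_t}(v)$ and on the non-degeneracy of its tangential component is where convexity of $\Omega$ and the precise normalization must be used — this is essentially a gradient/Schwarz-lemma estimate for $F$ along the ray, and it is the main obstacle: one must rule out the possibility that $dF$ crushes $v$ into the normal direction at a rate faster than polynomial. I expect this to be handled by comparing $\delta_\Omega(p_t;v)$ from above by the Euclidean width of $\Omega$ in the $v$-direction (which, for a convex domain biholomorphic to the ball, cannot shrink faster than $\sim e^{-t/2}$ because a faster rate would force $\partial\Omega$ to be too flat, contradicting — via the squeezing/ball characterization or a limiting-rescaling argument as in the earlier sections — that $\Omega$ is biholomorphic to $\Bb_d$).

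\medskip

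For the lower bound $\liminf_{t\to\infty}\frac1t\log\delta_\Omega(p_t;v)\ge\frac12$: since $\Omega$ is convex, $\delta_\Omega(p_t;v)\ge$ the distance from $p_t$ to $\partial\Omega$ in the complex line $p_t+\Cb v$, and I would bound this below using Lemma~\ref{lem:kob_inf_bound} in the other direction: $\delta_\Omega(p_t;v)\ge \frac{\|v\|}{2K_\Omega(p_t;v)} = \frac{\|v\|}{2k_\Omega(p_t;v)}$, so it suffices to show $k_\Omega(p_t;v)\le C e^{-t/2}$ for large $t$. Transporting to $\Bb_d$ again and using that $F(p_t)$ approaches $\zeta\in\partial\Bb_d$ along a quasi-geodesic with $1-\|F(p_t)\|^2\asymp e^{-2t}$, the worst case for $k_{\Bb_d}(F(p_t);dF_{p_t}v)$ is when $dF_{p_t}v$ is fully complex-tangential, giving $k_{\Bb_d}\asymp \frac{|dF_{p_t}v|}{\sqrt{1-\|F(p_t)\|^2}} \asymp e^{t}|dF_{p_t}v|$; so one needs $|dF_{p_t}v|\lesssim e^{-3t/2}$, which again is a derivative estimate for the biholomorphism. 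Alternatively — and this is cleaner — I would argue directly in $\Omega$: by convexity and conditions (1)–(2), $\Omega$ sits inside the slab $\{{\rm Re}(z_1)<1\}$ and contains a ball of fixed radius around $0$, and the complex line through $p_t$ in direction $v$ must exit $\Omega$; a supporting-hyperplane argument shows the exit distance is at least $c\cdot d_{\Euc}(p_t,\partial\Omega)^{1/2}$ provided $\Omega$ near $e_1$ is "no flatter than order $2$", which is exactly what biholomorphic equivalence to $\Bb_d$ guarantees (if it were flatter, a rescaling limit would produce a domain whose boundary contains an analytic disk, contradicting simple boundary, hence contradicting that $\Omega\cong\Bb_d$). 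Assembling the two inequalities gives the claimed limit. The genuine difficulty throughout is converting "$\Omega$ is biholomorphic to $\Bb_d$" into the quantitative statement "$\partial\Omega$ is exactly of order $2$ (finite type $2$) at $e_1$ in the complex-tangential directions," which I would do by the rescaling/normal-families machinery already developed (Frankel compactness, Theorem~\ref{thm:frankel_compactness}, plus the fact that a rescaling limit of $\Omega$ along $p_t$ is again biholomorphic to $\Bb_d$ by the squeezing-function semicontinuity argument of Lemma~\ref{lem:SQ_1}).
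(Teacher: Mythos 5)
The paper does not actually prove this proposition -- it is imported verbatim from \cite[Proposition 2.1]{Z2019} -- so your proposal has to be judged on its own, and it has genuine gaps. First, the quantitative backbone is off. From condition (2), Lemma~\ref{lem:hyperplanes} applied to $H=e_1+\Span_{\Cb}\{e_2,\dots,e_d\}$, and the half-plane slice $\Omega\cap\Cb\cdot e_1$, one gets $K_\Omega(p_0,p_t)=\tfrac{t}{2}+O(1)$, hence (after normalizing $F(p_0)=0$) $1-\norm{F(p_t)}\asymp e^{-t}$, not $e^{-2t}$ as you assert; in a statement whose entire content is the value of an exponent this is not cosmetic. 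Second, in your lower-bound step the ``worst case'' for bounding $k_{\Bb_d}(F(p_t);dF_{p_t}v)$ from above is when $dF_{p_t}v$ is \emph{normal}, not tangential, since that component carries the weight $(1-\norm{q}^2)^{-1}$ rather than $(1-\norm{q}^2)^{-1/2}$; and the crude route you then suggest (a derivative estimate $\norm{dF_{p_t}v}\lesssim e^{-3t/2}$) cannot be obtained from Cauchy-type estimates, which only give $\norm{dF_{p_t}v}\lesssim\delta_\Omega(p_t;v)^{-1}$ -- i.e.\ exactly the quantity you are trying to control. The real content of the proposition is that $dF_{p_t}v$ is asymptotically complex-tangential at $F(p_t)$ with a specific magnitude, and the proposal contains no mechanism for proving this.

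Third, the places where you try to invoke hypothesis (3) do not do the work. Comparing $\delta_\Omega(p_t;v)$ ``from above by the Euclidean width of $\Omega$ in the $v$-direction'' is circular, since $\delta_\Omega(p_t;v)$ \emph{is} that width. Reducing to ``$\partial\Omega$ is exactly of order $2$ at $e_1$'' addresses the wrong end of the domain: $p_t=-e^te_1\rightarrow\infty$, so $\delta_\Omega(p_t;v)$ is governed by the shape of $\Omega$ at infinity, which is logically independent of the order of contact of $\partial\Omega$ at the finite boundary point $e_1$. Finally, the rescaling/simple-boundary argument you appeal to can only exclude an analytic disk in the boundary of a limit, and that is consistent with any exponent in $(0,1)$: the domain $\{z : \Real(z_1)<1-\abs{z_2}^4\}$ is convex, satisfies (1) and (2), has simple boundary and Gromov hyperbolic Kobayashi metric, yet gives the limit $1/4$. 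So no amount of Gromov hyperbolicity or finite-type machinery can isolate the exponent $1/2$; one must use that rescaled limits of $(\Omega,p_t)$ are themselves biholomorphic to $\Bb_d$ (via $s_\Omega\equiv 1$ and upper semicontinuity of the squeezing function, as in Lemma~\ref{lem:SQ_1}) and then prove a rigidity statement for such limit domains. Your proposal gestures at this in its last sentence but never carries it out, and that is precisely where the difficulty of the proposition lies.
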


\begin{remark} The theorem says that $\Omega$ asymptotically ``looks'' like the domain
\begin{align*}
\left\{ (z_1,\dots, z_d) : { \rm Re}(z_1) < 1 - \sum_{j=2}^d \abs{z_j}^2 \right\}
\end{align*}
which is biholomorphic to $\Bb_d$. 
\end{remark}

\begin{lemma}\label{lem:SQ_2}Suppose $\Omega \subset \Cb^d$ is a bounded convex domain, $\partial\Omega$ is $C^{1}$, and $\lim_{z \rightarrow \partial\Omega} s_\Omega(z) = 1$. Then $\Omega$ is $(2+a)$-convex for every $a > 0$. 
\end{lemma}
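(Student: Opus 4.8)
\textbf{Plan of proof for Lemma~\ref{lem:SQ_2}.}

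The plan is to argue by contradiction, exactly in the spirit of the proof of Theorem~\ref{thm:loc_m_convex}. Fix $a > 0$ and suppose $\Omega$ is not $(2+a)$-convex. Then there exist points $z_m \in \Omega$ and unit vectors $v_m \in \Cb^d$ with $\delta_\Omega(z_m;v_m) / \delta_\Omega(z_m)^{1/(2+a)} \to \infty$. Since $\delta_\Omega(z;v)$ is bounded on $\Omega \times S^{2d-1}$, this forces $\delta_\Omega(z_m) \to 0$, so $z_m$ approaches $\partial\Omega$. First I would normalize: using Theorem~\ref{thm:normalizing} (or the simpler normalization used in Section~\ref{sec:local_m_convexity}) choose affine maps $A_m \in \Aff(\Cb^d)$ with $A_m(z_m)$ sent to a controlled point, $A_m(\pi_\Omega(z_m)) = e_1$, the supporting hyperplane at $\pi_\Omega(z_m)$ sent to $e_1 + \Span_\Cb\{e_2,\dots,e_d\}$, and $A_m v_m$ pointing in the $\Span_\Cb\{e_2,\dots,e_d\}$ direction. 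Passing to a subsequence, $A_m(\Omega, z_m)$ converges to some $(U,u) \in \Xb_{d,0}$; by Lemma~\ref{lem:SQ_1}, $U$ is biholomorphic to $\Bb_d$.

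The heart of the argument is to combine the extremal/maximal-choice device from the proof of Theorem~\ref{thm:loc_m_convex} with the asymptotic estimate of Proposition~\ref{prop:asymptotic_shape}. Concretely, after replacing $z_m$ by $t_m z_m$ along the segment toward the boundary (the usual trick producing the extremal property \eqref{eq:maximal_choice}) and rescaling so that the offending point $z_m + C_m \delta_\Omega(z_m)^{1/m}v_m$ lands on $\partial \Omega$, I would track how the rescaled domains $B_m\Omega$ converge; the limit domain $D_1$ will satisfy hypotheses (1)--(3) of Proposition~\ref{prop:asymptotic_shape} (a supporting hyperplane at $e_1$ in the $\Span_\Cb\{e_2,\dots,e_d\}$-directions, $\Cb\cdot e_1 \cap D_1$ a half-plane, and $D_1$ biholomorphic to $\Bb_d$ since the squeezing function is $1$ in the limit). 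Proposition~\ref{prop:asymptotic_shape} then says $\delta_{D_1}(-e^t e_1; v) = e^{t/2 + o(t)}$, i.e. the domain is asymptotically paraboloid-like with exponent exactly $1/2$. But the extremal property forces $\delta_{D_1}(-te_1; e_2)$ to grow at least like $t^{1/m}$ with $m = 2+a$, i.e. at rate at most $\frac{1}{2+a} \log t = o(t)$ — this is strictly slower than the genuinely \emph{linear in $e^t$} (equivalently, exponential) blow-up predicted by Proposition~\ref{prop:asymptotic_shape}, once the limiting $C_m$ are unbounded. Reconciling the two growth statements yields the contradiction. One must be careful here to match the two parametrizations: Proposition~\ref{prop:asymptotic_shape} parametrizes by $-e^t e_1$ (exponential) while the extremal condition is naturally phrased along $[0,z_m)$ with the variable $\norm{q-\xi}$; translating between them is the analogue of Lemma~\ref{lem:dist_est_normalizing_maps} and is where the exponent $1/2$ enters on the nose, forcing $m$ down to $2+a$ for every $a>0$ but not to $2$ itself.

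The main obstacle I expect is precisely this bookkeeping of exponents: showing that the asymptotic rate $\frac{1}{t}\log \delta_{D_1}(-e^t e_1; v) \to \frac12$ from Proposition~\ref{prop:asymptotic_shape} is genuinely incompatible with the sub-exponential rate $\delta_{D_1}(-se_1; e_2) \lesssim s^{1/(2+a)}$ coming from the extremal choice, and in organizing the rescalings so that the limit $D_1$ really does fall under the hypotheses of Proposition~\ref{prop:asymptotic_shape} (in particular that $\Cb\cdot e_1 \cap D_1$ is exactly a half-plane — this uses the $C^{1,\epsilon}$ regularity of $\partial\Omega$ to control the geometry of the slice near $e_1$, analogous to the cone $C$ appearing in Claim~1 of the proof of Theorem~\ref{thm:loc_m_convex}). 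A secondary point is that upper semicontinuity of the squeezing function plus $\lim_{z\to\partial\Omega}s_\Omega(z)=1$ must be applied after each rescaling, but this is exactly Lemma~\ref{lem:SQ_1}, so it is already in hand. Once the contradiction is obtained for arbitrary $a>0$, the lemma follows.
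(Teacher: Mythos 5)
Your plan follows the paper's proof essentially step for step: contradiction via a sequence $z_m, v_m$ violating $(2+a)$-convexity, the extremal rescaling along $[0,z_m)$, affine normalization so the limit domain $D$ contains the half-plane $\{\Real(z_1)<1\}\cdot e_1$ (using the $C^{1}$ regularity to produce widening cones in the slice), Lemma~\ref{lem:SQ_1} to see $D$ is biholomorphic to $\Bb_d$, and then the clash between the exponent $\tfrac{1}{2+a}$ forced by the extremal property and the exponent $\tfrac12$ from Proposition~\ref{prop:asymptotic_shape}. The only slip is the phrase ``grow at least like $t^{1/m}$'' where you mean ``at most'' (the extremal choice gives the upper bound $\delta_D(-te_1;e_2)\le(1+t)^{1/(2+a)}$), but your subsequent comparison of rates is in the correct direction, so the argument is sound.
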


\begin{proof} Without loss of generality we may assume $0 \in \Omega$. Then, as in Section~\ref{sec:local_m_convexity}, for $z \in \Omega \setminus \{0\}$ let $\pi_\Omega(z) \in \partial \Omega$ be defined by
\begin{align*}
\{ \pi_\Omega(z) \} = \partial \Omega \cap  \Rb_{>0} \cdot z.
\end{align*}
Also, for $z \in \Omega \setminus \{0\}$ let $r_\Omega(z) = \norm{z-\pi_\Omega(z)}$ and let $T_\Omega(z)$ denote the set of unit vectors $v \in \Cb^d$ where 
\begin{align*}
( \pi_\Omega(z) + \Cb \cdot v) \cap \Omega = \emptyset. 
\end{align*}
Since $\Omega$ is convex and $\partial \Omega$ is $C^1$, the set $T_\Omega(z)$ coincides with a complex hyperplane intersected with the unit sphere. Also, if $z \in \Omega \setminus\{0\}$, then  $\overline{\Omega}$ contains the convex hull of $\Bb_d(0;\delta_\Omega(0))$ and $\pi_\Omega(z)$. Hence
\begin{equation}
\label{eqn : sq fcn estimate on r Omega}
r_\Omega(z) \leq \frac{\norm{\pi_\Omega(z)}}{\delta_{\Omega}(0)} \delta_\Omega(z) \leq  \frac{\max_{w \in \partial \Omega} \norm{w}}{\delta_{\Omega}(0)} \delta_\Omega(z)
\end{equation}
for all $z \in \Omega \setminus\{0\}$.

Fix $a >0$. We claim that  $\Omega$ is $(2+a)$-convex. Using Equation~\eqref{eqn : sq fcn estimate on r Omega} and the proof of Lemma~\ref{lem:Omega_R_any_direction}, it is enough to show that there exists $C > 0$ such that
\begin{align*}
\delta_\Omega(z;v) \leq C r_\Omega(z)^{1/(2+a)}
\end{align*}
for every $z \in \Omega \setminus \{0\}$ and $v \in T_\Omega(z)$. Suppose not, then there are sequences $z_m \in \Omega \setminus \{0\}$ and $v_m \in T_\Omega(z_m)$ such that 
\begin{align*}
\delta_\Omega(z_m;v_m) = C_m r_\Omega(z_m)^{1/(2+a)}
\end{align*}
and $C_m \geq m$. 

Since $\Omega$ is bounded, the quantity
\begin{align*}
M:=\sup \left\{ \delta_{ \Omega}(z;v) : z \in \Omega, v \in \Cb^d \setminus\{0\} \right\} 
\end{align*}
is finite. Then, since $C_m \geq m$, we must have 
\begin{align}
\label{eq:r_goes_to_zero_2convex}
\lim_{m \rightarrow \infty} r_\Omega(z_m) = 0.
\end{align}

Since $\Omega_{m}$ is convex, the function $f_m:[0,1]\rightarrow \Rb$ defined by
\begin{align*}
f_m(t) = \frac{\norm{\pi_{\Omega}(z_m)-tz_m}^{1/(2+a)}}{\delta_{\Omega}(tz_m;v_m)}
\end{align*}
is continuous. Let $t_m \in [0,1]$ be a minimum point of $f_m$. Notice that $f_m(1) = \frac{1}{C_m} \leq \frac{1}{m}$ and
$$
f_m(0) = \frac{ \norm{\pi_{\Omega}(z_m)}^{1/(2+a)}}{ \delta_{\Omega}(0;v_m)} \geq \frac{ \delta_\Omega(0)^{1/(2+a)}}{M}.
$$
So for $m$ sufficiently large, $f_m(1) < f_m(0)$ and hence $t_m \in (0,1]$. So after possibly passing to a tail of the sequence, replacing $z_m$ with $t_mz_m$, and increasing $C_m$, we can further assume that each $z_m$ has the following extremal property: 
\begin{align}
\label{eq:maximal_choice_squeezing}
\delta_{ \Omega}(tz_m;v_m) \leq C_m r_{\Omega}(tz_m)^{1/(2+a)}
\end{align}
for all $t \in (0,1]$. Finally, by replacing $v_m$ by some $e^{i\theta_m}v_m$ where $\theta_m \in \Rb$, we can assume that 
\begin{align*}
z_m + C_m r_{\Omega}(z_m)^{1/(2+a)} v_m \in \partial \Omega.
\end{align*}
Notice that $v_m$ is still contained in $T_{\Omega}(z_m)$. 

Let 
\begin{align*}
a_m := \pi_{\Omega}(z_m) \in \partial \Omega
\end{align*}
and 
\begin{align*}
b_m:=z_m + C_m r_{\Omega}(z_m)^{1/(1+a)} v_m \in \partial \Omega.
\end{align*}
Then let $B_m \in \Aff(\Cb^d)$ be an affine map such that $B_m(z_m)=0$, $B_m(a_m)=e_1$, and $B_m(b_m) = e_2$. 

For $r > 0$ and $\theta \in (0,\pi/2)$ let 
\begin{align*}
\mathcal{C}(r,\theta) = \{ x+iy \in \Cb: -r < x < 1, \abs{y} < \tan(\theta)(1-x) \}.
\end{align*}
Then $\mathcal{C}(r,\theta) \subset \Cb$ is a truncated cone based at $1$ in $\Cb$. Since $\partial\Omega$ is $C^1$ and $z_m$ converges towards the boundary, there exist sequences $r_m \rightarrow \infty$ and $\theta_m \rightarrow \pi/2$ such that 
\begin{align}
\label{eq:squeezing_cone_inclusion}
\mathcal{C}(r_m,\theta_m)\cdot e_1 \subset B_m \Omega.
\end{align}
In particular, there exists some $r  \in (0,1)$ such that 
\begin{align*}
r\Db \cdot e_1 \subset B_m\Omega
\end{align*}
for all $m$. Further, since $v_m \in T_{\Omega}(z_m)$, we see that 
\begin{align*}
B_m \Omega \cap (e_1 + \Cb \cdot e_2) = \emptyset.
\end{align*}
By construction $e_2=B_m(b_m) \in \partial B_m \Omega$ and since $\delta_{\Omega}(z_m;v_m) = \norm{b_m-z_m}$ we see that 
\begin{align*}
\Db \cdot e_2 \subset B_m\Omega.
\end{align*}
Thus 
\begin{align*}
B_m \Omega \cap \Span_{\Cb}\{e_1, e_2\} \in \Kb_2(r).
\end{align*}
So by Proposition~\ref{prop:compact_on_slices},  we can assume that $B_m \Omega \in \Kb_d(r)$. Then, since $\Kb_d(r)$ is compact, we can pass to a subsequence so that $B_m (\Omega,z_m)  \rightarrow (\mathcal{D},0)$ in $\Xb_{d,0}$. 

Lemma~\ref{lem:SQ_1} implies that $\mathcal{D}$ is biholomorphic to $\Bb_d$. We will use Proposition~\ref{prop:asymptotic_shape} to derive a contradiction. First, since $\mathcal{D} \in \Kb_d(r)$ we have
\begin{align*}
\mathcal{D}\cap (e_1+\Span_{\Cb}\{e_2,\dots, e_d\})=\emptyset.
\end{align*}
Next, Equation~\eqref{eq:squeezing_cone_inclusion} implies that 
\begin{align*}
\{ (z,0,\dots,0) \in \Cb^d: { \rm Re}(z) < 1\} \subset \mathcal{D}.
\end{align*}
Then, since $e_1 \in \partial \mathcal{D}$ and $\mathcal{D}$ is convex, we must have 
\begin{align*}
\{ (z,0,\dots,0) \in \Cb^d: { \rm Re}(z) < 1\} = \mathcal{D} \cap \Cb \cdot e_1.
\end{align*}
Finally we obtain a contradiction by verifying the following claim.

\medskip

\noindent \textbf{Claim:} $\delta_\mathcal{D}(-te_1; e_2) \leq (1+t)^{1/(2+a)}$ for every $t > 0$. 

\medskip

\noindent \emph{Proof of Claim:} Fix $t > 0$. Then for $m$ sufficiently large
\begin{align*}
B_m^{-1}(-te_1) \in (0,z_m)
\end{align*}
and
\begin{align*}
r_{\Omega}( B_m^{-1}(-te_1)) = (1+t)r_\Omega(z_m).
\end{align*}
Then by Equation~\eqref{eq:maximal_choice_squeezing}
\begin{align*}
\delta_{ \Omega}(B_m^{-1}(-te_1);v_m) \leq C_m (1+t)^{1/(2+a)} r_{\Omega}(z_m)^{1/(2+a)}.
\end{align*}
Then
\begin{align*}
\delta_{ B_m\Omega}(-te_1;e_2) = \frac{1}{C_mr_{\Omega}(z_m)^{1/(2+a)}} \delta_\Omega(B_m^{-1}(-te_1);v_m) \leq (1+t)^{1/(2+a)}.
\end{align*}
So
\begin{align*}
\delta_\mathcal{D}(-te_1; e_2) =\lim_{m \rightarrow \infty} \delta_{ B_m\Omega}(-te_1;e_2)  \leq (1+t)^{1/(2+a)}.
\end{align*}
This proves the claim. \hspace*{\fill}$\blacktriangleleft$

Now we have a contradiction: Proposition~\ref{prop:asymptotic_shape} implies that 
\begin{align*}
\frac{1}{2} = \lim_{t \rightarrow \infty} \frac{1}{t} \log \delta_\mathcal{D}(-e^te_1; e_2),
\end{align*}
while the claim implies that this limit is bounded above by $\frac{1}{2+a} < \frac{1}{2}$. 
\end{proof}

\begin{lemma} Suppose $\Omega \subset \Cb^d$ is a bounded convex domain and $\lim_{z \rightarrow \partial\Omega} s_\Omega(z) = 1$. Then $(\Omega, K_\Omega)$ is Gromov hyperbolic. 
\end{lemma}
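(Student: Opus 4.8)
The plan is to deduce Gromov hyperbolicity of $(\Omega, K_\Omega)$ from Theorem~\ref{thm:main_equivalence_general_version} by verifying its hypothesis: every domain in $\overline{\Aff(\Cb^d) \cdot \Omega} \cap \Xb_d$ has simple boundary. Since $\Omega$ is bounded, the ``unbounded/totally real'' alternative does not arise, so it suffices to check the simple boundary condition. Fix a domain $D \in \overline{\Aff(\Cb^d) \cdot \Omega} \cap \Xb_d$; choose $A_n \in \Aff(\Cb^d)$ with $A_n\Omega \to D$ in $\Xb_d$. Fixing a base point $z_0 \in D$ and writing $z_n = A_n^{-1}z_0$, after passing to a subsequence $z_n \to z' \in \overline{\Omega}$. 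If $z' \in \Omega$, then by Proposition~\ref{prop:limit_domain_depends_on_sequence} $D = T\Omega$ for some affine $T$, so $\partial D$ is $C^{1,\epsilon}$ and in particular simple (a $C^1$ convex boundary contains no analytic disks since it is locally a graph with a well-defined real tangent hyperplane; more directly, $\Omega$ itself has simple boundary because its boundary is $C^1$). The substantive case is $z' \in \partial\Omega$, i.e. $z_n$ approaches the boundary.

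In that case I would argue that $D$ is biholomorphic to $\Bb_d$, hence has simple boundary (the ball's boundary is strongly pseudoconvex, so contains no nonconstant analytic disks, and this property is a biholomorphic invariant of the boundary in the sense of Proposition~\ref{prop:affine_vs_holomorphic_disks} applied to $D$). To get $D \cong \Bb_d$ I would reprise the normalization used in the proof of Lemma~\ref{lem:SQ_2}: pick $\xi_n \in \partial\Omega$ with $\xi_n$ on the ray from a fixed interior point through $z_n$ (or simply a nearest boundary point to $z_n$), and affine maps $B_n$ sending $z_n \mapsto 0$ and $\xi_n \mapsto e_1$, normalized so that $B_n\Omega \in \Kb_d(r)$ for a uniform $r>0$ — this uses that $\partial\Omega$ is $C^1$ (in fact $C^{1,\epsilon}$) and convex to produce, via the truncated cones $C(r_m,\theta_m)$ with $r_m \to \infty$, $\theta_m \to \pi/2$, a uniform lower bound on how ``fat'' $B_n\Omega$ is near $e_1$ together with Proposition~\ref{prop:compact_on_slices}. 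Passing to a subsequence, $B_n(\Omega, z_n) \to (D', 0)$ in $\Xb_{d,0}$, and by Lemma~\ref{lem:SQ_1} the limit $D'$ is biholomorphic to $\Bb_d$. Since $D' \in \overline{\Aff(\Cb^d)\cdot\Omega}\cap\Xb_d$ and, by Proposition~\ref{prop:limit_domain_depends_on_sequence}, $D = TD'$ for some $T \in \Aff(\Cb^d)$, we conclude $D$ is biholomorphic to $\Bb_d$ as well.

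Therefore every domain in $\overline{\Aff(\Cb^d)\cdot\Omega}\cap\Xb_d$ is biholomorphic to $\Bb_d$, hence strongly pseudoconvex, hence has simple boundary by Proposition~\ref{prop:affine_vs_holomorphic_disks} (every holomorphic disk in the boundary of a strongly pseudoconvex domain is constant). Then Theorem~\ref{thm:main_equivalence_general_version} (case (1), $\Omega$ bounded) yields that $(\Omega, K_\Omega)$ is Gromov hyperbolic.

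The main obstacle I anticipate is the uniformity of the normalization $B_n\Omega \in \Kb_d(r)$ for a single $r>0$ independent of $n$: one must make sure that the $C^{1,\epsilon}$ (or even just $C^1$) regularity of $\partial\Omega$, combined with convexity, forces the rescaled domains to contain a disk of uniform radius $r\Db \cdot e_1$ transverse to the approach direction and a disk $\Db \cdot e_2$ in a supporting-hyperplane direction — this is exactly where the argument of Lemma~\ref{lem:SQ_2} is reused, and the truncated cone inclusion $C(r_m,\theta_m)\cdot e_1 \subset B_m\Omega$ is the key quantitative input. The rest is bookkeeping with Propositions~\ref{prop:limit_domain_depends_on_sequence}, \ref{prop:compact_on_slices}, and Lemma~\ref{lem:SQ_1}, all already available. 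Alternatively, and more cleanly, one can simply invoke Lemma~\ref{lem:SQ_2}: it gives that $\Omega$ is $(2+a)$-convex, but that alone is not enough (Example~\ref{ex:intersections}); so the boundary-orbit argument above is genuinely needed, and the cleanest route is to observe that the proof of Lemma~\ref{lem:SQ_2} already establishes that every rescaling limit obtained from a boundary-approaching sequence is biholomorphic to $\Bb_d$, which is precisely the simple-boundary input required.
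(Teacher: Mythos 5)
Your overall skeleton is exactly the paper's: reduce to the simple-boundary criterion, split on whether the base points $z_n=A_n^{-1}z_0$ accumulate in $\Omega$ or on $\partial\Omega$, use Proposition~\ref{prop:limit_domain_depends_on_sequence} in the interior case and Lemma~\ref{lem:SQ_1} in the boundary case. Two of your justifications, however, are wrong as stated. First, in Case~1 you assert that a bounded convex domain with $C^1$ (even $C^{1,\epsilon}$) boundary automatically has simple boundary ``since it is locally a graph with a well-defined real tangent hyperplane.'' This is false: a bounded convex domain can have $C^\infty$ boundary containing an affine disk (e.g.\ a convex domain of the form $\{\Real z_1>\chi(\abs{z_2})\}$ suitably truncated, with $\chi$ smooth, convex, and vanishing on a neighborhood of $0$). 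The correct justification, and the one the paper uses, is Lemma~\ref{lem:SQ_2}: $\Omega$ is $(2+a)$-convex, so $\delta_\Omega(z;v)\to 0$ as $z\to\partial\Omega$ uniformly in $v$, which rules out affine disks in $\partial\Omega$, and then Proposition~\ref{prop:affine_vs_holomorphic_disks} upgrades this to simple boundary.

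Second, in Case~2 you pass from ``$D$ is biholomorphic to $\Bb_d$'' to ``$D$ has simple boundary'' by appealing to strong pseudoconvexity of $\partial\Bb_d$ and ``biholomorphic invariance in the sense of Proposition~\ref{prop:affine_vs_holomorphic_disks}.'' That proposition compares affine and holomorphic disks in the boundary of a single convex domain; it says nothing about transporting boundary structure through a biholomorphism, and in general a biholomorphism $D\to\Bb_d$ need not extend to the boundary, so you cannot directly conclude that $\partial D$ contains no analytic disks from the corresponding fact for $\partial\Bb_d$. The clean route (the paper's) is: $K_D$ is a biholomorphic invariant, so $(D,K_D)$ is Gromov hyperbolic because $(\Bb_d,K_{\Bb_d})$ is, and then Theorem~\ref{thm:prior_nec}(1) gives that $D$ has simple boundary. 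Finally, your detour through the normalizing maps $B_n$ in Case~2 is unnecessary: Lemma~\ref{lem:SQ_1} places no normalization requirement on the affine maps, so it applies directly to $A_n(\Omega,z_n)\to(D,z)$, and the uniformity worry you raise at the end simply does not arise.
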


\begin{proof} Using Theorem~\ref{thm:main_equivalence} we need to show that every domain in
\begin{align*}
\overline{\Aff(\Cb^d) \cdot \Omega} \cap \Xb_d
\end{align*}
has simple boundary. 

If 
$$
\mathcal{D} \in \overline{\Aff(\Cb^d) \cdot \Omega} \cap \Xb_d - \Aff(\Cb^d) \cdot \Omega,
$$
then $\mathcal{D}$ is biholomorphic to $\Bb_d$ by Lemma~\ref{lem:SQ_1}. So, in this case, $(\mathcal{D},K_\mathcal{D})$ is Gromov hyperbolic and hence $\mathcal{D}$ has simple boundary by Theorem~\ref{thm:main_equivalence}. So it suffices to show that $\Omega$ has simple boundary. However, if $\Omega$ has non-simple boundary, then there exists some 
$$
\mathcal{D}^\prime \in \overline{\Aff(\Cb^d) \cdot \Omega} \cap \Xb_d - \Aff(\Cb^d) \cdot \Omega,
$$
with non-simple boundary, see for instance~\cite[Proposition A.9]{GZ2020}, and we just showed that this is impossible. 
\end{proof}

\begin{proof}[Proof of Theorem~\ref{thm:squeezing}] Since $(\Omega, K_\Omega)$ is Gromov hyperbolic, Theorem~\ref{thm:optimal_order} says that a subelliptic estimate of order $\epsilon$ holds for all
\begin{align*}
\epsilon < \frac{1}{\alpha_\star(\Omega)m_\star(\Omega)}.
\end{align*}
Further $m_\star(\Omega) =2$ by Lemma~\ref{lem:SQ_2} and $\alpha_\star(\Omega)=1$ by Proposition~\ref{prop:alpha_star_reg}.
\end{proof}

\section{Miscellaneous Examples}\label{sec:misc_examples}

\subsection{The failure of the converse to Theorem~\ref{thm:main}}\label{sec:converse_example} In Example~\ref{ex:intersections} we constructed strongly convex domains $\Omega_1, \dots, \Omega_d$ such that 
\begin{align*}
\Omega: = \cap_{j=1}^d \Omega_j
\end{align*} 
is non-empty and $(\Omega, d_\Omega)$ is not Gromov hyperbolic. However, each $(\Omega_j, d_{\Omega_j})$ is Gromov hyperbolic by Corollary~\ref{cor:finite_type} and so $\Omega$ satisfies a subelliptic estimate by Theorem~\ref{thm:intersection}.

\subsection{Example~\ref{ex:local_cone_point}}\label{sec:ex_local_cone_point} In~\cite[Theorem 1.8]{Z2017} we proved that the Kobayashi metric on the convex cone
\begin{align*}
\mathcal{C} = \{ (z_0,z) \in \Cb \times \Cb^d : {\rm Im}(z_0) > \norm{z} \}
\end{align*}
is Gromov hyperbolic. Then by Theorem~\ref{thm:intersection_body} a subelliptic estimate holds on 
\begin{align*}
\Omega = \Bb_{d+1}(0;r) \cap \mathcal{C}
\end{align*}
for any $r > 0$.

\subsection{Example~\ref{ex:ae_flat}}\label{sec:ex_ae_flat} To construct Example~\ref{ex:ae_flat} we need to recall some facts about convex divisible domains in $\Pb(\Rb^d)$. 

\begin{definition} \ \begin{enumerate}
\item A domain $\Omega \subset \Pb(\Rb^d)$ is \emph{properly convex} if there exists an affine chart of $\Pb(\Rb^d)$ which contains $\Omega$ as a bounded convex domain. 
\item Two domains $\Omega_1, \Omega_2 \subset \Pb(\Rb^d)$ are \emph{projectively equivalent} if there exists some $g \in \PGL_d(\Rb)$ such that $g\Omega_1 = \Omega_2$. 
\item The projective automorphism group of a domain $\Omega \subset \Pb(\Rb^d)$ is 
\begin{align*}
\Aut_{\Pb}(\Omega) = \{ g \in \PGL_{d}(\Rb) : g \Omega = \Omega \}.
\end{align*}
\item A properly convex domain $\Omega \subset \Pb(\Rb^d)$ is called \emph{divisible} if there exists a discrete group $\Gamma \leq \Aut_{\Pb}(\Omega)$ which acts properly discontinuously, freely, and co-compactly on $\Omega$. 
\end{enumerate}
\end{definition}

Given a properly convex domain $\Omega \subset \Pb(\Rb^d)$, one can define the Hilbert distance on $\Omega$ by fixing an affine chart that contains $\Omega$ as a bounded convex domain and taking the Hilbert metric there. Using the projective invariance of the cross ratio, one can show that this definition does not depend on the choice of affine chart. 

The fundamental example of a properly convex divisible domain is the unit ball 
\begin{align*}
\Bc = \left\{ [1:x_1:\dots : x_{d-1}] \in \Pb(\Rb^d): \sum_{i=1}^{d-1} x_i^2 < 1\right\}.
\end{align*} 
Then $(\Bc, H_{\Bc})$ is the Klein-Beltrami model of real hyperbolic $(d-1)$-space and any real hyperbolic manifold can be identified with a quotient $\Gamma \backslash \Bc$ for some discrete group $\Gamma \leq \Aut_{\Pb}(\Bc)$ which acts properly discontinuously on $\Bc$. Since compact real hyperbolic manifolds exist in any dimension, this implies that $\Bc$ is divisible.  

It turns out that $\Bc$ is not the only example of a properly convex divisible domain.

\begin{theorem}[{Benoist~\cite[Corollary 2.10]{B2004}, Kapovich~\cite{K2007}}] For any $d \geq 3$ there exists a properly convex divisible domain $\Omega \subset \Pb(\Rb^d)$ such that $\Omega$ is not projectively equivalent to $\Bc$ and $(\Omega, H_\Omega)$ is Gromov hyperbolic.  \end{theorem}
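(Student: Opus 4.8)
The plan is to reduce everything to Benoist's structure theory for divisible convex domains and then, in each dimension, to produce a convex projective structure on a closed aspherical manifold whose holonomy is not conjugate into $\SO(d-1,1)$. Recall from Benoist's work on divisible convex domains that if $C \subset \Pb(\Rb^d)$ is properly convex and divided by a discrete group $\Gamma$, then $C$ is strictly convex, $\partial C$ is $C^1$, and $(C,H_C)$ is Gromov hyperbolic as soon as $\Gamma$ is word hyperbolic. So it suffices to exhibit, for each $d \geq 3$, a closed manifold $M^{d-1}$ with word hyperbolic fundamental group $\Gamma$ and a discrete faithful representation $\rho : \Gamma \to \PGL(d,\Rb)$ dividing a properly convex domain $\Omega_\rho$ that is not conjugate into the isometry group $\SO(d-1,1)$ of $\Hb^{d-1}$. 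The last condition forces $\Omega_\rho$ to be not projectively equivalent to the ellipsoid $\Bc$, since $\Aut_{\Pb}(\Bc)$ is conjugate to $\SO(d-1,1)$ and a group dividing $\Bc$ acts cocompactly on the associated copy of $\Hb^{d-1}$.

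For $d = 3$ I would deform a Fuchsian group. Fix a closed hyperbolic surface $S_g = \Gamma_0 \backslash \Hb^2$ with $g \geq 2$ and holonomy $\rho_0 : \Gamma_0 \to \PSL(2,\Rb)$, and post-compose with the irreducible embedding $\PSL(2,\Rb) \hookrightarrow \PGL(3,\Rb)$; the image divides the conic $\Bc$, the Klein model of $\Hb^2$. By Goldman's classification of convex projective structures on surfaces, the component of $\Hom(\Gamma_0,\PGL(3,\Rb))/\PGL(3,\Rb)$ containing $[\rho_0]$ consists entirely of holonomies of convex projective structures on $S_g$ and has dimension $16g-16$, strictly larger than the dimension $6g-6$ of the sublocus of Fuchsian holonomies. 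Choosing $\rho$ in this component near $\rho_0$ but off the Fuchsian sublocus, the group $\rho(\Gamma_0)$ divides a properly convex $\Omega_\rho$; since $\Gamma_0$ is word hyperbolic, Benoist's theorem gives that $\Omega_\rho$ is strictly convex and $(\Omega_\rho, H_{\Omega_\rho})$ is Gromov hyperbolic, and if $\Omega_\rho$ were projectively equivalent to $\Bc$ then $\rho$ would be conjugate to a Fuchsian representation, contradicting the choice of $\rho$. (Alternatively one can use the Kac--Vinberg deformation of a hyperbolic triangle reflection group.)

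For $d \geq 4$ I would run the same scheme over a closed hyperbolic $(d-1)$-manifold $M = \Gamma \backslash \Hb^{d-1}$ whose holonomy $\rho_0 : \Gamma \to \SO(d-1,1) \subset \PGL(d,\Rb)$ admits a genuine deformation: either a Johnson--Millson bending along an embedded totally geodesic hypersurface, or, when $d-1 \geq 4$, the Gromov--Thurston examples used by Kapovich. By the Koszul--Benoist openness of the locus of convex projective holonomies in the character variety, a sufficiently small such deformation $\rho$ still divides a properly convex $\Omega_\rho$; word hyperbolicity of $\Gamma$ and Benoist's theorem again yield strict convexity and Gromov hyperbolicity of $(\Omega_\rho, H_{\Omega_\rho})$. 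Since $\rho$ is, by construction, not conjugate into $\SO(d-1,1)$ — Mostow rigidity forbids nontrivial deformations that stay inside $\SO(d-1,1)$ — the domain $\Omega_\rho$ is not projectively equivalent to $\Bc$.

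The main obstacle is the dynamical input from Benoist's work, which I would take as a black box: first, openness in $\Hom(\Gamma,\PGL(d,\Rb))/\PGL(d,\Rb)$ of the set of holonomies that divide a properly convex domain (Koszul, Benoist), so that the deformed representation produces a convex domain at all; and second, the automatic regularity statement that a divisible properly convex domain with word hyperbolic deck group is strictly convex with $C^1$ boundary, equivalently has Gromov hyperbolic Hilbert metric. Granting these, the remaining points — realizing the deformations (Goldman for surfaces, Johnson--Millson bending or Gromov--Thurston manifolds in higher dimension) and checking via Mostow rigidity that the deformation genuinely escapes the $\SO(d-1,1)$-conjugacy class — are comparatively routine.
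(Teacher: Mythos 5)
The paper does not prove this statement—it simply cites Benoist and Kapovich—and your proposal is a correct sketch of exactly the argument underlying those references: Koszul's openness of the convex projective holonomy locus plus Benoist's equivalence (divisible $+$ word hyperbolic dividing group $\Rightarrow$ strictly convex with Gromov hyperbolic Hilbert metric), fed with Goldman/Kac--Vinberg deformations for $d=3$ and Johnson--Millson bending or Kapovich's Gromov--Thurston manifolds for $d\geq 4$, with Calabi--Weil/Mostow rigidity guaranteeing the deformation escapes $\SO(d-1,1)$. No gaps; this is the same route as the cited sources.
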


Benoist~\cite{B2004} proved a number of results about these domains. To state his results we need one definition. 

\begin{definition}\label{defn:curvature_on_meas_zero} Suppose $\Omega \subset \Rb^d$ is a bounded convex domain with $C^1$ boundary. For $x \in \partial \Omega$, let $n_\Omega(x)$ be the inward pointing unit normal vector at $x$. Then \emph{the curvature of $\partial \Omega$ is concentrated on a set of measure zero} if the pull back of the Lebesgue measure on $\mathbb{S}^{d-1}$ under $n_\Omega$ is singular to the volume induced by some (hence any) Riemannian metric on $\partial \Omega$.  
\end{definition}

\begin{theorem}[{Benoist~\cite[Theorem 1.1, Theorem 1.2, Theorem 1.3]{B2004}}] Suppose $\Omega \subset \Pb(\Rb^d)$ is a properly convex divisible domain with $(\Omega, H_\Omega)$ Gromov hyperbolic. If $\Omega$  is not projectively equivalent to $\Bc$, then 
\begin{enumerate}
\item $\partial \Omega$ is $C^{1,\alpha}$ for some $\alpha > 0$ but not $C^{1,1}$,
\item $\Omega$ is strictly convex, and
\item the curvature of $\partial \Omega$ is concentrated on a set of measure zero
\end{enumerate}
\end{theorem}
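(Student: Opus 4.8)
The plan is to treat the three conclusions separately, extracting (2) and the qualitative half of (1) directly from the machinery already in place, and then bootstrapping to the sharp regularity statements using the cocompactness of the dividing group $\Gamma$ together with projective duality.

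First I would observe that strict convexity of $\Omega$ is precisely the first conclusion of Theorem~\ref{thm:KN} (Karlsson--Noskov) and that $\partial\Omega$ is $C^1$ is the second; this proves (2) and shows $\partial\Omega$ is at least $C^1$. Since $\Omega$ is strictly convex with $C^1$ boundary, the Gauss map $n_\Omega\colon\partial\Omega\to S^{d-1}$ is a continuous bijection of compact spaces, hence a homeomorphism, so the push-forward measure $(n_\Omega)^{\ast}(\mathrm{Leb}_{S^{d-1}})$ in Definition~\ref{defn:curvature_on_meas_zero} is well defined and (3) is a meaningful assertion.

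For the first half of (1), the key is self-similarity of $\partial\Omega$ coming from the cocompact $\Gamma$-action. By projective duality, $\partial\Omega$ being uniformly $C^{1,\alpha}$ for some $\alpha>0$ is equivalent to the dual divisible domain $\Omega^{\ast}$ being uniformly $m$-convex for some finite $m$ (the graphing exponents of a boundary point of $\Omega$ and of the corresponding dual point satisfy $\alpha=1+\tfrac1{m-1}$). To establish uniform $m$-convexity of $\Omega^{\ast}$ I would run the rescaling argument modeled on the proof of Theorem~\ref{thm:loc_m_convex}: if no uniform exponent existed, rescaling $\Omega^{\ast}$ around a sequence of increasingly ``flat'' boundary points (using Benz\'ecri compactness, Theorem~\ref{thm:real_affine_cocompact}, to keep the rescaled domains in a fixed compact family of $\Rb$-properly convex domains) would produce in the limit a domain lying in $\overline{\Aff(\Rb^d)\cdot\Omega^{\ast}}\cap\Yb_d$ whose boundary contains a nontrivial segment; but every such limit domain is still Gromov hyperbolic and therefore strictly convex by Corollary~\ref{cor:hilbert_gromov_necessary}, a contradiction. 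Dualizing back gives a uniform $C^{1,\alpha}$ modulus on $\partial\Omega$.

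Finally, for ``$\partial\Omega$ is not $C^{1,1}$'' and for (3) I would argue by contradiction and invoke rigidity. If $\partial\Omega$ were $C^{1,1}$ it would satisfy a uniform exterior-ball condition; combined with the uniform modulus of strict convexity obtained above (and the dual statements for $\Omega^{\ast}$, which is also divisible and Gromov hyperbolic), one gets two-sided uniform curvature control along $\partial\Omega$, forcing $H_\Omega$ to be comparable to a Riemannian metric of pinched negative curvature with cocompact $\Gamma$-action; the rigidity of divisible convex sets with such regularity — passing through the Blaschke metric / hyperbolic affine sphere of $\Omega$ and its symmetry group (equivalently, negative-curvature rigidity) — then forces $\Omega$ to be projectively equivalent to $\Bc$, contrary to hypothesis. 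For (3), if the absolutely continuous part of $(n_\Omega)^{\ast}(\mathrm{Leb}_{S^{d-1}})$ with respect to the Riemannian volume on $\partial\Omega$ were nontrivial, then $\partial\Omega$ would be twice differentiable with locally bounded second fundamental form on a set of positive Riemannian measure, and ergodicity of the geodesic flow of the (cocompact, Gromov hyperbolic) Hilbert space $(\Omega,H_\Omega)$ would spread this to a conull set, again making $\partial\Omega$ effectively $C^{1,1}$ and contradicting (1). The main obstacle is the rigidity input in the $C^{1,1}$ step: proving that two-sided uniform curvature bounds on a divisible convex domain force it to be an ellipsoid genuinely requires affine differential geometry (or the geodesic-flow/boundary-rigidity machinery) going beyond the tools developed here, so in a self-contained account this is where the real work concentrates.
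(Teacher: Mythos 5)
First, be aware that the paper does not prove this statement at all: it is imported verbatim from Benoist's work on divisible convex sets (the cited Theorems 1.1--1.3 of the reference \texttt{B2004}), so there is no internal proof to compare yours against. Your sketch does identify several genuine ingredients of Benoist's argument. Conclusion (2) and the qualitative $C^1$ statement do follow from Theorem~\ref{thm:KN}, and the quantitative $C^{1,\alpha}$ part is indeed obtained via the duality between H\"older regularity of $\partial\Omega$ and an $m$-convexity exponent for the dual divisible domain $\Omega^{\ast}$, combined with Benz\'ecri cocompactness and a rescaling argument in the spirit of Theorem~\ref{thm:loc_m_convex} (modulo bookkeeping: the Legendre transform of $|x|^m$ behaves like $|y|^{m/(m-1)}$, so the H\"older exponent of the derivative is $1/(m-1)$, not $1+\tfrac{1}{m-1}$).

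The two negative statements --- ``not $C^{1,1}$'' and (3) --- are where the theorem actually lives, and your treatment of them has a genuine gap. The rigidity input you invoke (two-sided uniform curvature control on a divisible domain forces an ellipsoid) is essentially the hard theorem being proved, so asserting it is circular in a self-contained account, as you yourself concede. More seriously, your reduction of (3) to (1) would fail: by Alexandrov's theorem every convex hypersurface is twice differentiable almost everywhere, so knowing that the Alexandrov second fundamental form is nondegenerate, or even uniformly bounded, on a conull set does not make $\partial\Omega$ ``effectively $C^{1,1}$'' --- almost-everywhere second-order control provides no modulus of continuity for the Gauss map. Benoist's actual argument does use ergodicity, but of the Anosov geodesic flow on the compact quotient together with its Lyapunov exponents and the Zariski density of $\Gamma$: nondegenerate Alexandrov curvature at a single recurrent boundary point forces, after rescaling by group elements (kept in a compact family by Benz\'ecri's theorem), convergence of $\Omega$ to an ellipsoid, and cocompactness then forces $\Omega$ itself to be an ellipsoid. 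That rescaling-to-an-ellipsoid step at an Alexandrov point is the missing idea; without it neither the ``not $C^{1,1}$'' claim nor conclusion (3) is established.
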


Then the existence of Example~\ref{ex:ae_flat} follows from the previous two theorems.

\bibliographystyle{alpha}
\bibliography{SCV}

\begin{thebibliography}{DFW14}

\bibitem[Anc87]{A1987}
Alano Ancona.
\newblock Negatively curved manifolds, elliptic operators, and the {M}artin
  boundary.
\newblock {\em Ann. of Math. (2)}, 125(3):495--536, 1987.

\bibitem[Anc90]{A1990}
A.~Ancona.
\newblock Th\'{e}orie du potentiel sur les graphes et les vari\'{e}t\'{e}s.
\newblock In {\em \'{E}cole d'\'{e}t\'{e} de {P}robabilit\'{e}s de
  {S}aint-{F}lour {XVIII}---1988}, volume 1427 of {\em Lecture Notes in Math.},
  pages 1--112. Springer, Berlin, 1990.

\bibitem[And83]{A1983}
Michael~T. Anderson.
\newblock The {D}irichlet problem at infinity for manifolds of negative
  curvature.
\newblock {\em J. Differential Geom.}, 18(4):701--721 (1984), 1983.

\bibitem[AS85]{AS1985}
Michael~T. Anderson and Richard Schoen.
\newblock Positive harmonic functions on complete manifolds of negative
  curvature.
\newblock {\em Ann. of Math. (2)}, 121(3):429--461, 1985.

\bibitem[Bar80]{B1980}
Theodore~J. Barth.
\newblock Convex domains and {K}obayashi hyperbolicity.
\newblock {\em Proc. Amer. Math. Soc.}, 79(4):556--558, 1980.

\bibitem[Ben60]{B1959}
Jean-Paul Benz\'{e}cri.
\newblock Sur les vari\'{e}t\'{e}s localement affines et localement
  projectives.
\newblock {\em Bull. Soc. Math. France}, 88:229--332, 1960.

\bibitem[Ben03]{B2003}
Yves Benoist.
\newblock Convexes hyperboliques et fonctions quasisym\'{e}triques.
\newblock {\em Publ. Math. Inst. Hautes \'{E}tudes Sci.}, (97):181--237, 2003.

\bibitem[Ben04]{B2004}
Yves Benoist.
\newblock Convexes divisibles. {I}.
\newblock In {\em Algebraic groups and arithmetic}, pages 339--374. Tata Inst.
  Fund. Res., Mumbai, 2004.

\bibitem[Ber09]{B2009}
Andreas Bernig.
\newblock Hilbert geometry of polytopes.
\newblock {\em Arch. Math. (Basel)}, 92(4):314--324, 2009.

\bibitem[BGZ21]{BGZ2018b}
Filippo Bracci, Herv\'{e} Gaussier, and Andrew Zimmer.
\newblock Homeomorphic extension of quasi-isometries for convex domains in
  {$\Bbb C^d$} and iteration theory.
\newblock {\em Math. Ann.}, 379(1-2):691--718, 2021.

\bibitem[BH99]{BH1999}
Martin~R. Bridson and Andr{\'e} Haefliger.
\newblock {\em Metric spaces of non-positive curvature}, volume 319 of {\em
  Grundlehren der Mathematischen Wissenschaften [Fundamental Principles of
  Mathematical Sciences]}.
\newblock Springer-Verlag, Berlin, 1999.

\bibitem[BK53]{BK1953}
Herbert Busemann and Paul~J. Kelly.
\newblock {\em Projective geometry and projective metrics}.
\newblock Academic Press Inc., New York, N. Y., 1953.

\bibitem[BP94]{BP1994}
E.~Bedford and S.~I. Pinchuk.
\newblock Convex domains with noncompact groups of automorphisms.
\newblock {\em Mat. Sb.}, 185(5):3--26, 1994.

\bibitem[Bre57]{B1957}
H.~J. Bremermann.
\newblock Holomorphic functionals and complex convexity in {B}anach spaces.
\newblock {\em Pacific J. Math.}, 7:811--831, 1957.

\bibitem[BS92]{BS1992}
Harold~P. Boas and Emil~J. Straube.
\newblock On equality of line type and variety type of real hypersurfaces in
  {${\bf C}^n$}.
\newblock {\em J. Geom. Anal.}, 2(2):95--98, 1992.

\bibitem[BS99]{BS1999}
Harold~P. Boas and Emil~J. Straube.
\newblock Global regularity of the {$\overline\partial$}-{N}eumann problem: a
  survey of the {$L^2$}-{S}obolev theory.
\newblock In {\em Several complex variables ({B}erkeley, {CA}, 1995--1996)},
  volume~37 of {\em Math. Sci. Res. Inst. Publ.}, pages 79--111. Cambridge
  Univ. Press, Cambridge, 1999.

\bibitem[Cat83]{C1983}
David Catlin.
\newblock Necessary conditions for subellipticity of the {$\bar \partial
  $}-{N}eumann problem.
\newblock {\em Ann. of Math. (2)}, 117(1):147--171, 1983.

\bibitem[Cat87]{C1987}
David Catlin.
\newblock Subelliptic estimates for the {$\overline\partial$}-{N}eumann problem
  on pseudoconvex domains.
\newblock {\em Ann. of Math. (2)}, 126(1):131--191, 1987.

\bibitem[CD10]{CD2010}
David~W. Catlin and John~P. D'Angelo.
\newblock Subelliptic estimates.
\newblock In {\em Complex analysis}, Trends Math., pages 75--94.
  Birkh\"{a}user/Springer Basel AG, Basel, 2010.

\bibitem[CS01]{CS2001}
So-Chin Chen and Mei-Chi Shaw.
\newblock {\em Partial differential equations in several complex variables},
  volume~19 of {\em AMS/IP Studies in Advanced Mathematics}.
\newblock American Mathematical Society, Providence, RI; International Press,
  Boston, MA, 2001.

\bibitem[CY80]{CY1980}
Shiu~Yuen Cheng and Shing~Tung Yau.
\newblock On the existence of a complete {K}\"ahler metric on noncompact
  complex manifolds and the regularity of {F}efferman's equation.
\newblock {\em Comm. Pure Appl. Math.}, 33(4):507--544, 1980.

\bibitem[DFW14]{DFW2014}
K.~Diederich, J.~E. Forn{\ae}ss, and E.~F. Wold.
\newblock Exposing points on the boundary of a strictly pseudoconvex or a
  locally convexifiable domain of finite 1-type.
\newblock {\em J. Geom. Anal.}, 24(4):2124--2134, 2014.

\bibitem[DGZ12]{DGZ2012}
Fusheng Deng, Qian Guan, and Liyou Zhang.
\newblock Some properties of squeezing functions on bounded domains.
\newblock {\em Pacific J. Math.}, 257(2):319--341, 2012.

\bibitem[DGZ16]{DGZ2016}
Fusheng Deng, Qi'an Guan, and Liyou Zhang.
\newblock Properties of squeezing functions and global transformations of
  bounded domains.
\newblock {\em Trans. Amer. Math. Soc.}, 368(4):2679--2696, 2016.

\bibitem[DSU17]{DSU2017}
Tushar Das, David Simmons, and Mariusz Urba\'{n}ski.
\newblock {\em Geometry and dynamics in {G}romov hyperbolic metric spaces},
  volume 218 of {\em Mathematical Surveys and Monographs}.
\newblock American Mathematical Society, Providence, RI, 2017.
\newblock With an emphasis on non-proper settings.

\bibitem[Fra89]{F1989b}
Sidney Frankel.
\newblock Affine approach to complex geometry.
\newblock In {\em Recent developments in geometry ({L}os {A}ngeles, {CA},
  1987)}, volume 101 of {\em Contemp. Math.}, pages 263--286. Amer. Math. Soc.,
  Providence, RI, 1989.

\bibitem[Fra91]{F1991}
Sidney Frankel.
\newblock Applications of affine geometry to geometric function theory in
  several complex variables. {I}. {C}onvergent rescalings and intrinsic
  quasi-isometric structure.
\newblock In {\em Several complex variables and complex geometry, {P}art 2
  ({S}anta {C}ruz, {CA}, 1989)}, volume~52 of {\em Proc. Sympos. Pure Math.},
  pages 183--208. Amer. Math. Soc., Providence, RI, 1991.

\bibitem[FS89]{FS1989}
John~Erik Forn{\ae}ss and Nessim Sibony.
\newblock Construction of {P}.{S}.{H}. functions on weakly pseudoconvex
  domains.
\newblock {\em Duke Math. J.}, 58(3):633--655, 1989.

\bibitem[FS98]{FS1998}
Siqi Fu and Emil~J. Straube.
\newblock Compactness of the {$\overline\partial$}-{N}eumann problem on convex
  domains.
\newblock {\em J. Funct. Anal.}, 159(2):629--641, 1998.

\bibitem[FW18]{FW2018}
John~Erik Forn{\ae}ss and Erlend~Forn{\ae}ss Wold.
\newblock A non-strictly pseudoconvex domain for which the squeezing function
  tends to 1 towards the boundary.
\newblock {\em Pacific J. Math.}, 297(1):79--86, 2018.

\bibitem[Gra91]{G1991}
Ian Graham.
\newblock Sharp constants for the {K}oebe theorem and for estimates of
  intrinsic metrics on convex domains.
\newblock In {\em Several complex variables and complex geometry, {P}art 2
  ({S}anta {C}ruz, {CA}, 1989)}, volume~52 of {\em Proc. Sympos. Pure Math.},
  pages 233--238. Amer. Math. Soc., Providence, RI, 1991.

\bibitem[GZ20]{GZ2020}
Herv\'{e} Gaussier and Andrew Zimmer.
\newblock The space of convex domains in complex {E}uclidean space.
\newblock {\em J. Geom. Anal.}, 30(2):1312--1358, 2020.

\bibitem[Har07]{H2007}
Phillip~S. Harrington.
\newblock A quantitative analysis of {O}ka's lemma.
\newblock {\em Math. Z.}, 256(1):113--138, 2007.

\bibitem[HIK96]{HIK1996}
Gennadi~M. Henkin, Andrei Iordan, and Joseph~J. Kohn.
\newblock Estimations sous-elliptiques pour le probl\`eme
  {$\overline\partial$}-{N}eumann dans un domaine strictement pseudoconvexe \`a
  fronti\`ere lisse par morceaux.
\newblock {\em C. R. Acad. Sci. Paris S\'{e}r. I Math.}, 323(1):17--22, 1996.

\bibitem[Kap07]{K2007}
Michael Kapovich.
\newblock Convex projective structures on {G}romov-{T}hurston manifolds.
\newblock {\em Geom. Topol.}, 11:1777--1830, 2007.

\bibitem[KB02]{KB2002}
Ilya Kapovich and Nadia Benakli.
\newblock Boundaries of hyperbolic groups.
\newblock In {\em Combinatorial and geometric group theory ({N}ew {Y}ork,
  2000/{H}oboken, {NJ}, 2001)}, volume 296 of {\em Contemp. Math.}, pages
  39--93. Amer. Math. Soc., Providence, RI, 2002.

\bibitem[KN02]{KN2002}
Anders Karlsson and Guennadi~A. Noskov.
\newblock The {H}ilbert metric and {G}romov hyperbolicity.
\newblock {\em Enseign. Math. (2)}, 48(1-2):73--89, 2002.

\bibitem[McN92]{M1992}
Jeffery~D. McNeal.
\newblock Convex domains of finite type.
\newblock {\em J. Funct. Anal.}, 108(2):361--373, 1992.

\bibitem[McN94]{M1994}
Jeffery~D. McNeal.
\newblock Estimates on the {B}ergman kernels of convex domains.
\newblock {\em Adv. Math.}, 109(1):108--139, 1994.

\bibitem[McN02]{M2002}
Jeffery~D. McNeal.
\newblock Uniform subelliptic estimates on scaled convex domains of finite
  type.
\newblock {\em Proc. Amer. Math. Soc.}, 130(1):39--47, 2002.

\bibitem[Mer93]{M1993}
Peter~R. Mercer.
\newblock Complex geodesics and iterates of holomorphic maps on convex domains
  in {${\bf C}^n$}.
\newblock {\em Trans. Amer. Math. Soc.}, 338(1):201--211, 1993.

\bibitem[Mic51]{M1950}
Ernest Michael.
\newblock Topologies on spaces of subsets.
\newblock {\em Trans. Amer. Math. Soc.}, 71:152--182, 1951.

\bibitem[MS98]{MS1998}
Joachim Michel and Mei-Chi Shaw.
\newblock Subelliptic estimates for the {$\overline\partial$}-{N}eumann
  operator on piecewise smooth strictly pseudoconvex domains.
\newblock {\em Duke Math. J.}, 93(1):115--128, 1998.

\bibitem[MY83]{MY1983}
Ngaiming Mok and Shing-Tung Yau.
\newblock Completeness of the {K}\"ahler-{E}instein metric on bounded domains
  and the characterization of domains of holomorphy by curvature conditions.
\newblock In {\em The mathematical heritage of {H}enri {P}oincar\'e, {P}art 1
  ({B}loomington, {I}nd., 1980)}, volume~39 of {\em Proc. Sympos. Pure Math.},
  pages 41--59. Amer. Math. Soc., Providence, RI, 1983.

\bibitem[NPT13]{NPT2013}
Nikolai Nikolov, Peter Pflug, and Pascal~J. Thomas.
\newblock On different extremal bases for {$\Bbb{C}$}-convex domains.
\newblock {\em Proc. Amer. Math. Soc.}, 141(9):3223--3230, 2013.

\bibitem[Pes90]{P1992}
Georg Peschke.
\newblock The theory of ends.
\newblock {\em Nieuw Arch. Wisk. (4)}, 8(1):1--12, 1990.

\bibitem[PZ18]{PZ2018}
Peter Pflug and W{\l}odzimierz Zwonek.
\newblock Regularity of complex geodesics and (non-) {G}romov hyperbolicity of
  convex tube domains.
\newblock {\em Forum Math.}, 30(1):159--170, 2018.

\bibitem[Roy71]{R1971}
H.~L. Royden.
\newblock Remarks on the {K}obayashi metric.
\newblock In {\em Several complex variables, {II} ({P}roc. {I}nternat. {C}onf.,
  {U}niv. {M}aryland, {C}ollege {P}ark, {M}d., 1970)}, pages 125--137. Lecture
  Notes in Math., Vol. 185. Springer, Berlin, 1971.

\bibitem[Str97]{S1997}
Emil~J. Straube.
\newblock Plurisubharmonic functions and subellipticity of the
  {$\overline\partial$}-{N}eumann problem on non-smooth domains.
\newblock {\em Math. Res. Lett.}, 4(4):459--467, 1997.

\bibitem[Sul83]{S1983}
Dennis Sullivan.
\newblock The {D}irichlet problem at infinity for a negatively curved manifold.
\newblock {\em J. Differential Geom.}, 18(4):723--732 (1984), 1983.

\bibitem[Ven89]{V1989}
Sergio Venturini.
\newblock Pseudodistances and pseudometrics on real and complex manifolds.
\newblock {\em Ann. Mat. Pura Appl. (4)}, 154:385--402, 1989.

\bibitem[Zim16]{Z2016}
Andrew~M. Zimmer.
\newblock Gromov hyperbolicity and the {K}obayashi metric on convex domains of
  finite type.
\newblock {\em Math. Ann.}, 365(3-4):1425--1498, 2016.

\bibitem[Zim17a]{BC2017}
Andrew Zimmer.
\newblock Gromov hyperbolicity of bounded convex domains.
\newblock In L\'ea Blanc-Centi, editor, {\em Metrical and Dynamical Aspects in
  Complex Analysis}, volume 2195 of {\em Lecture Notes in Mathematics}.
  Springer International Publishing, 2017.

\bibitem[Zim17b]{Z2017b}
Andrew~M. Zimmer.
\newblock Characterizing domains by the limit set of their automorphism group.
\newblock {\em Adv. Math.}, 308:438--482, 2017.

\bibitem[Zim17c]{Z2017}
Andrew~M. Zimmer.
\newblock Gromov hyperbolicity, the {K}obayashi metric, and {$\Bbb{C}$}-convex
  sets.
\newblock {\em Trans. Amer. Math. Soc.}, 369(12):8437--8456, 2017.

\bibitem[Zim18]{Z2018}
Andrew Zimmer.
\newblock A gap theorem for the complex geometry of convex domains.
\newblock {\em Trans. Amer. Math. Soc.}, 370(10):7489--7509, 2018.

\bibitem[Zim19]{Z2019}
Andrew Zimmer.
\newblock Characterizing strong pseudoconvexity, obstructions to
  biholomorphisms, and {L}yapunov exponents.
\newblock {\em Math. Ann.}, 374(3-4):1811--1844, 2019.

\end{thebibliography}

\end{document}